 \newtheorem{thm}{Theorem}[section]
 \newtheorem{lem}[thm]{Lemma}
 \theoremstyle{definition}
 \newtheorem{rem}[thm]{Remark}
 \numberwithin{equation}{section}
 \def\bR{\mathbb{R}}
\def\bN{\mathbb{N}}
\def\bZ{\mathbb{Z}}
\newtheorem{theorem}{Theorem}[section]
\theoremstyle{definition}
\newcommand{\newcom}{\newcommand}
\newcom{\al}{\alpha}
\newcom{\be}{\beta}
\newcom{\eps}{\epsilon}
\newcom{\veps}{\varepsilon}
\newcom{\e}{\varepsilon}
\newcom{\ga}{\gamma}
\newcom{\Ga}{\Gamma}
\newcom{\ka}{\kappa}
\newcom{\La}{\Lambda}
\newcom{\la}{\lambda}
\newcom{\Om}{\Omega}
\newcom{\Th}{\Theta}
\newcom{\Si}{\Sigma}
\newcom{\si}{\sigma}
\newcom{\tht}{\theta}
\newcom{\dtri}{\nabla}
\newcom{\tri}{\triangle}
\newcom{\oo}{\infty}
\newcom{\vphi}{\varphi}
\newcom{\cB}{{\mathcal B}}
\newcom{\cC}{{\mathcal C}}
\newcom{\cD}{{\mathcal D}}
\newcom{\cF}{{\mathcal F}}
\newcom{\cH}{{\mathcal H}}
\newcom{\cL}{{\mathcal L}}
\newcom{\cM}{{\mathcal M}}
\newcom{\cN}{{\mathcal N}}
\newcom{\cP}{{\mathcal P}}
\newcom{\cS}{{\mathcal S}}
\newcom{\cQ}{{\mathcal Q}}
\newcom{\cT}{{\mathcal T}}
\newcom{\cY}{{\mathcal Y}}
\newcom{\cZ}{{\mathcal Z}}
\newcom{\R}{\mathbb R}
\newcom{\T}{\mathbb T}
\newcom{\BR}{{\mathbb{R}}}
\newcom{\Z}{\mathbb Z}
\newcom{\C}{\mathbb C}
\newcom{\E}{\mathbb E}
\newcom{\wt}{\widetilde}
\newcom{\wu}{\underline{w}}
\newcommand{\vc}[1]{{\bf #1}}
\newcom{\ve}{\vc{e}}
\newcom{\vN}{\vc{N}}
\newcom{\vn}{\vc{n}}
\newcom{\vG}{\vc{G}}
\newcom{\vF}{\vc{F}}
\newcom{\vZ}{\vc{Z}}
\newcom{\vf}{\vc{f}}
\newcom{\vg}{\vc{g}}
\newcom{\vq}{\vc{q}}
\newcom{\vu}{\vc{u}}
\newcom{\vv}{\vc{v}}
\newcom{\vw}{\vc{w}}
\newcom{\vb}{\vc{b}}
\newcom{\vh}{\vc{h}}
\newcom{\vz}{\vc{z}}
\newcom{\vup}{\vu^{+}}
\newcom{\vum}{\vu^{-}}
\newcom{\vvp}{\vv^{+}}
\newcom{\vvm}{\vv^{-}}
\newcom{\vbp}{\vb^{+}}
\newcom{\vbm}{\vb^{-}}
\newcom{\vhp}{\vh^{+}}
\newcom{\vhm}{\vh^{-}}
\newcom{\Omp}{{\Om^+}}
\newcom{\Omm}{{\Om^-}}
\newcom{\vupm}{{\vu^{\pm}}}
\newcom{\vvpm}{{\vv^{\pm}}}
\newcom{\vbpm}{{\vb^{\pm}}}
\newcom{\vhpm}{{\vh^{\pm}}}
\newcom{\vwp}{{\vc{w}^+}}
\newcom{\vwm}{{\vc{w}^-}}
\newcom{\vwpm}{{\vc{w}^{\pm}}}
\newcom{\Ompm}{{\Omega^{\pm}}}
\newcom{\vom}{\boldsymbol{\omega}}
\newcom{\vvap}{\boldsymbol{\varpi}}
\newcom{\vop}{\vom^{+}}
\newcom{\vnu}{\boldsymbol{\nu}}
\newcom{\vopm}{\vom^{\pm}}
\newcom{\vjp}{\vj^+}
\newcom{\vjm}{\vj^-}
\newcom{\vjpm}{\vj^{\pm}}
\newcom{\vj}{\boldsymbol{\xi}}
\newcom{\Ds} {\langle\nabla\rangle^{s-\f12}}
\newcom{\vcY}{\ud{\mathcal{Y}}}
\newcom{\vcZ}{\ud{\mathcal{Z}}}
\newcom{\p}{\partial}
\newcom{\f}{\frac}
\newcom{\ud}{\underline}
\def\div{\mathop{\rm div}\nolimits}
\def\curl{\mathop{\rm curl}\nolimits}
\def\d{\mathrm{d}}
\newcommand{\dx}{{\rm d} {x}}
\newcommand{\dt}{{\rm d} t }
\newcom{\ds}{{\rm d} s}
\theoremstyle{remark}
\newtheorem{remark}[theorem]{Remark}
\begin{document}
\title{Global current-vortex sheets in the two-dimensional ideal incompressible MHD }

\author{Yuan Cai\footnote{School of Mathematical Sciences, Fudan University, Shanghai 200433, P. R. China. Email: caiy@fudan.edu.cn
}
\and
Zhen Lei\footnote{School of Mathematical Sciences, Fudan University; Shanghai Center for Mathematical Sciences, Shanghai 200433, P. R. China. Email: zlei@fudan.edu.cn}
}
\date{}
\maketitle

\begin{abstract}
The magnetohydrodynamic current-vortex sheet is a free boundary problem involving
 a moving free surface separating two plasma regions.
We prove the global nonlinear stability of current-vortex sheet in the two dimensional ideal incompressible magnetohydrodynamics under the strong horizontal background magnetic field. This appears to be the first result
on the global solutions of the free boundary problems for the ideal (inviscid and non-resistive) incompressible rotational fluids.
The strong magnetic field plays a crucial role in the global in time stabilization effect.
The proof relies on the understanding of the interplay between the dynamics of the fluids inside the domain and on the free interface, a design of multiple-level energy estimates with different weights, and the inherent structures of the problem.

\end{abstract}

\maketitle


\tableofcontents




\section{Introduction}
Magnetohydrodynamics (MHD) is the fundamental equation in magneto-fluid mechanics. It describes the dynamics of electrically conducting fluids
arising from plasma or some other physical phenomenons.
Magnetohydrodynamics is the study of the dynamics of electrically conducting fluids.
Examples of such fluids include plasmas, liquid metals, and salt water or electrolytes.
The velocity field obeys the Navier-Stokes equations with Lorentz force.
 The magnetic field satisfies the non-resistive Maxwell-Faraday equations which
describes the Faraday's law of induction.
When the fluid is assumed to be ideal and a perfect conductor,  both the Reynolds number and the magnetic Reynolds number are very high, and hence the viscosity and the resistivity can be ignored. The ideal incompressible MHD reads
%
\begin{equation} \label{A1}
\begin{cases}
\p_t u + u\cdot\nabla u + \nabla  p  = h\cdot \nabla h, 
\\
\p_t h + u\cdot\nabla h = h\cdot\nabla u,\\
\div u = 0,\,\,
\div h=0.
\end{cases}
\end{equation}
Here $u$ represents the velocity field, $h$ the magnetic field and $ p $ the scalar total pressure.

Between two plasmas with different properties, a plasma-plasma interface naturally arises, referring to the boundary or transition region. This concept is crucial in various contexts, including space physics, where the Earth's magnetosphere interacts with the solar wind, and in laboratory settings like fusion experiments. The interface moves freely with the fluids. The total pressure is balanced on the interface, while the normal part of the magnetic field vanishes, and the tangent part may jump, thus forming a tangential discontinuity.

Mathematically, plasma-plasma interface is formulated as a free boundary problem for the MHD
system, see, for example \cite{CMST, SWZ18}. It is referred to as the current-vortex sheet problem for the incompressible MHD.
 Denote
\[
\Om =\bR \times [-1,1] \subseteq \BR^2.
\]
$\Ga_f$ is a graph of the free surface:
\[
\Ga_f=\left\{x\in \Om | x_2=f(t,x_1) \in (-1, 1),  x_1 \in \BR \right\},
\]
such that
\[
\Om_f=\big\{ x \in \Om| -1<x_2 < f(t,x_1),  x_1 \in \BR\big\},
\quad\hat{\Om}_f=\big\{ x \in \Om| f(t,x_1)<x_2<1,  x_1 \in \BR \big\}
.
\]
The unknowns are denoted by
\[
u := u |_{\Om_f}, \quad h := h|_{\Om_f}, \quad p := p|_{\Om_f},
\]
\[
\hat{u} := u |_{\hat{\Om}_f}, \quad \hat{h} := h|_{\hat{\Om}_f}, \quad \hat{p} := p|_{\hat{\Om}_f}.
\]
They satisfy
\begin{equation}\label{A5}
	\begin{cases}
		\p_t u + u \cdot\nabla u - h \cdot\nabla h + \nabla p = 0\quad \text{ in }\, \Om_f,  \\
		\p_t h + u\cdot\nabla h - h \cdot\nabla u = 0\quad\text{ in }\, \Om_f,  \\
		\div u = 0,\,\div h = 0\quad\text{ in }\, \Om_f,  \\
		u\cdot n_f =V(t,x), \quad h \cdot n_f = 0\quad\text{ on }\,\Ga_f,
	\end{cases}
\end{equation}
and
\begin{equation}\label{A6}
	\begin{cases}
		\p_t \hat{u} + \hat{u} \cdot\nabla \hat{u} - \hat{h} \cdot\nabla \hat{h} + \nabla \hat{p} = 0\quad \text{ in }\, \hat{\Om}_f,  \\
		\p_t \hat{h} + \hat{u}\cdot\nabla \hat{h} - \hat{h} \cdot\nabla \hat{u} = 0\quad\text{ in }\, \hat{\Om}_f,  \\
		\div \hat{u} = 0,\,\div \hat{h} = 0\quad\text{ in }\, \hat{\Om}_f,  \\
		\hat{u}\cdot n_f =V(t,x), \quad \hat{h} \cdot n_f = 0\quad\text{ on }\,\Ga_f.
	\end{cases}
\end{equation}
with a contact discontinuity condition for the pressure
\begin{align}\label{A7}
	[p]= p -\hat{p}=0\quad \text{on}\,\, \Ga_f.
\end{align}
Here $n_f$ is the outward unit normal to $\Ga_f$,
$V(t,x)$ is the normal velocity of $\Ga_f$.
On the fixed boundaries $\Ga=\bR \times\{x_2=-1\} $, $\hat{\Ga}=\bR \times\{x_2=1\} $,
we impose the following slip boundary conditions for velocity fields and frozen-in boundary conditions for magnetic fields:
\begin{equation}\label{A8}
	u\cdot n =0,\,\, h\cdot n = 0\,\,\text{on}\,\,  \Ga ,\quad
	\hat{u}\cdot n =0,\,\, \hat{h}\cdot n = 0\,\,\text{on}\,\, \hat{\Ga},
\end{equation}
where $n=e_2=(0,1)$.


Under this setting, the boundary conditions on $\Ga_f$ in \eqref{A5}-\eqref{A7} are transformed into
\begin{equation}\label{A10}
	[p]=0, \quad u\cdot N_f =\hat{u}\cdot N_f = \p_t f,
	\quad h\cdot N_f =\hat{h}\cdot N_f = 0\quad\text{on}\,\,\Ga_f,
\end{equation}
where
\[
N_f=e_2-e_1\p_1f, \quad n_f=\f{N_f}{|N_f|},\quad e_1=(1,0).
\]
The system \eqref{A5}-\eqref{A6}, \eqref{A10} is  supplemented with initial data
\begin{align}\label{A12}
	\begin{cases}
		u(0,x) = u^{0}(x), \quad h(0,x)=h^{0}(x)\quad \text{in}\,\,\Om_{f_0},\\
		\hat{u}(0,x) = \hat{u}^{0}(x), \quad \hat{h}(0,x)=\hat{h}^{0}(x)\quad \text{in}\,\,\hat{\Om}_{f_0},\\
		f(0,x_1)=f^0(x_1),\quad \p_tf(0,x_1)=v^0(x_1) ,\,\, x_1\in\bR
	\end{cases}
\end{align}
where the initial data satisfies
\begin{equation}\label{A13}
	\begin{cases}
		\div u^{0}=0, \quad\div h^{0}=0\quad\text{ in}\,\, \Om_{f_0},\\
		\div \hat{u}^{0}=0, \quad\div \hat{h}^{0}=0\quad\text{ in}\,\, \hat{\Om}_{f_0},\\
		u^0\cdot n_{f_0} = \hat{u}^0\cdot n_{f_0},
		\quad h^0 \cdot n_{f_0} =\hat{h}^0 \cdot n_{f_0}= 0\quad\text{on}\,\,\Gamma_{f_0},\\
		u^0 \cdot e_2=h^0 \cdot e_2=0\,\, \text{ on } \Ga ,
		\quad\hat{u}^0 \cdot e_2=\hat{h}^0 \cdot e_2=0\,\, \text{ on }\hat{\Ga} .
	\end{cases}
\end{equation}
Here $u^{0}, h^{0}$ and $\hat{u}^{0}, \hat{h}^{0}$ denote the initial velocity field and magnetic field in the lower and upper region, respectively.
$f^0, v^0$ denote the initial amplitude and velocity field  of the free surface, respectively.
The initial data $u^{0}, h^{0}, \hat{u}^{0}, \hat{h}^{0}, f^0, v^0$ are assumed to be smooth and decay fast
enough at spatial infinity.

\begin{remark}
Let us remark that the divergence free restriction on $h$ and $\hat{h}$ is automatically satisfied
if $\div h^0=0,\,\div \hat{h}^0=0$ since
\begin{equation*}
\p_t (\div h)+u\cdot\nabla  (\div h)=0,\quad \p_t (\div \hat{h})+\hat{u}\cdot\nabla  (\div \hat{h})=0.
\end{equation*}
Similar argument can be also applied to yield $h\cdot N_f=0,\,\hat{h}\cdot N_f=0$ if $h^0\cdot N_{f^0}=0,\,\hat{h}^0\cdot N_{f^0}=0$.
\end{remark}

 Coulombel, Morando,  Secchi and Trebeschi \cite{CMST} proved an {\it a priori estimate} without loss of derivatives for the nonlinear current-vortex sheet problem under a strong stability condition. 
In an important paper \cite{SWZ18}, Sun, Wang and Zhang proved the nonlinear stability of the system \eqref{A5}-\eqref{A6}, \eqref{A10} locally in time in the three-dimensional setting under the
stability condition
\begin{align}\label{stab-cond}
| [u]\times h |^2+| [u]\times \hat{h} |^2<2|h \times \hat{h}|^2.
\end{align}
The most important observation of their work is that the system \eqref{A5}-\eqref{A6}, \eqref{A10} is strictly
hyperbolic under
the stability condition \eqref{stab-cond} in the sense that the interface graph satisfies
a second-order hyperbolic equation.

In this article, we prove the global current-vortex sheet for ideal incompressible MHD equations with strong magnetic fields in two dimensions. It appears to be the first result on the global nonlinear stability of free boundary problems for the ideal (inviscid and non-resistive) incompressible rotational fluids. The equation of the interface 
derived in \cite{SWZ18} and the stability effect of strong magnetic field \cite{BSS, CL} played a crucial role in our current work. Apart from those important existing observations,
our proof highly relies on a new approach to understanding  the interplay between the dynamics of the fluids inside the domain and on the free interface, a design of multiple-level energy estimates with different weights, and the inherent structures hidden within the nonlinearities. We will explain them in more detail in subsection \ref{MaI}.


The goal of this paper is to study the global well-posedness of this system under strong background magnetic field along the horizontal direction, which is set to be $e_1=(1,0)$. We show that the steady solution $(u, h,\hat{u},\hat{h}, f) = (\textbf{0}, e_1,\textbf{0}, e_1,0)$ is nonlinearly stable under small initial perturbations. Such kinds of solutions will produce Alfv\'en  waves in literatures (see, for instance, \cite{Alfven42}).

\subsection{Review of related works}
There is a large body of mathematical literature on free boundary problems in fluid mechanics.
Here we are not going to give a complete review on all the related important references, but just
mention some of them which are closely related to our work.

For the free boundary problem in incompressible Euler equations,
early results on the local well-posedness of the system include those by Nalimov \cite{N}, 
Yosihara \cite{Yo},
Kano-Nishida \cite{KN}
 and Craig \cite{Craig}.
These results deal with small
perturbations of initial interfaces.
The  local well-posedness for general initial data was achieved by the breakthrough work of Wu \cite{Wu1,Wu2} (see also Lannes \cite{Lannes}).
She showed that in dimensions 2 and 3,
strong Rayleigh-Taylor sign condition
\begin{align*}
-\f{\p p}{\p n}\geq c_0 >0
\end{align*}
always holds for the infinite depth water wave problem, and the problem is locally well-posed in Sobolev spaces $H^s$
, $s \geq 4$, for given $H^s$ data with arbitrary size.
Since then, well-posedness
for water waves and free-boundary Euler equations with additional effects such as the surface tension, bottom and non-zero vorticity
has been addressed by several authors.
Beyer-Gunther \cite{BG} took into account the effects of surface tension,
and Lannes \cite{Lannes} treated the case of non-trivial bottom topography.
For the general free boundary problem in incompressible Euler equations, the first local well-posedness in 3D was obtained by Lindblad \cite{Lindblad05} for the case without surface tension (see Christodoulou and Lindblad \cite{CL_00} for the a priori estimates) and
by Coutand and Shkoller \cite{CS07} for the case with (and without) surface tension. We also refer to the results of Shatah and Zeng \cite{SZ1, SZ2, SZ3}
 and Zhang and Zhang
\cite{ZZ}.
For the development of singularities
in finite time of free boundary problems,
we refer to Castro, C\'{o}rdoba, Fefferman, Gancedo and G\'{o}mez-Serrano
\cite{CCFG1, CCFG2}, Fefferman, Ionescu and Lie \cite{Feff}, Coutand \cite{Coutand}, etc.
We also refer to Alazard, Burq and Zuily \cite{ABZ-D, ABZ}, Ambrose and Masmoudi \cite{Am05, Am07, Am09}, Wu \cite{Wu5}, Christianson-Hur-Staffilani \cite{CHS},
Coutand and Shkoller \cite{DS_10}, Gu and Lei \cite{Gu_2011, Gu_2016},
Masmoudi and Rousset \cite{MasRou},  Wang and Xin \cite{Wang_15}, etc. for more works on the free boundary problem of inviscid fluids.


For the irrotational inviscid fluids under gravity or with surface capillary forms, certain dispersive effects can be used to establish the long time existence for the small initial data. The global solution for the three dimensional gravity water wave was obtained by Germain, Masmoudi and Shatah \cite{GMS1}, Wu \cite{Wu4}.
 The three dimensional capillary water  waves was latter on obtained also by  Germain, Masmoudi and Shatah \cite{GMS2}.
For the more difficult two dimensional case, we refer to
Wu \cite{Wu3} on the almost global solutions and
Ionescu and Pusateri \cite{IP} and Alazard and Delort \cite{AD} for the global solutions (see also  Hunter-Ifrim-Tataru for another proof of the almost global solution \cite{HIT} and global solutions \cite{IT}).
For the two dimensional global solutions for capillary wave, we refer to  Ionescu and Pusateri \cite{IP2}.
We also refer to  Deng-Ionescu-Pausader-Pusateri \cite{DIPP}
on the global solutions of the gravity-capillary water-wave system in three dimensions. For the global solutions in three dimensional finite depth case, we refer to Wang \cite{W, W1}.

The vortex sheet for the incompressible Euler equations has vorticity concentrated as a measure (delta function)
in a set of codimension one, a curve or a surface for two dimensional flow or three dimensional flow, respectively.
We refer to \cite{MB} for more introductions.
The MHD current-vortex sheet problem describes a velocity and magnetic field tangential discontinuity in two different ideal MHD flows. The total pressure is balanced on the interface, while the normal part of the magnetic field vanishes.
Syrovatskii \cite{Sy} and Axford \cite{Ax} found  the
 necessary and sufficient condition for the planar (constant coefficients) incompressible
current-vortex sheet a long time ago.
The linear stability condition reads as follows
\begin{align}\label{stab-cond0}
&\big|[u]\big|^2\le 2\big(|h|^2+|\hat{h}|^2\big),\\\nonumber
&\big|[u]\times h\big|^2+\big|[u]\times \hat{h}\big|^2\le 2\big|h\times \hat{h}\big|^2.
\end{align}
Trakhinin \cite{Tra-in1} proved an {\it a priori} estimate without loss of derivative from data for the linearized system with variable coefficients.
Under the stability condition \eqref{stab-cond},
 Morando, Trakhinin and Trebeschi \cite{Mo1} proved an {\it a priori} estimate with a loss of three derivatives for the linearized system. Under a strong stability condition,
Trakhinin \cite{Tra-in1} proved an {\it a priori} estimate without loss of derivative from data for the linearized system with variable coefficients. Coulombel, Morando,  Secchi and Trebeschi \cite{CMST} proved an {\it a priori estimate} without loss of derivatives for the nonlinear current-vortex sheet problem under the following stability condition
\begin{align*} 
\textrm{max}\big( |[u]\times h|, |[u]\times \hat{h}| \big)<|h \times \hat{h}|.
\end{align*}
The nonlinear stability was solved by Sun, Wang and Zhang \cite{SWZ18} under the stability condition \eqref{stab-cond}.
It should be noted that the above mentioned results on the vortex sheet problem are all local in time.
Recently,  Wang and Xin \cite{Wang_20}   proved the global solutions plasma-vacuum interface problem for the incompressible inviscid resistive MHD
in a horizontally periodic slab impressed by a uniform non-horizontal magnetic field
and with surface tension on the free surface.

For more related studies on the (local in time)
stability of the compressible current-vortex sheets, we refer to Trakinin \cite{Tra1, Trak_09}, Chen and Wang \cite{Chen_08}, Wang and Yu \cite{WY}.
For the studies on the on the 
 stability of the plasma-vacuum interface problem,
we refer to Secchi and Trakhinin \cite{Secchi_14, Trak_10,Secchi_13}, Morando, Trakhinin and Trebeschi \cite{Mo_14}, Sun, Wang and Zhang \cite{SWZ19}, Gu \cite{Gu_17}, Hao and Luo \cite{Hao_13},
 Gu and Wang \cite{GW_16}.

Despite these progress, it is still unknown whether the free boundary problem of the ideal incompressible fluids for the general small initial data admits global solutions or not.
In this paper, we are going to address the global solutions for
the current-vortex sheet free boundary problem of the
two dimensional ideal incompressible MHD for small initial data with possibly large graph amplitude.

\subsection{Els\"{a}sser variables }
To study the global nonlinear stability of the MHD current-vortex sheet, we need to explore the propagation of Alfv\'{e}n waves. In terms of the  Els\"{a}sser variables
\begin{equation*}
	Z_{\pm}=u \pm  h .
\end{equation*}
The ideal incompressible MHD equations \eqref{A1} can be rewritten by
\begin{equation} \label{A2}
	\begin{cases}
		\p_t Z_{+} +Z_{-}\cdot\nabla Z_{+} + \nabla p =0, \\
		\p_t Z_{-} +Z_{+}\cdot\nabla Z_{-} + \nabla p =0, \\
		\div Z_{\pm}=0.
	\end{cases}
\end{equation}
When \eqref{A2} are linearized around the static solution with a constant magnetic field $e_1$, one obtains that the fluctuations $\La_\pm = Z_\pm \mp e_1$ propagate
along the $e_1$ magnetic field in opposite directions. This suggests that in the original
nonlinear problem, a strong enough magnetic field will reduce the nonlinear
interactions \cite{Kr} and inhibit formation of strong gradients. This effect was observed
in direct numerical simulations of equations \eqref{A2} with periodic boundary conditions
\cite{FPSM}.
These calculations showed that in the presence of a strong enough magnetic
field, solutions remain analytic in a strip whose width is bounded from below.
The rigorous mathematical verification of this effect is first proved by Bardos-Sulem-Sulem \cite{BSS} in $\bR^2$ and $\bR^3$, see also  \cite{HeXuYu, CL, WZ} in $\bR^3$ for the viscous case. The result \cite{CL} is also valid in in $\bR^2$.

We use the following  Els\"{a}sser variables in the upper and lower domains
\begin{equation*}
\begin{cases}
Z_{\pm}=u \pm  h \quad \textrm{in}\,\,\Om_f, \\
\hat{Z}_{\pm}=\hat{u} \pm  \hat{h} \quad \textrm{in}\,\,\hat{\Om}_f.
\end{cases}
\end{equation*}
In terms of the Els\"{a}sser variables, \eqref{A5}-\eqref{A6} can be rewritten by
\begin{equation} \label{A15}
\begin{cases}
\p_t Z_{+} +Z_{-}\cdot\nabla Z_{+} + \nabla p =0, \\
\p_t Z_{-} +Z_{+}\cdot\nabla Z_{-} + \nabla p =0, \\
\div Z_{\pm}=0,
\end{cases}
\textrm{in}\,\, \Omega_f,
\end{equation}
respectively,
\begin{equation} \label{A16}
\begin{cases}
\p_t \hat{Z}_{+} +\hat{Z}_{-}\cdot\nabla \hat{Z}_{+} + \nabla \hat{p} =0, \\
\p_t \hat{Z}_{-} +\hat{Z}_{+}\cdot\nabla \hat{Z}_{-} + \nabla \hat{p} =0, \\
\div \hat{Z}_{\pm}=0,
\end{cases}
\textrm{in}\,\, \hat{\Omega}_f.
\end{equation}
By \eqref{A10}, the moving equation of the free surface can be written as follows
\begin{equation}\label{A17}
\p_t f = Z_{\pm} \cdot N_f=\hat{Z}_{\pm} \cdot N_f\quad \text{on}\,\, \Ga_f .
\end{equation}

The global stability of the free surface will be achieved by imposing strong background magnetic fields.
Inspired by the stability conditions  \eqref{stab-cond}, \eqref{stab-cond0}, we consider the case where the background magnetic fields in the domains $\Om_f$ and $\hat{\Om}_f$  are both set to be
$$e_1=(1,0).$$
Introducing the perturbation variables:
\begin{equation*}
\La_{+}=Z_{+}-e_1,\, \La_{-}=Z_{-} +e_1\,\, \textrm{in}\,\,\Om_f,
\end{equation*}
and
\begin{equation*}
\hat{\La}_{+}=\hat{Z}_{+}-e_1,\, \hat{\La}_{-}=\hat{Z}_{-}+e_1\,\, \textrm{in}\,\,\hat{\Om}_f.
\end{equation*}
Then we write down the evolution equations for  $\La_{+}$, $\La_{-}$ and
$\hat{\La}_{+}$, $\hat{\La}_{-}$:
\begin{equation} \label{A18}
\begin{cases}
\p_t \La_{+} - \p_1 \La_{+}
+\La_{-}\cdot\nabla \La_{+} + \nabla p =0,\\
\p_t \La_{-} + \p_1 \La_{-}
+\La_{+}\cdot\nabla \La_{-} + \nabla p =0, \\
\div \La_{\pm}=0,
\end{cases}
 \textrm{in}\,\,\Om_f,
\end{equation}
respectively,
\begin{equation} \label{A19}
\begin{cases}
\p_t \hat{\La}_{+} - \p_1 \hat{\La}_{+}
+\hat{\La}_{-}\cdot\nabla \hat{\La}_{+} + \nabla \hat{p} =0,\\
\p_t \hat{\La}_{-} + \p_1 \hat{\La}_{-}
+\hat{\La}_{+}\cdot\nabla \hat{\La}_{-} + \nabla \hat{p} =0,\\
\div \hat{\La}_{\pm}=0.
\end{cases}
 \textrm{in}\,\,\hat{\Om}_f,
\end{equation}
The boundary conditions \eqref{A8} and \eqref{A10} become
\begin{align}\label{A20}
\begin{cases}
\La_{\pm}\cdot N_f =\hat{\La}_{\pm}\cdot N_f = (\p_t\pm\p_1) f\quad\text{on}\,\,\Ga_f, \\
\hat{\La}_{\pm}\cdot e_2=0
\,\, \text{on}\,\, \hat{\Ga},
\quad  \La_{\pm}\cdot e_2=0
\,\, \text{on}\,\, \Ga, \\
p=\hat{p} \quad \text{on}\,\, \Ga_f.
\end{cases}
\end{align}
The initial data \eqref{A12} becomes
\begin{align*}
\begin{cases}
\La_{\pm}(0,x)= \La_{\pm}^0(x) \,\,\,\, \text{in}\,\, \Om_{f_0},\\
\hat{\La}_{\pm}(0,x)= \hat{\La}_{\pm}^0(x) \,\,\,\, \text{in}\,\, \hat{\Om}_{f_0},\\
f(0,x_1)=f^0(x_1),\quad
\p_tf(0,x_1)=v^0(x_1),\,\, x_1\in\bR,
\end{cases}
\end{align*}
where $\La_{\pm}^0=u^{0}\pm (h^{0}-e_1)$, $\hat{\La}_{\pm}^0=\hat{u}^{0}\pm (\hat{h}^{0}-e_1)$.
It is clear from \eqref{A13} that the initial data satisfies
\begin{equation}\label{com}
\left\{\begin{aligned}
&\div \La_{\pm}^0 = 0  \,\, \text{ in } \Om_{f_0},\,\,  \\
&\div \hat{\La}_{\pm}^0 = 0  \,\, \text{ in } \hat{\Om}_{f_0},\,\,  \\
& v^0=(\La_{\pm}^0 \pm e_1)\cdot N_{f_0}=(\hat{\La}_{\pm}^0 \pm e_1)\cdot N_{f_0} \,\, \text{ on } \Ga_{f_0},\\
&\La_{\pm}^0\cdot e_2=0\,\, \text{ on }\Ga  ,\quad
\hat{\La}_{\pm}^0\cdot e_2=0\,\, \text{ on }\hat{\Ga}  .
\end{aligned}\right.
\end{equation}

We will prove that for sufficiently small initial disturbance,
the weighted Sobolev norm of $\Lambda_\pm$ and $f$
will be uniformly bounded for all the time.
The combination of these uniform estimates with the local existence theory will yield the global solutions of the free boundary problem for the MHD current-vortex sheet in two dimensions.

\subsection{Main result}

The weighted energy $E_s(t)$, the ghost weight energy $G_s(t)$ and other notations appeared in the following theorem will be explained in Section 2. Note that
\begin{align*}
	E_s(0) &= \sum_{+,-}\sum_{|\alpha|\leq 3}
	\big(\|\langle x_1\rangle^{5\mu} \nabla^\alpha\La_{\pm}^0\|^2_{L^2(\Om_{f_0})}
	+\|\langle x_1\rangle^{5\mu} \nabla^\alpha\hat{\La}_{\pm}^0\|^2_{L^2(\hat{\Om}_{f_0})}\big)\\
	&+\sum_{+,-}\sum_{|\alpha|\leq s}
	\big(\|\langle x_1\rangle^{2\mu} \nabla^\alpha\La_{\pm}^0\|^2_{L^2(\Om_{f_0})}
	+\|\langle x_1\rangle^{2\mu} \nabla^\alpha\hat{\La}_{\pm}^0\|^2_{L^2(\hat{\Om}_{f_0})}\big)\\
	&+\sum_{|a|\leq s-1} \big(\| \langle x_1 \rangle^{2\mu} \langle \p_1 \rangle^{\f12 }\p_1^a v^0\|^2_{L^2(\bR)}
	+\|\langle x_1 \rangle^{2\mu} \langle \p_1 \rangle^{\f12 } \p_1^{a+1}f^0\|^2_{L^2(\bR)}\big).
\end{align*}
We state the main result of this paper  as follows.
\begin{thm}\label{thm}
Let $s\geq 4$ be an integer, $1/2< \mu \leq 3/5$ and $0 < c_0 < \frac{1}{2}$. Assume $\La_{\pm}^0\in H^s(\Om_{f_0}),\, \hat{\La}_{\pm}^0\in H^s(\hat{\Om}_{f_0}) $,
$f^0 \in  \dot{H}^1(\BR) \cap \dot{H}^{s+\frac12}(\BR)\cap L^{\infty}(\BR)$,
$v^0\in H^{s-\f12}(\BR)$. Suppose that $\La_{\pm}^0$, $\hat{\La}_{\pm}^0$, $f^0$
 and $v^0$ satisfy the  conditions \eqref{com}
and
\begin{equation*}
	-1 + c_0 \leq f^0 \leq 1 - c_0.
\end{equation*}
There exists a positive constant $\epsilon_0$ which
depends only on $s,\ \mu$ and $c_0$ such that, if $E_s(0)
\leq \epsilon_0$, then the free boundary problem for the current-vortex sheet of the two dimensional incompressible MHD  \eqref{A18}-\eqref{A20}  with  the following initial data
\begin{align*}
&\La_{\pm}(0,x)= \La_{\pm}^0(x),\quad \hat{\La}_{\pm}(0,x)= \hat{\La}_{\pm}^0(x),\\
& f(0,x_1)=f^0(x_1),\quad
\p_tf(0,x_1)=v^0(x_1)
\end{align*}
has a unique global solution which satisfy
\begin{align*}
&E_s(t) + \int_0^t G_s(\tau)\d\tau\leq C_0E_s(0),\quad
\| f(t,\cdot)\|_{L^\infty}\leq \| f^0\|_{L^\infty} + C_1E_s^{\f12}(0)\leq 1-\f{c_0}{2},
\end{align*}
for some $C_0,C_1 > 1$ and all $t\geq 0$.
\end{thm}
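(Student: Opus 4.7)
The plan is a continuity (bootstrap) argument built on top of the local well-posedness theory. Assume on an interval $[0,T]$ that $E_s(t) + \int_0^t G_s(\tau)\,d\tau \leq 2C_0 E_s(0)$ and $\|f(t,\cdot)\|_{L^\infty} \leq 1-c_0/2$, then improve both bounds by a factor of $2$ once $E_s(0)$ is sufficiently small. The starting point is the perturbative Els\"asser formulation \eqref{A18}--\eqref{A20}: each $\La_\pm$ (resp.\ $\hat\La_\pm$) is transported along the opposite Alfv\'en characteristic $\p_t \mp \p_1$ by the companion field $\La_\mp$, while the interface obeys $\p_t f = \La_\pm \cdot N_f \pm \p_1 f$. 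Before estimating, I flatten the moving domains by a diffeomorphism of $\Om_f,\hat\Om_f$ onto the halves of the fixed slab built from a harmonic (or convolution-regularized) extension of $f$, so that derivatives are taken in the reference coordinates and the geometric remainders are controlled by weighted norms of $f$.

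The core energy estimate is organized at two regularity levels with two weights. At the low-regularity level $|\alpha|\leq 3$ I use the strong weight $\langle x_1\rangle^{5\mu}$, and at the high-regularity level $|\alpha|\leq s$ the weaker weight $\langle x_1\rangle^{2\mu}$. This split is forced by the structure of the problem: top-order derivatives cannot absorb the full weight in the commutators with $\La_\mp\cdot\nabla$, whereas the low-regularity weighted norm is what turns Sobolev embedding into a time-integrable pointwise bound on $\La_\pm$, which in turn furnishes the small coefficient multiplying the top-order commutator error. On each level I apply Alinhac's ghost-weight method: pairing $\nabla^\alpha\La_\pm$ with itself against a multiplier of schematic form $\langle x_1\rangle^{2\mu} e^{-q(x_1\pm t)}$ produces the basic energy together with the ghost-weight dissipation that is collected in $G_s$. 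Because the Alfv\'en waves propagate in the $\pm\p_1$ directions only, the ghost weight is one-dimensional along $x_1$; the range $1/2<\mu\leq 3/5$ is precisely the window in which the resulting weighted dispersion integrals along the interface remain summable while still delivering pointwise decay via Sobolev embedding.

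For the interface $f$ the crucial input is the second-order hyperbolic equation derived in \cite{SWZ18}: the contact discontinuity $[p]=0$ combined with the two kinematic identities in \eqref{A10} forces, linearized around the background $h=\hat h=e_1$, a wave equation of schematic form $\p_t^2 f - 2\p_1^2 f = \cN(f,\La_\pm,\hat\La_\pm)$ whose nonlinear right-hand side is at least quadratic in the perturbation. Differentiating by $\p_1^a$ and pairing with $\langle\p_1\rangle\p_1^a\p_t f$ against the weight $\langle x_1\rangle^{2\mu}$ yields the interface pieces of both $E_s$ and $G_s$. The pressure is recovered from the elliptic transmission problem $\Delta p = -\mathrm{tr}(\nabla\La_-\cdot\nabla\La_+)$ in $\Om_f$, its analogue in $\hat\Om_f$, the jump condition $[p]=0$ on $\Ga_f$, and Neumann conditions on $\Ga,\hat\Ga$; standard Dirichlet-to-Neumann analysis on the flattened slab then trades interior regularity for interface regularity.

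The main obstacle is closing the interior-boundary coupling. The interior ghost-weight estimates generate boundary trace integrals on $\Ga_f$ that, taken in isolation, are not absorbed by $G_s$; conversely, the forcing $\cN$ in the wave equation for $f$ requires tangential control of $\La_\pm|_{\Ga_f}$, which can only come from the interior norm. My plan is to close this loop by exploiting the Els\"asser boundary conditions $\La_\pm\cdot N_f = (\p_t\pm\p_1)f$: the trace contributions from $\La_+$ and $\La_-$ (and likewise $\hat\La_\pm$) combine through the wave equation into an exact time-derivative of the interface energy plus a remainder whose bad direction coincides with one of the ghost-weight dissipations. Together with the routine absorption of all truly nonlinear terms by bounds of the form $E_s^{1/2}(t)\,G_s(t)$ for small data, this closes the bootstrap and yields the claimed global bound. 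The $L^\infty$-bound on $f$ is then a consequence of a weighted trace inequality applied to the controlled interface norm, which keeps the interface uniformly away from the rigid walls.
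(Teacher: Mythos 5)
Your high-level skeleton (bootstrap on $E_s+\int G_s$, Alinhac ghost weights adapted to the Alfv\'en characteristics, a two-tier energy with weights of strength $5\mu$ at order $3$ and $2\mu$ at order $s$, the second-order interface equation of Sun--Wang--Zhang, elliptic recovery of the pressure) matches the paper. But two of your load-bearing steps are wrong as stated. First, the power weights must be the \emph{moving} weights $\langle x_1\pm t\rangle^{2\mu}$ attached to $\La_\pm$ respectively, not static $\langle x_1\rangle^{2\mu}$: the entire mechanism for producing terms of the form $E_s^{1/2}G_s$ is that in any quadratic interaction the $\La_\mp$ factor carries the weight $\langle x_1\mp t\rangle^{2\mu}$ complementary to the ghost denominator $\langle x_1\mp t\rangle^{\mu}$ of the $\La_\pm$ factors; with a static weight $\|\langle x_1\rangle^{2\mu}\La_\mp\|_{L^\infty}$ is merely bounded, not matched to any dissipation, and nothing is time-integrable. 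Second, the linearized interface equation is $\p_t^2 f-\p_1^2 f=\cdots$ (speed $1$), not $\p_t^2 f-2\p_1^2 f$: with both backgrounds equal to $e_1$ one has $Z_+^1Z_-^1+\hat Z_+^1\hat Z_-^1=-2+O(\La)$, so the coefficient of $\p_1^2 f$ is exactly $1$. This is not cosmetic. The coincidence of the interface characteristic speed with the bulk Alfv\'en speed is precisely what makes $(\p_t\pm\p_1)f=\ud{\La_\pm}\cdot N_f$ simultaneously the characteristic derivatives of the interface wave operator and the traces supplied by the kinematic conditions; your final paragraph's claimed cancellation between boundary traces and the interface energy uses exactly this identification, and with speed $\sqrt2$ the good unknowns $(\p_t\pm\sqrt2\,\p_1)f$ would no longer match the boundary data and the coupling would not close.

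Two further steps are missing ideas rather than imprecision. The pressure estimate is not ``standard DN analysis'': the weighted gradient estimate of $p$ produces a commutator term $\int\nabla(\langle w^+\rangle^\mu\langle w^-\rangle^\nu)\,p\cdot\langle w^+\rangle^\mu\langle w^-\rangle^\nu\nabla p$ that requires a weighted $L^2$ bound on $p$ \emph{itself}, which is unavailable until one proves the solvability identity $\int_{-1}^{f}p\,\dx_2+\int_{f}^{1}\hat p\,\dx_2+\int_{-1}^{f}\La_-^1\La_+^1\,\dx_2+\int_{f}^{1}\hat\La_-^1\hat\La_+^1\,\dx_2=0$ and the resulting weighted Poincar\'e inequality; moreover the interface estimate at top order needs $\langle\p_1\rangle^{1/2}\p_1^{s-1}\big(N_f\cdot(\ud{\nabla p}+\ud{\nabla\hat p})\big)$ in weighted $L^2$, which is half a derivative more than the trace of the $H^s$ interior bound on $p$ delivers; one must decompose $p=p^1+p^2$ into a harmonic part and a part with homogeneous boundary data and use the Dirichlet--Neumann operator to recover that derivative. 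Finally, the $L^\infty$ bound on $f$ cannot follow from ``a weighted trace inequality applied to the controlled interface norm'': $E^f_{s+1/2}$ contains only derivatives of $f$, and the theorem explicitly permits $\|f\|_{L^\infty}$ of size $1-c_0$, far larger than $E_s^{1/2}(0)$. The correct argument integrates $(\p_t\pm\p_1)f=\ud{\La_\pm}\cdot N_f$ along the characteristic of $\p_t\pm\p_1$, along which the transversal weight $\langle x_1\pm t\rangle^{-2\mu}$ is integrable in time precisely because $2\mu>1$; this is where the uniform-in-time control of the amplitude, and hence the non-touching of the rigid walls, actually comes from.
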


\begin{remark}
The local well-posedness of \eqref{A5}-\eqref{A10} can be carried out as in \cite{SWZ18}.
For the convenience of readers,  we will sketch its proof in the last section.
\end{remark}

\begin{remark}
If $h=0$, the incompressible MHD current-vortex sheet
reduces to classical vortex sheet for Euler equations.
In this case, the evolution of the two dimensional vortex sheet can be described by Birkhoff-Rott (BR) equation.
The linear stability theory, by considering the perturbation of a flat constant solution, showed that the Fourier mode has a exponential growth component referred to as the Kelvin-Helmholtz instability.
For more details, please refer to Chapter 9 in the book \cite{MB}.
For the two dimensional incompressible MHD current-vortex sheet problem, the nonzero background magnetic field is crucial for the global existence of solution.
The nonzero magnetic field acts to damp the exponential
growth in the high modes,  transforming the problem into one that is globally nonlinearly stable.
This is achieved through the propagation of one-dimensional characteristics where the null conditions are satisfied.
\end{remark}

\begin{remark}
The weighted energy $E_s$ is small for all time, implying that the derivatives of $f$ are also small.
The amplitude of the free surface $\| f \|_{L^\infty}$ does not necessarily need to be small.
For example, $f$ can be bounded and grow slowly from $-1+\frac{c_0}{2}$ to $1-\frac{c_0}{2}$ in the $x_1$ direction.
The condition $\|f^0\|_{L^\infty}<1-c_0$ is imposed to ensure that the free surface will not intersect with the top and bottom fixed boundaries.
\end{remark}

\subsection{Main ideas and sketch of the proof} \label{MaI}
For the remarkable works in proving the long time existence of solutions in water wave problems,
 two main methods are prevalent in the literature:
the vector field method and the method of space-time resonances.
We refer to Wu \cite{Wu3, Wu4}, Alazard, Delort \cite{AD}, Hunter, Ifrim, Tataru \cite{HIT, IT}, and others for the (variant) vector field method, and to
Germain, Shatah, Masmoudi \cite{GMS1, GMS2},  Ionescu, Pusateri \cite{IP, IP2}, and others for the method of space-time resonances.
In many of these works,  the irrotational condition is crucial as one can fully describe the evolution of the free boundary in terms of the free boundary and the trace of the velocity potential on the free boundary.
By the expansion of the Dirichlet-Neumann operator, it reduces to the study of the nonlocal quasilinear dispersive equations.
However, when the vorticity is nontrivial, one has to understand the long-time behavior of the flows within the moving domain, which seems to require 
entirely new strategies.
To obtain the global solutions for the two dimensional MHD current-vortex sheet free boundary problem,
one will inevitably encounter the nontrivial nature of rotationality.
It relies on the strong magnetic field,  which leads to the propagation of Alfv\'{e}n waves \cite{Alfven42}.
We also refer to Bardos-Sulem-Sulem \cite{BSS}, Cai-Lei \cite{CL}, He-Xu-Yu \cite{HeXuYu}, Wei-Zhang \cite{WZ} on global small solutions of ideal incompressible MHD in the whole space $\BR^n$ or in a slab.

\subsubsection{Alfv\'en waves and the null conditions}
The second order evolution equation for the free surface derived by Sun-Wang-Zhang \cite{SWZ18} plays a crucial role in our current work.
In terms of the Els\"{a}sser variables, the evolution equation of the free boundary reads as follows (see Lemma \ref{Lem-sf-eq}):
\begin{align*}
\p^2_t f=
&-\f12 (\ud{Z_{+}^1}+\ud{Z_{-}^1}+\ud{\hat{Z}_{+}^1}+\ud{\hat{Z}_{-}^1})
\p_1\p_t f
-\f12(\ud{Z_{+}^1} \ud{Z_{-}^1}
+\ud{\hat{Z}_{+}^1}\ud{\hat{Z}_{-}^1})\p_1^2 f \nonumber\\
&-\f12 N_f\cdot\ud{\nabla \hat{p}}
-\f12 N_f\cdot\ud{\nabla p} \,.
\end{align*}
Here $\ud{g}$ stands for the trace of $g$ on $\Gamma_f.$
$Z_{\pm}^1$, $\hat{Z}_{\pm}^1$ represent the first components of $Z_{\pm}=(Z_{\pm}^1,Z_{\pm}^2),$ $\hat{Z}_{\pm}=(\hat{Z}_{\pm}^1,\hat{Z}_{\pm}^2),$ respectively.
Under the strong background magnetic field assumption along
$e_1$ direction, the equation of the free surface 
 turns out to be one dimensional nonlocal quasilinear wave equation
(see Section 2.3):
\begin{align} \label{A26}
\p_t^2 f-\p_1^2 f
=&-\f12\big(\ud{\La_{+}^1}+\ud{\hat{\La}_{+}^1} \big) \p_1 (\p_t-  \p_1) f
 -\f12\big(\ud{\La_{-}^1}+\ud{\hat{\La}_{-}^1} \big) \p_1 (\p_t+  \p_1) f\\\nonumber
&-\f12\big(\ud{\La_{+}^1} \ud{\La_{-}^1} +\ud{\hat{\La}_{+}^1} \ud{\hat{\La}_{-}^1} \big)\p_1^2 f
-\f12(N_f\cdot\ud{\nabla p} +N_f\cdot\ud{\nabla \hat{p}}).
\end{align}
From the linearized level, $ (\p_t+\p_1) f$ and $(\p_t-\p_1) f$  propagate along the background magnetic field 
 in opposite directions. Moreover, due to the kinetic boundary condition $\eqref{A20}_1$ and $\eqref{A20}_2$, roughly, $ (\p_t+\p_1) f\sim \Lambda_+^2\sim \hat{\La}_+^2$,
$ (\p_t-\p_1) f\sim \La_-^2\sim\hat{\La}_-^2$.
On the other hand, one observes from the linearized equations \eqref{A18}, \eqref{A19} that $\La_{+}$ ($\hat{\La}_{+}$) and $\La_{-}$ ($\hat{\La}_-$)  propagate
along the background magnetic field in opposite directions.
For the nonlinearities of the system both inside the domain and on the free surface (see \eqref{A18}, \eqref{A19}, \eqref{A26}), they  are always the combination of the interaction of
the left Alfv\'en waves $\Lambda_+, (\p_t+\p_1)f$ and the right Alfv\'en waves $\Lambda_-, (\p_t-\p_1)f$.
Moreover,   the pressure (see \eqref{B22}) holds similar structure: it is some combination of the nonlocal nonlinear interaction of left and right Alfv\'en waves modulo the pseudo-differential operator.
In other words, the systems satisfy the null conditions.
These intrinsic structures make it reasonable to expect the nonlinearities will finally be negligible in the perturbation regime.

\subsubsection{Weighted energy estimates}

In order to capture the inherent structure of the system,
we introduce suitable weight
functions to obtain the directional decay of solutions and apply the ghost weight energy
method by Alinhac \cite{Alinhac00} to perform energy estimates with weights.
This will yield the fact that energies of good unknowns with appropriate weights are always integrable in time.

Due to the nontrivial nature of fluid vorticity,
the previous strategy where the problem reduced to the free boundary is not available in the current setting.
We need to explore the dynamics inside the domain, and examine the interplay between inside the domain and the free boundary.
The weighted energy estimate is consisted of three parts: the weighted energy of the free surface,
the weighted energy of the tangential derivative part, and the vorticity part.
With the estimate for the free surface, the tangential derivative part, and the vorticity and current,
we can retrieve the estimate for the velocity and magnetic field by solving the weighted div-curl system.

Precisely, the weighted energy estimate consists of the following five parts:
\begin{itemize}
\item The highest order weighted energy estimate of the free surface
\item The estimate of the amplitude of the free surface graph
\item The highest order weighted energy estimate of the vorticity and current
\item The lower order weighted energy estimate of the tangential derivative part with increased power of weights
\item The lower order weighted energy estimate of the vorticity and current with increased power of weights
\end{itemize}
We refer to \eqref{AA8}-\eqref{AA12} in Section 2.4 for the 
expression of the five parts energy estimate.
The weighted energy estimate of the vorticity part is standard.
The estimate of the amplitude of the free surface is carried out by using the characteristic integral method.
In the sequel, we sketch the weighted energy estimate of the free surface, as well as the weighted tangential derivative estimate of the plasmas inside the domain.
We will explain main ideas and issues associated with these estimates.

\textbf{Weighted energy estimate of the free surface: strong null structure and the variant weights}

The weighted energy energy estimate of the free surface 
involves the derivatives of the highest order $(s\geq 4)$.
We take the weighted energy estimate for $(\p_t-\p_1)f$ as an example.
Define the weight functions $w^\pm(t,x_1)=x_1\pm t$.
Let $s\geq 4$ be an integer, $0\leq a\leq s-1$, $a\in \bZ$.
Applying $\langle \p_1\rangle^{\f12}\p_1^a$ to \eqref{A26}.
Then taking the $L^2$ inner product of the consequent equations with
 $2\langle w^-\rangle^{4\mu}(\p_t-\p_1)\langle \p_1\rangle^{\f12}\p_1^af\cdot e^{q^-}$ where
$e^{q^-}=e^{q(-w^+)}$, $q(\theta)=\int_0^\theta \langle\tau\rangle^{-2\mu} \d\tau$ ($\f12<\mu\leq \f35$),
one has (see Section 6.2)
\begin{align}\label{A27}
& \f{\d}{\dt}
\int_{\BR}\big| \langle w^-\rangle^{2\mu}(\p_t-\p_1) \langle \p_1\rangle^{\f12}\p_1^a f
\big|^2 e^{q^-}\d x_1
+2\int_{\BR}\f{\big| \langle w^-\rangle^{2\mu}(\p_t-\p_1) \langle \p_1\rangle^{\f12}\p_1^a f
\big|^2}{\langle w^+\rangle^{2\mu}} e^{q^-}\d x_1\\\nonumber
&=2\int_{\BR}(\p_t+\p_1) \langle w^-\rangle^{2\mu} (\p_t-\p_1) \langle \p_1\rangle^{\f12}\p_1^a f \cdot \langle w^-\rangle^{2\mu}(\p_t-\p_1) \langle \p_1\rangle^{\f12}\p_1^a f e^{q^-}\d x_1\\\nonumber
&=-\int_{\BR} \langle w^-\rangle^{4\mu} (\p_t-\p_1) \langle \p_1\rangle^{\f12}\p_1^a f\cdot
\Big((\ud{\La^1_{+}}\ud{\La^1_{-}}
+\ud{\hat{\La}^1_{+}}\ud{\hat{\La}^1_{-}})\p_1^2 \langle \p_1\rangle^{\f12}\p_1^a f \\\nonumber
&\qquad\quad+  (\ud{\La^1_{+}}+\ud{\hat{\La}^1_{+}}) \p_1(\p_t-\p_1)
 \langle \p_1\rangle^{\f12}\p_1^a f
+(\ud{\La^1_{-}}+\ud{\hat{\La}^1_{-}})\p_1 (\p_t+\p_1)
\langle \p_1\rangle^{\f12}\p_1^a f \Big) e^{q^-} \d x_1 \nonumber\\
&\quad-\int_{\BR} \langle w^-\rangle^{4\mu} (\p_t-\p_1)\langle \p_1\rangle^{\f12}\p_1^a f\cdot
 \Big( [\langle \p_1\rangle^{\f12}\p_1^a,\ud{\La^1_{+}}\ud{\La^1_{-}}
+\ud{\hat{\La}^1_{+}}\ud{\hat{\La}^1_{-}}] \p_1^2f  \nonumber\\
&\qquad\quad+  [\langle \p_1\rangle^{\f12}\p_1^a,\ud{\La^1_{+}}+\ud{\hat{\La}^1_{+}}]
  \p_1(\p_t-\p_1) f  +[\langle \p_1\rangle^{\f12}\p_1^a,\ud{\La^1_{-}}+\ud{\hat{\La}^1_{-}}]
\p_1(\p_t+\p_1) f \Big) e^{q^-} \d x_1 \nonumber\\\nonumber
&\quad-\int_{\BR} \langle w^-\rangle^{4\mu} (\p_t-\p_1)\langle \p_1\rangle^{\f12}\p_1^a f\cdot
\langle \p_1\rangle^{\f12}\p_1^a\Big( N_f\cdot(\ud{\nabla p} +\ud{\nabla \hat{p} })\Big) e^{q^-} \d x_1.
\end{align}
For the last three lines of \eqref{A27} corresponding to the weighted commutator terms and the pressure terms, respectively, we will explain them in Section 1.4.3. For the third and forth lines of \eqref{A27} containing the highest-order derivatives,
 they may lead to derivative loss. We need to apply integration by parts to save one derivative.
Here, we pick the tricky term (the integration procedure of other terms is standard)
\begin{align*}
&-\int_{\BR} \langle w^-\rangle^{4\mu} (\p_t-\p_1)\langle \p_1\rangle^{\f12}\p_1^a f\cdot
  (\ud{\La^1_{-}}+\ud{\hat{\La}^1_{-}})\cdot \p_1(\p_t+\p_1)
 \langle \p_1\rangle^{\f12}\p_1^a f e^{q^-} \d x_1.
\end{align*}
For this term, by using integration by parts, it is equal to
\begin{align}\label{A29}
&-\f{\d}{\dt}\int_{\BR} \langle w^-\rangle^{4\mu}  (\p_t-\p_1)\langle \p_1\rangle^{\f12}\p_1^a  f \big(\ud{\La_{-}^1}
+\ud{\hat{\La}_{-}^1}) \cdot \p_1  \langle \p_1 \rangle^{\f12}\p_1^a  f e^{q^-} \d x_1 \\
&+\int_{\BR} \langle w^-\rangle^{4\mu}  (\p_t-\p_1)\langle \p_1\rangle^{\f12}\p_1^a  f\cdot (\p_t+ \p_1)\big(\ud{\La_{-}^1}+\ud{\hat{\La}_{-}^1})
 \p_1  \langle \p_1\rangle^{\f12}\p_1^a  f e^{q^-} \d x_1  \nonumber\\
&+\int_{\BR} \langle w^-\rangle^{4\mu} (\p_t-  \p_1)\langle \p_1\rangle^{\f12}\p_1^a  f \big(\ud{\La_{-}^1}
+\ud{\hat{\La}_{-}^1}) \cdot   \p_1  \langle \p_1\rangle^{\f12}\p_1^af(\p_t+\p_1) e^{q^-}\d x_1 \nonumber\\\nonumber
&+\int_{\BR} \langle w^-\rangle^{4\mu} (\p_t-  \p_1)(\p_t+\p_1)\langle \p_1\rangle^{\f12}\p_1^a  f \big(\ud{\La_{-}^1}
+\ud{\hat{\La}_{-}^1}) \cdot   \p_1  \langle \p_1\rangle^{\f12}\p_1^af e^{q^-} \d x_1 .
\end{align}
For the last line of \eqref{A29}, we need to plug in  $(\p_t^2-\p_1^2)\langle \p_1\rangle^{\f12}\p_1^a  f$ by using
 the equation of the free surface \eqref{A26} to identify the symmetry to reduce one derivative.
 Here it reflects the ``\textbf{strong null structure}'' which has an effect similar to the normal form.
 We mention that the effect that some of the quadratic terms becoming cubic terms will not help to achieve the global solutions since the unknowns of the solutions do not decay in time.

Finally, the third and forth lines of of \eqref{A27} will become:
\begin{align*}
& \f14 \f{\d}{\dt}\int_{\BR}
|\p_1 \langle \p_1\rangle^{\f12}\p_1^a f|^2
e^{q^-}\cdot\langle w^-\rangle^{4\mu} (\ud{\La^1_{+}}-\ud{\hat{\La}^1_{+}}) (\ud{\La^1_{-}}-\ud{\hat{\La}^1_{-}}) \d x_1\\
&-\f14\int_{\BR} \big| \p_1 \langle \p_1\rangle^{\f12}\p_1^a f  \big|^2 e^{q^-}\cdot (\p_t-  \p_1)
\Big(\langle w^-\rangle^{4\mu} \big(\ud{\La_{+}^1}-\ud{\hat{\La}_{+}^1} \big)\big(\ud{\La_{-}^1}-\ud{\hat{\La}_{-}^1}) \Big)
  \d x_1\\
  & +\textrm{nonlinear terms  can be estimated}.
\end{align*}
Here, in order to employ the ghost weight energy, the weight applied on $\p^{\leq 1} \La_-$ and on $\p^{\leq 1} \hat{\La}_-$ should be $\langle w^-\rangle^{5\mu}$. However, $\p^{\leq 1} \La_-$ and  $\p^{\leq 1} \hat{\La}_-$ can only carry the weight  $\langle w^-\rangle^{2\mu}$.

\textbf{Variant weight choices for the free surfaces and the lower order derivative energy estimate}

In order to  estimate
\begin{align*}
\sum_{+,-}\sum_{|a|\leq 1}
\| \langle w^\pm\rangle^{5\mu} \big(|\p_1^a \ud{\La_{\pm}^1}|+|\p_1^a\ud{\hat{\La}_{\pm}^1}|\big) \|_{L^\infty}.
\end{align*}
We introduce the lower order weighted energy with increased power of weights:
\begin{align*}
E_3^b (t)=
 \sum_{+,-}\sum_{|\alpha|\leq 3}
\big(\| \langle w^\pm \rangle^{5\mu}\nabla^\alpha \La_{\pm} \|^2_{L^2(\Om_f)}
+\| \langle w^\pm \rangle^{5\mu}\nabla^\alpha \hat{\La}_{\pm}\|^2_{L^2(\hat{\Om}_f)}\big).
\end{align*}

\textbf{The lower order weighted energy estimate of the tangential derivative part with increased power of weights inside the domain}

In the weighted energy estimate of $\La_\pm$ and $\hat{\La}_\pm$ inside the domain
$\Om_f$ and $\hat{\Om}_f$, to use the boundary conditions, we need to apply the tangential derivative estimate and  vorticity and current estimate. This is different from the case of whole space or in a strip as in \cite{BSS, HeXuYu, CL, WZ} where the traditional derivatives can be applied.

For the lower-order $(3-\textrm{order})$ tangential weighted energy estimate of $\La_\pm$ and $\hat{\La}_\pm$ inside the domain $\Om_f$ and $\hat{\Om}_f$,
the estimates are conducted by applying increased power of weights $\langle w^\pm\rangle^{5\mu}$.
At the same time, we also need to use the ghost weight method.
Due to the non-vanishing boundary conditions, some of the estimates will reduce to the free boundary.
In the sequel, we will sketch the weighted energy estimate for $\La^+$ and $\hat{\La}^+$ as an example.

We denote by $\p_\tau$ the tangential derivative where the definition is given in \eqref{tangD-}.
There holds $\ud{\p_\tau g}= \ud{\p_1 g}+\p_1f \ud{\p_2 g}=\p_1 \ud{g}.$
For $0\leq m\leq 3$, applying derivative $\p_\tau^m$ to $\eqref{A18}_1$, $\eqref{A19}_1$. 
Denote $e^{q^+}=e^{q(w^-)}$ where $q(\theta)=\int_0^\theta \langle\tau\rangle^{-2\mu} d\tau$.
Then by using the weights  $\langle w^+ \rangle^{5\mu} $ and the boundary conditions, we derive that
(see Section 7)
\begin{align}\label{A30}
&\f12\f{\d}{\dt} \Big( \int_{\Om_f}
| \langle w^+ \rangle^{5\mu} \p_\tau^m \La_{+}|^2 e^{q^+} \dx
+\int_{\hat{\Om}_f}
| \langle w^+ \rangle^{5\mu} \p_\tau^m \hat{\La}_{+}|^2 e^{q^+} \dx \Big)\\\nonumber
&+\int_{\Om_f}
 \f{|\langle w^+ \rangle^{5\mu}\p_\tau^m \La_{+} |^2}{ \langle w^- \rangle^{2\mu} }
 e^{q^+}\dx
+\int_{\hat{\Om}_f} \f{|\langle w^+ \rangle^{5\mu}\p_\tau^m \hat{\La}_{+} |^2}{ \langle w^- \rangle^{2\mu} }
 e^{q^+}   \dx \\\nonumber
&=-\int_{\Om_f}
\langle w^+ \rangle^{10\mu}\p_\tau^m \La_{+} e^{q^+} \cdot
 \nabla \p_\tau^m p\dx
 -\int_{\hat{\Om}_f}
\langle w^+ \rangle^{10\mu}\p_\tau^m \hat{\La}_{+} e^{q^+} \cdot
 \nabla \p_\tau^m \hat{p}\dx \\\nonumber
 &\quad+\textrm{the other nonlinear terms}\\\nonumber
&= -\int_{\Ga_f} e^{q^+}\langle w^+\rangle^{10\mu} \p_\tau^m p n_f\cdot \p_\tau^m \La_{+}\d\sigma(x)
+\int_{\Ga_f} e^{q^+}\langle w^+\rangle^{10\mu} \p_\tau^m \hat{p} n_f\cdot \p_\tau^m \hat{\Lambda}_{+}\d\sigma(x)+...
\end{align}
Here we omit the other nonlinear terms since the estimate is standard. We will focus on the pressure estimate on the free boundary.
For the first two terms in the last line of \eqref{A30}, due to the boundary condition $\eqref{A20}_1$,
we derive
\begin{align*}
& -\int_{\Ga_f} e^{q^+}\langle w^+\rangle^{10\mu} \p_\tau^m p n_f\cdot \p_\tau^m \La_{+}\d\sigma(x)
+\int_{\Ga_f} e^{q^+}\langle w^+\rangle^{10\mu} \p_\tau^m \hat{p} n_f\cdot \p_\tau^m \hat{\Lambda}_{+}\d\sigma(x)
\\
&= \int_{\BR} e^{q^+}\langle w^+\rangle^{10\mu} \p_1^m \ud{\hat{p}} N_f \cdot
\p_1^m\big(\ud{  \hat{\Lambda}_{+}}-\ud{  \La_{+}}\big) \d x_1  \\
&= \int_{\BR} e^{q^+}\langle w^+\rangle^{10\mu}  \p_1^m \ud{\hat{p}} \sum_{0<a\leq m}C_m^a\p_1^{a+1} f \cdot \p_1^{m-a}
 \big(\ud{ \hat{\La}_{+}^1}- \ud{\La_{+}^1}\big) \d x_1.
\end{align*}
Note that the term $\p_1^{a+1} f$ contains higher order derivatives
when $a=m=3$. Fortunately, we can distribute the weight $\langle w^+\rangle^{10\mu}$ onto the pressure and onto $\La_{+}$, $\hat{\La}_{+}$
but not on $f$. While $\p_1^{a+1} f$ will be bounded by the higher order energy of the free surface.

\subsubsection{Further technical issues}

\begin{itemize}
\item Low frequency issue for the weighted estimate of the pressure
	
In the weighted gradient estimate of the pressure, we will encounter the low-frequency issue: the deficiency of the estimate of the pressure itself with weights.
More precisely, we collect the elliptic equation and the boundary conditions for the pressure  as follows (see Section 2.3):
\begin{equation*}
\begin{cases}
\Delta p =-\nabla \cdot (\La_{-} \cdot \nabla \La_{+}) \quad \textrm{in}\,\, \Om_f,\\
\Delta \hat{p} =-\nabla \cdot (\hat{\La}_{-} \cdot \nabla \hat{\La}_{+})\quad \textrm{in}\,\, \hat{\Om}_f,\\
p =\hat{p}\quad \textrm{on} \,\, \Gamma_f,\\
N_f\cdot\nabla p-N_f\cdot\nabla \hat{p}= -\big(\ud{\La_{+}^1} \ud{\La_{-}^1}- \ud{\hat{\La}_{+}^1} \ud{\hat{\La}_{-}^1} \big) \p_1^2 f \\
\qquad-\big(\ud{\La_{+}^1}-\ud{\hat{\La}_{+}^1} \big) \p_1 (\p_t-  \p_1) f
  -\big(\ud{\La_{-}^1}-\ud{\hat{\La}_{-}^1}\big) \p_1 (\p_t+  \p_1) f
\quad\text{on}\quad\Ga_f ,\\
\p_2 p=0\,\,   \textrm{on} \,\,  \Ga,
\quad \p_2 \hat{p}=0\,\,   \textrm{on} \,\,  \hat{\Ga}.
\end{cases}
\end{equation*}
The weighted energy estimate of the pressure is sketched as follows
\begin{align}\label{A34}
&\| \langle w^+\rangle^\mu \langle w^-\rangle^\nu \nabla p\|^2_{L^2(\Om_f)}+
\| \langle w^+\rangle^\mu \langle w^-\rangle^\nu \nabla \hat{p}\|^2_{L^2(\hat{\Om}_f)} \\\nonumber
&=2\int_{\Om_f} \nabla(\langle w^+\rangle^\mu \langle w^-\rangle^\nu)p \cdot  \langle w^+\rangle^\mu \langle w^-\rangle^\nu \nabla p \dx \\\nonumber
&\quad+2\int_{\hat{\Om}_f} \nabla(\langle w^+\rangle^\mu \langle w^-\rangle^\nu)\hat{p} \cdot  \langle w^+\rangle^\mu \langle w^-\rangle^\nu \nabla \hat{p} +...,\,\,\mu,\nu\in\bR.
\end{align}
To estimate the first two terms on the right hand side of \eqref{A34} which correspond to the commutators,
it is imperative to have control over the pressure with weights: $\| \langle w^+\rangle^\mu \langle w^-\rangle^\nu p\|_{L^2(\Om_f)}$ and
$\| \langle w^+\rangle^\mu \langle w^-\rangle^\nu\hat{p}\|_{L^2(\hat{\Om}_f)}$.

Fortunately, we derive
an identity for the pressure (see Section 4.1):
\begin{align*}
	&\f{\d}{\dx_1} \Big(\int_{-1}^f p\, \dx_2 +\int_f^1 \hat{p}\, \dx_2
	+ \int_{-1}^f \La_{-}^1 \La_{+}^1\, \dx_2+\int_f^1 \hat{\La}_{-}^1 \hat{\La}_{+}^1\, \dx_2
	\Big)=0\,.
\end{align*}
Due to the assumption that $p,\hat{p}, \La_\pm, \hat{\La}_\pm$ have appropriate decay as $x_1$ tends to infinity, thus
\begin{align}\label{A32}
	&  \int_{-1}^f p\, \dx_2 +\int_f^1 \hat{p}\, \dx_2
	+ \int_{-1}^f \La_{-}^1 \La_{+}^1\, \dx_2+\int_f^1 \hat{\La}_{-}^1 \hat{\La}_{+}^1\, \dx_2=0\,.
\end{align}
Similar to the solvability condition for the Neumann problem for Poisson equation in a bounded domain,
the above \eqref{A32} is exactly the solvability condition for the boundary value problem for the pressure.
With \eqref{A32}, we can apply the Poincar\'e inequality to estimate the pressure itself with weights.

\item  The weighted Sobolev inequalities, the weighted trace theorem, the weighted chain rule, the weighted commutator estimate and the weighted div-curl decomposition

The order of the derivatives on the free surface is a half order higher than in the plasma region.
Consequently, the fractional derivatives are extensively used in the current paper.  In the application, we also need apply the weight functions. Therefore, we require the commutator estimate of the fractional derivatives and weight functions.

A key point to achieve global solutions is the $L^1$ integrability in time,
 which forces $\mu>\f12$ with the weights $\langle w^\pm \rangle^{2\mu}$.
 Unfortunately, the weighted commutator estimate for Muckenhoupt weights in literature
 (for instance, see \cite{CN}) requires the weights belong to the $\mathcal{A}_p$ class.
This only permits $0\leq \mu< 1/4 \,(p=2)$ in the current setting. This is far from enough for the purpose of achieving global solutions.
Hence we present a whole set of the weighted estimate: the weighted Sobolev inequalities, the weighted trace theorem, the weighted chain rule, the weighted commutator estimate and so on.
We use the inhomogeneous fractional derivatives which resort to the singular integral with Bessel potentials.
A delicate analysis is needed to avoid the singularity of the integral kernel.
At the same time, the structure of the weight functions is kept in order to make use of the null structure.
We refer to Section 3 for the details.

%

\item Weight estimate of $N_f\cdot(\ud{\nabla p} +\ud{\nabla \hat{p} })$ in \eqref{A27}

Due to the dependence of regularity of the free surface,
we would obtain $s-$order weighted energy estimate of the pressure inside the domain (see Lemma \ref{lemPre}).
In the weighted estimate of the free surface, as seen in \eqref{A27},
this is not enough to obtain the control of the $s-\f12$ order weighted estimate of $N_f\cdot(\ud{\nabla p} +\ud{\nabla \hat{p} })$.
To address this issue, we resort to the classical decomposition for Poisson equation: $p=p^1+p^2$, $\hat{p}=\hat{p}^1+\hat{p}^2$, where $p^1$ and $\hat{p}^1$ are harmonic functions with inhomogeneous boundary conditions, $p^2$ and $\hat{p}^2$ satisfy the Poisson equations with homogeneous boundary conditions.
Thus by using the known results for the Dirichlet-Neumann (DN) operator, the commutator estimate and the inherent structure,
 we could save one order derivative and attain $s-\f12$ order weighted estimate of  $N_f\cdot(\ud{\nabla p} +\ud{\nabla \hat{p} })$.
\end{itemize}

\subsection{Further discussions}
Now we discuss some other related interesting problems.
\begin{itemize}

\item
The background magnetic field in $\Om_f$ and $\hat{\Om}_f$ are the same and both taken to be $e_1$.
The argument also works for the case of reversed background magnetic field:
taking background magnetic field  in $\Om_f$ to be $e_1$,
and in $\hat{\Omega}_f$ to be $-e_1$, and vice versa.
 The proof is exactly the same to the argument in this paper.
For the problem where the
background magnetic field in $\Om_f$ and $\hat{\Om}_f$ are taken to be $ke_1$ and $le_1$
 with non-identical strengths:
$|k|\neq |l|$ ($|k|>0$, $|l|>0$),
it is interesting to investigate whether a global solution exists or not.

\item
The global solutions obtained in this paper work in the two dimensional setting.
Whether the global nonlinear stability result holds for the general three dimensional case is not clear.
Under the collinear background magnetic field setting in the current article, the local in time stability result is also not clear
for the corresponding three dimensional problem.
Actually, this setting seems not to satisfy the stability condition \eqref{stab-cond} in three dimensional case.
For the corresponding long time nonlinear stability problem under the non-collinearity condition,
the method is the current paper does not work since the system may not be one dimensional wave type equations.

\item
For the two dimensional plasma-vacuum free boundary problem,
we will study the global small solutions in a forthcoming paper.
\end{itemize}

The remaining part of this paper is organized as follows: In
Section 2, we will derive the second order evolution equation of the free surface.
After that,  the Alfv\'en waves and the null condition will be intuitively shown.
Then we will introduce the various energy notations.
Section 3 is devoted to the weighted Sobolev inequalities, weighted trace theorem, weighted commutator estimate and so on.
Section 4 and Section 5 are devoted to the weighted estimate of the pressure.
In Section 6, we treat the weighted energy estimate of the free surface.
The energy estimate inside the domain will be shown in Section 7 and Section 8.
In Section 9, we will sketch the local well-posedness to
the two dimensional ideal incompressible MHD current-vortex sheet.
Section 10 is an appendix, in which we include the estimate for the Dirichlet-Neumann operator.

\section{Equations for the free surface, null condition and various energies}

In this section, we present the second-order evolution equation for the free surface in Lemma \ref{Lem-sf-eq}. This equation was derived by Sun, Wang, and Zhang \cite{SWZ18}. The key point is that this equation turns out to be a strictly hyperbolic system under some stability condition. To prove the global existence of the global solutions, we formulate the equation in terms of the $Z_\pm$ variables instead of the velocity field and the magnetic field. After collecting the evolution equations for $\Lambda_\pm$ and $\hat{\Lambda}_\pm$ in the plasma region, the evolution equation for the free surface, the elliptic equation for the pressure, and their boundary conditions, we briefly look into the structure of the systems and explain that they satisfy the null condition. Then we introduce various energy notations and briefly explain the five parts energy estimates.

We denote $\p=\nabla=(\p_1,\p_2)$, $\p^\alpha=\p_1^{\alpha_1}\p_2^{\alpha_2}$ for $\alpha\in\mathbb{N}^2$. For $1\leq p\leq +\infty$, we use $\|g \|_{L^p(\Omega_f)}$ and $\|g\|_{L^p(\hat{\Omega}_f)}$ to denote the $L^p$ norm in $\Omega_f$ and in $\hat{\Omega}_f$ respectively, $\|g \|_{L^p(\mathbb{R})}$ is abbreviated as $\|g \|_{L^p}$. We use Japanese brackets to represent $\langle a\rangle=(1+a^2)^{\frac{1}{2}}$. For $m\in\mathbb{N}$, denote $|\p^m h|=\sum_{|\alpha|=m} |\p^\alpha h|$, $|\p^{\leq m} h|=\sum_{|\alpha|\leq m} |\p^\alpha h|$. Similarly, $\|W\p^{\leq m}g\|_{L^p}=\sum_{|\alpha|\leq m}\|W\p^\alpha g\|_{L^p}$. Throughout the whole paper, the summation convention over repeated indices is always used.

\subsection{Second order equations of the free surface}
In this subsection, we will derive the second order evolution equation
for the moving surface.

Throughout the whole paper, for function $g:\Om_f\rightarrow\BR$  or $g:\hat{\Om}_f\rightarrow\BR$, we use $\ud{g}$ to stand for the trace of $g$ on $\Gamma_f.$
The free surface moves along the normal component of the velocity field.
Combined with the boundary condition, we have
\begin{align}\label{B1}
	\p_t f
	=u\cdot N_f|_{\Ga_f}=\hat{u}\cdot N_f|_{\Ga_f}
	= Z_{\pm}\cdot N_f|_{\Ga_f}=\hat{Z}_{\pm}\cdot N_f|_{\Ga_f} ,
\end{align}
where $N_f$ is the outer normal vector field of the graph:
\begin{equation*}
	N_f=-e_1 \p_1 f+e_2.
\end{equation*}
Note that $f$ depends only on $(t,x_1)$. Hence
\begin{equation*}
	\nabla f=(\p_1,\p_2) f=
	(\p_1f,0).
\end{equation*}

Now we present the second-order evolution equation for the free surface.
\begin{lem}\label{Lem-sf-eq}
For the free surface, there holds
\begin{align}\label{B10}
\p^2_t f=
&-\f12 (\ud{Z_{+}^1}+\ud{Z_{-}^1}+\ud{\hat{Z}_{+}^1}+\ud{\hat{Z}_{-}^1})
\p_1\p_t f
-\f12(\ud{Z_{+}^1} \ud{Z_{-}^1}
+\ud{\hat{Z}_{+}^1}\ud{\hat{Z}_{-}^1})\p_1^2 f \nonumber\\
&-\f12 N_f\cdot\ud{\nabla \hat{p}}
-\f12 N_f\cdot\ud{\nabla p} \,.
\end{align}
\end{lem}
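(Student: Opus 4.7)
The plan is to differentiate the kinematic boundary condition \eqref{B1} in time twice and substitute the Els\"asser equations \eqref{A15}, \eqref{A16}. The key technical tool is the trace chain rule: for a smooth function $g$ defined in $\Omega_f$, $\p_t \ud{g}=\ud{\p_t g}+\p_t f\,\ud{\p_2 g}$, and analogously $\p_1\ud{g}=\ud{\p_1 g}+\p_1 f\,\ud{\p_2 g}$. Starting from $\p_t f=\ud{Z_{+}}\cdot N_f$, I would compute
\begin{align*}
\p_t^2 f=\big(\ud{\p_t Z_{+}}+\p_t f\,\ud{\p_2 Z_{+}}\big)\cdot N_f+\ud{Z_{+}}\cdot\p_t N_f,
\end{align*}
and substitute $\ud{\p_t Z_{+}}=-\ud{Z_{-}\cdot\nabla Z_{+}}-\ud{\nabla p}$ from $\eqref{A15}_1$.

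Next, I would use the other kinematic identity $\ud{Z_{-}}\cdot N_f=\p_t f$, i.e.\ $\ud{Z_{-}^2}=\p_t f+\p_1 f\,\ud{Z_{-}^1}$, to collapse the convective term:
\begin{align*}
\ud{Z_{-}\cdot\nabla Z_{+}}=\ud{Z_{-}^1}\ud{\p_1 Z_{+}}+\ud{Z_{-}^2}\ud{\p_2 Z_{+}}=\ud{Z_{-}^1}\,\p_1\ud{Z_{+}}+\p_t f\,\ud{\p_2 Z_{+}},
\end{align*}
so the awkward $\p_t f\,\ud{\p_2 Z_{+}}$ contribution cancels. Since $N_f=(-\p_1 f,1)$, one has $\p_t N_f=(-\p_1\p_t f,0)$ and $\p_1 N_f=(-\p_1^2 f,0)$, which gives the commutator identity $\p_1\ud{Z_{+}}\cdot N_f=\p_1\p_t f+\ud{Z_{+}^1}\,\p_1^2 f$. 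Combined with $\ud{Z_{+}}\cdot\p_t N_f=-\ud{Z_{+}^1}\,\p_1\p_t f$, this yields
\begin{align*}
\p_t^2 f=-(\ud{Z_{+}^1}+\ud{Z_{-}^1})\p_1\p_t f-\ud{Z_{+}^1}\ud{Z_{-}^1}\p_1^2 f-N_f\cdot\ud{\nabla p}.
\end{align*}
Repeating the same computation starting instead from $\p_t f=\ud{Z_{-}}\cdot N_f$ (using $\eqref{A15}_2$ and the role of $\ud{Z_{+}}$ reversed) produces an identical right-hand side, so one obtains the same equation; the symmetric form already captures both. Performing the analogous derivation on the upper side from $\p_t f=\ud{\hat Z_{\pm}}\cdot N_f$ with \eqref{A16} gives
\begin{align*}
\p_t^2 f=-(\ud{\hat Z_{+}^1}+\ud{\hat Z_{-}^1})\p_1\p_t f-\ud{\hat Z_{+}^1}\ud{\hat Z_{-}^1}\p_1^2 f-N_f\cdot\ud{\nabla \hat p}.
\end{align*}
Averaging the two expressions (half from the lower domain, half from the upper domain) produces \eqref{B10}.

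The main obstacle I expect is purely bookkeeping: carefully tracking the two separate chain-rule corrections (one from $\p_t$ and one implicit in $\p_1 \ud{Z_{\pm}}\cdot N_f$), and recognizing that the kinematic condition on the \emph{opposite} Els\"asser variable is precisely what is needed to rewrite the convective term as a pure tangential derivative of the \emph{same} variable. Once that cancellation is spotted, the remaining algebra is direct, and averaging lower/upper contributions is what turns the one-sided pressure $\ud{\nabla p}$ (or $\ud{\nabla \hat p}$) into the symmetric $\tfrac12(\ud{\nabla p}+\ud{\nabla\hat p})$ appearing in the statement.
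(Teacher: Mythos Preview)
Your proposal is correct and follows essentially the same approach as the paper: derive the one-sided identity \eqref{B8} from the lower domain by time-differentiating the kinematic condition, substituting the Els\"asser equation, and using $\ud{Z_-}\cdot N_f=\p_t f$ to collapse the convective term, then obtain \eqref{B9} analogously from the upper domain and average. The paper's writeup organizes the algebra slightly differently (first simplifying $(\p_t Z_+ +\p_t f\,\p_2 Z_+)|_{\Ga_f}$ before dotting with $N_f$), but the underlying computation and the key cancellation you identify are identical.
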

\begin{proof}
We will show
\begin{align}\label{B8}
\p_t^2 f&=- (\ud{Z_{+}^1}+\ud{Z_{-}^1})
\p_1\p_t f-\ud{Z_{+}^1} \ud{Z_{-}^1} \p_1^2 f- N_f\cdot\ud{\nabla p}\,
\end{align}
and
\begin{align}\label{B9}
\p_t^2 f=- (\ud{\hat{Z}_{+}^1}+\ud{\hat{Z}_{-}^1})
\p_1\p_t f-\ud{\hat{Z}_{+}^1} \ud{\hat{Z}_{-}^1} \p_1^2 f- N_f\cdot\ud{\nabla \hat{p}}.
\end{align}
Then taking the average of \eqref{B8} and \eqref{B9} yields \eqref{B10}.

Recalling that the MHD system in $\Om_f$ can be written as follows:
\begin{equation} \label{B3}
\begin{cases}
\p_t Z_{+} +Z_{-}^1\p_1 Z_{+}
+Z_{-}^2\p_2 Z_{+} + \nabla p =0, \\
\p_t Z_{-} +Z_{+}^1\p_1 Z_{-}
+Z_{+}^2\p_2 Z_{-} + \nabla p =0,
\end{cases}
\textrm{in}\,\,\Omega_f.
\end{equation}
Employing the expression \eqref{B1} for $\p_tf$, $N_f$ and $\eqref{B3}_2$, we have
\begin{align*}
&(\p_t Z_{+} +\p_tf\p_2 Z_{+})\big|_{x_2=f(t,x_1)}\\
&=\big(-Z_{-}\cdot\nabla Z_{+} -\nabla p + Z_{-}\cdot(-\nabla f+e_2) \p_2 Z_{+}\big)\big|_{x_2=f(t,x_1)}\nonumber\\\nonumber
&=\big(-Z_{-}^1 \p_1 Z_{+}
-Z_{-}\cdot \nabla f \p_2 Z_{+}
-\nabla p\big)\big|_{x_2=f(t,x_1)}\nonumber\\\nonumber
&=\big(-Z_{-}^1 \p_1 \ud{Z_{+}} -\nabla p \big)\big|_{x_2=f(t,x_1)} \\
&=\big(-Z_{-} \cdot\nabla \ud{Z_{+}} -\nabla p \big)\big|_{x_2=f(t,x_1)} .
\end{align*}
Note
\begin{align*}
&\nabla N_f=\nabla (e_2-e_1\p_1f)= -e_1\otimes \nabla\p_1f.
\end{align*}
Consequently,
we deduce by integration by parts to obtain that
\begin{align*} 
\p_t^2 f=&\big( (\p_t Z_{+} +\p_2 Z_{+}\p_tf)\cdot N_f+Z_{+}\cdot\p_t N_f \big)\big|_{x_2=f(t,x_1)}\nonumber \\
=&\big(-(Z_{-} \cdot\nabla \ud{Z_{+}}) \cdot N_f -\nabla p\cdot N_f   +Z_{+}\cdot\p_t N_f \big)\big|_{x_2=f(t,x_1)}\nonumber\\
=&-\ud{Z_{-}}\cdot\nabla \p_t f-(\ud{Z_{-}}\otimes \ud{Z_{+}}):(\nabla\otimes \nabla) f
-N_f\cdot\ud{\nabla p}-\ud{Z_{+}}\cdot\nabla \p_t f \nonumber\\
=&-\ud{Z_{-}^1}\p_1\p_t f  -\ud{Z_{+}^1}\p_1\p_t f -N_f\cdot\ud{\nabla p}
-\ud{Z_{-}^1}\ud{Z_{+}^1}\p_1^2 f,
\end{align*}
where $(\ud{Z_{-}}\otimes \ud{Z_{+}}):(\nabla\otimes \nabla) f
=\ud{Z_{-}^1} \ud{Z_{+}^1}\p_1^2f $.
This yields \eqref{B8}.
 \eqref{B9} can be derived in similar way.
This ends the proof of the lemma.
\end{proof}

\subsection{Equations restricted to the free surface}
In this subsection, we derive the evolution equations for $\ud{Z_{\pm}}$ and $\ud{\hat{Z}_{\pm}}$
for the purpose of the future uses.
Note that the equation $\p_t f=\ud{Z_{\pm}}\cdot N_f$ can be organized as follows:
\begin{align*}
(\p_t+\ud{Z_{\pm}^1}\p_1)f=\ud{Z_{\pm}^2}.
\end{align*}
Hence restricting to the free surface in the trace sense, we deduce by \eqref{A15} and \eqref{A16} that
\begin{align*}
(\p_t +\ud{Z_{\pm}^1}\p_1)\ud{Z_{\mp}^1}
&=\ud{(\p_t +{Z_{\pm}^1}\p_1)Z_{\mp}^1}
+\ud{\p_2 Z_{\mp}^1}  (\p_t +\ud{Z_{\pm}^1}\p_1)f \\
&=\ud{(\p_t +{Z_{\pm}^1}\p_1 +{Z_{\pm}^2}\p_2 )Z_{\mp}^1}
=  -\ud{\p_1 p}.
\end{align*}
Similar calculation yields
\begin{equation*}
(\p_t +\ud{Z_{\pm}^1}\p_1)\ud{Z_{\mp}}
=-\ud{\nabla p},\\
\end{equation*}
and
\begin{equation*}
(\p_t +\ud{\hat{Z}_{\pm}^1}\p_1)\ud{\hat{Z}_{\mp}}
=-\ud{\nabla \hat{p}}.
\end{equation*}
Employing the perturbation variables $\La_\pm$, the above equations become
\begin{equation}\label{B20}
\begin{cases}
(\p_t \pm\p_1)  \ud{\La_{\mp}}    = -\ud{\La_{\pm}^1}\p_1\ud{\La_{\mp}}-\ud{\nabla p},\\
(\p_t \pm\p_1)\ud{\hat{\La}_{\mp}}= -\ud{\hat{\La}_{\pm}^1}\p_1\ud{\hat{\La}_{\mp}}-\ud{\nabla\hat{p}}.
\end{cases}
\end{equation}

%
%
%

\subsection{Perturbation variables, Alfv\'en waves, equations for the pressure, null conditions}
Recalling the equations for $\La_\pm$, $\hat{\La}_\pm$ and their boundary conditions as follows: 
\begin{equation} \label{BB3}
\begin{cases}
\p_t \La_{+} - \p_1  \La_{+}
+\La_{-}\cdot\nabla \La_{+} + \nabla p =0,\\
\p_t \La_{-} + \p_1 \La_{-}
+\La_{+}\cdot\nabla \La_{-} + \nabla p =0, \\
\div \La_{\pm}=0,
\end{cases}
 \textrm{in}\,\,\Om_f,
\end{equation}
and
\begin{equation} \label{BB4}
\begin{cases}
\p_t \hat{\La}_{+} - \p_1 \hat{\La}_{+}
+\hat{\La}_{-}\cdot\nabla \hat{\La}_{+} + \nabla \hat{p} =0,\\
\p_t \hat{\La}_{-} + \p_1 \hat{\La}_{-}
+\hat{\La}_{+}\cdot\nabla \hat{\La}_{-} + \nabla \hat{p} =0,\\
\div \hat{\La}_{\pm}=0,
\end{cases}
 \textrm{in}\,\,\hat{\Om}_f,
\end{equation}
and
\begin{equation}\label{BB5}
\begin{cases}
\p_tf\pm\p_1 f= \ud{\La_{\pm}} \cdot N_f= \ud{\hat{\La}_{\pm}} \cdot N_f\quad \textrm{on}\,\, \Ga_f,\\
p=\hat{p} \quad \textrm{on}\,\, \Ga_f,\\
\La_{\pm}\cdot n=0 \quad \textrm{on}\,\, \Ga,\\
\hat{\La}_{\pm}\cdot n=0 \quad \textrm{on}\,\, \hat{\Ga}.
\end{cases}
\end{equation}
In terms of the perturbation variables $\La_\pm$, $\hat{\La}_\pm$,
 the equation of the free surface \eqref{B10} can be written as follows
\begin{align}\label{BB9}
\p_t^2 f-\p_1^2 f=
&-\f12(\ud{\La_{+}^1}+\ud{\La_{-}^1} +\ud{\hat{\La}_{+}^1}+\ud{\hat{\La}_{-}^1})\p_1 \p_t f +\f12(-  \ud{\La_{-}^1}+\ud{\La_{+}^1}
-\ud{\hat{\La}_{-}^1}+\ud{\hat{\La}_{+}^1}  )\p_1^2 f\\\nonumber
&-\f12(\ud{\La_{+}^1} \ud{\La_{-}^1}+\ud{ \hat{\La}_{+}^1} \ud{ \hat{\La}_{-}^1})\p_1^2 f
 -\f12(N_f\cdot\ud{\nabla p} +N_f\cdot\ud{\nabla \hat{p}})\,.
\end{align}
To show the null structure, the above \eqref{BB9} can be organized as follows:
\begin{align}\label{BB10}
\p_t^2 f-\p_1^2 f
=&-\f12\big(\ud{\La_{+}^1}+\ud{\hat{\La}_{+}^1} \big) \p_1 (\p_t-  \p_1) f
 -\f12\big(\ud{\La_{-}^1}+\ud{\hat{\La}_{-}^1} \big) \p_1 (\p_t+  \p_1) f\\\nonumber
&-\f12\big(\ud{\La_{+}^1} \ud{\La_{-}^1} +\ud{\hat{\La}_{+}^1} \ud{\hat{\La}_{-}^1} \big)\p_1^2 f
-\f12(N_f\cdot\ud{\nabla p} +N_f\cdot\ud{\nabla \hat{p}}).
\end{align}

Now we collect the equation and the boundary condition for the pressure.
The estimate of the pressure plays an important role in this paper.
It follows from \eqref{B8} and \eqref{B9} that
\begin{align} \label{BB11}
N_f\cdot\ud{\nabla p}-N_f\cdot\ud{\nabla \hat{p}}
&= -\big(\ud{\La_{+}^1}-\ud{\hat{\La}_{+}^1} \big) \p_1 (\p_t-  \p_1) f
  -\big(\ud{\La_{-}^1}-\ud{\hat{\La}_{-}^1}\big) \p_1 (\p_t+  \p_1) f \\\nonumber
&\quad-\big(\ud{\La_{+}^1} \ud{\La_{-}^1}- \ud{\hat{\La}_{+}^1} \ud{\hat{\La}_{-}^1} \big) \p_1^2 f
\,\quad \text{on}\,\, \Ga_f.
\end{align}
For the boundary condition of the pressure on $\Ga$ and $\hat{\Ga}$, restricting $\eqref{A18}_1$ on $\Ga$ and  restricting $\eqref{A19}_1$ on $\hat{\Ga}$ in the trace sense, respectively, there holds the following Neumann boundary condition for the pressure
\begin{align*}
\p_2 p=0\,\, \textrm{on} \,\,  \Ga,
\quad
\p_2 \hat{p}=0 \,\,  \textrm{on} \,\,  \hat{\Ga}.
\end{align*}
On the other hand,
taking the divergence to the first equation of (\ref{BB3}) and (\ref{BB4}) respectively, we obtain
\begin{align*}
&\Delta p=-\nabla \cdot (\La_{-} \cdot \nabla\La_{+})
\quad \textrm{in}\,\,\Om_f,\\
&\Delta \hat{p}=-\nabla \cdot (\hat{\La}_{-} \cdot \nabla \hat{\La}_{+})
\quad \textrm{in}\,\,\hat{\Om}_f.
\end{align*}
Note $\eqref{A20}_3$, hence we collect the above elliptic equations and the boundary conditions for the pressure as follows
\begin{equation}\label{B22}
\begin{cases}
&\Delta p =-\nabla \cdot (\La_{-} \cdot \nabla \La_{+}) \quad \textrm{in}\,\, \Om_f,\\
&\Delta \hat{p} =-\nabla \cdot (\hat{\La}_{-} \cdot \nabla \hat{\La}_{+})\quad \textrm{in}\,\, \hat{\Om}_f,\\
&p =\hat{p}\quad \textrm{on} \,\, \Gamma_f,\\
&N_f\cdot\nabla p-N_f\cdot\nabla \hat{p}= -\big(\ud{\La_{+}^1} \ud{\La_{-}^1}- \ud{\hat{\La}_{+}^1} \ud{\hat{\La}_{-}^1} \big) \p_1^2 f \\
&\qquad-\big(\ud{\La_{+}^1}-\ud{\hat{\La}_{+}^1} \big) \p_1 (\p_t-  \p_1) f
  -\big(\ud{\La_{-}^1}-\ud{\hat{\La}_{-}^1}\big) \p_1 (\p_t+  \p_1) f
\quad\text{on}\,\,\Ga_f ,\\
&\p_2 p=0 \,\, \textrm{on} \,\,  \Ga,\quad \p_2 \hat{p}=0 \,\, \textrm{on} \,\,  \hat{\Ga}.
\end{cases}
\end{equation}
It can be roughly seen that the pressure consists of three parts: inside $\Om_f$, inside $\hat{\Om}_f$ and on $\Ga_f$. The estimate of the pressure is complicated. We refer to Section 4 and 5.

From the linearized equations of \eqref{BB3}, \eqref{BB4}, and \eqref{BB10}, it can be seen that $\Lambda_{+}$, $\hat{\Lambda}_{+}$, $(\partial_t+\partial_1) f$ and $\Lambda_{-}$, $\hat{\Lambda}_{-}$, $(\partial_t-\partial_1) f$ propagate along the background magnetic field in opposite directions. For the nonlinear terms of \eqref{BB3}, \eqref{BB4}, and \eqref{BB10}, they are always the combination of the interactions of the left Alfv\'en waves variables $\Lambda_{+}$, $\hat{\Lambda}_{+}$, $(\partial_t+\partial_1) f$ and the right Alfv\'en waves $\Lambda_{-}$, $\hat{\Lambda}_{-}$, $(\partial_t-\partial_1) f$. On the other hand, we see from \eqref{B22} that the pressure terms have the similar structure modulo the pseudo-differential operator. In other words, the equations \eqref{BB3}, \eqref{BB4} inside the domain and \eqref{BB10} of the free surface satisfy the same type of null conditions.

\subsection{Energy notations and five parts energy estimate}

Define the weight functions $w^\pm(t,x_1)=x_1\pm t$.
Employing the weight functions, for positive integer $0\leq s\leq 3$ and $1/2<\mu \leq 3/5$, we define the lower order weighted energy of the plasma as follows:
\begin{align*}
E^b_s (t)=&\sum_{+,-}\sum_{ |\alpha|\leq s}
 \big(\|\langle w^\pm \rangle^{5\mu}\nabla^\alpha \La_{\pm}(t,\cdot)\|^2_{L^2(\Om_f)}
+ \| \langle w^\pm \rangle^{5\mu}\nabla^\alpha \hat{\La}_{\pm}(t,\cdot)\|^2_{L^2(\hat{\Om}_f)} \big)\,.
\end{align*}
The lower order ghost weight energy is defined as follows:
\begin{align*}
G_s^b (t)=&\sum_{+,-} \sum_{|\alpha|\leq s}
\Big(\big\|
\frac{\langle w^\pm \rangle^{5\mu}\nabla^\alpha \La_{\pm}}{ \langle w^\mp \rangle^{\mu} } (t,\cdot) \big\|^2_{L^2(\Om_f)}
+\big\|\frac{\langle w^\pm \rangle^{5\mu}\nabla^\alpha \hat{\La}_{\pm}}{ \langle w^\mp \rangle^{\mu} }(t,\cdot) \big\|^2_{L^2(\hat{\Om}_f)}\Big).
\end{align*}
For positive integer $s\geq 4$ and $1/2<\mu \leq 3/5$, we define the weighted energy of the plasma as follows:
\begin{align*}
E^b_s (t)
=E^b_3 (t) +\sum_{+,-}\sum_{ |\alpha|\leq s} \big( \| \langle w^\pm \rangle^{2\mu}\nabla^\alpha \La_{\pm}(t,\cdot)\|^2_{L^2(\Om_f)}
 +\| \langle w^\pm \rangle^{2\mu}\nabla^\alpha \hat{\La}_{\pm}(t,\cdot)\|^2_{L^2(\hat{\Om}_f)} \big)\,.
\end{align*}
The ghost weight energy is defined as follows:
\begin{align*}
G_s^b (t)
=G_3^b (t) +\sum_{+,-} \sum_{|\alpha|\leq s}
\Big(\big\|
\frac{\langle w^\pm \rangle^{2\mu}\nabla^\alpha \La_{\pm}}{ \langle w^\mp \rangle^{\mu} } (t,\cdot)\big\|^2_{L^2(\Om_f)}
+\big\|\frac{\langle w^\pm \rangle^{2\mu}\nabla^\alpha \hat{\La}_{\pm}}{ \langle w^\mp \rangle^{\mu} } (t,\cdot) \big\|^2_{L^2(\hat{\Om}_f)}\Big) \,.
\end{align*}
For positive integer $s\geq 1$,
the weighted energy of the free surface is denoted by:
\begin{align*}
&E^f_{s+\f12} (t)=  \sum_{+,-}\sum_{|a|\leq s-1} \|\langle w^\pm\rangle^{2\mu}(\p_t \pm\p_1)\langle\p_1\rangle^{\f12}\p_1^a f (t,\cdot)\|^2_{L^2(\bR)} \,.
\end{align*}
The ghost weight energy is defined as follows:
\begin{align*}
&G^f_{s+\f12} (t)= \sum_{+,-} \sum_{|a|\leq s-1} \big\|
 \f{\langle w^\pm\rangle^{2\mu}(\p_t \pm \p_1)\langle\p_1\rangle^{\f12}\p_1^a f }
{\langle w^\mp\rangle^{\mu}} (t,\cdot)\big\|^2_{L^2(\bR)}.
\end{align*}
Thus the total weighted energy and the weighted ghost weight energy are defined as follows. For integer $s\geq 3$, we define
\begin{align*}
E_s(t)= E_s^b(t)+E_{s+\f12}^f(t),\\
G_s(t)= G_s^b(t)+G_{s+\f12}^f(t).
\end{align*}

The local well-posedness of the system will be presented in the last section.
The local existence of solutions is obtained by the method of successive approximations.
The unknowns will be in Sobolev spaces without weights.
The estimate is treated for the successive of the unknowns, and then we prove the contraction of the iteration map.
The limiting unknowns $(f,\La_\pm,\hat{\La}_\pm,p,\hat{p})$ will satisfy the current-vortex sheet problem \eqref{A18}-\eqref{A20}.
Precisely, for $s\geq 4$, there exists $T>0$ such that the system  admits a unique solution $(f,\La_\pm,\hat{\La}_\pm)$
satisfying
\begin{align*}
&\p_t f,\p_1 f\in L^{\infty}([0,T),H^{s-\f12}(\bR)),\quad f\in L^{\infty}([0,T),L^\infty(\bR)),\\
& \La_\pm\in L^{\infty}([0,T),H^{s}(\Om_f)),\quad \hat{\La}_\pm\in L^{\infty}([0,T),H^{s}(\hat{\Om}_f)).
\end{align*}
Once the local existence of the solutions is obtained.
We only need treat the current-vortex sheet problem \eqref{A18}-\eqref{A20} instead of the sequence of the unknowns.

By the local in time stability result in the last section, the maximal possible time that the solutions exist  depends on the size
of the Sobolev norm of the initial data.
To extend the local solution to be a global one, we will show \textit{a priori} estimate for \eqref{A18}-\eqref{A20} uniformly in time.
The \textit{a priori} estimate will be conducted in Sobolev spaces with weights. Precisely,
we will show that there hold the following weighted energy estimates uniformly in time for $s\geq 4$:
\begin{align}
\label{AA1}
& E_s(t) +\int_0^t G_s (\tau) \d\tau \leq C E_s(0) +C\int_0^t E_s^{\f 12}(\tau) G_s(\tau)\d\tau,\\
\label{AA2}
&\| f(t,\cdot)\|_{L^\infty} \leq \| f(0,\cdot)\|_{L^\infty}
+  \f12 M \sup_{0\leq \tau \leq t} (1+C(E_s^f(\tau))^{\f12}) (E_s^b(\tau))^{\f12}
,
\end{align}
where $M=\int_{\bR} \langle z  \rangle^{-2\mu} \d z$, the constant $C$ in \eqref{AA1} polynomially depends on $\sup_{0\leq \tau\leq t}E_s(\tau)$ and $\sup_{0\leq \tau\leq t}\| f(\tau,\cdot)\|_{L^\infty}$.
By the weighted div-curl Lemma \ref{lemDCw1} and Lemma \ref{lemDCw2},
the weighted energy will be controlled by the weighted energy of the tangential derivative part, the vorticity part, and the divergence part,
or it can be bounded by the weighted energy of the free surface, the vorticity part, and the divergence part.
Hence there hold
\begin{align*}
&     E_3^b(t)\lesssim E_3^{b,\tau}(t)+E_3^{b,\omega}(t),
\quad G_3^b(t)\lesssim G_3^{b,\tau}(t)+G_3^{b,\omega}(t), \\
&     E_s (t) \lesssim E_s^{b,\omega}(t)+E_{s+\f12}^f (t),
\quad G_s (t) \lesssim G_s^{b,\omega}(t)+G_{s+\f12}^f (t),\quad s\geq 4.
\end{align*}
The definition for  $E_3^{b,\tau},\,G_3^{b,\tau},\,E_3^{b,\omega},\,G_3^{b,\omega}$
and $E_s^{b,\omega},\, G_s^{b,\omega}$ are given in \eqref{F01}, \eqref{F02},
\eqref{F113}, \eqref{F114},  \eqref{F121}, \eqref{F122}.
Thus in order to show \eqref{AA1}, \eqref{AA2}, we will show
\begin{align}
\label{AA8}
\sup_{0\leq \tau\leq t}E_{s+{\f12}}^f(\tau) + \int_0^t G_{s+{\f12}}^f(\tau)\d\tau
&\lesssim E_{s+{\f12}}^f(0)+\int_0^t (1+E_s^{\f12}+E_s )E_s^{\f12} G_s (\tau)\d\tau,\\
\label{AA9}
\sup_{0\leq \bar{\tau}\leq t} E_3^{b,\tau}(\bar{\tau}) +\int_0^t G_3^{b,\tau} (\bar{\tau}) \d \bar{\tau}
&\lesssim E_3^{b,\tau}(0) +\int_0^t E_4^{\f 12}(\bar{\tau}) G_3^b(\bar{\tau})\d \bar{\tau},\\
\label{AA10}
\sup_{0\leq \bar{\tau}\leq t} E_3^{b,\omega}(\bar{\tau}) +\int_0^t G_3^{b,\omega} (\bar{\tau}) \d \bar{\tau}
&\lesssim E_3^{b,\omega}(0) +\int_0^t E_3^{\f 12}(\bar{\tau}) G_3^b(\bar{\tau})\d \bar{\tau},\\
\label{AA11}
\sup_{0\leq \bar{\tau}\leq t} E_s^{b,\omega}(\bar{\tau}) +\int_0^t G_s^{b,\omega} (\bar{\tau}) \d \bar{\tau}
&\lesssim  E_s^{b,\omega}(0) +\int_0^t E_s^{\f 12}(\bar{\tau}) G_s^b(\bar{\tau})\d \bar{\tau},\\
\label{AA12}
\| f(t,\cdot)\|_{L^\infty} \leq \| f(0,\cdot)\|_{L^\infty}
+  &\f12 M \sup_{0\leq \tau \leq t} (1+C(E_s^f(\tau))^{\f12}) (E_s^b(\tau))^{\f12}
.
\end{align}
The proof of \eqref{AA8} and \eqref{AA12} is given in Section \ref{Ef}.
The proof of \eqref{AA9} is given in Section \ref{Et}.
The proof of \eqref{AA10} and \eqref{AA11} is given in Section \ref{Ev}.
Once \eqref{AA8}-\eqref{AA12} are obtained, we see that Theorem \ref{thm} will be obtained by the continuity argument.

Throughout this paper, we use $A\lesssim B$ to denote $A \leq C B$ for some absolute positive constant $C$, whose meaning may change from line to line. The constant $C$ never depend on time. The dependence on other parameters is usually clear from the context and we usually suppress this dependence. By taking an appropriately small $\epsilon_0$ in Theorem \ref{thm},
we can assume that $E_s\leq C_0\epsilon\leq C_0\epsilon_0\ll1$,
which is always assumed in this paper.


\section{Preliminary weighted estimates}

In this section, we present 
the weighted Sobolev inequalities, the weighted trace theorem, the weighted commutator estimate,
the weighted tangential derivative estimate, the weighted chain rules and the weighted div-curl theorem.
These estimates all adapts to the weights $\langle w^+ \rangle^{\mu}\langle w^- \rangle^{\nu}$ for any $\mu\in \bR,\, \nu\in \bR$.
Here we recall the weights $w^\pm=x_1\pm t$.
These weighted estimates will be extensively used in the energy estimate for the unknowns of the free surface and of the plasma.

In some of these estimates, we need to use the fractional derivatives.
There are many methods to explain the fractional derivative, we refer to \cite{Adams, Caf, Di}
and the references therein.
In this paper, we use the inhomogeneous fractional
derivatives $\langle \p\rangle^{\mu}$ ($\mu\in\bR$). They will be explained as singular integrals with Bessel potentials.
We will only use the fractional derivatives defined in the whole space $\bR$.
Different from  the bounded domain, this definition will not cause any confusion.
Several definitions, for instance the difference quotient method, the frequency method, and so on, are equivalent.
On the other hand, compared with the Riesz potentials for homogeneous fractional derivatives,
the advantage is that the Bessel potentials have exponential decay at infinity.

Now we explain the main ideas of the proof of these weighted estimates.
For the weighted trace theorem and the weighted commutator estimate, we transform them into the commutator of the fractional derivatives and the weights $\langle w^+ \rangle^{\mu}\langle w^- \rangle^{\nu}$. These commutators mainly consist of a singular integral with weights, see, for instance $\eqref{D14}$ and Lemma \ref{lemD0}:
$$\textrm{p.v.} \int_{\BR}  (1-\p_1^2)B_{2-s}(x_1-x_1')
\big(  \langle w^+(x_1)\rangle^{\mu}\langle w^-(x_1)\rangle^{\nu} -\langle w^+(x_1')\rangle^{\mu}\langle w^-(x_1')\rangle^{\nu}\big) h(x_1')\d x_1'.$$
These commutators can be estimated. Actually, for the integral domain near the origin:
$\{x_1':|x_1-x_1'|\leq \f12\}$, there holds $\langle w^\pm(x_1)\rangle
\sim \langle w^\pm(x_1')\rangle$. Thus we can freely move the weights $\langle w^\pm(x_1)\rangle$ and $\langle w^\pm(x_1')\rangle$ and keep the structure of the integral kernel near the origin.
For the integral domain away from the origin:
$\{x_1':|x_1-x_1'|\geq \f12\}$, there holds $\langle w^\pm(x_1)\rangle^{\mu}
\leq 3^{|\mu|} |x_1-x_1'|^{|\mu|}\langle w^\pm(x_1')\rangle^{\mu}$ for $\mu\in\bR$. The growth factor $|x_1-x_1'|^{|\mu|}$ or $|x_1-x_1'|^{|\nu|}$
will be absorbed into the exponential decay of the Bessel potentials. Thus the integral kernel is always summable away from the origin.
As for the weighted Sobolev inequalities, the weighted tangential derivative estimate, the weighted chain rules, and the weighted div-curl theorem, they are the direct consequence of the corresponding classical estimates without weights $\langle w^+ \rangle^{\mu}\langle w^- \rangle^{\nu}$.

We mention that the weighted Kato-Ponce commutator estimate in the literature
(for instance, see \cite{CN}) requires that the weights belong to the $\mathcal{A}_p$ class,
which is not applicable in our setting since it only permits the weights $\langle w^\pm \rangle^{\mu}$ with $0\leq \mu< 1/4\, (p=2)$.
Hence we present a self-contained proof of the weighted estimate for the inhomogeneous fractional derivatives
which will allows for a large range of $\mu$ an $\nu$ for the weights $\langle w^+ \rangle^{\mu}\langle w^- \rangle^{\nu}$.


\subsection{Weighted Sobolev inequalities}
In the sequel, we present several weighted Sobolev inequalities.
 They are the direct consequences of
the  Sobolev imbedding and the properties of the weight functions $\langle w^\pm \rangle$.
\begin{lem}\label{Sobo1}
Let $\Omega\subseteq \bR^2$ with $\p\Omega\in C^1$, $\f{1}{q}\geq \f{1}{p}-\f{k}{2}$, $1\geq \f{1}{p}>\f{k}{2}$.
For any $\lambda\in \BR$, $\mu\in \BR$, there holds
\begin{align}
\label{SobWeit1}
\big\| \langle w^+ \rangle^{\lambda} \langle w^- \rangle^{\mu} g (t,\cdot)\big\|_{L^{q}(\Om)}
&\lesssim \sum_{|\alpha|\leq k}
\big\|\langle w^+ \rangle^{\lambda}\langle w^- \rangle^{\mu} \nabla^\alpha g (t,\cdot)\big\|_{L^p(\Om)},
\end{align}
provided the right hand side is finite.
\end{lem}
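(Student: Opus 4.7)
The plan is to reduce the weighted inequality to the classical (unweighted) Sobolev embedding on $\Omega$ applied to the single function
\[
F(t,x):=\langle w^+(t,x_1)\rangle^{\lambda}\langle w^-(t,x_1)\rangle^{\mu} g(t,x).
\]
Since $\Omega\subseteq\mathbb{R}^2$ has a $C^1$ boundary and the indices satisfy $\tfrac{1}{q}\geq \tfrac{1}{p}-\tfrac{k}{2}$, $\tfrac{1}{p}>\tfrac{k}{2}$, the standard Sobolev embedding on $\Omega$ gives
\[
\|F\|_{L^{q}(\Omega)}\lesssim \sum_{|\alpha|\leq k}\|\nabla^{\alpha}F\|_{L^{p}(\Omega)},
\]
provided the right-hand side is finite. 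The left-hand side is exactly $\|\langle w^+\rangle^{\lambda}\langle w^-\rangle^{\mu}g\|_{L^q(\Omega)}$, so it suffices to dominate $\sum_{|\alpha|\leq k}\|\nabla^\alpha F\|_{L^p(\Omega)}$ by the right-hand side of \eqref{SobWeit1}.

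For this I would use the Leibniz rule
\[
\nabla^{\alpha}F=\sum_{\beta\leq \alpha}\binom{\alpha}{\beta}\,\nabla^{\beta}\bigl(\langle w^+\rangle^{\lambda}\langle w^-\rangle^{\mu}\bigr)\,\nabla^{\alpha-\beta}g,
\]
and control each weight derivative pointwise. Because $w^{\pm}(t,x_1)=x_1\pm t$ depends only on $x_1$, for every nonnegative integer $j$ and every real exponent $\sigma$ one has the elementary bound
\[
\bigl|\partial_{1}^{\,j}\langle w^{\pm}\rangle^{\sigma}\bigr|\le C_{\sigma,j}\,\langle w^{\pm}\rangle^{\sigma-j}\le C_{\sigma,j}\,\langle w^{\pm}\rangle^{\sigma},
\]
the last inequality following from $\langle w^{\pm}\rangle\ge 1$. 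Combining the two weight factors and applying Leibniz again gives
\[
\bigl|\nabla^{\beta}\bigl(\langle w^+\rangle^{\lambda}\langle w^-\rangle^{\mu}\bigr)\bigr|\lesssim_{\lambda,\mu,|\beta|}\langle w^+\rangle^{\lambda}\langle w^-\rangle^{\mu}.
\]

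Inserting these pointwise estimates into the Leibniz expansion yields
\[
|\nabla^{\alpha}F|\lesssim \sum_{\beta\leq \alpha}\langle w^+\rangle^{\lambda}\langle w^-\rangle^{\mu}\,|\nabla^{\alpha-\beta}g|,
\]
and taking the $L^{p}(\Omega)$ norm and summing over $|\alpha|\leq k$ finishes the argument, because each $\langle w^+\rangle^{\lambda}\langle w^-\rangle^{\mu}|\nabla^{\alpha-\beta}g|$ appears in the sum on the right-hand side of \eqref{SobWeit1}. There is no genuine obstacle here: the only thing to be a little careful about is that $\lambda$ and $\mu$ are arbitrary real numbers (possibly negative), but since $\langle w^{\pm}\rangle\ge 1$ the weight-derivative bound above holds for every $\sigma\in\mathbb{R}$, so the argument is uniform in sign.
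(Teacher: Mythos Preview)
Your proof is correct and follows exactly the paper's approach: apply the standard Sobolev embedding $W^{k,p}(\Omega)\hookrightarrow L^q(\Omega)$ to the weighted function $F=\langle w^+\rangle^{\lambda}\langle w^-\rangle^{\mu}g$, then use the Leibniz rule together with the pointwise bound $|\partial_1^j\langle w^{\pm}\rangle^{\sigma}|\lesssim \langle w^{\pm}\rangle^{\sigma}$ to pass the derivatives onto $g$. The paper's proof is more terse but identical in substance.
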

\begin{proof}
For $\f{1}{q}\geq \f{1}{p}-\f{k}{2}$, $1\geq \f{1}{p}>\f{k}{2}$, by $W^{k,p}(\Omega)
\hookrightarrow L^{q}(\Omega)$,
we have
\begin{align*}
\big\|\langle w^+ \rangle^{\lambda} \langle w^- \rangle^{\mu} g (t,\cdot)\big\|_{L^{q}(\Omega)}
&\lesssim \sum_{|\alpha|\leq k}
\big\|\nabla^\alpha \big(\langle w^+ \rangle^{\lambda}\langle w^- \rangle^{\mu} g (t,\cdot)\big)\big\|_{ L^p(\Omega)} \\
&\lesssim \sum_{|\alpha|\leq k}\big\|\langle w^+ \rangle^{\lambda}\langle w^- \rangle^{\mu} \nabla^\alpha g (t,\cdot)\big\|_{L^p(\Omega)},
\end{align*}
which yields \eqref{SobWeit1}.
\end{proof}

Similar weighted Sobolev inequalities also hold on the free surface.
Here we directly state the weighted Sobolev inequalities on the real line.
The details of the proof are omitted since it is the same to Lemma \ref{Sobo1}.
\begin{lem}\label{Sobo2}
Let $h(t,\cdot )\in W^{1,p}(\BR )$, $1<p<q\leq \infty$.
For any $\lambda\in \BR$, $\mu\in \BR$, there holds
\begin{align*}
&\big\|\langle w^+ \rangle^{\lambda}\langle w^-  \rangle^{\mu}
 h (t,\cdot) \big\|_{L^q(\BR )}
\lesssim \sum_{|a|\leq 1}
\big\|\langle w^+ \rangle^{\lambda}\langle w^-  \rangle^{\mu}
\nabla^a h(t,\cdot)\big\|_{L^p(\BR )} ,
\end{align*}
provided the right hand side is finite.
\end{lem}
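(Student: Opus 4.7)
The plan is to mirror the proof of Lemma 3.1, since the argument on $\bR$ is essentially identical to the one on a bounded regular $\Om\subseteq \bR^2$ once the appropriate one-dimensional Sobolev embedding is used. The key point is to apply the classical embedding $W^{1,p}(\bR)\hookrightarrow L^q(\bR)$, valid for all $1<p<q\leq\infty$ in one space dimension (since $W^{1,p}(\bR)\hookrightarrow L^\infty(\bR)$ for every $p>1$, and interpolation with $L^p$ then covers the whole range), to the entire weighted product $\langle w^+\rangle^{\lambda}\langle w^-\rangle^{\mu} h$ rather than to $h$ alone.

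Concretely, I would first invoke the embedding in the form
\[
\big\|\langle w^+\rangle^{\lambda}\langle w^-\rangle^{\mu} h\big\|_{L^q(\bR)} \lesssim \big\|\langle w^+\rangle^{\lambda}\langle w^-\rangle^{\mu} h\big\|_{L^p(\bR)} + \big\|\p_1\big(\langle w^+\rangle^{\lambda}\langle w^-\rangle^{\mu} h\big)\big\|_{L^p(\bR)},
\]
and then expand the derivative on the right via the product rule. The weight derivatives are controlled pointwise: since $w^\pm(t,x_1)=x_1\pm t$, a direct computation gives $\p_1\langle w^\pm\rangle^{\nu}=\nu\, w^\pm\, \langle w^\pm\rangle^{\nu-2}$, so
\[
\big|\p_1\langle w^\pm\rangle^{\nu}\big|\leq |\nu|\,\langle w^\pm\rangle^{\nu-1}\leq |\nu|\,\langle w^\pm\rangle^{\nu}\qquad\text{for every }\nu\in\bR,
\]
where the last step uses only $\langle w^\pm\rangle\geq 1$ and so handles positive and negative exponents simultaneously. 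Combining these bounds yields $|\p_1(\langle w^+\rangle^{\lambda}\langle w^-\rangle^{\mu} h)|\lesssim \langle w^+\rangle^{\lambda}\langle w^-\rangle^{\mu}(|h|+|\p_1 h|)$, and taking the $L^p(\bR)$ norm produces the claimed estimate.

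There is essentially no obstacle in this argument; it is routine. The only points requiring attention are the hypothesis $p>1$ (needed to invoke the one-dimensional embedding up to $L^\infty$) and the fact that the weight-derivative inequality must be formulated uniformly in the sign of $\lambda$ and $\mu$, which is precisely what the elementary estimate $\langle w^\pm\rangle^{\nu-1}\leq\langle w^\pm\rangle^{\nu}$ provides. Consequently the proof is a one-line adaptation of Lemma 3.1, and the author's omission of the details is justified.
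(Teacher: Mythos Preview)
Your proposal is correct and follows exactly the approach the paper indicates: the authors omit the proof entirely, stating only that it is the same as Lemma~\ref{Sobo1}, and your argument is precisely that---apply the one-dimensional embedding $W^{1,p}(\bR)\hookrightarrow L^q(\bR)$ to the weighted product and absorb the weight derivatives via $|\p_1\langle w^\pm\rangle^{\nu}|\le |\nu|\langle w^\pm\rangle^{\nu}$. Nothing further is needed.
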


\subsection{Weighted trace theorem}

Denote $w(x_1)$ by some (weight) function. We remind that $w(x_1)$ is just some function and it may not be weights $w^\pm$
 where $w^\pm=x_1\pm t$.

We begin by writing down the definition of inhomogeneous fractional derivatives.
This definition is well known to the experts.
Let $h=h(x_1)$ be a $C^2_c(\BR)$ function. Then
for $0<s<1$,  there holds
\begin{align}\label{D1}
&\langle \p_1\rangle^{s} h(x_1)
= (1-\p_1^2)^{\f s2 -1} (1-\p_1^2)h(x_1)\\\nonumber
&= \int_{\BR} B_{2-s}(x_1-x_1') (1-\p_{x_1'}^2)h(x_1') \d x_1' \\\nonumber
&=  \int_{\BR} B_{2-s}(x_1-x_1') h(x_1') \d x_1'
+ \text{p.v.} \int_{\BR} \p_1^2B_{2-s}(x_1-x_1') (h(x_1)-h(x_1')) \d x_1'.
\end{align}
Here p.v. denotes the principal integral.
 $B_s(x_1)$ are Bessel potentials smoothing on $\BR \backslash \{0\}$ satisfying $B_s(x_1)>0$:
\begin{align*}
B_s(x_1)=\mathcal{F}^{-1}\big( (1+|\xi|^2)^{-\f{s}{2}} \big) (x_1),
\end{align*}
where $\mathcal{F}^{-1}$ stands for the inverse Fourier transform.
For the Bessel potentials, we have the following asymptotic behavior near $0$ and at infinity.
\begin{lem}\label{lemBe}
If $0<s<1$, there hold
\begin{align*}
&|B_{2-s}(x_1)|+|\p_1 B_{2-s}(x_1)|+|\p_1^2B_{2-s}(x_1)| \leq C e^{-\f{|x_1|}{2}},\,\,
\text{for}\,\,  |x_1|\geq \f12,
\end{align*}
and
\begin{align*}
&\begin{cases}
B_{2-s}(x_1) \sim 1,\\
\p_1   B_{2-s}(x_1) \sim  x_1|x_1|^{-s-1},\\
\p_1^2 B_{2-s}(x_1) \sim |x_1|^{-s-1},\\
\p_1^3 B_{2-s}(x_1) \sim  x_1|x_1|^{-s-3},\\
\end{cases}
\,\, \text{for}\,\,  0<|x_1|\leq 2.
\end{align*}
\end{lem}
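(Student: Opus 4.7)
The plan is to express $B_{2-s}$ in closed form through the modified Bessel function of the second kind and then read off both asymptotics from standard facts about $K_\nu$. Starting from the Fourier definition $B_{2-s}(x_1) = \mathcal{F}^{-1}((1+|\xi|^2)^{-(2-s)/2})(x_1)$ and the subordination identity
$$
(1+|\xi|^2)^{-(2-s)/2} = \f{1}{\Gamma((2-s)/2)}\int_0^\infty t^{(2-s)/2 - 1} e^{-t(1+|\xi|^2)}\,\dt,
$$
a change of variables $t = |x_1| u/2$ in the resulting one-dimensional Gaussian integral yields the classical representation
$$
B_{2-s}(x_1) = c_s\, |x_1|^{(1-s)/2} K_{(1-s)/2}(|x_1|),
$$
where $c_s$ is a positive constant depending only on $s$. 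With this identity in hand, both asymptotics reduce to the well-studied behavior of $K_\nu$ at $0$ and at infinity.

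For the exponential bound at infinity, I would invoke $K_\nu(r) = \sqrt{\pi/(2r)}\,e^{-r}(1+O(1/r))$ as $r \to \infty$, which gives $B_{2-s}(x_1) \sim c\,|x_1|^{-s/2} e^{-|x_1|}$. Since $|x_1|^{-s/2}$ is uniformly bounded on $|x_1| \geq 1/2$, and $e^{-|x_1|} = e^{-|x_1|/2}\cdot e^{-|x_1|/2}$, the bound $|B_{2-s}(x_1)| \leq C e^{-|x_1|/2}$ follows. For the derivatives, I would apply the identity $\p_r[r^\beta K_\beta(r)] = -r^\beta K_{\beta-1}(r)$ and the recurrence $K_\nu'(r) = -(K_{\nu-1}(r)+K_{\nu+1}(r))/2$: each differentiation preserves an overall $K_{\nu}$-factor whose large-$r$ decay is the same $e^{-r}\sqrt{\pi/(2r)}$, while the polynomial prefactor remains bounded on $|x_1|\geq 1/2$. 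This yields $|\p_1 B_{2-s}(x_1)| + |\p_1^2 B_{2-s}(x_1)| \leq C e^{-|x_1|/2}$ as well.

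For the behavior near zero, I would use $K_\nu(r) \sim \tfrac12 \Gamma(\nu)(r/2)^{-\nu}$ as $r\to 0^+$ for $\nu>0$, together with the symmetry $K_{-\nu}=K_\nu$. Set $\beta := (1-s)/2 \in (0,1/2)$ and $g(r) := r^\beta K_\beta(r)$, so that $B_{2-s}(x_1) = c_s g(|x_1|)$. Directly $g(r) \to c\,\Gamma(\beta)$ as $r\to 0^+$, giving $B_{2-s}(x_1) \sim 1$. Using $g'(r) = -r^\beta K_{\beta-1}(r)$ together with $K_{\beta-1}(r) = K_{1-\beta}(r) \sim c\, r^{\beta-1}$ near $0$ (since $1-\beta = (1+s)/2 > 0$), I obtain $g'(r) \sim c\, r^{2\beta - 1} = c\, r^{-s}$; restoring the $\text{sgn}(x_1)$ factor from $\p_1|x_1|$ gives $\p_1 B_{2-s}(x_1) \sim x_1|x_1|^{-s-1}$. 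Iterating via the same recurrences, the most singular term in $g''(r)$ comes from $r^\beta K_{\beta - 2}(r) \sim r^\beta r^{\beta - 2} = r^{-s-1}$, and analogously $g'''(r) \sim r^{-s-2}$ with the odd symmetry in $x_1$ surviving, which produces the claimed $\p_1^2 B_{2-s}(x_1) \sim |x_1|^{-s-1}$ and $\p_1^3 B_{2-s}(x_1) \sim x_1|x_1|^{-s-3}$. The main (purely computational) obstacle is a careful bookkeeping of constants and signs for the second and third derivatives to confirm that the leading singular contributions in the various $K_{\beta \pm k}$ pieces do not cancel; this is a short exercise with the $K_\nu$-recurrence.
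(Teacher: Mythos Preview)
Your approach is correct. The paper does not give its own proof but simply refers to standard textbook treatments (Grafakos, Stein, Aronszajn--Smith); the explicit $K_\nu$ representation and the small- and large-argument asymptotics you use are precisely what one finds in those references, so your proposal faithfully fills in the classical argument.
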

\begin{proof}
We refer to Grafakos's book in chapter 6.1.2 \cite{G_book}, Stein's book in Chapter V \cite{Stein} or Aronszajn-Smith \cite{AS} in section 4 Chapter II
for the proof of the properties of the Bessel potentials.
\end{proof}
\begin{remark}
\eqref{D1} holds for $h\in C^2_c(\BR)$. It is standard to extend the definition
as long as the right hand side of \eqref{D1} is summable.
Hence in the following argument, we will not require $h\in C^2_c(\BR)$ but only require the integral in \eqref{D1} is finite.
\end{remark}
Now we study the fractional derivative with some weight functions.
Multiplying \eqref{D1} by some function $w(x_1)$, we write
\begin{align}\label{D13}
w(x_1) \langle \p_1\rangle^{s} h(x_1)
&=w(x_1) \int_{\BR} B_{2-s}(x_1-x_1') h(x_1') \d x_1' \\\nonumber
&\quad + w(x_1)\ \text{p.v.} \int_{\BR} \p_1^2B_{2-s}(x_1-x_1') (h(x_1)-h(x_1')) \d x_1' .
\end{align}
For the second term on the right hand side of \eqref{D13}, we write
\begin{align*}
&w(x_1)\ \text{p.v.} \int_{\BR} \p_1^2 B_{2-s}(x_1-x_1')(h(x_1)-h(x_1')) \d x_1'\\
&= \langle \p_1\rangle^{s} \big( w(x_1) h(x_1)\big)
- \int_{\BR}  B_{2-s}(x_1-x_1')
w(x_1')h(x_1') \d x_1'\\
&\quad - \ \text{p.v.} \int_{\BR} \p_1^2 B_{2-s}(x_1-x_1')
\big[w(x_1)-w(x_1')\big]h(x_1') \d x_1'.
\end{align*}
Consequently, \eqref{D13} becomes
\begin{align}\label{D14}
[\langle \p_1\rangle^{s}, w(x_1)] h(x_1)
=-\text{p.v.} \int_{\BR}  (1-\p_1^2)B_{2-s}(x_1-x_1')
\big(  w(x_1)-w(x_1')\big) h(x_1')\d x_1',
\end{align}
where we have used the following commutator notation:
\begin{align*}
[\langle\nabla\rangle^{s},g]h
=\langle\nabla\rangle^{s}(g h)
-g\langle\nabla\rangle^{s}h.
\end{align*}
Here \eqref{D14} holds for any functions $w(x_1)$ and $h(x_1)$ such that the singular integral make sense.

Before the weighted trace theorem and the weighted commutator estimate, we first estimate \eqref{D14} for some appropriate weight functions.
We recall that the weight functions  are given by $w^\pm=x_1\pm t$.
\begin{lem}\label{lemD0}
Let $0<s<1$, $1\leq q\leq +\infty$, $\mu\in\BR,\ \nu\in\BR$.
There holds
\begin{align*} 
&\big\| \textrm{p.v.} \int_{\BR}  (1-\p_1^2)B_{2-s}(x_1-x_1')
\big(  \langle w^+(x_1)\rangle^{\mu}\langle w^-(x_1)\rangle^{\nu} -\langle w^+(x_1')\rangle^{\mu}\langle w^-(x_1')\rangle^{\nu}\big) h(x_1')\d x_1' \big\|_{L^q(\BR)}\\
&\lesssim \| \langle w^+\rangle^{\mu}\langle w^-\rangle^{\nu} h \|_{L^q(\BR )},
\end{align*}
provided the right hand side is finite.
\end{lem}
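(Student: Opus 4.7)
Write $W(x_1):=\langle w^+(x_1)\rangle^{\mu}\langle w^-(x_1)\rangle^{\nu}$ and
$$
K(x_1,x_1') := (1-\partial_1^2) B_{2-s}(x_1-x_1')\bigl(W(x_1)-W(x_1')\bigr),
$$
so that the quantity to be estimated is $\bigl\|\int_{\mathbb{R}} K(x_1,x_1')h(x_1')\,dx_1'\bigr\|_{L^q}$. The plan is to show that $|K(x_1,x_1')|/W(x_1')\le \Phi(x_1-x_1')$ for some $\Phi\in L^1(\mathbb{R})$; writing $g=Wh$ converts the operator into one dominated by convolution against $\Phi$, and Young's inequality then yields the bound $\|\Phi\|_{L^1}\|g\|_{L^q}=\|\Phi\|_{L^1}\|Wh\|_{L^q}$. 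To produce $\Phi$, I would split $\mathbb{R}=\{|x_1-x_1'|\le 1/2\}\cup\{|x_1-x_1'|\ge 1/2\}$ and treat the two regions separately, exploiting the two different asymptotics of $B_{2-s}$ given by Lemma \ref{lemBe}.

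For the near region, the key point is that $(\log W)'(x_1)=\mu\,w^+(x_1)/\langle w^+(x_1)\rangle^2+\nu\,w^-(x_1)/\langle w^-(x_1)\rangle^2$ is bounded by a constant depending only on $|\mu|+|\nu|$, independently of $t$. Thus the mean value theorem applied to $\log W$ gives $W(x_1)/W(x_1')\in[e^{-C|x_1-x_1'|},e^{C|x_1-x_1'|}]$, and in particular
$$
|W(x_1)-W(x_1')|\lesssim |x_1-x_1'|\,W(x_1')\qquad\text{when }|x_1-x_1'|\le 1/2.
$$
Combining with the singularity estimate $|(1-\partial_1^2)B_{2-s}(x_1-x_1')|\lesssim |x_1-x_1'|^{-s-1}+1$ from Lemma \ref{lemBe} yields $|K(x_1,x_1')|/W(x_1')\lesssim |x_1-x_1'|^{-s}$, which is integrable on $\{|x_1-x_1'|\le 1/2\}$ since $0<s<1$.

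For the far region, Lemma \ref{lemBe} supplies exponential decay $|(1-\partial_1^2)B_{2-s}(x_1-x_1')|\lesssim e^{-|x_1-x_1'|/2}$. To dispose of the weight difference I would use the translation inequality $\langle w^{\pm}(x_1)\rangle\le \sqrt{2}\,\langle w^{\pm}(x_1')\rangle\,\langle x_1-x_1'\rangle$ and its reverse, which give, for real exponents of either sign,
$$
\langle w^{\pm}(x_1)\rangle^{\mu}\lesssim \langle x_1-x_1'\rangle^{|\mu|}\langle w^{\pm}(x_1')\rangle^{\mu},\qquad \text{and similarly for }\nu,
$$
so that $W(x_1)/W(x_1')\lesssim \langle x_1-x_1'\rangle^{|\mu|+|\nu|}$. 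Hence on the far region
$$
|K(x_1,x_1')|/W(x_1')\lesssim \langle x_1-x_1'\rangle^{|\mu|+|\nu|}e^{-|x_1-x_1'|/2},
$$
which is integrable. Combining the near and far bounds produces the desired majorant $\Phi(x_1-x_1')\in L^1(\mathbb{R})$, and Young's inequality closes the argument.

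The main subtlety I anticipate is the near-region step: one needs the growth of $W$ to be controlled by $W$ itself with a constant uniform in $t$, in order to convert the singular $|x_1-x_1'|^{-s-1}$ behaviour of $\partial_1^2B_{2-s}$ into the integrable $|x_1-x_1'|^{-s}$. The uniformity in $t$ is what makes the argument genuinely delicate, and it is the reason why the specific structure $W=\langle w^+\rangle^{\mu}\langle w^-\rangle^{\nu}$ is used rather than a general Muckenhoupt weight, as noted in the introductory discussion of this section.
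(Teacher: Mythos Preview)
Your proposal is correct and follows essentially the same approach as the paper: the same near/far splitting at $|x_1-x_1'|=1/2$, the same use of the mean value theorem on the weight in the near region to upgrade $|x_1-x_1'|^{-s-1}$ to $|x_1-x_1'|^{-s}$, and the same absorption of the polynomial weight comparison into the exponential decay of the Bessel kernel in the far region, finished by Young's inequality. The only cosmetic difference is that you apply the mean value theorem to $\log W$ (obtaining $|W(x_1)-W(x_1')|\lesssim |x_1-x_1'|W(x_1')$ directly), whereas the paper applies it to $\langle w^\pm\rangle$ separately and writes out the derivative as $\langle w^+\rangle^{\mu-1}\langle w^-\rangle^{\nu}+\langle w^+\rangle^{\mu}\langle w^-\rangle^{\nu-1}$ before bounding by $W(x_1')$; your formulation is marginally cleaner but the content is identical.
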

\begin{proof}
We divide the
estimate into two cases:
$\{x_1':|x_1-x_1'|\leq \f12\}$ and $\{x_1':|x_1-x_1'|\geq \f12\}$.
Firstly, for $|x_1-x_1'|\leq \f12$, note
\begin{align*}
\langle w^\pm(x_1) \rangle-\langle w^\pm(x_1')\rangle
=(x_1-x_1')\cdot
\big( \langle w^\pm \rangle^{-1} w^\pm \p_1 w^\pm \big)(x_\beta),
\end{align*}
where $x_\beta$ is between $x_1$ and $x_1'$.
Then there holds
\begin{align}\label{D15}
&\langle w^\pm(x_1)\rangle
\sim \langle w^\pm(x_1')\rangle
\sim \langle w^\pm(x_\beta)\rangle .
\end{align}
Consequently, we obtain
\begin{align*}
&|\langle w^+(x_1)\rangle^{\mu}\langle w^-(x_1)\rangle^{\nu}-\langle w^+(x_1')\rangle^{\mu}\langle w^-(x_1')\rangle^{\nu}|\\
&\lesssim |x_1-x_1'|\cdot \big( \langle w^+(x_1')\rangle^{\mu-1}\langle w^-(x_1')\rangle^{\nu}+\langle w^+(x_1')\rangle^{\mu}\langle w^-(x_1')\rangle^{\nu-1}\big) .
\end{align*}
Thus we deduce that
\begin{align*}
& \big|\ \text{p.v.} \int_{|x_1-x_1'|\leq \f12} (1-\p_1^2) B_{2-s}(x_1-x_1')
\big[\langle w^+(x_1)\rangle^{\mu}\langle w^-(x_1)\rangle^{\nu} -\langle w^+(x_1')\rangle^{\mu}\langle w^-(x_1')\rangle^{\nu}\big] h(x_1') \d x_1'\big|\\
&\lesssim \int_{|x_1-x_1'|\leq \f12} | (x_1-x_1')(1-\p_1^2)B_{2-s}(x_1-x_1')|\cdot
\langle w^+(x_1')\rangle^{\mu}\langle w^-(x_1')\rangle^{\nu} |h(x_1')| \d x_1' .
\end{align*}
Taking the $L^q$ norm of the above expression,
by the Young inequality, it can be  bounded by
\begin{align*}
\int_{|x_1'|\leq \f12}  |x_1'(1-\p_1^2) B_{2-s}(x_1')| \d x_1'
\cdot \| \langle w^+\rangle^{\mu}\langle w^-\rangle^{\nu}
h \|_{L^q(\BR )}
 \lesssim \| \langle w^+\rangle^{\mu}\langle w^-\rangle^{\nu}
h \|_{L^q(\BR )}.
\end{align*}

Next, note
\begin{align*} 
\langle w^\pm(x_1)\rangle
= \langle w^\pm(x_1')\rangle +\int_{x_1'}^{x_1} \f{w^\pm(\tilde{x}_1)\p_{\tilde{x}_1} w^\pm(\tilde{x}_1)}
 {\langle w^\pm(\tilde{x}_1)\rangle} \d\tilde{x}_1.
\end{align*}
Thus  if $|x_1-x_1'|\geq \f12$,
there holds
\begin{align} \label{D19}
\langle w^\pm(x_1)\rangle
\leq 3 |x_1-x_1'|\langle w^\pm(x_1')\rangle.
\end{align}
Then
\begin{align*}
&\big|\text{p.v.}\int_{|x_1-x_1'|\geq \f12}  (1-\p_1^2)B_{2-s}(x_1-x_1')
\big(  \langle w^+(x_1)\rangle^{\mu}\langle w^-(x_1)\rangle^{\nu} -\langle w^+(x_1')\rangle^{\mu}\langle w^-(x_1')\rangle^{\nu} \big) h(x_1')\d x_1'\big| \\\nonumber
&\lesssim \int_{|x_1-x_1'|\geq \f12}  |(1-\p_1^2)B_{2-s}(x_1-x_1')|\cdot |x_1-x_1'|^{|\mu|+|\nu|} \langle w^+(x_1')\rangle^{\mu}\langle w^-(x_1')\rangle^{\nu} |h(x_1')| \d x_1' .
\end{align*}
Consequently, by the Young inequality and the properties of Bessel potentials of Lemma \ref{lemBe},
the $L^q$ norm of
the above expression is further bounded by
\begin{align*}
 \int_{|x_1'|\geq \f12} |x_1'|^{|\mu|+|\nu|}\cdot |(1-\p_1^2)B_{2-s}(x_1')| \d x_1'
\cdot\|  \langle w^+\rangle^{\mu}\langle w^-\rangle^{\nu} h \|_{L^q(\BR )}
\lesssim  \| \langle w^+\rangle^{\mu}\langle w^-\rangle^{\nu} h\|_{L^q(\BR )}.
\end{align*}
This finishes the proof of the lemma.
\end{proof}

Now we study the weighted trace theorem.
Let  $g=g(x)$  be a two dimensional function defined in a two dimensional domain $\Om\subseteq \bR^2$ with boundary  parameterized by $\p\Om=\{x_2=f(x_1)| x_1\in\bR \}$.
Let $\ud{g}(x_1)$ be the trace of $g(x)$ on the boundary $\p\Om$. We start by recalling the  classical trace theorem in the two dimensional setting.
\begin{lem}[Classical trace theorem in 2D]
\label{lemD1}
Let $s> \f12$. There holds
\begin{align*}
&\|  \langle \p_1 \rangle^{s-\f12} \ud{g} \|_{L^2(\BR )}
\lesssim  \| \langle \nabla \rangle^{s} g\|_{L^2(\Omega)}.
\end{align*}
Moreover, there holds
\begin{align*}
&\|  \ud{g} \|_{L^2(\BR )} \lesssim  \|   g\|_{H^1(\Omega)}.
\end{align*}
\end{lem}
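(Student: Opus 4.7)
The plan is to reduce to the flat boundary case by a change of variables and then use Fourier analysis in the tangential direction. Since $\Omega$ has boundary $\Gamma = \{x_2 = f(x_1)\}$ and the interior of $\Omega$ lies on one side of $\Gamma$, I would introduce the flattening diffeomorphism
\begin{equation*}
\Psi : \Omega \to \Omega', \qquad \Psi(x_1,x_2) = (x_1,\, x_2 - f(x_1)),
\end{equation*}
which sends $\Gamma$ to the line $\{y_2 = 0\}$ and $\Omega$ to a strip (or half-plane) $\Omega'$. Setting $\tilde g(y) = g(\Psi^{-1}(y))$, the trace $\ud g(x_1)$ equals $\tilde g(x_1,0)$, and because $f$ is assumed to have $\|f\|_{L^\infty}$ and $\|\partial_1 f\|_{L^\infty}$ controlled (which will be guaranteed in our a priori framework from the hypotheses on $f$), $\Psi$ is a bilipschitz $C^1$ diffeomorphism whose Jacobian is bounded above and below. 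Consequently $\|\langle \nabla \rangle^s \tilde g\|_{L^2(\Omega')} \sim \|\langle \nabla\rangle^s g\|_{L^2(\Omega)}$ for $s\in\{0,1\}$, and then by interpolation / direct change of variable in the fractional norm for $0< s \le 1$ (and iteration for higher $s$), it suffices to prove the estimate on the flat strip.

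In the flat case one reduces further (by even/odd reflection across the horizontal fixed boundary if needed, or by a cutoff localizing near $\Gamma$) to $\bR^2_+ = \{y_2 > 0\}$. For $\tilde g \in \mathcal{S}(\overline{\bR^2_+})$, apply the Fourier transform in $y_1$ and write
\begin{equation*}
\widehat{\tilde g(\cdot,0)}(\xi_1) \;=\; \frac{1}{2\pi}\int_{\bR} \widehat{E\tilde g}(\xi_1,\xi_2)\,d\xi_2,
\end{equation*}
where $E$ is a bounded extension operator $H^s(\bR^2_+)\to H^s(\bR^2)$. By Cauchy–Schwarz with the weight $(1+|\xi|^2)^s$,
\begin{equation*}
|\widehat{\tilde g(\cdot,0)}(\xi_1)|^2 \;\le\; \Big(\int_{\bR} (1+\xi_1^2+\xi_2^2)^{-s}\,d\xi_2\Big)\cdot \int_{\bR} (1+\xi_1^2+\xi_2^2)^s |\widehat{E\tilde g}(\xi_1,\xi_2)|^2\,d\xi_2.
\end{equation*}
For $s>\tfrac12$ the first factor equals $C_s (1+\xi_1^2)^{-(s-\frac12)}$, so multiplying by $(1+\xi_1^2)^{s-\frac12}$ and integrating over $\xi_1$ yields, by Plancherel,
\begin{equation*}
\|\langle \partial_1\rangle^{s-\frac12}\tilde g(\cdot,0)\|_{L^2(\bR)}^2 \;\lesssim\; \|\langle \nabla\rangle^s E\tilde g\|_{L^2(\bR^2)}^2 \;\lesssim\; \|\langle \nabla\rangle^s \tilde g\|_{L^2(\bR^2_+)}^2.
\end{equation*}
Combining with the diffeomorphism estimate above gives the first bound, and the second bound is the special case $s=1$.

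The main (and essentially only) technical point is to justify that $\Psi$ preserves the $\langle\nabla\rangle^s$ norm up to constants; this will be immediate for integer $s$ via the chain rule and the bound on $\partial_1 f$, and for fractional $s$ can be handled either by complex interpolation between $L^2$ and $H^1$ or, consistent with the paper's conventions, by a direct commutator argument using the Bessel potential formula \eqref{D1} together with Lemma \ref{lemBe}. Since $f$ is bounded and Lipschitz under the a priori assumptions, no further regularity of the boundary is needed for $s\in (\tfrac12,1]$, and higher $s$ follows by commuting tangential derivatives $\partial_1$ through the reduction.
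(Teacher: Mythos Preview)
Your argument is the standard textbook proof of the trace theorem (flatten the boundary, extend to $\bR^2$, and apply Plancherel with the explicit computation $\int_{\bR}(1+\xi_1^2+\xi_2^2)^{-s}\,d\xi_2 = C_s(1+\xi_1^2)^{-(s-\frac12)}$ for $s>\tfrac12$), and it is correct. The paper, however, gives no proof of this lemma at all: it is simply stated as the classical trace theorem and used as input for the weighted trace results that follow (Lemmas~\ref{lemD2} and~\ref{lemD3}). So there is nothing to compare against --- you have supplied a valid proof where the paper only quotes the result.

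One minor remark: your discussion of why the flattening map preserves fractional $H^s$ norms is a bit informal. For the paper's purposes this is harmless, since the lemma is only invoked with $s=\tfrac12$ and $s=1$ (via Lemma~\ref{lemD3} and Lemma~\ref{lemD2}), both of which are covered directly by your integer-order chain-rule argument and the interpolation step. If you wanted to be fully self-contained for general $s$, the cleanest route is complex interpolation between $L^2$ and $H^1$ as you suggest, rather than invoking the Bessel-potential commutator machinery, which in the paper is developed only for the weight functions $\langle w^\pm\rangle^\mu$ and not for composition with a general diffeomorphism.
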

A direct consequence of the second inequality of Lemma \ref{lemD1} is the following weighted trace theorem.
\begin{lem}[Relaxed weighted trace theorem] \label{lemD2}
For any $\mu\in \BR,\ \nu\in\BR$, there holds
\begin{align*}
\| \langle w^+\rangle^{\mu}\langle w^-\rangle^{\nu}  \ud{g} \|_{L^2(\BR )}
&\lesssim  \sum_{|\alpha|\leq 1} \| \langle w^+\rangle^{\mu}\langle w^-\rangle^{\nu}  \p^{\alpha} g \|_{L^2(\Omega)}.
\end{align*}
\end{lem}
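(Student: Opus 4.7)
The plan is to reduce the weighted trace inequality to the classical unweighted trace inequality (the second statement of Lemma \ref{lemD1}) by applying it to the weighted function $W g$, where
\[
W(x_1) := \langle w^{+}(x_1)\rangle^{\mu}\langle w^{-}(x_1)\rangle^{\nu}.
\]
The crucial observation is that $W$ depends only on the horizontal variable $x_1$ (with $t$ regarded as a parameter), hence $W$ is well-defined on $\Omega$ and its trace on $\partial\Omega=\{x_2=f(x_1)\}$ coincides with $W(x_1)$ itself. Consequently $\underline{Wg}=W\underline{g}$, so that
\[
\|\langle w^{+}\rangle^{\mu}\langle w^{-}\rangle^{\nu}\underline{g}\|_{L^{2}(\mathbb{R})}=\|\underline{Wg}\|_{L^{2}(\mathbb{R})}.
\]

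First I would invoke Lemma \ref{lemD1} applied to $Wg$ to obtain $\|\underline{Wg}\|_{L^{2}(\mathbb{R})}\lesssim \|Wg\|_{H^{1}(\Omega)}$; this requires only that $Wg\in H^{1}(\Omega)$, which is guaranteed by the finiteness of the right-hand side of the inequality we want to prove. Next I would expand the $H^{1}$ norm by the Leibniz rule: since $\partial_{2}W=0$, the only nontrivial derivative of $W$ is
\[
\partial_{1}W=\mu\,\langle w^{+}\rangle^{\mu-2}w^{+}\langle w^{-}\rangle^{\nu}+\nu\,\langle w^{+}\rangle^{\mu}\langle w^{-}\rangle^{\nu-2}w^{-}.
\]

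The key pointwise estimate is that $|w^{\pm}|/\langle w^{\pm}\rangle\le 1$ and $\langle w^{\pm}\rangle\ge 1$, which together yield
\[
|\partial_{1}W(x_1)|\lesssim \langle w^{+}(x_1)\rangle^{\mu-1}\langle w^{-}(x_1)\rangle^{\nu}+\langle w^{+}(x_1)\rangle^{\mu}\langle w^{-}(x_1)\rangle^{\nu-1}\lesssim \langle w^{+}(x_1)\rangle^{\mu}\langle w^{-}(x_1)\rangle^{\nu},
\]
with an implicit constant depending only on $\mu$ and $\nu$. Combining with the Leibniz rule gives
\[
\|Wg\|_{H^{1}(\Omega)}\lesssim \sum_{|\alpha|\le 1}\|\langle w^{+}\rangle^{\mu}\langle w^{-}\rangle^{\nu}\partial^{\alpha}g\|_{L^{2}(\Omega)},
\]
which together with the trace bound above closes the proof.

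No step here is a real obstacle; the argument is essentially one line once one observes that $W$ is purely horizontal and hence commutes with taking the trace on a graph boundary. The only thing to verify carefully is the pointwise domination $|\partial_{1}W|\lesssim W$, which is immediate from $\langle w^{\pm}\rangle\ge 1$. I would not need the finer singular integral analysis of Lemma \ref{lemD0} for this relaxed version; that machinery becomes necessary only for the sharper weighted trace theorem involving the fractional derivative $\langle\partial_{1}\rangle^{s-1/2}$ (as in the first inequality of Lemma \ref{lemD1}), where the weight does not commute with $\langle\partial_{1}\rangle^{s-1/2}$ and a commutator must be controlled.
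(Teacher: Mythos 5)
Your proposal is correct and is exactly the argument the paper intends: the lemma is stated there as a "direct consequence of the second inequality of Lemma \ref{lemD1}", namely applying the unweighted trace bound to $Wg$ with $W=\langle w^+\rangle^{\mu}\langle w^-\rangle^{\nu}$ and using $|\partial_1 W|\lesssim W$ (valid for all $\mu,\nu\in\BR$ since $\langle w^{\pm}\rangle\ge 1$), the same device used in the proof of Lemma \ref{Sobo1}. No gaps.
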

 Next, we study the weighted trace theorem for the fractional derivatives.
 \begin{lem}[Weighted trace theorem]\label{lemD3}
For any $\mu\in \BR,\ \nu\in \BR$, there holds
\begin{align*}
\| \langle w^+\rangle^{\mu}\langle w^-\rangle^{\nu} \langle \p_1 \rangle^{\f12} \ud{g} \|_{L^2(\BR )}
&\lesssim  \sum_{|\alpha|\leq 1} \|\langle w^+\rangle^{\mu}\langle w^-\rangle^{\nu} \p^{\alpha} g \|_{L^2(\Omega)},
\end{align*}
provided the right hand side is finite.
\end{lem}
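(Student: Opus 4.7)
The plan is to combine the classical trace theorem (Lemma \ref{lemD1}), the relaxed weighted trace theorem (Lemma \ref{lemD2}), and the commutator bound of Lemma \ref{lemD0} via the decomposition
\begin{equation*}
W(x_1)\,\langle\p_1\rangle^{\f12}\ud{g}(x_1)
= \langle\p_1\rangle^{\f12}\bigl(W\ud{g}\bigr)(x_1) - [\langle\p_1\rangle^{\f12},W]\ud{g}(x_1),
\end{equation*}
where $W(x_1):=\langle w^+(x_1)\rangle^{\mu}\langle w^-(x_1)\rangle^{\nu}$. The advantage is that $W$ depends only on $x_1$, so $W$ commutes with the trace operation: $\ud{Wg}(x_1)=W(x_1)\ud{g}(x_1)$.

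First I would estimate $\langle\p_1\rangle^{\f12}(W\ud{g})$ by applying Lemma \ref{lemD1} with $s=1$ to the function $Wg$, which gives
\begin{equation*}
\|\langle\p_1\rangle^{\f12}\ud{Wg}\|_{L^2(\BR)}
\lesssim \|\langle\nabla\rangle(Wg)\|_{L^2(\Omega)}
\lesssim \|Wg\|_{L^2(\Omega)}+\|(\p_1W)g\|_{L^2(\Omega)}+\|W\nabla g\|_{L^2(\Omega)}.
\end{equation*}
A direct computation from $W=\langle w^+\rangle^{\mu}\langle w^-\rangle^{\nu}$ and $|w^\pm|/\langle w^\pm\rangle\leq 1$ yields the pointwise bound $|\p_1W|\leq (|\mu|+|\nu|)W$, which controls $\|(\p_1W)g\|_{L^2(\Omega)}$ by $\|Wg\|_{L^2(\Omega)}$. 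Hence this term is already bounded by $\sum_{|\alpha|\leq 1}\|W\p^{\alpha}g\|_{L^2(\Omega)}$.

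For the commutator term, I would invoke formula \eqref{D14} with $s=\f12$ and weight $W$, which represents $[\langle\p_1\rangle^{\f12},W]\ud{g}$ as
\begin{equation*}
-\,\mathrm{p.v.}\int_{\BR}(1-\p_1^2)B_{\f32}(x_1-x_1')\bigl(W(x_1)-W(x_1')\bigr)\ud{g}(x_1')\,\d x_1'.
\end{equation*}
Lemma \ref{lemD0} (taking $s=\f12$, $q=2$) then gives
\begin{equation*}
\|[\langle\p_1\rangle^{\f12},W]\ud{g}\|_{L^2(\BR)}
\lesssim \|W\ud{g}\|_{L^2(\BR)},
\end{equation*}
and Lemma \ref{lemD2} finishes the bound: $\|W\ud{g}\|_{L^2(\BR)}\lesssim \sum_{|\alpha|\leq 1}\|W\p^{\alpha}g\|_{L^2(\Omega)}$.

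Combining the two estimates yields the claim. The main (essentially only) technical obstacle is the commutator, but Lemma \ref{lemD0} has already packaged precisely that difficulty: the near-diagonal contribution ($|x_1-x_1'|\leq \tfrac12$) uses the comparability $\langle w^\pm(x_1)\rangle\sim \langle w^\pm(x_1')\rangle$ to absorb the difference $W(x_1)-W(x_1')$ into a factor $|x_1-x_1'|$, while the far-diagonal contribution uses the growth bound $\langle w^\pm(x_1)\rangle\leq 3|x_1-x_1'|\langle w^\pm(x_1')\rangle$ from \eqref{D19} together with the exponential decay of the Bessel potential $B_{3/2}$ at infinity. Once these two ingredients are in place, the proof is a clean three-line application of the triangle inequality to the above decomposition.
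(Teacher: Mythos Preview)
Your proof is correct and follows essentially the same approach as the paper: the decomposition $W\langle\p_1\rangle^{\f12}\ud{g}=\langle\p_1\rangle^{\f12}(W\ud{g})-[\langle\p_1\rangle^{\f12},W]\ud{g}$ is exactly the paper's identity \eqref{D24}, with the first term handled by Lemma~\ref{lemD1} applied to $Wg$ and the commutator handled by Lemma~\ref{lemD0} followed by Lemma~\ref{lemD2}. Your write-up is simply more explicit about the intermediate step $|\p_1W|\lesssim W$, which the paper leaves implicit.
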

\begin{proof}
We start by rewriting \eqref{D14} as follows:
\begin{align}\label{D24}
&\langle w^+(x_1)\rangle^{\mu}\langle w^-(x_1)\rangle^{\nu} \langle \p_1\rangle^{\f 12} \ud{g}(x_1)
=\langle \p_1\rangle^{\f 12} \big( \langle w^+(x_1)\rangle^{\mu}\langle w^-(x_1)\rangle^{\nu} \ud{g}(x_1)\big)
\\\nonumber
&\qquad+\text{p.v.} \int_{\BR}  (1-\p_1^2)B_{\f 32}(x_1-x_1')
\big(\langle w^+(x_1)\rangle^{\mu}\langle w^-(x_1)\rangle^{\nu}-\langle w^+(x_1')\rangle^{\mu}\langle w^-(x_1')\rangle^{\nu}\big) \ud{g}(x_1')\d x_1'.
\end{align}
The first term on the right hand side \eqref{D24} can be controlled from the classical trace theorem of Lemma \ref{lemD1}.
For the second term, it is bounded from Lemma \ref{lemD0}
and the relaxed trace theorem Lemma \ref{lemD2}.
This finishes the proof of the lemma.
\end{proof}

\subsection{Weighted commutator estimate: fractional derivative}
Different from the notations in the above subsection,
$g$ and $h$ in this subsection denote functions defined in $\BR$.

Before the weighted commutator estimate,
we first present several lemmas.
\begin{lem}\label{lemD21}
Let  $0< s< 1$, $1\leq q\leq  \infty$, there holds
\begin{align*}
\| \langle \p_1\rangle^{s} h\|_{L^q(\BR )}
\lesssim \|   h  \|_{L^q(\BR )}
+\|  \p_1  h  \|_{L^q(\BR )}.
\end{align*}
\end{lem}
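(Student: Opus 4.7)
The plan is to exploit the integral representation \eqref{D1} for $\langle\p_1\rangle^s h$ and estimate the two resulting pieces separately, leveraging the Bessel kernel asymptotics supplied by Lemma \ref{lemBe}.

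First, since $B_{2-s}$ is bounded near the origin and decays exponentially at infinity by Lemma \ref{lemBe}, it lies in $L^1(\bR)$. Hence the smooth piece $\int_{\bR}B_{2-s}(x_1-x_1')h(x_1')\,dx_1'$ is a convolution with an $L^1$ kernel, and Young's inequality bounds its $L^q$ norm by $\|B_{2-s}\|_{L^1}\|h\|_{L^q}\lesssim \|h\|_{L^q}$, which supplies one of the two desired terms on the right-hand side.

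Next I would split the principal-value piece $\textrm{p.v.}\int_{\bR}\p_1^2 B_{2-s}(x_1-x_1')(h(x_1)-h(x_1'))\,dx_1'$ into a near region $\{|x_1-x_1'|\leq \f12\}$ and a far region $\{|x_1-x_1'|\geq \f12\}$. On the near region, I use the fundamental theorem of calculus to write $h(x_1)-h(x_1')=(x_1-x_1')\int_0^1 \p_1 h(x_1'+\theta(x_1-x_1'))\,d\theta$; combined with the bound $|\p_1^2 B_{2-s}(y)|\lesssim |y|^{-s-1}$ from Lemma \ref{lemBe}, the integrand is pointwise dominated by $|x_1-x_1'|^{-s}\int_0^1|\p_1 h|(x_1'+\theta(x_1-x_1'))\,d\theta$. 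A change of variables $y=x_1'-x_1$ and Minkowski's integral inequality then yield the $L^q$ bound $\bigl(\int_{|y|\leq 1/2}|y|^{-s}\,dy\bigr)\|\p_1 h\|_{L^q}\lesssim \|\p_1 h\|_{L^q}$, where the $y$-integral is finite precisely because $s<1$.

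On the far region, Lemma \ref{lemBe} supplies the absolutely integrable pointwise bound $|\p_1^2 B_{2-s}(y)|\lesssim e^{-|y|/2}$, so the p.v.\ is already an ordinary integral there, and I split $h(x_1)-h(x_1')$ into its two summands. The $h(x_1)$ piece contributes $|h(x_1)|\int_{|y|\geq 1/2}e^{-|y|/2}\,dy\lesssim |h(x_1)|$, whose $L^q$ norm is $\lesssim \|h\|_{L^q}$; the $h(x_1')$ piece is a convolution of $|h|$ with an $L^1$ kernel, again bounded by $\|h\|_{L^q}$ via Young's inequality. Summing the three estimates produces the claim. No substantive obstacle is expected: the only delicate point is the singularity $|y|^{-s-1}$ in the near region, and this is neutralized exactly by the first-order cancellation in $h(x_1)-h(x_1')$ once $s<1$, which is the hypothesis of the lemma.
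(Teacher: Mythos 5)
Your proposal is correct and follows essentially the same route as the paper: the same representation \eqref{D1}, Young's inequality for the smooth piece and the far region, and the first-order Taylor expansion of $h(x_1)-h(x_1')$ combined with the kernel bound $|\p_1^2B_{2-s}(y)|\sim|y|^{-s-1}$ and Minkowski's inequality on the near region. No gaps.
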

\begin{proof}
Let us copy \eqref{D1} and write
\begin{align*}
&\langle \p_1\rangle^{s} h(x_1)=  \int_{\BR} B_{2-s}(x_1-x_1') h(x_1') \d x_1'
+ \text{p.v.} \int_{\BR} \p_1^2B_{2-s}(x_1-x_1') (h(x_1)-h(x_1')) \d x_1'.
\end{align*}
By Young's inequality, it is easy to see that
\begin{align*}
\big\| \int_{\BR} B_{2-s}(x_1-x_1') h(x_1') \d x_1' \big\|_{L^q(\BR )}
\lesssim \| h \|_{L^q(\BR )} .
\end{align*}
On the other hand, by Young's inequality, we have
\begin{align*}
\big\|\text{p.v.} \int_{|x_1-x_1'|\geq \f12} \p_1^2B_{2-s}(x_1-x_1') (h(x_1)-h(x_1')) \d x_1'\big\|_{L^q(\BR )}
\lesssim \| h \|_{L^q(\BR )}.
\end{align*}
For  $|x_1-x_1'|\leq \f12$, we calculate
\begin{align*}
&\text{p.v.} \int_{|x_1-x_1'|\leq \f12} \p_1^2B_{2-s}(x_1-x_1') (h(x_1)-h(x_1')) \d x_1' \\\nonumber
&=\text{p.v.} \int_{|x_1-x_1'|\leq \f12}\int_0^1 (x_1'-x_1) \p_1^2B_{2-s}(x_1-x_1') \p_1h (x_1+\tau(x_1'-x_1)) \d \tau  \d x_1'\\\nonumber
&=\text{p.v.} \int_{|\bar{x}_1|\leq \f12}\int_0^1 \bar{x}_1 \p_1^2 B_{2-s}(\bar{x}_1) \p_1h(x_1+\tau \bar{x}_1) \d\tau  \d \bar{x}_1.
\end{align*}
Taking the $L^q(\BR )$ norm of the above expression, by
the Minkowski's inequality,
it is bounded by
\begin{align*}
\|  \p_1  h  \|_{L^q(\BR )}.
\end{align*}
Thus the lemma is proved.
\end{proof}
\begin{lem}\label{lemD23}
Let $s>0$, $\mu\in\BR,\, \nu\in\BR $, $1\leq q\leq\infty$. There holds
\begin{align*}
\|  \langle w^+\rangle^{\mu}\langle w^-\rangle^{\nu} \langle \p_1\rangle^{-s} g \|_{L^q(\BR )}
&\lesssim \| \langle w^+\rangle^{\mu}\langle w^-\rangle^{\nu} g\|_{L^q(\BR )} ,\\
\|  \langle w^+\rangle^{\mu}\langle w^-\rangle^{\nu} g\|_{L^q(\BR )}
&\lesssim \| \langle w^+\rangle^{\mu}\langle w^-\rangle^{\nu}\langle \p_1\rangle^{s} g\|_{L^q(\BR )} ,
\end{align*}
provided that the right hand sides are finite.
\end{lem}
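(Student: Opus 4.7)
The plan is to reduce both inequalities to the fact that $\langle\p_1\rangle^{-s}$ is given by convolution with the Bessel potential $B_s$, which is a nonnegative function with good decay properties (integrable near the origin for $s>0$, and exponentially decaying at infinity, analogously to Lemma \ref{lemBe}). The two inequalities are then related by a duality/substitution trick, so the only real work is in proving the first one.

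For the first inequality, I would write
\begin{align*}
\langle w^+(x_1)\rangle^{\mu}\langle w^-(x_1)\rangle^{\nu}|\langle\p_1\rangle^{-s}g(x_1)|
\leq \int_{\BR} B_s(x_1-x_1')\,R(x_1,x_1')\cdot\langle w^+(x_1')\rangle^{\mu}\langle w^-(x_1')\rangle^{\nu}|g(x_1')|\,\d x_1',
\end{align*}
where $R(x_1,x_1')=\langle w^+(x_1)\rangle^{\mu}\langle w^-(x_1)\rangle^{\nu}/(\langle w^+(x_1')\rangle^{\mu}\langle w^-(x_1')\rangle^{\nu})$. I split the integration region into $\{|x_1-x_1'|\leq 1/2\}$ and $\{|x_1-x_1'|\geq 1/2\}$. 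On the near region, \eqref{D15} gives $R(x_1,x_1')\lesssim 1$, so I get a convolution of $\langle w^+\rangle^\mu\langle w^-\rangle^\nu|g|$ with $B_s\mathbf{1}_{|\cdot|\leq 1/2}\in L^1(\BR)$. On the far region, \eqref{D19} gives $R(x_1,x_1')\leq 3^{|\mu|+|\nu|}|x_1-x_1'|^{|\mu|+|\nu|}$, which is absorbed by the exponential decay of $B_s$ at infinity, so $|x_1-x_1'|^{|\mu|+|\nu|}B_s(x_1-x_1')\mathbf{1}_{|\cdot|\geq 1/2}\in L^1(\BR)$. Young's convolution inequality (with an $L^1$ kernel and an $L^q$ function) then yields the first inequality.

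For the second inequality, I set $\tilde g=\langle\p_1\rangle^s g$, so that $g=\langle\p_1\rangle^{-s}\tilde g$. Then the first inequality applied to $\tilde g$ gives
\begin{align*}
\|\langle w^+\rangle^{\mu}\langle w^-\rangle^{\nu}g\|_{L^q(\BR)}
=\|\langle w^+\rangle^{\mu}\langle w^-\rangle^{\nu}\langle\p_1\rangle^{-s}\tilde g\|_{L^q(\BR)}
\lesssim \|\langle w^+\rangle^{\mu}\langle w^-\rangle^{\nu}\tilde g\|_{L^q(\BR)}
=\|\langle w^+\rangle^{\mu}\langle w^-\rangle^{\nu}\langle\p_1\rangle^s g\|_{L^q(\BR)},
\end{align*}
which is the desired bound.

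The main technical point is ensuring the Bessel kernel $B_s$ for $s>0$ has the two properties used above: local integrability together with exponential decay at infinity. The cited Bessel potential theory (Grafakos, Stein, Aronszajn--Smith) provides $B_s\in L^1(\BR)$ for $s>0$ with $B_s(x_1)\lesssim e^{-|x_1|/2}$ for $|x_1|\geq 1/2$, which is exactly what makes the weighted convolution kernel summable after absorbing the polynomial growth $|x_1-x_1'|^{|\mu|+|\nu|}$. Thus the main obstacle is really only notational rather than conceptual, and the argument is substantially simpler than the proof of the commutator Lemma \ref{lemD0}, since here we are comparing $\langle\p_1\rangle^{-s}$ to a genuine convolution (without the principal-value correction that appears for $\langle\p_1\rangle^{s}$).
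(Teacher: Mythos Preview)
Your proposal is correct and follows essentially the same approach as the paper: represent $\langle\p_1\rangle^{-s}$ as convolution with the Bessel potential $B_s$, split into near and far regions using \eqref{D15} and \eqref{D19}, apply Young's inequality with an $L^1$ kernel, and deduce the second inequality from the first by the substitution $\tilde g=\langle\p_1\rangle^s g$. The paper's proof is organized identically, including the same cutoff at $|x_1-x_1'|=1/2$ and the same appeal to the asymptotics of $B_s$.
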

\begin{proof}
The second inequality is a direct consequence of the first one.
Thus it suffices to prove the first inequality.

For $s>0$, by \eqref{D15} and \eqref{D19}, we write
\begin{align}\label{D25}
&\langle w^+\rangle^{\mu}\langle w^-\rangle^{\nu} |\langle \p_1\rangle^{-s} g(x_1)|  \\\nonumber
&=\langle w^+(x_1)\rangle^{\mu}\langle w^-(x_1)\rangle^{\nu}\Big|\int_{\BR}  B_s(x_1-x_1') g(x_1')\d x_1'\Big| \\\nonumber
&\lesssim
\int_{|x_1-x_1'|\leq \f12} B_s(x_1-x_1') \langle w^+(x_1')\rangle^{\mu}\langle w^-(x_1')\rangle^{\nu} |g(x_1')|\d x_1'  \\\nonumber
&\quad+\int_{|x_1-x_1'|\geq \f12} |x_1-x_1'|^{|\mu|+|\nu|} B_s(x_1-x_1') \langle w^+(x_1')\rangle^{\mu}\langle w^-(x_1')\rangle^{\nu} |g(x_1')| \d x_1'.
\end{align}
Here Bessel potentials $B_s(x_1)$ satisfy
\begin{align*}
&B_s(x_1)\lesssim e^{-\f{|x_1|}{2}}\quad \textrm{for}\,\,|x_1|\geq \f12,
\end{align*}
and
\begin{align*}
&B_s(x_1)\sim
\begin{cases}
|x_1|^{s-1}, \quad \textrm{for} \quad 0<s<1 ,\\
\ln|x_1|^{-1}, \quad \textrm{for} \quad s=1,\\
1, \quad \textrm{for} \quad s>1,
\end{cases}
\quad\textrm{for}\,\, 0<|x_1|\leq \f12.
\end{align*}
Taking the $L^q$ norm of \eqref{D25} for $1\leq q\leq +\infty$, by Young's inequality,
we obtain
\begin{align*}
\| \langle w^+\rangle^{\mu}\langle w^-\rangle^{\nu}\langle \p_1\rangle^{-s} g   \|_{L^q(\BR )}
\lesssim \| \langle w^+\rangle^{\mu}\langle w^-\rangle^{\nu} g \|_{L^q(\BR )} .
\end{align*}
This finishes the proof of the lemma.
\end{proof}

\begin{lem}\label{lemW}
Let $\mu,\ \nu\in\BR$.
There hold
\begin{align*}
&\| \langle w^+\rangle^{\mu}\langle w^-\rangle^{\nu} \langle \p_1\rangle^{-\f12}\p_1 h \|_{L^2(\BR)}
\lesssim  \| \langle w^+\rangle^{\mu}\langle w^-\rangle^{\nu} \langle \p_1\rangle^{\f12}  h \|_{L^2(\BR)},\\
&\| \langle w^+\rangle^{\mu}\langle w^-\rangle^{\nu} \langle \p_1\rangle^{-1}\p_1 h \|_{L^2(\BR)}
\lesssim  \| \langle w^+\rangle^{\mu}\langle w^-\rangle^{\nu}  h \|_{L^2(\BR)}.
\end{align*}
\end{lem}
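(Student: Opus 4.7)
The two inequalities are in fact equivalent. Setting $g := \langle\p_1\rangle^{\f12} h$ and using that Fourier multipliers commute,
\begin{align*}
\langle\p_1\rangle^{-\f12}\p_1 h
= \langle\p_1\rangle^{-\f12}\p_1 \langle\p_1\rangle^{-\f12} g
= \langle\p_1\rangle^{-1}\p_1 g,
\end{align*}
so the first inequality reduces to the second applied to $g$. Thus it suffices to prove, with $W:=\langle w^+\rangle^{\mu}\langle w^-\rangle^{\nu}$,
\begin{align*}
\| W \langle\p_1\rangle^{-1}\p_1 h\|_{L^2(\BR)} \lesssim \|Wh\|_{L^2(\BR)}.
\end{align*}

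The plan is to combine the unweighted $L^2$-boundedness of $T := \langle\p_1\rangle^{-1}\p_1$ with the near/far commutator strategy already used in Lemma \ref{lemD0}. Since the symbol $i\xi/\langle\xi\rangle$ satisfies $|i\xi/\langle\xi\rangle|\le 1$, Plancherel gives $\|Tf\|_{L^2}\le \|f\|_{L^2}$. Moreover $T$ is convolution with $K:=\p_1 B_1$, and by standard Bessel potential theory (as cited for Lemma \ref{lemBe}), $B_1(x_1)\sim \ln|x_1|^{-1}$ near the origin with exponential decay at infinity; hence
\begin{align*}
|K(x_1)|\lesssim \tfrac{1}{|x_1|} \text{ for } 0<|x_1|\le \tfrac12,\qquad |K(x_1)|\lesssim e^{-|x_1|/2} \text{ for } |x_1|\ge \tfrac12,
\end{align*}
and $T h(x_1)=\mathrm{p.v.}\int K(x_1-x_1')h(x_1')\,dx_1'$.

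Next I would decompose
\begin{align*}
W(x_1) T h(x_1)
= T(Wh)(x_1) + \mathrm{p.v.}\int_{\BR} K(x_1-x_1')\bigl(W(x_1)-W(x_1')\bigr) h(x_1')\, dx_1'.
\end{align*}
The first summand is controlled in $L^2$ by $\|Wh\|_{L^2}$. For the commutator, I split the integral into $|x_1-x_1'|\le \f12$ and $|x_1-x_1'|\ge \f12$ exactly as in the proof of Lemma \ref{lemD0}. In the near region, \eqref{D15} together with the mean value theorem gives $|W(x_1)-W(x_1')|\lesssim |x_1-x_1'|\,W(x_1')$, which exactly cancels the $1/|x_1-x_1'|$ singularity of $K$ and produces a bounded kernel supported in $\{|x_1|\leq \f12\}$; Young's inequality gives the desired bound. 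In the far region, \eqref{D19} together with the exponential decay of $K$ yields a convolution kernel $(1+|x_1|^{|\mu|+|\nu|})e^{-|x_1|/2}\in L^1(\BR)$ acting on $Wh$, and Young's inequality again concludes.

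The main obstacle is that $K=\p_1 B_1$ is a critical Calder\'on--Zygmund-type kernel with a non-integrable $1/|x_1|$ singularity at the origin, one derivative worse than the subcritical kernels $\p_1^2 B_{2-s}$, $0<s<1$, handled in Lemma \ref{lemD0}. The rescue is that the commutator factor $W(x_1)-W(x_1')$ contributes precisely one derivative's worth of cancellation, neutralising the singularity and eliminating any need to invoke principal value cancellation in the near-diagonal analysis; this is why the proof reduces to two simple Young inequalities.
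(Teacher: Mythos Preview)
Your proof is correct and follows essentially the same approach as the paper: both reduce to the second inequality, write $\langle\p_1\rangle^{-1}\p_1$ as convolution with $K=\p_1 B_1$, and exploit the commutator structure together with the near/far dichotomy, using the mean-value bound on $W(x_1)-W(x_1')$ to cancel the $1/|x_1|$ singularity of $K$ in the near region and the exponential decay of $K$ in the far region. The only organizational difference is that you decompose $W\,Th=T(Wh)+[W,T]h$ globally and bound $T(Wh)$ in one stroke via Plancherel, whereas the paper first splits $W\,Th$ into near and far pieces and then performs the commutator decomposition only on the near piece, invoking Calder\'on--Zygmund boundedness for the truncated singular integral; this is a cosmetic difference, not a substantive one.
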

\begin{proof}
The first and the second inequality are equivalent. It suffices to show the second one.

We write
\begin{align*}
\langle \p_1\rangle^{-1}\p_1 h(x_1)
&=\int_{\BR} B_1(x_1-x_1') \p_{x_1'} h(x_1')\d x_1'\\
&= \text{p.v.}\int_{\BR} \p_1B_1(x_1-x_1')   h(x_1')\d x_1'\, .
\end{align*}
Here
$B_1(x_1)$ is the Bessel potential.
If $|x_1-x_1'|\geq 1$, we estimate
\begin{align*}
&\langle w^+(x_1)\rangle^{\mu}\langle w^-(x_1)\rangle^{\nu}\big| \text{p.v.}\int_{|x_1-x_1'|\geq 1} \p_1B_1(x_1-x_1')   h(x_1')\d x_1'\big|\\
&\lesssim   \int_{|x_1-x_1'|\geq 1} |x_1-x_1'|^{|\mu|+|\nu|}\cdot |\p_1B_1(x_1-x_1') |
\langle w^+(x_1')\rangle^{\mu}\langle w^-(x_1')\rangle^{\nu}|h(x_1')|\d x_1'\,.
\end{align*}
Consequently,
\begin{align*}
\big\| \int_{|x_1-x_1'|\geq 1} |x_1-x_1'|\cdot |\p_1B_1(x_1-x_1') |
\langle w^+(x_1')\rangle^{\mu}\langle w^-(x_1')\rangle^{\nu}|h(x_1')|\d x_1' \big\|_{L^2}
\lesssim \|\langle w^+\rangle^{\mu}\langle w^-\rangle^{\nu} h  \big\|_{L^2}.
\end{align*}
Next, if $|x_1-x_1'|\leq 1$, we have
\begin{align*}
&\langle w^+(x_1)\rangle^{\mu}\langle w^-(x_1)\rangle^{\nu} \text{p.v.}\int_{|x_1-x_1'|\leq 1} \p_1B_1(x_1-x_1')   h(x_1')\d x_1'\\
&= \text{p.v.}\int_{|x_1-x_1'|\leq 1} \p_1B_1(x_1-x_1') \langle w^+(x_1')\rangle^{\mu}\langle w^-(x_1')\rangle^{\nu} h(x_1')\d x_1'\\
&\quad + \text{p.v.}\int_{|x_1-x_1'|\leq 1} \p_1B_1(x_1-x_1')  \big(\langle w^+(x_1)\rangle^{\mu}\langle w^-(x_1)\rangle^{\nu}-\langle w^+(x_1')\rangle^{\mu}\langle w^-(x_1')\rangle^{\nu}
\big) h(x_1')\d x_1'.
\end{align*}
The first term is Calderon-Zygmund type singular integral. Hence
\begin{align*}
\big\| \text{p.v.}\int_{|x_1-x_1'|\leq 1} \p_1B_1(x_1-x_1')  \langle w^+(x_1')\rangle^{\mu}\langle w^-(x_1')\rangle^{\nu} h(x_1')\d x_1' \big\|_{L^2}
\leq \|\langle w^+\rangle^{\mu}\langle w^-\rangle^{\nu} h  \big\|_{L^2}.
\end{align*}
For the second term,
note
\begin{align*}
&\big| \langle w^+(x_1)\rangle^{\mu}\langle w^-(x_1)\rangle^{\nu}-\langle w^+(x_1')\rangle^{\mu}\langle w^-(x_1')\rangle^{\nu} \big|\\
&\lesssim  |x_1-x_1'| \big(\langle w^+\rangle^{\mu-1}\langle w^-\rangle^{\nu}+\langle w^+\rangle^{\mu}\langle w^-\rangle^{\nu-1}\big)(x_1')\, .
\end{align*}
Consequently,
\begin{align*}
&\big\| \text{p.v.}\int_{|x_1-x_1'|\leq 1} \p_1B_1(x_1-x_1')  \big(\langle w^+(x_1)\rangle^{\mu}\langle w^-(x_1)\rangle^{\nu}-\langle w^+(x_1')\rangle^{\mu}\langle w^-(x_1')\rangle^{\nu}
\big) h(x_1')\d x_1' \big\|_{L^2}\\
&\leq \|\langle w^+\rangle^{\mu}\langle w^-\rangle^{\nu} h  \big\|_{L^2}.
\end{align*}
\end{proof}
Applying Lemma \ref{lemD0} to \eqref{D14} with $w=\langle w^+\rangle^{\mu}\langle w^-\rangle^{\nu}$,  by Lemma \ref{lemD23}, we have the following
commutator estimate.
\begin{lem}\label{lemD5}
For  $0< s<1$, $\mu\in \BR,\, \nu\in\BR$, $1\leq q\leq\infty$, there hold
\begin{align*}
& \| [\langle \p_1\rangle^{s}, \langle w^+\rangle^{\mu}\langle w^-\rangle^{\nu}] g \|_{L^q(\BR )}
\lesssim\| \langle w^+\rangle^{\mu}\langle w^-\rangle^{\nu} g\|_{L^q(\BR )} ,\\\nonumber
& \| \langle \p_1\rangle^{s}\big( \langle w^+\rangle^{\mu}\langle w^-\rangle^{\nu}  g\big)  \|_{L^q(\BR )}
\lesssim\| \langle w^+\rangle^{\mu}\langle w^-\rangle^{\nu} \langle \p_1\rangle^{s} g\|_{L^q(\BR )} ,
\end{align*}
provided the right hand sides are finite.
\end{lem}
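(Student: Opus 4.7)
The plan is to observe that Lemma \ref{lemD5} is essentially a packaging of the ingredients already assembled: identity \eqref{D14} rewrites the commutator as an explicit singular integral, Lemma \ref{lemD0} bounds this integral in $L^q$, and Lemma \ref{lemD23} converts weighted $L^q$ estimates of $g$ into weighted $L^q$ estimates of $\langle\p_1\rangle^s g$.

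For the first inequality, I would specialize \eqref{D14} by choosing $w(x_1) = \langle w^+(x_1)\rangle^{\mu}\langle w^-(x_1)\rangle^{\nu}$ and $h(x_1) = g(x_1)$, yielding
\begin{equation*}
[\langle \p_1\rangle^{s}, \langle w^+\rangle^{\mu}\langle w^-\rangle^{\nu}] g(x_1)
= -\text{p.v.} \int_{\BR} (1-\p_1^2)B_{2-s}(x_1-x_1') \bigl(\langle w^+(x_1)\rangle^{\mu}\langle w^-(x_1)\rangle^{\nu} - \langle w^+(x_1')\rangle^{\mu}\langle w^-(x_1')\rangle^{\nu}\bigr) g(x_1')\d x_1'.
\end{equation*}
The right-hand side is precisely the integral bounded in Lemma \ref{lemD0}, so its $L^q(\BR)$ norm is controlled by $\|\langle w^+\rangle^{\mu}\langle w^-\rangle^{\nu} g\|_{L^q(\BR)}$, which gives the first estimate.

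For the second inequality, I would use the triangle inequality together with the definition of the commutator:
\begin{equation*}
\|\langle \p_1\rangle^{s}\bigl(\langle w^+\rangle^{\mu}\langle w^-\rangle^{\nu} g\bigr)\|_{L^q(\BR)}
\leq \|[\langle \p_1\rangle^{s}, \langle w^+\rangle^{\mu}\langle w^-\rangle^{\nu}] g\|_{L^q(\BR)}
+ \|\langle w^+\rangle^{\mu}\langle w^-\rangle^{\nu} \langle \p_1\rangle^{s} g\|_{L^q(\BR)}.
\end{equation*}
The commutator term is handled by the first inequality of the lemma, which reduces the bound to $\|\langle w^+\rangle^{\mu}\langle w^-\rangle^{\nu} g\|_{L^q(\BR)}$; and this in turn is dominated by $\|\langle w^+\rangle^{\mu}\langle w^-\rangle^{\nu} \langle \p_1\rangle^{s} g\|_{L^q(\BR)}$ by the second inequality of Lemma \ref{lemD23}. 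Combining everything delivers the second estimate.

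Since both halves of Lemma \ref{lemD5} reduce almost immediately to quoted facts, I do not anticipate a serious obstacle at this stage; the genuine work was already carried out in Lemma \ref{lemD0}, where the near-diagonal and far-diagonal splits of the singular kernel were performed using the asymptotics in Lemma \ref{lemBe}, the estimate \eqref{D15} for the weight on the near-diagonal piece, and the polynomial control \eqref{D19} absorbed by the exponential decay of Bessel potentials on the far-diagonal piece. The only minor care needed here is to record that the assumption $0<s<1$ is inherited from \eqref{D1}--\eqref{D14}, so that both $B_{2-s}$ and its derivatives satisfy the asymptotics used in Lemma \ref{lemBe}, and that Lemma \ref{lemD23} is applied with the same exponents $\mu,\nu\in\mathbb R$, which the lemma permits without restriction.
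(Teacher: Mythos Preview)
Your proof is correct and follows essentially the same route as the paper: the paper states only that the lemma follows by applying Lemma~\ref{lemD0} to identity~\eqref{D14} with $w=\langle w^+\rangle^{\mu}\langle w^-\rangle^{\nu}$, together with Lemma~\ref{lemD23}, and you have spelled out precisely these steps.
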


Next we recall a version of the  commutator estimate \cite{KP, Li}.
\begin{lem}\label{lemD6}
Let  $0<s,s_1<1$, $\f12 =\f{1}{q_1}+\f{1}{q_2}=\f{1}{q_3}+\f{1}{q_4}$, $2\leq q_1, q_2, q_3, q_4 \leq\infty$, there hold
\begin{align*}
 \big\|[\langle\p_1\rangle^s,g] h \big\|_{L^2(\BR)}
&\lesssim
\| \langle\p_1\rangle^s  g \|_{L^{q_1}(\BR)}
\| h \|_{L^{q_2}(\BR)},\\
\big\|[\langle\p_1\rangle^{s_1}\p_1,g] h \big\|_{L^2(\BR )}
&\lesssim
\| \p_1  g \|_{L^{q_1}(\BR)}
\| \langle\p_1\rangle^{s_1-2}\p_1 h \|_{L^{q_2}(\BR )}
+\| \langle\p_1\rangle^{s_1}\p_1 g \|_{L^{q_3}(\BR)}
\|  h \|_{L^{q_4}(\BR)} .
\end{align*}
\end{lem}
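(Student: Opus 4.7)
The plan is to prove both commutator estimates by reducing them to the Bessel-potential singular-integral representation developed in Section 3.2, and then applying the near/far splitting of Lemma \ref{lemD0} and Lemma \ref{lemW}, in the spirit of the classical Kato--Ponce and Li arguments \cite{KP,Li}.

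For the first inequality I start from the commutator identity \eqref{D14} with the ``weight'' $w$ replaced by $g$ itself, so that
\begin{align*}
[\langle\p_1\rangle^s,g]h(x_1) = -\text{p.v.}\int_{\BR}(1-\p_1^2)B_{2-s}(x_1-x_1')\big(g(x_1)-g(x_1')\big)h(x_1')\,\d x_1'.
\end{align*}
I then split the integration into the far region $|x_1-x_1'|\ge \f12$ and the near region $|x_1-x_1'|\le\f12$. In the far region the kernel decays exponentially by Lemma \ref{lemBe}, so bounding $|g(x_1)-g(x_1')|\le|g(x_1)|+|g(x_1')|$ and applying Young's inequality together with Lemma \ref{lemD23} (to promote $\|g\|_{L^{q_1}}$ to $\|\langle\p_1\rangle^s g\|_{L^{q_1}}$) gives the desired bilinear bound. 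The near region is the essential piece: the kernel $(1-\p_1^2)B_{2-s}$ has singularity of order $|x_1-x_1'|^{-(s+1)}$, and the needed $s$-order cancellation is extracted from $g(x_1)-g(x_1')$ by writing $g = B_s*\langle\p_1\rangle^s g$. Substituting and interchanging the order of integration by Fubini yields a trilinear kernel estimate on $(1-\p_1^2)B_{2-s}(x_1-x_1')\big(B_s(x_1-y)-B_s(x_1'-y)\big)$; using the near-diagonal bound $|B_s(x_1-y)-B_s(x_1'-y)|\lesssim|x_1-x_1'|^s$ together with Schur's test then produces the claim via H\"older in $L^{q_1}_y\times L^{q_2}_{x_1'}$.

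For the second inequality I use the algebraic identity
\begin{align*}
[\langle\p_1\rangle^{s_1}\p_1,g]h = \langle\p_1\rangle^{s_1}\big((\p_1 g)h\big) + [\langle\p_1\rangle^{s_1},g]\p_1 h,
\end{align*}
which follows from the product rule $\p_1(gh)=g\p_1 h+(\p_1 g)h$. The first summand is a fractional Leibniz (Kato--Ponce product) estimate: a paraproduct decomposition of $\langle\p_1\rangle^{s_1}((\p_1 g)h)$ separates high frequencies of $\p_1 g$ (contributing $\|\langle\p_1\rangle^{s_1}\p_1 g\|_{L^{q_3}}\|h\|_{L^{q_4}}$) from high frequencies of $h$ (contributing $\|\p_1 g\|_{L^{q_1}}\|\langle\p_1\rangle^{s_1-2}\p_1 h\|_{L^{q_2}}$ after absorbing the bounded multiplier $\p_1\langle\p_1\rangle^{-1}$ via Lemma \ref{lemW}). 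The commutator summand $[\langle\p_1\rangle^{s_1},g]\p_1 h$ requires the strong form of the Kato--Ponce commutator bound: by a Bony decomposition $g\cdot\p_1 h = T_g(\p_1 h)+T_{\p_1 h}g+R(g,\p_1 h)$, the low-high piece $[\langle\p_1\rangle^{s_1},T_g]\p_1 h$ gains a derivative on $g$ and is bounded by $\|\p_1 g\|_{L^{q_1}}\|\langle\p_1\rangle^{s_1-1}\p_1 h\|_{L^{q_2}}$, while the high-low and resonance pieces distribute the $s_1$ derivatives onto $g$ and yield $\|\langle\p_1\rangle^{s_1}\p_1 g\|_{L^{q_3}}\|h\|_{L^{q_4}}$; collecting terms produces the two target summands.

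The main technical obstacle throughout is the sharp transfer of fractional smoothness in the near-diagonal region: $\langle\p_1\rangle^s g\in L^{q_1}$ alone gives no pointwise H\"older information on $g$, so the cancellation in $g(x_1)-g(x_1')$ must be extracted by Bessel-kernel composition and analyzed band-by-band. This is the heart of the original Kato--Ponce argument \cite{KP} and Li's refinement \cite{Li}; in our self-contained inhomogeneous Bessel framework, it mirrors exactly the near/far analysis of Lemma \ref{lemD0} with the weight $\langle w^+\rangle^\mu\langle w^-\rangle^\nu$ replaced by the dynamical function $g$ and its fractional gradient $\langle\p_1\rangle^s g$.
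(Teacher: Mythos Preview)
The paper does not give a proof of this lemma at all: it is stated as a recalled result with a citation to \cite{KP,Li}, and the text moves directly on to the weighted version in Lemma~\ref{lemD8}. So the relevant comparison is not between two proofs, but between the literature argument (Littlewood--Paley paraproducts, as in Kato--Ponce and Li) and your attempted self-contained argument.

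Your proof of the first inequality has a genuine gap in the near region. The pointwise bound you invoke,
\[
|B_s(x_1-y)-B_s(x_1'-y)|\lesssim |x_1-x_1'|^s,
\]
is false: for $0<s<1$ the Bessel kernel satisfies $B_s(z)\sim|z|^{s-1}$ near the origin, so the difference blows up as $y$ approaches $x_1$ or $x_1'$. Even if one replaces this by the correct integrated bound $\int_{\bR}|B_s(x_1-y)-B_s(x_1'-y)|\,\dy\lesssim|x_1-x_1'|^s$, combining it with the near-origin singularity $(1-\p_1^2)B_{2-s}(x_1-x_1')\sim|x_1-x_1'|^{-s-1}$ leaves a kernel of size $|x_1-x_1'|^{-1}$, which is not integrable, so Schur's test cannot close the estimate. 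The reason the analogous argument works in Lemma~\ref{lemD0} is that the weight $\langle w^+\rangle^\mu\langle w^-\rangle^\nu$ is smooth with \emph{uniformly bounded} first derivative, so the difference gains a full factor $|x_1-x_1'|$; a general $g$ with only $\langle\p_1\rangle^s g\in L^{q_1}$ for $s<1$ has no such pointwise regularity. The correct route, as in \cite{KP,Li}, is a Littlewood--Paley decomposition of $g$ and $h$ from the outset, not a post-hoc substitution $g=B_s*\langle\p_1\rangle^s g$.

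For the second inequality your algebraic identity is correct, and the paraproduct sketch is closer to the actual argument, but the bookkeeping does not match the stated conclusion: the high-$h$ piece of $\langle\p_1\rangle^{s_1}\big((\p_1 g)h\big)$ produces $\|\p_1 g\|_{L^{q_1}}\|\langle\p_1\rangle^{s_1}h\|_{L^{q_2}}$, not $\|\p_1 g\|_{L^{q_1}}\|\langle\p_1\rangle^{s_1-2}\p_1 h\|_{L^{q_2}}$, and Lemma~\ref{lemW} only lets you trade $\p_1$ for $\langle\p_1\rangle$, not gain an extra negative order. To reach the form stated in the lemma one must run the full paraproduct analysis on the commutator $[\langle\p_1\rangle^{s_1}\p_1,g]h$ directly rather than on the pieces of your splitting; this is what \cite{Li} does.
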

Now we present the weighted commutator estimate
for fractional derivative ($0<s<1$) in one space dimension.
Later we will consider higher order weighted commutator estimate.
Similar estimates also hold  in general space dimension and the proof is the same.
\begin{lem}[Weighted commutator estimate, first version] \label{lemD8}
Let  $0<s<1$, $\f12 =\f{1}{q_1}+\f{1}{q_2}$, $2\leq q_1, q_2 \leq\infty$, $\mu,\mu_1,\mu_2,\nu,\nu_1,\nu_2\in\BR$,
 $\mu=\mu_1+\mu_2$, $\nu=\nu_1+\nu_2$. There holds
\begin{align*}
 \big\| \langle w^+\rangle^{\mu}\langle w^-\rangle^{\nu}
[\langle\p_1\rangle^{s},g] h \big\|_{L^2(\BR )}
&\lesssim  \| \langle w^+\rangle^{\mu_1}\langle w^-\rangle^{\nu_1} \langle \p_1\rangle^{s} g \|_{L^{q_1}(\BR)} \| \langle w^+\rangle^{\mu_2}\langle w^-\rangle^{\nu_2}  h  \|_{L^{q_2}(\BR )} ,
\end{align*}
provided the right hand sides are finite.
\end{lem}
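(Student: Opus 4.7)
I reduce the weighted commutator to the unweighted Kato--Ponce commutator of Lemma~\ref{lemD6} by transferring the weights onto $g$ and $h$. Write $w_1 = \langle w^+\rangle^{\mu_1}\langle w^-\rangle^{\nu_1}$, $w_2 = \langle w^+\rangle^{\mu_2}\langle w^-\rangle^{\nu_2}$, $\tilde g = w_1 g$ and $\tilde h = w_2 h$. Starting from the integral representation \eqref{D14} with $g$ in the role of $w$,
\[
[\langle\p_1\rangle^{s},g]h(x_1) = -\mathrm{p.v.}\!\int_{\BR} (1-\p_1^2)B_{2-s}(x_1-x_1')\bigl(g(x_1)-g(x_1')\bigr)h(x_1')\,\d x_1',
\]
I multiply by $w_1(x_1)w_2(x_1)=\langle w^+(x_1)\rangle^\mu\langle w^-(x_1)\rangle^\nu$ and split the integration into the near region $|x_1-x_1'|\le 1/2$ and the far region $|x_1-x_1'|\ge 1/2$, following the scheme of Lemmas~\ref{lemD0} and \ref{lemD3}. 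The objective is to identify the leading contribution with the kernel of $[\langle\p_1\rangle^s,\tilde g]\tilde h$ (modulo error terms) so as to invoke Lemma~\ref{lemD6} and then the second inequality of Lemma~\ref{lemD5} to obtain $\|w_1\langle\p_1\rangle^s g\|_{L^{q_1}}\|\tilde h\|_{L^{q_2}}$.

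\textbf{Near region.} The local equivalences $w_i(x_1)\sim w_i(x_1')$ from \eqref{D15}, together with the algebraic identity
\[
w_1(x_1)\bigl(g(x_1)-g(x_1')\bigr) = \bigl(\tilde g(x_1)-\tilde g(x_1')\bigr) - \bigl(w_1(x_1)-w_1(x_1')\bigr)g(x_1'),
\]
let me rewrite $w_1(x_1)w_2(x_1)(g(x_1)-g(x_1'))h(x_1')$ as $(\tilde g(x_1)-\tilde g(x_1'))\tilde h(x_1')$ plus remainders containing the weight differences $w_i(x_1)-w_i(x_1')$. Adding and subtracting the corresponding far-region piece reassembles the full commutator kernel of $[\langle\p_1\rangle^s,\tilde g]\tilde h$, to which Lemma~\ref{lemD6} and Lemma~\ref{lemD5} apply directly, yielding the desired right-hand side. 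The weight-difference remainders are estimated as in Lemma~\ref{lemD0}: combining $|w_i(x_1)-w_i(x_1')|\lesssim|x_1-x_1'|\,w_i(x_1')$ with $|(1-\p_1^2)B_{2-s}(x_1-x_1')|\lesssim|x_1-x_1'|^{-s-1}$ from Lemma~\ref{lemBe} produces a kernel behaving like $|x_1-x_1'|^{-s}$ on $|x_1-x_1'|\le 1/2$, hence locally $L^1$; Young's inequality together with H\"older (using $1/2=1/q_1+1/q_2$) bounds the remainders by $\|\tilde g\|_{L^{q_1}}\|\tilde h\|_{L^{q_2}}$, which is in turn dominated by the target right-hand side via Lemma~\ref{lemD23}.

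\textbf{Far region and main obstacle.} In the far region, the growth bound \eqref{D19} gives $w_i(x_1)\le 3^{|\mu_i|+|\nu_i|}|x_1-x_1'|^{|\mu_i|+|\nu_i|}w_i(x_1')$, so after splitting $g(x_1)-g(x_1')$ and distributing the weights, each piece is of the form $K(x_1-x_1')\tilde g(\cdot)\tilde h(x_1')$ with $K(y)=|y|^{|\mu|+|\nu|}|(1-\p_1^2)B_{2-s}(y)|$. The polynomial growth is absorbed by the exponential decay of $(1-\p_1^2)B_{2-s}$ from Lemma~\ref{lemBe}, so $K\in L^1$ on $\{|y|\ge 1/2\}$, and Young's inequality together with H\"older closes this region. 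The principal technical hurdle is the near-region bookkeeping: extracting exactly the kernel of $[\langle\p_1\rangle^s,\tilde g]\tilde h$ from $w_1(x_1)w_2(x_1)(g(x_1)-g(x_1'))h(x_1')$, checking that the far-region piece of $[\langle\p_1\rangle^s,\tilde g]\tilde h$ introduced by this reassembly is itself absorbed into the estimate, and tracking the weight-difference remainders uniformly for arbitrary $\mu_i,\nu_i\in\BR$. This unrestricted range of exponents is precisely what forces the direct kernel analysis here, since the Muckenhoupt-based commutator estimates in the literature require weights in $\mathcal{A}_p$ and accommodate only $|\mu_i|,|\nu_i|<1/4$.
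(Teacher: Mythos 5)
Your argument is correct, and the underlying idea is the same as the paper's: move $w_1=\langle w^+\rangle^{\mu_1}\langle w^-\rangle^{\nu_1}$ onto $g$ and $w_2=\langle w^+\rangle^{\mu_2}\langle w^-\rangle^{\nu_2}$ onto $h$, so that the main contribution is the unweighted Kato--Ponce commutator $[\langle\p_1\rangle^s,w_1g](w_2h)$ (Lemma \ref{lemD6}, then Lemma \ref{lemD5} to pull $\langle\p_1\rangle^s$ past $w_1$), while the residue consists of weight-difference terms controlled as in Lemma \ref{lemD0}. The difference is one of execution: the paper avoids all near/far kernel bookkeeping by writing the \emph{exact} operator identity
\begin{align*}
\langle w^+\rangle^{\mu}\langle w^-\rangle^{\nu}[\langle\p_1\rangle^{s},g]h
=-[\langle \p_1\rangle^{s},\langle w^+\rangle^{\mu}\langle w^-\rangle^{\nu}](gh)
+\big[\langle \p_1\rangle^{s},w_1 g\big](w_2h)
+w_1 g\,[\langle \p_1\rangle^{s}, w_2]h,
\end{align*}
after which the first and third terms are disposed of immediately by Lemma \ref{lemD5} and H\"older, with Lemma \ref{lemD23} removing the extra $\langle\p_1\rangle^s$ from $gh$ where needed. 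Your kernel-level reassembly reproves inline exactly the content of Lemmas \ref{lemD0} and \ref{lemD5} for the weight-difference remainders; this is sound (the near-region bound $|w_i(x_1)-w_i(x_1')|\lesssim|x_1-x_1'|\,w_i(x_1')$ against the $|x_1-x_1'|^{-s-1}$ singularity, and the far-region absorption of polynomial growth into the exponential decay of the Bessel kernel, are precisely the mechanisms of Lemma \ref{lemD0}), but it costs you a page where the operator identity costs three lines. If you keep your route, do state explicitly that the far-region piece of $[\langle\p_1\rangle^s,w_1g](w_2h)$ that you add back is bounded by splitting $\tilde g(x_1)-\tilde g(x_1')$ and applying Young plus H\"older to each half; you flag this but do not carry it out.
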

\begin{proof}
By commutator notation, we write
\begin{align}\label{D30}
\langle w^+\rangle^{\mu}\langle w^-\rangle^{\nu}   [\langle\p_1\rangle^{s},g] h
&=-[\langle \p_1\rangle^{s},\langle w^+\rangle^{\mu}\langle w^-\rangle^{\nu}] (gh) \\[-4mm]\nonumber\\\nonumber
&\quad+\big[\langle \p_1\rangle^{s},
\langle w^+\rangle^{\mu_1}\langle w^-\rangle^{\nu_1} g\big] (\langle w^+\rangle^{\mu_2}\langle w^-\rangle^{\nu_2}h)  \\[-4mm]\nonumber\\\nonumber
&\quad+\langle w^+\rangle^{\mu_1}\langle w^-\rangle^{\nu_1} g
\cdot
[\langle \p_1\rangle^{s}, \langle w^+\rangle^{\mu_2}\langle w^-\rangle^{\nu_2} ] h.
\end{align}
For the second term on the right hand side of \eqref{D30}, by the classical commutator estimate Lemma \ref{lemD6}, it is bounded by
\begin{align*}
\| \langle w^+\rangle^{\mu_1}\langle w^-\rangle^{\nu_1} \langle \p_1\rangle^{s} g \|_{L^{q_1}(\BR)} \| \langle w^+\rangle^{\mu_2}\langle w^-\rangle^{\nu_2}  h  \|_{L^{q_2}(\BR )}.
\end{align*}
For the first and the third term, by Lemma \ref{lemD5} and the H\"{o}lder inequality, it is bounded by
\begin{align*}
\| \langle w^+\rangle^{\mu_1}\langle w^-\rangle^{\nu_1} g \|_{L^{q_1}(\BR)} \| \langle w^+\rangle^{\mu_2}\langle w^-\rangle^{\nu_2}  h  \|_{L^{q_2}(\BR )}.
\end{align*}
Thus the lemma is proved.
\end{proof}

\subsection{Commutator estimate: higher order derivative}

In this subsection, we study the weighted commutator estimate with higher order fractional derivatives
\begin{align*}
& \big\| w [\langle\p_1\rangle^{s}\p_1,g] h \big\|_{L^2(\BR )}
\end{align*}
for $0<s<1$. Here $w$ is a function. We will study the commutator for $w=\langle w^+\rangle^{\mu}\langle w^-\rangle^{\nu}$.

We first study the commutator
\begin{align}\label{D31}
&\big\| [\langle \p_1\rangle^{s}\p_1, w] h \big\|_{L^2(\BR )}
=\big\| \langle \p_1\rangle^{s}\p_1 \big( w  h\big)
-w\langle \p_1\rangle^{s}\p_1 h \big\|_{L^2(\BR )}.
\end{align}
Define the symmetric commutator
\begin{align*}
[\langle\p_1\rangle^{s}\p_1,w,h]
=\langle\p_1\rangle^{s}\p_1(wh)-w \langle\p_1\rangle^{s}\p_1 h - h \langle\p_1\rangle^{s}\p_1 w.
\end{align*}
Let $h=h(x_1)$ be a function defined in $\BR$.
 By \eqref{D1}, we have
\begin{align*}
&\langle \p_1\rangle^{s} \p_1 h(x_1)
= \int_{\BR} B_{2-s}(x_1-x_1') (1-\p_{x_1'}^2) \p_{x_1'}h(x_1') \d x_1' \\\nonumber
&= \int_{\BR} \p_1 B_{2-s}(x_1-x_1') h(x_1') \d x_1'
 + \text{p.v.} \int_{\BR} \p_1^2B_{2-s}(x_1-x_1') (\p_1 h(x_1)-\p_{x_1'} h(x_1')) \d x_1'.
\end{align*}
To estimate \eqref{D31}, we calculate the symmetric commutator \cite{P}
\begin{align}\label{D32}
&[\langle\p_1\rangle^{s}\p_1,w,h](x_1)=\langle \p_1\rangle^{s}\p_1 \big( w(x_1)  h(x_1)\big)
-w(x_1) \langle \p_1\rangle^{s}\p_1 h(x_1)
-h(x_1) \langle \p_1\rangle^{s}\p_1 w(x_1)
\\\nonumber
&= \int_{\BR} \p_1 B_{2-s}(x_1-x_1') \big(w(x_1') h(x_1')- w(x_1) h(x_1')-w(x_1') h(x_1) \big)\d x_1' \\\nonumber
&\quad+ \ \text{p.v.} \int_{\BR} \p_1^2B_{2-s}(x_1-x_1') \big(w(x_1)-w(x_1')\big)\p_{x_1'}\big(h(x_1)-h(x_1')\big) \d x_1' \\\nonumber
&=\int_{\BR} \p_1 B_{2-s}(x_1-x_1') \big(w(x_1') h(x_1')- w(x_1) h(x_1')-w(x_1') h(x_1) \big)\d x_1' \\\nonumber
&\quad +\text{p.v.} \int_{\BR} \p_1^2B_{2-s}(x_1-x_1')
\p_{x_1'}  w(x_1') \big(h(x_1)-h(x_1')\big) \d x_1' \\\nonumber
&\quad + \text{p.v.} \int_{\BR} \p_1^3B_{2-s}(x_1-x_1') \big(w(x_1)-w(x_1')\big)\big(h(x_1)-h(x_1')\big) \d x_1' .
\end{align}
For $w=\langle w^+\rangle^{\mu}\langle w^-\rangle^{\nu}$, in order to estimate \eqref{D31},
we need estimate $h(x_1) \langle \p_1\rangle^{s}\p_1 w(x_1)$
and the right hand side of \eqref{D32}.

We first calculate $\langle \p_1\rangle^{s}\p_1 w(x_1)$.
\begin{lem}\label{lemD24}
For $0<s<1$, $\mu \in \BR,\ \nu\in\BR$.
There holds
\begin{align*}
| \langle \p_1\rangle^{s}\p_1
\big(\langle w^+(x_1)\rangle^{\mu}\langle w^-(x_1)\rangle^{\nu} \big)|
\lesssim \langle w^+(x_1)\rangle^{\mu-1}\langle w^-(x_1)\rangle^{\nu} +\langle w^+(x_1)\rangle^{\mu}\langle w^-(x_1)\rangle^{\nu-1}.
\end{align*}
\end{lem}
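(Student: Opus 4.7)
Write $W(x_1)=\langle w^+(x_1)\rangle^{\mu}\langle w^-(x_1)\rangle^{\nu}$ and let
\[
R(x_1):=\langle w^+(x_1)\rangle^{\mu-1}\langle w^-(x_1)\rangle^{\nu}+\langle w^+(x_1)\rangle^{\mu}\langle w^-(x_1)\rangle^{\nu-1}
\]
denote the target upper bound. Since $\p_1 w^\pm=1$, direct differentiation gives
$\p_1 W=\mu\,w^+\langle w^+\rangle^{\mu-2}\langle w^-\rangle^{\nu}+\nu\,\langle w^+\rangle^{\mu}w^-\langle w^-\rangle^{\nu-2}$, and $\p_1^2 W$ is a finite linear combination of terms of the form $\langle w^+\rangle^{\mu-i}\langle w^-\rangle^{\nu-j}$ with $i+j=2$, $i,j\geq 0$, times bounded factors $w^\pm\langle w^\pm\rangle^{-1}$. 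Because $\langle w^\pm\rangle\geq 1$, each such contribution is dominated by $R(x_1)$, so I obtain the pointwise input
\[
|\p_1 W(x_1)|+|\p_1^2 W(x_1)|\lesssim R(x_1).
\]

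Next I apply the representation \eqref{D1} to $h=\p_1 W$, which gives
\begin{align*}
\langle \p_1\rangle^{s}\p_1 W(x_1)
&=\int_{\BR}B_{2-s}(x_1-x_1')\,\p_{x_1'}W(x_1')\,\d x_1'\\
&\quad+\textrm{p.v.}\int_{\BR}\p_1^2 B_{2-s}(x_1-x_1')\bigl(\p_1 W(x_1)-\p_{x_1'}W(x_1')\bigr)\d x_1'.
\end{align*}
This identity extends from $C_c^2$ inputs to $h=\p_1 W$ because Lemma \ref{lemBe} ensures that $B_{2-s}$ and its derivatives decay exponentially while $\p_1 W$ grows at most polynomially, so every integral converges absolutely or in the p.v.\ sense.

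For each of the two integrals I split the domain into the near-diagonal piece $\{|x_1-x_1'|\leq 1/2\}$ and the far piece $\{|x_1-x_1'|\geq 1/2\}$, mirroring the proof of Lemma \ref{lemD0}. On the near region \eqref{D15} gives $\langle w^\pm(x_1')\rangle\sim\langle w^\pm(x_1)\rangle$, hence $|\p_{x_1'}W(x_1')|\lesssim R(x_1)$, and the mean value theorem at an intermediate point $x_\beta$ yields
$|\p_1 W(x_1)-\p_{x_1'}W(x_1')|\leq|x_1-x_1'|\,|\p_1^2 W(x_\beta)|\lesssim|x_1-x_1'|R(x_1)$; combined with $|\p_1^2 B_{2-s}(y)|\sim|y|^{-s-1}$ near the origin, the singular integrand is pointwise bounded by $|y|^{-s}R(x_1)$, integrable near $0$ because $s<1$. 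On the far region the kernels $B_{2-s}$ and $\p_1^2 B_{2-s}$ decay exponentially (Lemma \ref{lemBe}); I use \eqref{D19} in the form $\langle w^\pm(x_1')\rangle^{\alpha}\leq 3^{|\alpha|}|x_1-x_1'|^{|\alpha|}\langle w^\pm(x_1)\rangle^{\alpha}$ ($\alpha\in\BR$) to transfer all weights from $x_1'$ to $x_1$, picking up polynomial factors $|x_1-x_1'|^M$ that are absorbed by the exponential decay. Summing the four pieces gives $|\langle\p_1\rangle^{s}\p_1 W(x_1)|\lesssim R(x_1)$, which is the claim.

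The main obstacle is the near-diagonal part of the principal value integral, where $\p_1^2 B_{2-s}$ is not absolutely integrable. The crucial observation is that subtracting $\p_1 W(x_1)$ from $\p_{x_1'}W(x_1')$ performs a first-order Taylor cancellation, providing exactly one extra power of $|x_1-x_1'|$ and reducing the kernel from $|y|^{-s-1}$ to $|y|^{-s}$, which is integrable at the origin for every $s\in(0,1)$. This is the same cancellation mechanism that underlies Lemma \ref{lemD0}, adapted here to produce a pointwise bound rather than an $L^q$ estimate.
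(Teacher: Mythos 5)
Your proof is correct and follows essentially the same route as the paper's: both reduce $\langle\p_1\rangle^{s}\p_1 W$ to $\langle\p_1\rangle^{s}$ applied to $v=\p_1 W$ via the representation \eqref{D1}, bound $|\p_1 W|$ and $|\p_1^2 W|$ pointwise by $R$, and then split each integral into the near-diagonal region (where \eqref{D15} and the first-order Taylor cancellation tame the $|y|^{-s-1}$ singularity) and the far region (where \eqref{D19} transfers the weights at polynomial cost absorbed by the exponential decay of the Bessel kernel). The only cosmetic difference is that you explicitly justify extending \eqref{D1} to the non-compactly-supported input $\p_1 W$, which the paper handles by an earlier remark.
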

\begin{proof}
Denote $v(x_1)=\p_1\langle w^+(x_1)\rangle^{\mu}\langle w^-(x_1)\rangle^{\nu}$.
Obviously one has
\begin{align*}
v(x_1)\lesssim \langle w^+(x_1)\rangle^{\mu-1}\langle w^-(x_1)\rangle^{\nu} +\langle w^+(x_1)\rangle^{\mu}\langle w^-(x_1)\rangle^{\nu-1}.
\end{align*}
By \eqref{D1}, we write
\begin{align}\label{D41}
&\langle \p_1\rangle^{s}\p_1 (\langle w^+(x_1)\rangle^{\mu}\langle w^-(x_1)\rangle^{\nu})
=\langle \p_1\rangle^{s} v(x_1) \\\nonumber
&=\int_{\BR} B_{2-s}(x_1-x_1')v(x_1')  \d x_1'
 +\text{p.v.}\int_{\BR} \p_1^2B_{2-s}(x_1-x_1') \big(v(x_1)-v(x_1')\big) \d x_1'.
\end{align}
Recalling that for the weight functions $w^\pm$, there hold
\begin{align*}
&\langle w^\pm(x_1)\rangle
\sim \langle w^\pm(x_1')\rangle,\quad \textrm{if}\ \ |x_1-x_1'|\leq \f12, \\
&\langle w^\pm(x_1')\rangle
\leq 3  |x_1-x_1'|
\langle w^\pm(x_1)\rangle ,\quad \textrm{if}\ \ |x_1-x_1'|\geq \f12.
\end{align*}
Thus for the first term on the right hand side of \eqref{D41},
we estimate similar to the proof of Lemma \ref{lemD0}:
\begin{align*}
& |\int_{\BR} B_{2-s}(x_1-x_1') v(x_1) \d x_1'| \\\nonumber
&\lesssim
\big( \langle w^+(x_1)\rangle^{\mu-1}\langle w^-(x_1)\rangle^{\nu} +\langle w^+(x_1)\rangle^{\mu}\langle w^-(x_1)\rangle^{\nu-1} \big) \\[-5mm]\nonumber\\
&\quad \Big(\int_{|x_1-x_1'|\leq \f12} |B_{2-s}(x_1-x_1')|  \d x_1'
+ \int_{|x_1-x_1'|\geq \f12} |x_1-x_1'|^{|\mu|+|\nu| }\cdot |B_{2-s}(x_1-x_1')|\d x_1'\Big)\\[-4mm]\nonumber\\
&\lesssim \langle w^+(x_1)\rangle^{\mu-1}\langle w^-(x_1)\rangle^{\nu} +\langle w^+(x_1)\rangle^{\mu}\langle w^-(x_1)\rangle^{\nu-1} .
\end{align*}
For the last term of \eqref{D41},
if $|x_1-x_1'|\geq \f12$, by the exponential decay
of the Bessel potentials at infinity, we obtain
\begin{align*}
& \big|\text{p.v.}
\int_{|x_1-x_1'|\geq \f12} \p_1^2B_{2-s}(x_1-x_1') \big(
v(x_1)-v(x_1') \big) \d x_1'\big| \\\nonumber
&\lesssim \langle w^+(x_1)\rangle^{\mu-1}\langle w^-(x_1)\rangle^{\nu} +\langle w^+(x_1)\rangle^{\mu}\langle w^-(x_1)\rangle^{\nu-1} .
\end{align*}
If $|x_1-x_1'|\leq \f12$, we have
\begin{align*}
|v(x_1)-v(x_1')|
\lesssim |x_1-x_1'|
\big( \langle w^+(x_1)\rangle^{\mu-1}\langle w^-(x_1)\rangle^{\nu} +\langle w^+(x_1)\rangle^{\mu}\langle w^-(x_1)\rangle^{\nu-1} \big) .
\end{align*}
Consequently,   we obtain
\begin{align*}
& \big|\text{p.v.}
\int_{|x_1-x_1'|\leq \f12} \p_1^2B_{2-s}(x_1-x_1') \big(v(x_1)-v(x_1')\big) \d x_1'\big|\\\nonumber
&\lesssim \langle w^+(x_1)\rangle^{\mu-1}\langle w^-(x_1)\rangle^{\nu} +\langle w^+(x_1)\rangle^{\mu}\langle w^-(x_1)\rangle^{\nu-1}.
\end{align*}
This finishes the proof of the lemma.
\end{proof}

For the first line on the right hand side of \eqref{D32},
by the similar argument in the proof of Lemma \ref{lemD0}, we have
\begin{lem}\label{lemD25}
Let $0<s<1$, $\mu \in\BR,\, \nu\in\BR $. There holds
\begin{align*}
&\big\| \int_{\BR} \p_1 B_{2-s}(x_1-x_1') \big(\langle w^+(x_1')\rangle^{\mu}\langle w^-(x_1')\rangle^{\nu} h(x_1') - \langle w^+(x_1)\rangle^{\mu}\langle w^-(x_1)\rangle^{\nu} h(x_1')\\\nonumber
&\qquad\quad -\langle w^+(x_1')\rangle^{\mu}\langle w^-(x_1')\rangle^{\nu} h(x_1) \big)\d x_1'  \big\|_{L^2(\BR)}
\lesssim \| \langle w^+\rangle^{\mu}\langle w^-\rangle^{\nu}  h\|_{L^2(\BR)},
\end{align*}
provided the right hand side is finite.
\end{lem}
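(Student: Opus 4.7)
\textbf{Proof plan for Lemma \ref{lemD25}.}
Write $W(x_1)=\langle w^+(x_1)\rangle^{\mu}\langle w^-(x_1)\rangle^{\nu}$ and denote the quantity inside the integral by
\[
Q(x_1,x_1') := W(x_1')h(x_1')-W(x_1)h(x_1')-W(x_1')h(x_1).
\]
The plan is to use the algebraic identity
\[
Q(x_1,x_1')=\bigl[W(x_1')-W(x_1)\bigr]\bigl[h(x_1')-h(x_1)\bigr]-W(x_1)h(x_1),
\]
which one verifies by direct expansion. Then the contribution of the $-W(x_1)h(x_1)$ piece to the singular integral is
\[
-W(x_1)h(x_1)\int_{\BR}\p_1 B_{2-s}(x_1-x_1')\,\d x_1' = 0,
\]
since $B_{2-s}$ is even and decays exponentially (Lemma \ref{lemBe}), so $\p_1 B_{2-s}$ is odd and integrable on $\BR$. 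Thus it suffices to bound the ``symmetric'' term
\[
I(x_1):=\int_{\BR}\p_1 B_{2-s}(x_1-x_1')\bigl[W(x_1')-W(x_1)\bigr]\bigl[h(x_1')-h(x_1)\bigr]\d x_1'
\]
in $L^2(\BR)$.

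I would split $I(x_1)=I_{\mathrm{near}}(x_1)+I_{\mathrm{far}}(x_1)$ according to $|x_1-x_1'|\le 1/2$ and $|x_1-x_1'|\ge 1/2$. For the near region, the analysis in the proof of Lemma \ref{lemD0} yields $W(x_1)\sim W(x_1')$ and $|W(x_1')-W(x_1)|\lesssim |x_1-x_1'|\,W(x_1')$. Combined with the kernel bound $|\p_1 B_{2-s}(x_1-x_1')|\lesssim |x_1-x_1'|^{-s}$ from Lemma \ref{lemBe}, the effective integrand is controlled by $|x_1-x_1'|^{1-s}W(x_1')\bigl(|h(x_1')|+|h(x_1)|\bigr)$. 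Since $|x_1-x_1'|^{1-s}\mathbf{1}_{|x_1-x_1'|\le 1/2}\in L^1(\BR)$, both contributions are bounded in $L^2(x_1)$ by $\|Wh\|_{L^2}$ via Young's convolution inequality (for the $|h(x_1')|$ piece) and by direct pointwise domination by $W(x_1)|h(x_1)|$ (for the $|h(x_1)|$ piece).

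For the far region I would use the exponential decay $|\p_1 B_{2-s}(x_1-x_1')|\lesssim e^{-|x_1-x_1'|/2}$ together with the polynomial transfer estimates $W(x_1)\lesssim |x_1-x_1'|^{|\mu|+|\nu|}W(x_1')$ and $W(x_1')\lesssim |x_1-x_1'|^{|\mu|+|\nu|}W(x_1)$ (which follow from \eqref{D19}). The polynomial weights are absorbed into the exponential decay, so one may dominate the integrand by an $L^1$ convolution kernel times $W(x_1')|h(x_1')|$ (after estimating $|h(x_1)-h(x_1')|\le|h(x_1)|+|h(x_1')|$ and relocating the weight onto $x_1'$ using the transfer inequalities), and Young's inequality finishes the proof.

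The main obstacle I anticipate is the near-region bound: since $h$ is only assumed to lie in a weighted $L^2$ space, we cannot invoke any smoothness of $h$ to exploit the factor $h(x_1')-h(x_1)$, and we must rely on the fact that the weight difference $W(x_1')-W(x_1)$ supplies enough vanishing to make $\p_1 B_{2-s}\cdot(W(x_1')-W(x_1))$ an absolutely integrable kernel near the diagonal. The algebraic identity above is the key device that isolates this ``commutator-type'' cancellation while routing the leftover part into the vanishing odd integral of $\p_1 B_{2-s}$.
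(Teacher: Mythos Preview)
Your proof is correct. The algebraic identity, the vanishing of $\int_{\BR}\p_1 B_{2-s}=0$ by oddness, and the near/far treatment of the remaining symmetric piece all go through as you describe.

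That said, your route is slightly more elaborate than the paper's. The paper simply says the argument is ``similar to the proof of Lemma~\ref{lemD0}'', and the point is that here the kernel $\p_1 B_{2-s}$ is already in $L^1(\BR)$: by Lemma~\ref{lemBe} it behaves like $|x_1|^{-s}$ near the origin (integrable since $0<s<1$) and decays exponentially at infinity. Hence no cancellation is needed at all, and each of the three terms $W(x_1')h(x_1')$, $-W(x_1)h(x_1')$, $-W(x_1')h(x_1)$ can be bounded individually by the near/far split, using only $W(x_1)\sim W(x_1')$ for $|x_1-x_1'|\le\tfrac12$ and the polynomial transfer \eqref{D19} for $|x_1-x_1'|\ge\tfrac12$, followed by Young's inequality. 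Your identity and the oddness trick extract an extra factor of $|x_1-x_1'|$ in the near region (effective kernel $|x_1-x_1'|^{1-s}$ instead of $|x_1-x_1'|^{-s}$), which is harmless but unnecessary. The ``main obstacle'' you anticipated does not actually arise here precisely because $\p_1 B_{2-s}$, unlike $\p_1^2 B_{2-s}$ in Lemma~\ref{lemD0}, is not a singular kernel.
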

For the last line of \eqref{D32}, we have the following estimate.
\begin{lem}\label{lemD26}
Let $0<s<1$. For any $\mu\in\BR,\, \nu\in\BR $, there holds
\begin{align*} 
&\big\| \text{p.v.} \int_{\BR} \p_1^3B_{2-s}(x_1-x_1')
\big(\langle w^+(x_1)\rangle^{\mu}\langle w^-(x_1)\rangle^{\nu}
-\langle w^+(x_1')\rangle^{\mu}\langle w^-(x_1')\rangle^{\nu} \big) \\\nonumber
&\qquad\quad \cdot\big(h(x_1)-h(x_1')\big) \d x_1' \big\|_{L^2(\BR)}
 \lesssim \big\| \langle w^+\rangle^{\mu}\langle w^-\rangle^{\nu} \langle \p_1\rangle^s h  \big\|_{L^2(\BR)},
\end{align*}
provided the right hand side is finite.
\end{lem}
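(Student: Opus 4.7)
Write $W(x_1)=\langle w^+(x_1)\rangle^{\mu}\langle w^-(x_1)\rangle^{\nu}$ and denote the integrand by $I(x_1)$. The kernel $\p_1^3B_{2-s}$ is singular of order $|x_1-x_1'|^{-s-2}$ near the diagonal, but each of the two differences $W(x_1)-W(x_1')$ and $h(x_1)-h(x_1')$ contributes one order of cancellation, so the effective kernel behaves like $|x_1-x_1'|^{-s-1}$, i.e.\ like the fractional derivative kernel $\p_1^2 B_{2-s}$. I would split the integration domain into a near region $\{|x_1-x_1'|\le\f12\}$ and a far region $\{|x_1-x_1'|\ge\f12\}$, using the properties of $B_{2-s}$ from Lemma \ref{lemBe}. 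For the far part, the exponential decay of $\p_1^3 B_{2-s}$ together with the polynomial weight bound $W(x_1')\lesssim (1+|x_1-x_1'|)^{|\mu|+|\nu|}W(x_1)$ from \eqref{D19} reduces it immediately by Young's inequality to $\|Wh\|_{L^2}$, which is controlled by $\|W\langle\p_1\rangle^s h\|_{L^2}$ via Lemma \ref{lemD23}.

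\textbf{Near region.} Here I would Taylor expand $W(x_1)-W(x_1-y)=yW'(x_1)-R(x_1,y)$, using the structure of $\langle w^+\rangle^\mu\langle w^-\rangle^\nu$ which gives $|W'|\lesssim W$ and $|W''|\lesssim W$ (as in Lemma \ref{lemD24}), so that $|R(x_1,y)|\lesssim |y|^2 W(x_1)$ for $|y|\le\f12$. The remainder $R$ provides the missing factor of $|y|$: its contribution is pointwise bounded by $W(x_1)\int_{|y|\le 1/2}|y|^{-s}|h(x_1)-h(x_1-y)|\d y$, and since $|y|^{-s}\chi$ is in $L^1$, absolute-value estimates together with Young's inequality (moving $W(x_1)$ under the integral using $W(x_1)\sim W(x_1-y)$ in the near region) give $\lesssim \|Wh\|_{L^2}\lesssim \|W\langle\p_1\rangle^s h\|_{L^2}$. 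This reduces the problem to estimating $W(x_1)J(x_1)$ in $L^2$, where $J(x_1):=\text{p.v.}\!\int_{|y|\le 1/2}y\p_1^3B_{2-s}(y)(h(x_1)-h(x_1-y))\d y$. I would then transfer the weight onto $h$ using the identity $W(x_1)(h(x_1)-h(x_1-y))=((Wh)(x_1)-(Wh)(x_1-y))+(W(x_1-y)-W(x_1))h(x_1-y)$; the second term is again of remainder type and bounded as above, so the matter reduces to establishing
\[
\big\|\text{p.v.}\!\int_{|y|\le 1/2}y\p_1^3B_{2-s}(y)\bigl(H(x_1)-H(x_1-y)\bigr)\d y\big\|_{L^2}\lesssim \|\langle\p_1\rangle^s H\|_{L^2}
\]
with $H=Wh$.

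\textbf{The hardest step} is precisely the last display. One \emph{cannot} simply move absolute values inside: the resulting kernel $|y|^{-s-1}$ is not locally integrable and the bound $\|\int|y|^{-s-1}|H(x_1)-H(x_1-y)|\d y\|_{L^2}\lesssim \|\langle\p_1\rangle^s H\|_{L^2}$ fails for general $H\in H^s$. The signed cancellation of $y\p_1^3B_{2-s}(y)$ against $(H(x_1)-H(x_1-y))$ must be retained. My route is the pointwise identity
\[
y\p_1^3B_{2-s}(y)=\p_y\bigl(y\p_1^2B_{2-s}(y)\bigr)-\p_1^2B_{2-s}(y),
\]
which separates the kernel into a total derivative plus a piece of the same order as the fractional derivative kernel. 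The second piece paired against $(H(x_1)-H(x_1-y))$ reproduces, via \eqref{D1} restricted to $|y|\le\f12$, the quantity $\langle\p_1\rangle^s H$ modulo smooth convolutions of $H$ that are trivially bounded in $L^2$. For the first piece I integrate by parts in $y$ against a smooth cutoff (so there are no boundary contributions); this yields the convolution $\int\chi(y)y\p_1^2B_{2-s}(y)\p_y H(x_1-y)\d y$, a Fourier multiplier acting on $H$ whose symbol, by the Riesz-type asymptotic $\widehat{|y|^{-s}\chi}(\xi)=O(|\xi|^{s-1})$ as $|\xi|\to\infty$, satisfies $|\xi\hat\phi(\xi)|\lesssim\langle\xi\rangle^s$, hence is $L^2$-bounded with constant $\lesssim \|\langle\p_1\rangle^s H\|_{L^2}$ by Plancherel. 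Finally, to return from $\|\langle\p_1\rangle^s(Wh)\|_{L^2}$ to $\|W\langle\p_1\rangle^s h\|_{L^2}$, I invoke the commutator identity $\langle\p_1\rangle^s(Wh)=W\langle\p_1\rangle^s h+[\langle\p_1\rangle^s,W]h$ and bound the commutator by Lemma \ref{lemD0} (applied with $h$ in place of the first argument there) followed by Lemma \ref{lemD23}.
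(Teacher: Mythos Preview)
Your proposal is correct and follows the same overall architecture as the paper (far/near split, Taylor expansion of the weight, isolating the linear term), but the treatment of the crucial near-region linear piece differs. The paper expands $W$ around $x_1'$, writes $\p_1 v(x_1')\bigl(h(x_1)-h(x_1')\bigr)=(\p_1v\,h)(x_1)-(\p_1v\,h)(x_1')-(\p_1v(x_1)-\p_1v(x_1'))h(x_1)$, and then observes directly that $(x_1-x_1')\p_1^3B_{2-s}(x_1-x_1')\sim |x_1-x_1'|^{-1-s}$ \emph{is} (up to a far-field correction) the Riesz kernel for $|\p_1|^s$; hence the main term becomes $|\p_1|^s(\p_1v\cdot h)$, which is bounded by $\|W\langle\p_1\rangle^s h\|_{L^2}$ by re-running the argument of Lemma~\ref{lemD5} with the weight $\p_1v$ (which has the same structure as $W$). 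You instead move the weight onto $h$ first via the algebraic identity, reduce to an unweighted estimate for $H=Wh$, and then handle the kernel through the product-rule identity $y\p_1^3B_{2-s}=\p_y(y\p_1^2B_{2-s})-\p_1^2B_{2-s}$: one piece is recognized from \eqref{D1} as $\langle\p_1\rangle^s H$ plus smooth convolutions, the other is a Fourier multiplier of order $s$ by the Riesz-type asymptotic. The paper's route is shorter (one kernel identification instead of a splitting plus an integration by parts plus a multiplier bound); your route is more explicit and arguably more portable to kernels where the exact Riesz identification is less transparent. Both close with the same commutator step $\|\langle\p_1\rangle^s(Wh)\|_{L^2}\lesssim\|W\langle\p_1\rangle^s h\|_{L^2}$, which is precisely the second inequality of Lemma~\ref{lemD5}.
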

\begin{proof}
We similarly divide the integral domain $\BR$ into two domains
$\{x_1':|x_1-x_1'|\leq \f12\}$ and $\{x_1':|x_1-x_1'|\geq \f12\}$.
Recalling that if $|x_1-x_1'|\geq \f12$, there hold
\begin{align*}
\langle w^\pm(x_1)\rangle
\leq 3 |x_1-x_1'|\langle w^\pm(x_1')\rangle,
\quad  \langle w^\pm(x_1')\rangle
\leq 3 |x_1-x_1'|\langle w^\pm(x_1)\rangle.
\end{align*}
Then
\begin{align*}
\langle w^\pm(x_1)\rangle^{\mu}
\leq 3^{|\mu|} |x_1-x_1'|^{|\mu|}\langle w^\pm(x_1')\rangle^{\mu},
\quad  \langle w^\pm(x_1')\rangle^{\mu}
\leq 3 |x_1-x_1'|^{|\mu|}\langle w^\pm(x_1)\rangle^{\mu}.
\end{align*}
Thus we can estimate similar to Lemma \ref{lemD0} by the exponential decay
of the Bessel potentials. Precisely, by Young's inequality, we deduce
\begin{align*} 
&\big\| \int_{|x_1-x_1'|\geq \f12} \p_1^3B_{2-s}(x_1-x_1')
\big(\langle w^+(x_1)\rangle^{\mu}\langle w^-(x_1)\rangle^{\nu}-\langle w^+(x_1')\rangle^{\mu}\langle w^-(x_1')\rangle^{\nu} \big)\big(h(x_1)-h(x_1')\big) \d x_1' \big\|_{L^2(\BR)} \\[-5mm]\nonumber\\\nonumber
&\lesssim \big\| \int_{|x_1-x_1'|\geq \f12} |\p_1^3B_{2-s}(x_1-x_1')|\cdot
|x_1-x_1'|^{|\mu|+|\nu|}\langle w^+(x_1)\rangle^{\mu}\langle w^-(x_1)\rangle^{\nu}
|h(x_1)|   \d x_1' \big\|_{L^2(\BR)} \\\nonumber
&\quad+\big\| \int_{|x_1-x_1'|\geq \f12} |\p_1^3 B_{2-s}(x_1-x_1')|
\cdot |x_1-x_1'|^{|\mu|+|\nu|}\langle w^+(x_1')\rangle^{\mu}\langle w^-(x_1')\rangle^{\nu}
|h(x_1')|  \d x_1' \big\|_{L^2(\BR)} \\\nonumber
&\lesssim \|  \langle w^+\rangle^{\mu}\langle w^-\rangle^{\nu} h \|_{L^2(\BR)}.
\end{align*}

Next we estimate the case of $|x_1-x_1'|\leq \f 12$.
To simplify the presentation, denote $v(x_1)=\langle w^+(x_1)\rangle^{\mu}\langle w^-(x_1)\rangle^{\nu} $.
We write
\begin{align*}
v(x_1)=v(x_1')+(x_1-x_1')\p_1v(x_1')+\f12 (x_1-x_1')^2\p_1^2v(x_\beta),
\end{align*}
where $x_\beta$ is between $x_1$ and $x_1'$.
There hold
\begin{align*}
&|\p_1v(x_1)| \lesssim \langle w^+(x_1)\rangle^{\mu-1}\langle w^-(x_1)\rangle^{\nu}+\langle w^+(x_1)\rangle^{\mu}\langle w^-(x_1)\rangle^{\nu-1},\\
&|\p_1^2v(x_1)| \lesssim \langle w^+(x_1)\rangle^{\mu-2}\langle w^-(x_1)\rangle^{\nu}+\langle w^+(x_1)\rangle^{\mu-1}\langle w^-(x_1)\rangle^{\nu-1}
+\langle w^+(x_1)\rangle^{\mu-1}\langle w^-(x_1)\rangle^{\nu-2} .
\end{align*}
Thus
\begin{align}\label{D54}
&\text{p.v.} \int_{|x_1-x_1'|\leq \f12} \p_1^3B_{2-s}(x_1-x_1')
\big(v(x_1) -v(x_1') \big)\big(h(x_1)-h(x_1')\big) \d x_1' \\\nonumber
&=\f12\text{p.v.} \int_{|x_1-x_1'|\leq \f12} (x_1-x_1')^2 \p_1^3B_{2-s}(x_1-x_1')
 \, \p_1^2v(x_\beta) \big(h(x_1)-h(x_1')\big) \d x_1' \\\nonumber
&\quad+\text{p.v.} \int_{|x_1-x_1'|\leq \f12} (x_1-x_1')\p_1^3 B_{2-s}(x_1-x_1')
 \p_1v(x_1') \big(h(x_1)-h(x_1')\big) \d x_1' .
\end{align}
For the second line of \eqref{D54}, we have the following bound:
\begin{align*}
&\f12\big| \text{p.v.} \int_{|x_1-x_1'|\leq \f12} (x_1-x_1')^2 \p_1^3B_{2-s}(x_1-x_1')
 \, \p_1^2v(x_\beta) \big(h(x_1)-h(x_1')\big) \d x_1'\big| \\\nonumber
&\lesssim  \int_{|x_1-x_1'|\leq \f12} |x_1-x_1'|^{-s }
\langle w^+(x_1')\rangle^{\mu}\langle w^-(x_1')\rangle^{\nu}  |h(x_1)-h(x_1')| \d x_1' .
\end{align*}
Taking the $L^2$ norm of the above expression, by the Young inequality,
it is further controlled by
\begin{align*}
\big\| \langle w^+\rangle^{\mu}\langle w^-\rangle^{\nu} h\big\|_{L^2(\BR)} .
\end{align*}
For the last line of \eqref{D54}, the integral kernel is still too singular. We further organize that
\begin{align*}
 \p_1v(x_1') \big(h(x_1)-h(x_1')\big)
&=\big(\p_1v(x_1)\cdot h(x_1)-\p_1v(x_1')\cdot h(x_1')\big) \\\nonumber
&\quad-\big( \p_1v(x_1)-\p_1v(x_1')\big)  h(x_1)  .
\end{align*}
Consequently, we organize the last line of \eqref{D54} as follows
\begin{align}\label{D56}
&\text{p.v.} \int_{|x_1-x_1'|\leq \f12} (x_1-x_1')\p_1^3 B_{2-s}(x_1-x_1')
 \p_1v(x_1') \big(h(x_1)-h(x_1')\big) \d x_1' \\\nonumber
&=\text{p.v.} \int_{|x_1-x_1'|\leq \f12} (x_1-x_1')\p_1^3 B_{2-s}(x_1-x_1')
 \big(\p_1v(x_1)\cdot h(x_1)-\p_1v(x_1')\cdot h(x_1')\big)  \d x_1'\\\nonumber
&\quad-\text{p.v.} \int_{|x_1-x_1'|\leq \f12} (x_1-x_1')\p_1^3 B_{2-s}(x_1-x_1')
\big(\p_1v(x_1)-\p_1v(x_1')\big)  h(x_1) \d x_1' .
\end{align}
To estimate the last line of \eqref{D56}, note
\begin{align*}
&|\p_1v(x_1)-\p_1v(x_1')| \\
&\lesssim |x_1-x_1'| \big(\langle w^+(x_1)\rangle^{\mu-2}\langle w^-(x_1)\rangle^{\nu}+\langle w^+(x_1)\rangle^{\mu-1}\langle w^-(x_1)\rangle^{\nu-1}
+\langle w^+(x_1)\rangle^{\mu-1}\langle w^-(x_1)\rangle^{\nu-2}\big) .
\end{align*}
Hence the last line of \eqref{D56} have the following bound
\begin{align*}
&\big|\text{p.v.} \int_{|x_1-x_1'|\leq \f12} (x_1-x_1')\p_1^3 B_{2-s}(x_1-x_1')
\big(\p_1v(x_1)-\p_1v(x_1')\big)  h(x_1) \d x_1' \big|\\\nonumber
&\lesssim  \int_{|x_1-x_1'|\leq \f12} (x_1-x_1')^2|\p_1^3 B_{2-s}(x_1-x_1')|
\langle w^+(x_1)\rangle^{\mu}\langle w^-(x_1)\rangle^{\nu} |h(x_1)| \d x_1'.
\end{align*}
Note that the integral kernel $x_1^2\p_1^3B_{2-s}(x_1)$ belongs to $L^1$.
Thus taking the $L^2$ norm of the above expression, 
it is controlled by
\begin{align*}
\| \langle w^+\rangle^{\mu}\langle w^-\rangle^{\nu} h \|_{L^2(\BR)} .
\end{align*}
For the second line of \eqref{D56}, we write
\begin{align} \label{D58}
& \text{p.v.} \int_{|x_1-x_1'|\leq \f12} (x_1-x_1')\p_1^3 B_{2-s}(x_1-x_1')
\cdot \big(\p_1v(x_1)\cdot h(x_1)-\p_1v(x_1')\cdot h(x_1')\big)  \d x_1'\\\nonumber
&\sim \text{p.v.} \int_{|x_1-x_1'|\leq \f12} |x_1-x_1'|^{-1-s}
 \big(\p_1v(x_1)\cdot h(x_1)-\p_1v(x_1')\cdot h(x_1')\big)  \d x_1'  \\\nonumber
&= |\p_1|^s \big(\p_1v(x_1)\cdot h(x_1)\big)
- \text{p.v.} \int_{|x_1-x_1'|\geq \f12} |x_1-x_1'|^{-1-s}
 \big(\p_1v(x_1)\cdot h(x_1)-\p_1v(x_1')\cdot h(x_1')\big)  \d x_1'  .
\end{align}
Thus taking the $L^2$ norm of \eqref{D58}, for the last second term, repeating the argument of the proof of Lemma \ref{lemD5}, it is further bounded by
\begin{align*}
\big\| |\p_1|^s \big(\p_1v \cdot h \big)\big\|_{L^2(\BR)}
\leq \| \langle \p_1\rangle^s \big(\p_1v \cdot h \big)\|_{L^2(\BR)}
\lesssim \| \langle w^+\rangle^{\mu}\langle w^-\rangle^{\nu} \langle \p_1\rangle^s h \|_{L^2(\BR)}.
\end{align*}
For the last term of \eqref{D58}, by the Young inequality, we have
\begin{align*}
&\big\| \text{p.v.} \int_{|x_1-x_1'|\geq \f12} |x_1-x_1'|^{-1-s}
 \big(\p_1v(x_1)\cdot h(x_1)-\p_1v(x_1')\cdot h(x_1')\big)  \d x_1'  \big\|_{L^2(\BR)} \\
&\lesssim \|  \p_1v \cdot h  \|_{L^2(\BR)}
\lesssim  \| \langle w^+\rangle^{\mu}\langle w^-\rangle^{\nu}   h \|_{L^2(\BR)} .
\end{align*}
This finishes the proof of the lemma.
\end{proof}

For the second line on the right hand side of \eqref{D32},
if $|x_1-x_1'|\geq \f12$, we can use the exponential decay of the Bessel potentials.
If $|x_1-x_1'|\leq \f12$, we estimate similar to \eqref{D56} and obtain the same bound.
Thus we have
\begin{lem}\label{lemD27}
Let $0<s<1$, $\mu \in \BR,\, \nu\in\BR$.
There holds
\begin{align*}
&\big\| \text{p.v.} \int_{\BR} \p_1^2B_{2-s}(x_1-x_1')
\p_{x_1'}\langle w^+(x_1')\rangle^{\mu}\langle w^-(x_1')\rangle^{\nu} \\\nonumber
&\qquad \cdot\big(h(x_1)-h(x_1')\big) \d x_1' \big\|_{L^2(\BR)}
\lesssim \| \langle w^+\rangle^{\mu}\langle w^-\rangle^{\nu}  \langle \p_1\rangle^s h  \|_{L^2(\BR)},
\end{align*}
provided the right hand side is finite.
\end{lem}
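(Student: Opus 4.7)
As the remark after the statement suggests, the plan is to split the integration domain into the near region $\{|x_1-x_1'|\leq 1/2\}$ and the far region $\{|x_1-x_1'|\geq 1/2\}$, and to handle each part in the same spirit as the last step of the proof of Lemma \ref{lemD26}. Throughout, I abbreviate $v(x_1)=\langle w^+(x_1)\rangle^{\mu}\langle w^-(x_1)\rangle^{\nu}$. By direct differentiation (as in Lemma \ref{lemD24}), one has $|\p_1 v(x_1)|\lesssim \langle w^+\rangle^{\mu-1}\langle w^-\rangle^{\nu}+\langle w^+\rangle^{\mu}\langle w^-\rangle^{\nu-1}$ and $|\p_1^2 v(x_1)|\lesssim \langle w^+\rangle^{\mu-2}\langle w^-\rangle^{\nu}+\langle w^+\rangle^{\mu-1}\langle w^-\rangle^{\nu-1}+\langle w^+\rangle^{\mu}\langle w^-\rangle^{\nu-2}$, both of which obey $|\p_1 v|, |\p_1^2 v|\lesssim v$.

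For the far region I would use the exponential decay of $\p_1^2 B_{2-s}$ from Lemma \ref{lemBe}, the polynomial trade bound $\langle w^\pm(x_1)\rangle\leq 3|x_1-x_1'|\langle w^\pm(x_1')\rangle$ (and its symmetric counterpart), and the pointwise bound on $\p_1 v(x_1')$ above. After splitting $h(x_1)-h(x_1')$ by the triangle inequality, every resulting kernel is of the form $|x_1-x_1'|^N e^{-|x_1-x_1'|/2}$ for some $N=N(\mu,\nu)$, hence in $L^1(\BR)$, and Young's inequality yields a bound by $\|\langle w^+\rangle^{\mu}\langle w^-\rangle^{\nu} h\|_{L^2}\lesssim \|\langle w^+\rangle^{\mu}\langle w^-\rangle^{\nu}\langle\p_1\rangle^s h\|_{L^2}$ (the last step via Lemma \ref{lemD23}).

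For the near region, where $\p_1^2 B_{2-s}(x_1-x_1')\sim |x_1-x_1'|^{-s-1}$, the kernel is too singular to be controlled directly. Mimicking the decomposition used for \eqref{D56}, I would write
\begin{align*}
\p_1 v(x_1')\bigl(h(x_1)-h(x_1')\bigr)&=\bigl(\p_1 v(x_1)h(x_1)-\p_1 v(x_1')h(x_1')\bigr)-\bigl(\p_1 v(x_1)-\p_1 v(x_1')\bigr)h(x_1).
\end{align*}
In the second piece the mean value theorem gives $|\p_1 v(x_1)-\p_1 v(x_1')|\lesssim |x_1-x_1'|\,v(x_\beta)$ for some $x_\beta$ between $x_1$ and $x_1'$, so the effective kernel becomes $|x_1-x_1'|^{-s}$, which is integrable on $\{|x_1-x_1'|\leq 1/2\}$, and Young's inequality with the pointwise estimate $v(x_\beta)\sim v(x_1)$ controls this by $\|\langle w^+\rangle^{\mu}\langle w^-\rangle^{\nu} h\|_{L^2}$. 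In the first piece, exactly as in \eqref{D58}, one recognizes the near-region singular integral as $|\p_1|^s(\p_1 v\cdot h)$ up to an $L^1$-kernel correction on $\{|x_1-x_1'|\geq 1/2\}$; the latter is handled by Young, while the former is bounded by $\|\langle\p_1\rangle^s(\p_1 v\cdot h)\|_{L^2}$. Invoking the weighted commutator / product estimate of Lemma \ref{lemD5} (or Lemma \ref{lemD8}) together with $|\p_1 v|\lesssim v$ gives
\begin{align*}
\bigl\|\langle\p_1\rangle^s(\p_1 v\cdot h)\bigr\|_{L^2}\lesssim \bigl\|\langle w^+\rangle^{\mu}\langle w^-\rangle^{\nu}\langle\p_1\rangle^s h\bigr\|_{L^2},
\end{align*}
which is the desired bound. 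Combining the two regions yields the lemma.

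The main obstacle is the near-region first piece: the formally divergent kernel $|x_1-x_1'|^{-s-1}$ forces one to express the truncated singular integral as $|\p_1|^s$ acting on $\p_1 v\cdot h$, and then to absorb the weight $\p_1 v$ using a weighted product estimate without losing decay. This is precisely where the $\langle\p_1\rangle^s h$ on the right-hand side (rather than just $h$) is generated, and where one must be careful that $|\p_1 v|\lesssim v$ rather than anything worse so that the weight structure is preserved.
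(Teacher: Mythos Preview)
Your proposal is correct and follows exactly the approach the paper uses: the paper's proof simply states that the far region is handled by exponential decay of the Bessel potentials and the near region is estimated ``similar to \eqref{D56}'', which is precisely the decomposition $\p_1 v(x_1')\bigl(h(x_1)-h(x_1')\bigr)=\bigl(\p_1 v(x_1)h(x_1)-\p_1 v(x_1')h(x_1')\bigr)-\bigl(\p_1 v(x_1)-\p_1 v(x_1')\bigr)h(x_1)$ that you wrote out. One small remark: in the last step, $\p_1 v$ is not literally a weight of the form $\langle w^+\rangle^{\mu'}\langle w^-\rangle^{\nu'}$, so Lemma~\ref{lemD5} does not apply verbatim; the paper phrases this as ``repeating the argument of the proof of Lemma~\ref{lemD5}'', since $\p_1 v$ is a finite sum of such weights and satisfies the same pointwise and difference bounds used in that proof.
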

Now we present the estimate for \eqref{D31}.
Combining \eqref{D32}, Lemma \ref{lemD23}, Lemma \ref{lemD24}, Lemma \ref{lemD25}, Lemma \ref{lemD26}, Lemma \ref{lemD27}, we have the following commutator estimate.
\begin{lem}\label{lemD9}
For $0< s<1$, $\mu \in \BR,\, \nu\in\BR$.   there holds
\begin{align*}
& \big\| [\langle \p_1\rangle^{s} \p_1,\langle w^+\rangle^{\mu}\langle w^-\rangle^{\nu}] h \big\|_{L^2(\BR )}
\lesssim \| \langle w^+\rangle^{\mu}\langle w^-\rangle^{\nu}\langle \p_1 \rangle^s h\|_{L^2(\BR )} ,
\end{align*}
provided the right hand side is finite.
\end{lem}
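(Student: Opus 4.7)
The plan is to reduce the asymmetric commutator $[\langle\p_1\rangle^s\p_1, w]h$ with $w=\langle w^+\rangle^\mu\langle w^-\rangle^\nu$ to the symmetric commutator $[\langle\p_1\rangle^s\p_1, w, h]$ defined just before \eqref{D32}, and then apply the three-piece kernel decomposition already derived in \eqref{D32}. Precisely, by the definition of the symmetric commutator one has the algebraic identity
\[
[\langle\p_1\rangle^s\p_1, w]h \;=\; [\langle\p_1\rangle^s\p_1, w, h] \;+\; h\,\langle\p_1\rangle^s\p_1 w,
\]
so it suffices to bound each of the two pieces on the right-hand side in $L^2(\BR)$ by $\|\langle w^+\rangle^\mu\langle w^-\rangle^\nu\langle\p_1\rangle^s h\|_{L^2(\BR)}$.

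The pointwise term $h\,\langle\p_1\rangle^s\p_1 w$ is the easy one. By Lemma \ref{lemD24} applied to $w = \langle w^+\rangle^\mu\langle w^-\rangle^\nu$,
\[
|h(x_1)\,\langle\p_1\rangle^s\p_1 w(x_1)| \;\lesssim\; \bigl(\langle w^+(x_1)\rangle^{-1}+\langle w^-(x_1)\rangle^{-1}\bigr)\,\langle w^+(x_1)\rangle^\mu\langle w^-(x_1)\rangle^\nu\,|h(x_1)|,
\]
and the two weight factors $\langle w^\pm\rangle^{-1}$ are bounded by $1$. Taking $L^2$ gives a bound by $\|\langle w^+\rangle^\mu\langle w^-\rangle^\nu h\|_{L^2(\BR)}$, which is in turn controlled by $\|\langle w^+\rangle^\mu\langle w^-\rangle^\nu\langle\p_1\rangle^s h\|_{L^2(\BR)}$ thanks to the second inequality in Lemma \ref{lemD23}.

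For the symmetric commutator I would use the explicit representation \eqref{D32}, which expresses $[\langle\p_1\rangle^s\p_1, w, h]$ as the sum of three singular integrals: a $\p_1 B_{2-s}$-kernel piece with the bilinear combination $w(x_1')h(x_1') - w(x_1)h(x_1') - w(x_1')h(x_1)$, a $\p_1^2 B_{2-s}$-kernel piece involving $\p_{x_1'}w(x_1')\bigl(h(x_1)-h(x_1')\bigr)$, and a $\p_1^3 B_{2-s}$-kernel piece involving $\bigl(w(x_1)-w(x_1')\bigr)\bigl(h(x_1)-h(x_1')\bigr)$. These three pieces are exactly the objects estimated respectively in Lemma \ref{lemD25}, Lemma \ref{lemD27}, and Lemma \ref{lemD26} with the weight choice $w=\langle w^+\rangle^\mu\langle w^-\rangle^\nu$. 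Summing the three bounds yields the desired $L^2$ estimate by $\|\langle w^+\rangle^\mu\langle w^-\rangle^\nu\langle\p_1\rangle^s h\|_{L^2(\BR)}$.

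The only slightly delicate point — which is already absorbed in the statements of Lemmas \ref{lemD26} and \ref{lemD27} — is the near-diagonal behaviour of the $\p_1^3 B_{2-s}$ kernel, which is too singular to be treated by a brute Young estimate. The mechanism there is exactly the one displayed in \eqref{D54}--\eqref{D58}: one Taylor expands the weight difference $w(x_1)-w(x_1')$ to second order, rewrites the leading piece in the form $\p_1 v(x_1)h(x_1)-\p_1 v(x_1')h(x_1')$ up to an acceptable remainder, and recognizes the resulting $|x_1-x_1'|^{-1-s}$ kernel as the Riesz-type representation of $|\p_1|^s$, which can then be bounded by $\langle\p_1\rangle^s$ and the weight commuted through using Lemma \ref{lemD5}. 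Since this is the principal technical obstacle and has already been absorbed into Lemma \ref{lemD26}, no further work beyond quoting the three lemmas is required.
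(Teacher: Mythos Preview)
Your proposal is correct and follows essentially the same route as the paper: the paper's proof is a one-line reference to the decomposition \eqref{D32} together with Lemmas \ref{lemD23}, \ref{lemD24}, \ref{lemD25}, \ref{lemD26}, \ref{lemD27}, which is precisely the reduction you spell out (split $[\langle\p_1\rangle^s\p_1,w]h$ into the symmetric commutator plus $h\,\langle\p_1\rangle^s\p_1 w$, bound the latter via Lemma~\ref{lemD24} and Lemma~\ref{lemD23}, and bound the three kernel pieces of \eqref{D32} via Lemmas~\ref{lemD25}, \ref{lemD27}, \ref{lemD26}).
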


Next, we study the  weighted commutator estimate for higher order fractional derivative.
The estimate also holds for the general dimensional case and the proof is similar.
\begin{lem}[Weighted commutator estimate, second version] \label{lemD10}
Let  $0<s<1$, $\f12 =\f{1}{q_1}+\f{1}{q_2}=\f{1}{q_3}+\f{1}{q_4}$, $2\leq q_1, q_2, q_3, q_4 \leq\infty$, $\mu,\mu_1,\mu_2,\nu,\nu_1,\nu_2\in\BR$,
 $\mu=\mu_1+\mu_2$, $\nu=\nu_1+\nu_2$. There holds
\begin{align*}
& \big\| \langle w^+\rangle^{\mu}\langle w^-\rangle^{\nu} [\langle\p_1\rangle^{s}\p_1,g] h \big\|_{L^2(\BR )} \\
&\lesssim \| \langle w^+\rangle^{\mu_1}\langle w^-\rangle^{\nu_1} \langle \p_1\rangle^s\p_1^{\leq 1} g \|_{L^{q_1}}
 \| \langle w^+\rangle^{\mu_2}\langle w^-\rangle^{\nu_2}  h \|_{L^{q_2}} \\
&\quad+ \| \langle w^+\rangle^{\mu_1}\langle w^-\rangle^{\nu_1}\p_1^{\leq 1} g \|_{L^{q_3}}
\| \langle w^+\rangle^{\mu_2}\langle w^-\rangle^{\nu_2} \langle \p_1\rangle^s h \|_{L^{q_4}} ,
\end{align*}
provided the right hand side is finite.
\end{lem}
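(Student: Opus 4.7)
The plan is to mirror the proof of Lemma \ref{lemD8}, replacing the scalar operator $\langle\p_1\rangle^s$ by $\langle\p_1\rangle^s\p_1$ throughout: in place of Lemma \ref{lemD6}'s first inequality and Lemma \ref{lemD5}, we invoke Lemma \ref{lemD6}'s second inequality and Lemma \ref{lemD9}. Let $W=\langle w^+\rangle^\mu\langle w^-\rangle^\nu$ and $W_j=\langle w^+\rangle^{\mu_j}\langle w^-\rangle^{\nu_j}$, so that $W=W_1W_2$. Adding and subtracting the terms $\langle\p_1\rangle^s\p_1((W_1g)(W_2h))$ and $(W_1g)\langle\p_1\rangle^s\p_1(W_2h)$ produces the three-term decomposition
\[
W[\langle\p_1\rangle^s\p_1, g]h = -[\langle\p_1\rangle^s\p_1, W](gh) + [\langle\p_1\rangle^s\p_1, W_1 g](W_2 h) + (W_1 g)\,[\langle\p_1\rangle^s\p_1, W_2]h,
\]
in exact analogy with \eqref{D30}.

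The middle term is the principal one. Applying Lemma \ref{lemD6}'s second inequality with exponents $(q_3,q_4)$ to the first summand and $(q_1,q_2)$ to the second yields
\[
\|[\langle\p_1\rangle^s\p_1, W_1 g](W_2 h)\|_{L^2} \lesssim \|\p_1(W_1 g)\|_{L^{q_3}}\|\langle\p_1\rangle^{s-2}\p_1(W_2 h)\|_{L^{q_4}} + \|\langle\p_1\rangle^s\p_1(W_1 g)\|_{L^{q_1}}\|W_2 h\|_{L^{q_2}}.
\]
The pointwise bound $|\p_1 W_j|\lesssim W_j$ combined with Lemma \ref{lemD5}'s second estimate gives $\|\p_1(W_1 g)\|_{L^{q_3}}\lesssim\|W_1\p_1^{\le 1}g\|_{L^{q_3}}$ and $\|\langle\p_1\rangle^s\p_1(W_1 g)\|_{L^{q_1}}\lesssim\|W_1\langle\p_1\rangle^s\p_1^{\le 1}g\|_{L^{q_1}}$. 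Since the operator $\langle\p_1\rangle^{s-2}\p_1$ has convolution kernel $\p_1 B_{2-s}$, which is integrable on $\BR$ for $0<s<1$ by Lemma \ref{lemBe} (integrable $|x_1|^{-s}$ singularity at $0$, exponential decay at infinity), Young's inequality together with Lemma \ref{lemD23} yields $\|\langle\p_1\rangle^{s-2}\p_1(W_2 h)\|_{L^{q_4}}\lesssim\|W_2 h\|_{L^{q_4}}\lesssim\|W_2\langle\p_1\rangle^s h\|_{L^{q_4}}$. This generates both products on the right-hand side of Lemma \ref{lemD10}.

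For the first of the remaining terms, Lemma \ref{lemD9} bounds $\|[\langle\p_1\rangle^s\p_1, W](gh)\|_{L^2}$ by $\|W\langle\p_1\rangle^s(gh)\|_{L^2}$; expanding $\langle\p_1\rangle^s(gh)=g\langle\p_1\rangle^s h+[\langle\p_1\rangle^s, g]h$ and applying Lemma \ref{lemD8} together with H\"older recovers the desired product structure. The third term $(W_1 g)[\langle\p_1\rangle^s\p_1, W_2]h$ is handled by H\"older with exponents $(q_3,q_4)$:
\[
\|(W_1 g)[\langle\p_1\rangle^s\p_1, W_2]h\|_{L^2} \le \|W_1 g\|_{L^{q_3}}\|[\langle\p_1\rangle^s\p_1, W_2]h\|_{L^{q_4}},
\]
and reduces to controlling the second factor.

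The main technical obstacle is this last reduction: we need an $L^{q_4}$ analogue of Lemma \ref{lemD9}. Re-examining its proof via the symmetric identity \eqref{D32}, the contributions coming from Lemmas \ref{lemD24}, \ref{lemD25}, and \ref{lemD27} are driven entirely by pointwise Bessel-kernel bounds and Young's inequality, so they transfer to $L^{q_4}$ without change for any $2\le q_4\le\infty$. The only genuinely $L^2$-specific piece is the Calder\'on-Zygmund term $|\p_1|^s(\p_1 v\cdot h)$ appearing in \eqref{D58} of Lemma \ref{lemD26}: this extends to $L^{q_4}$ for $2\le q_4<\infty$ by the Mihlin multiplier theorem, while the $q_4=\infty$ endpoint is avoided in application by reversing the H\"older split (taking $q_3=\infty$, $q_4=2$), which falls back on Lemma \ref{lemD9} as stated. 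Collecting the three estimates completes the proof.
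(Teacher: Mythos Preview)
Your approach is essentially the paper's: the three-term decomposition you write is exactly \eqref{D61}, and the treatment of each piece mirrors the paper's (Lemma~\ref{lemD9} plus Lemma~\ref{lemD8} for the first term, Lemma~\ref{lemD6} for the middle term, Lemma~\ref{lemD9} plus H\"older for the third). You are in fact more careful than the paper on one point: the paper invokes ``Lemma~\ref{lemD9} and the H\"older inequality'' to bound the third term by $\|W_1 g\|_{L^{q_3}}\|W_2\langle\p_1\rangle^s h\|_{L^{q_4}}$, silently using an $L^{q_4}$ version of Lemma~\ref{lemD9} that is only stated for $L^2$. Your Mikhlin argument for $2\le q_4<\infty$ correctly fills this.

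The one wrinkle is your handling of $q_4=\infty$. ``Reversing the H\"older split'' to $(q_3',q_4')=(\infty,2)$ does \emph{not} recover the bound $\|W_1\p_1^{\le 1}g\|_{L^{q_3}}\|W_2\langle\p_1\rangle^s h\|_{L^{q_4}}$ claimed in the lemma when $(q_3,q_4)=(2,\infty)$; it produces a different, non-equivalent product. So the lemma as stated is not proved at this endpoint by your argument. A direct route is available: when $W_2\langle\p_1\rangle^s h\in L^\infty$, one has a weighted H\"older-type bound $|F(x_1)-F(x_1')|\lesssim |x_1-x_1'|^s\|W_2\langle\p_1\rangle^s h\|_{L^\infty}$ for $F=\p_1 W_2\cdot h$ and $|x_1-x_1'|\le\tfrac12$ (write $h=B_s*(\langle\p_1\rangle^s h)$ and move the weight onto $\langle\p_1\rangle^s h$ via the kernel manipulations of Lemma~\ref{lemD0}), which makes the singular integral in \eqref{D58} bounded pointwise. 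Note, however, that the paper's own proof shares exactly this gap, so your argument is at least as complete as the original.
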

\begin{proof}
Similar to \eqref{D30}, we have
\begin{align}\label{D61}
\langle w^+\rangle^{\mu}\langle w^-\rangle^{\nu}   [\langle\p_1\rangle^{s}\p_1,g] h
&=-[\langle \p_1\rangle^{s}\p_1,\langle w^+\rangle^{\mu}\langle w^-\rangle^{\nu}] (gh) \\[-4mm]\nonumber\\\nonumber
&\quad+\big[\langle \p_1\rangle^{s}\p_1,\langle w^+\rangle^{\mu_1}\langle w^-\rangle^{\nu_1} g\big] (\langle w^+\rangle^{\mu_2}\langle w^-\rangle^{\nu_2}h)  \\[-4mm]\nonumber\\\nonumber
&\quad+\langle w^+\rangle^{\mu_1}\langle w^-\rangle^{\nu_1} g\cdot [\langle \p_1\rangle^{s}\p_1, \langle w^+\rangle^{\mu_2}\langle w^-\rangle^{\nu_2} ] h.
\end{align}
For the first term  on the right hand side of \eqref{D61}, by Lemma \ref{lemD9}, Lemma \ref{lemD8} and the H\"{o}lder inequality, it is bounded by
\begin{align*}
& \| \langle w^+\rangle^{\mu}\langle w^-\rangle^{\nu}   \langle\p_1\rangle^{s} (gh) \|_{L^2(\BR )} \\
&\lesssim \| \langle w^+\rangle^{\mu_1}\langle w^-\rangle^{\nu_1} \langle \p_1\rangle^{s} g \|_{L^{q_1}(\BR)} \| \langle w^+\rangle^{\mu_2}\langle w^-\rangle^{\nu_2}  h  \|_{L^{q_2}(\BR )}\\
&\quad+\| \langle w^+\rangle^{\mu_1}\langle w^-\rangle^{\nu_1}  g \|_{L^{q_3}(\BR)} \| \langle w^+\rangle^{\mu_2}\langle w^-\rangle^{\nu_2} \langle \p_1\rangle^{s} h  \|_{L^{q_4}(\BR )}.
\end{align*}
For the third term on the right hand side of \eqref{D61}, by Lemma \ref{lemD9} and the H\"{o}lder inequality, it is bounded by
\begin{align*}
\| \langle w^+\rangle^{\mu_1}\langle w^-\rangle^{\nu_1}  g \|_{L^{q_3}(\BR)} \| \langle w^+\rangle^{\mu_2}\langle w^-\rangle^{\nu_2} \langle \p_1\rangle^{s} h  \|_{L^{q_4}(\BR )}.
\end{align*}
For the second term on the right hand side of \eqref{D61},
 by applying Lemma \ref{lemD21}, Lemma \ref{lemD23}, Lemma \ref{lemD5} and  Lemma \ref{lemD6}, we have
\begin{align*}
&\big\| \big[\langle \p_1\rangle^{s}\p_1,\langle w^+\rangle^{\mu_1}\langle w^-\rangle^{\nu_1} g\big] (\langle w^+\rangle^{\mu_2}\langle w^-\rangle^{\nu_2}h) \big\|_{L^2} \\[-4mm]\nonumber\\\nonumber
&\lesssim  \| \langle \p_1\rangle^s\p_1\big( \langle w^+\rangle^{\mu_1}\langle w^-\rangle^{\nu_1} g\big)
\|_{L^{q_1}} \| \langle w^+\rangle^{\mu_2}\langle w^-\rangle^{\nu_2}  h \|_{L^{q_2}}\\\nonumber
&\quad+ \| \p_1 \big( \langle w^+\rangle^{\mu_1}\langle w^-\rangle^{\nu_1} g\big)  \|_{L^{q_3}}
\| \langle \p_1\rangle^s\big( \langle w^+\rangle^{\mu_2}\langle w^-\rangle^{\nu_2} h \big) \|_{L^{q_4}}\\\nonumber
&\lesssim  \| \langle w^+\rangle^{\mu_1}\langle w^-\rangle^{\nu_1} \langle \p_1\rangle^s\p_1^{\leq 1} g \|_{L^{q_1}}
 \| \langle w^+\rangle^{\mu_2}\langle w^-\rangle^{\nu_2}  h \|_{L^{q_2}}\\\nonumber
&\quad+ \| \langle w^+\rangle^{\mu_1}\langle w^-\rangle^{\nu_1}\p_1^{\leq 1} g \|_{L^{q_3}}
\| \langle w^+\rangle^{\mu_2}\langle w^-\rangle^{\nu_2} \langle \p_1\rangle^s h \|_{L^{q_4}} .
\end{align*}
This finishes the proof of the lemma.
\end{proof}

Now we show the more general commutator estimate for higher order derivatives.
\begin{lem}[Weighted commutator estimate, higher order version] \label{lemD11}
Let $a\geq 2$ be an integer,  $0<s<1$, $\mu,\mu_1,\mu_2,\nu,\nu_1,\nu_2\in\BR$.
 $\mu=\mu_1+\mu_2$, $\nu=\nu_1+\nu_2$. There holds
\begin{align*}
&\| \langle w^+\rangle^{\mu}\langle w^-\rangle^{\nu} \big[\langle\p_1\rangle^s\p_1^a,g \big] \p_1h\|_{L^2(\BR)} \\\nonumber
&\lesssim \| \langle w^+\rangle^{\mu_1}\langle w^-\rangle^{\nu_1} \langle\p_1\rangle^s\p_1^{\leq a} g\|_{L^2}
          \| \langle w^+\rangle^{\mu_2}\langle w^-\rangle^{\nu_2} \langle\p_1\rangle^s\p_1^{\leq a} h\|_{L^2},
\end{align*}
provided the right hand side is finite.
\end{lem}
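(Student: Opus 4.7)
The plan is to reduce the higher-order commutator to the second-order weighted commutator of Lemma \ref{lemD10} plus a Leibniz sum of pointwise products, and to handle each product via a weighted Kato--Ponce argument. Starting from the identity
\[
[\langle\p_1\rangle^s\p_1^a,g]\p_1 h=\langle\p_1\rangle^s\p_1^a(g\p_1 h)-g\langle\p_1\rangle^s\p_1^{a+1}h,
\]
expanding $\p_1^a(g\p_1 h)$ by Leibniz and using the elementary rearrangement
\[
[\langle\p_1\rangle^s,g]\p_1^{a+1}h=[\langle\p_1\rangle^s\p_1,g]\p_1^a h-\langle\p_1\rangle^s(\p_1 g\cdot\p_1^a h),
\]
I first obtain
\[
[\langle\p_1\rangle^s\p_1^a,g]\p_1 h=[\langle\p_1\rangle^s\p_1,g]\p_1^a h+(a-1)\langle\p_1\rangle^s(\p_1 g\cdot\p_1^a h)+\sum_{k=2}^{a}\binom{a}{k}\langle\p_1\rangle^s(\p_1^k g\cdot\p_1^{a-k+1}h).
\]
The first term is estimated directly by Lemma \ref{lemD10} applied with $\p_1^a h$ in place of $h$, splitting the weight $W=\langle w^+\rangle^{\mu}\langle w^-\rangle^{\nu}$ as $W_1 W_2$ with $W_i=\langle w^+\rangle^{\mu_i}\langle w^-\rangle^{\nu_i}$ and taking the $L^\infty\cdot L^2$ choice of H\"older exponents. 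The weighted Sobolev embedding of Lemma \ref{Sobo2} then yields $\|W_1\langle\p_1\rangle^s\p_1^{\leq 1}g\|_{L^\infty}\lesssim\|W_1\langle\p_1\rangle^s\p_1^{\leq 2}g\|_{L^2}\leq\|W_1\langle\p_1\rangle^s\p_1^{\leq a}g\|_{L^2}$ since $a\geq 2$, placing this contribution within the desired right-hand side.

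For each Leibniz product $\|W\langle\p_1\rangle^s(\p_1^k g\cdot\p_1^{a-k+1}h)\|_{L^2}$ with $1\leq k\leq a$, I move the weight inside via $W\langle\p_1\rangle^s F=\langle\p_1\rangle^s(WF)-[\langle\p_1\rangle^s,W]F$, the commutator piece being controlled by the first estimate of Lemma \ref{lemD5}. Splitting $W=W_1W_2$ and applying the classical Kato--Ponce product rule (an immediate consequence of Lemma \ref{lemD6}) to $\langle\p_1\rangle^s(W_1\p_1^k g\cdot W_2\p_1^{a-k+1}h)$ gives
\[
\|W_1\langle\p_1\rangle^s\p_1^k g\|_{L^{q_1}}\|W_2\p_1^{a-k+1}h\|_{L^{q_2}}+\|W_1\p_1^k g\|_{L^{q_3}}\|W_2\langle\p_1\rangle^s\p_1^{a-k+1}h\|_{L^{q_4}},
\]
after using the second estimate of Lemma \ref{lemD5} to pass $\langle\p_1\rangle^s$ through $W_1$ and $W_2$ without altering exponents. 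Choosing $L^\infty\cdot L^2$ with the $L^\infty$ placed on whichever factor carries the fewer derivatives, and converting the resulting $L^\infty$ norm into an $L^2$ norm on $\langle\p_1\rangle^s\p_1^{\leq a}$ via Lemma \ref{Sobo2}, each term is absorbed into the stated right-hand side.

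The main obstacle is the borderline cases $k=1$ and $k=a$. For $k=a$ the pointwise product is $\p_1^a g\cdot\p_1 h$, and the low-derivative factor $\p_1 h$ requires an $L^\infty$ bound; by one-dimensional weighted Sobolev embedding this costs two $L^2$ derivatives $\|W_2\langle\p_1\rangle^s\p_1^{\leq 2}h\|_{L^2}$, which fits within $\|W_2\langle\p_1\rangle^s\p_1^{\leq a}h\|_{L^2}$ precisely because $a\geq 2$; the symmetric case $k=1$ is handled identically, and this is exactly where the hypothesis $a\geq 2$ enters. The secondary difficulty is bookkeeping: every application of Lemma \ref{lemD5} to commute $\langle\p_1\rangle^s$ past a weight must preserve the Lebesgue exponents chosen in the Kato--Ponce split, so that the H\"older product $L^\infty\cdot L^2$ indeed lands in $L^2$ and no spurious loss of derivatives occurs.
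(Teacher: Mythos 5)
Your proposal is correct and follows essentially the same route as the paper: the identical Leibniz rearrangement reducing the commutator to $[\langle\p_1\rangle^s\p_1,g]\p_1^a h$ plus a sum of products $\langle\p_1\rangle^s(\p_1^k g\cdot\p_1^{a-k+1}h)$, the first handled by Lemma \ref{lemD10} and the products by the weighted Kato--Ponce machinery of Lemmas \ref{lemD5}, \ref{lemD6}, \ref{lemD8} together with the weighted Sobolev embedding. Your explicit tracking of where $a\geq 2$ enters (converting the $L^\infty$ norm of the low-derivative factor into an $L^2$ norm with one extra derivative) matches the paper's use of that hypothesis.
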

\begin{remark}
The assumption $a\geq 2$ is not sharp. Actually it holds only if $s+a>\f{d}{2}+1$ where $d$ is the space dimension.
To save the length of this paper, we will not pursue along this direction.
\end{remark}
\begin{proof}
By the Leibniz rule, we write
\begin{align}\label{D72}
\big[ \langle\p_1\rangle^s\p_1^a,g \big] \p_1h
=\langle\p_1\rangle^s \sum_{b+c=a,\ c\neq a}C_a^b\p_1^bg  \p_1^{c+1}h
-\langle\p_1\rangle^s  (\p_1 g  \p_1^ah)
+\big[ \langle\p_1\rangle^s \p_1,g \big] \p_1^a h,
\end{align}
where $ C_a^b=\f{a!}{b!(a-b)!}$ is the binomial coefficient.
For the second and the last term of \eqref{D72}, by the weighted commutator estimate
 Lemma \ref{lemD6}, Lemma \ref{lemD8}, Lemma \ref{lemD10} and
the weighted Sobolev inequalities Lemma \ref{Sobo2},
 we have
\begin{align*}
&\big\|\langle w^+\rangle^{\mu}\langle w^-\rangle^{\nu} \langle\p_1\rangle^s  (\p_1 g  \p_1^ah)\big\|_{L^2(\BR)}
+\big\|\langle w^+\rangle^{\mu}\langle w^-\rangle^{\nu} \big[ \langle\p_1\rangle^s\p_1,g \big] \p_1^{a} h
\big\|_{L^2(\BR)} \\\nonumber
&\lesssim \| \langle w^+\rangle^{\mu_1}\langle w^-\rangle^{\nu_1} \langle \p_1\rangle^s\p_1^{\leq 1} g \|_{L^{\infty}}
 \| \langle w^+\rangle^{\mu_2}\langle w^-\rangle^{\nu_2}  \p_1^{a} h \|_{L^2}\\[-5mm]\nonumber\\\nonumber
&\quad + \| \langle w^+\rangle^{\mu_1}\langle w^-\rangle^{\nu_1}\p_1^{\leq 1} g \|_{L^{\infty}}
\| \langle w^+\rangle^{\mu_2}\langle w^-\rangle^{\nu_2} \langle \p_1\rangle^s \p_1^{a} h \|_{L^2} \\[-5mm]\nonumber\\\nonumber
&\lesssim \| \langle w^+\rangle^{\mu_1}\langle w^-\rangle^{\nu_1} \langle\p_1\rangle^s\p_1^{\leq a} g\|_{L^2}
          \| \langle w^+\rangle^{\mu_2}\langle w^-\rangle^{\nu_2} \langle\p_1\rangle^s\p_1^{\leq a} h\|_{L^2}.
\end{align*}
It remains to estimate the first term of \eqref{D72}. By standard calculation similar to the commutator estimate and by using the weighted commutator estimate  Lemma \ref{lemD6}, Lemma \ref{lemD8} and
the weighted Sobolev inequalities Lemma \ref{Sobo2}, there holds
\begin{align*}
&\big\|\langle w^+\rangle^{\mu}\langle w^-\rangle^{\nu} \langle\p_1\rangle^s \sum_{b+c=a,\ c\neq a}C_a^b\p_1^bg  \p_1^{c+1}h \big\|_{L^2(\BR)}\\
&\lesssim \| \langle w^+\rangle^{\mu_1}\langle w^-\rangle^{\nu_1} \langle \p_1\rangle^s\p_1^{\leq 1} g \|_{L^{\infty}}
 \| \langle w^+\rangle^{\mu_2}\langle w^-\rangle^{\nu_2}  \p_1^{a} h \|_{L^2} \\[-5mm]\nonumber\\\nonumber
&\quad+ \| \langle w^+\rangle^{\mu_1}\langle w^-\rangle^{\nu_1}\p_1^{\leq 1} g \|_{L^{\infty}}
\| \langle w^+\rangle^{\mu_2}\langle w^-\rangle^{\nu_2} \langle \p_1\rangle^s \p_1^{a} h \|_{L^2} \\[-5mm]\nonumber\\\nonumber
&\quad+ \| \langle w^+\rangle^{\mu_1}\langle w^-\rangle^{\nu_1} \p_1^{a} g \|_{L^2}
 \| \langle w^+\rangle^{\mu_2}\langle w^-\rangle^{\nu_2}\langle \p_1\rangle^s\p_1^{\leq 1} h \|_{L^{\infty}}\\[-5mm]\nonumber\\\nonumber
&\quad+ \| \langle w^+\rangle^{\mu_1}\langle w^-\rangle^{\nu_1} \langle \p_1\rangle^s \p_1^{a} g \|_{L^2}
\| \langle w^+\rangle^{\mu_2}\langle w^-\rangle^{\nu_2} \p_1^{\leq 1} h \|_{L^\infty} \\[-5mm]\nonumber\\\nonumber
&\quad +\| \langle w^+\rangle^{\mu_1}\langle w^-\rangle^{\nu_1} \langle \p_1\rangle^s\p_1^2 g \|_{L^2}
 \| \langle w^+\rangle^{\mu_2}\langle w^-\rangle^{\nu_2}  \p_1^{a-1} h \|_{L^\infty} +... \\[-5mm]\nonumber\\\nonumber
&\lesssim \| \langle w^+\rangle^{\mu_1}\langle w^-\rangle^{\nu_1} \langle\p_1\rangle^s\p_1^{\leq a} g\|_{L^2}
          \| \langle w^+\rangle^{\mu_2}\langle w^-\rangle^{\nu_2} \langle\p_1\rangle^s\p_1^{\leq a} h\|_{L^2}.
\end{align*}
This finishes the proof of the lemma.
\end{proof}

\subsection{Tangential derivative}
In this subsection, we introduce the tangential derivative $\p_\tau$ and
then present its elementary estimates.
For function $g$ defined in $\Om_f$ or $\hat{\Om}_f$, recalling that $\ud{g}$ denotes
the trace of $g$ on $\Ga_f.$ We expect
\begin{align*}
&\p_1 \ud{g}
= \ud{\p_1 g}+\p_1f \ud{\p_2 g}
= \ud{\p_\tau g}.
\end{align*}
Thus we introduce
\begin{equation*}
\begin{cases}
\Psi(t,x)= \zeta(x_2)\p_1f \quad \textrm{in}\,\, \Om_f,\\
\hat{\Psi}(t,x)= \zeta(x_2)\p_1f \quad \textrm{in}\,\, \hat{\Om}_f,
\end{cases}
\end{equation*}
where $\zeta(x_2)\in C^\infty(\BR)$ and
\begin{equation*}
\zeta(x_2)=
\begin{cases}
1 , &\text{ if } |x_2|\leq 1-\f{c_0}{2} ,\\
0 , &\text{ if } |x_2|\geq 1-\f{c_0}{2},
\end{cases}
\end{equation*}
where  $c_0$ is the constant in Theorem \ref{thm}.
The tangential derivative $\p_\tau$ in $\Om_f$ and $\hat{\Om}_f$ is
defined as follows:
\begin{equation}\label{tangD-}
\begin{cases}
\p_\tau=\Psi\p_2+\p_1 \quad \textrm{in}\,\, \Om_f,\\
\p_\tau=\hat{\Psi}\p_2+\p_1 \quad \textrm{in}\,\, \hat{\Om}_f.
\end{cases}
\end{equation}

\begin{remark}
The  tangential derivative defined by \eqref{tangD-}  will 
lose half a derivative compared with the  more natural one by \eqref{tangD}.
However, when applying the weight functions, \eqref{tangD}  will involve the commutator of the nonlocal operator and the weight functions, which will lead to technical complications.
 In the lower order energy estimate, the derivative loss is acceptable.
 Here, the simplified form \eqref{tangD-} is sufficient. In the highest order estimate, we need to use \eqref{tangD} to avoid the derivative loss problem to close the energy estimate.
\end{remark}

\begin{remark}
The cut-off function $\zeta$ is introduced so that the support of $\Psi$ and $\hat{\Psi}$ is concentrated near the free surface.
When $x$ is far from the free surface, particularly near the fixed boundaries $x_2=\pm1$, we have $\p_\tau=\p_1$.
\end{remark}


It is easy to verify that there holds the following estimate for $\Psi$ and $\hat{\Psi}$.
 \begin{lem}\label{lemPsi-s}
For any $k\in\bN,$ $\nu\in\bR$, there hold
\begin{align*}
\| \p^k\Psi\|_{L^2(\Om_f)}+\| \p^k\hat{\Psi}\|_{L^2(\hat{\Om}_f)}
&\lesssim \|\p_1f(t,\cdot) \|_{H^k(\BR)}, \\
\| \langle w^\pm\rangle^{\nu} (\p_t\pm\p_1)\p^k\Psi\|_{L^2(\Om_f)}
&\lesssim \|\langle w^\pm\rangle^{\nu} (\p_t\pm\p_1)\p_1f(t,\cdot) \|_{H^{k}(\BR)},\\
\| \langle w^\pm\rangle^{\nu} (\p_t\pm\p_1)\p^k\hat{\Psi}\|_{L^2(\hat{\Om}_f)}
&\lesssim \|\langle w^\pm\rangle^{\nu} (\p_t\pm\p_1)\p_1f(t,\cdot) \|_{H^{k}(\BR)}\, .
\end{align*}
\end{lem}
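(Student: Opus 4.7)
\medskip

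\noindent\textbf{Proof proposal.} The key observation is that $\Psi$ and $\hat\Psi$ separate variables: they factor as $\zeta(x_2)\,\p_1 f(t,x_1)$, where $\zeta$ depends only on $x_2$ while $\p_1 f$ depends only on $(t,x_1)$. Consequently, for any multi-index $\alpha=(\alpha_1,\alpha_2)$ with $|\alpha|=k$, we have the exact formula
\begin{equation*}
\p^{\alpha}\Psi(t,x)=\zeta^{(\alpha_2)}(x_2)\,\p_1^{\alpha_1+1}f(t,x_1),
\end{equation*}
and the same for $\hat\Psi$. Since $\zeta\in C^\infty(\BR)$ has all its derivatives uniformly bounded, and since $\Om_f\subseteq \BR\times(-1,1)$ gives a uniformly bounded $x_2$-slice, Fubini yields
\begin{equation*}
\|\zeta^{(\alpha_2)}(x_2)\,g(t,x_1)\|^2_{L^2(\Om_f)}
=\int_{\BR}|g(t,x_1)|^2\!\int_{-1}^{f(t,x_1)}|\zeta^{(\alpha_2)}(x_2)|^2\,\dx_2\,\dx_1
\lesssim \|g(t,\cdot)\|^2_{L^2(\BR)},
\end{equation*}
with an analogous bound on $\hat\Om_f$. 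Applied with $g=\p_1^{\alpha_1+1}f$ and summed over $|\alpha|\leq k$, this gives the first estimate.

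\medskip

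For the second and third estimates I exploit the fact that $(\p_t\pm\p_1)$ and $\p_1^{j}$ commute with $\zeta(x_2)$, since $\zeta$ is independent of $(t,x_1)$, and commute with the weight $\langle w^\pm\rangle^\nu$ viewed as a multiplier, since the weight depends only on $(t,x_1)$ and not on $x_2$. Writing $\alpha=(\alpha_1,\alpha_2)$ with $|\alpha|=k$, I get the identity
\begin{equation*}
\langle w^\pm\rangle^\nu (\p_t\pm\p_1)\p^{\alpha}\Psi
=\zeta^{(\alpha_2)}(x_2)\cdot\p_1^{\alpha_1}\!\bigl(\langle w^\pm\rangle^\nu(\p_t\pm\p_1)\p_1 f\bigr)
+\zeta^{(\alpha_2)}(x_2)\cdot\bigl[\p_1^{\alpha_1},\langle w^\pm\rangle^\nu\bigr](\p_t\pm\p_1)\p_1 f,
\end{equation*}
but in fact the cleaner route is simply to note that $\langle w^\pm\rangle^\nu$ passes through $\zeta^{(\alpha_2)}$ untouched, leaving
\begin{equation*}
\langle w^\pm\rangle^\nu (\p_t\pm\p_1)\p^{\alpha}\Psi
=\zeta^{(\alpha_2)}(x_2)\,\langle w^\pm\rangle^\nu\,\p_1^{\alpha_1}(\p_t\pm\p_1)\p_1 f,
\end{equation*}
and then apply the same Fubini bound as above. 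This produces the control
\begin{equation*}
\|\langle w^\pm\rangle^\nu(\p_t\pm\p_1)\p^{\alpha}\Psi\|_{L^2(\Om_f)}
\lesssim \|\langle w^\pm\rangle^\nu\,\p_1^{\alpha_1}(\p_t\pm\p_1)\p_1 f(t,\cdot)\|_{L^2(\BR)},
\end{equation*}
and summing over $|\alpha|\leq k$ gives the claimed $H^k(\BR)$ bound on the right. The argument for $\hat\Psi$ in $\hat\Om_f$ is identical.

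\medskip

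There is no real obstacle: this lemma is a routine bookkeeping computation that exploits the tensor-product structure of $\Psi=\zeta(x_2)\p_1 f(t,x_1)$ and the fact that the weights $\langle w^\pm\rangle^\nu$ and the differential operators $(\p_t\pm\p_1)$ all act trivially on the $x_2$ variable. The only mild point of care is the separation of variables in the $L^2(\Om_f)$ norm, which works because $\zeta^{(m)}$ has support contained in a fixed bounded $x_2$-interval, making $\int_{-1}^{f(t,x_1)}|\zeta^{(m)}(x_2)|^2\dx_2$ uniformly bounded in $(t,x_1)$ independently of the free surface $f$.
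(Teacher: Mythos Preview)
Your proof is correct. The paper itself does not supply a proof for this lemma, stating only that ``It is easy to verify that there holds the following estimate for $\Psi$ and $\hat{\Psi}$''; your argument via the tensor-product structure $\Psi=\zeta(x_2)\p_1 f(t,x_1)$ and Fubini is exactly the intended routine verification.
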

Next, we present a simple tangential derivative estimate.
\begin{lem}\label{lemTang}
For any $k\in\bZ^+,$ $\mu,\,\nu\in\bR$, there holds
\begin{align*}
&\|\langle w^+\rangle^{\mu}\langle w^-\rangle^{\nu}\p_\tau^k g\|_{L^2(\Om_f)}
\lesssim \sum_{1\leq a\leq k}\|\langle w^+\rangle^{\mu}\langle w^-\rangle^{\nu} \p^a g\|_{L^2(\Om_f)}
\mathcal{P}(1+\| \p_1 f\|_{H^2}),\quad 1\leq k\leq 3,\\
&\|\langle w^+\rangle^{\mu}\langle w^-\rangle^{\nu}\p_\tau^k g\|_{L^2(\Om_f)}
\lesssim \sum_{1\leq a\leq k}\|\langle w^+\rangle^{\mu}\langle w^-\rangle^{\nu} \p^a g\|_{L^2(\Om_f)}
\mathcal{P}(1+\| \p_1 f\|_{H^{k-1}}),\quad k\geq 3,\\
&\|\langle w^+\rangle^{\mu}\langle w^-\rangle^{\nu}\p_\tau^k g\|_{L^2(\hat{\Om}_f)}
\lesssim \sum_{1\leq a\leq k}\|\langle w^+\rangle^{\mu}\langle w^-\rangle^{\nu} \p^a g\|_{L^2(\hat{\Om}_f)}
\mathcal{P}(1+\| \p_1 f\|_{H^2}),\quad 1\leq k\leq 3,\\
&\|\langle w^+\rangle^{\mu}\langle w^-\rangle^{\nu}\p_\tau^k g\|_{L^2(\hat{\Om}_f)}
\lesssim \sum_{1\leq a\leq k}\|\langle w^+\rangle^{\mu}\langle w^-\rangle^{\nu} \p^a g\|_{L^2(\hat{\Om}_f)}
\mathcal{P}(1+\| \p_1 f\|_{H^{k-1}}),\quad k\geq 3,
\end{align*}
where $\mathcal{P}: \BR^+\rightarrow\BR^+$ is a polynomial.
\end{lem}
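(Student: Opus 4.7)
The plan is a Leibniz-type expansion of $\p_\tau^k g$ followed by weighted H\"older pairing with the Sobolev bounds from Lemma \ref{Sobo1} and the $\Psi$-estimates of Lemma \ref{lemPsi-s}. Iterating the definition $\p_\tau = \Psi\p_2 + \p_1$ with $\Psi(t,x) = \zeta(x_2)\p_1 f(t,x_1)$, a straightforward induction on $k$ shows
\begin{equation*}
\p_\tau^k g = \sum\, c\, \Bigl(\prod_{j=1}^{r}\p^{m_j}\Psi\Bigr)\,\p^\ell g,\qquad 1\le\ell\le k,\ m_j\ge 0,\ \sum_j m_j + \ell = k,\ r\le k,
\end{equation*}
and the identical expansion holds in $\hat{\Om}_f$ with $\hat{\Psi}$ in place of $\Psi$. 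Since the weight $\langle w^+\rangle^{\mu}\langle w^-\rangle^{\nu}$ depends only on $(t,x_1)$, I move it entirely onto the $\p^\ell g$ factor and apply H\"older to the remaining product.

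The key structural point is that $\Psi$ is essentially one-dimensional: $\p_1^{a_1}\p_2^{a_2}\Psi = \zeta^{(a_2)}(x_2)\,\p_1^{a_1+1}f(t,x_1)$. Combined with the 1D embedding $H^1(\BR)\hookrightarrow L^\infty(\BR)$, this yields the pointwise bound $\|\p^a\Psi\|_{L^\infty(\Om_f)}\lesssim\|\p_1 f\|_{H^{\max(a,1)}(\BR)}$ (and analogously for $\hat{\Psi}$), while Lemma \ref{lemPsi-s} provides $\|\p^a\Psi\|_{L^2(\Om_f)}\lesssim\|\p_1 f\|_{H^a}$; intermediate $L^q$ bounds ($2<q<\infty$) follow from the 2D embedding $H^1(\Om_f)\hookrightarrow L^q(\Om_f)$ supplied by Lemma \ref{Sobo1}.

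For each term in the expansion, the H\"older pairing is chosen so that no $\Psi$-factor requires more regularity of $f$ than the stated budget permits. When every $m_j$ satisfies $\max(m_j,1)\le k-1$, I bound the $\Psi$-product in $L^\infty$ and pair with $\|\langle w^+\rangle^{\mu}\langle w^-\rangle^{\nu}\p^\ell g\|_{L^2}$. When a single $\Psi$-factor carries the highest count $m_1=k-\ell\ge 2$, I put $\p^{k-\ell}\Psi$ in $L^2$, keep the remaining $\Psi$-factors in $L^\infty$, and use the weighted 2D embedding $H^2\hookrightarrow L^\infty$ (proved exactly as Lemma \ref{Sobo1} is proved) to obtain
\begin{equation*}
\|\langle w^+\rangle^{\mu}\langle w^-\rangle^{\nu}\p^\ell g\|_{L^\infty(\Om_f)}\lesssim\sum_{|\alpha|\le 2}\|\langle w^+\rangle^{\mu}\langle w^-\rangle^{\nu}\p^{\ell+|\alpha|} g\|_{L^2(\Om_f)},
\end{equation*}
which costs two extra derivatives on $g$ but stays within $\p^{\le k}g$ because $\ell+2\le k$ whenever $m_1=k-\ell\ge 2$. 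Intermediate cases are handled by $L^4$-$L^4$ H\"older combined with the weighted embedding $H^1(\Om_f)\hookrightarrow L^4(\Om_f)$ from Lemma \ref{Sobo1}.

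Tracking the $f$-regularity case by case closes the argument. For $1\le k\le 3$ the critical configuration is $k=3$, $\ell=1$, $m_1=2$, handled by the second strategy above with only $\|\p^2\Psi\|_{L^2}\lesssim\|\p_1 f\|_{H^2}$; all other subcases (for instance $(\p\Psi)^2\p g$ or $(\p\Psi)\p^2 g$) are controlled by the first strategy and need at most the same bound. For $k\ge 4$ the worst term arises at $\ell=1$, $r=1$, $m_1=k-1$, requiring $\|\p^{k-1}\Psi\|_{L^2}\lesssim\|\p_1 f\|_{H^{k-1}}$, which matches the claim; all remaining subcases are strictly easier. The argument in $\hat{\Om}_f$ is verbatim with $\hat{\Psi}$ in place of $\Psi$. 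The main obstacle is not analytic but purely combinatorial: one must carefully allocate Sobolev embeddings between the $\Psi$-product and $g$ so that the $f$-regularity budget ($H^2$ for $k\le 3$, $H^{k-1}$ for $k\ge 4$) is respected while keeping the total derivative count on $g$ at most $k$; the essentially one-dimensional structure of $\Psi$ and the 2D embedding $H^2\hookrightarrow L^\infty$ are both indispensable for closing the critical case $k=3$.
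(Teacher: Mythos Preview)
Your overall approach is correct and matches the paper's: expand $\p_\tau^k$ by iterating $\p_\tau = \p_1 + \Psi\p_2$, then bound each term by H\"older and Sobolev embeddings, exploiting the one-dimensional structure of $\Psi$. The paper organizes this slightly differently---it computes $k\le 3$ explicitly and handles $k\ge 4$ by the one-step induction $\p_\tau^{k+1}g = \p_\tau^k\p_1 g + \p_\tau^k(\Psi\p_2 g)$, which sidesteps the full combinatorics of your expansion---but the ideas are identical.

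There is, however, an off-by-one slip in your stated $L^\infty$ bound. Since $\p_1^{a_1}\p_2^{a_2}\Psi = \zeta^{(a_2)}(x_2)\,\p_1^{a_1+1}f(x_1)$ and the one-dimensional embedding is $H^1(\BR)\hookrightarrow L^\infty(\BR)$, the correct estimate is
\[
\|\p^a\Psi\|_{L^\infty(\Om_f)}\lesssim\|\p_1 f\|_{H^{a+1}(\BR)},
\]
not $\|\p_1 f\|_{H^{\max(a,1)}}$. With your stated bound the threshold ``$\max(m_j,1)\le k-1$'' for strategy~1 is too permissive: at $k=3$ it would allow placing $\p^2\Psi$ in $L^\infty$ at cost $H^2$, which is false. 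Fortunately your final case-by-case discussion is still right: for $k=3$, any term with $m_1=2$ forces $\ell=1$ and all remaining $m_j=0$ (since $\sum m_j\le 2$), so it always lands in your strategy~2; and for $k\ge 4$ the corrected strategy-1 condition $m_j+1\le k-1$ together with strategy~2 exhausts all terms. Just fix the exponent in the $L^\infty$ bound and tighten the strategy-1 threshold accordingly.
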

\begin{proof}
It suffices to show the above first two inequalities in the lemma. The proof of the remaining two inequalities is the same.

Through a simple calculation, we deduce that
\begin{align*}
\p_\tau&=\p_1 + \Psi\p_2,\\
\p_\tau^2&=(\p_1 + \Psi\p_2)(\p_1 + \Psi\p_2)=\p_1^2 +\p_1\Psi\p_2+2\Psi\p_{12}+\Psi\p_2\Psi\p_2
+\Psi^2\p_2^2,\\
\p_\tau^3&=(\p_1 + \Psi\p_2)(\p_1^2 +\p_1\Psi\p_2+2\Psi\p_{12}+\Psi\p_2\Psi\p_2
+\Psi^2\p_2^2\big).
\end{align*}
Hence
\begin{align*}
|\p_\tau g|&\leq(1+|\Psi|)|\p g|,\\
|\p_\tau^2 g|&\lesssim (1+|\Psi|)^2\big( |\p^2 g| + |\p \Psi \p g|\big),\\
|\p_\tau^3 g|&\lesssim (1+|\Psi|)^3\Big( |\p^3 g| + |\p \Psi \p^2 g|
+\big( |\p^{1\leq a\leq 2} \Psi|+|\p \Psi|^2\big) |\p g|\Big).
\end{align*}
Consequently,
\begin{align*}
\|\langle w^+\rangle^{\mu}\langle w^-\rangle^{\nu}\p_\tau g\|_{L^2(\Om_f)}&\leq(1+\|\Psi\|_{L^\infty} ) \|\langle w^+\rangle^{\mu}\langle w^-\rangle^{\nu}\p g\|_{L^2(\Om_f)},\\
\|\langle w^+\rangle^{\mu}\langle w^-\rangle^{\nu}\p_\tau^2 g\|_{L^2(\Om_f)}&\lesssim (1+\|\Psi\|_{L^\infty} )^2 \big(1+\|\p \Psi\|_{H^1}\big) \|\langle w^+\rangle^{\mu}\langle w^-\rangle^{\nu}\p g\|_{H^1(\Om_f)} ,\\
\|\langle w^+\rangle^{\mu}\langle w^-\rangle^{\nu}\p_\tau^3 g\|_{L^2(\Om_f)}&\lesssim (1+\|\Psi\|_{L^\infty} )^3 \big(1+\|\p \Psi\|_{H^1}\big)^2 \|\langle w^+\rangle^{\mu}\langle w^-\rangle^{\nu}\p g\|_{H^2(\Om_f)}.
\end{align*}
Then the result for $1\leq k\leq 3$ follows from Lemma \ref{Sobo1} and Lemma \ref{lemPsi-s}.
For $k\geq 4$, we use induction argument.
We assume the result holds for all positive integer no more than $k$.
Note
\begin{align*}
\p_\tau^{k+1} g=\p_\tau^k \p_1g +\p_\tau^k (\Psi\p_2g).
\end{align*}
Then we have
\begin{align*}
&\|\langle w^+\rangle^{\mu}\langle w^-\rangle^{\nu}\p_\tau^{k+1} g\|_{L^2(\Om_f)}\leq
\|\langle w^+\rangle^{\mu}\langle w^-\rangle^{\nu}\p_\tau^k \p_1g\|_{L^2(\Om_f)} +\|\langle w^+\rangle^{\mu}\langle w^-\rangle^{\nu}\p_\tau^k (\Psi\p_2g)\|_{L^2(\Om_f)}\\
&\lesssim \sum_{1\leq a\leq k}\big( \|\langle w^+\rangle^{\mu}\langle w^-\rangle^{\nu} \p^a \p_1g\|_{L^2(\Om_f)}+\|\langle w^+\rangle^{\mu}\langle w^-\rangle^{\nu} \p^a (\Psi\p_2g)\|_{L^2(\Om_f)}\big)
\mathcal{P}(1+\| \p_1 f\|_{H^{k-1}})\\
&\lesssim \sum_{1\leq a\leq k+1}  \|\langle w^+\rangle^{\mu}\langle w^-\rangle^{\nu} \p^a g\|_{L^2(\Om_f)} (1+ \| \p_1 f\|_{H^{k}})
\mathcal{P}(1+\| \p_1 f\|_{H^{k-1}}).
\end{align*}
This finishes the proof of the lemma.
\end{proof}

\subsection{Weighted chain rules}
Before the energy estimate, we present a weighted chain rule estimate.
\begin{lem}\label{lemG3}
Let  $g=g(x)$  be a function defined in a two dimensional domain $\Om\subseteq \bR^2$ with boundary  parameterized by $\p\Om=\{x_2=f(x_1)| x_1\in\bR \}$.
Let $\ud{g}(x_1)$ be the trace of $g(x)$ on the boundary $\p\Om$,
$k\geq 1$ be a positive integer, $0<s<1$, $\mu\in\BR,\, \nu\in\BR $.
For all $|a|\leq k$, there hold
\begin{align*}
&\big\| \langle w^+\rangle^{\mu}\langle w^-\rangle^{\nu} \p_1^a \ud{g}
\big\|_{L^2(\BR)}   \\
&\lesssim
\begin{cases}
\sum_{|\alpha|\leq k}\big\| \langle w^+\rangle^{\mu}\langle w^-\rangle^{\nu}  \ud{\p^\alpha g}\big\|_{L^2(\BR)}
\cdot  \big(1+ \|\p_1f\|_{L^\infty(\BR)} \big)\quad k=1,\\
\sum_{|\alpha|\leq k}\big\| \langle w^+\rangle^{\mu}\langle w^-\rangle^{\nu}  \ud{\p^\alpha g}\big\|_{L^2(\BR)}
\cdot  \mathcal{P}\big(1+ \|\p_1f\|_{H^{k-1}(\BR)} \big)\quad k\geq 2,\\
\end{cases}
\\
&\big\| \langle w^+\rangle^{\mu}\langle w^-\rangle^{\nu} \langle\p_1\rangle^{s} \p_1^a \ud{g}\big\|_{L^2(\BR)}   \\
&\lesssim
\begin{cases}
\sum_{|\alpha|\leq k}\big\| \langle w^+\rangle^{\mu}\langle w^-\rangle^{\nu} \langle\p_1\rangle^{s}  \ud{\p^\alpha g}\big\|_{L^2(\BR)}
\cdot \mathcal{P}\big(1+ \|\p_1f\|_{H^s(\BR)}+\|\p_1f\|_{L^\infty(\BR)} \big)\quad k=1,\\
\sum_{|\alpha|\leq k}\big\| \langle w^+\rangle^{\mu}\langle w^-\rangle^{\nu} \langle\p_1\rangle^{s}  \ud{\p^\alpha g}\big\|_{L^2(\BR)}
\cdot \mathcal{P}\big(1+ \|\p_1f\|_{ H^{k+s-1}(\BR)} \big)\quad k\geq 2,
\end{cases}
\end{align*}
where $\mathcal{P}: \BR^+\rightarrow\BR^+$ is a polynomial.
\end{lem}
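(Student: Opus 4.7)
The plan is to reduce $\partial_1^a \underline{g}$ via the chain rule to a finite sum of products of traces $\underline{\partial^\alpha g}$ and polynomials in $\partial_1^j f$, and then to apply the weighted product and commutator estimates of Lemmas \ref{lemD5}--\ref{lemD11} together with the weighted Sobolev embedding of Lemma \ref{Sobo2}.

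First, iterating the chain-rule identity $\partial_1 \underline{g} = \underline{\partial_1 g} + (\partial_1 f)\underline{\partial_2 g}$ and using Fa\`a di Bruno's formula, for $1 \leq a \leq k$ one obtains an expansion
\begin{align*}
\partial_1^a \underline{g} = \sum_{|\alpha|\leq a} \underline{\partial^\alpha g} \cdot P_{a,\alpha}(\partial_1 f, \partial_1^2 f, \ldots, \partial_1^a f),
\end{align*}
where each $P_{a,\alpha}$ is a polynomial in $\partial_1^j f$ with $1 \leq j \leq a$ and bounded total degree. The case $k=1$ is immediate: after taking the weighted $L^2$ norm and using H\"older's inequality, one arrives at $(1+\|\partial_1 f\|_{L^\infty})\sum_{|\alpha|\leq 1}\|\langle w^+\rangle^\mu\langle w^-\rangle^\nu \underline{\partial^\alpha g}\|_{L^2(\BR)}$, matching the claimed bound exactly.

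For $k\geq 2$, I would take the weighted $L^2$ norm of each term in the Fa\`a di Bruno expansion, anchor the weight $\langle w^+\rangle^\mu\langle w^-\rangle^\nu$ entirely on $\underline{\partial^\alpha g}$, and estimate every $\partial_1^j f$ appearing in $P_{a,\alpha}$ in $L^\infty$. Since $1\leq j\leq a\leq k$, the one-dimensional Sobolev embedding $H^1(\BR)\hookrightarrow L^\infty(\BR)$ gives $\|\partial_1^j f\|_{L^\infty}\lesssim \|\partial_1 f\|_{H^{k-1}(\BR)}$, yielding the polynomial $\mathcal{P}(1+\|\partial_1 f\|_{H^{k-1}})$. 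For the fractional-derivative inequality I would apply $\langle\partial_1\rangle^s$ term by term to the same expansion and invoke Lemma \ref{lemD10} with the splitting $\mu_1=\nu_1=0$, $\mu_2=\mu$, $\nu_2=\nu$. This produces, schematically,
\begin{align*}
\|\langle w^+\rangle^\mu\langle w^-\rangle^\nu\langle\partial_1\rangle^s(P_{a,\alpha}\underline{\partial^\alpha g})\|_{L^2} \lesssim \|\langle\partial_1\rangle^{s}\partial_1^{\leq 1}P_{a,\alpha}\|_{L^\infty}\|\langle w^+\rangle^\mu\langle w^-\rangle^\nu \underline{\partial^\alpha g}\|_{L^2} + \|\partial_1^{\leq 1}P_{a,\alpha}\|_{L^\infty}\|\langle w^+\rangle^\mu\langle w^-\rangle^\nu\langle\partial_1\rangle^s\underline{\partial^\alpha g}\|_{L^2},
\end{align*}
after which the $f$-derivative factors are bounded by $\mathcal{P}(1+\|\partial_1 f\|_{H^s}+\|\partial_1 f\|_{L^\infty})$ when $k=1$ and $\mathcal{P}(1+\|\partial_1 f\|_{H^{k+s-1}})$ when $k\geq 2$, via Sobolev embedding on $\BR$ and a Moser-type product estimate applied inductively to the polynomial $P_{a,\alpha}$.

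The main obstacle is keeping the weights $\langle w^+\rangle^\mu\langle w^-\rangle^\nu$ fastened to the trace factor while freely distributing fractional derivatives across products of the $P_{a,\alpha}$ and $\underline{\partial^\alpha g}$, because these weights lie outside the Muckenhoupt $\mathcal{A}_p$ range for which the classical weighted Kato--Ponce estimates are available. This is precisely the regime for which the weighted commutator estimates in Section 3 were designed: the splitting $\mu_1=\nu_1=0$ leaves the $f$-derivative polynomials unweighted, so that they may be absorbed into an $L^\infty$ Sobolev-type norm of $\partial_1 f$, while the trace factor carries the full weight without ever being transported across a fractional derivative. Bookkeeping the exact order of $f$-derivatives that appears in each $P_{a,\alpha}$ then accounts for the difference between $\|\partial_1 f\|_{H^{k-1}}$ in the polynomial part and $\|\partial_1 f\|_{H^{k+s-1}}$ in the fractional part.
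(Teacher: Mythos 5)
Your overall route — expand $\p_1^a\ud{g}$ by the chain rule into $\sum_{|\alpha|\le a}\ud{\p^\alpha g}\cdot P_{a,\alpha}(\p_1 f,\dots,\p_1^a f)$ and then apply the weighted product/commutator lemmas of Section 3 together with the weighted Sobolev embedding — is the same as the paper's, which organizes the identical expansion as an induction on $k$ (applying the inductive hypothesis to $\ud{\p_2 g}$ rather than writing the full Fa\`a di Bruno formula at once). However, there is a genuine gap in your H\"older pairing for the top-order term. The expansion of $\p_1^k\ud{g}$ contains the term $\p_1^k f\cdot\ud{\p_2 g}$ (all $k$ derivatives falling on the single factor $\p_1 f$), and your plan is to put \emph{every} $\p_1^j f$ in $L^\infty$, claiming $\|\p_1^j f\|_{L^\infty}\lesssim\|\p_1 f\|_{H^{k-1}(\BR)}$ for all $1\le j\le k$. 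This fails at $j=k$: the embedding $H^1(\BR)\hookrightarrow L^\infty(\BR)$ gives $\|\p_1^k f\|_{L^\infty}\lesssim\|\p_1^k f\|_{H^1}\simeq\|\p_1 f\|_{H^{k}}$, a full derivative more than the claimed $\|\p_1 f\|_{H^{k-1}}$, and no interpolation can recover boundedness of $\p_1^k f$ from $\p_1 f\in H^{k-1}$ alone. The correct pairing for this term reverses the roles: put $\p_1^k f$ in $L^2$ (so $\|\p_1^k f\|_{L^2}\le\|\p_1 f\|_{H^{k-1}}$) and the weighted trace in $L^\infty$ via Lemma \ref{Sobo2},
\begin{align*}
\|\langle w^+\rangle^{\mu}\langle w^-\rangle^{\nu}\,\ud{\p_2 g}\|_{L^\infty(\BR)}
\lesssim \sum_{|a|\le 1}\|\langle w^+\rangle^{\mu}\langle w^-\rangle^{\nu}\,\p_1^a\ud{\p_2 g}\|_{L^2(\BR)},
\end{align*}
which costs one more trace derivative but stays within the allowed range $|\alpha|\le k$ for $k\ge 2$. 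The same correction is needed in the fractional case, where the exponent $k+s-1$ in the statement is precisely $\|\langle\p_1\rangle^{s}\p_1^{k}f\|_{L^2}=\|\p_1 f\|_{H^{k+s-1}}$, i.e.\ the top-order $f$-factor must be measured in an $L^2$-based norm, not $L^\infty$. With this repair (and the lower-order factors $\p_1^j f$, $j\le k-1$, handled in $L^\infty$ exactly as you propose), your argument closes and coincides in substance with the paper's inductive proof.
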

\begin{proof}
The proof of first and the second inequality is the same.
In the sequel, we only present the details for the first one.

The proof is conducted  by induction argument.
The case of $k=1,\, 2$ is an easy consequence of chain rules.
Assuming for $k\geq 2$, there holds
\begin{align}\label{G0}
&\sum_{|a|\leq k}\big\| \langle w^+\rangle^{\mu}\langle w^-\rangle^{\nu} \p_1^a \ud{g}
\big\|_{L^2(\BR)} \\\nonumber
&\lesssim \sum_{|\alpha|\leq k}
\big\| \langle w^+\rangle^{\mu}\langle w^-\rangle^{\nu} \ud{\p^\alpha g}\big\|_{L^2(\BR)}
\cdot \mathcal{P}\big(1+ \|\p_1 f\|_{H^{k-1}(\BR)}\big)  .
\end{align}
Next, we calculate
\begin{align*}
&\p_1^{k+1} \ud{g}=\p_1^k
  \big( \ud{\p_1 g}+ \ud{\p_2 g}\p_1 f\big) ,\\\nonumber
&=\p_1^k\ud{\p_1 g}+
\sum_{a+b=k}C_k^a \p_1^a \ud{\p_2 g}\p_1^{b+1} f ,
\end{align*}
where $ C_k^a=\f{k!}{a!(k-a)!}$ is the binomial coefficient.
Then by the H\"older inequality, the Sobolev inequality and \eqref{G0}, we have
\begin{align*}
& \big\| \langle w^+\rangle^{\mu}\langle w^-\rangle^{\nu} \p_1^{k+1} \ud{g} \big\|_{L^2(\BR)} \\
&\lesssim \big\| \langle w^+\rangle^{\mu}\langle w^-\rangle^{\nu} \p_1^k\ud{\p_1 g} \big\|_{L^2(\BR)}
+\sum_{a+b=k} \big\| \langle w^+\rangle^{\mu}\langle w^-\rangle^{\nu} \p_1^a \ud{\p_2 g}\p_1^{b+1} f \big\|_{L^2(\BR)} \\
&\lesssim \big\| \langle w^+\rangle^{\mu}\langle w^-\rangle^{\nu} \p_1^k\ud{\p_1 g} \big\|_{L^2(\BR)}
+\sum_{|a|\leq k}\big\| \langle w^+\rangle^{\mu}\langle w^-\rangle^{\nu} \p_1^a \ud{\p_2 g}\|_{L^2(\BR)}
\cdot   \|\p_1 f\|_{H^{k-1}(\BR)} \\
&\lesssim
\sum_{|\alpha|\leq k+1}\big\| \langle w^+\rangle^{\mu}\langle w^-\rangle^{\nu}  \ud{\p^\alpha g}\big\|_{L^2(\BR)}
(1+\|f\|_{\dot{H}^1(\BR)}+\|f\|_{\dot{H}^{k+1}(\BR)}) \\
&\quad \cdot \mathcal{P}\big(1+\|f\|_{\dot{H}^1(\BR)}+\|f\|_{\dot{H}^{k+1}(\BR)}\big) .
\end{align*}
Thus the case for $k+1$ is proved.
This finishes the proof of the lemma.
\end{proof}

\subsection{Weighted div-curl theorem}

We will use weighted div-curl theorem to treat the energy estimate.

We first recall a div-curl estimate.
\label{lemDC0}\begin{lem}
Let $s\geq 3$ be an integer, $\Om\subseteq\bR^2$
with $\p\Om\in H^{s+\f12}$.
Assume that $v\in L^2(\Om)$, $\div v\in H^{s-1}(\Om)$, $\curl v\in H^{s-1}(\Om)$,
$\ud{v}\cdot n\in H^{s-\f12}(\bR)$.
Then for integer $\sigma\in[1,s]$,
 there holds
\begin{align*}
&\| \nabla^\sigma v \|_{L^2(\Om)} \leq C\Big( \| \div \nabla^{\leq \sigma-1}v \|_{L^2(\Om)} + \| \curl \nabla^{\leq \sigma-1} v \|_{L^2(\Om)} +\| v \|_{L^2(\Om)}
+  \| v\cdot n\|_{H^{\sigma-\f12}(\p\Om)}  \Big),
\end{align*}
where the constant $C$ depends on $\|\p\Om\|_{H^{s+\f12}}$.
\end{lem}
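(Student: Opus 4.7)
The plan is to prove this by induction on $\sigma$, with the base case $\sigma=1$ carrying the core analytic content and higher orders being obtained by applying tangential derivatives and commuting through $\div$, $\curl$, and the normal-trace operation. The key observation in two dimensions is the pointwise identity
\begin{align*}
|\nabla v|^2=|\div v|^2+|\curl v|^2+2\bigl(\p_1 v_1\,\p_2 v_2-\p_2 v_1\,\p_1 v_2\bigr),
\end{align*}
whose last bracket is a null Lagrangian. Integration by parts of this bracket produces only a boundary integral, giving
\begin{align*}
\|\nabla v\|_{L^2(\Om)}^2=\|\div v\|_{L^2(\Om)}^2+\|\curl v\|_{L^2(\Om)}^2+\int_{\p\Om}(\text{boundary terms})\,\d\sigma.
\end{align*}

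For the base case, I would decompose the boundary integrand in the normal/tangential frame $(n,\tau)$ on $\p\Om$: the normal--normal entry $n\cdot(\nabla v)\cdot n$ is recovered from $\div v$ minus the tangential derivative of $v\cdot\tau$; the tangential--normal entry from $\curl v$ minus the tangential derivative of $v\cdot n$; and the purely tangential entries involve only tangential derivatives of $\ud v$. Summing these contributions and estimating via a standard trace inequality, one bounds the boundary integral by $\|v\cdot n\|_{H^{1/2}(\p\Om)}^2$, the interior norms $\|\div v\|_{L^2}$ and $\|\curl v\|_{L^2}$, and the $L^2$ norm of $v$ (after using $\|v\cdot\tau\|_{H^{-1/2}(\p\Om)}\lesssim\|v\|_{L^2(\Om)}+\|\div v\|_{L^2(\Om)}$, itself a dual trace statement), which closes the $\sigma=1$ estimate. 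The regularity hypothesis $\p\Om\in H^{s+\f12}$ enters here through the tangential and normal vector fields needed to perform this decomposition.

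For the inductive step, assume the statement for $\sigma-1$. Applying tangential derivatives $\p_\tau^{\sigma-1}$ (as defined in \eqref{tangD-}) to $v$, I would invoke the base case for $\p_\tau^{\sigma-1}v$: on the right-hand side, $\div(\p_\tau^{\sigma-1}v)=\p_\tau^{\sigma-1}\div v+[\div,\p_\tau^{\sigma-1}]v$, and similarly for $\curl$, with the commutators supplying only lower-order contributions absorbable by the induction hypothesis; on the boundary, $(\p_\tau^{\sigma-1}v)\cdot n$ differs from $\p_\tau^{\sigma-1}(v\cdot n)$ by terms involving derivatives of $n$, controlled by $\|\p\Om\|_{H^{s+\f12}}$. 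A standard tangential/normal splitting recovers normal derivatives of $v$ from tangential derivatives together with $\div v$ and $\curl v$, producing the full $\|\nabla^\sigma v\|_{L^2}$ bound.

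The main obstacle is the boundary term in the $\sigma=1$ identity, which \emph{a priori} involves the entire first derivative of $v$ on $\p\Om$ rather than only $v\cdot n$. The delicate point is to trade the missing $v\cdot\tau$ component against $\div v$ and $\curl v$ on the boundary without losing more than half a derivative, so that only $\|v\cdot n\|_{H^{\sigma-1/2}}$ appears on the right. A secondary technical issue arises at the top order $\sigma=s$, where the regularity $H^{s+\f12}$ of $\p\Om$ is saturated; at that level the commutator $[\div,\p_\tau^{s-1}]$ must be estimated by placing the highest derivative of $f$ in $L^2$ and using the weighted commutator and chain rule machinery of Section~3 so no derivative is lost on the boundary parametrization.
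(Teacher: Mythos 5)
The paper does not actually prove this lemma: it observes that under the stated hypotheses the claim reduces to the classical div--curl theorem and defers entirely to the references (Lannes, Cheng--Shkoller), invoking Lax--Milgram and elliptic regularity. Your proposal supplies an explicit argument of Gaffney--Friedrichs type (null-Lagrangian identity plus tangential-derivative induction), which is in the spirit of the cited Cheng--Shkoller reference rather than of anything written in the paper itself. The overall architecture --- base case from $|\nabla v|^2=|\div v|^2+|\curl v|^2-2\det\nabla v$ with $\det\nabla v$ in divergence form, then induction by applying $\p_\tau^{\sigma-1}$, commuting with $\div$, $\curl$ and the normal trace, and recovering normal derivatives algebraically from tangential ones together with $\div v$ and $\curl v$ --- is sound and is the standard route to this estimate.

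One step in your base case is off by a derivative, though. The boundary term produced by the null Lagrangian is, after decomposing in the frame $(n,\tau)$ and integrating by parts along $\p\Om$, essentially $\int_{\p\Om}(v\cdot\tau)\,\p_\tau(v\cdot n)\,\d\sigma$ plus curvature terms quadratic in the trace of $v$. Pairing $v\cdot\tau\in H^{-\f12}(\p\Om)$ (which, incidentally, is controlled by $\|v\|_{L^2}+\|\curl v\|_{L^2}$, not by $\|v\|_{L^2}+\|\div v\|_{L^2}$ as you wrote --- the divergence controls the \emph{normal} trace) against $\p_\tau(v\cdot n)$ would require $v\cdot n\in H^{\f32}(\p\Om)$, one derivative more than the hypothesis provides at $\sigma=1$. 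The correct pairing goes the other way: bound the integral by $\|v\cdot\tau\|_{H^{\f12}(\p\Om)}\,\|v\cdot n\|_{H^{\f12}(\p\Om)}$, estimate $\|v\cdot\tau\|_{H^{\f12}(\p\Om)}\leq C\|v\|_{H^1(\Om)}$ by the trace theorem, and absorb $\epsilon\|v\|_{H^1(\Om)}^2$ into the left-hand side via Young's inequality. With that repair (and noting that the sign of the Jacobian term in your identity should be $-2\det\nabla v$, which is immaterial), the base case closes and the induction proceeds as you describe; the top-order commutators at $\sigma=s$ land in products of $H^{\f12}$ traces of the boundary parametrization with $H^{s-1}$ norms of $\nabla v$, which are harmless for $s\geq3$.
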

The statement of the above theorem differs slightly from the div-curl theorem (see \cite{Lannes, CS} for example).
Based on the assumption and the div-curl theorem, we have $v\in H^s(\Om)$.
Note that we have sufficient regularity assumption on $v$, and therefore, the theorem follows from the
Lax-Milgram theorem and regularity theory for elliptic equation. For detailed information, we refer to
\cite{Lannes, CS}, for example.

Now we show the weighted version of the div-curl estimate.
\begin{lem}\label{lemDCw1}
Let $s\geq 3$ be an integer, $\mu,\nu\in\BR$,
 $\Om\subseteq\bR^2$
  with $\p\Om\in H^{s+\f12}$.
  Assume that $v\in L^2(\Om)$, $\div v\in H^{s-1}(\Om)$, $\curl v\in H^{s-1}(\Om)$,
$\ud{v}\cdot n\in H^{s-\f12}(\bR)$.
Then for integer $\sigma\in[1,s]$, there holds
\begin{align*}
&\sum_{|\alpha|\leq \sigma}\| \langle w^+\rangle^{\mu}\langle w^-\rangle^{\nu} \nabla^\alpha v \|_{L^2(\Om)}\\
&\leq C\sum_{|\beta|\leq \sigma-1}\big( \| \langle w^+\rangle^{\mu}\langle w^-\rangle^{\nu} \div \p^\beta v \|_{L^2(\Om)} + \| \langle w^+\rangle^{\mu}\langle w^-\rangle^{\nu} \curl \p^\beta v \|_{L^2(\Om)}\big) \\
&\quad +C
\big(\| \langle w^+\rangle^{\mu}\langle w^-\rangle^{\nu}  v \|_{L^2(\Om)}
+\|\langle w^+\rangle^{\mu}\langle w^-\rangle^{\nu} v \cdot n \|_{H^{\sigma-\f12}(\p\Om)} \big),
\end{align*}
where the constant $C$ depends on $\|\p\Om\|_{H^{s+\f12}}$.
\end{lem}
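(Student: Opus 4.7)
The plan is to reduce the weighted div-curl estimate to the unweighted one (Lemma \ref{lemDC0}) applied to the product $Wv$, where $W(x_1) := \langle w^+\rangle^\mu \langle w^-\rangle^\nu$, and then to control the resulting commutators using the decisive feature that $W$ depends only on $x_1$ and every derivative of $W$ is pointwise dominated by $W$ itself. Indeed, direct computation gives
\[
\bigl|\p_1^k \langle w^\pm\rangle^\mu\bigr| \leq C_{k,\mu}\, \langle w^\pm\rangle^\mu \qquad (k\in\mathbb{N},\ \mu\in\bR),
\]
so by the Leibniz rule $|\p_1^k W| \leq C_{k,\mu,\nu}\, W$ for every $k\ge 0$.

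I would argue by induction on $\sigma$. For the base case $\sigma=1$, applying Lemma \ref{lemDC0} to $Wv$ yields
\[
\|\nabla(Wv)\|_{L^2(\Om)} \leq C\bigl(\|\div(Wv)\|_{L^2} + \|\curl(Wv)\|_{L^2} + \|Wv\|_{L^2} + \|(Wv)\cdot n\|_{H^{1/2}(\p\Om)}\bigr).
\]
Because $\div(Wv)=W\div v + (\p_1 W)v^1$, $\curl(Wv)=W\curl v - (\p_1 W)v^2$, and $\nabla(Wv)=W\nabla v + v\otimes\nabla W$, the pointwise bound $|\p_1 W|\leq C W$ absorbs every extra factor into $C\|Wv\|_{L^2}$. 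Since $W$ depends only on $x_1$, the trace identity $(Wv)\cdot n = W(v\cdot n)$ shows that the boundary term already matches $\|Wv\cdot n\|_{H^{1/2}(\p\Om)}$ on the RHS of the lemma verbatim, completing the case $\sigma=1$.

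For the inductive step, assume the estimate at order $\sigma-1$. Apply Lemma \ref{lemDC0} at order $\sigma$ to $Wv$:
\[
\sum_{|\alpha|\leq\sigma}\|\nabla^\alpha(Wv)\|_{L^2} \leq C\Bigl(\!\sum_{|\beta|\leq\sigma-1}\!\bigl(\|\div\p^\beta(Wv)\|_{L^2}+\|\curl\p^\beta(Wv)\|_{L^2}\bigr) + \|Wv\|_{L^2} + \|(Wv)\cdot n\|_{H^{\sigma-\f12}(\p\Om)}\Bigr).
\]
A Leibniz expansion gives $\p^\beta(Wv) = W\p^\beta v + \sum_{0<\gamma\leq\beta}\binom{\beta}{\gamma}(\p^\gamma W)(\p^{\beta-\gamma}v)$; taking divergence or curl and invoking $|\p^\gamma W|\leq C W$ produces
\[
\|\div\p^\beta(Wv)\|_{L^2} \leq \|W\div\p^\beta v\|_{L^2} + C\|W\nabla^{\leq\sigma-1}v\|_{L^2},
\]
and analogously for the curl. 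The remainder $\|W\nabla^{\leq\sigma-1}v\|_{L^2}$ is bounded by the inductive hypothesis, hence by the RHS of the lemma at order $\sigma$ (which dominates the one at order $\sigma-1$). Symmetrically on the LHS, $\nabla^\alpha(Wv) = W\nabla^\alpha v + \text{commutators}$, and moving the commutators to the right recovers $\|W\nabla^\alpha v\|_{L^2}$ up to the same inductively controlled remainder. The trace term once more satisfies $(Wv)\cdot n = W(v\cdot n)$, so it already appears in the desired form.

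The only real obstacle is the bookkeeping of commutator terms at higher orders, but since each such term is pointwise dominated by $W$ times a strictly lower-order derivative of $v$, the induction closes cleanly with no loss. I expect no new analytic difficulty beyond this careful Leibniz expansion, precisely because the weight $W$ is a function of the single variable $x_1$ and the weighted commutator/trace machinery needed to handle fractional-order boundary norms developed earlier in Section 3 is not required here, the boundary norm being manifestly of the right form.
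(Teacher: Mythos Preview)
Your proposal is correct and follows essentially the same approach as the paper: apply the unweighted div-curl estimate (Lemma~\ref{lemDC0}) to $Wv$ with $W=\langle w^+\rangle^\mu\langle w^-\rangle^\nu$, use the pointwise bound $|\p_1^k W|\le C_k W$ to control the Leibniz commutators, and close by induction on $\sigma$. The paper's write-up is terser (it does not spell out the base case and compresses the Leibniz expansion into one line), but the mechanism is identical.
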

\begin{proof}
Firstly, we have
\begin{align*}
\| \langle w^+\rangle^{\mu}\langle w^-\rangle^{\nu}\nabla^\sigma v \|_{L^2(\Om)}
&\leq \| \nabla (\langle w^+\rangle^{\mu}\langle w^-\rangle^{\nu}\nabla^{\sigma-1} v) \|_{L^2(\Om)} +
\| \nabla (\langle w^+\rangle^{\mu}\langle w^-\rangle^{\nu}) \nabla^{\sigma-1} v \|_{L^2(\Om)}\\
&\leq \| \nabla^\sigma (\langle w^+\rangle^{\mu}\langle w^-\rangle^{\nu} v) \|_{L^2(\Om)} +
C \| \langle w^+\rangle^{\mu}\langle w^-\rangle^{\nu}\nabla^{\leq \sigma-1} v \|_{L^2(\Om)}.
\end{align*}
Consequently, by Lemma \ref{lemDC0}, we have
\begin{align*}
&\| \nabla^\sigma (\langle w^+\rangle^{\mu}\langle w^-\rangle^{\nu} v) \|_{L^2(\Om)} \\
&\lesssim   \| \div \nabla^{\leq \sigma-1}(\langle w^+\rangle^{\mu}\langle w^-\rangle^{\nu} v) \|_{L^2(\Om)}
+ \| \curl \nabla^{\leq \sigma-1} (\langle w^+\rangle^{\mu}\langle w^-\rangle^{\nu} v) \|_{L^2(\Om)} \\
&\quad+\|  \langle w^+\rangle^{\mu}\langle w^-\rangle^{\nu} v  \|_{L^2(\Om)}
+\| \langle w^+\rangle^{\mu}\langle w^-\rangle^{\nu} v \cdot n \|_{H^{\sigma-\f12}(\p\Om)}\\
&\lesssim \| \langle w^+\rangle^{\mu}\langle w^-\rangle^{\nu} \div \nabla^{\leq \sigma-1} v \|_{L^2(\Om)}
+ \| \langle w^+\rangle^{\mu}\langle w^-\rangle^{\nu} \curl \nabla^{\leq \sigma-1}  v \|_{L^2(\Om)}\\
&\quad+ \| \langle w^+\rangle^{\mu}\langle w^-\rangle^{\nu} \nabla^{\leq \sigma-1} v \|_{L^2(\Om)}
+\|\langle w^+\rangle^{\mu}\langle w^-\rangle^{\nu} v \cdot n \|_{H^{\sigma-\f12}(\p\Om)}.
\end{align*}
Then the lemma follows from an induction argument.
\end{proof}


To treat the lower order weighted energy estimate, we need another version of the div-curl lemma
which involves the tangential derivative estimate of the unknowns inside the domain.

\begin{lem}\label{lemDC1}
Let $s\geq 3$ be an integer, $\Om\subseteq\bR^2$
with $\p\Om\in H^{s+\f12}$.
Assume that $v\in L^2(\Om)$, $\div v\in H^{s-1}(\Om)$, $\curl v\in H^{s-1}(\Om)$,
$\p_\tau^{\leq s} v\in L^2(\Om)$.
Then for integer $\sigma\in[1,s]$,
 there holds
\begin{align*}
&\| \nabla^\sigma v \|_{L^2(\Om)} \leq C(\|\p\Om\|_{H^{s+\f12}(\p\Om)})\big(\| \nabla^{\leq \sigma-1} \div v \|_{L^2(\Om)} +
 \| \nabla^{\leq \sigma-1} \curl v \|_{L^2(\Om)} +\|\p_\tau^{\leq \sigma} v\|_{L^2(\Om)} \big).
\end{align*}
\end{lem}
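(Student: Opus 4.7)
The key idea is an algebraic inversion: in two dimensions, once we know $\div v$, $\curl v$, and the single tangential derivative $\p_\tau v$, we can recover the full gradient $\nabla v$. I would prove Lemma \ref{lemDC1} by induction on $\sigma$, with the base case $\sigma = 1$ carried out by direct computation and the higher orders obtained by applying the base case to higher derivatives.

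For the base case $\sigma = 1$, writing $\p_1 = \p_\tau - \Psi \p_2$ turns $\div v = \p_1 v_1 + \p_2 v_2$ and $\curl v = \p_1 v_2 - \p_2 v_1$ into the $2\times 2$ linear system for $(\p_2 v_1, \p_2 v_2)$,
\begin{equation*}
\begin{pmatrix} -\Psi & 1 \\ -1 & -\Psi \end{pmatrix}\begin{pmatrix} \p_2 v_1 \\ \p_2 v_2 \end{pmatrix} = \begin{pmatrix} \div v - \p_\tau v_1 \\ \curl v - \p_\tau v_2 \end{pmatrix},
\end{equation*}
whose determinant $1 + \Psi^2 \geq 1$ is pointwise bounded below. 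Solving by Cramer's rule and using $\|\Psi\|_{L^\infty} \leq C(\|\p\Om\|_{H^{s+\f12}})$ (a consequence of Lemma \ref{lemPsi-s} and Sobolev embedding for $s \geq 3$) gives
\begin{equation*}
\|\p_2 v\|_{L^2(\Om)} \leq C\bigl(\|\p_\tau v\|_{L^2(\Om)} + \|\div v\|_{L^2(\Om)} + \|\curl v\|_{L^2(\Om)}\bigr).
\end{equation*}
Combined with $\p_1 v = \p_\tau v - \Psi \p_2 v$, this handles $\sigma = 1$.

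For the inductive step, suppose the estimate holds up to order $\sigma$. For order $\sigma + 1$, write a typical component of $\nabla^{\sigma+1} v$ as $\nabla \p^\alpha v$ with $|\alpha| = \sigma$, and apply the base-case identities to the vector field $\p^\alpha v$, noting $\div \p^\alpha v = \p^\alpha \div v$ and $\curl \p^\alpha v = \p^\alpha \curl v$. This expresses $\nabla^{\sigma+1} v$ in terms of $\nabla^{\leq \sigma} \div v$, $\nabla^{\leq \sigma} \curl v$, $\p_\tau \p^\alpha v$, and commutator terms of the form $[\p^\alpha, \Psi]\p_2 v$ and $[\p^\alpha, \p_\tau] v$. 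Each commutator is strictly lower order in $v$ and involves at most $\sigma$ derivatives of $\Psi$, which by Lemma \ref{lemPsi-s} and Sobolev embedding are controlled by $\|\p\Om\|_{H^{s+\f12}}$; the inductive hypothesis absorbs what remains.

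The main obstacle is the commutator bookkeeping at top order $\sigma$ close to $s$: every commutator $[\p^\alpha, \Psi]$ produces derivatives of $\p_1 f$ up to order $\sigma$, sitting precisely at the borderline of the regularity hypothesis $f \in H^{s+\f12}$. The distribution of derivatives via the Leibniz rule and Sobolev embedding must be chosen so that the $\Psi$-factors absorb into the constant $C(\|\p\Om\|_{H^{s+\f12}})$ while the $v$-factors remain at order $\leq \sigma$ for the inductive hypothesis. An alternative route of comparable difficulty would invoke Lemma \ref{lemDC0} directly and control $\|v\cdot n\|_{H^{\sigma-\f12}(\p\Om)}$ by iterating the identity $\p_1 \ud{g} = \ud{\p_\tau g}$ together with the classical normal-trace inequality $\|u\cdot n\|_{H^{-1/2}(\p\Om)} \lesssim \|u\|_{L^2(\Om)} + \|\div u\|_{L^2(\Om)}$, but the commutator analysis is essentially the same.
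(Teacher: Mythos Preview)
Your approach is correct and takes a genuinely different route from the paper. The paper's proof simply invokes two inequalities from \cite{CS, GW-23}: a div-curl estimate with boundary term $\|\p_\tau^\sigma v\cdot n\|_{H^{-1/2}(\p\Om)}$, and the normal-trace inequality $\|u\cdot n\|_{H^{-1/2}(\p\Om)} \lesssim \|u\|_{L^2(\Om)} + \|\div u\|_{L^2(\Om)}$ applied with $u = \p_\tau v$. Your pointwise algebraic inversion instead exploits the two-dimensional structure --- once $\div v$, $\curl v$, and $\p_\tau v$ are known, the $2\times 2$ system for $\p_2 v$ has determinant $1+\Psi^2\ge 1$ --- and avoids negative-index boundary spaces entirely. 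This is more elementary and self-contained, at the price of being dimension-specific (in $\bR^3$, div, curl, and a single tangential derivative do not determine the full gradient). The one step your sketch leaves implicit is how the induction converts $\p_\tau\p^\alpha v$ into the iterated-tangential form $\p_\tau^{\leq\sigma+1}v$ appearing on the right-hand side: this requires applying the inductive hypothesis at level $\sigma$ to the vector field $\p_\tau v$ itself (then to $\p_\tau^2 v$, and so on), generating further commutators $[\div,\p_\tau]$ and $[\curl,\p_\tau]$ at each stage --- all lower order and absorbable as you indicate. Incidentally, the ``alternative route'' you sketch in your final sentence is essentially the paper's actual proof.
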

\begin{proof}
The lemma follows the following two inequalities. See cf. \cite{CS, GW-23}.
\begin{align*}
&\| v \|_{H^\sigma(\Om)} \leq \| \div v \|_{H^{\sigma-1}(\Om)} + \| \curl v \|_{H^{\sigma-1}(\Om)}
+\|\p_\tau^\sigma v\cdot n\|_{H^{-\f12}(\p\Om)},\\
&\|\p_\tau v\cdot n\|_{H^{-\f12}(\p\Om)} \leq \|\p_\tau v \|_{L^2(\Om)} + \| \div v \|_{L^2(\Om)}.
\end{align*}
\end{proof}
Now we show another weighted div-curl estimate.
\begin{lem}\label{lemDCw2}
Let $s\geq 3$ be an integer, $\mu,\,\nu\in\BR$, $\Om\subseteq\bR^2$
with $\p\Om\in H^{s+\f12}$.
Assume that $v\in L^2(\Om)$, $\div v\in H^{s-1}(\Om)$, $\curl v\in H^{s-1}(\Om)$,
$\p_\tau^{\leq s} v\in L^2(\Om)$.
Then for integer $\sigma\in[1,s]$, there holds
\begin{align*}
\| \langle w^+\rangle^{\mu}\langle w^-\rangle^{\nu} \nabla^\sigma v \|_{L^2(\Om)}
\leq &C\big( \| \langle w^+\rangle^{\mu}\langle w^-\rangle^{\nu} \div \nabla^{\leq \sigma-1} v \|_{L^2(\Om)} +
 \| \langle w^+\rangle^{\mu}\langle w^-\rangle^{\nu} \curl \nabla^{\leq \sigma-1} v \|_{L^2(\Om)} \\
&\quad+\| \langle w^+\rangle^{\mu}\langle w^-\rangle^{\nu} \p_\tau^{\leq \sigma} v \|_{L^2(\Om)}   \big),
\end{align*}
where the constant $C$ depends on $\|\p\Om\|_{H^{s+\f12}}$.
\end{lem}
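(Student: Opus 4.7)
The plan is to mimic the proof of Lemma \ref{lemDCw1}, but replace the invocation of Lemma \ref{lemDC0} by its tangential counterpart Lemma \ref{lemDC1}. The central idea is to absorb the weight $W:=\langle w^+\rangle^{\mu}\langle w^-\rangle^{\nu}$ into the vector field, apply the unweighted tangential div-curl estimate, and then commute $W$ back past $\div$, $\curl$ and $\partial_\tau^{\leq\sigma}$, controlling all commutators by lower order quantities of the same form.

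First I would write
\begin{align*}
\|W\nabla^\sigma v\|_{L^2(\Om)}
&\le \|\nabla^\sigma(Wv)\|_{L^2(\Om)}+C\,\|W\nabla^{\le\sigma-1}v\|_{L^2(\Om)},
\end{align*}
using the pointwise bound $|\nabla^j W|\lesssim W$ for $j\ge 1$ (since $|\nabla^j(\langle w^+\rangle^{\mu}\langle w^-\rangle^{\nu})|\lesssim \langle w^+\rangle^{\mu}\langle w^-\rangle^{\nu}$ as in Lemma \ref{lemD24}). Then I would apply Lemma \ref{lemDC1} to the vector field $Wv$, obtaining
\begin{align*}
\|\nabla^\sigma(Wv)\|_{L^2(\Om)}
&\le C\Big(\|\nabla^{\le\sigma-1}\div(Wv)\|_{L^2(\Om)}+\|\nabla^{\le\sigma-1}\curl(Wv)\|_{L^2(\Om)}+\|\partial_\tau^{\le\sigma}(Wv)\|_{L^2(\Om)}\Big).
\end{align*}

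Next I would expand each term on the right by the Leibniz rule. For the divergence and curl terms, $\div(Wv)=W\div v+\nabla W\cdot v$ and $\curl(Wv)=W\curl v+\nabla^\perp W\cdot v$, and similarly for higher $\nabla^{\le\sigma-1}$ derivatives; all commutator contributions involve $\nabla^{\ge 1}W$ paired with $\nabla^{\le\sigma-1}v$, hence are bounded by $\|W\nabla^{\le\sigma-1}v\|_{L^2(\Om)}$. For the tangential term I would use the crucial fact that $W$ depends only on $x_1$ (and $t$), so $\partial_\tau W=(\partial_1+\Psi\partial_2)W=\partial_1 W$, giving
\begin{align*}
\partial_\tau^k(Wv)=\sum_{j=0}^{k}\binom{k}{j}\partial_\tau^{j}W\cdot\partial_\tau^{k-j}v
=\sum_{j=0}^{k}\binom{k}{j}\partial_1^{j}W\cdot\partial_\tau^{k-j}v,
\end{align*}
and again $|\partial_1^{j}W|\lesssim W$ for each $j\ge 0$, so $\|\partial_\tau^{\le\sigma}(Wv)\|_{L^2(\Om)}\lesssim \|W\partial_\tau^{\le\sigma}v\|_{L^2(\Om)}$.

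Combining the three estimates yields
\begin{align*}
\|W\nabla^\sigma v\|_{L^2(\Om)}
&\le C\Big(\|W\div\nabla^{\le\sigma-1}v\|_{L^2(\Om)}+\|W\curl\nabla^{\le\sigma-1}v\|_{L^2(\Om)}+\|W\partial_\tau^{\le\sigma}v\|_{L^2(\Om)}\Big)\\
&\quad+C\,\|W\nabla^{\le\sigma-1}v\|_{L^2(\Om)},
\end{align*}
and I would close the estimate by a finite induction on $\sigma$, with the base case $\sigma=1$ following directly (the $\nabla^{\le 0}v=v$ term on the last line is absorbed into $\|W\partial_\tau^{\le 1}v\|_{L^2(\Om)}$ via the trivial bound $\|Wv\|_{L^2}\le\|W\partial_\tau^{\le 1}v\|_{L^2}$, or kept as a harmless $\|Wv\|_{L^2}\le\|W\partial_\tau^{\le\sigma}v\|_{L^2}$ term). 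The main obstacle is not analytic but bookkeeping: one must verify that every commutator $[\,\cdot\,,W]$ arising from $\div$, $\curl$, and especially $\partial_\tau^{\le\sigma}$ really is absorbed by lower-order weighted norms of $v$, which works only because $W$ is a function of $(t,x_1)$ alone so that $\partial_\tau$ acts on $W$ purely through $\partial_1$ and produces no growth in weight.
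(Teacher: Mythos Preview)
Your proposal is correct and follows essentially the same approach as the paper: absorb the weight $W$ into $v$, apply the unweighted tangential div--curl estimate Lemma~\ref{lemDC1} to $Wv$, commute $W$ back out, and close by induction on $\sigma$. Your argument is in fact slightly more explicit than the paper's, in that you spell out why $[\partial_\tau^{k},W]$ is harmless (because $W$ depends only on $(t,x_1)$, so $\partial_\tau^j W=\partial_1^j W$ and $|\partial_1^j W|\lesssim W$), whereas the paper simply writes the resulting inequality and absorbs the commutator into the lower-order term $\|W\nabla^{\le\sigma-1}v\|_{L^2(\Om)}$.
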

\begin{proof}
Firstly, we have
\begin{align*}
\| \langle w^+\rangle^{\mu}\langle w^-\rangle^{\nu}\nabla^\sigma v \|_{L^2(\Om)}
&\leq \| \nabla (\langle w^+\rangle^{\mu}\langle w^-\rangle^{\nu}\nabla^{\sigma-1} v) \|_{L^2(\Om)}
 +\| \nabla \langle w^+\rangle^{\mu}\langle w^-\rangle^{\nu}\nabla^{\sigma-1} v \|_{L^2(\Om)}\\
&\leq \| \nabla^\sigma (\langle w^+\rangle^{\mu}\langle w^-\rangle^{\nu} v) \|_{L^2(\Om)} +
C \| \langle w^+\rangle^{\mu}\langle w^-\rangle^{\nu}\nabla^{\leq \sigma-1} v \|_{L^2(\Om)}.
\end{align*}
By using Lemma \ref{lemDC1}, we have
\begin{align*}
&\| \nabla^\sigma (\langle w^+\rangle^{\mu}\langle w^-\rangle^{\nu} v) \|_{L^2(\Om)} \\
&\lesssim   \| \div \nabla^{\leq \sigma-1}(\langle w^+\rangle^{\mu}\langle w^-\rangle^{\nu} v) \|_{L^2(\Om)}
+ \| \curl \nabla^{\leq \sigma-1} (\langle w^+\rangle^{\mu}\langle w^-\rangle^{\nu} v) \|_{L^2(\Om)}\\
&\quad+\|\p_\tau^{\leq \sigma} (\langle w^+\rangle^{\mu}\langle w^-\rangle^{\nu} v) \|_{L^2(\Om)}\\
& \leq   \| \langle w^+\rangle^{\mu}\langle w^-\rangle^{\nu} \div \nabla^{\leq \sigma-1} v \|_{L^2(\Om)}
+ \| \langle w^+\rangle^{\mu}\langle w^-\rangle^{\nu} \curl \nabla^{\leq \sigma-1} v \|_{L^2(\Om)}
+\|\langle w^+\rangle^{\mu}\langle w^-\rangle^{\nu} \p_\tau^{\leq \sigma} v \|_{L^2(\Om)}\\
&\quad +C \| \langle w^+\rangle^{\mu}\langle w^-\rangle^{\nu}\nabla^{\leq \sigma-1} v \|_{L^2(\Om)}.
\end{align*}
Then the lemma follows from an induction argument.
\end{proof}
%

\section{Weighted energy estimate of the pressure}




In this section, we will present the weighted estimate of the pressure.
In the weighted gradient estimate of the pressure,
a direct energy method
leads to the estimate of
pressure itself with weights.
Fortunately, we discover an integral identity for the pressure, which
enables the application of the Poincar\'e inequality
to estimate the pressure itself with weights.

As for the weighted higher order $(2\leq k \leq s)$ estimate of the pressure, they consist of the weighted tangential derivative estimate
and the other normal and mixed derivative estimate. The former can be carried out by the weighted energy method, while the latter can be estimated by the elliptic equation of the pressure. The commutator induced by the weight functions can be treated by using the achieved weighted lower order estimate of the pressure.
In these estimate, especially in the highest order $k=s$, we need to balance the structure of the derivatives in order not to lose derivatives.

The elliptic equation and the boundary conditions for the pressure \eqref{B22} are copied  as follows
\begin{equation}\label{H0}
\begin{cases}
\Delta p =-\nabla \cdot (\La_{-} \cdot \nabla \La_{+}) \quad \textrm{in}\,\, \Om_f,\\
\Delta \hat{p} =-\nabla \cdot (\hat{\La}_{-} \cdot \nabla \hat{\La}_{+})\quad \textrm{in}\,\, \hat{\Om}_f,\\
p =\hat{p}\quad \textrm{on} \,\, \Gamma_f,\\
N_f\cdot\nabla p-N_f\cdot\nabla \hat{p}= -\big(\ud{\La_{+}^1} \ud{\La_{-}^1}- \ud{\hat{\La}_{+}^1} \ud{\hat{\La}_{-}^1} \big) \p_1^2 f \\
\qquad-\big(\ud{\La_{+}^1}-\ud{\hat{\La}_{+}^1} \big) \p_1 (\p_t-  \p_1) f
  -\big(\ud{\La_{-}^1}-\ud{\hat{\La}_{-}^1}\big) \p_1 (\p_t+  \p_1) f
\quad\text{on}\quad\Ga_f ,\\
\p_2 p=0\,\,   \textrm{on} \,\,  \Ga,
\quad \p_2 \hat{p}=0\,\,   \textrm{on} \,\,  \hat{\Ga}.
\end{cases}
\end{equation}
\begin{rem}
For the solvability of the elliptic system \eqref{H0} for the pressure.
The readers may refer to the method in \cite{SZ2}. Firstly, we can determine the value of the pressure on the free boundary (see Section two in \cite{SZ2}). Then we can solve the elliptic boundary value for $p$ in $\Om_f$ and $\hat{p}$ in $\hat{\Om}_f$ in appropriate spaces, respectively.
Having secured the existence of solutions for the pressure,
our primary focus in this paper is
to prove an a priori estimate for the pressure uniformly in time.
\end{rem}
We will prove the following weighted estimate for the pressure.
\begin{lem}\label{lemPre}
Let $s\geq 4$ be an integer, $1/2< \mu \leq 3/5$,
$\delta>0$ be a sufficiently small number, $\| \p_1f \|_{H^{s-\f12}(\BR)} \leq \delta.$
For the pressure satisfying \eqref{H0}, there hold
\begin{align*}
&\sum_{|\alpha|\leq 3 }\| \langle w^\pm\rangle^{5\mu}
\langle w^\mp\rangle^{\mu}  \p^\alpha p\|_{L^2(\Om_f)}^2
+\sum_{|\alpha|\leq 3 }\| \langle w^\pm\rangle^{5\mu}
\langle w^\mp\rangle^{\mu}  \p^\alpha \hat{p}\|_{L^2(\hat{\Om}_f)}^2\\
&\lesssim
E_3 G_3^b
\cdot \mathcal{P}(1+ \|\p_1f\|_{H^{\f52}(\BR)}),\\
&\sum_{|\alpha|\leq s }\| \langle w^\pm\rangle^{2\mu}
\langle w^\mp\rangle^{\mu}  \p^\alpha p\|_{L^2(\Om_f)}^2
+\sum_{|\alpha|\leq s }\| \langle w^\pm\rangle^{2\mu}
\langle w^\mp\rangle^{\mu}  \p^\alpha \hat{p}\|_{L^2(\hat{\Om}_f)}^2\\
&\lesssim
E_s G_s
\cdot \mathcal{P}(1+ \| \p_1f \|_{H^{s-\f12}(\BR)}),
\end{align*}
where $\mathcal{P}(\cdot)$ is a polynomial.
\end{lem}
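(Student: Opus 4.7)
The plan is to treat \eqref{H0} as a single transmission problem for the pair $(p,\hat p)$ and to run a hierarchy of weighted elliptic estimates: first a low-frequency $L^2$ bound on the pressure itself, then a weighted $H^1$ energy identity, then lifting to higher tangential and normal orders, and finally a harmonic-plus-Poisson splitting at the top order.

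The first step is to patch the low-frequency defect that a pure weighted energy identity only sees $\|W\nabla p\|_{L^2}$, not $\|W p\|_{L^2}$. Integrating the horizontal component of \eqref{BB3}--\eqref{BB4} vertically across $[-1,f]$ and $[f,1]$, using $\div\Lambda_\pm=0$ and the boundary conditions in \eqref{BB5}, and taking $|x_1|\to\infty$ yields the integral identity \eqref{A32},
\[
\int_{-1}^{f}p\,dx_2+\int_{f}^{1}\hat p\,dx_2=-\int_{-1}^{f}\Lambda_-^1\Lambda_+^1\,dx_2-\int_{f}^{1}\hat{\Lambda}_-^1\hat{\Lambda}_+^1\,dx_2.
\]
This pins the $x_2$-average of $(p,\hat p)$ to a quadratic functional of $\Lambda$. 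Combined with the one-dimensional Poincar\'e inequality in $x_2$ on the bounded strip, and because $\langle w^\pm\rangle$ depends only on $x_1$, one obtains $\|Wp\|_{L^2(\Omega_f)}+\|W\hat p\|_{L^2(\hat\Omega_f)}\lesssim\|W\nabla p\|_{L^2(\Omega_f)}+\|W\nabla\hat p\|_{L^2(\hat\Omega_f)}+\text{(quadratic in }\Lambda\text{)}$ for any $x_1$-dependent weight $W$.

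For the weighted $H^1$ identity I would multiply the two Poisson equations by $W^2 p$ and $W^2\hat p$, integrate by parts, and sum. The interior produces $\|W\nabla p\|_{L^2}^2+\|W\nabla\hat p\|_{L^2}^2$ modulo the commutator $\int(\nabla W^2)\,p\,\nabla p$, absorbed via Step~1. The two $\Gamma_f$-boundary integrals combine through $p=\hat p$ and the prescribed jump in $N_f\cdot\nabla p$ into a single trace pairing controlled by Lemma~\ref{lemD3} against the free-surface quantities $\partial_1^2 f$ and $\partial_1(\partial_t\pm\partial_1)f$, which live in $E^f_{s+1/2}+G^f_{s+1/2}$; the Neumann data at the fixed top and bottom contributes nothing. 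Using the null factorization of $\Lambda_-\cdot\nabla\Lambda_+$, so that one factor is placed in an $E$-type norm and the other in a ghost $G$-type norm, yields the asymmetric-weight $H^1$ bound with right-hand side $E_3 G_3^b$. For intermediate orders $2\le k\le s$, I would differentiate \eqref{H0} tangentially and iterate: applying $\partial_\tau^k$ (or $\partial_1^k$) to the equations and repeating the energy identity controls $\partial_\tau^k\nabla p$ and $\partial_\tau^k\nabla\hat p$, with the commutators $[\partial_\tau^k,W^2]$ and $[\partial_\tau^k,\Lambda_-\cdot\nabla]\Lambda_+$ handled by the weighted Sobolev, chain-rule and commutator estimates of Section~3 (Lemmas~\ref{lemD5}, \ref{lemD11}, \ref{lemG3}). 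Normal and mixed derivatives are then recovered algebraically from $\partial_2^2 p=-\partial_1^2 p-\nabla\cdot(\Lambda_-\cdot\nabla\Lambda_+)$, so that mixed derivatives of total order $\le s$ reduce inductively to tangential ones. Distributing the weights so that each bilinear pairing has one factor in $E_s^{1/2}$ and the other in $G_s^{1/2}$ produces the $E_s G_s$ right-hand side.

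The main technical obstacle is the top order $k=s$, because the jump term in the Neumann data on $\Gamma_f$ contains $\partial_1^2 f$, which has only $H^{s-5/2}(\mathbb R)$ tangential regularity, whereas a naive trace bound for $\partial_\tau^{s-1}(N_f\cdot\ud{\nabla p})$ would demand $H^{s-1/2}$. To recover the missing half-derivative, I would split $p=p^1+p^2$ and $\hat p=\hat p^1+\hat p^2$, where $(p^1,\hat p^1)$ are harmonic and carry the inhomogeneous Dirichlet/Neumann-jump data on $\Gamma_f$ (with zero Neumann data on $\Gamma,\hat\Gamma$), while $(p^2,\hat p^2)$ solve the Poisson equations with homogeneous boundary data. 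The pair $(p^2,\hat p^2)$ is controlled by standard weighted elliptic regularity. For $(p^1,\hat p^1)$, the Dirichlet--Neumann operator estimates from the appendix (Section~10), combined with the weighted commutator estimate Lemma~\ref{lemD11} applied on the free surface, transfer one derivative from the boundary data onto the DN symbol, so that only $\partial_1^{s-1}f$-type terms appear on the right; feeding the lower-order bounds from Steps~1--3 back in then closes the top-order estimate and yields the polynomial factor $\mathcal P(1+\|\partial_1 f\|_{H^{s-1/2}(\mathbb R)})$.
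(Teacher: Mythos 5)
Your Steps 1--3 reproduce the paper's argument for this lemma: the vertical integration giving the identity \eqref{A32} and the resulting weighted Poincar\'e inequality (Lemma \ref{lemPI}) to cure the low-frequency defect; the weighted $H^1$ identity with the commutator $\int\nabla W^2\,p\,\nabla p$ absorbed by the Poincar\'e step and the $\Ga_f$ trace pairing controlled through the prescribed jump (the paper's Lemma \ref{lemPFL}); and the tangential iteration with normal derivatives recovered algebraically from $\p_2^2p=-\p_1^2p-\nabla\cdot(\La_-\cdot\nabla\La_+)$. Where you diverge is the top order. The paper does \emph{not} use the harmonic-plus-Poisson splitting $p=p^1+p^2$ in the proof of this lemma: since the Neumann data on $\Ga_f$ enters the energy identity only as the \emph{difference} $N_f\cdot\ud{\nabla p}-N_f\cdot\ud{\nabla\hat p}$, which has the explicit algebraic expression \eqref{BB11} in terms of $\p_1^2f$, $\p_1(\p_t\pm\p_1)f$ and traces of $\La_\pm$, the boundary term at order $m=s$ closes directly by moving $\langle\p_1\rangle^{1/2}$ onto $\ud{\p_\tau^{m-1}p}$ (Lemma \ref{lemW}) and invoking Lemma \ref{lemPFH} together with the trace theorem; note also that $\p_1^2f\in H^{s-3/2}$, not $H^{s-5/2}$, and $E^f_{s+1/2}$ controls exactly the $\langle\p_1\rangle^{1/2}\p_1^{s-1}(\p_t\pm\p_1)f$ terms that arise. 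The splitting and Dirichlet--Neumann machinery are reserved for the genuinely harder, separate statement (Lemma \ref{lemPP0}), namely the $(s-\f12)$-order boundary estimate of the \emph{sum} $N_f\cdot(\ud{\nabla p}+\ud{\nabla\hat p})$, for which no explicit formula is available. If you do want to run the splitting already at this stage, be aware of a near-circularity: estimating the harmonic part $p^1$ at order $s$ requires its Dirichlet datum $\ud{p}$ in weighted $H^{s-1/2}$, which is obtained from the bulk estimate of $p$ itself; the paper's Lemma \ref{lemPP1} breaks this by first estimating $p^2$ (homogeneous boundary data, hence order $s+1$) and then rerunning the direct tangential energy argument for $p^1$ --- so the splitting supplements, but cannot replace, the direct top-order estimate.
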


\subsection{An integral identity for the pressure}
%
In this subsection, we show an integral identity for the pressure.
\begin{lem}
Assume $f,$ $\La_\pm$, $\hat{\La}_\pm$, $p$, $\hat{p}$ satisfies \eqref{A18}-\eqref{A20},  $\lim_{x_1\rightarrow +\infty}(\nabla^{\leq 1}p,\nabla^{\leq 1}\hat{p},\nabla^{\leq 1}\La_{\pm},\nabla^{\leq 1}\hat{\La}_{\pm})=0.$
Then there holds
\begin{align}\label{Pre-Ide}
	\int_{-1}^f p\, \dx_2 +\int_f^1 \hat{p}\, \dx_2
 + \int_{-1}^f \La_{-}^1 \La_{+}^1\, \dx_2+\int_f^1 \hat{\La}_{-}^1 \hat{\La}_{+}^1\, \dx_2=0\,.
\end{align}
\end{lem}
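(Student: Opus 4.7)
The plan is to denote by $\Phi(t,x_1)$ the left-hand side of \eqref{Pre-Ide} and to show $\p_{x_1}\Phi\equiv 0$; combined with the assumed decay $(\nabla^{\leq 1}p,\nabla^{\leq 1}\hat p,\nabla^{\leq 1}\La_\pm,\nabla^{\leq 1}\hat\La_\pm)\to 0$ as $x_1\to+\infty$, one gets $\Phi(t,x_1)\to 0$ at $x_1=+\infty$ and therefore $\Phi\equiv 0$.

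First, I would differentiate $\Phi$ in $x_1$ by the Leibniz rule. The two boundary terms from $\int_{-1}^f p\,dx_2$ and $\int_f^1 \hat p\,dx_2$ at $x_2=f$ cancel thanks to the pressure continuity $\ud{p}=\ud{\hat p}$ from $\eqref{BB5}_2$, while the $\La^1\La^1$ Leibniz terms leave the residue $(\ud{\La_-^1}\ud{\La_+^1}-\ud{\hat\La_-^1}\ud{\hat\La_+^1})\p_1 f$. What remains inside the domain is
\begin{equation*}
\int_{-1}^f\bigl[\p_1 p+\p_1(\La_-^1\La_+^1)\bigr]\,dx_2+\int_f^1\bigl[\p_1\hat p+\p_1(\hat\La_-^1\hat\La_+^1)\bigr]\,dx_2.
\end{equation*}

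Next I would extract a pointwise divergence identity from the momentum equations \eqref{BB3}. Taking the first component of both and averaging so that $\p_1 p$ appears from each, and then using $\div\La_\pm=0$ to write $\La_\mp\cdot\nabla\La_\pm^1$ as $\p_1(\La_\mp^1\La_\pm^1)+\p_2(\La_\mp^2\La_\pm^1)$, I expect to reach
\begin{equation*}
\p_1 p+\p_1(\La_-^1\La_+^1)=-\p_t u^1-\p_2\bigl(u^1u^2-h^1h^2+2h^2\bigr),
\end{equation*}
where $u=\tfrac12(\La_++\La_-)$ and $h-e_1=\tfrac12(\La_+-\La_-)$, with the analogous identity in $\hat\Om_f$. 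Integrating in $x_2$ from $-1$ to $f$ (respectively $f$ to $1$) and evaluating the boundary contributions using the rigid slip $\La_\pm^2=0$ on $\Ga$, $\hat\La_\pm^2=0$ on $\hat\Ga$, and the kinematic conditions $\ud{\La_\pm^2}=(\p_t\pm\p_1)f+\ud{\La_\pm^1}\p_1 f$ (likewise for $\hat\La_\pm$) obtained from $\eqref{BB5}_1$, I expect a clean cancellation: the residual quadratic pieces $\pm\ud{\La_-^1}\ud{\La_+^1}\p_1 f$ and $\pm\ud{\hat\La_-^1}\ud{\hat\La_+^1}\p_1 f$ precisely kill the Leibniz residue, the $\pm\p_1 f$ pieces from the two subdomains cancel each other, and the $\p_t f$ contributions recombine with $\int\p_t u^1$ into time derivatives of full fluxes. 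The surviving identity is
\begin{equation*}
\p_{x_1}\Phi=-\p_t G(t,x_1),\qquad G(t,x_1):=\int_{-1}^f u^1\,dx_2+\int_f^1\hat u^1\,dx_2.
\end{equation*}

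Finally I would show $G\equiv 0$ by a much simpler linear version of the same argument. Applying only $\div\La_\pm=0$, the same rigid and kinematic boundary conditions, and integration in $x_2$, one finds that each of the quantities $\int_{-1}^f\La_\pm^1\,dx_2+\int_f^1\hat\La_\pm^1\,dx_2$ has vanishing $x_1$-derivative (the $\pm(\p_t f+\p_1 f)$ contributions from $\Om_f$ and $\hat\Om_f$ exactly cancel), hence is constant in $x_1$; the decay hypothesis at $x_1\to+\infty$ then forces both constants to be zero, and summing gives $G\equiv 0$. The main obstacle is the careful bookkeeping in the previous step: many boundary terms of the forms $\ud{\La_\pm^1}\p_t f$, $\p_tf$, $\p_1 f$, and $\ud{\La_-^1}\ud{\La_+^1}\p_1 f$ must combine, and their simultaneous cancellation is exactly the hidden divergence structure of the momentum system compatible with the free- and fixed-boundary conditions, producing the solvability condition \eqref{Pre-Ide} for the elliptic problem \eqref{H0}.
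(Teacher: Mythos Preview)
Your approach is correct but genuinely different from the paper's.  The paper does not pass through the momentum equation; instead it integrates the Poisson equation $\Delta p=-\nabla\cdot(\La_-\cdot\nabla\La_+)$ over the semi-infinite region $\{x_1'\ge x_1\}\cap\Om_f$, converts the boundary contribution on $\Ga_f$ via $\eqref{A18}_1$ into $-\int_{x_1}^{+\infty}(\p_t-\p_1)(N_f\cdot\ud{\La_+})\,dx_1'$ plus an exact $x_1$-derivative, does the same in $\hat\Om_f$, and adds.  Because $N_f\cdot\ud{\La_+}=N_f\cdot\ud{\hat\La_+}=(\p_t+\p_1)f$, those two semi-infinite integrals cancel outright, yielding $\p_{x_1}\Phi=0$ in one stroke.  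Your route instead reads the first component of the momentum balance as the local conservation law $\p_t u^1+\p_1(p+\La_-^1\La_+^1)+\p_2(u^1u^2-h^1h^2+2h^2)=0$, integrates only in $x_2$, and obtains $\p_{x_1}\Phi=-\p_t G$; you then need the separate (easy) fact that the total horizontal flux $G$ vanishes, which is exactly mass conservation $\div\La_\pm=0$ combined with $N_f\cdot\ud{\La_\pm}=N_f\cdot\ud{\hat\La_\pm}$.  Your argument is thus the momentum/mass conservation viewpoint and is slightly more elementary (no semi-infinite $x_1$-integration, no Poisson equation), at the cost of an extra step; the paper's argument stays closer to the elliptic system \eqref{H0} for which \eqref{Pre-Ide} serves as the solvability condition, and gets $\p_{x_1}\Phi=0$ directly without isolating $G$.
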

\begin{proof}
Let $-\infty<x_1<\bar{x}_1<+\infty$. For $\Delta p$, taking the integral in $\Om_f$ from $x_1$ to $\bar{x}_1$. Then let $\bar{x}_1$
go to $+\infty$, due to the vanishing of $(\nabla^{\leq 1}p,\nabla^{\leq 1}\La_{\pm})$ at positive infinity and the boundary condition
$\p_2 p\big|_{\Ga}=0$, we obtain
\begin{align*}
\int_{x_1}^{+\infty}\int_{-1}^f\Delta p \dx_2\dx_1
=\int_{x_1}^{+\infty} N_f\cdot\ud{\nabla p} \,\dx_1
- \int_{-1}^f \p_1 p\, \dx_2.
\end{align*}
On the other hand, by using $\Delta p = -\nabla \cdot (\La_- \cdot \nabla \La_+) $, we deduce that
\begin{align*}
&\int_{x_1}^{+\infty}\int_{-1}^f\Delta p \dx_2\dx_1
=-\int_{x_1}^{+\infty}\int_{-1}^f\nabla \cdot (\La_{-} \cdot \nabla \La_{+}) \dx_2\dx_1  \\
&=\int_{x_1}^{+\infty} \La_{-} \cdot \nabla \La_{+}^2 \big|_{\Ga} \,\dx_1
-\int_{x_1}^{+\infty} N_f\cdot(\ud{\La_{-} \cdot \nabla \La_{+}}) \,\dx_1  + \int_{-1}^f (\La_{-} \cdot \nabla \La_{+}^1)\, \dx_2\,.
\end{align*}
By the boundary condition  $\La_\pm^2\big|_{\Ga}=0$, this further yields
\begin{align}\label{PP3}
&\int_{x_1}^{+\infty} \ud{N_f\cdot\nabla p} \,\dx_1
- \int_{-1}^f \p_1 p \, \dx_2\\\nonumber
&=-\int_{x_1}^{+\infty} N_f\cdot(\ud{\La_{-} \cdot \nabla \La_{+}}) \,\dx_1
+ \int_{-1}^f (\La_{-} \cdot \nabla \La_{+}^1)\, \dx_2 \, .
\end{align}

Now we calculate the quantities in \eqref{PP3}.
Note that
\begin{align*}
&N_f\cdot\ud{\p_2\La_{+}}
=-\p_1f\ud{\p_2\La_{+}^1}+\ud{\p_2\La_{+}^2}
=-\p_1f\ud{\p_2\La_{+}^1}-\ud{\p_1\La_{+}^1}
=-\p_1\ud{\La_{+}^1}.
\end{align*}
Thus by using $\eqref{A18}_1$, we deduce that
\begin{align}\label{PP5}
& N_f\cdot(\ud{\La_{-} \cdot \nabla \La_{+}}+\ud{\nabla p})
=- N_f\cdot\ud{(\p_t-\p_1) \La_+}  \\\nonumber
&=- N_f\cdot \big( (\p_t-\p_1) \ud{\La_+}- \ud{\p_2 \La_+} (\p_t-\p_1)f \big)\\\nonumber
&=- (\p_t-\p_1) (N_f\cdot  \ud{\La_+})- \ud{\La_+^1} (\p_t-\p_1)\p_1f
-\p_1\ud{\La_{+}^1} (\p_t-\p_1)f\\\nonumber
&=- (\p_t-\p_1) (N_f\cdot  \ud{\La_+})- \p_1\big( \ud{\La_+^1} (\p_t-\p_1)f \big)\,.
\end{align}
Next, let us calculate
\begin{align}
\label{PP7}
&\int_{-1}^f \p_1 p \, \dx_2=\f{\d}{\dx_1} \int_{-1}^f p\, \dx_2 -\p_1f\ud{p},\\
\label{PP8}
&\int_{-1}^f (\La_{-} \cdot \nabla \La_{+}^1)\, \dx_2
=\int_{-1}^f \div (\La_{-} \La_{+}^1)\, \dx_2\\\nonumber
&=\int_{-1}^f \p_1 (\La_{-}^1 \La_{+}^1)\, \dx_2
+ \ud{\La_{+}^1}\ud{\La_{-}^2}\\\nonumber
&=\f{\d}{\dx_1}\int_{-1}^f (\La_{-}^1 \La_{+}^1)\, \dx_2
-\p_1f\ud{\La_{-}^1 \La_{+}^1}+ \ud{\La_{+}^1}\ud{\La_{-}^2}\\\nonumber
&=\f{\d}{\dx_1}\int_{-1}^f (\La_{-}^1 \La_{+}^1)\, \dx_2
+\ud{\La_{+}^1}N_f\cdot \ud{\La_{-}}.
\end{align}

Plugging \eqref{PP5}-\eqref{PP8} into \eqref{PP3}, by using the boundary condition $(\p_t-\p_1)f=N_f\cdot \ud{\La_{-}}$, we derive that
\begin{align}\label{PP9}
&\int_{x_1}^{+\infty} N_f\cdot(\ud{\La_{-} \cdot \nabla \La_{+}}+\ud{\nabla p}) \,\dx_1
- \int_{-1}^f (\La_{-} \cdot \nabla \La_{+}^1)\, \dx_2 \\\nonumber
&=-\int_{x_1}^{+\infty} (\p_t-\p_1) (N_f\cdot  \ud{\La_+}) \,\dx_1
+\ud{\La_+^1} (\p_t-\p_1)f -\f{\d}{\dx_1}\int_{-1}^f (\La_{-}^1 \La_{+}^1)\, \dx_2
-\ud{\La_{+}^1}N_f\cdot \ud{\La_{-}} \\\nonumber
&=-\int_{x_1}^{+\infty} (\p_t-\p_1) (N_f\cdot  \ud{\La_+}) \,\dx_1
 -\f{\d}{\dx_1}\int_{-1}^f (\La_{-}^1 \La_{+}^1)\, \dx_2 \\\nonumber
 &=\int_{-1}^f \p_1 p \, \dx_2=\f{\d}{\dx_1} \int_{-1}^f p\, \dx_2 -\p_1f\ud{p}.
\end{align}
The above \eqref{PP9} is further simplified as
\begin{align}\label{PP10}
&\f{\d}{\dx_1} \int_{-1}^f p\, \dx_2 -\p_1f\ud{p}+\f{\d}{\dx_1}\int_{-1}^f \La_{-}^1 \La_{+}^1\, \dx_2
+\int_{x_1}^{+\infty}  (\p_t-\p_1) (N_f\cdot  \ud{\La_+})  \dx_1=0\,.
\end{align}

\textbf{Similar calculation in $\hat{\Om}_f$}

Now we conduct the similar calculation in $\hat{\Om}_f$:
\begin{align}\label{PP11}
&-\int_{x_1}^{+\infty} N_f\cdot\ud{\nabla \hat{p}}   \,\dx_1
- \int_f^1 \p_1 \hat{p}\, \dx_2\\\nonumber
&=\int_{x_1}^{+\infty} N_f\cdot(\ud{\hat{\La}_{-} \cdot \nabla \hat{\La}_{+}})  \,\dx_1
+ \int_f^1(\hat{\La}_{-} \cdot \nabla \hat{\La}_{+}^1)\, \dx_2 \, ,
\end{align}
Similarly,
\begin{align}
&N_f\cdot\ud{\p_2\hat{\La}_{+}}
=-\p_1\ud{\hat{\La}_{+}^1},\nonumber\\
\label{PP12}
& N_f\cdot(\ud{\hat{\La}_{-} \cdot \nabla \hat{\La}_{+}}+\ud{\nabla \hat{p}})
=- (\p_t-\p_1) (N_f\cdot  \ud{\hat{\La}_+})- \p_1\big( \ud{\hat{\La}_+^1} (\p_t-\p_1)f \big),\\
\label{PP13}
&\int_f^1 \p_1 \hat{p} \, \dx_2=\f{\d}{\dx_1} \int_f^1  \hat{p} \, \dx_2 +\p_1f\ud{\hat{p}},\\
\label{PP14}
&\int_f^1 (\hat{\La}_{-} \cdot \nabla \hat{\La}_{+}^1)\, \dx_2
=\f{\d}{\dx_1}\int_f^1 (\hat{\La}_{-}^1 \hat{\La}_{+}^1)\, \dx_2
-\ud{\hat{\La}_{+}^1}N_f\cdot \ud{\hat{\La}_{-}}.
\end{align}
Similar to \eqref{PP9}, plugging \eqref{PP12}-\eqref{PP14} into \eqref{PP11}, by using the boundary condition $(\p_t-\p_1)f=N_f\cdot \ud{\hat{\La}_{-}}$, we derive that
\begin{align}\label{PP15}
&\int_{x_1}^{+\infty} N_f\cdot(\ud{\hat{\La}_{-} \cdot \nabla \hat{\La}_{+}}+\ud{\nabla \hat{p}}) \dx_1
+ \int_f^1(\hat{\La}_{-} \cdot \nabla \hat{\La}_{+}^1)\, \dx_2\\\nonumber
 &=-\int_{x_1}^{+\infty} (\p_t-\p_1) (N_f\cdot  \ud{\hat{\La}_+}) \,\dx_1
 +\f{\d}{\dx_1}\int_{f}^1 (\hat{\La}_{-}^1 \hat{\La}_{+}^1)\, \dx_2 \\\nonumber
&=-\int_f^1 \p_1 \hat{p} \, \dx_2=-\f{\d}{\dx_1} \int_f^1 \hat{p}\, \dx_2 -\p_1f\ud{\hat{p}}.
\end{align}
The above \eqref{PP15} is further simplified as
\begin{align}\label{PP16}
&\f{\d}{\dx_1} \int_f^1 \hat{p}\, \dx_2 +\p_1f\ud{p}
+\f{\d}{\dx_1}\int_f^1 \hat{\La}_{-}^1 \hat{\La}_{+}^1\, \dx_2
-\int_{x_1}^{+\infty} (\p_t-\p_1) (N_f\cdot  \ud{\hat{\La}_+}) \,\dx_1=0\,.
\end{align}
Taking the sum of \eqref{PP10} and \eqref{PP16}, note that $(\p_t+\p_1)f=N_f\cdot  \ud{\La_+}=N_f\cdot  \ud{\hat{\La}_+}$, this further yields
\begin{align*}
&\f{\d}{\dx_1} \Big(\int_{-1}^f p\, \dx_2 +\int_f^1 \hat{p}\, \dx_2
 + \int_{-1}^f \La_{-}^1 \La_{+}^1\, \dx_2+\int_f^1 \hat{\La}_{-}^1 \hat{\La}_{+}^1\, \dx_2
\Big) =0\,.
\end{align*}
Due to the vanishing of $(p,\hat{p}, \La_{\pm},\hat{\La}_{\pm})$ at positive infinity, there holds
\begin{align*}
\lim_{x_1\rightarrow+\infty}\Big(\int_{-1}^f p\, \dx_2 +\int_f^1 \hat{p}\, \dx_2
 + \int_{-1}^f \La_{-}^1 \La_{+}^1\, \dx_2+\int_f^1 \hat{\La}_{-}^1 \hat{\La}_{+}^1\, \dx_2
\Big)=0\,.
\end{align*}
Hence for all $x_1\in\bR$, there holds
\begin{align*}
\int_{-1}^f p\, \dx_2 +\int_f^1 \hat{p}\, \dx_2
 + \int_{-1}^f \La_{-}^1 \La_{+}^1\, \dx_2+\int_f^1 \hat{\La}_{-}^1 \hat{\La}_{+}^1\, \dx_2
=0\,.
\end{align*}
\end{proof}

With the integral identity for the pressure \eqref{Pre-Ide}, we can show the weighted Poincar\'e inequality.
\begin{lem}\label{lemPI}
Let $p\in C(\bar{\Om}_f)$, $\hat{p}\in C(\bar{\hat{\Om}}_f)$, $\ud{p}=\ud{\hat{p}}$, and it satisfies
\begin{align*}
&  \int_{-1}^f p\, \dx_2 +\int_f^1 \hat{p}\, \dx_2
 + \int_{-1}^f \La_{-}^1 \La_{+}^1\, \dx_2+\int_f^1 \hat{\La}_{-}^1 \hat{\La}_{+}^1\, \dx_2=0\,.
\end{align*}
Then there hold
\begin{align}\label{G2}
\| \langle w^+\rangle^{\mu}\langle w^-\rangle^{\nu}\ud{p} \|_{L^2(\bR)}
\lesssim& \| \langle w^+\rangle^{\mu}\langle w^-\rangle^{\nu}\p_2 p \|_{L^2(\Om_f)}
+\| \langle w^+\rangle^{\mu}\langle w^-\rangle^{\nu}\p_2 \hat{p} \|_{L^2(\hat{\Om}_f)}\\\nonumber
& +\|\langle w^+\rangle^{\mu_1}\langle w^-\rangle^{\nu_1}\La_{-}^1\|_{L^2(\Om_f)}
\|\langle w^+\rangle^{\mu_2}\langle w^-\rangle^{\nu_2}\La_{+}^1\|_{L^2(\Om_f)}\\\nonumber
&+\| \langle w^+\rangle^{\mu_1}\langle w^-\rangle^{\nu_1}\hat{\La}_{-}^1\|_{L^2{(\hat{\Om}_f)}}
\|\langle w^+\rangle^{\mu_2}\langle w^-\rangle^{\nu_2}\hat{\La}_{+}^1\|_{L^2{(\hat{\Om}_f)}}
\end{align}
and
\begin{align}\label{G3}
&\| \langle w^+\rangle^{\mu}\langle w^-\rangle^{\nu} p \|_{L^2(\Om_f)}
+\| \langle w^+\rangle^{\mu}\langle w^-\rangle^{\nu} \hat{p} \|_{L^2(\hat{\Om}_f)} \\\nonumber
&\lesssim \| \langle w^+\rangle^{\mu}\langle w^-\rangle^{\nu}\p_2 p \|_{L^2(\Om_f)}
+\| \langle w^+\rangle^{\mu}\langle w^-\rangle^{\nu}\p_2 \hat{p} \|_{L^2(\hat{\Om}_f)}\\\nonumber
&\quad +\|\langle w^+\rangle^{\mu_1}\langle w^-\rangle^{\nu_1}\La_{-}^1\|_{L^2(\Om_f)}
\|\langle w^+\rangle^{\mu_2}\langle w^-\rangle^{\nu_2}\La_{+}^1\|_{L^2(\Om_f)}\\\nonumber
&\quad+\| \langle w^+\rangle^{\mu_1}\langle w^-\rangle^{\nu_1}\hat{\La}_{-}^1\|_{L^2{(\hat{\Om}_f)}}
\|\langle w^+\rangle^{\mu_2}\langle w^-\rangle^{\nu_2}\hat{\La}_{+}^1\|_{L^2{(\hat{\Om}_f)}}
\end{align}
provided the right hand sides are finite.
\end{lem}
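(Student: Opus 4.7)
The strategy is a one-dimensional Poincar\'e-type argument in the vertical variable $x_2$, using the integral identity \eqref{Pre-Ide} as a substitute for the usual ``mean zero'' hypothesis. The crucial structural point is that the weight $\langle w^+\rangle^{\mu}\langle w^-\rangle^{\nu}$ depends only on $x_1$, so it is a constant with respect to the vertical integration and commutes freely with all $x_2$-integrals.

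First I would express the trace in terms of $\partial_2 p$, $\partial_2\hat{p}$ and the bilinear integrals in \eqref{Pre-Ide}. For fixed $x_1$, the fundamental theorem of calculus gives
\begin{equation*}
p(x_1,x_2)=\ud{p}(x_1)-\int_{x_2}^{f(x_1)}\p_2 p(x_1,y)\,\dy,\qquad \hat{p}(x_1,x_2)=\ud{\hat{p}}(x_1)+\int_{f(x_1)}^{x_2}\p_2 \hat{p}(x_1,y)\,\dy.
\end{equation*}
Integrating in $x_2$ over $[-1,f]$ and $[f,1]$ respectively, using $\ud{p}=\ud{\hat{p}}$ and the assumption $-1+c_0\le f\le 1-c_0$ so that $f+1$ and $1-f$ stay comparable to constants, and inserting the result into \eqref{Pre-Ide} yields an identity of the schematic form
\begin{equation*}
2\ud{p}(x_1)=\int_{-1}^{f}(y+1)\p_2 p(x_1,y)\,\dy-\int_{f}^{1}(1-y)\p_2\hat{p}(x_1,y)\,\dy-\int_{-1}^{f}\La_{-}^1\La_{+}^1\,\dx_2-\int_{f}^{1}\hat{\La}_{-}^1\hat{\La}_{+}^1\,\dx_2,
\end{equation*}
after switching the order of integration in the double integrals. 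This is exactly the representation that converts the ``mean-type'' constraint \eqref{Pre-Ide} into a pointwise (in $x_1$) bound for $\ud{p}$.

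Next I would take absolute values, apply Cauchy--Schwarz in $x_2$ on the bounded vertical interval, multiply by $\langle w^+\rangle^{\mu}\langle w^-\rangle^{\nu}$, and take $L^2$ in $x_1$. Because the weight is independent of $x_2$, the factor $\langle w^+\rangle^{\mu}\langle w^-\rangle^{\nu}$ passes through $\int_{-1}^{f}\,\dx_2$ and becomes the full weight on $\Om_f$; the first two terms then produce the $\p_2 p$ and $\p_2\hat{p}$ contributions on the right-hand side of \eqref{G2}. For the bilinear terms I split the weight as $\langle w^+\rangle^{\mu}=\langle w^+\rangle^{\mu_1}\langle w^+\rangle^{\mu_2}$ and likewise for $\langle w^-\rangle^{\nu}$, then apply Cauchy--Schwarz in $x_2$ to one factor of $\La_{-}^1$ and the other of $\La_{+}^1$ (and similarly for the hatted variables); this yields exactly the product structure appearing in the last two lines of \eqref{G2}.

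To obtain \eqref{G3}, I would feed the representation $p(x_1,x_2)=\ud{p}(x_1)-\int_{x_2}^{f}\p_2 p\,\dy$ back into $\|\langle w^+\rangle^{\mu}\langle w^-\rangle^{\nu} p\|_{L^2(\Om_f)}^2$, use $(a+b)^2\le 2a^2+2b^2$ together with Cauchy--Schwarz on the tail integral, and exploit that the vertical slab has length at most $2$ to bound $\int_{-1}^{f}|\int_{x_2}^{f}\p_2 p\,\dy|^2\,\dx_2\lesssim \|\p_2 p(x_1,\cdot)\|_{L^2([-1,f])}^2$. Integrating against the weight in $x_1$ gives
\begin{equation*}
\|\langle w^+\rangle^{\mu}\langle w^-\rangle^{\nu} p\|_{L^2(\Om_f)}^2\lesssim \|\langle w^+\rangle^{\mu}\langle w^-\rangle^{\nu}\ud{p}\|_{L^2(\BR)}^2+\|\langle w^+\rangle^{\mu}\langle w^-\rangle^{\nu}\p_2 p\|_{L^2(\Om_f)}^2,
\end{equation*}
and similarly for $\hat{p}$; combining with \eqref{G2} closes the estimate. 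The argument involves no genuine difficulty beyond bookkeeping, because the uniform vertical height $f\in[-1+c_0,1-c_0]$ and the $x_2$-independence of the weight make the Poincar\'e step essentially one-dimensional; the only place to be careful is the splitting of the weight in the bilinear terms so that the exponents $(\mu_i,\nu_i)$ on the right of \eqref{G2}--\eqref{G3} come out in the form stated.
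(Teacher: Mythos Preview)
Your argument is correct and reaches the same conclusion as the paper, but via a slightly different Poincar\'e step. The paper invokes the mean value theorem for integrals on each vertical fiber: continuity of the glued function ($p$ on $[-1,f]$, $\hat p$ on $[f,1]$, matched by $\ud p=\ud{\hat p}$) together with the integral identity produces a point $\bar x_2(x_1)$ at which the pressure equals (a constant times) the bilinear integral, and then one writes $\ud p(x_1)=p(x_1,\bar x_2)+\int_{\bar x_2}^{f}\p_2 p\,\dy$ to get the pointwise control. You instead integrate the representations $p=\ud p-\int_{x_2}^{f}\p_2 p$ and $\hat p=\ud{\hat p}+\int_{f}^{x_2}\p_2\hat p$ over the vertical slab, swap the order of integration, and solve explicitly for $2\,\ud p(x_1)$. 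Your route is a bit more direct and avoids the (non-constructive) mean-value selection and any attendant measurability concern for $\bar x_2(x_1)$, while the paper's version is terser to state. The passage from \eqref{G2} to \eqref{G3} is handled identically in both arguments.
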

\begin{proof}
The pressure is continuous in $\bR\times[-1,1]$.
Due to the integral identity, there exists $\bar{x}=(\bar{x}_1,\bar{x}_2)\in \bar{\Om}_f\backslash\Ga_f$ such that
$p(\bar{x})=\int_{-1}^f \La_{-}^1 \La_{+}^1\, \dx_2+\int_{f}^1 \hat{\La}_{-}^1 \hat{\La}_{+}^1\, \dx_2,$ or $\bar{x}\in \bar{\hat{\Om}}_f\backslash\Ga_f$ such that  $\hat{p}(\bar{x})=\int_{-1}^f \La_{-}^1 \La_{+}^1\, \dx_2+\int_{f}^1 \hat{\La}_{-}^1 \hat{\La}_{+}^1\, \dx_2,$
or $\bar{x}\in \Ga_f$ such that  $p(\bar{x})=\int_{-1}^f \La_{-}^1 \La_{+}^1\, \dx_2+\int_{f}^1 \hat{\La}_{-}^1 \hat{\La}_{+}^1\, \dx_2.$

For the last case, \eqref{G2} follows directly. For the first two cases,
without loss of generality, we assume $\bar{x}\in \bar{\Om}_f\backslash\Ga_f$.
Then there holds
\begin{align*}
\langle w^+\rangle^{\mu}\langle w^-\rangle^{\nu}\big(\ud{p}(\bar{x}_1)-p(\bar{x})\big)=-\int_{f}^{\bar{x}_2}\langle w^+\rangle^{\mu}\langle w^-\rangle^{\nu} \p_2 p\, \dx_2.
\end{align*}
Then taking $L^2(\bR)$ norm yields \eqref{G2}.

For \eqref{G3}, we only estimate $p$ in $\Om_f$. The estimate of  $\hat{p}$ is similarly thus the details is omitted.
For $x=(x_1,x_2)\in \Om_f$, we write
\begin{align*}
\langle w^+\rangle^{\mu}\langle w^-\rangle^{\nu}\big(\ud{p}(x_1)-p(x)\big)=-\int_{f}^{x_2}\langle w^+\rangle^{\mu}\langle w^-\rangle^{\nu} \p_2 p\, \dx_2.
\end{align*}
Then taking $L^2(\Om_f)$ norm, combined with \eqref{G2} yields \eqref{G3}.
\end{proof}


\subsection{Some preliminary estimates}
The normal of the pressure on the free boundary  plays an important role in the estimate.
Recalling the difference of the normal of the pressure on the free boundary \eqref{BB11} that
\begin{align}\label{G-7}
N_f\cdot\ud{\nabla p}-N_f\cdot\ud{\nabla \hat{p}}
&= -\big(\ud{\La_{+}^1}-\ud{\hat{\La}_{+}^1} \big) \p_1 (\p_t-  \p_1) f
  -\big(\ud{\La_{-}^1}-\ud{\hat{\La}_{-}^1}\big) \p_1 (\p_t+  \p_1) f \\\nonumber
&\quad-\big(\ud{\La_{+}^1} \ud{\La_{-}^1}- \ud{\hat{\La}_{+}^1} \ud{\hat{\La}_{-}^1} \big) \p_1^2 f\quad \textrm{on} \,\,  \Ga_f \,.
\end{align}
In the sequel, we estimate $N_f\cdot\ud{\nabla p}-N_f\cdot\ud{\nabla \hat{p}}$ on $\Ga_f$.
\begin{lem}\label{lemPFL}
Let $\f 12<\mu\leq \f 35$.
Assuming $ \|\p_1f\|_{H^{s-\f 12}}\lesssim 1$.
Then there hold
\begin{align}\label{G-8}
&\| \langle w^\pm\rangle^{5\mu}\langle w^\mp\rangle^{\mu}
 \big( N_f\cdot\ud{\nabla p}-N_f\cdot\ud{\nabla \hat{p}} \big)\|_{L^2(\BR)}
 \leq (E_2G_2^b)^{\f 12} \cdot \mathcal{P}(1+\|\p_1f\|_{H^1(\bR)}),\\\label{G-9}
&\sum_{0\leq a\leq 1}\| \langle w^\pm\rangle^{5\mu}\langle w^\mp\rangle^{\mu} \langle\p_1\rangle^{\f 12}\p_1^a(N_f\cdot\ud{\nabla p}-N_f\cdot\ud{\nabla \hat{p}})\|_{L^2(\BR)}
\leq (E_3 G_3^b)^{\f 12} \cdot \mathcal{P}(1+\|\p_1f\|_{H^{\f52}(\bR)}),
\end{align}
where $\mathcal{P}(\cdot)$ is a polynomial.
\end{lem}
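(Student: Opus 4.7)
The plan is to start from the identity (\ref{G-7}), which expresses the normal jump of the pressure gradient on $\Ga_f$ as three quadratic and cubic combinations of boundary traces of $\La_\pm, \hat{\La}_\pm$ and the wave-type free-surface quantities $\p_1(\p_t\pm\p_1)f$ and $\p_1^2 f$. Each term exhibits the null pairing discussed in Section 2.3: a left-going Alfv\'en variable ($\La_+^1, \hat{\La}_+^1, (\p_t+\p_1)f$) is always multiplied by a right-going one ($\La_-^1, \hat{\La}_-^1, (\p_t-\p_1)f$); for the trilinear term I would split $\p_1^2 f = \f12\p_1(\p_t+\p_1)f - \f12\p_1(\p_t-\p_1)f$ so that the same pairing becomes manifest. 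The total weight $\langle w^\pm\rangle^{5\mu}\langle w^\mp\rangle^\mu$ is then distributed along the pairing: the heavy weight $\langle w^\pm\rangle^{5\mu}$ is assigned to the $\pm$-side (where $E_k^b$ provides sharp control) and the lighter $\langle w^\mp\rangle^\mu$ to the $\mp$-side (where $G_k^b$ or the free-surface ghost weight $G^f_{s+\f12}$ supplies the $L^2$-in-time integrability); the mirror case is symmetric.

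For the first inequality I would apply H\"older's inequality to each product, placing one factor in $L^\infty(\BR)$ and the others in $L^2(\BR)$. The $L^\infty$ bound comes from the weighted Sobolev inequality (Lemma \ref{Sobo2}) combined with the weighted trace theorem (Lemma \ref{lemD3}), absorbing the heavy weight into $E_2^b$. For the $L^2$ factor on the opposite-going side, weighted trace either reduces it to a $\La_\mp$-norm belonging to $G_2^b$ (using $\langle w^\mp\rangle^\mu\leq\langle w^\mp\rangle^{5\mu}$ to relax the ghost weight to one of the norms already controlled), or, for a free-surface factor, converts through the kinetic boundary condition $(\p_t\mp\p_1)f = \ud{\La_\mp}\cdot N_f$ into the same bulk norm. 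The trilinear term carries two $L^\infty$-controlled traces and one $L^2$ factor on $\p_1^2 f$. All slope factors generated by $N_f = e_2 - e_1\p_1 f$ and by the weighted chain rule Lemma \ref{lemG3} are collected in the polynomial $\mathcal{P}(1+\|\p_1 f\|_{H^1})$.

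For the second inequality I would apply $\langle\p_1\rangle^{\f12}\p_1^a$ ($a\in\{0,1\}$) to both sides of (\ref{G-7}) and distribute the operator across each product via the weighted commutator estimates of Lemmas \ref{lemD10} and \ref{lemD11}, splitting the weight exponents $\mu = \mu_1+\mu_2$, $\nu = \nu_1+\nu_2$ in accordance with the null pairing. The weighted chain rule Lemma \ref{lemG3} then converts boundary derivatives $\langle\p_1\rangle^{\f12}\p_1^a\ud{\La_\pm^1}$ into bulk norms of $\p^\alpha\La_\pm$ on $\Om_f$ at the cost of the polynomial $\mathcal{P}(1+\|\p_1 f\|_{H^{5/2}})$. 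The main obstacle is the sharp regularity count when $a=1$ and the full operator $\langle\p_1\rangle^{\f12}\p_1$ lands on the highest-order free-surface factor $\p_1(\p_t\pm\p_1)f$: the resulting quantity $\langle\p_1\rangle^{\f12}\p_1^2(\p_t\pm\p_1)f$ sits precisely at the top of $E^f_{7/2}$, leaving no budget for derivative loss in the companion trace. This forces Lemma \ref{lemD11} to be applied in its sharp form, with the companion $L^\infty$ bound drawn directly from the weighted Sobolev embedding applied to $E_3^b$, and all sub-leading commutator and chain-rule remainders absorbed into $\mathcal{P}(1+\|\p_1 f\|_{H^{5/2}})$.
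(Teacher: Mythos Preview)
Your high-level plan is right --- start from (\ref{G-7}), use the null pairing, split the weight across the product, and control traces via Lemmas \ref{Sobo2}, \ref{lemD3}, \ref{lemG3}, \ref{lemD10}--\ref{lemD11}. But your description of \emph{which} factor receives $E$ and which receives $G$ is inverted, and with your assignment the estimate does not close. Concretely, for the weight $\langle w^+\rangle^{5\mu}\langle w^-\rangle^{\mu}$ you propose to put $\langle w^+\rangle^{5\mu}$ on the $\La_+$-factor (bounded by $E_k^b$) and $\langle w^\mp\rangle^{\mu}$ on the $\La_-$-factor (claimed to lie in $G_k^b$). But $G_3^b$ for $\La_-$ carries the weight $\langle w^-\rangle^{5\mu}\langle w^+\rangle^{-\mu}$, and your relaxation $\langle w^-\rangle^{\mu}\le\langle w^-\rangle^{5\mu}$ lands only in $E_3^b$, not $G_3^b$ (the missing $\langle w^+\rangle^{-\mu}$ cannot be manufactured). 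With this split you obtain at best $E_3$, not $(E_3 G_3^b)^{1/2}$.

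The paper's split is the opposite: write $\langle w^+\rangle^{5\mu}\langle w^-\rangle^{\mu}=(\langle w^+\rangle^{5\mu}\langle w^-\rangle^{-\mu})\cdot\langle w^-\rangle^{2\mu}$; the \emph{ghost-weight} factor $\langle w^+\rangle^{5\mu}\langle w^-\rangle^{-\mu}$ sits on the $\La_+$-side (exactly the $G_3^b$ weight), while $\langle w^-\rangle^{2\mu}\le\langle w^-\rangle^{5\mu}$ on the $\La_-$-side is absorbed by $E_3^b$. This is precisely why the paper first uses the kinetic condition $(\p_t\pm\p_1)f=\ud{\La_\pm}\cdot N_f$ to rewrite \emph{all} free-surface factors as bulk traces, arriving at the expanded identity (\ref{G-10}): since the total exponent is $5\mu+\mu$, the heavy $5\mu$ must be carried by a bulk norm, and $E^f_{7/2}$ only supplies $\langle w^\pm\rangle^{2\mu}$. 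Your proposal to keep $\langle\p_1\rangle^{1/2}\p_1^2(\p_t\pm\p_1)f$ and bound it by $E^f_{7/2}$ therefore breaks down for the mirror weight $\langle w^\mp\rangle^{5\mu}\langle w^\pm\rangle^{\mu}$, where the $5\mu$ falls on the free-surface side. As a secondary remark, the decomposition $\p_1^2 f=\tfrac12\p_1(\p_t+\p_1)f-\tfrac12\p_1(\p_t-\p_1)f$ for the trilinear term is unnecessary: the null pairing is already present in $\ud{\La_+^1}\,\ud{\La_-^1}$, and $\p_1^2 f$ is simply absorbed into the polynomial $\mathcal P(1+\|\p_1 f\|_{H^{5/2}})$.
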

\begin{proof}
We only show \eqref{G-9}.
The first inequality \eqref{G-8} can be proved in a similar and simpler way.
For the second inequality, we only show
\begin{align*}
 \| \langle w^+\rangle^{5\mu}\langle w^-\rangle^{\mu} \langle\p_1\rangle^{\f 12}\p_1^a(N_f\cdot\ud{\nabla p}-N_f\cdot\ud{\nabla \hat{p}})\|_{L^2(\BR)}
  \leq (E_3 G_3^b)^{\f 12}\cdot \mathcal{P}(1+ \|\p_1f\|_{H^{\f52}(\bR)})
\end{align*}
for $a=1$.
The other cases can be estimated similarly.

Note that the index on the weight functions is $5\mu$ plus $\mu$, the left and right Alfv\'en waves need be written in terms of $\La_{\pm}$, $\hat{\La}_{\pm}$ instead of $(\p_t\pm \p_1)f$.
Recalling $\eqref{BB5}_1$ that
\begin{align*}
(\p_t\pm\p_1) f=\La_{\pm}\cdot N_f=\hat{\La}_{\pm}\cdot N_f.
\end{align*}
Plugging the above expression into \eqref{G-7}, thus
\begin{align}\label{G-10}
&N_f\cdot\ud{\nabla p}-N_f\cdot\ud{\nabla \hat{p}}\\\nonumber
&= - \ud{\La_{+}^1} \big( \p_1\ud{\La_{-}^2}-\p_1\ud{\La_{-}^1}\p_1f-\ud{\La_{-}^1}\p_1^2f \big)
+\ud{\hat{\La}_{+}^1}
\big( \p_1\ud{\hat{\La}_{-}^2}-\p_1\ud{\hat{\La}_{-}^1}\p_1f-\ud{\hat{\La}_{-}^1}\p_1^2f \big)\\\nonumber
&\quad  - \ud{\La_{-}^1}
\big( \p_1\ud{\La_{+}^2}-\p_1\ud{\La_{+}^1}\p_1f-\ud{\La_{+}^1}\p_1^2f \big)
+\ud{\hat{\La}_{-}^1}
\big( \p_1\ud{\hat{\La}_{+}^2}-\p_1\ud{\hat{\La}_{+}^1}\p_1f-\ud{\hat{\La}_{+}^1}\p_1^2f \big)
\\\nonumber
&\quad-\big(\ud{\La_{+}^1} \ud{\La_{-}^1}- \ud{\hat{\La}_{+}^1} \ud{\hat{\La}_{-}^1} \big) \p_1^2 f\,.
\end{align}
We only show the details for first term in the second line of \eqref{G-10}. The estimate of the other terms is similar.

By  weighted Sobolev inequalities Lemma \ref{Sobo1}, weighted commutator Lemma \ref{lemD10}, weighted chain rule Lemma \ref{lemG3}
and the weighted trace Lemma \ref{lemD3}, there holds
\begin{align}\label{G-11}
 &\| \langle w^+\rangle^{5\mu}\langle w^-\rangle^{\mu} \langle\p_1\rangle^{\f 12}\p_1 \big( \ud{\La_{+}^1}\p_1\ud{\La_{-}^2} \big)\|_{L^2(\bR)} \\\nonumber
&\lesssim \| \langle w^+\rangle^{5\mu}\langle w^-\rangle^{-\mu} \ud{\La_{+}^1}\|_{L^\infty(\bR)}
\| \langle w^-\rangle^{2\mu} \langle\p_1\rangle^{\f 12}\p_1^2\ud{\La_{-}^2} \|_{L^2(\bR)} \\\nonumber
&\quad+\| \langle w^+\rangle^{5\mu}\langle w^-\rangle^{-\mu} \p_1\ud{\La_{+}^1}\|_{L^\infty(\bR)}
\| \langle w^-\rangle^{2\mu} \langle\p_1\rangle^{\f 12} \p_1\ud{\La_{-}^2} \|_{L^2(\bR)} \\\nonumber
&\quad+ \| \langle w^+\rangle^{5\mu}\langle w^-\rangle^{-\mu} \langle\p_1\rangle^{\f 12}\p_1 \ud{\La_{+}^1}\|_{L^2(\bR)}
\| \langle w^-\rangle^{2\mu} \p_1\ud{\La_{-}^2} \|_{L^\infty(\bR)} \\\nonumber
&\lesssim (E_3 G_3^b)^{\f 12} \cdot \mathcal{P}(1+\|\p_1f\|_{ H^{\f52}(\bR)})  \, .
\end{align}

Similarly by weighted Sobolev inequalities Lemma \ref{Sobo1}, weighted commutator Lemma \ref{lemD10}, weighted chain rule Lemma \ref{lemG3}
and the weighted trace Lemma \ref{lemD3}, we estimate
\begin{align} \label{G-13}
&\| \langle w^+\rangle^{5\mu}\langle w^-\rangle^{\mu} \langle\p_1\rangle^{\f 12}\p_1 ( \ud{\La_{+}^1}\p_1\ud{\La_{-}^1}\p_1f )\|_{L^2(\bR)} \\\nonumber
&\lesssim \| \langle w^+\rangle^{5\mu}\langle w^-\rangle^{-\mu} \ud{\La_{+}^1} \p_1f \|_{L^\infty(\bR)}
\| \langle w^-\rangle^{2\mu} \langle\p_1\rangle^{\f 12}\p_1^2\ud{\La_{-}^1} \|_{L^2(\bR)} \\\nonumber
&\quad+\| \langle w^+\rangle^{5\mu}\langle w^-\rangle^{-\mu} \p_1(\ud{\La_{+}^1}\p_1f )\|_{L^\infty(\bR)}
\| \langle w^-\rangle^{2\mu} \langle\p_1\rangle^{\f 12} \p_1\ud{\La_{-}^1} \|_{L^2(\bR)} \\\nonumber
&\quad+ \| \langle w^+\rangle^{5\mu}\langle w^-\rangle^{-\mu} \langle\p_1\rangle^{\f 12}\p_1 (\ud{\La_{+}^1}\p_1f )\|_{L^2(\bR)}
\| \langle w^-\rangle^{2\mu} \p_1\ud{\La_{-}^1} \|_{L^\infty(\bR)} \\\nonumber
&\lesssim (E_3 G_3^b)^{\f 12} \cdot \mathcal{P}(1+\|\p_1f\|_{ H^{\f52}(\bR)})
+ \| \langle w^+\rangle^{5\mu}\langle w^-\rangle^{-\mu} \langle\p_1\rangle^{\f 12}\p_1 (\ud{\La_{+}^1}\p_1f )\|_{L^2(\bR)} E_3^{\f12} \\\nonumber
&\lesssim (E_3 G_3^b)^{\f 12} \cdot \mathcal{P}(1+\|\p_1f\|_{ H^{\f52}(\bR)}).
\end{align}

Similar to \eqref{G-11} and \eqref{G-13},
by weighted Sobolev inequalities Lemma \ref{Sobo1}, weighted commutator Lemma \ref{lemD10}, weighted chain rule Lemma \ref{lemG3}
and the weighted trace Lemma \ref{lemD3}, we deduce that
\begin{align*}
&\| \langle w^+\rangle^{5\mu}\langle w^-\rangle^{\mu} \langle\p_1\rangle^{\f 12}\p_1 ( \ud{\La_{+}^1} \ud{\La_{-}^1} \p_1^2 f )\|_{L^2(\bR)} \\
&\lesssim (E_3 G_3^b)^{\f 12}  \cdot \mathcal{P}(1+\|\p_1f\|_{H^{\f52}(\bR)}) \, .
\end{align*}
This ends the proof of the lemma.
\end{proof}

\begin{lem}\label{lemPFH}
Let $\f 12<\mu\leq \f 35$, $s\geq 4$ be an integer. Assuming $ \|\p_1f\|_{H^{s-\f 12}}\lesssim 1$.
For $1\leq a\leq s-2$, there holds
\begin{align*}
\| \langle w^\pm\rangle^{2\mu}\langle w^\mp\rangle^{\mu} \langle\p_1\rangle^{\f 12}\p_1^a(N_f\cdot\ud{\nabla p}-N_f\cdot\ud{\nabla \hat{p}})\|_{L^2(\BR)}
\leq E_s^{\f 12}G_s^{\f 12}  \cdot \mathcal{P}(1 +\|\p_1f\|_{H^{s-\f 12}(\bR)}),
\end{align*}
where $\mathcal{P}(\cdot)$ is a polynomial.
\end{lem}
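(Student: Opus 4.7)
We follow the template of Lemma \ref{lemPFL}, now adapted to the higher derivative range $1\le a\le s-2$ and to the lighter weight $\langle w^\pm\rangle^{2\mu}\langle w^\mp\rangle^\mu$ matching the $s$-th order energies $E_s^b$ and $G_s^b$. Starting from the null-structure expansion \eqref{G-10}, each summand of $N_f\cdot\ud{\nabla p}-N_f\cdot\ud{\nabla\hat p}$ factors as the product of a ``plus'' trace ($\ud{\La_+^1}$, $\ud{\hat\La_+^1}$) and a ``minus'' trace ($\ud{\La_-^j}$, $\ud{\hat\La_-^j}$), possibly multiplied by $\p_1 f$ or $\p_1^2 f$. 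We apply $\langle\p_1\rangle^{\f12}\p_1^a$ and distribute the derivatives using the higher-order weighted commutator estimate Lemma \ref{lemD11}, the weighted trace theorem Lemma \ref{lemD3}, and the weighted chain rule Lemma \ref{lemG3}; the chain rule is what produces the polynomial factor $\mathcal{P}(1+\|\p_1f\|_{H^{s-\f12}})$.

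\textbf{Weight splitting via null structure.} For the bound with weight $\langle w^+\rangle^{2\mu}\langle w^-\rangle^\mu$ we use the factorization
\[
\langle w^+\rangle^{2\mu}\langle w^-\rangle^\mu=\frac{\langle w^+\rangle^{2\mu}}{\langle w^-\rangle^\mu}\cdot\langle w^-\rangle^{2\mu},
\]
placing the ghost-weight factor on the plus trace and the energy-weight factor on the minus trace; the companion bound with $\langle w^-\rangle^{2\mu}\langle w^+\rangle^\mu$ is obtained by swapping $+$ and $-$. In each product the lower-derivative factor is taken in $L^\infty(\BR)$ via weighted trace and weighted Sobolev (Lemma \ref{lemD3}, Lemma \ref{Sobo2}), yielding a bound by $(G_s^b)^{\f12}$ on the plus side or by $(E_s^b)^{\f12}$ on the minus side (using $s\ge4$). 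The complementary high-derivative factor, of order at most $s-\f12$ when the top derivative lands on an Alfv\'en trace, is placed in $L^2(\BR)$ and controlled through Lemma \ref{lemG3} by $\sum_{|\alpha|\le s}\|\langle w^\mp\rangle^{2\mu}\p^\alpha\La_\mp\|_{L^2(\Om_f)}\le(E_s^b)^{\f12}$, or by the analogous ghost quantity when the ghost-weight piece is on the high-derivative side. Each summand is therefore bounded by $E_s^{\f12}G_s^{\f12}\cdot\mathcal{P}$, while residual $\p_1 f$ factors of order $\le s-1$ are absorbed into $\mathcal P$.

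\textbf{Main obstacle.} The delicate case is $a=s-2$ with all derivatives landing on $\p_1^2 f$, producing the top-order term
\[
\ud{\La_+^1}\ud{\La_-^1}\,\langle\p_1\rangle^{\f12}\p_1^s f
\]
(and its hatted analogue), whose $f$-derivative count $s+\f12$ cannot be absorbed into $\|\p_1f\|_{H^{s-\f12}}$. We resolve this via the characteristic decomposition
\[
\langle\p_1\rangle^{\f12}\p_1^s f=\tfrac12(\p_t+\p_1)\langle\p_1\rangle^{\f12}\p_1^{s-1}f-\tfrac12(\p_t-\p_1)\langle\p_1\rangle^{\f12}\p_1^{s-1}f,
\]
which trades one $\p_1$ for a characteristic $\p_t\pm\p_1$, placing each piece inside the scope of $E^f_{s+\f12}$ and $G^f_{s+\f12}$. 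For the $(\p_t+\p_1)$ piece we assign the ghost weight $\langle w^+\rangle^{2\mu}/\langle w^-\rangle^\mu$ to it (weighted $L^2$ norm $\le(G^f_{s+\f12})^{\f12}$), put the residual $\langle w^-\rangle^{2\mu}$ on $\ud{\La_-^1}$ (weighted $L^\infty$ norm $\le(E_s^b)^{\f12}$ via trace and Sobolev), and take $\ud{\La_+^1}$ in $L^\infty$ unweighted, bounded by $(E_3^b)^{\f12}$. For the $(\p_t-\p_1)$ piece we assign $\langle w^-\rangle^{2\mu}/\langle w^+\rangle^\mu$ to it and place the residual $\langle w^+\rangle^{3\mu}\langle w^-\rangle^{-\mu}$ on $\ud{\La_+^1}$, here crucially exploiting the stronger lower-order weight $\langle w^\pm\rangle^{5\mu}$ available in $E_3^b$ to obtain $\|\langle w^+\rangle^{3\mu}\ud{\La_+^1}\|_{L^\infty}\le\|\langle w^+\rangle^{5\mu}\La_+\|_{H^2(\Om_f)}\le(E_3^b)^{\f12}$. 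Both cases assemble to $E_s^{\f12}G_s^{\f12}\cdot\mathcal{P}$, and the hatted terms are handled identically by symmetry. The interplay between the strongly-weighted lower-order energy $E_3^b$ and the lighter-weighted top-order energy $E_s^b$ is the mechanism that lets the estimate close at the highest derivative level.
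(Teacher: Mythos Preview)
Your proof works but it differs from the paper's in a way that reveals a miscounting. The paper does \emph{not} start from the expanded form \eqref{G-10}; it uses the original identity \eqref{BB11} (written as \eqref{G-18} in the proof), keeping the characteristic derivatives $(\p_t\pm\p_1)f$ explicit. With the lighter weight $\langle w^\pm\rangle^{2\mu}\langle w^\mp\rangle^{\mu}$ this is natural, since the surface energies $E^f_{s+\f12}$ and $G^f_{s+\f12}$ carry precisely the $2\mu$-weight. For the quadratic term $(\ud{\La_+^1}-\ud{\hat\La_+^1})\p_1(\p_t-\p_1)f$ the paper simply writes $\langle\p_1\rangle^{\f12}\p_1^a$ acting on it as a main term plus commutator: the main term has $\langle\p_1\rangle^{\f12}\p_1^{a+1}(\p_t-\p_1)f$ with $a+1\le s-1$, landing squarely in $E^f_{s+\f12}$. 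The cubic term $(\ud{\La_+^1}\ud{\La_-^1}-\ud{\hat\La_+^1}\ud{\hat\La_-^1})\p_1^2 f$ is declared ``similar'' and omitted.

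Your ``main obstacle'' rests on a derivative miscount. You claim that $\langle\p_1\rangle^{\f12}\p_1^s f$ has order $s+\tfrac12$ and so exceeds $\|\p_1 f\|_{H^{s-\f12}}$, but in fact
\[
\|\langle\p_1\rangle^{\f12}\p_1^{s} f\|_{L^2}
=\|\langle\p_1\rangle^{\f12}\p_1^{s-1}(\p_1 f)\|_{L^2}
\le \|\p_1 f\|_{H^{s-\f12}},
\]
so this factor goes straight into $\mathcal P$. The top cubic piece is then bounded directly by
\[
\Big\|\tfrac{\langle w^+\rangle^{2\mu}}{\langle w^-\rangle^{\mu}}\ud{\La_+^1}\Big\|_{L^\infty}
\big\|\langle w^-\rangle^{2\mu}\ud{\La_-^1}\big\|_{L^\infty}
\|\p_1 f\|_{H^{s-\f12}}
\lesssim (G_s^b)^{\f12}(E_s^b)^{\f12}\,\mathcal P,
\]
with no need for the characteristic decomposition or the heavy $5\mu$-weighted lower-order energy. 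Your detour through $(\p_t\pm\p_1)\langle\p_1\rangle^{\f12}\p_1^{s-1}f$ is valid, but it produces a triple product $E_s\,G_s^{\f12}$ rather than the stated $E_s^{\f12}G_s^{\f12}$; you are silently absorbing the extra $E_s^{\f12}$ into the a priori smallness, which the paper does not need at this point. A minor additional omission: Lemma~\ref{lemD11} requires $a\ge 2$, so the case $a=1$ must be handled separately via Lemma~\ref{lemD10}, as the paper does.
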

\begin{proof}
For $1\leq a\leq s-2$, it suffices to show
\begin{align*}
 \| \langle w^+\rangle^{2\mu}\langle w^-\rangle^{\mu} \langle\p_1\rangle^{\f 12}\p_1^a(N_f\cdot\ud{\nabla p}-N_f\cdot\ud{\nabla \hat{p}})\|_{L^2(\BR)}
 \leq E_s^{\f 12}G_s^{\f 12}  \cdot \mathcal{P}(1+\|\p_1f\|_{H^{s-\f 12}(\bR)}).
\end{align*}
The other case can be estimated similarly.

Recalling \eqref{BB11} as follows:
\begin{align} \label{G-18}
N_f\cdot\ud{\nabla p}-N_f\cdot\ud{\nabla \hat{p}}
&= -\big(\ud{\La_{+}^1}-\ud{\hat{\La}_{+}^1} \big) \p_1 (\p_t-  \p_1) f
  -\big(\ud{\La_{-}^1}-\ud{\hat{\La}_{-}^1}\big) \p_1 (\p_t+  \p_1) f \\\nonumber
&\quad-\big(\ud{\La_{+}^1} \ud{\La_{-}^1}- \ud{\hat{\La}_{+}^1} \ud{\hat{\La}_{-}^1} \big) \p_1^2 f\,.
\end{align}
We first  show
\begin{align*}
 &\| \langle w^+\rangle^{2\mu}\langle w^-\rangle^{\mu} \langle\p_1\rangle^{\f 12}\p_1^a\big( (\ud{\La_{+}^1}-\ud{\hat{\La}_{+}^1} )
\p_1(\p_t-\p_1)f \big)\|_{L^2(\BR)}
\leq G_s^{\f12} E_s^{\f12} \cdot \mathcal{P}(1 +\|\p_1f\|_{H^{s-\f 12}(\bR)})\, .
\end{align*}
Note that
\begin{align}\label{G-20}
 & \| \langle w^+\rangle^{2\mu}\langle w^-\rangle^{\mu} \langle\p_1\rangle^{\f 12}\p_1^a\big( (\ud{\La_{+}^1}-\ud{\hat{\La}_{+}^1} )
\p_1(\p_t-\p_1)f \big)\|_{L^2(\BR)}\\\nonumber
&\leq\| \langle w^+\rangle^{2\mu}\langle w^-\rangle^{\mu} (\ud{\La_{+}^1}-\ud{\hat{\La}_{+}^1} )
\langle\p_1\rangle^{\f 12}\p_1^{a+1}(\p_t-\p_1)f  \|_{L^2(\BR)}\\\nonumber
&\quad+\| \langle w^+\rangle^{2\mu}\langle w^-\rangle^{\mu} [\langle\p_1\rangle^{\f 12}\p_1^a, (\ud{\La_{+}^1}-\ud{\hat{\La}_{+}^1} )]
\p_1(\p_t-\p_1)f \|_{L^2(\BR)}.
\end{align}
For the second line of \eqref{G-20}, by Lemma \ref{Sobo2} and Lemma \ref{lemD2}, one has
\begin{align*}
&\| \langle w^+\rangle^{2\mu}\langle w^-\rangle^{\mu} (\ud{\La_{+}^1}-\ud{\hat{\La}_{+}^1} )
\langle\p_1\rangle^{\f 12}\p_1^{a+1}(\p_t-\p_1)f  \|_{L^2(\BR)}\\
&\leq \Big\| \f{\langle w^+\rangle^{2\mu} (\ud{\La_{+}^1}-\ud{\hat{\La}_{+}^1} )}{\langle w^-\rangle^{\mu} }\Big\|_{L^\infty(\BR)}
\|  \langle w^-\rangle^{2\mu} \langle\p_1\rangle^{\f 12}\p_1^{a+1}(\p_t-\p_1)f  \|_{L^2(\BR)}\\
&\lesssim G_s^{\f12} E_s^{\f12} (1+\| \p_1 f\|_{L^2(\bR)}).
\end{align*}
For the third line of \eqref{G-20},
%
%
if $a=1$, by the weighted commutator estimate Lemma \ref{lemD10},
the weighted Sobolev inequalities Lemma \ref{Sobo2},
the weighted chain rules Lemma \ref{lemG3} and the weighted trace Lemma \ref{lemD3}, we deduce that
\begin{align*}
	&\| \langle w^+\rangle^{2\mu} \langle w^-\rangle^{\mu}
\big[\langle\p_1\rangle^{\f12}\p_1,\ud{\La_{+}^1 }-\ud{\hat{\La}_{+}^1 }\ \big] \p_1(\p_t-\p_1) f\|_{L^2(\BR)} \\
	&\lesssim \Big\| \f{\langle w^+\rangle^{2\mu}}{\langle w^-\rangle^{\mu}}  \langle \p_1\rangle^{\f12}\p_1
	\big( \ud{\La_{+}^1 }- \ud{\hat{\La}_{+}^1} \big)
	\Big\|_{L^2(\BR)}
	\big\| \langle w^-\rangle^{2\mu} \p_1 (\p_t-\p_1) f \big\|_{L^\infty(\BR)}  \\\nonumber
	&\quad+ \Big\| \f{\langle w^+\rangle^{2\mu}}{\langle w^-\rangle^{\mu}}  \langle \p_1\rangle^{\f12}
	\big( \ud{\La_{+}^1 }- \ud{\hat{\La}_{+}^1} \big)
	\Big\|_{L^\infty(\BR)}
	\big\| \langle w^-\rangle^{2\mu} \p_1^2 (\p_t-\p_1) f \big\|_{L^2(\BR)}  \\\nonumber
	&\lesssim E_3^{\f12} G_3^{\f12} .
\end{align*}
If $2\leq a\leq s-2$, by weighted commutator estimate Lemma \ref{lemD11}, weighted chain rule Lemma \ref{lemG3} and the weighted trace Lemma \ref{lemD3}, there holds
\begin{align*}
	&\| \langle w^+\rangle^{2\mu} \langle w^-\rangle^{\mu} \big[\langle\p_1\rangle^{\f12}\p_1^a,\ud{\La_{+}^1 }-\ud{\hat{\La}_{+}^1 }\ \big] \p_1(\p_t-\p_1) f\|_{L^2(\BR)} \\
	&\lesssim \Big\|  \f{\langle w^+\rangle^{2\mu}}{\langle w^-\rangle^{\mu}}\langle \p_1\rangle^{\f12}
	\p_1^{\leq a} \big( \ud{\La_{+}^1 }- \ud{\hat{\La}_{+}^1} \big)
	\Big\|_{L^2(\BR)}
	\big\| \langle w^-\rangle^{2\mu} \langle \p_1\rangle^{\f12}
	\p_1^{\leq a}(\p_t-\p_1) f \big\|_{L^2(\BR)}  \\\nonumber
	&\lesssim G_{s-1}^{\f12}E^f_{s-\f12}\mathcal{P}(1+\| f'\|_{H^{s-\f32}}). 
\end{align*}

For the estimate of the other terms in \eqref{G-18}, it is similar to \eqref{G-20},
hence the details are omitted.
\end{proof}

\subsection{Weighted $H^1$ estimate of $\hat{p}$ and $p$ }
In this subsection, we treat the weighted $H^1$ estimate of the pressure.
\begin{lem}\label{lemP1}
Let $\f12<\mu\leq\f35$, for the pressure satisfying \eqref{H0}, there holds
\begin{align*}
& \| \langle w^\pm\rangle^{5\mu} \langle w^\mp\rangle^{\mu}  \nabla^{\leq 1} p \|_{L^2(\Om_f)}^2
+\| \langle w^\pm\rangle^{5\mu} \langle w^\mp\rangle^{\mu}  \nabla^{\leq 1}  \hat{p} \|_{L^2(\Om_f)}^2
\lesssim   E_2G_2^b \mathcal{P}(1+\|\p_1f\|_{H^1(\bR)}) \,.
\end{align*}
\end{lem}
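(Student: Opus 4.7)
The plan is to apply a weighted energy argument to the elliptic system \eqref{H0}, summing the contributions from $\Omega_f$ and $\hat{\Omega}_f$ so that the transmission condition $\underline{p}=\underline{\hat{p}}$ converts the separate boundary integrals on $\Gamma_f$ into a single integral involving $N_f\cdot(\underline{\nabla p}-\underline{\nabla\hat{p}})$, which is already controlled by the available a priori data. I fix $W=\langle w^+\rangle^{5\mu}\langle w^-\rangle^{\mu}$; the symmetric estimate with the roles of $w^+$ and $w^-$ swapped is proved in the identical fashion.

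First I would multiply the Poisson equation for $p$ by $W^2 p$ and integrate over $\Omega_f$, and similarly for $\hat{p}$; using $\partial_2 p=0$ on $\Gamma$ and $\partial_2 \hat{p}=0$ on $\hat{\Gamma}$ (where $W$ is $x_2$-independent, so no boundary contribution survives there), the two identities can be added to yield
\begin{align*}
\int_{\Omega_f}\!\! W^2|\nabla p|^2\,dx+\int_{\hat{\Omega}_f}\!\! W^2|\nabla\hat{p}|^2\,dx
&=-\int_{\Omega_f}\!\! 2W\nabla W\cdot p\,\nabla p\,dx-\int_{\hat{\Omega}_f}\!\! 2W\nabla W\cdot\hat{p}\,\nabla\hat{p}\,dx\\
&\quad+\int_{\Omega_f}\!\! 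W^2 p\,\nabla\!\cdot(\Lambda_-\!\cdot\nabla\Lambda_+)\,dx+\int_{\hat{\Omega}_f}\!\! W^2\hat{p}\,\nabla\!\cdot(\hat{\Lambda}_-\!\cdot\nabla\hat{\Lambda}_+)\,dx\\
&\quad+\int_{\mathbb{R}} W^2\underline{p}\,\bigl(N_f\cdot\underline{\nabla p}-N_f\cdot\underline{\nabla\hat{p}}\bigr)\,dx_1.
\end{align*}
The commutator terms are handled by Cauchy--Schwarz, absorbing $\tfrac12\|W\nabla p\|_{L^2(\Omega_f)}^2$ and $\tfrac12\|W\nabla\hat{p}\|_{L^2(\hat{\Omega}_f)}^2$ into the left-hand side at the cost of $\|Wp\|_{L^2(\Omega_f)}^2+\|W\hat{p}\|_{L^2(\hat{\Omega}_f)}^2$, since $|\nabla W|\lesssim W$.

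The residual $L^2$ norms of $p$ and $\hat{p}$ are recovered via the weighted Poincar\'e inequality of Lemma \ref{lemPI}, whose hypothesis is exactly the integral identity \eqref{Pre-Ide}. This step is the crux of the argument: it supplies control of $\|Wp\|_{L^2(\Omega_f)}+\|W\hat{p}\|_{L^2(\hat{\Omega}_f)}$ by $\|W\partial_2 p\|_{L^2(\Omega_f)}+\|W\partial_2\hat{p}\|_{L^2(\hat{\Omega}_f)}$ (again absorbed on the left) together with bilinear Els\"asser products $\|W_1\Lambda_-^1\|\|W_2\Lambda_+^1\|$ with $W_1W_2\lesssim W$; writing $W=\langle w^+\rangle^{5\mu}\langle w^-\rangle^{-\mu}\cdot\langle w^-\rangle^{2\mu}$ and using the weighted Sobolev embedding Lemma \ref{Sobo1} puts one factor in $L^\infty$ with ghost weight and the other in $L^2$ with energy weight, producing the target $E_2(G_2^b)$ bound. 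The interior source terms $\int W^2 p\,\nabla\!\cdot(\Lambda_-\!\cdot\nabla\Lambda_+)\,dx$ are integrated by parts once; $\operatorname{div}\Lambda_-=0$ and $\Lambda_\pm\cdot e_2|_{\Gamma}=0$ eliminate the $\Gamma$ boundary contribution, while the trace on $\Gamma_f$ is estimated by Lemma \ref{lemD3}, leaving an interior bilinear expression distributed in $L^\infty\times L^2$ in the same manner.

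Finally, the coupled boundary term is bounded by $\|W\underline{p}\|_{L^2(\mathbb{R})}\cdot\|W(N_f\cdot(\underline{\nabla p}-\underline{\nabla\hat{p}}))\|_{L^2(\mathbb{R})}$; the second factor is exactly the quantity estimated in Lemma \ref{lemPFL} by $(E_2 G_2^b)^{1/2}\mathcal{P}(1+\|\partial_1 f\|_{H^1})$, and the first is controlled through the weighted trace Lemma \ref{lemD3} combined with the weighted Poincar\'e inequality. Collecting everything, using the smallness of $E_s$ to absorb higher powers, yields the stated bound. I expect the main obstacle to be not any single analytic step but the careful bookkeeping of the weight splitting $W_1W_2\lesssim W$ throughout, making sure that at every occurrence one factor receives a ghost weight that can be integrated in time and the other is placed in $L^\infty$ through the weighted Sobolev embedding; the need to invoke the Poincar\'e inequality (and hence the integral identity \eqref{Pre-Ide}) to close the commutator from $\nabla W$ is the conceptually delicate point, because the elliptic system by itself offers no natural $L^2$ control of $p$.
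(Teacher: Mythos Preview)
Your overall architecture is right---test against $W^2 p$ and $W^2\hat p$, sum, use $\underline p=\underline{\hat p}$ to collapse the $\Gamma_f$ boundary terms into Lemma~\ref{lemPFL}, and invoke the identity \eqref{Pre-Ide} through Lemma~\ref{lemPI}. But the step where you write ``absorbing $\tfrac12\|W\nabla p\|^2$ \dots\ at the cost of $\|Wp\|^2$, since $|\nabla W|\lesssim W$'' and then claim the residual $\|Wp\|^2$ is, via Poincar\'e, ``again absorbed on the left'' does not close. Concretely, the commutator contributes
\[
\Big|\int 2W\,\partial_1 W\, p\,\partial_1 p\Big|\le 2\|W\nabla p\|\,\|(\partial_1 W)p\|\le \delta\|W\nabla p\|^2+C_\delta\|Wp\|^2,
\]
and Lemma~\ref{lemPI} gives $\|Wp\|\le C_P\bigl(\|W\partial_2 p\|+\|W\partial_2\hat p\|\bigr)+\text{(small bilinear)}$ with a \emph{fixed} Poincar\'e constant $C_P=O(1)$ coming from the strip width. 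Hence the right-hand side carries $C_\delta C_P^2\|W\nabla p\|^2$ with $C_\delta C_P^2\gg 1$, and there is nothing small to absorb it; the estimate is circular.

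The paper's proof circumvents this by a nine-step bootstrap in which the weight exponent is raised by at most $1$ at each stage. The point is that $|\partial_1\langle w^\pm\rangle^{2k}|\le 2k\,\langle w^\pm\rangle^{2k-1}$, so at the $k$th step the commutator produces $\|\langle w^\pm\rangle^{k}\nabla p\|\cdot\|\langle w^\pm\rangle^{k-1}p\|$, and the second factor is controlled by the \emph{previous} step rather than by Poincar\'e on the current one. In particular the jump from weight $\langle w^\pm\rangle^{2}$ (Step~3) to $\langle w^\pm\rangle^{5\mu}$ (Step~4) requires $5\mu-1\le 2$, which is exactly the origin of the hypothesis $\mu\le 3/5$; your single-shot argument never sees this constraint, which is a signal that something is missing. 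If you replace the crude bound $|\nabla W|\lesssim W$ by the sharper $|\partial_1 W|\lesssim \langle w^+\rangle^{5\mu-1}\langle w^-\rangle^{\mu}+\langle w^+\rangle^{5\mu}\langle w^-\rangle^{\mu-1}$ and iterate in both exponents as the paper does (Steps~1--9), the rest of your outline goes through.
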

\begin{proof}
The proof will be divided into nine steps in order to gradually improve the index of the weights.

\textbf{Step 1: estimate of
$\|  p\|_{H^1(\Om_f)}^2
+\|  \hat{p}\|_{H^1(\hat{\Om}_f)}^2$}

Taking the $L^2(\Om_f)$ inner product of $\eqref{H0}_1$ with $ p$ and
taking the $L^2(\hat{\Om}_f)$ inner product of $\eqref{H0}_2$ with $ \hat{p}$, respectively,
by the boundary condition, we obtain
\begin{align}\label{G10}
&\| \nabla p \|_{L^2(\Om_f)}^2
+\| \nabla \hat{p} \|_{L^2(\hat{\Om}_f)}^2
=\int_{\Ga_f} (\ud{\p_{n_f} p} -\ud{\p_{n_f} \hat{p}})\cdot \ud{p} \d\sigma (x) \\\nonumber
&\quad+\int_{\Om_f}  \big(\nabla \cdot (\La_{-} \cdot \nabla \La_{+}) \big) \cdot p\, \dx
+\int_{\hat{\Om}_f}  \big(\nabla \cdot (\hat{\La}_{-} \cdot \nabla \hat{\La}_{+}) \big) \cdot \hat{p} \,\dx  .
\end{align}
By using Lemma \ref{lemPI}, we infer
\begin{align*}
\|\ud{p} \|_{L^2(\BR)}
\lesssim \| \p_2 p \|_{L^2(\Om_f)}+\| \p_2 \hat{p} \|_{L^2(\hat{\Om}_f)}+\|\La_{-}^1\|_{L^2(\Om_f)} \|\La_{+}^1\|_{L^2(\Om_f)}+\| \hat{\La}_{-}^1\|_{L^2{(\hat{\Om}_f)}} \|\hat{\La}_{+}^1\|_{L^2{(\hat{\Om}_f)}}.
\end{align*}
Thus by using Lemma \ref{lemPFL}, there holds
\begin{align*}
&\int_{\Ga_f} (\ud{\p_{n_f} p} -\ud{\p_{n_f} \hat{p}})\cdot \ud{p} \d\sigma (x)
=\int_{\BR} (\ud{N_f\cdot\nabla  p} -\ud{N_f\cdot\nabla  \hat{p}})\cdot \ud{p}  \dx_1
 \\\nonumber
&\leq \| \ud{N_f\cdot\nabla  p} -\ud{N_f\cdot\nabla  \hat{p}} \|_{L^2(\BR)}\|\ud{p} \|_{L^2(\bR)} \\\nonumber
&\leq C (G_2^b E_2)^{\f12} \mathcal{P}(1+\|\p_1f\|_{H^1(\bR)})
\big (\| \nabla p \|_{L^2(\Om_f)}+\| \nabla \hat{p} \|_{L^2(\hat{\Om}_f)}+(G_2^b E_2)^{\f12} \big)\\\nonumber
&\leq \f{\delta}{2}\| \nabla p\|_{L^2(\Om_f)}^2
     +\f{\delta}{2}\| \nabla \hat{p}\|_{L^2(\hat{\Om}_f)}^2
     +C_{\delta} G_2^b E_2    \, .
\end{align*}
Next, we have
\begin{align*}
&\int_{\Om_f}  \big( \nabla \cdot (\La_{-} \cdot \nabla \La_{+}) \big) \cdot p\, \dx
+\int_{\hat{\Om}_f}  \big( \nabla \cdot (\hat{\La}_{-} \cdot \nabla \hat{\La}_{+}) \big) \cdot \hat{p}\, \dx\\
&=-\int_{\Om_f} (\La_{-} \cdot \nabla \La_{+})  \cdot \nabla p\, \dx
-\int_{\hat{\Om}_f} (\hat{\La}_{-} \cdot \nabla \hat{\La}_{+}) \cdot \nabla \hat{p}\, \dx\\
&\quad-\int_{\Ga_f} (\La_{-} \cdot \nabla \La_{+}) \cdot n_f p\, \d\sigma(x)
+\int_{\Ga_f}  (\hat{\La}_{-} \cdot \nabla \hat{\La}_{+}) \cdot n_f \hat{p}\, \d\sigma{x}\\
&\leq  \big\| |\La_{-}| \cdot |\nabla \La_{+}|  \big\|_{L^2(\Om_f)}
\| \nabla p \|_{L^2(\Om_f)}
+ \big\| |\hat{\La}_{-}| \cdot |\nabla \hat{\La}_{+}|  \big\|_{L^2(\hat{\Om}_f)}
\| \nabla \hat{p}\|_{L^2(\hat{\Om}_f)}\\
&\quad+ \Big( \big\| |\ud{\La_{-}}| \cdot |\ud{\nabla \La_{+}}|  \big\|_{L^2(\bR)}
\| \ud{p} \|_{L^2(\bR)}
+ \big\| |\ud{\hat{\La}_{-}}| \cdot |\ud{\nabla \hat{\La}_{+}}|  \big\|_{L^2(\bR)}
\| \ud{\hat{p}}\|_{L^2(\bR)}\Big)(1+\|\p_1f\|_{L^\infty(\bR)})\\
&\leq \f{\delta}{2}\| \nabla p\|_{L^2(\Om_f)}^2
     +\f{\delta}{2}\| \nabla \hat{p}\|_{L^2(\hat{\Om}_f)}^2
     +C_{\delta} G_2^b E_2    \, .
\end{align*}
Hence by the H\"older inequality, \eqref{G10} can be further bounded by the product of
the energy and the ghost weight energy with some remainders:
\begin{align}\label{G11}
& \| \nabla p \|_{L^2(\Om_f)}^2
+\| \nabla \hat{p} \|_{L^2(\hat{\Om}_f)}^2 \\\nonumber
& \leq C_\delta G_2^b E_2 \mathcal{P}(1+\|\p_1f\|_{H^1(\bR)})
 +\delta \big(\| \nabla p \|^2_{L^2(\Om_f)}
+ \| \nabla \hat{p} \|^2_{L^2(\hat{\Om}_f)} \big)
\end{align}
where $\delta>0$ can be any positive number, $C_\delta$ is a constant depending on
$\delta$. Taking $\delta<1$, absorbing the last term of \eqref{G11} yields
\begin{align*}
 \|  \nabla p \|_{L^2(\Om_f)}^2
+\| \hat{p} \|_{L^2(\hat{\Om}_f)}^2
 \leq C G_2^b E_2 \mathcal{P}(1+\|\p_1f\|_{H^1(\bR)}) \,.
\end{align*}
By using Lemma \ref{lemPI}, this further yields
\begin{align*}
 \|  p \|_{H^1(\Om_f)}^2
+\| \hat{p} \|_{H^1(\hat{\Om}_f)}^2
 \leq C G_2^b E_2 \mathcal{P}(1+\|\p_1f\|_{H^1(\bR)}) \,.
\end{align*}


\textbf{Step 2: estimate of
$\| \langle w^\pm\rangle \nabla^{\leq 1} p \|_{L^2(\Om_f)}^2
+\| \langle w^\pm\rangle \nabla^{\leq 1} \hat{p} \|_{L^2(\hat{\Om}_f)}^2$.}

Taking the $L^2(\Om_f)$ inner product of $\eqref{H0}_1$ with $ \langle w^\pm\rangle^2 p$ and
taking the $L^2(\hat{\Om}_f)$ inner product of $\eqref{H0}_2$ with $ \langle w^\pm\rangle^2 \hat{p}$, respectively,
we obtain
\begin{align}\label{G13}
&\| \langle w^\pm\rangle  \nabla p \|_{L^2(\Om_f)}^2
+\| \langle w^\pm\rangle  \nabla p \|_{L^2(\Om_f)}^2\\\nonumber
&=-\int_{\Om_f} \nabla\langle w^\pm\rangle^2   p \cdot\nabla p \dx
-\int_{\hat{\Om}_f} \nabla\langle w^\pm\rangle^2   \hat{p} \cdot\nabla \hat{p} \dx\\\nonumber
&\quad+\int_{\bR} \langle w^\pm\rangle^2 (\ud{N_f\cdot\nabla  p} -\ud{N_f\cdot\nabla  \hat{p}})\cdot \ud{p} \dx_1\\\nonumber
&\quad+\int_{\Om_f}  \langle w^\pm\rangle^2 \big( \nabla \cdot (\La_{-} \cdot \nabla \La_{+}) \big) \cdot p\, \dx
+\int_{\hat{\Om}_f}  \langle w^\pm\rangle^2 \big( \nabla \cdot (\hat{\La}_{-} \cdot \nabla \hat{\La}_{+}) \big) \cdot \hat{p}\, \dx  .
\end{align}
For the second line of \eqref{G13} which corresponding to
 the commutator of the pressure and the weight functions, by the H\"older inequality, one estimates
\begin{align*}
&-\int_{\Om_f} \nabla\langle w^\pm\rangle^2   p \cdot\nabla p \dx
-\int_{\hat{\Om}_f} \nabla\langle w^\pm\rangle^2   \hat{p} \cdot\nabla \hat{p} \dx
\\
&\leq 2\|\langle w^\pm\rangle    \nabla p\|_{L^2(\Om_f)} \| p\|_{L^2(\Om_f)}
+2\|\langle w^\pm\rangle    \nabla \hat{p}\|_{L^2(\hat{\Om}_f)} \| \hat{p}\|_{L^2(\hat{\Om}_f)}
 \\
&\leq \delta\|\langle w^\pm\rangle    \nabla p\|^2_{L^2(\Om_f)}+\f{1}{\delta} \| p\|^2_{L^2(\Om_f)}
+\delta\|\langle w^\pm\rangle    \nabla \hat{p}\|^2_{L^2(\hat{\Om}_f)}+\f{1}{\delta} \| \hat{p}\|^2_{L^2(\hat{\Om}_f)} .
\end{align*}
The third line of \eqref{G13} which corresponds to the boundary term can be estimated similar to the step 1.
By Lemma \ref{lemD1} and Lemma \ref{lemPFL}, it is bounded by
\begin{align*}
& \| \langle \ud{w}^\pm\rangle  (\ud{N_f\cdot\nabla  p} -\ud{N_f\cdot\nabla  \hat{p}}) \|_{L^2(\BR)}\|\langle \ud{w}^\pm\rangle\ud{p} \|_{L^2(\BR)}\\
&\leq \f{1}{4\delta}\| \langle \ud{w}^\pm\rangle  (\ud{N_f\cdot\nabla  p} -\ud{N_f\cdot\nabla  \hat{p}}) \|_{L^2(\BR)}^2+\delta\|\langle \ud{w}^\pm\rangle\ud{p} \|_{L^2(\BR)}^2 \\
&\leq  C_{\delta}G_2^b E_2 \mathcal{P}(1+\|\p_1f\|_{H^1(\bR)}) + \delta
\big( \| \langle w^\pm\rangle \nabla p \|_{L^2(\Om_f)}^2
+\| \langle w^\pm\rangle \nabla \hat{p} \|_{L^2(\hat{\Om}_f)}^2 \big)\, .
\end{align*}
The last line of \eqref{G13} which corresponds to the boundary term can be estimated similar to the step 1.
By using Lemma \ref{lemPI}, it is bounded by
\begin{align*}
&\int_{\Om_f}  \langle w^\pm\rangle^2 \big( \nabla \cdot (\La_{-} \cdot \nabla \La_{+}) \big) \cdot p\, \dx
+\int_{\hat{\Om}_f}  \langle w^\pm\rangle^2 \big( \nabla \cdot (\hat{\La}_{-} \cdot \nabla \hat{\La}_{+}) \big) \cdot \hat{p}\, \dx  \\\nonumber
&\leq \delta\| \langle w^\pm\rangle\nabla p\|_{L^2(\Om_f)}^2
     +\delta\| \langle w^\pm\rangle\nabla \hat{p}\|_{L^2(\hat{\Om}_f)}^2
     +C_{\delta} G_2^b E_2    \, .
\end{align*}
Thus \eqref{G13} reduces to
\begin{align*}
&\| \langle w^\pm\rangle    \nabla p \|_{L^2(\Om_f)}^2
+\| \langle w^\pm\rangle   \nabla \hat{p} \|_{L^2(\hat{\Om}_f)}^2\\
&\leq  C_{\delta} G_2^b E_2 \mathcal{P}(1+\|\p_1f\|_{H^1(\bR)}) + 3\delta
\big( \| \langle w^\pm\rangle \nabla p \|_{L^2(\Om_f)}^2
+\| \langle w^\pm\rangle \nabla \hat{p} \|_{L^2(\hat{\Om}_f)}^2 \big)\, .
\end{align*}
Taking $0<\delta<\f12$, absorbing the last term yields
\begin{align*}
\| \langle w^\pm\rangle    \nabla p \|_{L^2(\Om_f)}^2
+\| \langle w^\pm\rangle   \nabla \hat{p} \|_{L^2(\hat{\Om}_f)}^2
\lesssim G_2^b E_2 \mathcal{P}(1+\|\p_1f\|_{H^1(\bR)}).
\end{align*}
By using Lemma \ref{lemPI}, this further yields
\begin{align*}
\| \langle w^\pm\rangle    \nabla^{\leq 1} p \|_{L^2(\Om_f)}^2
+\| \langle w^\pm\rangle   \nabla^{\leq 1} \hat{p} \|_{L^2(\hat{\Om}_f)}^2
\lesssim G_2^b E_2 \mathcal{P}(1+\|\p_1f\|_{H^1(\bR)}).
\end{align*}

\textbf{Step 3: estimate of
$\| \langle w^\pm\rangle^2 \nabla^{\leq 1} p \|_{L^2(\Om_f)}^2
+ \| \langle w^\pm\rangle^2 \nabla^{\leq 1} \hat{p} \|_{L^2(\hat{\Om}_f)}^2$.}
The estimate is the same to step 2. We will obtain
\begin{align*}
\| \langle w^\pm\rangle^2 \nabla^{\leq 1} p \|_{L^2(\Om_f)}^2
+ \| \langle w^\pm\rangle^2 \nabla^{\leq 1} \hat{p} \|_{L^2(\hat{\Om}_f)}^2
\lesssim G_2^b E_2 \mathcal{P}(1+\|\p_1f\|_{H^1(\bR)}).
\end{align*}

\textbf{Step 4: estimate of
$\| \langle w^\pm\rangle^{5\mu} \nabla^{\leq 1} p \|_{L^2(\Om_f)}^2
+\| \langle w^\pm\rangle^{5\mu} \nabla^{\leq 1} \hat{p} \|_{L^2(\hat{\Om}_f)}^2$
with $\f12<\mu\leq \f35$.}

The estimate is similar to step 2. We only explain the differences by the index of the weight functions. Let $\f12<\mu\leq \f35$.
Taking the $L^2(\Om_f)$ inner product of $\eqref{H0}_1$ with $ \langle w^\pm\rangle^{10\mu} p$ and
taking the $L^2(\hat{\Om}_f)$ inner product of $\eqref{H0}_2$ with $ \langle w^\pm\rangle^{10\mu} \hat{p}$, respectively. After similar estimate of the boundary term, the estimate of the commutator of the pressure and the weights and the estimate of the inhomogeneous term, we obtain
\begin{align}\label{G15}
&\| \langle w^\pm\rangle^{5\mu} \nabla p \|_{L^2(\Om_f)}^2
+\| \langle w^\pm\rangle^{5\mu} \nabla \hat{p} \|_{L^2(\hat{\Om}_f)}^2\\\nonumber
&\leq  C_{\delta} G_2^b E_2\mathcal{P}(1+\|\p_1f\|_{H^1(\bR)})+ 10\mu \|\langle w^\pm\rangle^{5\mu}   \nabla p\|_{L^2(\Om_f)}
\| \langle w^\pm\rangle^{5\mu-1} p\|_{L^2(\Om_f)} \\\nonumber
&\quad+ 10\mu \|\langle w^\pm\rangle^{5\mu}   \nabla \hat{p}\|_{L^2(\hat{\Om}_f)}
\| \langle w^\pm\rangle^{5\mu-1} \hat{p}\|_{L^2(\hat{\Om}_f)}\\\nonumber
&\leq C_{\delta}G_2^b E_2\mathcal{P}(1+\|\p_1f\|_{H^1(\bR)})+\delta\|\langle w^\pm\rangle^{5\mu}   \nabla p\|^2_{L^2(\Om_f)}
+\f{25\mu^2}{\delta} \| \langle w^\pm\rangle^{5\mu-1} p\|^2_{L^2(\Om_f)}\\\nonumber
&\quad+\delta\|\langle w^\pm\rangle^{5\mu}   \nabla \hat{p}\|^2_{L^2(\hat{\Om}_f)}
+\f{25\mu^2}{\delta} \| \langle w^\pm\rangle^{5\mu-1} \hat{p}\|^2_{L^2(\hat{\Om}_f)}.
\end{align}
The parameter $2\mu>1$ is for the $L^1$ in time integrability issue in order to obtain global solutions.
On the other side, we need $5\mu-1\leq 2$ so that
$\f{25\mu^2}{\delta} \| \langle w^\pm\rangle^{5\mu-1} p\|^2_{L^2(\Om_f)}
+\f{25\mu^2}{\delta} \| \langle w^\pm\rangle^{5\mu-1} \hat{p}\|^2_{L^2(\hat{\Om}_f)}$
can be controlled by $\| \langle w^\pm\rangle^2    p \|_{L^2(\Om_f)}^2
+\| \langle w^\pm\rangle^2    \hat{p} \|_{L^2(\hat{\Om}_f)}^2$ (multiplying by a contant) in step 3. Hence $\f12<\mu\leq \f35$.
Then taking $0<\delta<\f12$, \eqref{G15} yields
\begin{align*}
\| \langle w^\pm\rangle^{5\mu} \nabla^{\leq 1} p \|_{L^2(\Om_f)}^2
+\| \langle w^\pm\rangle^{5\mu} \nabla^{\leq 1} \hat{p} \|_{L^2(\hat{\Om}_f)}^2
\lesssim G_2^b E_2 \mathcal{P}(1+\|\p_1f\|_{H^1(\bR)}).
\end{align*}

Furthermore, similar to the above steps, we have

\textbf{Step 5, estimate of
$\| \langle w^\pm\rangle^{\f12}\langle w^\mp\rangle^{\f12} \nabla^{\leq 1} p \|_{L^2(\Om_f)}^2
+\| \langle w^\pm\rangle^{\f12}\langle w^\mp\rangle^{\f12} \nabla^{\leq 1} \hat{p} \|_{L^2(\hat{\Om}_f)}^2$ }

\textbf{Step 6, estimate of
$\| \langle w^\pm\rangle^{\f32} \langle w^\mp\rangle^{\f12}  \nabla^{\leq 1} p \|_{L^2(\Om_f)}^2
+\| \langle w^\pm\rangle^{\f32} \langle w^\mp\rangle^{\f12} \nabla^{\leq 1} \hat{p} \|_{L^2(\hat{\Om}_f)}^2.$}

\textbf{Step 7, estimate of
$\| \langle w^\pm\rangle^{\f32} \langle w^\mp\rangle^{\mu} \nabla^{\leq 1} p \|_{L^2(\Om_f)}^2
+\| \langle w^\pm\rangle^{\f32} \langle w^\mp\rangle^{\mu} \nabla^{\leq 1} \hat{p} \|_{L^2(\hat{\Om}_f)}^2 $ with $\f12<\mu\leq\f35$}

\textbf{Step 8, estimate of
$\| \langle w^\pm\rangle^{\f52} \langle w^\mp\rangle^{\mu} \nabla^{\leq 1} p \|_{L^2(\Om_f)}^2
+\| \langle w^\pm\rangle^{\f52} \langle w^\mp\rangle^{\mu} \nabla^{\leq 1} \hat{p} \|_{L^2(\hat{\Om}_f)}^2$ with $\f12<\mu\leq\f35$}

\textbf{Step 9, estimate of
$\| \langle w^\pm\rangle^{5\mu} \langle w^\mp\rangle^{\mu} \nabla^{\leq 1} p \|_{L^2(\Om_f)}^2
+\| \langle w^\pm\rangle^{5\mu} \langle w^\mp\rangle^{\mu} \nabla^{\leq 1} \hat{p} \|_{L^2(\hat{\Om}_f)}^2$ with $\f12<\mu\leq\f35$}

Finally, for $\f12<\mu\leq\f35$, we would obtain
\begin{align*}
&\| \langle w^\pm\rangle^{5\mu} \langle w^\mp\rangle^{\mu}\nabla^{\leq 1} p \|_{L^2(\Om_f)}^2
+\| \langle w^\pm\rangle^{5\mu} \langle w^\mp\rangle^{\mu}\nabla^{\leq 1} \hat{p} \|_{L^2(\hat{\Om}_f)}^2
\lesssim G_2^b E_2 \mathcal{P}(1+\|\p_1f\|_{H^1(\bR)}) .
\end{align*}
This ends the proof of the lemma.
\end{proof}

\subsection{Higher derivative ($2\leq k\leq s$) estimate of pressure}

In this subsection, we present $k$-order ($2\leq k\leq s$) derivatives estimate of the pressure
in Lemma \ref{lemPre}.
We first introduce the tangential derivatives and
then present their estimates.
 The tangential derivatives are globally defined for the curved free surface.
The estimates will do no harm to the weight functions. Moreover, the structure of the derivatives is well-balanced.

To construct the tangential derivatives,
we will introduce two functions defined in $\Om_f$ and in $\hat{\Om}_f$ respectively.
Under the \textit{a priori}  assumption
$\|\p_1 f\|_{H^{s-\f12}(\BR)}+\| f\|_{L^\infty(\bR)} \lesssim 1$,
we will construct functions $\Phi(x)$ and $\hat{\Phi}(x)$ satisfying
 $\Phi\in H^s(\Om_f)$ and $\hat{\Phi}\in H^s(\hat{\Om}_f)$.
Moreover, for $0\leq k\leq  s$,
\begin{equation}\label{F-1}
\begin{cases}
\Phi\big|_{\Ga_f}=\hat{\Phi}\big|_{\Ga_f}=\p_1f ,\quad
\Phi\big|_{x_2=- 1}=0,\quad \hat{\Phi}\big|_{x_2=1}=0, \\
\| \nabla^k \Phi\|_{L^2(\Om_f)}
\lesssim   \| \langle \p_1\rangle^{-\f12} \p_1 f(t,\cdot) \|_{H^k(\BR)} ,\\
\| \nabla^k \hat{\Phi}\|_{L^2(\hat{\Om}_f)}
\lesssim  \| \langle \p_1\rangle^{-\f12} \p_1 f(t,\cdot) \|_{H^k(\BR)}.
\end{cases}
\end{equation}
Then the tangential derivative is defined by
\begin{equation}\label{tangD}
\begin{cases}
\p_\tau=\Phi\p_2+\p_1 \quad \textrm{in}\,\, \Om_f,\\
\p_\tau=\hat{\Phi}\p_2+\p_1 \quad \textrm{in}\,\, \hat{\Om}_f.
\end{cases}
\end{equation}
We remind that the definition of the tangential derivative here is different from \eqref{tangD-}.

For functions $g$ defined in $\Om_f$ or $\hat{\Om}_f$, recaling that $\ud{g}$ denotes
the trace of $g$ on $\Ga_f.$ Simple calculation yields
\begin{align*}
&\p_1 \ud{g}
= \ud{\p_1 g}+\p_1f \ud{\p_2 g}
= \ud{\p_\tau g}.
\end{align*}
Repeating the above procedure, we obtain
\begin{align*}
\p_1^k \ud{g}=\ud{\p_\tau^k g},
\end{align*}
for all $1\leq k \leq s$.
We will see later that there is a cut-off in $\Phi$ and $\hat{\Phi}$, thus if $x_2$ near the fixed boundary $\pm 1$, we have
\begin{equation*}
\p_\tau =\p_1.
\end{equation*}

\subsubsection{Estimate of Sobolev norms of the tangential derivatives}

Let
\begin{align*}
\psi(t,x)&=e^{-x_2|\p_1|} \p_1 f(t,x_1),\quad x_2\in\BR^+,\,x_1\in\BR.
\end{align*}
Simple calculation yields
\begin{align*}
\p_1\psi(t,x)&= e^{-x_2|\p_1|} \p_1^2 f(t,x_1),\\
\p_2\psi(t,x)&=-e^{-x_2|\p_1|} |\p_1|\p_1 f(t,x_1) .
\end{align*}

Now we estimate $\| \psi\|_{L^2( \BR\times[0,1] )}$.
Firstly, one has
\begin{align*}
&\int_0^1 |e^{ -x_2 |\xi_1|} |^2 \d x_2
\leq C (|\xi_1|+1)^{-1}.
\end{align*}
Consequently,  by the Parseval-Plancherel identity, there holds
\begin{align*}
\|\psi(t,\cdot)\|_{L^2(\BR\times[0,1])}^2
 =\int_{\BR} |\xi_1 \hat{f}(t,\xi_1)|^2 \d\xi_1
\int_0^1 | e^{ -x_2 |\xi_1|} |^2  \d x_2
 \leq C\| \langle \p_1\rangle^{-\f12} \p_1 f(t,\cdot)\|^2_{L^2(\BR )}.
\end{align*}
where $\xi_1\in\bR$.
Through a deduction argument, the similar estimates hold for the higher order norms.
\begin{lem}
For $0\leq k \leq s$, there holds
\begin{align*}
\| \nabla^k \psi\|_{L^2( \BR\times[0,1])}&\leq C\| \langle \p_1\rangle^{-\f12}\p_1  f(t,\cdot) \|_{\dot{H}^{k}(\BR)} .
\end{align*}
\end{lem}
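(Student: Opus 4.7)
The plan is to run a direct Fourier calculation in the $x_1$ variable, exactly parallel to the base case $k=0$ that has already been carried out in the excerpt, and then package the result as an induction as the text suggests. First I would Fourier transform in $x_1$ only: writing $\widehat{\psi}(t,\xi_1,x_2) = e^{-x_2|\xi_1|}\, i\xi_1\, \hat f(t,\xi_1)$, one sees that for any multi-index decomposition $\nabla^k = \p_1^a \p_2^b$ with $a+b=k$, the symbol of $\p_1^a\p_2^b\psi$ is $(i\xi_1)^a(-|\xi_1|)^b e^{-x_2|\xi_1|} i\xi_1 \hat f$, whose modulus squared equals $|\xi_1|^{2k+2} e^{-2x_2|\xi_1|}|\hat f|^2$.

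Next I would integrate in $x_2$ over $[0,1]$, using the elementary bound
\[
\int_0^1 e^{-2x_2|\xi_1|}\,\d x_2 \;=\; \frac{1-e^{-2|\xi_1|}}{2|\xi_1|} \;\leq\; \frac{C}{1+|\xi_1|} \;\sim\; \frac{1}{\langle\xi_1\rangle},
\]
which is the same inequality that drove the $k=0$ case. Combining this with the Parseval--Plancherel identity gives
\[
\|\p_1^a\p_2^b\psi\|_{L^2(\BR\times[0,1])}^2 \;\lesssim\; \int_{\BR} \frac{|\xi_1|^{2k+2}}{\langle\xi_1\rangle}|\hat f(t,\xi_1)|^2\,\d\xi_1.
\]
The right-hand side is precisely $\|\langle\p_1\rangle^{-1/2}\p_1 f\|_{\dot H^k(\BR)}^2$, since $\widehat{\langle\p_1\rangle^{-1/2}\p_1 f}(\xi_1) = \langle\xi_1\rangle^{-1/2} i\xi_1 \hat f(\xi_1)$. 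Summing over all decompositions $a+b=k$ yields the claimed bound.

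If one prefers the inductive framing hinted at by the phrase ``through a deduction argument,'' the step $k\to k+1$ splits in two cases: one horizontal derivative ($\p_1\psi = e^{-x_2|\p_1|}\p_1^2 f$ has the same form as $\psi$ with $f\rightsquigarrow \p_1 f$, so the inductive hypothesis immediately applies); and one vertical derivative ($\p_2\psi = -e^{-x_2|\p_1|}|\p_1|\p_1 f$), which does not reduce to the previous step purely by differentiating in $x_1$, but is handled by exactly the same Fourier computation as above with the extra symbol $-|\xi_1|$.

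There is no real obstacle here: the entire statement is a one-line Fourier-multiplier estimate, and the only mildly delicate point is the uniform-in-$\xi_1$ bound on $\int_0^1 e^{-2x_2|\xi_1|}\d x_2$, which behaves like $1$ at low frequencies and like $\langle\xi_1\rangle^{-1}$ at high frequencies --- accounting exactly for the loss of half a derivative on $\p_1 f$ built into $\langle\p_1\rangle^{-1/2}$.
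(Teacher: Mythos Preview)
Your proposal is correct and follows exactly the approach the paper intends: the paper only writes out the $k=0$ case via Plancherel and the bound $\int_0^1 e^{-2x_2|\xi_1|}\,\d x_2 \le C(1+|\xi_1|)^{-1}$, then states the lemma with the remark ``through a deduction argument, the similar estimates hold for the higher order norms.'' Your direct Fourier computation for general $a+b=k$ is precisely the omitted detail, and your identification of the resulting integral with $\|\langle\p_1\rangle^{-1/2}\p_1 f\|_{\dot H^k}^2$ is correct.
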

Now we introduce
\begin{align*}
\Phi(t,x)&= \zeta(x_2) \psi(t,x_1, f(t,x_1)-x_2) ,\quad -1\leq x_2\leq f(t,x_1),\,x_1\in\BR,\\
\hat{\Phi}(t,x)&= \zeta(x_2) \psi(t,x_1, x_2 -f(t,x_1)) ,\quad f(t,x_1)\leq x_2\leq 1,\,x_1\in\BR,
\end{align*}
where $\zeta(x_2)\in C^\infty(\BR)$ and
\begin{equation*}
\zeta(x_2)=
\begin{cases}
1 , &\text{ if } |x_2|\leq 1-\f{c_0}{2} ,\\
0 , &\text{ if } |x_2|\geq 1-\f{c_0}{2}.
\end{cases}
\end{equation*}

\textbf{Estimate of $\Phi$ in $\Om_f$}

Now we present the estimate for $\Phi$. Firstly, there holds
\begin{align*}
\| \Phi\|^2_{L^2(\Om_f )}
\leq\| \psi\|^2_{L^2(\bR\times[0,1] )}
\leq C\| \langle \p_1\rangle^{-\f12} \p_1f(t,\cdot) \|^2_{L^2(\BR)}.
\end{align*}
For the gradient of $\Phi$ in the domain $\Om_f$, we calculate
\begin{align*}
\p_1 \Phi(t,x)&=\zeta(x_2)\p_1f \p_2\psi(t,x_1, f-x_2) +\zeta(x_2)\p_1\psi(t,x_1,f-x_2),\\
\p_2 \Phi(t,x)&=\zeta'(x_2)\psi(t,x_1, f-x_2)-\zeta(x_2)\p_2\psi(t,x_1, f-x_2).
\end{align*}
Consequently,
\begin{align*}
\| \nabla \Phi\|_{L^2(\Om_f)}
&\lesssim \big( \| \nabla^{\leq 1}  \psi\|_{L^2(\BR\times [0,1])} \big) (1+\|\p_1 f\|_{L^\infty(\BR)}) \\
&\lesssim \big(\| \langle \p_1\rangle^{-\f12} \p_1 f(t,\cdot) \|_{L^2(\BR)}
+\| \langle \p_1\rangle^{-\f12} \p_1 f(t,\cdot) \|_{\dot{H}^1(\BR)} \big)
(1+\|\p_1 f\|_{H^1(\BR)})\\
&\lesssim \| \p_1 f(t,\cdot) \|_{H^{\f12}(\BR)}
(1+\|\p_1 f\|_{H^1(\BR)}).
\end{align*}

\textbf{Estimate of $\hat{\Phi}$ in $\hat{\Om}_f$}

For $\hat{\Phi}$ in $\hat{\Om}_f$, there holds
\begin{align*}
\| \hat{\Phi}\|^2_{L^2(\hat{\Om}_f )}
 \leq \|\psi(t,\cdot)\|_{L^2(\BR\times[0,1] )}^2
 \leq C\| \langle \p_1\rangle^{-\f12}\p_1 f(t,\cdot) \|_{L^2(\BR)}.
\end{align*}
In $\hat{\Om}_f$, we calculate
\begin{align*}
\p_1 \hat{\Phi}(t,x)&=-\zeta(x_2)\p_1f \p_2\psi(t,x_1,x_2-f) +\zeta(x_2)\p_1\psi(t,x_1,x_2-f),\\
\p_2 \hat{\Phi}(t,x)&=\zeta(x_2)\p_2\psi(t,x_1,x_2-f)+\zeta'(x_2) \psi(t,x_1,x_2-f).
\end{align*}
Consequently,
\begin{align*}
\| \nabla \hat{\Phi}\|_{L^2(\hat{\Om}_f )}
&\lesssim \big( \| \nabla^{\leq 1}  \psi\|_{L^2(\BR\times [0,1])} \big) (1+\|\p_1 f\|_{L^\infty(\BR)}) \\
&\lesssim \big(\| \langle \p_1\rangle^{-\f12} \p_1 f(t,\cdot) \|_{L^2(\BR)}
+\| \langle \p_1\rangle^{-\f12} \p_1 f(t,\cdot) \|_{\dot{H}^1(\BR)} \big)
(1+\|\p_1 f\|_{H^1(\BR)})\\
&\lesssim \| \p_1 f(t,\cdot) \|_{H^{\f12}(\BR)}
(1+\|\p_1 f\|_{H^1(\BR)}).
\end{align*}
Through a deduction argument, the similar estimates hold for the higher order norms.
\begin{lem}\label{lemPsi}
There hold
\begin{align*}
\| \Phi\|_{L^2(\Om_f)}
&\lesssim \| \langle \p_1\rangle^{-\f12} \p_1 f(t,\cdot) \|_{L^2(\BR)},\\
\| \hat{\Phi}\|_{L^2(\hat{\Om}_f)}
&\lesssim \| \langle \p_1\rangle^{-\f12} \p_1 f(t,\cdot) \|_{L^2(\BR)},\\
\| \nabla \Phi\|_{L^2(\Om_f)}
&\lesssim \| \p_1 f(t,\cdot) \|_{H^{\f12}(\BR)}
 \big(1+\| \p_1 f(t,\cdot) \|_{H^1(\BR)}\big),\\
\| \nabla\hat{\Phi}\|_{L^2(\hat{\Om}_f)}
&\lesssim \| \p_1 f(t,\cdot) \|_{H^{\f12}(\BR)}
 \big(1+\| \p_1 f(t,\cdot) \|_{H^1(\BR)}\big) \, .
\end{align*}
For $2\leq k\leq s$, there hold 
\begin{align*}
\| \nabla^k \Phi\|_{L^2(\Om_f)}
&\lesssim  \| \p_1 f(t,\cdot) \|_{H^{k-\f12}(\BR)}
\mathcal{P} \big(1+\| \p_1 f(t,\cdot) \|_{H^{k-\f12}(\BR)} \big),\\
\| \nabla^k \hat{\Phi}\|_{L^2(\hat{\Om}_f)}
&\lesssim   \| \p_1 f(t,\cdot) \|_{H^{k-\f12}(\BR)}
\mathcal{P}\big(1+\| \p_1 f(t,\cdot) \|_{H^{k-\f12}(\BR)} \big),
\end{align*}
where $\mathcal{P}(\cdot)$ is a polynomial.
\end{lem}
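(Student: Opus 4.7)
The plan is to reduce every estimate to the auxiliary harmonic extension
\[
\psi(t, x_1, x_2) = e^{-x_2 |\p_1|}\p_1 f(t, x_1), \qquad x_2 \ge 0,
\]
on which sharp Fourier-side bounds are available, and then push these bounds to $\Phi$ and $\hat{\Phi}$ through the composition with the maps $(x_1,x_2) \mapsto (x_1, f(t,x_1) - x_2)$ and $(x_1,x_2) \mapsto (x_1, x_2 - f(t,x_1))$, together with the cut-off $\zeta$. First, by Plancherel on a horizontal strip $\bR \times [0, L]$ and using $\int_0^L e^{-2 x_2 |\xi_1|} \dx_2 \le C(1 + |\xi_1|)^{-1}$, one obtains
\[
\|\p_1^a \p_2^b \psi\|_{L^2(\bR \times [0,L])}^2 \lesssim \int_\bR |\xi_1|^{2(a+b)}(1 + |\xi_1|)^{-1} |\widehat{\p_1 f}(t, \xi_1)|^2\, \d\xi_1 \lesssim \|\langle \p_1\rangle^{-\f12}\p_1 f(t,\cdot)\|_{\dot H^{a+b}(\bR)}^2,
\]
for every $a, b \in \bN$ and every fixed $L$. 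This is just the higher-order version of the bound already used in the excerpt for $L = 1$, and it yields clean control of $\|\nabla^k \psi\|_{L^2(\bR \times [0, L])}$ for all $k \le s$.

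Second, I would pass from $\psi$ to $\Phi$ and $\hat\Phi$ by inspecting the identities
\[
\Phi(t,x) = \zeta(x_2)\,\psi\bigl(t, x_1, f(t,x_1) - x_2\bigr), \qquad \hat{\Phi}(t,x) = \zeta(x_2)\,\psi\bigl(t, x_1, x_2 - f(t,x_1)\bigr).
\]
The cut-off $\zeta$ localizes $\Phi, \hat\Phi$ away from the fixed boundaries $x_2 = \pm 1$, and the assumption $\|f\|_{L^\infty} \le 1 - c_0/2$ guarantees that on $\Om_f$ (respectively $\hat\Om_f$) the argument $f - x_2$ (respectively $x_2 - f$) remains in a fixed compact subinterval of $[0, 2-c_0/2]$, so that the Fourier bound above applies after the (unit-Jacobian) vertical change of variable. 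The cases $k = 0, 1$ then follow immediately from the explicit formulas for $\p_1 \Phi, \p_2 \Phi$ displayed in the excerpt, together with the 1D Sobolev embedding $\|\p_1 f\|_{L^\infty} \lesssim \|\p_1 f\|_{H^1}$.

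For $2 \le k \le s$, I would expand $\nabla^k \Phi$ via the combined Leibniz and Faà di Bruno formulas, producing a finite sum of terms of the form
\[
\zeta^{(j)}(x_2)\,(\nabla^{k_0}\psi)\bigl(t, x_1, f - x_2\bigr)\cdot \prod_{i=1}^\ell \p_1^{k_i} f(t,x_1),
\]
with $k_0 + \sum_{i=1}^\ell k_i \le k$, $k_i \ge 1$ and $\ell \le k$. After the change of variable $y_2 = f(t, x_1) - x_2$, the $L^2(\Om_f)$ norm of $(\nabla^{k_0}\psi)(t, x_1, f - x_2)$ is bounded by $\|\nabla^{k_0}\psi\|_{L^2(\bR \times [0, 2])} \lesssim \|\langle\p_1\rangle^{-\f12}\p_1 f\|_{\dot H^{k_0}(\bR)}$ by Step 1. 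The remaining polynomial factors $\prod \p_1^{k_i} f$ are distributed by standard Moser-type product estimates in one space dimension: a single ``top'' factor is kept in $L^2_{x_1}$, while the others (of lower differential order) are placed in $L^\infty_{x_1}$ via $H^{\f12 + \veps}(\bR) \hookrightarrow L^\infty(\bR)$. Summing over the finitely many Faà di Bruno patterns and absorbing all product norms into the polynomial $\mathcal{P}$, one concludes
\[
\|\nabla^k \Phi\|_{L^2(\Om_f)} \lesssim \|\p_1 f(t,\cdot)\|_{H^{k-\f12}(\bR)}\,\mathcal{P}\bigl(1 + \|\p_1 f(t,\cdot)\|_{H^{k-\f12}(\bR)}\bigr).
\]
The bound for $\hat\Phi$ is obtained by the same argument with $f - x_2$ replaced by $x_2 - f$.

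The main technical obstacle is the combinatorial bookkeeping in the higher-derivative composition. One must verify that in every Faà di Bruno term the total number of derivatives distributed onto $\psi$ and onto the factors $\p_1^{k_i} f$ never exceeds $k$, and, more delicately, that when a factor $\p_1^{k_i} f$ absorbs most of the derivatives (so that $\nabla^{k_0}\psi$ uses only a few), the Fourier bound $\|\nabla^{k_0}\psi\|_{L^2} \lesssim \|\p_1 f\|_{\dot H^{k_0 - \f12}}$ remains compatible with the master quantity $\|\p_1 f\|_{H^{k-\f12}(\bR)}$. Choosing $\|\p_1 f\|_{H^{k-\f12}(\bR)}$ as the common yardstick and tracking the exponents carefully through the Moser product inequalities is precisely what generates the polynomial $\mathcal{P}$ in the stated inequalities.
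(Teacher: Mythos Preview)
Your proposal is correct and follows essentially the same approach as the paper: first establish sharp Fourier-side bounds for $\nabla^k\psi$ on a horizontal strip via Plancherel and the kernel estimate $\int_0^L e^{-2x_2|\xi_1|}\,\dx_2 \lesssim (1+|\xi_1|)^{-1}$, then transfer these to $\Phi,\hat\Phi$ by the unit-Jacobian vertical change of variable together with the chain rule, treating $k=0,1$ by the explicit formulas and the higher $k$ by a Fa\`a di Bruno expansion and product estimates. The paper merely records the $k=0,1$ computations explicitly and covers $2\le k\le s$ with the phrase ``through a deduction argument, the similar estimates hold for the higher order norms''; your write-up supplies exactly the details that phrase is hiding.
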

Recalling the \textit{a priori} assumption $\|\p_1f\|_{H^{s-\f 12}}\lesssim 1$,
and note that $\text{supp}\, \Phi$ and $\text{supp}\, \hat{\Phi}$ is near the free surface,
hence \eqref{F-1} holds.

\subsubsection{Second derivative estimate of $p$ and $\hat{p}$}
Applying $\p_\tau$ to $\eqref{H0}_1$ and $\eqref{H0}_2$, one obtains
\begin{align}
\label{K10}
-\Delta\p_\tau p -\left[\p_\tau, \Delta\right] p =\p_\tau\nabla \cdot (\La_{-} \cdot \nabla \La_{+})
\quad \text{in}\,\,\Om_f,\\
\label{K11}
-\Delta\p_\tau \hat{p} -\left[\p_\tau, \Delta\right] \hat{p} =\p_\tau\nabla \cdot (\hat{\La}_{-} \cdot \nabla \hat{\La}_{+})
\quad \text{in}\,\,\hat{\Om}_f.
\end{align}
Taking the $L^2$ inner product of \eqref{K10} with
$ \langle w^+\rangle^{10\mu}\langle w^-\rangle^{2\mu} \p_\tau p$ in $\Om_f$
and taking the $L^2$ inner product of \eqref{K11} with
$\langle w^+\rangle^{10\mu}\langle w^-\rangle^{2\mu} \p_\tau \hat{p}$ in $\hat{\Om}_f$,
by integration by parts, we have
\begin{align}\label{K12}
&\int_{\Om_f} \langle w^+\rangle^{10\mu}\langle w^-\rangle^{2\mu}
|\nabla\p_\tau p|^2 \dx
+\int_{\hat{\Om}_f} \langle w^+\rangle^{10\mu}\langle w^-\rangle^{2\mu}
|\nabla\p_\tau \hat{p}|^2 \dx \\\nonumber
&=\int_{\Ga_f} \langle w^+\rangle^{10\mu}\langle w^-\rangle^{2\mu}
\big(\p_{n_f}\p_\tau p\cdot \p_\tau p-\p_{n_f}\p_\tau \hat{p}\cdot \p_\tau \hat{p}\big) \,\d\sigma(x)\\\nonumber
&\quad +\int_{\hat{\Ga}}  \langle w^+\rangle^{10\mu}\langle w^-\rangle^{2\mu}
\p_2\p_\tau \hat{p}\cdot \p_\tau \hat{p} \,\d\sigma(x)
-\int_{\Ga}  \langle w^+\rangle^{10\mu}\langle w^-\rangle^{2\mu}
\p_2\p_\tau p\cdot \p_\tau p \,\d\sigma(x)
 \\\nonumber
&\quad-\int_{\Om_f} \nabla \big(\langle w^+\rangle^{10\mu}\langle w^-\rangle^{2\mu} \big)
\cdot \nabla\p_\tau p\, \p_\tau p \dx
-\int_{\hat{\Om}_f} \nabla \big( \langle w^+\rangle^{10\mu}\langle w^-\rangle^{2\mu} \big)
\cdot \nabla\p_\tau \hat{p}\, \p_\tau \hat{p} \dx \\\nonumber
&\quad+\int_{\Om_f} \langle w^+\rangle^{10\mu}\langle w^-\rangle^{2\mu}
\left[\p_\tau, \Delta\right]p\, \p_\tau p \dx
+\int_{\hat{\Om}_f} \langle w^+\rangle^{10\mu}\langle w^-\rangle^{2\mu}
\left[\p_\tau, \Delta\right]\hat{p}\, \p_\tau \hat{p} \dx \\\nonumber
&\quad+\int_{\Om_f} \langle w^+\rangle^{10\mu}\langle w^-\rangle^{2\mu}
\p_\tau(\p_j\La_{-}^i\p_i\La_{+}^j)\, \p_\tau p \dx  \\\nonumber
&\quad +\int_{\hat{\Om}_f} \langle w^+\rangle^{10\mu}\langle w^-\rangle^{2\mu}
\p_\tau(\p_j\hat{\La}_{-}^i\p_i\hat{\La}_{+}^j)\, \p_\tau \hat{p} \dx \, .
\end{align}
For the fourth line of \eqref{K12}, one has
\begin{align*}
&\big|\int_{\Om_f} \nabla_i \big(  \langle w^+\rangle^{10\mu}\langle w^-\rangle^{2\mu}\big)\nabla_i \p_\tau p\cdot \p_\tau p \dx
+\int_{\hat{\Om}_f} \nabla_i \big(  \langle w^+\rangle^{10\mu}\langle w^-\rangle^{2\mu}\big)\nabla_i \p_\tau \hat{p}\cdot \p_\tau \hat{p} \dx \big|\\
&\leq \delta  \int_{\Om_f} \langle w^+\rangle^{10\mu}\langle w^-\rangle^{2\mu} |\nabla\p_\tau p|^2\dx
+C_\delta \int_{\Om_f} \langle w^+\rangle^{10\mu}\langle w^-\rangle^{2\mu} |\p_\tau p|^2\dx\\
&\quad+ \delta  \int_{\hat{\Om}_f} \langle w^+\rangle^{10\mu}\langle w^-\rangle^{2\mu} |\nabla\p_\tau \hat{p}|^2\dx
+C_\delta \int_{\hat{\Om}_f} \langle w^+\rangle^{10\mu}\langle w^-\rangle^{2\mu} |\p_\tau \hat{p}|^2\dx\,.
\end{align*}
Here $\delta$ is a small constant. $C_\delta$ is a constant depending on $\delta$.

For the sixth line of \eqref{K12}, we estimate
\begin{align*}
&\big|\int_{\Om_f} \langle w^+\rangle^{10\mu}\langle w^-\rangle^{2\mu}
\p_\tau(\p_j\La_{-}^i\p_i\La_{+}^j)\, \p_\tau p \dx \big| \\\nonumber
&\leq \| \langle w^+\rangle^{5\mu}\langle w^-\rangle^{\mu}
\p_\tau(\p_j\La_{-}^i\p_i\La_{+}^j)\|_{L^2(\Om_f)}
\| \langle w^+\rangle^{5\mu}\langle w^-\rangle^{\mu}
 \p_\tau p\|_{L^2(\Om_f)}\\\nonumber
&\leq E_3^{\f12} (G_2^b)^{\f12}(1+\|\Phi\|_{L^\infty(\Om_f)}) \| \langle w^+\rangle^{5\mu}\langle w^-\rangle^{\mu}
 \p_\tau p\|_{L^2(\Om_f)}\, .
\end{align*}
Similarly,
\begin{align*}
&\big| \int_{\hat{\Om}_f} \langle w^+\rangle^{10\mu}\langle w^-\rangle^{2\mu}
\p_\tau(\p_j\hat{\La}_{-}^i\p_i\hat{\La}_{+}^j)\, \p_\tau \hat{p} \dx \big| \\\nonumber
&\leq E_3^{\f12} (G_2^b)^{\f12}(1+\|\hat{\Phi}\|_{L^\infty(\hat{\Om}_f)}) \| \langle w^+\rangle^{5\mu}\langle w^-\rangle^{\mu}
 \p_\tau \hat{p}\|_{L^2(\hat{\Om}_f)}\, .
\end{align*}

\textbf{Estimate of the second line of \eqref{K12}.}

We write
\begin{align*} 
&\int_{\Ga_f} \langle w^+\rangle^{10\mu}\langle w^-\rangle^{2\mu}
\big(\p_{n_f}\p_\tau p\cdot \p_\tau p-\p_{n_f}\p_\tau \hat{p}\cdot \p_\tau \hat{p}\big) \,\d\sigma(x) \\\nonumber
&= \int_{\BR} \langle w^+\rangle^{10\mu}\langle w^-\rangle^{2\mu}
(\ud{N_f\cdot\nabla \p_\tau p}-\ud{N_f\cdot\nabla \p_\tau \hat{p}} )\cdot \ud{\p_\tau p} \,\d x_1\\\nonumber
&=\int_{\BR}\langle w^+\rangle^{10\mu}\langle w^-\rangle^{2\mu}
\p_1 \big(\ud{N_f\cdot\nabla  p}-\ud{N_f\cdot\nabla  \hat{p}} \big)\cdot \ud{\p_\tau p} \,\d x_1 \\\nonumber
&\quad- \int_{\BR} \langle w^+\rangle^{10\mu}\langle w^-\rangle^{2\mu}
  \Big( -\ud{[N_f\cdot\nabla ,\p_\tau] p}+\ud{[N_f\cdot\nabla ,\p_\tau] \hat{p}} \Big)\cdot \ud{\p_\tau p} \,\d x_1 \\\nonumber
&\leq  \big\|\langle w^+\rangle^{5\mu}\langle w^-\rangle^{\mu}
\langle\p_1\rangle^{-\f12} \p_1 \big(\ud{N_f\cdot\nabla  p}-\ud{N_f\cdot\nabla  \hat{p}}\big) \big\|_{L^2(\BR)} \\\nonumber
&\qquad\cdot \big\| \langle w^+\rangle^{-5\mu}\langle w^-\rangle^{-\mu} \langle\p_1\rangle^{\f12}  \big( \langle w^+\rangle^{10\mu}\langle w^-\rangle^{2\mu} \ud{\p_\tau p} \big)\big\|_{L^2(\BR)}\\\nonumber
&\quad + \big\| \langle w^+\rangle^{-5\mu}\langle w^-\rangle^{-\mu}
\langle \p_1\rangle^{\f 12}
\big(\langle w^+\rangle^{5\mu}\langle w^-\rangle^{\mu}
 ( -\ud{[N_f\cdot\nabla ,\p_\tau] p}+\ud{[N_f\cdot\nabla ,\p_\tau] \hat{p}})\big)\big\|_{L^2(\BR)} \\\nonumber
&\quad\quad\cdot  \|\langle w^+\rangle^{5\mu}\langle w^-\rangle^{\mu}
\langle \p_1\rangle^{-\f 12} \p_1\ud{p}  \|_{L^2(\BR)}\\\nonumber
&=I_1\cdot I_2 +I_3\cdot I_4.
\end{align*}
In the above calculation, we have used the fact that
\begin{align*}
&-\ud{N_f\cdot\nabla \p_\tau p}+\ud{N_f\cdot\nabla \p_\tau \hat{p}}\\
&=-\ud{\p_\tau N_f\cdot\nabla  p}+\ud{\p_\tau N_f\cdot\nabla  \hat{p}}-\ud{[N_f\cdot\nabla ,\p_\tau] p}+\ud{[N_f\cdot\nabla ,\p_\tau] \hat{p}}\\
&=-\p_1 \big(\ud{N_f\cdot\nabla  p}-\ud{N_f\cdot\nabla  \hat{p}} \big) -\ud{[N_f\cdot\nabla ,\p_\tau] p}+\ud{[N_f\cdot\nabla ,\p_\tau] \hat{p}} \, .
\end{align*}
For $I_1$, by Lemma \ref{lemW} and 
Lemma \ref{lemPFL}, we have
\begin{align*}
I_1&= \big\|\langle w^+\rangle^{5\mu}\langle w^-\rangle^{\mu}
\langle\p_1\rangle^{-\f12}\p_1 \big(\ud{N_f\cdot\nabla  p}-\ud{N_f\cdot\nabla  \hat{p}}\big) \big\|_{L^2(\BR)}\\
&\lesssim \big\|\langle w^+\rangle^{5\mu}\langle w^-\rangle^{\mu}
\langle\p_1\rangle^{\f12} \big(\ud{N_f\cdot\nabla  p}-\ud{N_f\cdot\nabla  \hat{p}}\big) \big\|_{L^2(\BR)}\\
&\leq E_2^{\f12} (G_2^b)^{\f12} \mathcal{P}(1+ \| \p_1f\|_{H^{\f52}}) \,.
\end{align*}
For $I_2$, 
 by  commutator Lemma \ref{lemD5} and weighted trace Lemma \ref{lemD3}, we imply
\begin{align*}
I_2=&
\big\| \langle w^+\rangle^{-5\mu}\langle w^-\rangle^{-\mu} \langle\p_1\rangle^{\f12}  \big( \langle w^+\rangle^{10\mu}\langle w^-\rangle^{2\mu} \ud{\p_\tau p} \big)\big\|_{L^2(\BR)}\\[-5mm]\\
\lesssim & \sum_{|\alpha|\leq 1}\big\| \langle w^+\rangle^{5\mu}\langle w^-\rangle^{ \mu} \p^\alpha\p_\tau  p\big\|_{L^2(\Om_f)}\lesssim \sum_{1\leq i\leq 2}\big\| \langle w^+\rangle^{5\mu}\langle w^-\rangle^{ \mu}
\nabla^i p  \big\|_{L^2(\Om_f)} (1+ \| \p_1f\|_{L^\infty}).
\end{align*}
To estimate $I_3$, we calculate the commutator in $\Om_f$
\begin{align*}
[N_f\cdot\nabla ,\p_\tau]
&=[\p_2-\p_1f \p_1,\Phi\p_2+\p_1]\\
&=\left( \p_2\p_1+\Phi \p_2^2+\p_2\Phi\p_2
-\p_1f(\p_1^2+\Phi\p_1\p_2+\p_1\Phi\p_2  ) \right) \\
&\quad-\left( \p_2\p_1+\Phi \p_2^2-\Phi\p_1f\p_2\p_1
-\p_1f\p_1^2 -\p_1^2f\p_1 \right)\\
&=\p_2\Phi\p_2-\p_1f\p_1\Phi\p_2+\p_1^2f\p_1,
\end{align*}
which turns out to be one order operator. Similar calculation for $[N_f\cdot\nabla ,\p_\tau]$ also holds in $\hat{\Om}_f$.
Consequently, by Lemma \ref{lemW}, weighted trace Lemma \ref{lemD3} and Lemma \ref{lemPsi},
 we infer that
\begin{align*}
I_3\cdot I_4
\lesssim&
\Big(\| \langle w^+\rangle^{2\mu}\langle w^-\rangle^{\mu} \nabla^{\leq 1}
[N_f\cdot\nabla ,\p_\tau] p \|_{L^2(\Om_f)}
+\| \langle w^+\rangle^{2\mu}\langle w^-\rangle^{\mu} \nabla^{\leq 1}
[N_f\cdot\nabla ,\p_\tau] \hat{p}\|_{L^2(\hat{\Om}_f)}\Big)\\
&\cdot \|\langle w^+\rangle^{5\mu}\langle w^-\rangle^{\mu} \nabla^{\leq 1}p \|_{L^2(\Om_f)} \\[-5mm]\\
\lesssim&
\sum_{1\leq i\leq 2}\big( \big\| \langle w^+\rangle^{5\mu}\langle w^-\rangle^{ \mu}
\nabla^i  p  \big\|_{L^2(\Om_f)}
+\big\| \langle w^+\rangle^{5\mu}\langle w^-\rangle^{ \mu}
\nabla^i  \hat{p}  \big\|_{L^2(\hat{\Om}_f)} \big) \\
&\,\, \cdot  \big\| \langle w^+\rangle^{5\mu}\langle w^-\rangle^{ \mu}
\nabla^{\leq 1}  p  \big\|_{L^2(\Om_f)}
 \| \p_1f\|_{H^{\f52}(\bR)}  ( 1+\| \p_1f\|_{H^{\f52}(\bR)}.
\end{align*}

\textbf{Estimate of the third line of \eqref{K12}}
One has
\begin{align*}
&\int_{\hat{\Ga}}  \langle w^+\rangle^{10\mu}\langle w^-\rangle^{2\mu}
\p_2\p_1 \hat{p}\cdot \p_1 \hat{p}\, \d x_1
-\int_{\Ga}  \langle w^+\rangle^{10\mu}\langle w^-\rangle^{2\mu}
\p_2\p_1 p\cdot \p_1 p\, \d x_1=0.
\end{align*}

\textbf{Estimate of the fifth line of \eqref{K12}}

We have
\begin{align*} 
&\big| \int_{\Om_f} \langle w^+\rangle^{10\mu}\langle w^-\rangle^{2\mu}
\left[\p_\tau, \Delta\right]p\, \p_\tau p \dx
+\int_{\hat{\Om}_f} \langle w^+\rangle^{10\mu}\langle w^-\rangle^{2\mu}
\left[\p_\tau, \Delta\right]\hat{p}\, \p_\tau \hat{p} \dx  \big| \\\nonumber
&\leq \| \langle w^+\rangle^{5\mu}\langle w^-\rangle^{\mu} \left[\p_\tau, \Delta\right]p \|_{L^2(\Om_f)}
\| \langle w^+\rangle^{5\mu}\langle w^-\rangle^{\mu}  \p_\tau p \|_{L^2(\Om_f)}
\\[-4mm]\nonumber\\\nonumber
&\quad+ \| \langle w^+\rangle^{5\mu}\langle w^-\rangle^{\mu} \left[\p_\tau, \Delta\right]\hat{p} \|_{L^2(\hat{\Om}_f)}
\| \langle w^+\rangle^{5\mu}\langle w^-\rangle^{\mu}  \p_\tau \hat{p} \|_{L^2(\hat{\Om}_f)}\,.
\end{align*}
Now we calculate the commutator of Laplacian and $\p_\tau$ in $\Om_f$.
Firstly,
\begin{align*}
[\p,\p_\tau]&=\p \Phi \p_2 ,\\
[\p_1^2,\p_\tau]&=\p_1 [\p_1,\p_\tau]+ [\p_1,\p_\tau]\p_1
=\p_1^2 \Phi \p_2 +2\p_1 \Phi \p_1\p_2 ,\\
[\p_2^2,\p_\tau]&=\p_2 [\p_2,\p_\tau]+ [\p_2,\p_\tau]\p_2
=\p_2^2 \Phi \p_2 +2\p_2 \Phi \p^2_2.
\end{align*}
Consequently,
\begin{align*}
[\Delta,\p_\tau]
= \Delta  \Phi  \p_2
 +2 \nabla\Phi\cdot \p_2\nabla.
\end{align*}
Thus
\begin{align*} 
&\| \langle w^+\rangle^{5\mu}\langle w^-\rangle^{\mu} \left[\p_\tau, \Delta\right]p \|_{L^2(\Om_f)}\\\nonumber
&\leq  \|\nabla^{\leq 2} \Phi\|_{L^4(\Om_f)}
 \| \langle w^+\rangle^{5\mu}\langle w^-\rangle^{\mu} \nabla p\|_{L^4(\Om_f)}
 \\\nonumber
&\quad+  \| \nabla \Phi\|_{L^\infty(\Om_f)}
 \| \langle w^+\rangle^{5\mu}\langle w^-\rangle^{\mu} \nabla^2 p\|_{L^2(\Om_f)}
 \\\nonumber
&\lesssim \| \nabla^{\leq 3}\Phi\|_{L^2(\Om_f)}
 \sum_{1\leq j \leq 2} \| \langle w^+\rangle^{2\mu}\langle w^-\rangle^{\mu}
 \nabla^j p\|_{L^2(\Om_f)} .
\end{align*}
Consequently, 
\begin{align*} 
&\big| \int_{\Om_f}\langle w^+\rangle^{10\mu}\langle w^-\rangle^{2\mu} \left[\p_\tau, \Delta\right]p\, \p_\tau p \dx \big| \\\nonumber
&\lesssim  \| \nabla^{\leq 3}\Phi\|_{L^2(\Om_f)}
 \sum_{1\leq j \leq 2} \| \langle w^+\rangle^{2\mu}\langle w^-\rangle^{\mu}
 \nabla^j p\|_{L^2(\Om_f)}
\| \langle w^+\rangle^{2\mu}\langle w^-\rangle^{\mu} \p_\tau p\|_{L^2(\Om_f)} .
\end{align*}
Similarly,
\begin{align*}
&\big| \int_{\hat{\Om}_f}\langle w^+\rangle^{10\mu}\langle w^-\rangle^{2\mu} \left[\p_\tau, \Delta\right]\hat{p}\, \p_\tau \hat{p} \dx \big| \\\nonumber
&\lesssim
\| \nabla^{\leq 3}\Phi\|_{L^2(\hat{\Om}_f)}
 \sum_{1\leq j \leq 2} \| \langle w^+\rangle^{2\mu}\langle w^-\rangle^{\mu}
 \nabla^j \hat{p}\|_{L^2(\hat{\Om}_f)}
\| \langle w^+\rangle^{2\mu}\langle w^-\rangle^{\mu} \p_\tau \hat{p}\|_{L^2(\hat{\Om}_f)} .
\end{align*}
Gathering the above estimates, by Lemma \ref{lemPsi} and Lemma \ref{lemP1}, \eqref{K12} reduces to
\begin{align}\label{K25}
&\|\langle w^+\rangle^{5\mu}\langle w^-\rangle^{\mu}\nabla\p_\tau p\|^2_{L^2(\Om_f)}
+\|\langle w^+\rangle^{5\mu}\langle w^-\rangle^{\mu}\nabla\p_\tau \hat{p}\|^2_{L^2(\hat{\Om}_f)}\\\nonumber
&\lesssim \big(\| \langle w^+\rangle^{5\mu}\langle w^-\rangle^{\mu}  \nabla^2 p\|_{L^2(\Om_f)}+\| \langle w^+\rangle^{5\mu}\langle w^-\rangle^{\mu}  \nabla^2 \hat{p}\|_{L^2(\hat{\Om}_f)} \big)
E_3^{\f12}(G_2^b)^{\f12} \mathcal{P}(1+\|\p_1 f\|_{H^3} )\\\nonumber
&\quad+ E_3 G_2^b \mathcal{P}(1+ \|\p_1 f\|_{H^{\f52}} ).
\end{align}
\textbf{Second normal derivative estimate of the pressure}

We first calculate some commutators in $\Om_f$.
It is easy to see that
\begin{align*} 
[\nabla,\p_\tau]
&=\nabla \Phi\p_2 ,\\
|\p_1 \nabla p|
&\leq   |\p_\tau \nabla p|+ |\Phi \p_2 \nabla p| \\\nonumber
&\leq |\nabla \p_\tau  p|+ |[\p_\tau, \nabla] p| + |\Phi \p_2 \nabla p| \\\nonumber
&\leq |\nabla \p_\tau  p|+ |\nabla \Phi \p_2 p| + |\Phi \p_2 \nabla p| \, .
\end{align*}
Consequently, we have
\begin{align}\label{K27}
&\|\langle w^+\rangle^{5\mu}\langle w^-\rangle^{\mu} \nabla\p_1 p \|^2_{L^2(\Om_f)}\\\nonumber
&\lesssim \|\langle w^+\rangle^{5\mu}\langle w^-\rangle^{\mu} \nabla \p_\tau p \|^2_{L^2(\Om_f)}
+\| \langle w^+\rangle^{5\mu}\langle w^-\rangle^{\mu} \p_2 p \|^2_{L^4(\Om_f)}
\| \nabla \Phi \|^2_{L^4(\Om_f)}\\\nonumber
&\quad+\| \langle w^+\rangle^{5\mu}\langle w^-\rangle^{\mu}  \p_2 \nabla p \|^2_{L^2(\Om_f)}
\| \Phi \|^2_{L^\infty(\Om_f)}.
\end{align}

Now we treat the second normal derivative estimate of the pressure.
Rewriting $\eqref{H0}_1$ as follows
\begin{align}\label{K28}
\p_2^2 p=-\p_1^2 p -\nabla_j\La_{-}^i \nabla_i\La_{+}^j.
\end{align}
Multiplying \eqref{K28} by $ \langle w^+\rangle^{5\mu}\langle w^-\rangle^{\mu}$,
then taking the $L^2(\Om_f)$ norm of the consequent equation yields
\begin{align}\label{K29}
\big\| \langle w^+\rangle^{5\mu}\langle w^-\rangle^{\mu}
\p_2^2 p \big\|_{L^2(\Om_f)}
\leq \big\| \langle w^+\rangle^{5\mu}\langle w^-\rangle^{\mu}
 \p_1^2 p \big\|_{L^2(\Om_f)}
+C (G_2^b)^{\f12} E_2^{\f12}.
\end{align}
Consequently, by \eqref{K25}, \eqref{K27}, \eqref{K29}, Lemma \ref{lemPsi} and Lemma \ref{lemP1}, we derive
\begin{align}\label{K31}
&\| \langle w^+\rangle^{5\mu}\langle w^-\rangle^{\mu} \nabla^2 p \|^2_{L^2(\Om_f)}
\\\nonumber
&=\|\langle w^+\rangle^{5\mu}\langle w^-\rangle^{\mu} \p_2^2 p \|^2_{L^2(\Om_f)}
+\| \langle w^+\rangle^{5\mu}\langle w^-\rangle^{\mu} \nabla\p_1 p \|^2_{L^2(\Om_f)} \\\nonumber
&\leq C\big\| \langle w^+\rangle^{5\mu}\langle w^-\rangle^{\mu}
\nabla\p_1 p \big\|^2_{L^2(\Om_f)} +C G_2^b E_2  \\\nonumber
&\leq C\big(\| \langle w^+\rangle^{5\mu}\langle w^-\rangle^{\mu}  \nabla^2 p\|_{L^2(\Om_f)}
           +\| \langle w^+\rangle^{5\mu}\langle w^-\rangle^{\mu}  \nabla^2 \hat{p}\|_{L^2(\hat{\Om}_f)}
\big) E_3^{\f12} (G_2^b)^{\f12} \mathcal{P}(1+ \|\p_1 f\|_{H^{\f52}} )  \\\nonumber
&\quad+\|  \langle w^+\rangle^{5\mu}\langle w^-\rangle^{\mu}  \p_2\nabla p \|^2_{L^2(\Om_f)}\|\p_1 f \|^2_{H^{\f32}}
+ CE_3 G_2^b \mathcal{P}(1+ \|\p_1 f\|_{H^{\f52}} ).
 \end{align}
Similarly,
\begin{align}\label{K32}
&\| \langle w^+\rangle^{5\mu}\langle w^-\rangle^{\mu} \nabla^2 \hat{p} \|^2_{L^2(\hat{\Om}_f)}\\\nonumber
&\leq C\big(\| \langle w^+\rangle^{5\mu}\langle w^-\rangle^{\mu}  \nabla^2 p\|_{L^2(\Om_f)}+\| \langle w^+\rangle^{5\mu}\langle w^-\rangle^{\mu}  \nabla^2 \hat{p}\|_{L^2(\hat{\Om}_f)}\big)
E_3^{\f12} G_2^{\f12} \mathcal{P}(1+ \|\p_1 f\|_{H^{\f52}}) \\\nonumber
&\quad+\| \langle w^+\rangle^{5\mu}\langle w^-\rangle^{\mu}  \p_2 \nabla \hat{p} \|^2_{L^2(\hat{\Om}_f)}\|\p_1 f \|^2_{H^{\f32}}
+ CE_3 G_2 \mathcal{P}(1+ \|\p_1 f\|_{H^{\f52}}  ).
 \end{align}
Combining \eqref{K31} and \eqref{K32}, by the H\"older inequality,  note the assumption $\|\p_1 f \|_{H^{\f32}}\leq\delta\ll 1$, we obtain
\begin{align*}
\|  \langle w^+\rangle^{5\mu}\langle w^-\rangle^{\mu} \nabla^2 p \|^2_{L^2(\Om_f)}
+\|  \langle w^+\rangle^{5\mu}\langle w^-\rangle^{\mu} \nabla^2 \hat{p} \|^2_{L^2(\hat{\Om}_f)}
\leq CE_3 G_2^b \mathcal{P}(1+ \|\p_1 f\|_{H^{\f52}} ).
\end{align*}
Thus we obtain $2$-order derivatives estimate of the pressure
in Lemma \ref{lemPre}.

\subsubsection{Three order estimate of the pressure}
Applying $\p_\tau^2$ to $\eqref{H0}_1$ and $\eqref{H0}_2$, one obtains
\begin{align}
\label{K50}
-\Delta\p_\tau^2 p -\left[\p_\tau^2, \Delta\right] p =\p_\tau^2\nabla \cdot (\La_{-} \cdot \nabla \La_{+})
\quad \text{in}\,\,\Om_f,\\
\label{K51}
-\Delta\p_\tau^2 \hat{p} -\left[\p_\tau^2, \Delta\right] \hat{p} =\p_\tau^2\nabla \cdot (\hat{\La}_{-} \cdot \nabla \hat{\La}_{+})
\quad \text{in}\,\,\hat{\Om}_f.
\end{align}
Taking the $L^2$ inner product of \eqref{K50} with
$ \langle w^+\rangle^{10\mu}\langle w^-\rangle^{2\mu} \p_\tau^2 p$ in $\Om_f$
and taking the $L^2$ inner product of \eqref{K51} with
$ \langle w^+\rangle^{10\mu}\langle w^-\rangle^{2\mu} \p_\tau^2 \hat{p}$ in $\hat{\Om}_f$,
by integration by parts, we have
\begin{align}\label{K52}
&\int_{\Om_f} \langle w^+\rangle^{10\mu}\langle w^-\rangle^{2\mu}
|\nabla\p_\tau^2 p|^2 \dx
+\int_{\hat{\Om}_f} \langle w^+\rangle^{10\mu}\langle w^-\rangle^{2\mu}
|\nabla\p_\tau^2 \hat{p}|^2 \dx \\\nonumber
&=\int_{\Ga_f} \langle w^+\rangle^{10\mu}\langle w^-\rangle^{2\mu}
\big(\p_{n_f}\p_\tau^2 p\cdot \p_\tau^2 p-\p_{n_f}\p_\tau^2 \hat{p}\cdot \p_\tau^2 \hat{p}\big) \,\d\sigma(x)\\\nonumber
&\quad +\int_{\hat{\Ga}} \langle w^+\rangle^{10\mu}\langle w^-\rangle^{2\mu}
\p_2\p_\tau^2 \hat{p}\cdot \p_\tau^2 \hat{p} \,\d\sigma(x)
-\int_{\Ga} \langle w^+\rangle^{10\mu}\langle w^-\rangle^{2\mu}
\p_2\p_\tau^2 p\cdot \p_\tau^2 p \,\d\sigma(x) \\\nonumber
&\quad-\int_{\Om_f} \nabla \big(\langle w^+\rangle^{10\mu}\langle w^-\rangle^{2\mu} \big)
\cdot \nabla\p_\tau^2 p\, \p_\tau^2 p \dx
-\int_{\hat{\Om}_f} \nabla \big(\langle w^+\rangle^{10\mu}\langle w^-\rangle^{2\mu} \big)
\cdot \nabla\p_\tau^2 \hat{p}\, \p_\tau^2 \hat{p} \dx \\\nonumber
&\quad+\int_{\Om_f}\langle w^+\rangle^{10\mu}\langle w^-\rangle^{2\mu}
\left[\p_\tau^2, \Delta\right]p\, \p_\tau^2 p \dx
+\int_{\hat{\Om}_f}\langle w^+\rangle^{10\mu}\langle w^-\rangle^{2\mu}
\left[\p_\tau^2, \Delta\right]\hat{p}\, \p_\tau^2 \hat{p} \dx  \\\nonumber
&\quad+\int_{\Om_f}\langle w^+\rangle^{10\mu}\langle w^-\rangle^{2\mu}
\p_\tau^2(\p_j\La_{-}^i\p_i\La_{+}^j)\, \p_\tau^2 p \dx  \\\nonumber
&\quad +\int_{\hat{\Om}_f}\langle w^+\rangle^{10\mu}\langle w^-\rangle^{2\mu}
\p_\tau^2(\p_j\hat{\La}_{-}^i\p_i\hat{\La}_{+}^j)\, \p_\tau^2 \hat{p} \dx \, .
\end{align}

For the fourth line of \eqref{K52}, one has
\begin{align*}
&\big| \int_{\Om_f} \nabla_i \big( \langle w^+\rangle^{10\mu}\langle w^-\rangle^{2\mu}\big)\nabla_i \p_\tau^2 p\cdot \p_\tau^2 p \dx
+\int_{\hat{\Om}_f} \nabla_i \big(\langle w^+\rangle^{10\mu}\langle w^-\rangle^{2\mu}\big)\nabla_i \p_\tau^2 \hat{p}\cdot \p_\tau^2 \hat{p} \dx \big| \\
&\leq \delta  \|\langle w^+\rangle^{5\mu}\langle w^-\rangle^{\mu} \nabla\p_\tau^2 p\|^2_{L^2(\Om_f)}
+C_\delta \| \langle w^+\rangle^{5\mu}\langle w^-\rangle^{\mu} \p_\tau^2 p\|^2_{L^2(\Om_f)}\\[-5mm]\\
&\quad+ \delta  \|\langle w^+\rangle^{5\mu}\langle w^-\rangle^{\mu} \nabla\p_\tau^2 \hat{p} \|^2_{L^2(\hat{\Om}_f)}
+C_\delta \| \langle w^+\rangle^{5\mu}\langle w^-\rangle^{\mu} \p_\tau^2 \hat{p}\|_{L^2(\hat{\Om}_f)}\,.
\end{align*}
Here $\delta$ is a small constant. $C_\delta$ is a constant depending on $\delta$.

For the sixth and the seventh line of \eqref{K52}, we estimate
\begin{align*}
&\big|\int_{\Om_f}\langle w^+\rangle^{10\mu}\langle w^-\rangle^{2\mu}
\p_\tau^2(\p_j\La_{-}^i\p_i\La_{+}^j)\, \p_\tau^2 p \dx \big| \\\nonumber
&\leq E_3^{\f12} G_3^{\f12}(1+\|\nabla^{\leq 3}\Phi\|_{L^2(\Om_f)}) \| \langle w^+\rangle^{5\mu}\langle w^-\rangle^{\mu}
 \p_\tau^2 p\|_{L^2(\Om_f)}, \\\nonumber
&\big|\int_{\hat{\Om}_f} \langle w^+\rangle^{10\mu}\langle w^-\rangle^{2\mu}
\p_\tau^2(\p_j\hat{\La}_{-}^i\p_i\hat{\La}_{+}^j)\, \p_\tau^2 \hat{p} \dx \big| \\\nonumber
&\leq E_3^{\f12} G_3^{\f12}(1+\|\nabla^{\leq 3}\hat{\Phi}\|_{L^2(\hat{\Om}_f)}) \| \langle w^+\rangle^{5\mu}\langle w^-\rangle^{\mu}
  \p_\tau^2 \hat{p}\|_{L^2(\hat{\Om}_f)}\, .
\end{align*}

For the second line of \eqref{K52}, in order to use the boundary condition of the pressure, we write
\begin{align}\label{K54}
& \int_{\Ga_f} \langle w^+\rangle^{10\mu}\langle w^-\rangle^{2\mu}
\big(\p_{n_f}\p_\tau^2 p\cdot \p_\tau^2 p-\p_{n_f}\p_\tau^2 \hat{p}\cdot \p_\tau^2 \hat{p}\big) \,\d\sigma(x)  \\\nonumber
&= \int_{\BR}\langle w^+\rangle^{10\mu}\langle w^-\rangle^{2\mu}
\p_1^2 \big(\ud{N_f\cdot\nabla  p}-\ud{N_f\cdot\nabla  \hat{p}} \big)\cdot \ud{\p_\tau^2 p} \,\d x_1 \\\nonumber
&\quad- \int_{\BR} \langle w^+\rangle^{10\mu}\langle w^-\rangle^{2\mu}
  \Big( -\ud{[N_f\cdot\nabla ,\p_\tau^2] p}+\ud{[N_f\cdot\nabla ,\p_\tau^2] \hat{p}} \Big)\cdot \ud{\p_\tau^2 p} \,\d x_1\,.
\end{align}
For the second line of \eqref{K54}, we will use the boundary condition.
By Lemma \ref{lemW}, 
Lemma \ref{lemPFH},
the commutator Lemma \ref{lemD5} and weighted trace Lemma \ref{lemD3}, it is bounded by
\begin{align*}
& \big\|\langle w^+\rangle^{5\mu}\langle w^-\rangle^{\mu}
\langle\p_1\rangle^{-\f12}\p_1^2 \big(\ud{N_f\cdot\nabla  p}-\ud{N_f\cdot\nabla  \hat{p}}\big) \big\|_{L^2(\BR)}\\
&\cdot\big\| \langle w^+\rangle^{-5\mu}\langle w^-\rangle^{-\mu} \langle\p_1\rangle^{\f12}  \big( \langle w^+\rangle^{10\mu}\langle w^-\rangle^{2\mu} \ud{\p_\tau^2 p} \big)\big\|_{L^2(\BR)}\\
&\lesssim \big\|\langle w^+\rangle^{5\mu}\langle w^-\rangle^{\mu}
\langle\p_1\rangle^{\f12} \p_1 \big(\ud{N_f\cdot\nabla  p}-\ud{N_f\cdot\nabla  \hat{p}}\big) \big\|_{L^2(\BR)}
\big\| \langle w^+\rangle^{5\mu}\langle w^-\rangle^{ \mu} \p^{\leq 1}
\p_\tau^2  p\big\|_{L^2(\Om_f)}\\
&\lesssim E_3^{\f12} G_3^{\f12}\cdot \big\| \langle w^+\rangle^{5\mu}\langle w^-\rangle^{ \mu}
\nabla^{1\leq a\leq 3}  p  \big\|_{L^2(\Om_f)} (1+ \| \p_1 f\|_{H^2} ) \,.
\end{align*}

To estimate the last line of \eqref{K54}, we calculate the commutator in $\Om_f$:
\begin{align*}
[N_f\cdot\nabla ,\p_\tau]
&=\p_2\Phi\p_2-\p_1f\p_1\Phi\p_2+\p_1^2f\p_1,\\
[N_f\cdot\nabla ,\p_\tau^2]&=[N_f\cdot\nabla ,\p_\tau]\p_\tau+\p_\tau[N_f\cdot\nabla ,\p_\tau]\\
&=\left(\p_2\Phi\p_2-\p_1f\p_1\Phi\p_2+\p_1^2f\p_1 \right)\p_\tau
+\p_\tau \left( \p_2\Phi\p_2-\p_1f\p_1\Phi\p_2+\p_1^2f\p_1 \right)\\
&= \left(\p_2\Phi\p_2-\p_1f\p_1\Phi\p_2+\p_1^2f\p_1 \right)\p_\tau
+\p_\tau \left( \p_2\Phi\p_2-\p_1f\p_1\Phi\p_2 \right)+\p_1^2f\p_\tau \p_1
+\p_1^3f \p_1\\
&=D_{\Phi,f}^3+\p_1^3f \p_1.
\end{align*}
Here $D_{\Phi,f}^3$ is second order operator involving $\Phi$ and $f$.
Similar calculation for $[N_f\cdot\nabla ,\p_\tau^2]$ also holds in $\hat{\Om}_f$.
Consequently, by Lemma \ref{lemW} and weighted trace Lemma \ref{lemD3},
we infer that
\begin{align*}
&\big| \int_{\BR} \langle w^+\rangle^{10\mu}\langle w^-\rangle^{2\mu}\cdot \ud{[N_f\cdot\nabla ,\p_\tau^2] p}\cdot \ud{\p_\tau^2 p} \,\d x_1 \big|\\
&=\big| \int_{\BR} \langle w^+\rangle^{10\mu}\langle w^-\rangle^{2\mu}\cdot \ud{D_{\Phi,f}^3p+\p_1^3f \p_1 p}\cdot \ud{\p_\tau^2 p} \,\d x_1 \big|\\
&\leq \big\| \langle w^+\rangle^{-5\mu}\langle w^-\rangle^{-\mu}
\langle \p_1\rangle^{\f12}\big( \langle w^+\rangle^{10\mu}\langle w^-\rangle^{2\mu}
 \ud{D_{\Phi,f}^3p} \big)\big\|_{L^2(\BR)}\cdot \big\|\langle w^+\rangle^{5\mu}\langle w^-\rangle^{\mu}
 \langle \p_1\rangle^{-\f12}\p_1\ud{\p_\tau p} \big\|_{L^2(\BR)}\\\nonumber
&\quad +\| \p_1^3 f\|_{L^2(\bR)}
\sum_{1\leq j\leq 2} \big\| \langle w^+\rangle^{5\mu}\langle w^-\rangle^{ \mu}\nabla^j  p  \big\|_{L^2(\Om_f)}
\| \langle w^+\rangle^{5\mu}\langle w^-\rangle^{\mu}\nabla^{\leq 1}\p_\tau^2 p  \|_{L^2(\Om_f)} \\\nonumber
&\lesssim
\big\| \langle w^+\rangle^{5\mu}\langle w^-\rangle^{ \mu}
\nabla^{\leq 3}  p  \big\|_{L^2(\Om_f)}
 \| \langle w^+\rangle^{5\mu}\langle w^-\rangle^{\mu}\nabla^{\leq 2}  p  \|_{L^2(\Om_f)}
\| \p_1f\|_{H^{\f52}(\bR)} \mathcal{P}(1+\| \p_1f\|_{H^{\f52}(\bR)}).
\end{align*}
Similarly,
\begin{align*}
&\big| \int_{\BR} \langle w^+\rangle^{10\mu}\langle w^-\rangle^{2\mu}\cdot \ud{[N_f\cdot\nabla ,\p_\tau^2] \hat{p}}\cdot \ud{\p_\tau^2 p} \,\d x_1 \big|\\
&\lesssim
\big\| \langle w^+\rangle^{5\mu}\langle w^-\rangle^{ \mu}
\nabla^{\leq 3}  \hat{p}  \big\|_{L^2(\hat{\Om}_f)}
 \| \langle w^+\rangle^{5\mu}\langle w^-\rangle^{\mu}\nabla^{\leq 2}  \hat{p} \|_{L^2(\hat{\Om}_f)}
\| \p_1f\|_{H^{\f52}(\bR)} \mathcal{P}(1+\| \p_1f\|_{H^{\f52}(\bR)}).
\end{align*}


For the third line of \eqref{K52}, due to the boundary condition of the pressure, it vanishes:
\begin{align*}
&\int_{\hat{\Ga}}  \langle w^+\rangle^{10\mu}\langle w^-\rangle^{2\mu}
\p_2\p_1^2 \hat{p}\cdot \p_1^2 \hat{p}\, \d x_1
-\int_{\Ga}  \langle w^+\rangle^{10\mu}\langle w^-\rangle^{2\mu}
\p_2\p_1^2 p\cdot \p_1^2 p\, \d x_1=0\,.
\end{align*}


\textbf{Estimate of the fifth line of \eqref{K52}}

We first calculate the commutator of Laplacian and $\p_\tau^2$ in $\Om_f$.
Firstly,
\begin{align*}
[\nabla,\p_\tau]&=\nabla \Phi \p_2 ,\\
[\p_2^2,\p_\tau]&=\p_2 [\p_2,\p_\tau]+ [\p_2,\p_\tau]\p_2
=\p_2^2 \Phi \p_2 +2\p_2 \Phi \p^2_2,\\
[\p_1^2,\p_\tau]&=\p_1 [\p_1,\p_\tau]+ [\p_1,\p_\tau]\p_1
=\p_1^2 \Phi \p_2 +2\p_1 \Phi \p_2\p_1 . 
\end{align*}
Then
\begin{align*}
[\Delta,\p_\tau]
&=\Delta \Phi \p_2
 +2 \nabla \Phi\cdot \nabla\p_2 , \\
[\Delta,\p_\tau^2]
&=[\Delta,\p_\tau]\p_\tau+ \p_\tau[\Delta,\p_\tau] \, \\
&=\big( \Delta \Phi \p_2
 +2 \nabla \Phi\cdot \nabla\p_2 \big)\p_\tau
 +\p_\tau \big( \Delta \Phi \p_2
 +2 \nabla \Phi\cdot \nabla\p_2 \big),
\end{align*}
which is a third order operator. Hence we obtain
\begin{align*}
&|[\Delta,\p_\tau]p|\lesssim
|\Delta\Phi| \cdot|\p_2p|
 +|\nabla\Phi \nabla^2 p|\, \\
&|[\Delta,\p_\tau^2]p|\lesssim(1+|\Phi|)(1+|\nabla^{\leq 1}\Phi|)
\big( |\nabla\Phi \nabla^3p|+ |\nabla^{\leq 2}\Phi \nabla^2p|
+ |\nabla^{\leq 3}\Phi \nabla p| \big) \, .
\end{align*}
Thus
\begin{align*}
&\|  \langle w^+\rangle^{5\mu}\langle w^-\rangle^{\mu} \left[\p_\tau^2, \Delta\right]p \|_{L^2(\Om_f)}\\\nonumber
&\leq \big(1+\|\nabla^{\leq 1} \Phi\|_{L^\infty(\Om_f)}\big)^2
\Big(\|\nabla^{\leq 3} \Phi\|_{L^2(\Om_f)}
 \|  \langle w^+\rangle^{5\mu}\langle w^-\rangle^{\mu} \nabla p\|_{L^\infty(\Om_f)} \\\nonumber
&\quad+  \|\nabla^{\leq 2} \Phi\|_{L^4(\Om_f)}
 \|  \langle w^+\rangle^{5\mu}\langle w^-\rangle^{\mu} \nabla^2 p\|_{L^4(\Om_f)}
 + \|\nabla\Phi\|_{L^\infty(\Om_f)}
 \|  \langle w^+\rangle^{5\mu}\langle w^-\rangle^{\mu} \nabla^3 p\|_{L^2(\Om_f)} \Big) \\\nonumber
&\lesssim \big(1+ \|\nabla^{\leq 3} \Phi\|_{L^2(\Om_f)}\big)^2
 \| \nabla^{\leq 3}\Phi\|_{L^2(\Om_f)}
 \sum_{1\leq j \leq 3} \|   \langle w^+\rangle^{5\mu}\langle w^-\rangle^{\mu}
 \nabla^j p\|_{L^2(\Om_f)} .
\end{align*}
By the second order derivative estimate of the pressure, we have
\begin{align*}
&\big| \int_{\Om_f} \langle w^+\rangle^{10\mu}\langle w^-\rangle^{2\mu}
\left[\p_\tau^2, \Delta\right]p\, \p_\tau^2 p \dx \\\nonumber
&\leq \|  \langle w^+\rangle^{5\mu}\langle w^-\rangle^{\mu} \left[\p_\tau^2, \Delta\right]p \|_{L^2(\Om_f)}
\| \langle w^+\rangle^{5\mu}\langle w^-\rangle^{\mu}  \p_\tau^2 p \|_{L^2(\Om_f)}
\\[-4mm]\nonumber\\\nonumber
&\lesssim \sum_{1\leq j \leq 3} \|   \langle w^+\rangle^{5\mu}\langle w^-\rangle^{\mu}
 \nabla^j p\|_{L^2(\Om_f)}  E_3^{\f12} G_2^{\f12} \|\nabla^{\leq 3} \Phi\|_{L^2(\Om_f)} F (1+ \|\nabla^{\leq 3} \Phi\|_{L^2(\Om_f)} ).
\end{align*}
Note the similar third weighted derivative estimate also holds for $\hat{p}$.
\begin{align*}
&\big| \int_{\hat{\Om}_f} \langle w^+\rangle^{10\mu}\langle w^-\rangle^{2\mu}
\left[\p_\tau^2, \Delta\right]\hat{p}\, \p_\tau^2 \hat{p} \dx \\\nonumber
&\lesssim
 \sum_{1\leq j \leq 3} \|   \langle w^+\rangle^{5\mu}\langle w^-\rangle^{\mu}
 \nabla^j \hat{p}\|_{L^2(\hat{\Om}_f)}  E_3^{\f12} G_2^{\f12} \|\nabla^{\leq 3} \hat{\Phi}\|_{L^2(\hat{\Om}_f)} F (1+ \|\nabla^{\leq 3} \hat{\Phi}\|_{L^2(\hat{\Om}_f)} ).
\end{align*}

Gathering the above estimates, by Lemma \ref{lemPsi}, the first and the second order weighted estimate of the pressure,
 \eqref{K52} reduces to
\begin{align}\label{K59}
&\|\langle w^+\rangle^{5\mu}\langle w^-\rangle^{\mu} \nabla\p_\tau^2 p\|^2_{L^2(\Om_f)}
+\|\langle w^+\rangle^{5\mu}\langle w^-\rangle^{\mu} \nabla\p_\tau^2 \hat{p}\|^2_{L^2(\hat{\Om}_f)} \\\nonumber
&\lesssim
\big(\|  \langle w^+\rangle^{5\mu}\langle w^-\rangle^{\mu}  \nabla^3 p\|_{L^2(\Om_f)}
+   \|  \langle w^+\rangle^{5\mu}\langle w^-\rangle^{\mu}  \nabla^3 \hat{p}\|_{L^2(\hat{\Om}_f)}\big)
E_3^{\f12} G_3^{\f12} \mathcal{P}(1+ \| \p_1 f\|_{H^{\f52}}) \\\nonumber
&\quad+ E_3 G_3 \mathcal{P}(1+ \| \p_1 f\|_{H^{\f52}} )\, .
\end{align}

\textbf{Third order normal derivative estimate}
Now we treat the third normal derivative estimate of the pressure.
Applying $\nabla$ on $\eqref{H0}_1$, one has
\begin{align}\label{K60}
\nabla \p_2^2 p=-\nabla\p_1^2 p-\nabla(\nabla_j\La_{-}^i \nabla_i\La_{+}^j).
\end{align}
Multiplying \eqref{K60} by $ \langle w^+\rangle^{5\mu}\langle w^-\rangle^{\mu}$,
then taking the $L^2(\Om_f)$ norm of the consequent equation, we obtain
\begin{align*}
\big\| \langle w^+\rangle^{5\mu}\langle w^-\rangle^{\mu}
\nabla\p_2^2 p \big\|_{L^2(\Om_f)}
\leq \big\|\langle w^+\rangle^{5\mu}\langle w^-\rangle^{\mu}
\nabla\p_1^2 p \big\|_{L^2 }
+C G_3^{\f12} E_3^{\f12} \,.
\end{align*}
Consequently,
\begin{align}\label{K63}
&\| \langle w^+\rangle^{5\mu}\langle w^-\rangle^{\mu} \nabla^3 p \|^2_{L^2(\Om_f)}
\\\nonumber
&=\| \langle w^+\rangle^{5\mu}\langle w^-\rangle^{\mu} \nabla  \p_2^2 p \|^2_{L^2(\Om_f)}
+ \| \langle w^+\rangle^{5\mu}\langle w^-\rangle^{\mu} \nabla  \p_1^2 p \|^2_{L^2(\Om_f)}\\\nonumber
&\lesssim  \| \langle w^+\rangle^{5\mu}\langle w^-\rangle^{\mu} \nabla  \p_1^2 p \|^2_{L^2(\Om_f)}
+C G_3 E_3 .
\end{align}
Now we connect $\nabla \p_1^2 p$ with $\nabla  \p_\tau^2 p$ in the last line in \eqref{K63}.
In $\Om_f$, we calculate
\begin{align*}
[\nabla,\p_\tau^2]
&=[\nabla,\p_\tau]\p_\tau+\p_\tau[\nabla,\p_\tau]
=\nabla \Phi \p_2 \p_\tau+\p_\tau(\nabla \Phi \p_2),\\
\p_\tau^2
&=\p_1^2+2\Phi\p_1\p_2+\p_1\Phi\p_2
+|\Phi|^2 \p_2^2 +\Phi \p_2\Phi \p_2 .
\end{align*}
Thus
\begin{align}\label{K66}
&\| \langle w^+\rangle^{5\mu}\langle w^-\rangle^{\mu} [\p_\tau^2, \nabla] p \|^2_{L^2(\Om_f)}\\[-4mm]\nonumber\\\nonumber
&\leq \sum_{1\leq j\leq 2}\| \langle w^+\rangle^{5\mu}\langle w^-\rangle^{\mu} \nabla^j p\|^2_{L^2(\Om_f)} \| \nabla^{\leq 3} \Phi\|^2_{L^2(\Om_f)}
\mathcal{P}\big(1+ \| \nabla^{\leq 3} \Phi\|_{L^2(\Om_f)} \big)  \\\nonumber
&\leq E_3 G_3 \mathcal{P}\big(1+  \| \nabla^{\leq 3} \Phi\|_{L^2(\Om_f)} \big),
\end{align}
\begin{align}\label{K67}
&\| \langle w^+\rangle^{5\mu}\langle w^-\rangle^{\mu} \nabla \p_1^2 p \|^2_{L^2(\Om_f)}
\leq \|\langle w^+\rangle^{5\mu}\langle w^-\rangle^{\mu} \p_\tau^2 \nabla p \|^2_{L^2(\Om_f)} \\[-4mm]\nonumber\\\nonumber
&\quad + \sum_{2\leq j\leq 3}\| \langle w^+\rangle^{5\mu}\langle w^-\rangle^{\mu} \nabla^j p \|^2_{L^2(\Om_f)}
 \| \nabla^{\leq 3} \Phi \|_{L^2(\Om_f)}^2
\mathcal{P}\big(1+  \| \nabla^{\leq 3} \Phi \|_{L^2(\Om_f)} \big)\,.
\end{align}
By Lemma \ref{lemPsi}, \eqref{K59}, \eqref{K66}, the weighted first and second order estimate of the pressure,
 we thus obtain
\begin{align}\label{K70}
&\|\langle w^+\rangle^{5\mu}\langle w^-\rangle^{\mu} \p_\tau^2 \nabla p \|^2_{L^2(\Om_f)}\\\nonumber
&\leq \|\langle w^+\rangle^{5\mu}\langle w^-\rangle^{\mu}  \nabla \p_\tau^2 p \|^2_{L^2(\Om_f)}
+\|\langle w^+\rangle^{5\mu}\langle w^-\rangle^{\mu} [\p_\tau^2, \nabla] p \|^2_{L^2(\Om_f)}\\\nonumber
&\leq \big(\| \langle w^+\rangle^{5\mu}\langle w^-\rangle^{\mu}  \nabla^3 p\|_{L^2(\Om_f)}
+ \| \langle w^+\rangle^{5\mu}\langle w^-\rangle^{\mu}  \nabla^3 \hat{p}\|_{L^2(\hat{\Om}_f)} \big)
 E_3^{\f12} G_3^{\f12} \mathcal{P}(1+ \| \p_1 f\|_{H^{\f52}}) \\\nonumber
&\quad +E_3 G_3 \mathcal{P}(1+ \| \p_1 f\|_{H^{\f52}} )\, .
\end{align}
Note similar estimate also holds for $\hat{p}$.
Combining \eqref{K63}, \eqref{K67}, \eqref{K70},
by applying Lemma \ref{lemPsi}, the weighted first and second order estimate of the pressure
and the H\"older inequality,
 note $\|\p_1 f \|_{H^{\f52}}\leq\delta\ll 1$, we obtain
\begin{align*} 
\|  \langle w^+\rangle^{5\mu}\langle w^-\rangle^{\mu} \nabla^3 p \|^2_{L^2(\Om_f)}
+\|  \langle w^+\rangle^{5\mu}\langle w^-\rangle^{\mu} \nabla^3 \hat{p} \|^2_{L^2(\hat{\Om}_f)}
\leq C E_3G_3\,
\mathcal{P}\big(1+ \| \p_1f\|_{H^{\f52}(\bR)} \big)   .
\end{align*}
Thus we obtain $3$-order derivatives estimate of the pressure
in Lemma \ref{lemPre}.

\subsubsection{Higher order estimate of the pressure ($4\leq m\leq s$)}
The highest order $4\leq m\leq s$ weighted estimate of the pressure is similar to the lower order case, we only sketch the main steps.
Applying $\p_\tau^{m-1}$ to $\eqref{H0}_1$ and $\eqref{H0}_2$, one obtains
\begin{align}
\label{K80}
-\Delta\p_\tau^{m-1} p -\left[\p_\tau^{m-1}, \Delta\right] p =\p_\tau^{m-1}\nabla \cdot (\La_{-} \cdot \nabla \La_{+})
\quad \text{in}\,\,\Om_f,\\
\label{K81}
-\Delta\p_\tau^{m-1} \hat{p} -\left[\p_\tau^{m-1}, \Delta\right] \hat{p} =\p_\tau^{m-1}\nabla \cdot (\hat{\La}_{-} \cdot \nabla \hat{\La}_{+})
\quad \text{in}\,\,\hat{\Om}_f.
\end{align}
Taking the $L^2$ inner product of \eqref{K80} with
$ \langle w^+\rangle^{4\mu}\langle w^-\rangle^{2\mu} \p_\tau^{m-1} p$ in $\Om_f$
and taking the $L^2$ inner product of \eqref{K81} with
$\langle w^+\rangle^{4\mu}\langle w^-\rangle^{2\mu} \p_\tau^{m-1} \hat{p}$ in $\hat{\Om}_f$,
by integration by parts, we have
\begin{align}\label{K82}
&\int_{\Om_f} \langle w^+\rangle^{4\mu}\langle w^-\rangle^{2\mu}
|\nabla\p_\tau^{m-1} p|^2 \dx
+\int_{\hat{\Om}_f} \langle w^+\rangle^{4\mu}\langle w^-\rangle^{2\mu}
|\nabla\p_\tau^{m-1} \hat{p}|^2 \dx \\\nonumber
&=\int_{\Ga_f} \langle w^+\rangle^{4\mu}\langle w^-\rangle^{2\mu}
\big(\p_{n_f}\p_\tau^{m-1} p\cdot \p_\tau^{m-1} p-\p_{n_f}\p_\tau^{m-1} \hat{p}\cdot \p_\tau^{m-1} \hat{p}\big) \,\d\sigma(x)\\\nonumber
&\quad +\int_{\hat{\Ga}}  \langle w^+\rangle^{4\mu}\langle w^-\rangle^{2\mu}
\p_2\p_\tau^{m-1} \hat{p}\cdot \p_\tau^{m-1} \hat{p} \,\d\sigma(x)
-\int_{\Ga}  \langle w^+\rangle^{4\mu}\langle w^-\rangle^{2\mu}
\p_2\p_\tau^{m-1} p\cdot \p_\tau^{m-1} p \,\d\sigma(x)
 \\\nonumber
&\quad-\int_{\Om_f} \nabla \big(\langle w^+\rangle^{4\mu}\langle w^-\rangle^{2\mu} \big)
\cdot \nabla\p_\tau^{m-1} p\, \p_\tau^{m-1} p \dx
-\int_{\hat{\Om}_f} \nabla \big( \langle w^+\rangle^{4\mu}\langle w^-\rangle^{2\mu} \big)
\cdot \nabla\p_\tau^{m-1} \hat{p}\, \p_\tau^{m-1} \hat{p} \dx \\\nonumber
&\quad+\int_{\Om_f} \langle w^+\rangle^{4\mu}\langle w^-\rangle^{2\mu}
\left[\p_\tau^{m-1}, \Delta\right]p\, \p_\tau^{m-1} p \dx
+\int_{\hat{\Om}_f} \langle w^+\rangle^{4\mu}\langle w^-\rangle^{2\mu}
\left[\p_\tau^{m-1}, \Delta\right]\hat{p}\, \p_\tau^{m-1} \hat{p} \dx \\\nonumber
&\quad+\int_{\Om_f} \langle w^+\rangle^{4\mu}\langle w^-\rangle^{2\mu}
\p_\tau^{m-1}(\p_j\La_{-}^i\p_i\La_{+}^j)\, \p_\tau^{m-1} p \dx  \\\nonumber
&\quad +\int_{\hat{\Om}_f} \langle w^+\rangle^{4\mu}\langle w^-\rangle^{2\mu}
\p_\tau^{m-1}(\p_j\hat{\La}_{-}^i\p_i\hat{\La}_{+}^j)\, \p_\tau^{m-1} \hat{p} \dx \, .
\end{align}

For the fourth line of \eqref{K82}, one has
\begin{align*}
&\big| \int_{\Om_f} \nabla \big( \langle w^+\rangle^{4\mu}\langle w^-\rangle^{2\mu}\big)\cdot\nabla \p_\tau^{m-1} p\cdot \p_\tau^{m-1} p \dx
+\int_{\hat{\Om}_f} \nabla \big(\langle w^+\rangle^{4\mu}\langle w^-\rangle^{2\mu}\big)\cdot\nabla \p_\tau^{m-1} \hat{p}\cdot \p_\tau^{m-1} \hat{p} \dx\big| \\
&\leq \delta
\| \langle w^+\rangle^{2\mu}\langle w^-\rangle^{\mu} \nabla\p_\tau^{m-1} p \|^2_{L^2(\Om_f)}
+C_\delta \| \langle w^+\rangle^{2\mu}\langle w^-\rangle^{\mu} \p_\tau^{m-1} p \|^2_{L^2(\Om_f)}\\[-5mm]\\
&\quad+ \delta \| \langle w^+\rangle^{2\mu}\langle w^-\rangle^{\mu} \nabla\p_\tau^{m-1} \hat{p}\|^2_{L^2(\hat{\Om}_f)}
+C_\delta \| \langle w^+\rangle^{2\mu}\langle w^-\rangle^{\mu} \nabla\p_\tau^{m-1} \hat{p}\|^2_{L^2(\hat{\Om}_f)}  \,.
\end{align*}
Here $\delta$ is a small constant. $C_\delta$ is a constant depending on $\delta$.
For the sixth and the seventh line of \eqref{K82}, we estimate
\begin{align*}
&\big|\int_{\Om_f}\langle w^+\rangle^{4\mu}\langle w^-\rangle^{2\mu}
\p_\tau^{m-1}(\p_j\La_{-}^i\p_i\La_{+}^j)\, \p_\tau^{m-1} p \dx
+\int_{\hat{\Om}_f} \langle w^+\rangle^{4\mu}\langle w^-\rangle^{2\mu}
\p_\tau^{m-1}(\p_j\hat{\La}_{-}^i\p_i\hat{\La}_{+}^j)\, \p_\tau^{m-1} \hat{p} \dx \big| \\\nonumber
&\leq E_s^{\f12} G_s^{\f12} \mathcal{P} (1+\|\nabla^{\leq s}\Phi\|_{L^2(\Om_f)}+\|\nabla^{\leq s}\hat{\Phi}\|_{L^2(\hat{\Om}_f)}) \\\nonumber
&\quad\cdot\big( \| \langle w^+\rangle^{2\mu}\langle w^-\rangle^{\mu}
 \p_\tau^{m-1} p\|_{L^2(\Om_f)}
+ \| \langle w^+\rangle^{2\mu}\langle w^-\rangle^{\mu}
  \p_\tau^{m-1} \hat{p}\|_{L^2(\hat{\Om}_f)} \big)\, .
\end{align*}

For the second line of \eqref{K82}, in order to use the boundary of the pressure, we write
\begin{align}\label{K85}
& \int_{\Ga_f} \langle w^+\rangle^{4\mu}\langle w^-\rangle^{2\mu}
\big(\p_{n_f}\p_\tau^{m-1} p\cdot \p_\tau^{m-1} p-\p_{n_f}\p_\tau^{m-1} \hat{p}\cdot \p_\tau^{m-1} \hat{p}\big) \,\d\sigma(x)  \\\nonumber
&= \int_{\BR}\langle w^+\rangle^{4\mu}\langle w^-\rangle^{2\mu}
\p_1^{m-1} \big(\ud{N_f\cdot\nabla  p}-\ud{N_f\cdot\nabla  \hat{p}} \big)\cdot \ud{\p_\tau^{m-1} p} \,\d x_1 \\\nonumber
&\quad- \int_{\BR} \langle w^+\rangle^{4\mu}\langle w^-\rangle^{2\mu}
  \Big( -\ud{[N_f\cdot\nabla ,\p_\tau^{m-1}] p}+\ud{[N_f\cdot\nabla ,\p_\tau^{m-1}] \hat{p}} \Big)\cdot \ud{\p_\tau^{m-1} p} \,\d x_1\,.
\end{align}
For the second line of \eqref{K85}, by Lemma \ref{lemW}, 
Lemma \ref{lemPFH},
weighted trace Lemma \ref{lemD3} and Lemma \ref{lemPsi},
it is bounded by
\begin{align*}
& \big\|\langle w^+\rangle^{2\mu}\langle w^-\rangle^{\mu}
\langle\p_1\rangle^{-\f12}\p_1^{m-1} \big(\ud{N_f\cdot\nabla  p}-\ud{N_f\cdot\nabla  \hat{p}}\big) \big\|_{L^2(\BR)}\\
&\cdot \big\| \langle w^+\rangle^{-2\mu}\langle w^-\rangle^{-\mu} \langle\p_1\rangle^{\f12}  \big( \langle w^+\rangle^{4\mu}\langle w^-\rangle^{2\mu} \ud{\p_\tau^{m-1} p} \big)\big\|_{L^2(\BR)}\\
&\lesssim \big\|\langle w^+\rangle^{2\mu}\langle w^-\rangle^{\mu}
\langle\p_1\rangle^{\f12} \p_1^{m-2} \big(\ud{N_f\cdot\nabla  p}-\ud{N_f\cdot\nabla  \hat{p}}\big) \big\|_{L^2(\BR)}
\big\| \langle w^+\rangle^{2\mu}\langle w^-\rangle^{ \mu} \p^{\leq 1}
\p_\tau^{m-1}  p\big\|_{L^2(\Om_f)}\\
&\lesssim E_s^{\f12} G_s^{\f12}
\sum_{1\leq i\leq m}\big\| \langle w^+\rangle^{2\mu}\langle w^-\rangle^{ \mu}
\nabla^i  p  \big\|_{L^2(\Om_f)} \mathcal{P}(1+\| \p_1 f\|_{H^{s-\f12}}) \,.
\end{align*}
To estimate the last line of \eqref{K85}, similar to the commutator of $[N_f\cdot\nabla ,\p_\tau]$ and $[N_f\cdot\nabla ,\p_\tau^2]$ in $\Om_f$. The commutator of
$[N_f\cdot\nabla ,\p_\tau^{m-1}]$ is a  ${(m-1)}-$order  operator $D_{\Psi,f}^{m-1}$ with weights $\nabla^\alpha\Phi$ and $\p_1^{j+1}f$ with
$1\leq l\leq m-1$, $|\alpha|\leq m-1$, $j\leq m-2$, $l+|\alpha|\leq m+1$
and a $1-$order operator $\p_1^m f\p_1$:
\begin{align*}
&[N_f\cdot\nabla ,\p_\tau^{m-1}] p=D_{\Psi,f}^{m-1}p+\p_1^m f\p_1p \\
& = \Big(\sum_{1\leq |\alpha|, |\beta|\leq m-1, 1\leq |\alpha|+|\beta|\leq m+1}  C_{\alpha,\beta}\nabla^\alpha\Phi \p^\beta p
+\sum_{1\leq j+1, |\beta|\leq m-1, 0\leq j+|\beta|\leq m} C_{j,\beta} \p^{j+1}_1f \p^\beta p \Big)\\
&\quad\cdot \big(1+ \textrm {the multiple of the lower order terms of}\,\,  \nabla^{\leq m-2}\Phi, \p^{1+(\leq m-2)}_1f\big)
+\p_1^m f\p_1p.
\end{align*}
By applying Lemma \ref{lemPsi},
we infer that
\begin{align*}
&\big| \int_{\BR} \langle w^+\rangle^{10\mu}\langle w^-\rangle^{2\mu}\cdot \ud{[N_f\cdot\nabla ,\p_\tau^{m-1}] p}\cdot \ud{\p_\tau^{m-1} p} \,\d x_1 \big|\\
&=\big| \int_{\BR} \langle w^+\rangle^{10\mu}\langle w^-\rangle^{2\mu}\cdot \ud{D_{\Phi,f}^{m-1}p+\p_1^mf \p_1 p}\cdot \ud{\p_\tau^{m-1} p} \,\d x_1 \big|\\
&\leq \big\| \langle w^+\rangle^{-5\mu}\langle w^-\rangle^{-\mu}
\langle \p_1\rangle^{\f12}\big( \langle w^+\rangle^{10\mu}\langle w^-\rangle^{2\mu}
 \ud{D_{\Phi,f}^{m-1}p} \big)\big\|_{L^2(\BR)}\cdot \big\|\langle w^+\rangle^{5\mu}\langle w^-\rangle^{\mu}
 \langle \p_1\rangle^{-\f12}\p_1\ud{\p_\tau^{m-2} p} \big\|_{L^2(\BR)}\\\nonumber
&\quad +\| \p_1^m f\|_{L^2(\bR)}
\sum_{1\leq j\leq 2} \big\| \langle w^+\rangle^{5\mu}\langle w^-\rangle^{ \mu}\nabla^j  p  \big\|_{L^2(\Om_f)}
\| \langle w^+\rangle^{5\mu}\langle w^-\rangle^{\mu}\nabla^{\leq 1}\p_\tau^{m-1} p  \|_{L^2(\Om_f)} \\\nonumber
&\lesssim
\big\| \langle w^+\rangle^{5\mu}\langle w^-\rangle^{ \mu}
\nabla^{\leq m}  p  \big\|_{L^2(\Om_f)}
 \| \langle w^+\rangle^{5\mu}\langle w^-\rangle^{\mu}\nabla^{\leq m-1}  p  \|_{L^2(\Om_f)}
\| \p_1f\|_{H^{s-\f12}(\bR)} \mathcal{P}(1+\| \p_1f\|_{H^{s-\f12}(\bR)}).
\end{align*}
Similarly
\begin{align*}
&\big| \int_{\BR} \langle w^+\rangle^{10\mu}\langle w^-\rangle^{2\mu}\cdot \ud{[N_f\cdot\nabla ,\p_\tau^{m-1}] \hat{p}}\cdot \ud{\p_\tau^{m-1} p} \,\d x_1 \big|\\
&\lesssim
\big\| \langle w^+\rangle^{5\mu}\langle w^-\rangle^{ \mu}
\nabla^{\leq m}  \hat{p}  \big\|_{L^2(\hat{\Om}_f)}
 \| \langle w^+\rangle^{5\mu}\langle w^-\rangle^{\mu}\nabla^{\leq m-1}  \hat{p}  \|_{L^2(\hat{\Om}_f)}
\| \p_1f\|_{H^{s-\f12}(\bR)} \mathcal{P}(1+\| \p_1f\|_{H^{s-\f12}(\bR)}).
\end{align*}

For the third line of \eqref{K82},
due to the boundary condition of the pressure on $\Ga$ and $\hat{\Ga}$, there holds
\begin{align*}
&\int_{\hat{\Ga}}  \langle w^+\rangle^{4\mu}\langle w^-\rangle^{2\mu}
\p_2\p_1^{m-1} \hat{p}\cdot \p_1^{m-1} \hat{p}\, \d x_1
-\int_{\Ga}  \langle w^+\rangle^{4\mu}\langle w^-\rangle^{2\mu}
\p_2\p_1^{m-1} p\cdot \p_1^{m-1} p\, \d x_1=0\,.
\end{align*}


\textbf{Estimate of the fifth of \eqref{K82}}
We have
\begin{align*} 
&\big| \int_{\Om_f} \langle w^+\rangle^{4\mu}\langle w^-\rangle^{2\mu}
\left[\p_\tau^{m-1}, \Delta\right]p\, \p_\tau^{m-1} p \dx
+\int_{\hat{\Om}_f} \langle w^+\rangle^{4\mu}\langle w^-\rangle^{2\mu}
\left[\p_\tau^{m-1}, \Delta\right]\hat{p}\, \p_\tau^{m-1} \hat{p} \dx \big| \\\nonumber
&\leq \|  \langle w^+\rangle^{2\mu}\langle w^-\rangle^{\mu} \left[\p_\tau^{m-1}, \Delta\right]p \|_{L^2(\Om_f)}
\| \langle w^+\rangle^{2\mu}\langle w^-\rangle^{\mu}  \p_\tau^{m-1} p \|_{L^2(\Om_f)}
\\[-4mm]\nonumber\\\nonumber
&\quad+ \| \langle w^+\rangle^{2\mu}\langle w^-\rangle^{\mu} \left[\p_\tau^{m-1}, \Delta\right]\hat{p} \|_{L^2(\hat{\Om}_f)}
\| \langle w^+\rangle^{2\mu}\langle w^-\rangle^{\mu}  \p_\tau^{m-1} \hat{p} \|_{L^2(\hat{\Om}_f)}\,.
\end{align*}
%
Similar to the commutator calculation of $\big[\p_\tau^{\leq 2}, \Delta\big]p$, by an induction argument, there holds
\begin{align*} 
&\| \langle w^+\rangle^{2\mu}\langle w^-\rangle^{\mu}
\big[\p_\tau^{m-1}, \Delta\big]p \|_{L^2(\Om_f)} \\\nonumber
&\lesssim
 \sum_{1\leq i \leq m} \|  \langle w^+\rangle^{2\mu}\langle w^-\rangle^{\mu} \nabla^i p\|_{L^2(\Om_f)}
 \sum_{ i\leq m}\| \nabla^i\Phi\|_{L^2(\Om_f)}
 \mathcal{P}\big(1+\sum_{ j\leq m}\| \nabla^j\Phi\|_{L^2(\Om_f)} \big).
\end{align*}
Similarly,
\begin{align*}
&\| \langle w^+\rangle^{2\mu}\langle w^-\rangle^{\mu}
\big[\p_\tau^{m-1}, \Delta\big]\hat{p} \|_{L^2(\hat{\Om}_f)} \\\nonumber
&\lesssim
 \sum_{1\leq i \leq m} \|  \langle w^+\rangle^{2\mu}\langle w^-\rangle^{\mu} \nabla^i \hat{p}\|_{L^2(\hat{\Om}_f)}
 \sum_{ i\leq m}\| \nabla^i\hat{\Phi}\|_{L^2(\hat{\Om}_f)}
 \mathcal{P}\big(1+\sum_{ j\leq m}\| \nabla^j\hat{\Phi}\|_{L^2(\hat{\Om}_f)} \big).
\end{align*}
Gathering the above estimates, by Lemma \ref{lemPsi},
the induction result of the lower order ($k\leq m-1$) weighted estimate of the pressure,
 \eqref{K82} reduces to
\begin{align*} 
&\| \langle w^+\rangle^{2\mu}\langle w^-\rangle^{\mu}
\nabla\p_\tau^{m-1} p \|^2_{L^2(\Om_f)}
+\|  \langle w^+\rangle^{2\mu}\langle w^-\rangle^{\mu}
\nabla\p_\tau^{m-1} \hat{p} \|^2_{L^2(\hat{\Om}_f)} \\\nonumber
&\lesssim
\big(\|  \langle w^+\rangle^{2\mu}\langle w^-\rangle^{\mu}  \nabla^m p\|_{L^2(\Om_f)}
+   \|  \langle w^+\rangle^{2\mu}\langle w^-\rangle^{\mu}  \nabla^m \hat{p}\|_{L^2(\hat{\Om}_f)}\big)
E_s^{\f12} G_s^{\f12} \mathcal{P}(1+ \| \p_1 f\|_{H^{s-\f12}}) \\\nonumber
&\quad+ E_s G_s \mathcal{P}(1+ \| \p_1 f\|_{H^{s-\f12}} )\, .
\end{align*}

\textbf{$m-$order normal derivative estimate}
Now we treat the third normal derivative estimate of the pressure.
For $|\alpha|=m-2$, $m\leq s$, applying $\p^\alpha$ on $\eqref{H0}_1$, one has
\begin{align}\label{K90}
\p^\alpha \p_2^2 p=-\p^\alpha\p_1^2 p-\p^\alpha(\nabla_j\La_{-}^i \nabla_i\La_{+}^j).
\end{align}
Multiplying \eqref{K90} by $ \langle w^+\rangle^{2\mu}\langle w^-\rangle^{\mu}$,
then taking the $L^2(\Om_f)$ norm of the consequent equation, we obtain
\begin{align}\label{K91}
\big\| \langle w^+\rangle^{2\mu}\langle w^-\rangle^{\mu}
\p^\alpha \p_2^2 p \big\|_{L^2(\Om_f)}
\leq \big\|\langle w^+\rangle^{2\mu}\langle w^-\rangle^{\mu}
\p^\alpha \p_1^2 p \big\|_{L^2 }
+C G_{s-1}^{\f12} E_{s-1}^{\f12} \,.
\end{align}
We one-by-one take  $\p^\alpha=\p_1^{m-2}$, $\p^\alpha=\p_2 \p_1^{m-3}$, ..., $\p^\alpha=\p_2^{m-2}$. By an induction, \eqref{K91} yields
\begin{align*}
\sum_{|\alpha|=m-2}\big\|  \langle w^+\rangle^{2\mu}\langle w^-\rangle^{\mu}
\p^{\alpha}\p_2^2 p \big\|_{L^2(\Om_f)}
\lesssim \big\|   \langle w^+\rangle^{2\mu}\langle w^-\rangle^{\mu}
 \nabla  \p_1^{m-1} p \big\|_{L^2(\Om_f)}
+C G_{s-1}^{\f12} E_{s-1}^{\f12} .
\end{align*}
Consequently,
\begin{align}\label{K93}
&\| \langle w^+\rangle^{2\mu}\langle w^-\rangle^{\mu} \nabla^m p \|^2_{L^2(\Om_f)}
\\\nonumber
&=\sum_{|\alpha|=m-2} \| \langle w^+\rangle^{2\mu}\langle w^-\rangle^{\mu} \p^\alpha  \p_2^2 p \|^2_{L^2(\Om_f)}
                +   \| \langle w^+\rangle^{2\mu}\langle w^-\rangle^{\mu} \nabla  \p_1^{m-1} p \|^2_{L^2(\Om_f)}\\\nonumber
&\lesssim \| \langle w^+\rangle^{2\mu}\langle w^-\rangle^{\mu} \nabla  \p_1^{m-1} p \|^2_{L^2(\Om_f)}
+C G_{s-1} E_{s-1} .
\end{align}
Now we establish the relation between $\nabla \p_1^{m-1} p$ and $\nabla  \p_\tau^{m-1} p$.
Similar to \eqref{K67}, by an induction argument, we would have
\begin{align}\label{K94}
&\| \langle w^+\rangle^{2\mu}\langle w^-\rangle^{\mu} \nabla \p_1^{m-1} p \|^2_{L^2(\Om_f)}
\leq \|\langle w^+\rangle^{2\mu}\langle w^-\rangle^{\mu} \p_\tau^{m-1} \nabla p \|^2_{L^2(\Om_f)}  \\\nonumber
&\quad + \sum_{1\leq j\leq m}\| \langle w^+\rangle^{2\mu}
\langle w^-\rangle^{\mu} \nabla^j p \|^2_{L^2(\Om_f)}
 \| \nabla^{\leq m} \Phi \|_{L^2(\Om_f)}^2
\mathcal{P}\big(1+  \| \nabla^{\leq m} \Phi \|_{L^2(\Om_f)} \big)\,.
\end{align}
Similarly, by an induction, we would have
\begin{align}\label{K96}
&\| \langle w^+\rangle^{2\mu}\langle w^-\rangle^{\mu} [\p_\tau^{m-1}, \nabla] p \|^2_{L^2(\Om_f)}\\[-5mm]\nonumber\\\nonumber
&\leq \sum_{1\leq j\leq m}\| \langle w^+\rangle^{2\mu}\langle w^-\rangle^{\mu} \nabla^j p\|^2_{L^2(\Om_f)} \| \nabla^{\leq s} \Phi\|^2_{L^2(\Om_f)}
\mathcal{P}\big(1+ \| \nabla^{\leq s} \Phi\|_{L^2(\Om_f)} \big)  \\\nonumber
&\leq E_s G_s \mathcal{P}\big(1+  \| \nabla^{\leq s} \Phi\|_{L^2(\Om_f)} \big).
\end{align}
By applying \eqref{K96} and Lemma \ref{lemPsi}, \eqref{K94} is further bounded by
\begin{align}\label{K97}
&\| \langle w^+\rangle^{2\mu}\langle w^-\rangle^{\mu} \nabla \p_1^{m-1} p \|^2_{L^2(\Om_f)} \\[-5mm]\nonumber\\\nonumber
&\leq \|\langle w^+\rangle^{2\mu}\langle w^-\rangle^{\mu} \nabla \p_\tau^{m-1} p \|^2_{L^2(\Om_f)}
+ \|\langle w^+\rangle^{2\mu}\langle w^-\rangle^{\mu} [\p_\tau^{m-1},\nabla] p \|^2_{L^2(\Om_f)} \\\nonumber
&\quad + \sum_{1\leq j\leq m}\| \langle w^+\rangle^{2\mu}
\langle w^-\rangle^{\mu} \nabla^j p \|^2_{L^2(\Om_f)}
\|\p_1f\|^2_{H^{s-\f12}} \mathcal{P}\big(1+ \|\p_1f\|_{H^{s-\f12}} \big) \\\nonumber
&\lesssim  \|\langle w^+\rangle^{2\mu}\langle w^-\rangle^{\mu} \nabla \p_\tau^{m-1} p \|^2_{L^2(\Om_f)}
+ E_s G_s \mathcal{P}\big(1+ \|\p_1f\|_{H^{s-\f12}} \big)\\\nonumber
&\quad+\sum_{1\leq j\leq m}\| \langle w^+\rangle^{2\mu}
\langle w^-\rangle^{\mu} \nabla^j p \|^2_{L^2(\Om_f)}
\|\p_1f\|^2_{H^{s-\f12}} \mathcal{P}\big(1+ \|\p_1f\|_{H^{s-\f12}} \big).
\end{align}
Note that similar estimate holds for $\hat{p}$ in $\hat{\Om}_f$.
Combining \eqref{K93}, \eqref{K97},
the induction result of the lower order ($k\leq m-1$) weighted estimate of the pressure,
and by the H\"older inequality,
 note $\|\p_1 f \|_{H^{\f52}}\leq\delta\ll 1$, we obtain
\begin{align*} 
\|  \langle w^+\rangle^{2\mu}\langle w^-\rangle^{\mu} \nabla^m p \|^2_{L^2(\Om_f)}
+\|  \langle w^+\rangle^{2\mu}\langle w^-\rangle^{\mu} \nabla^m \hat{p} \|^2_{L^2(\hat{\Om}_f)}
\leq C E_sG_s\,
\mathcal{P}\big(1+ \| \p_1f\|_{H^{s-\f12}(\bR)} \big)   .
\end{align*}
Combining Lemma \ref{lemP1} and the weighted higher derivative ($2\leq k\leq s$) estimate
of pressure in this section yields  Lemma \ref{lemPre}.

\section{Weighted highest order estimate of the pressure }

This section is devoted to the $s-\f12$ order weighted estimate of $N_f\cdot(\ud{\nabla p} +\ud{\nabla \hat{p} })$,
which is required in the weighted energy estimate of the free surface.
Due to the dependence of regularity of the free surface,
we have $s-$order weighted energy estimate of the pressure inside the domain, as have been proved in Lemma \ref{lemPre}.
In the weighted estimate of the free surface,
this is not enough to obtain the control of the $s-\f12$ order weighted estimate of $N_f\cdot(\ud{\nabla p} +\ud{\nabla \hat{p} })$.
To solve this problem, we use the classical decomposition of the pressure: $p=p^1+p^2$, $\hat{p}=\hat{p}^1+\hat{p}^2$ where $p^1$ and $\hat{p}^1$ are harmonic functions with inhomogeneous boundary conditions, $p^2$ and $\hat{p}^2$ satisfy the Poisson equations with inhomogeneous boundary conditions (see \eqref{U4} and \eqref{U5}). 
Thus by using the known results for the Dirichlet-Neumann (DN) operator, some commutator estimate and the inherent cancellation structure,
 we could save one order derivative and obtain the weighted estimate of $N_f\cdot(\ud{\nabla p} +\ud{\nabla \hat{p} })$.

Recalling that the elliptic equations and the boundary conditions for the pressure  \eqref{B22} as follows
\begin{equation}\label{U2}
\left\{
\begin{array}{l}
 \Delta p = -\nabla \cdot (\La_{-} \cdot \nabla \La_{+})
\quad \text{in}\quad\Om_f,\\
\Delta \hat{p} = -\nabla \cdot (\La_{-} \cdot \nabla \La_{+})
\quad \text{in}\quad\hat{\Om}_f,\\
p=\hat{p}\quad\text{on}\quad\Ga_f ,\\
N_f\cdot\nabla p-N_f\cdot\nabla \hat{p}= -\big(\ud{\La_{+}^1} \ud{\La_{-}^1}- \ud{\hat{\La}_{+}^1} \ud{\hat{\La}_{-}^1} \big) \p_1^2 f\\
\qquad-\big(\ud{\La_{+}^1}-\ud{\hat{\La}_{+}^1} \big) \p_1 (\p_t-  \p_1) f
  -\big(\ud{\La_{-}^1}-\ud{\hat{\La}_{-}^1}\big) \p_1 (\p_t+  \p_1) f
\quad\text{on}\quad\Ga_f ,\\
\p_2 p=0\,\,   \textrm{on} \,\,  \Ga,
\quad \p_2 \hat{p}=0\,\,   \textrm{on} \,\,  \hat{\Ga}.
\end{array}\right.
\end{equation}

In this section, we are going to show
\begin{lem}\label{lemPP0}
Let $s\geq 4$ be an integer, $1/2< \mu \leq 3/5$,
$\delta>0$ be a sufficiently small number, $\| \p_1f \|_{H^{s-\f12}(\BR)} \leq \delta.$
For the pressure satisfying \eqref{H0},
there holds
\begin{align*}
\| \langle w^\pm\rangle^{2\mu} \langle w^\mp\rangle^{\mu}
\langle \p_1 \rangle^{\f12} \p_1^{s-1}\big(\ud{N_f\cdot\nabla p} + \ud{N_f\cdot\nabla \hat{p}} \big) \|_{L^2 (\BR)}
\lesssim  E_s^{\f12} G_s^{\f12}.
\end{align*}
\end{lem}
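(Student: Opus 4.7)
The plan is to isolate the cancellation in $\ud{N_f\cdot\nabla p}+\ud{N_f\cdot\nabla\hat p}$ by the classical decomposition of the pressure into harmonic and Poisson parts. I set $p^2$ to be the solution of
\begin{equation*}
\Delta p^2=-\nabla\cdot(\La_{-}\cdot\nabla\La_{+})\ \text{in}\ \Om_f,\qquad p^2|_{\Ga_f}=0,\qquad \p_2 p^2|_{\Ga}=0,
\end{equation*}
and $p^1:=p-p^2$, so that $p^1$ is harmonic in $\Om_f$ with Dirichlet data $\ud{p^1}=\ud{p}$ on $\Ga_f$ and $\p_2 p^1|_{\Ga}=0$; analogously $\hat p=\hat p^1+\hat p^2$ in $\hat\Om_f$. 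Since $\ud{p}=\ud{\hat p}$ on $\Ga_f$ and $\ud{p^2}=\ud{\hat p^2}=0$, the two harmonic Dirichlet traces coincide: $\phi:=\ud{p^1}=\ud{\hat p^1}$. The quantity of interest thus splits as
\begin{equation*}
\ud{N_f\cdot\nabla p}+\ud{N_f\cdot\nabla\hat p}
=|N_f|(\mathcal N_f-\hat{\mathcal N}_f)\phi
+\ud{N_f\cdot\nabla p^2}+\ud{N_f\cdot\nabla\hat p^2},
\end{equation*}
where $\mathcal N_f,\hat{\mathcal N}_f$ are the Dirichlet--Neumann operators of the Laplacian on $\Om_f,\hat\Om_f$ with homogeneous Neumann data on the fixed walls.

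For the harmonic piece, the crucial point is that although each individual DN operator is first order, their difference is strictly lower order: it vanishes identically when $f=0$ (the two slabs are mirror images) and therefore always carries a structural factor of $f$, effectively making the difference operator bilinear in $(f,\phi)$. Using the DN expansions of Section~10, this should deliver a bound
\begin{equation*}
\|\langle w^+\rangle^{2\mu}\langle w^-\rangle^{\mu}\langle\p_1\rangle^{\f12}\p_1^{s-1}|N_f|(\mathcal N_f-\hat{\mathcal N}_f)\phi\|_{L^2(\BR)}
\lesssim \|\p_1 f\|_{H^{s-\f12}(\BR)}\cdot\|\phi\|_{\text{wt. }H^{s-\f12}},
\end{equation*}
which needs only the $s-\f12$ Sobolev norm of the Dirichlet trace $\phi=\ud{p}$, saving one derivative compared with the naive trace of $\nabla p$. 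The trace $\phi$ will then be controlled through the weighted trace theorem Lemma~\ref{lemD3} together with the $s$-order weighted pressure estimate Lemma~\ref{lemPre}, producing the required $E_s^{1/2}G_s^{1/2}$ bound. The factor $|N_f|$ and the weights are moved across $\mathcal N_f-\hat{\mathcal N}_f$ by the near-/far-diagonal kernel splitting used in the proof of Lemma~\ref{lemD0}.

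For the Poisson piece, the vanishing of $p^2$ on $\Ga_f$ implies that all tangential derivatives of $p^2$ vanish there, so $\ud{N_f\cdot\nabla p^2}=|N_f|^2\ud{\p_{n_f}p^2}$ is purely in the normal direction. Using $\div\La_{-}=0$, the source admits a double-divergence representation $\Delta p^2=-\p_i\p_j(\La_{-}^i\La_{+}^j)$. To estimate $\ud{\p_{n_f}p^2}$ in weighted $H^{s-\f12}$ I will proceed by duality: for a test function $\psi$ on $\Ga_f$, extend $\psi$ into $\Om_f$ as the harmonic function $\tilde\psi$ with Dirichlet data $\psi$ on $\Ga_f$ and homogeneous Neumann data on $\Ga$, then invoke Green's identity and integrate by parts twice to shift both derivatives off $\La_{-}^i\La_{+}^j$ onto $\tilde\psi$. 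The resulting bilinear pairing $\int_{\Om_f}\La_{-}^i\La_{+}^j\,\p_i\p_j\tilde\psi\,\dx$ is to be bounded using the weighted Sobolev inequalities of Section~3 together with the elliptic regularity bound $\|\tilde\psi\|_{H^{s+\f12}(\Om_f)}\lesssim\|\psi\|_{H^{s-\f12}(\BR)}$; taking the supremum over $\psi$ and treating $\hat p^2$ in $\hat\Om_f$ analogously yields the desired bound.

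The main obstacle is the weighted Dirichlet--Neumann estimate in the harmonic step. Standard DN theory is formulated in unweighted Sobolev spaces, whereas our target norm carries the anisotropic weights $\langle w^+\rangle^{2\mu}\langle w^-\rangle^{\mu}$, which fall outside the Muckenhoupt class $\mathcal A_2$ for $\mu>1/4$ and so preclude the direct use of weighted singular-integral theory. I will instead adapt the case-by-case kernel analysis of Section~3 to the DN-difference operator, using $\langle w^\pm(x_1)\rangle\sim\langle w^\pm(x_1')\rangle$ for $|x_1-x_1'|\le 1/2$ (see \eqref{D15}) and the polynomial growth estimate \eqref{D19} for $|x_1-x_1'|\ge 1/2$, with the polynomial factor absorbed by the exponential decay of the kernel. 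Once this is in place, combining the harmonic and Poisson contributions with Lemma~\ref{lemPre}, Lemma~\ref{lemPFH}, Lemma~\ref{lemD3}, and the smallness assumption $\|\p_1 f\|_{H^{s-\f12}}\le\delta$ closes the estimate.
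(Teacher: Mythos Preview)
Your decomposition $p=p^1+p^2$, $\hat p=\hat p^1+\hat p^2$ and the identification of the DN cancellation $\mathcal N_f-\hat{\mathcal N}_f$ as the mechanism that saves one derivative on the harmonic piece match the paper's strategy exactly. The gap is in how you propose to pass the weights $\langle w^+\rangle^{2\mu}\langle w^-\rangle^{\mu}$ through $\mathcal N_f-\hat{\mathcal N}_f$. You invoke ``the case-by-case kernel analysis of Section~3,'' but Lemmas~\ref{lemD0}, \ref{lemD5}, \ref{lemD9} are written for \emph{convolution} operators with explicit Bessel-potential kernels whose off-diagonal exponential decay is known a priori; the near/far splitting there relies on that explicit decay. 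The DN operator on a curved domain $\Om_f$ is not a convolution: its Schwartz kernel depends on $f$ and is defined only implicitly through the boundary value problem, so neither the exponential far-field decay nor the near-diagonal cancellation structure transfers automatically. A uniform-in-$f$ kernel estimate for $\mathcal N_f-\hat{\mathcal N}_f$ of the type Section~3 exploits would itself be a substantial result, and you do not supply one.

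The paper sidesteps this entirely by a different device: it pre-multiplies the harmonic equation by the weight $W=\langle w^+\rangle^{2\mu}\langle w^-\rangle^{\mu}$, so that $Wp^1$ satisfies $\Delta(Wp^1)=2\nabla W\cdot\nabla p^1+(\Delta W)\,p^1$ with Dirichlet data $W\ud p$ on $\Ga_f$, and then re-decomposes $Wp^1=p_w^{11}+p_w^{12}$ into a harmonic part and a zero-Dirichlet Poisson part (and likewise for $\hat p^1$). The \emph{unweighted} DN cancellation \eqref{appd-9} then applies directly to $p_w^{11}$ (Lemma~\ref{lemPP4}), and the new source in $p_w^{12}$ is controlled by the already-proved weighted $H^s$ pressure bound (Lemma~\ref{lemPP1}) since $|\nabla W|\lesssim W$. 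After that, commuting $W$ back outside $\langle\p_1\rangle^{\f12}\p_1^{s-1}$ uses only Lemmas~\ref{lemD5} and \ref{lemD9}. For the Poisson piece $p^2$, the paper also takes a shorter route than your duality argument (which, incidentally, produces unaddressed boundary terms on $\Ga_f$ when you integrate by parts twice): because $p^2|_{\Ga_f}=0$, the boundary integral in the tangential energy estimate \eqref{UU3} vanishes, so the weighted estimate for $p^2$ closes at order $s+1$ rather than $s$ (Lemma~\ref{lemPP1}, \eqref{U6}); the weighted $H^{s-\f12}$ bound for $\ud{N_f\cdot\nabla p^2}$ then follows from the weighted trace theorem and chain rule.
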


In the sequel, we only prove the above lemma when the weight function takes $\langle w^+\rangle^{2\mu} \langle w^-\rangle^{\mu}$. The other case can be estimated similarly.

We decompose $p=p^1 +p^2$ and $\hat{p}=\hat{p}^1 +\hat{p}^2$.
They are  the solution of the following elliptic equation
\begin{equation}\label{U4}
\left\{
\begin{array}{l}
\Delta p^1= 0
\quad \text{in}\quad\Om_f,\\
\Delta \hat{p}^1= 0
\quad \text{in}\quad\hat{\Om}_f,\\
p^1=\hat{p}^1=\ud{p}\quad\text{on}\quad\Gamma_f ,\\
\p_2 p^1=0\,\,   \textrm{on} \,\,  \Ga,
\quad \p_2 \hat{p}^1=0\,\,   \textrm{on} \,\,  \hat{\Ga},
\end{array}\right.
\end{equation}
and
\begin{equation}\label{U3}
\left\{
\begin{array}{l}
\Delta p^2= -\nabla \cdot (\La_{-} \cdot \nabla \La_{+})
\quad \text{in}\quad\Om_f,\\
\Delta \hat{p}^2= -\nabla \cdot (\hat{\La}_{-} \cdot \nabla \hat{\La}_{+})
\quad \text{in}\quad\hat{\Om}_f,\\
p^2=\hat{p}^2=0\quad\text{on}\quad\Ga_f ,\\
\p_2 p^2=0\,\,   \textrm{on} \,\,  \Ga,
\quad \p_2 \hat{p}^2=0\,\,   \textrm{on} \,\,  \hat{\Ga}.
\end{array}\right.
\end{equation}
Similar to the weighted energy estimate of the pressure in the above section, there holds similar weighted estimate for
$p^1,\, \hat{p}^1,\, p^2,\, \hat{p}^2.$
\begin{lem}\label{lemPP1}
Let $s\geq 4$ be an integer, $1/2< \mu \leq 3/5$,
$\delta>0$ be a sufficiently small number, $\| \p_1f \|_{H^{s-\f12}(\BR)} \leq \delta.$
For the pressure satisfying \eqref{U4},
there holds
\begin{align}\label{U5}
&\sum_{|\alpha|\leq s }\| \langle w^\pm\rangle^{2\mu}
\langle w^\mp\rangle^{\mu}  \p^\alpha p^1\|_{L^2(\Om_f)}^2
+\sum_{|\alpha|\leq s }\| \langle w^\pm\rangle^{2\mu}
\langle w^\mp\rangle^{\mu}  \p^\alpha \hat{p}^1\|_{L^2(\hat{\Om}_f)}^2\\\nonumber
&\lesssim
E_s G_s
\cdot \mathcal{P}(1+ \| \p_1f \|_{H^{s-\f12}(\BR)}),
\end{align}
For the pressure satisfying \eqref{U3},
there holds
\begin{align}\label{U6}
&\sum_{|\alpha|\leq s+1 }\| \langle w^\pm\rangle^{2\mu}
\langle w^\mp\rangle^{\mu}  \p^\alpha p^2\|_{L^2(\Om_f)}^2
+\sum_{|\alpha|\leq s+1 }\| \langle w^\pm\rangle^{2\mu}
\langle w^\mp\rangle^{\mu}  \p^\alpha \hat{p}^2\|_{L^2(\hat{\Om}_f)}^2\\\nonumber
&\lesssim
E_s G_s
\cdot \mathcal{P}(1+ \| \p_1f \|_{H^{s-\f12}(\BR)}),
\end{align}
where $\mathcal{P}(\cdot)$ is a polynomial.
\end{lem}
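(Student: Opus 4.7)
The plan is to adapt the multi-step weighted energy argument of Lemma \ref{lemPre} (Section 4.4) by running it in parallel on the two sub-problems \eqref{U4} and \eqref{U3}. For each we combine a tangential-derivative energy estimate---applying $\p_\tau^m$ for $0\leq m\leq s-1$ on the harmonic piece and for $0\leq m\leq s$ on the Poisson piece, then testing against $\langle w^+\rangle^{4\mu}\langle w^-\rangle^{2\mu}\p_\tau^m p^i$ (and the analogue on $\hat{\Om}_f$)---with a normal-derivative recovery from the respective elliptic equations. The extender $\Phi,\hat{\Phi}$ that realizes $\p_1\ud{g}=\ud{\p_\tau g}$ at the highest order is controlled by Lemma \ref{lemPsi} in terms of $\|\p_1 f\|_{H^{s-\f12}}$, and all weighted commutators with $\langle w^+\rangle^{2\mu}\langle w^-\rangle^{\mu}$ are handled via the inhomogeneous Bessel-potential commutator estimates of Section 3 (Lemmas \ref{lemD10} and \ref{lemD11}).

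For \eqref{U5}: first, since $\ud{p^2}=\ud{\hat p^2}=0$, the trace $\ud{p^1}=\ud{p}$ inherits the integral identity \eqref{Pre-Ide}, and Lemma \ref{lemPI} therefore yields the weighted $L^2$ bound of $p^1,\hat{p}^1$ needed as the base case. The tangential induction for $1\leq m\leq s-1$ then copies the argument of Lemma \ref{lemPre} almost line by line: the boundary integral $\int_{\Ga_f}\langle w\rangle^{4\mu}\bigl(\p_{n_f}\p_\tau^m p^1\cdot\p_\tau^m p^1-\p_{n_f}\p_\tau^m\hat{p}^1\cdot\p_\tau^m\hat{p}^1\bigr)\d\sigma(x)$ is closed using $\p_\tau^m\ud{p^1}=\p_1^m\ud{p}$ together with the weighted trace Lemma \ref{lemD3}, and the normal derivatives are recovered from $\Delta p^1=0$, which is in fact simpler than in Lemma \ref{lemPre} because no interior inhomogeneity is present.

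For \eqref{U6}: the crucial gain is that the homogeneous Dirichlet condition $p^2=\hat{p}^2=0$ on $\Ga_f$ forces $\p_\tau^m p^2\big|_{\Ga_f}=\p_\tau^m\hat{p}^2\big|_{\Ga_f}=0$ for every $m\geq 0$, so the free-surface boundary integrals vanish after integration by parts. This is precisely what permits pushing the tangential index up to $m=s$ rather than stopping at $m=s-1$, and the $(s+1)$-st normal derivative is then recovered from the Poisson equation $\Delta p^2=-\nabla\cdot(\La_-\cdot\nabla\La_+)$ exactly as in Sections 4.4.2--4.4.4. The inhomogeneity carries at most $s-1$ derivatives of $(\La_\pm,\hat{\La}_\pm)$ and is controlled in weighted $L^2$ by $E_s G_s\,\mathcal{P}(1+\|\p_1 f\|_{H^{s-\f12}})$ through Leibniz and the weighted Sobolev inequalities of Section 3.

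The hard part will be the highest-order tangential commutator $[\p_\tau^s,\Delta]p^2$ tested against the weighted function, because it contains $\nabla^{\leq s}\Phi$ and hence sits exactly at the borderline regularity $\|\p_1 f\|_{H^{s-\f12}}$ provided by Lemma \ref{lemPsi}. It must be distributed so that every factor $\nabla^\alpha\Phi$ with $|\alpha|=s$ is paired with a low-derivative factor of $p^2$ via weighted H\"older and Sobolev embedding, while the single factor of $p^2$ at order $s+1$ is absorbed on the left-hand side by a smallness argument identical to Step 4 of the proof of Lemma \ref{lemP1} (using $\|\p_1 f\|_{H^{s-\f12}}\leq\delta\ll 1$). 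Once this term is tamed, the Neumann boundary pieces on $\Ga,\hat{\Ga}$ drop out, the weight-commutator $\nabla(\langle w^+\rangle^{4\mu}\langle w^-\rangle^{2\mu})$ is subsumed into the lower-order induction, and both \eqref{U5} and \eqref{U6} follow by induction on the derivative order.
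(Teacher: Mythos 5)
Your overall strategy (weighted tangential energy estimates plus normal-derivative recovery from the elliptic equation, with the extra order for $p^2$ coming from the homogeneous Dirichlet data) is the same as the paper's, and your treatment of \eqref{U6} is essentially the paper's proof. But there is a genuine gap in your treatment of \eqref{U5}, precisely at the one structural point where the two sub-problems cannot be run ``in parallel.'' In the tangential energy estimate for $p^1$ the free-surface boundary term is
$\int_{\Ga_f}\langle w\rangle\bigl(\p_{n_f}\p_\tau^{m}p^1-\p_{n_f}\p_\tau^{m}\hat{p}^1\bigr)\p_\tau^{m}\ud{p}\,\d\sigma$, and to close it (as in \eqref{K54}, \eqref{K85}) you need the jump $N_f\cdot\ud{\nabla p^1}-N_f\cdot\ud{\nabla\hat{p}^1}$ in a weighted $H^{m-\f12}$ norm. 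Unlike for $p$, this jump is \emph{not} prescribed by \eqref{U4}, and your proposed fix --- ``closed using $\p_\tau^m\ud{p^1}=\p_1^m\ud{p}$ together with the weighted trace Lemma \ref{lemD3}'' --- does not work: applying the trace lemma to $\p_{n_f}\p_\tau^{m}p^1$ at the top order $m=s-1$ requires $s+1$ interior derivatives of $p^1$, which is more than \eqref{U5} provides, so the argument is circular. The paper's proof is necessarily sequential: one proves \eqref{U6} \emph{first} (with the extra order of regularity), then writes
$N_f\cdot\nabla p^1-N_f\cdot\nabla\hat{p}^1=(N_f\cdot\nabla p-N_f\cdot\nabla\hat{p})-(N_f\cdot\nabla p^2-N_f\cdot\nabla\hat{p}^2)$ on $\Ga_f$, controls the first bracket by the explicit jump condition (Lemmas \ref{lemPFL}, \ref{lemPFH}) and the second by \eqref{U6} together with the weighted trace theorem, and only then runs the Lemma \ref{lemPre} machinery on $p^1$. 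You need to add this step and reorder your argument accordingly.

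Two smaller points on \eqref{U6}. First, $[\p_\tau^{s},\Delta]p^2$ written out naively contains $\nabla^{s+1}\Phi\,\p_2 p^2$, and $\|\nabla^{s+1}\Phi\|_{L^2}$ requires $\p_1 f\in H^{s+\f12}$, half a derivative more than is available; the paper avoids this by splitting $[\p_\tau^{s},\Delta]=\p_\tau[\p_\tau^{s-1},\Delta]+[\p_\tau,\Delta]\p_\tau^{s-1}$ and integrating the outermost $\p_\tau$ by parts onto the test function, which is what actually reduces the coefficients to $\nabla^{\leq s}\Phi$ as you assert. Second, $\p_\tau^{s}(\p_j\La_-^i\p_i\La_+^j)$ puts $s+1$ derivatives on $\La_\pm$, exceeding $E_s$; the same integration by parts (trading one $\p_\tau$ for $\p_\tau^{s+1}p^2$, which is absorbed on the left) is needed there too. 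Both devices appear in the paper's handling of \eqref{UU3} and should be made explicit in your write-up.
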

We remind that due to the vanishing of all the boundary conditions for $p^2$ and $\hat{p}^2$, there holds $(s+1)$-order estimate for $p^2$ and $\hat{p}^2$.
\begin{proof}
We will first show the weighted energy estimate for $p^2$ and $\hat{p}^2$.
The $s-$order weighted estimates of  \eqref{U6} is similar the weighted energy estimate for the pressure in the above section. Thus the details are omitted. The proof of the $(s+1)-$order weighted estimates in \eqref{U6}
share the similar steps with minor modifications. Note that due the vanishing boundary condition of $p^2, \hat{p}^2$. The  weighted energy estimate for $p^2$ and $\hat{p}^2$ is simpler since the all the boundary terms, for instance, the second line of \eqref{UU3}, vanish.

Once \eqref{U6} is obtained, note
\begin{align*}
N_f\cdot\nabla  p^1-N_f\cdot\nabla  \hat{p}^1=\big( N_f\cdot\nabla  p-N_f\cdot\nabla  \hat{p} \big)-\big(N_f\cdot\nabla  p^2-N_f\cdot\nabla  \hat{p}^2\big)
\quad\text{on}\quad\Ga_f .
\end{align*}
Then by \eqref{U6}, Lemma \ref{lemPFL} and Lemma \ref{lemPFH},
for $0\leq a\leq s-2$, there holds
\begin{align*}
\| \langle w^\pm\rangle^{2\mu}\langle w^\mp\rangle^{\mu} \langle\p_1\rangle^{\f 12}\p_1^a(N_f\cdot\ud{\nabla p^1}-N_f\cdot\ud{\nabla \hat{p}^1})\|_{L^2(\BR)}
\leq E_s^{\f 12}G_s^{\f 12}  \cdot \mathcal{P}(1 +\|\p_1f\|_{H^{s-\f 12}(\bR)}),
\end{align*}
In another words, the weighted $(s-\f32)-$order derivative of $N_f\cdot\ud{\nabla p^1}-N_f\cdot\ud{\nabla \hat{p}^1}$ has control.
On the other hand, by Lemma \ref{lemPP1} and weighted trace theorem \ref{lemD3}, we have weighted $(s-\f12)-$order derivative estimate of $\ud{p}$.
Then we can copy the method in the above section to show \eqref{U5}. 
The proof is exactly the same, thus the details are omitted.

 In the sequel, we only sketch the main steps of the $(s+1)-$order weighted estimates for \eqref{U6}.

Applying $\p_\tau^{s}$ to $\eqref{U3}_1$ and $\eqref{U3}_2$, one obtains
\begin{align}
\label{UU1}
-\Delta\p_\tau^{s} p^2 -\left[\p_\tau^{s}, \Delta\right] p^2 =\p_\tau^{s}\nabla \cdot (\La_{-} \cdot \nabla \La_{+})
\quad \text{in}\,\,\Om_f,\\
\label{UU2}
-\Delta\p_\tau^{s} \hat{p}^2 -\left[\p_\tau^{s}, \Delta\right] \hat{p}^2 =\p_\tau^{s}\nabla \cdot (\hat{\La}_{-} \cdot \nabla \hat{\La}_{+})
\quad \text{in}\,\,\hat{\Om}_f.
\end{align}
Taking the $L^2$ inner product of \eqref{UU1} with
$ \langle w^+\rangle^{4\mu}\langle w^-\rangle^{2\mu} \p_\tau^{m-1} p^2$ in $\Om_f$
and taking the $L^2$ inner product of \eqref{UU2} with
$\langle w^+\rangle^{4\mu}\langle w^-\rangle^{2\mu} \p_\tau^{m-1} \hat{p}^2$ in $\hat{\Om}_f$,
by integration by parts, we have
\begin{align}\label{UU3}
&\int_{\Om_f} \langle w^+\rangle^{4\mu}\langle w^-\rangle^{2\mu}
|\nabla\p_\tau^{s} p^2|^2 \dx
+\int_{\hat{\Om}_f} \langle w^+\rangle^{4\mu}\langle w^-\rangle^{2\mu}
|\nabla\p_\tau^{s} \hat{p}^2|^2 \dx \\\nonumber
&=\int_{\Ga_f} \langle w^+\rangle^{4\mu}\langle w^-\rangle^{2\mu}
\big(\p_{n_f}\p_\tau^{s} p^2\cdot \p_\tau^{s} p^2-\p_{n_f}\p_\tau^{s} \hat{p}^2\cdot \p_\tau^{s} \hat{p}^2\big) \,\d\sigma(x)\\\nonumber
&\quad-\int_{\Om_f} \nabla \big(\langle w^+\rangle^{4\mu}\langle w^-\rangle^{2\mu} \big)
\cdot \nabla\p_\tau^{s} p^2\, \p_\tau^{s} p^2 \dx
-\int_{\hat{\Om}_f} \nabla \big( \langle w^+\rangle^{4\mu}\langle w^-\rangle^{2\mu} \big)
\cdot \nabla\p_\tau^{s} \hat{p}^2\, \p_\tau^{s} \hat{p}^2 \dx \\\nonumber
&\quad+\int_{\Om_f} \langle w^+\rangle^{4\mu}\langle w^-\rangle^{2\mu}
\left[\p_\tau^{s}, \Delta\right]p^2\, \p_\tau^{s} p^2 \dx
+\int_{\hat{\Om}_f} \langle w^+\rangle^{4\mu}\langle w^-\rangle^{2\mu}
\left[\p_\tau^{s}, \Delta\right]\hat{p}^2\, \p_\tau^{s} \hat{p}^2 \dx \\\nonumber
&\quad+\int_{\Om_f} \langle w^+\rangle^{4\mu}\langle w^-\rangle^{2\mu}
\p_\tau^{s}(\p_j\La_{-}^i\p_i\La_{+}^j)\, \p_\tau^{s} p^2 \dx  \\\nonumber
&\quad +\int_{\hat{\Om}_f} \langle w^+\rangle^{4\mu}\langle w^-\rangle^{2\mu}
\p_\tau^{s}(\p_j\hat{\La}_{-}^i\p_i\hat{\La}_{+}^j)\, \p_\tau^{s} \hat{p}^2 \dx \, .
\end{align}

For the third line of \eqref{UU3}, one has
\begin{align*}
&\big| \int_{\Om_f} \nabla \big( \langle w^+\rangle^{4\mu}\langle w^-\rangle^{2\mu}\big)\cdot\nabla \p_\tau^{s} p^2\cdot \p_\tau^{s} p^2 \dx
+\int_{\hat{\Om}_f} \nabla \big(\langle w^+\rangle^{4\mu}\langle w^-\rangle^{2\mu}\big)\cdot\nabla \p_\tau^{s} \hat{p}^2\cdot \p_\tau^{s} \hat{p}^2 \dx\big| \\
&\leq \delta
\| \langle w^+\rangle^{2\mu}\langle w^-\rangle^{\mu} \nabla\p_\tau^{s} p^2 \|^2_{L^2(\Om_f)}
+C_\delta \| \langle w^+\rangle^{2\mu}\langle w^-\rangle^{\mu} \p_\tau^{s} p^2 \|^2_{L^2(\Om_f)}\\[-5mm]\\
&\quad+ \delta \| \langle w^+\rangle^{2\mu}\langle w^-\rangle^{\mu} \nabla\p_\tau^{s} \hat{p}^2\|^2_{L^2(\hat{\Om}_f)}
+C_\delta \| \langle w^+\rangle^{2\mu}\langle w^-\rangle^{\mu} \nabla\p_\tau^{s} \hat{p}^2\|^2_{L^2(\hat{\Om}_f)}  \,.
\end{align*}
Here $\delta$ is a small constant. $C_\delta$ is a constant depending on $\delta$.

For the fifth and the sixth line of \eqref{UU3}, by using integration by parts, we write
\begin{align*}
&\big|\int_{\Om_f}\langle w^+\rangle^{4\mu}\langle w^-\rangle^{2\mu}
\p_\tau^{s}(\p_j\La_{-}^i\p_i\La_{+}^j)\, \p_\tau^{s} p^2 \dx
+\int_{\hat{\Om}_f}\langle w^+\rangle^{4\mu}\langle w^-\rangle^{2\mu}
\p_\tau^{s}(\p_j\hat{\La}_{-}^i\p_i\hat{\La}_{+}^j)\, \p_\tau^{s} \hat{p}^2 \dx\big|
 \\\nonumber
&=\big| \int_{\Om_f}\langle w^+\rangle^{4\mu}\langle w^-\rangle^{2\mu}
\p_\tau^{s-1}(\p_j\La_{-}^i\p_i\La_{+}^j)\, \p_\tau^{s+1} p
+\p_1\big( \langle w^+\rangle^{4\mu}\langle w^-\rangle^{2\mu} \big)
\p_\tau^{s-1}(\p_j\La_{-}^i\p_i\La_{+}^j)\, \p_\tau^{s} p \dx \\\nonumber
&\quad  +\int_{\hat{\Om}_f}\langle w^+\rangle^{4\mu}\langle w^-\rangle^{2\mu}
\p_\tau^{s-1}(\p_j\hat{\La}_{-}^i\p_i\hat{\La}_{+}^j)\, \p_\tau^{s+1} \hat{p}^2
+\p_1\big( \langle w^+\rangle^{4\mu}\langle w^-\rangle^{2\mu} \big)
\p_\tau^{s-1}(\p_j\hat{\La}_{-}^i\p_i\hat{\La}_{+}^j)\, \p_\tau^{s} \hat{p}^2 \dx \\\nonumber
&\quad+\int_{\Om_f}\langle w^+\rangle^{4\mu}\langle w^-\rangle^{2\mu}(1+\p_2\Phi)
\p_\tau^{s-1}(\p_j\La_{-}^i\p_i\La_{+}^j)\, \p_\tau^{s} p \dx \\\nonumber
&\qquad  +\int_{\hat{\Om}_f}\langle w^+\rangle^{4\mu}\langle w^-\rangle^{2\mu} (1+\p_2\hat{\Phi})
\p_\tau^{s-1}(\p_j\hat{\La}_{-}^i\p_i\hat{\La}_{+}^j)\, \p_\tau^{s} \hat{p}^2 \dx \big| \\\nonumber
&\leq E_s^{\f12} G_s^{\f12} \mathcal{P} (1+\|\nabla^{\leq s}\Phi\|_{L^2(\Om_f)}+\|\nabla^{\leq s}\hat{\Phi}\|_{L^2(\hat{\Om}_f)}) \\\nonumber
&\quad\cdot\big( \| \langle w^+\rangle^{2\mu}\langle w^-\rangle^{\mu}
 \p_\tau^{s} p^2\|_{L^2(\Om_f)}
+ \| \langle w^+\rangle^{2\mu}\langle w^-\rangle^{\mu}
  \p_\tau^{s} \hat{p}^2\|_{L^2(\hat{\Om}_f)} \\\nonumber
&\qquad +\| \langle w^+\rangle^{2\mu}\langle w^-\rangle^{\mu}
 \p_\tau^{s+1} p^2\|_{L^2(\Om_f)}
+ \| \langle w^+\rangle^{2\mu}\langle w^-\rangle^{\mu}
  \p_\tau^{s+1} \hat{p}^2\|_{L^2(\hat{\Om}_f)} \big)\, .
\end{align*}
For the second line of \eqref{UU3}, due to $p^2=\hat{p}^2=0$ on $\Ga_f$, thus
\begin{align*}
& \int_{\Ga_f} \langle w^+\rangle^{4\mu}\langle w^-\rangle^{2\mu}
\big(\p_{n_f}\p_\tau^{s} p^2\cdot \p_\tau^{s} p^2-\p_{n_f}\p_\tau^{s} \hat{p}^2\cdot \p_\tau^{s} \hat{p}^2\big) \,\d\sigma(x)=0\,.
\end{align*}

For the fourth line of \eqref{UU3},
\begin{align*}
&\int_{\Om_f} \langle w^+\rangle^{4\mu}\langle w^-\rangle^{2\mu}
\left[\p_\tau^{s}, \Delta\right]p^2\, \p_\tau^{s} p^2 \dx
+\int_{\hat{\Om}_f} \langle w^+\rangle^{4\mu}\langle w^-\rangle^{2\mu}
\left[\p_\tau^{s}, \Delta\right]\hat{p}^2\, \p_\tau^{s} \hat{p}^2 \dx .
\end{align*}
Note that
\begin{align*}
\left[\p_\tau^{s}, \Delta\right]
=\p_\tau \left[\p_\tau^{s-1}, \Delta\right] +  \left[\p_\tau, \Delta\right]\p_\tau^{s-1}\,.
\end{align*}
Thus
\begin{align*}
&\int_{\Om_f} \langle w^+\rangle^{4\mu}\langle w^-\rangle^{2\mu}
\left[\p_\tau^{s}, \Delta\right]p^2\, \p_\tau^{s} p^2 \dx \\\nonumber
&=\int_{\Om_f} \langle w^+\rangle^{4\mu}\langle w^-\rangle^{2\mu}
\left( \p_\tau \left[\p_\tau^{s-1}, \Delta\right]p^2 +  \left[\p_\tau, \Delta\right]\p_\tau^{s-1}p^2 \right)\, \p_\tau^{s} p^2 \dx \\\nonumber
&=-\int_{\Om_f} \langle w^+\rangle^{4\mu}\langle w^-\rangle^{2\mu}
\Big(  \left[\p_\tau^{s-1}, \Delta\right]p^2 \p_\tau^{s+1} p^2+\p_2 \Phi\left[\p_\tau^{s-1}, \Delta\right]p^2 \p_\tau^{s} p^2 \Big) \dx \\\nonumber
&\quad-\int_{\Om_f} \p_1\big( \langle w^+\rangle^{4\mu}\langle w^-\rangle^{2\mu} \big)
\left[\p_\tau^{s-1}, \Delta\right]p^2  \, \p_\tau^{s} p^2 \dx \\\nonumber
&\quad -\int_{\Om_f} \langle w^+\rangle^{4\mu}\langle w^-\rangle^{2\mu}
\left(  \Delta  \Phi  \p_2 +2 \nabla\Phi\cdot \p_2\nabla \right)\p_\tau^{s-1}p^2 \, \p_\tau^{s} p^2 \dx .
\end{align*}
The absolute value of the above expression can be bounded by
\begin{align*}
&\sum_{1\leq i \leq s+1} \|  \langle w^+\rangle^{2\mu}\langle w^-\rangle^{\mu} \p_\tau^i p^2\|_{L^2(\Om_f)}
 \sum_{1\leq i \leq s} \|  \langle w^+\rangle^{2\mu}\langle w^-\rangle^{\mu} \nabla^i p^2\|_{L^2(\Om_f)} \\\nonumber
&\quad\cdot \| \p_1 f\|_{H^{s-\f12}}
 \mathcal{P}\big(1+\| \p_1 f\|_{H^{s-\f12}} \big).
\end{align*}
Similarly
\begin{align*}
&\int_{\hat{\Om}_f} \langle w^+\rangle^{4\mu}\langle w^-\rangle^{2\mu}
\left[\p_\tau^{s}, \Delta\right]\hat{p}^2\, \p_\tau^{s} \hat{p}^2 \dx
\lesssim \sum_{1\leq i \leq s+1} \|  \langle w^+\rangle^{2\mu}\langle w^-\rangle^{\mu} \p_\tau^i \hat{p}^2\|_{L^2(\Om_f)}\\\nonumber
&\quad\cdot \sum_{1\leq i \leq s} \|  \langle w^+\rangle^{2\mu}\langle w^-\rangle^{\mu} \nabla^i \hat{p}^2\|_{L^2(\Om_f)} \| \p_1 f\|_{H^{s-\f12}}
 \mathcal{P}\big(1+\| \p_1 f\|_{H^{s-\f12}} \big).
\end{align*}

Combining the above estimates, \eqref{UU3} reduces to
\begin{align*} 
&\| \langle w^+\rangle^{2\mu}\langle w^-\rangle^{\mu}
\nabla\p_\tau^{s} p^2 \|^2_{L^2(\Om_f)}
+\|  \langle w^+\rangle^{2\mu}\langle w^-\rangle^{\mu}
\nabla\p_\tau^{s} \hat{p}^2 \|^2_{L^2(\hat{\Om}_f)} \\\nonumber
&\lesssim
\big(\|  \langle w^+\rangle^{2\mu}\langle w^-\rangle^{\mu}  \nabla^{s+1} p^2\|_{L^2(\Om_f)}
+   \|  \langle w^+\rangle^{2\mu}\langle w^-\rangle^{\mu}  \nabla^{s+1} \hat{p}^2\|_{L^2(\hat{\Om}_f)}\big)
E_s^{\f12} G_s^{\f12} \mathcal{P}(1+ \| \p_1 f\|_{H^{s-\f12}}) \\\nonumber
&\quad+ E_s G_s \mathcal{P}(1+ \| \p_1 f\|_{H^{s-\f12}} )\, .
\end{align*}
For the $(s+1)-$order normal and mixed derivative estimates,
it is the same as that in the above section. Thus repeating the argument will yield the lemma.

\end{proof}

For $\ud{N_f\cdot\nabla p^1} + \ud{N_f\cdot\nabla \hat{p}^1}$, we collect a useful lemma.
\begin{lem}\label{lemPP2}
Let $s\geq 3$ be an integer, $\| \p_1f \|_{H^{s-\f12}(\BR)} \lesssim 1.$
For the pressure satisfying \eqref{U4}, there holds
\begin{align*}
\| \ud{N_f\cdot\nabla p^1} + \ud{N_f\cdot\nabla \hat{p}^1} \|_{H^{s-\f12} (\bR)}
\lesssim \| \ud{p} \|_{H^{s-\f12}(\bR)}
\end{align*}
\end{lem}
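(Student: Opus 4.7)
The plan is to recognize the combination $\ud{N_f\cdot\nabla p^1}+\ud{N_f\cdot\nabla\hat{p}^1}$ as the action of the \emph{difference} of two Dirichlet-to-Neumann (DN) operators on the common Dirichlet trace $\ud{p}$, and to exploit the cancellation of their principal symbols. Writing $n_f=N_f/|N_f|$ for the unit outward normal of $\Om_f$ on $\Ga_f$, let $G[f]$ denote the DN operator for $\Om_f$ that sends a Dirichlet datum $g$ on $\Ga_f$ to $\p_{n_f}u|_{\Ga_f}$, where $u$ is harmonic in $\Om_f$ with $u|_{\Ga_f}=g$ and $\p_2 u|_{\Ga}=0$; let $\hat{G}[f]$ denote the analogous operator for $\hat{\Om}_f$, with respect to its unit outward normal $-n_f$ on $\Ga_f$ and the Neumann condition $\p_2\hat u|_{\hat\Ga}=0$. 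Since $N_f=|N_f|n_f$, the system \eqref{U4} yields
\begin{align*}
\ud{N_f\cdot\nabla p^1}=|N_f|\,G[f]\ud{p},\qquad \ud{N_f\cdot\nabla\hat{p}^1}=-|N_f|\,\hat{G}[f]\ud{p},
\end{align*}
so that the quantity to estimate equals $|N_f|\bigl(G[f]-\hat{G}[f]\bigr)\ud{p}$.

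The first step will be to invoke the DN estimates from Section 10 (the appendix): under the hypothesis $\|\p_1 f\|_{H^{s-\f12}}\lesssim 1$, both $G[f]$ and $\hat{G}[f]$ are elliptic pseudodifferential operators of order $1$ on $\Ga_f\cong\bR$, individually bounded $H^{s+\f12}\to H^{s-\f12}$ with operator norm polynomial in $\|\p_1 f\|_{H^{s-\f12}}$. Crucially, their principal symbols coincide on $\Ga_f$: the principal part of a DN operator is a local invariant of the Euclidean Laplacian and of the geometry of the boundary $\Ga_f$, both of which are shared between the two sides. Consequently $G[f]-\hat{G}[f]$ is a pseudodifferential operator of order $0$, bounded $H^{s-\f12}(\bR)\to H^{s-\f12}(\bR)$ with norm polynomial in $\|\p_1 f\|_{H^{s-\f12}}$. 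Multiplying by the Lipschitz factor $|N_f|=\sqrt{1+(\p_1 f)^2}$ preserves $H^{s-\f12}(\bR)$ under the same hypothesis on $f$, which will give the stated bound.

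The main obstacle will be to rigorously exhibit the cancellation of principal symbols, since this is precisely what yields the one-derivative save (naively each individual operator $G[f]$ or $\hat{G}[f]$ only gives $H^{s+\f12}\to H^{s-\f12}$). The cleanest route will be to flatten $\Ga_f$ locally via $(x_1,x_2)\mapsto(x_1,x_2-f(x_1))$: near $\Ga_f$ the two domains become opposite half-planes sitting on $\{x_2=0\}$, and a parametrix calculation shows that the leading-order DN symbol on either side is the same tangential square root $|D_{x_1}|\sqrt{1+(\p_1 f)^2}$, so the order-one parts cancel. The distant rigid boundaries $\Ga,\hat\Ga$ contribute only exponentially smoothing corrections (in the flat-strip model the respective DN symbols are $\tanh((1\pm f)|\xi|)|\xi|$, which differ from $|\xi|$ by $O(e^{-c_0|\xi|/2})$), and the variable-coefficient errors coming from $\p_1 f$ can be handled by paradifferential/commutator estimates already in place. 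All of this is packaged into the DN estimates of Section 10, so the proof amounts to a direct application of that machinery together with the identification above.
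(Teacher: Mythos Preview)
Your proposal is correct and takes essentially the same approach as the paper: the paper's proof is a one-line reference to the appendix estimate \eqref{appd-9}, which states precisely that $\|\mathcal{N}_f\phi-\hat{\mathcal{N}}_f\phi\|_{H^\sigma}\lesssim\|\phi\|_{H^\sigma}$ via the paradifferential decomposition $\mathcal{N}_f=\mathcal{T}+\mathcal{R}_1$, $\hat{\mathcal{N}}_f=\mathcal{T}+\mathcal{R}_2$ with common principal part $\mathcal{T}$, exactly as you describe. The only cosmetic difference is that the paper's DN operators are defined with respect to $N_f$ rather than the unit normal $n_f$, so your factor $|N_f|$ is already absorbed and no separate multiplier step is needed.
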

\begin{proof}
See \eqref{appd-9} in the appendix.
\end{proof}
Combining Lemma \ref{lemPP1} and Lemma \ref{lemPP2}, we have the following estimate for $\ud{N_f\cdot\nabla p^1} + \ud{N_f\cdot\nabla \hat{p}^1}$.
\begin{lem}\label{lemPP3}
Let $s\geq 4$ be an integer, $\| \p_1f \|_{H^{s-\f12}(\BR)} \lesssim 1.$
For the pressure satisfying \eqref{U2}, there holds
\begin{align*}
\| \ud{N_f\cdot\nabla p} + \ud{N_f\cdot\nabla \hat{p}} \|_{H^{s-\f12} (\bR)}
\lesssim \| \ud{p} \|_{H^{s-\f12}(\bR)}+E_s^{\f12}G_s^{\f12}.
\end{align*}
\end{lem}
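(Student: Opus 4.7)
The plan is to exploit the splitting $p = p^1 + p^2$ and $\hat{p} = \hat{p}^1 + \hat{p}^2$ already introduced in \eqref{U4}--\eqref{U3}, so that
\[
\ud{N_f\cdot\nabla p} + \ud{N_f\cdot\nabla \hat{p}} = \bigl(\ud{N_f\cdot\nabla p^1} + \ud{N_f\cdot\nabla \hat{p}^1}\bigr) + \bigl(\ud{N_f\cdot\nabla p^2} + \ud{N_f\cdot\nabla \hat{p}^2}\bigr),
\]
and to estimate the two pieces separately in $H^{s-\f12}(\bR)$. The harmonic piece is immediate: Lemma \ref{lemPP2} yields $\|\ud{N_f\cdot\nabla p^1} + \ud{N_f\cdot\nabla \hat{p}^1}\|_{H^{s-\f12}(\bR)} \lesssim \|\ud{p}\|_{H^{s-\f12}(\bR)}$, which accounts precisely for the first term on the right hand side of the claim.

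For the inhomogeneous piece, I would use the trace theorem together with the chain rule to rewrite
\[
\ud{N_f\cdot\nabla p^2} = \ud{\p_2 p^2} - \p_1 f\cdot \ud{\p_1 p^2}
\]
and bound its $H^{s-\f12}(\bR)$ norm by an interior norm of $\nabla p^2$ plus the product with $\p_1 f$. Since the trace theorem (Lemma \ref{lemD3}) costs half a derivative in passing from the bulk to the boundary, asking for $H^{s-\f12}$ of a first-order quantity on $\Gamma_f$ forces us to control $p^2$ at the $H^{s+1}$ level in $\Om_f$. This is exactly what the enhanced estimate \eqref{U6} of Lemma \ref{lemPP1} supplies, since the homogeneous Dirichlet data for $p^2,\hat{p}^2$ buys one extra order of regularity as compared with $p^1,\hat{p}^1$. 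For the product $\p_1 f\cdot\ud{\p_1 p^2}$ I would distribute $\langle\p_1\rangle^{\f12}\p_1^{s-1}$ via the weighted commutator and chain rule estimates (Lemmas \ref{lemD11} and \ref{lemG3}) and then transfer back to the interior using Lemma \ref{lemD3}; the analogous reasoning handles $\hat{p}^2$ in $\hat{\Om}_f$.

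The main obstacle I anticipate is controlling the top-order contribution of $\p_1 f\cdot\ud{\p_1 p^2}$: in the high-high pairing one must place all $s-\f12$ derivatives on the trace of $\p_1 p^2$ and then lift to the interior, thereby consuming precisely the single extra derivative gained from \eqref{U6}. The low-high and high-low pairings are harmless thanks to the smallness assumption $\|\p_1 f\|_{H^{s-\f12}(\bR)}\leq\delta$ and Sobolev embedding with $s\geq 4$, while the weighted interior bound of Lemma \ref{lemPP1} with weight $\langle w^+\rangle^{2\mu}\langle w^-\rangle^{\mu}$ delivers the quadratic control $E_s^{\f12} G_s^{\f12}$: each $\Lambda$ factor in $\Delta p^2 = -\nabla\cdot(\Lambda_-\cdot\nabla\Lambda_+)$ absorbs either $E_s^{\f12}$ or $G_s^{\f12}$, reflecting the null structure. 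Summing the two pieces by the triangle inequality produces the stated bound.
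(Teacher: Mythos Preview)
Your proposal is correct and follows essentially the same approach as the paper: decompose via $p=p^1+p^2$, $\hat p=\hat p^1+\hat p^2$, handle the harmonic piece by Lemma~\ref{lemPP2}, and control the $p^2,\hat p^2$ contribution by combining the enhanced $H^{s+1}$ bound \eqref{U6} of Lemma~\ref{lemPP1} with the trace theorem. The paper's proof is terser (it simply cites Lemmas~\ref{lemPP2}, \ref{lemPP1} and the trace theorem), but your expanded treatment of $\ud{N_f\cdot\nabla p^2}=\ud{\p_2 p^2}-\p_1 f\,\ud{\p_1 p^2}$ is exactly how one unpacks that citation; note only that since the target norm here is unweighted, ordinary product/commutator estimates in $H^{s-\f12}$ suffice and you need not invoke the weighted versions.
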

\begin{proof}
Note
\begin{align*}
\ud{N_f\cdot\nabla p} + \ud{N_f\cdot\nabla \hat{p}}=\big(\ud{N_f\cdot\nabla p^1} + \ud{N_f\cdot\nabla \hat{p}^1}\big)
+\big(\ud{N_f\cdot\nabla p^2} + \ud{N_f\cdot\nabla \hat{p}^2}\big)\,.
\end{align*}
The lemma is an easy consequence of Lemma \ref{lemPP2},  Lemma \ref{lemPP1} and the weighted trace theorem \ref{lemD3}.
\end{proof}

Now we present the 
estimate for
\begin{align*}
&\| \ud{N_f\cdot\nabla (\langle w^\pm\rangle^{2\mu} \langle w^\mp\rangle^{\mu} p^1)} + \ud{N_f\cdot\nabla (\langle w^\pm\rangle^{2\mu} \langle w^\mp\rangle^{\mu} \hat{p}^1)} \|_{H^{s-\f12} (\bR)} \,.
\end{align*}
Multiplying \eqref{U4} by $\langle w^+\rangle^{2\mu}\langle w^-\rangle^{\mu} $, we obtain
\begin{equation}\label{U7}
\left\{
\begin{array}{l}
\Delta (\langle w^+\rangle^{2\mu}\langle w^-\rangle^{\mu}p^1)  =
2\nabla  (\langle w^+\rangle^{2\mu}\langle w^-\rangle^{\mu}  )\cdot \nabla p^1+\Delta (\langle w^+\rangle^{2\mu}\langle w^-\rangle^{\mu} )p^1
\quad \text{in}\quad\Om_f,\\
\Delta (\langle w^+\rangle^{2\mu}\langle w^-\rangle^{\mu}\hat{p}^1)= 2\nabla (\langle w^+\rangle^{2\mu}\langle w^-\rangle^{\mu}) \cdot \nabla \hat{p}^1+\Delta (\langle w^+\rangle^{2\mu}\langle w^-\rangle^{\mu})\hat{p}^1
\quad \text{in}\quad\hat{\Om}_f,\\
\langle w^+\rangle^{2\mu}\langle w^-\rangle^{\mu} p^1=\langle w^+\rangle^{2\mu}\langle w^-\rangle^{\mu} \hat{p}^1=\langle w^+\rangle^{2\mu}\langle w^-\rangle^{\mu} p\quad\text{on}\quad\Gamma_f ,\\
N_f\cdot\nabla  (\langle w^+\rangle^{2\mu}\langle w^-\rangle^{\mu} p^1)
-N_f\cdot\nabla  (\langle w^+\rangle^{2\mu}\langle w^-\rangle^{\mu}\hat{p}^1)\\
\quad= \langle w^+\rangle^{2\mu}\langle w^-\rangle^{\mu} \big(N_f\cdot\nabla  p^1-N_f\cdot\nabla  \hat{p}^1\big) \qquad\qquad\text{on}\quad\Ga_f ,\\
\p_2 (\langle w^+\rangle^{2\mu}\langle w^-\rangle^{\mu}  p^1)=0\,\,   \textrm{on} \,\,  \Ga,
\quad \p_2 (\langle w^+\rangle^{2\mu}\langle w^-\rangle^{\mu} \hat{p}^1)=0\,\,   \textrm{on} \,\,  \hat{\Ga}.
\end{array}\right.
\end{equation}
Then we treat $\langle w^+\rangle^{2\mu} \langle w^-\rangle^{\mu} p^1$, $\langle w^+\rangle^{2\mu} \langle w^-\rangle^{\mu} \hat{p}^1$ similar to the estimate of the $p$, $\hat{p}$ where we have just conducted:
decomposing $\langle w^+\rangle^{2\mu} \langle w^-\rangle^{\mu} p^1$ and $\langle w^+\rangle^{2\mu} \langle w^-\rangle^{\mu} \hat{p}^1$ into two functions, respectively. One is harmonic function with inhomogeneous boundary conditions, another one satisfies the Poisson equation with homogeneous boundary conditions:
\begin{align*}
\langle w^+\rangle^{2\mu} \langle w^-\rangle^{\mu} p^1=p^{11}_w+p^{12}_w,\,\,
\langle w^+\rangle^{2\mu} \langle w^-\rangle^{\mu} \hat{p}^1=\hat{p}^{11}_w+\hat{p}^{12}_w.
\end{align*}
They satisfy
 \begin{equation*}
\left\{
\begin{array}{l}
\Delta p^{11}_w =0 \quad \text{in}\quad\Om_f,\\
\Delta \hat{p}^{11}_w= 0 \quad \text{in}\quad\hat{\Om}_f,\\
p^{11}_w=\hat{p}^{11}_w =\langle w^+\rangle^{2\mu}\langle w^-\rangle^{\mu} p\quad\text{on}\quad\Gamma_f ,\\
\p_2 p^{11}_w =0\,\,   \textrm{on} \,\,  \Ga,
\quad \p_2 \hat{p}^{11}_w =0\,\,   \textrm{on} \,\,  \hat{\Ga},
\end{array}\right.
\end{equation*}
and
\begin{equation*}
\left\{
\begin{array}{l}
\Delta p^{12}_w  =
2\nabla  (\langle w^+\rangle^{2\mu}\langle w^-\rangle^{\mu}  )\cdot \nabla p^1+\Delta (\langle w^+\rangle^{2\mu}\langle w^-\rangle^{\mu} )p^1
\quad \text{in}\quad\Om_f,\\
\Delta \hat{p}^{12}_w= 2\nabla (\langle w^+\rangle^{2\mu}\langle w^-\rangle^{\mu}) \cdot \nabla \hat{p}^1+\Delta (\langle w^+\rangle^{2\mu}\langle w^-\rangle^{\mu})\hat{p}^1
\quad \text{in}\quad\hat{\Om}_f,\\
p^{12}_w=\hat{p}^{12}_w=0 \quad\text{on}\quad\Gamma_f ,\\
\p_2 p^{12}_w=0\,\,   \textrm{on} \,\,  \Ga,
\quad \p_2 \hat{p}^{12}_w=0\,\,   \textrm{on} \,\,  \hat{\Ga}.
\end{array}\right.
\end{equation*}
%
%
Now we estimate $p^{11}_w$ ($\hat{p}^{11}_w$) and $p^{12}_w$ ($\hat{p}^{12}_w$) similar to the estimate of $p^1$ ($\hat{p}^1$) and $p^2$ ($\hat{p}^2$),
By Lemma \ref{lemD3},
Lemma \ref{lemPP1}, Lemma \ref{lemPP2}, Lemma \ref{lemPP3} and the method of their proof, we would obtain similar conclusion to Lemma \ref{lemPP3} with weight $\langle w^+\rangle^{2\mu}\langle w^-\rangle^{\mu} $. Note that when the weight takes $\langle w^-\rangle^{2\mu}\langle w^+\rangle^{\mu} $, similar conclusion still holds. Thus we would derive the following bound.
\begin{lem}\label{lemPP4}
Let $s\geq 4$ be an integer, $\| \p_1f \|_{H^{s-\f12}(\BR)} \lesssim 1.$
For the pressure satisfying \eqref{U7}, there holds
\begin{align*}
\| \ud{N_f\cdot\nabla (\langle w^\pm\rangle^{2\mu} \langle w^\mp\rangle^{\mu} p^1)} + \ud{N_f\cdot\nabla (\langle w^\pm\rangle^{2\mu} \langle w^\mp\rangle^{\mu} \hat{p}^1)} \|_{H^{s-\f12} (\bR)}
\lesssim 
E_s^{\f12} G_s^{\f12}.
\end{align*}
\end{lem}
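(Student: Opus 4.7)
The plan is to mirror the two-step decomposition used for Lemma \ref{lemPP3}, but applied one level up, directly to the weighted harmonic pressure system \eqref{U7}. Writing $W$ for either $\langle w^+\rangle^{2\mu}\langle w^-\rangle^{\mu}$ or $\langle w^-\rangle^{2\mu}\langle w^+\rangle^{\mu}$ (the two cases being symmetric under swapping roles), I would split, as indicated just above the lemma,
\[
Wp^1 = p^{11}_w + p^{12}_w,\qquad W\hat p^1 = \hat p^{11}_w + \hat p^{12}_w,
\]
where $p^{11}_w$, $\hat p^{11}_w$ are harmonic with common Dirichlet trace $\ud{Wp}$ on $\Ga_f$ and zero Neumann data on $\Ga$, $\hat\Ga$, while $p^{12}_w$, $\hat p^{12}_w$ solve the Poisson equations with forcing coming from the commutator $[\Delta,W]p^1 = 2\nabla W\cdot\nabla p^1+\Delta W\,p^1$ (and hat analogue), zero Dirichlet trace on $\Ga_f$, and zero Neumann trace on the fixed boundaries. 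This is exactly the structure that allowed the proof of Lemma \ref{lemPP3} to go through for the unweighted pressure, so the goal is to estimate each piece separately and sum.

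For the harmonic pair $(p^{11}_w,\hat p^{11}_w)$ I would invoke Lemma \ref{lemPP2} with $\ud p$ replaced by $\ud{Wp}$ to obtain
\[
\|\ud{N_f\cdot\nabla p^{11}_w}+\ud{N_f\cdot\nabla \hat p^{11}_w}\|_{H^{s-\frac12}(\bR)}\lesssim \|\ud{Wp}\|_{H^{s-\frac12}(\bR)}.
\]
The right-hand side is controlled by commuting $\langle \p_1\rangle^{s-\frac12}$ through $W$ by means of Lemma \ref{lemD5} and Lemma \ref{lemD9}, then applying the weighted trace theorem Lemma \ref{lemD3} and the weighted chain rule Lemma \ref{lemG3}; this reduces it to the $s$-order weighted interior norm of $p$, $\hat p$, which by Lemma \ref{lemPre} is bounded by $E_s^{1/2}G_s^{1/2}$.

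For the Poisson pair $(p^{12}_w,\hat p^{12}_w)$ I would repeat the $(s+1)$-order tangential energy scheme that established the second half of Lemma \ref{lemPP1}: test $\p_\tau^{s}$ applied to the equations against the appropriately weighted $\p_\tau^{s}p^{12}_w$, exploit the vanishing Dirichlet trace on $\Ga_f$ and zero Neumann trace on $\Ga$, $\hat\Ga$ to annihilate all boundary terms, recover mixed normal derivatives from the equation as in \eqref{K93}--\eqref{K97}, and bound the forcing through the pointwise estimate $|\nabla^k W|\lesssim W\bigl(\langle w^+\rangle^{-1}+\langle w^-\rangle^{-1}\bigr)$ together with Lemma \ref{lemPP1}. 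This yields a weighted $H^{s+1}$ estimate on $p^{12}_w$, $\hat p^{12}_w$ of the form $E_sG_s\cdot\mathcal P(1+\|\p_1f\|_{H^{s-1/2}})$, and the classical trace theorem Lemma \ref{lemD1} then converts the interior bound into the desired $H^{s-\frac12}$ control on $\ud{N_f\cdot\nabla p^{12}_w}+\ud{N_f\cdot\nabla \hat p^{12}_w}$. Summing the two contributions gives the lemma.

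The main obstacle I expect is the careful bookkeeping of weights inside the commutator terms for $p^{12}_w$: every derivative landing on $W$ costs one power of $\langle w^\pm\rangle^{-1}$, and this deficit must be absorbed by the ghost-weight factor $\langle w^\mp\rangle^{-\mu}$ already present in $G_s$, so that the forcing of $p^{12}_w$ can be re-expressed in terms of the previously controlled weighted norms of $p^1$, $\hat p^1$ from Lemma \ref{lemPP1}. A secondary, purely technical point is checking that Lemma \ref{lemPP2} remains applicable when fed the weighted trace $\ud{Wp}$; this follows because $W$ restricted to $\Ga_f$ is a smooth function of $x_1$ with the same two-sided decay structure as on the full domain, and the coefficient $N_f=e_2-e_1\p_1f$ acts as an $H^{s-\frac12}(\bR)$ multiplier under the standing hypothesis $\|\p_1f\|_{H^{s-\frac12}}\leq\delta$.
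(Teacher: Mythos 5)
Your proposal follows essentially the same route as the paper: the same splitting $Wp^1=p^{11}_w+p^{12}_w$ into a harmonic part with Dirichlet trace $\ud{Wp}$ (handled by the Dirichlet–Neumann bound of Lemma \ref{lemPP2} together with the weighted commutator/trace lemmas and Lemma \ref{lemPre}) and a Poisson part with homogeneous boundary data (handled by the $(s+1)$-order scheme of Lemma \ref{lemPP1} plus the trace theorem). The paper states this argument only in outline, and your write-up fills in the same steps it invokes, so there is nothing substantively different to flag.
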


Now we prove Lemma \ref{lemPP0}.
\begin{proof}[Proof of Lemma \ref{lemPP0}]
By Lemma \ref{lemD9}, we infer that
\begin{align}\label{U10}
&\| \langle w^+\rangle^{2\mu} \langle w^-\rangle^{\mu} \langle \p_1 \rangle^{\f12} \p_1^{s-1}\big(\ud{N_f\cdot\nabla p^1} + \ud{N_f\cdot\nabla \hat{p}^1} \big) \|_{L^2 (\BR)} \\\nonumber
&\lesssim \| \langle \p_1 \rangle^{\f12}\p_1\big( \langle w^+\rangle^{2\mu} \langle w^-\rangle^{\mu} \p_1^{s-2}\big(\ud{N_f\cdot\nabla p^1} + \ud{N_f\cdot\nabla \hat{p}^1} \big) \big) \|_{L^2 (\BR)}\\\nonumber
&\quad +\|  \langle w^+\rangle^{2\mu} \langle w^-\rangle^{\mu} \langle \p_1 \rangle^{\f12} \p_1^{s-2}\big(\ud{N_f\cdot\nabla p^1} + \ud{N_f\cdot\nabla \hat{p}^1} \big) \|_{L^2 (\BR)}.
\end{align}
Note that
\begin{align*}
\langle w^+\rangle^{2\mu} \langle w^-\rangle^{\mu} \p_1^{s-2} F
=\p_1^{s-2}(\langle w^+\rangle^{2\mu} \langle w^-\rangle^{\mu}F)
-\sum_{a+b=s-2,a\neq 0} C_{s-2}^a \p_1^a \big(\langle w^+\rangle^{2\mu} \langle w^-\rangle^{\mu} \big) \p_1^b F.
\end{align*}
On the other hand, note that Lemma \ref{lemD5} and Lemma \ref{lemD9} still hold
if $\langle w^+\rangle^{\mu}\langle w^-\rangle^{\nu}$ on the left hand side of the inequalities is replaced by $\p_1^a (\langle w^+\rangle^{2\mu} \langle w^-\rangle^{\mu})$ ($a\geq 0$) and
$\langle w^+\rangle^{\mu}\langle w^-\rangle^{\nu}$ on the right hand side takes $\langle w^+\rangle^{2\mu} \langle w^-\rangle^{\mu}$. Repeatedly using this variant version of Lemma \ref{lemD5} and Lemma \ref{lemD9}, \eqref{U10} reduces to
\begin{align}\label{U12}
&\|  \langle w^+\rangle^{2\mu} \langle w^-\rangle^{\mu} \langle \p_1 \rangle^{\f12} \p_1^{s-1}\big(\ud{N_f\cdot\nabla p^1} + \ud{N_f\cdot\nabla \hat{p}^1} \big) \|_{L^2 (\BR)}\\\nonumber
&\lesssim \| \langle \p_1\rangle^{\f12}\p_1^{s-1}\big( \langle w^+\rangle^{2\mu} \langle w^-\rangle^{\mu} \big(\ud{N_f\cdot\nabla p^1} + \ud{N_f\cdot\nabla \hat{p}^1} \big) \big) \|_{L^2 (\BR)}\\\nonumber
&\quad +\|  \langle w^+\rangle^{2\mu} \langle w^-\rangle^{\mu} \langle \p_1 \rangle^{\f12} \p_1^{\leq s-2}\big(\ud{N_f\cdot\nabla p^1} + \ud{N_f\cdot\nabla \hat{p}^1} \big) \|_{L^2 (\BR)}\\\nonumber
&\lesssim \| \langle \p_1\rangle^{\f12}\p_1^{s-1}\big( \ud{N_f\cdot\nabla  (\langle w^+\rangle^{2\mu} \langle w^-\rangle^{\mu} p^1)} + \ud{N_f\cdot\nabla (\langle w^+\rangle^{2\mu} \langle w^-\rangle^{\mu}\hat{p}^1)} \big) \|_{L^2 (\BR)}\\\nonumber
&\quad+\| \langle \p_1\rangle^{\f12}\p_1^{s-1} \big(\ud{p^1N_f\cdot\nabla (\langle w^+\rangle^{2\mu} \langle w^-\rangle^{\mu}) } + \ud{\hat{p}^1N_f\cdot\nabla } (\langle w^+\rangle^{2\mu} \langle w^-\rangle^{\mu}) \big) \|_{L^2 (\BR)}\\\nonumber
&\quad +\|  \langle w^+\rangle^{2\mu} \langle w^-\rangle^{\mu} \langle \p_1 \rangle^{\f12} \p_1^{\leq s-2}\big(\ud{N_f\cdot\nabla p^1} + \ud{N_f\cdot\nabla \hat{p}^1} \big) \|_{L^2 (\BR)}.
\end{align}
For the first line of on the right hand side of \eqref{U12}, by Lemma \ref{lemPP4}, Lemma \ref{lemPFH} and Lemma \ref{lemD5}, we obtain
\begin{align*}
\| \langle \p_1\rangle^{\f12}\p_1^{s-1}\big( \ud{N_f\cdot\nabla  (\langle w^+\rangle^{2\mu} \langle w^-\rangle^{\mu}p^1)} + \ud{N_f\cdot\nabla (\langle w^+\rangle^{2\mu} \langle w^-\rangle^{\mu}\hat{p}^1)} \big)  \|_{L^2 (\BR)} \lesssim E_s^{\f12} G_s^{\f12}.
\end{align*}
Note that for  Lemma \ref{lemD11}, the weight $\langle w^+\rangle^{\mu}\langle w^-\rangle^{\nu}$ on the left hand side of Lemma \ref{lemD11} is replaced by $\p_1W$, the lemma still holds.
Thus for the second line of on the right hand side of \eqref{U12}, by weighted trace Lemma \ref{lemD3},
weighted chain rule Lemma \ref{lemG3}, the estimate of pressure Lemma \ref{lemPre}, variant of weighted commutator estimate Lemma \ref{lemD11}, and by classical commutator estimate,
it is bounded by
\begin{align*}
&\| \langle \p_1\rangle^{\f12}\p_1^{s-1} \big(\ud{p^1N_f\cdot\nabla (\langle w^+\rangle^{2\mu} \langle w^-\rangle^{\mu}) } + \ud{\hat{p}^1N_f\cdot\nabla } (\langle w^+\rangle^{2\mu} \langle w^-\rangle^{\mu}) \big) \|_{L^2 (\BR)}\\\nonumber
&= 2 \| \langle \p_1\rangle^{\f12}\p_1^{s-1} \big(\ud{p} \p_1f\p_1(\langle w^+\rangle^{2\mu} \langle w^-\rangle^{\mu}) \big)   \|_{L^2 (\BR)}\\\nonumber
&\lesssim \| \langle \p_1\rangle^{\f12}\p_1^{s-1} \big(\ud{p} \p_1(\langle w^+\rangle^{2\mu} \langle w^-\rangle^{\mu}) \big)   \|_{L^2 (\BR)} \|\p_1f\|_{H^{s-\f12}(\bR)}\\\nonumber
&\lesssim E_s^{\f12} G_s^{\f12} \|\p_1f\|_{H^{s-\f12}(\bR)}.
\end{align*}
For the last  line of \eqref{U12},
by using weighted commutator estimate Lemma \ref{lemD11}, weighted chain rule Lemma \ref{lemG3}, weighted trace Lemma \ref{lemD3},
the estimate of $p^1$ Lemma \ref{lemPP1}, we derive that
\begin{align*}
&\| \langle w^+\rangle^{2\mu} \langle w^-\rangle^{\mu} \langle \p_1 \rangle^{\f12} \p_1^{\leq s-2}\ud{N_f\cdot\nabla p^1} \|_{L^2 (\BR)}\\\nonumber
&\leq\| \langle w^+\rangle^{2\mu} \langle w^-\rangle^{\mu}   \langle \p_1 \rangle^{\f12} \p_1^{\leq s-2}(\p_1f\ud{\p_1 p^1})   \|_{L^2(\BR)}
+\| \langle w^+\rangle^{2\mu} \langle w^-\rangle^{\mu}   \langle \p_1 \rangle^{\f12} \p_1^{\leq s-2}\ud{\p_2 p^1} \|_{L^2(\BR)} \\\nonumber
&\lesssim \| \langle w^+\rangle^{2\mu} \langle w^-\rangle^{\mu}   \langle \p_1 \rangle^{\f12} \p_1^{\leq s-2} \ud{\p_1 p^1}  \|_{L^2 (\bR)}  \| \p_1f\|_{H^{s-\f32}(\bR)}
+\| \langle w^+\rangle^{2\mu} \langle w^-\rangle^{\mu}   \langle \p_1 \rangle^{\f12} \p_1^{\leq s-2} \ud{\p_2 p^1}  \|_{L^2 (\bR)}\\\nonumber
&\lesssim \| \langle w^+\rangle^{2\mu} \langle w^-\rangle^{\mu}   \p^{\leq s-1}\nabla p^1  \|_{L^2 (\Om_f)} (1+\| \p_1f\|_{H^{s-\f32}(\bR)})\\\nonumber
&\lesssim  E_s^{\f12} G_s^{\f12} (1+\| \p_1f\|_{H^{s-\f12}(\bR)}) .
\end{align*}
Similarly,
\begin{align*}
\| \langle w^+\rangle^{2\mu} \langle w^-\rangle^{\mu} \langle \p_1 \rangle^{\f12} \p_1^{\leq s-2}\ud{N_f\cdot\nabla \hat{p}^1} \|_{L^2 (\BR)}
\lesssim  E_s^{\f12} G_s^{\f12} (1+\| \p_1f\|_{H^{s-\f12}(\bR)}) .
\end{align*}
On the other hand, similarly, by using Lemma \ref{lemD11}, Lemma \ref{lemG3} and Lemma \ref{lemPP1}, there holds
\begin{align*}
&\|  \langle w^+\rangle^{2\mu} \langle w^-\rangle^{\mu} \langle \p_1 \rangle^{\f12} \p_1^{s-1}\big(\ud{N_f\cdot\nabla p^2} + \ud{N_f\cdot\nabla \hat{p}^2} \big) \|_{L^2 (\BR)}\\\nonumber
 &\lesssim \big(\| \langle w^+\rangle^{2\mu} \langle w^-\rangle^{\mu}   \p^{\leq s}\nabla p^2  \|_{L^2 (\Om_f)}+\| \langle w^+\rangle^{2\mu} \langle w^-\rangle^{\mu}   \p^{\leq s}\nabla \hat{p}^2  \|_{L^2 (\hat{\Om}_f)}\big) (1+\| \p_1f\|_{H^{s-1}(\bR)})\\\nonumber
&\lesssim  E_s^{\f12} G_s^{\f12} (1+\| \p_1f\|_{H^{s-\f12}(\bR)}) .
\end{align*}
Combining all the above estimates, we derive
\begin{align*}
&\|  \langle w^+\rangle^{2\mu} \langle w^-\rangle^{\mu} \langle \p_1 \rangle^{\f12} \p_1^{s-1}\big(\ud{N_f\cdot\nabla p} + \ud{N_f\cdot\nabla \hat{p}} \big) \|_{L^2 (\BR)}\\\nonumber
&\lesssim \|  \langle w^+\rangle^{2\mu} \langle w^-\rangle^{\mu} \langle \p_1 \rangle^{\f12} \p_1^{s-1}\big(\ud{N_f\cdot\nabla p^1} + \ud{N_f\cdot\nabla \hat{p}^1} \big) \|_{L^2 (\BR)}\\\nonumber
&\quad+\|  \langle w^+\rangle^{2\mu} \langle w^-\rangle^{\mu} \langle \p_1 \rangle^{\f12} \p_1^{s-1}\big(\ud{N_f\cdot\nabla p^2} + \ud{N_f\cdot\nabla \hat{p}^2} \big) \|_{L^2 (\BR)} \\\nonumber
&\lesssim  E_s^{\f12} G_s^{\f12} (1+\| \p_1f\|_{H^{s-\f12}(\bR)}) .
\end{align*}
This ends the proof of the lemma.
\end{proof}

\section{Weighted energy estimate of the free surface}\label{Ef}
In this section, we are going to estimate the amplitude of the free surface and
deal with the weighted energy estimate of the free surface.
The estimation the amplitude of the free surface is carried out using the characteristic integral method.
Here, we only  require the estimation of the amplitude of the free surface without weights.
The weighted energy estimate of the free surface is performed by applying the weights $\langle w^\pm\rangle^{2\mu}$
and by utilizing the ghost weight method. Here, we recall $w^\pm=x_1\pm t$ where $\f12 < \mu\leq \f35$.
The nonlinearities in the weighted energy estimate of the free surface consists of three part.
The first part involves the highest order nonlinear terms that may lead to derivative loss.
The second part comprises the nonlinear commutator terms that do not incur derivative loss.
The third part includes the pressure terms arising from $N_f \cdot \nabla p + N_f \cdot \nabla \hat{p}$.
The commutators can be estimated by employing the intrinsic null structure and  the weighted commutator estimates for fractional derivatives in Section 3.
The weighted estimate of $N_f \cdot \nabla p + N_f \cdot \nabla \hat{p}$ has been demonstrated in Section 5. 
Regarding the highest order nonlinear terms that may lead to derivative loss issues, a detailed and lengthy analysis of the nonlinearities reveals that the system does exhibit the necessary symmetry to save one order derivative. However, for the term \eqref{rm}, it appears that the null structure is obscured.
To address this issue, we  apply increased power of weights
for $\Lambda_\pm$ and $\hat{\Lambda}_\pm$ in the lower-order terms.

\subsection{Estimate of the amplitude of the free surface}

In this subsection, we are going to show
\begin{align*}
\| f(t,\cdot)\|_{L^\infty(\BR)}
&\leq \| f(0,\cdot)\|_{L^\infty(\BR)}
+\f12 M\sup_{0\leq \tau\leq t} (1+\|\p_1f(\tau,\cdot)\|_{L^\infty})
\| \langle w^\pm \rangle^{2\mu}   \La_{\pm}(\tau,\cdot) \|_{L^\infty}
 \, , 
\end{align*}
for $\mu>\f12$. Here $M=\int_{\bR} \langle x_1\rangle^{-2\mu} \d x_1$.

Note that the kinetic boundary equation 
can be organized as follows:
\begin{align*}
(\p_t \pm \p_1)f=\ud{\La_{\pm}}\cdot N_f= \ud{\La^2_{\pm}}-\ud{\La^1_{\pm}}\p_1f \,.
\end{align*}
Introducing the characteristics $\phi_\pm(s)=(x_1\mp t)\pm s$.
Along the characteristics $\phi_\pm(s)$, $f(t,x_1)$ satisfies
\begin{align*}
\f{\d }{\ds} f(s,\phi_\pm(s) )
=(\p_t \pm \p_1)f (s, \phi_\pm(s) )=(\La^2_{\pm}-\La^1_{\pm}\p_1f)(s,\phi_\pm (s),x_2)\big|_{x_2=f(s,\phi_\pm(s))}\, .
\end{align*}
Taking integrals along the characteristics over $[0,t]$, one has
\begin{align*}
f(t,x_1)=f(0,\phi_\pm(0))+\int_0^t (\La^2_{\pm}-\La^1_{\pm}\p_1f)(\tau,\phi_\pm (\tau),x_2)\big|_{x_2=f(\tau,\phi_\pm(\tau))}  \d\tau\,.
\end{align*}
We calculate
\begin{align*}
&\int_0^t (\La^2_{\pm}-\La^1_{\pm}\p_1f)(\tau,\phi_\pm (\tau),x_2)\big|_{x_2=f(\tau,\phi_\pm(\tau))} \d\tau \\
&\leq \sup_{0\leq \tau\leq t}
(1+\|\p_1f(\tau,\cdot)\|_{L^\infty})\| \langle w^\pm \rangle^{2\mu} \La_\pm (\tau,\cdot)  \|_{L^\infty}
\int_0^t  \langle w^\pm (\tau,x_1) \rangle^{-2\mu} |_{x_1= \phi_\pm (\tau)} \d\tau\\
&\leq \sup_{0\leq \tau\leq t} (1+\|\p_1f(\tau,\cdot)\|_{L^\infty})\| \langle w^\pm \rangle^{2\mu} \La_\pm (\tau,\cdot) \|_{L^\infty}
\int_{-\infty}^{\infty} \langle w^\pm  \rangle^{-2\mu} \d w^\pm
\cdot\sup_{\tau}\f{1}{\big|\f{\d w^\pm(\tau,\phi_\pm (\tau)) }{\d\tau}\big|}\\
&= \f12M\sup_{0\leq \tau\leq t} (1+\|\p_1f(\tau,\cdot)\|_{L^\infty})\| \langle w^\pm \rangle^{2\mu} \La_\pm (\tau,\cdot) \|_{L^\infty} \, .
\end{align*}
Then the conclusion is obtained.

\subsection{Weighted energy estimate of the free surface}
In this subsection, we are going to deal with the weighted energy estimate of the free surface.
Recalling that for positive integer $s\geq 1$, the weighted energy of the free surface is defined as follows:
\begin{align*}
E_{s+\f12}^f(t)=\sum_{+,-}\sum_{|a|\leq s-1}
\|\langle w^\pm\rangle^{2\mu}(\p_t\pm\p_1)\langle \p_1\rangle^{ \f12} \p_1^a f(t,\cdot)\|_{L^2(\bR)}^2 \,,
\end{align*}
where $w^\pm=x_1\pm t$, $\f12 < \mu\leq \f35$. The corresponding ghost weight energy is defined as follows
\begin{align*}
G_{s+\f12}^f(t)=\sum_{+,-} \sum_{|a|\leq s-1}
\big\|\f{\langle w^\pm\rangle^{2\mu}(\p_t\pm\p_1)\langle \p_1\rangle^{ \f12} \p_1^a f}{\langle w^\mp\rangle^{\mu}} (t,\cdot)
\big\|_{L^2(\bR)}^2 \, .
\end{align*}
In the sequel, we only present the energy estimate for 
$\sum_{|a|\leq s-1}\|\langle w^-\rangle^{2\mu}(\p_t-\p_1)\langle \p_1\rangle^{ \f12} \p_1^a f\|_{L^2}^2 .$
The estimate for the right Alfv\'en waves $\sum_{|a|\leq s-1}\|\langle w^+\rangle^{2\mu}(\p_t+\p_1)\langle \p_1\rangle^{ \f12} \p_1^a f\|_{L^2}^2$
is the same and the details are omitted.

We recall that the equation \eqref{BB10} for the free surface as follows
\begin{align}\label{E9}
\p_t^2 f-\p_1^2 f
=&-\f12\big(\ud{\La_{+}^1}+\ud{\hat{\La}_{+}^1} \big) \p_1 (\p_t-  \p_1) f
 -\f12\big(\ud{\La_{-}^1}+\ud{\hat{\La}_{-}^1} \big) \p_1 (\p_t+  \p_1) f\\\nonumber
&-\f12\big(\ud{\La_{+}^1} \ud{\La_{-}^1} +\ud{\hat{\La}_{+}^1} \ud{\hat{\La}_{-}^1} \big)\p_1^2 f
-\f12(N_f\cdot\ud{\nabla p} +N_f\cdot\ud{\nabla \hat{p}}) \, .
\end{align}

Now we study the weighted energy estimate for the free surface with the aid of the ghost weight method.
The ghost weight method was introduced by Alinhac in the study of the two dimensional wave equations \cite{Alinhac00}.
Here we use a variant version of the ghost weight method of Alinhac for the one dimensional wave equations, similar to the modifications made by the authors \cite{CL}.

Let $q(\theta)=\int_0^\theta \langle\tau\rangle^{-2\mu} \d\tau$, $\f12<\mu\leq \f35,$
thus $|q(\theta)|\lesssim 1$.
To simply the notations, we abbreviate $e^{q^+}=e^{q(w^-)}$ and $e^{q^-}=e^{q(-w^+)}$.
Let $s\geq 4$ be an integer, $0\leq a\leq s-1$.
Applying $\langle \p_1\rangle^{\f12}\p_1^a$ to \eqref{E9}.
Then taking the $L^2$ inner product of the consequent equations with
 $2\langle w^-\rangle^{4\mu}(\p_t-\p_1)\langle \p_1\rangle^{\f12}\p_1^af\cdot e^{q^-}$
(to study the weighted energy estimate for $(\p_t+\p_1)f$, similarly we take the $L^2$ inner product of the consequent equations with $2\langle w^+\rangle^{4\mu}(\p_t+\p_1)\langle \p_1\rangle^{\f12}\p_1^af \cdot e^{q^+}$),
one has
\begin{align}\label{E10}
& \f{\d}{\dt}
\int_{\BR}\big| \langle w^-\rangle^{2\mu}(\p_t-\p_1) \langle \p_1\rangle^{\f12}\p_1^a f
\big|^2 e^{q^-}\d x_1
+2\int_{\BR}\f{\big| \langle w^-\rangle^{2\mu}(\p_t-\p_1) \langle \p_1\rangle^{\f12}\p_1^a f
\big|^2}{\langle w^+\rangle^{2\mu}} e^{q^-}\d x_1\\\nonumber
&=2\int_{\BR}(\p_t+\p_1) \langle w^-\rangle^{2\mu} (\p_t-\p_1) \langle \p_1\rangle^{\f12}\p_1^a f \cdot \langle w^-\rangle^{2\mu}(\p_t-\p_1) \langle \p_1\rangle^{\f12}\p_1^a f e^{q^-}\d x_1\\\nonumber
&=-\int_{\BR} \langle w^-\rangle^{4\mu} (\p_t-\p_1) \langle \p_1\rangle^{\f12}\p_1^a f\cdot
(\ud{\La^1_{+}}\ud{\La^1_{-}}
+\ud{\hat{\La}^1_{+}}\ud{\hat{\La}^1_{-}})\p_1^2 \langle \p_1\rangle^{\f12}\p_1^a f e^{q^-}\d x_1\\\nonumber
&\quad-\int_{\BR} \langle w^-\rangle^{4\mu} (\p_t-\p_1)\langle \p_1\rangle^{\f12}\p_1^a f\cdot
  (\ud{\La^1_{+}}+\ud{\hat{\La}^1_{+}}) \p_1(\p_t-\p_1)
 \langle \p_1\rangle^{\f12}\p_1^a f e^{q^-} \d x_1 \\
&\quad-\int_{\BR} \langle w^-\rangle^{4\mu} (\p_t-\p_1)\langle \p_1\rangle^{\f12}\p_1^a f\cdot
  (\ud{\La^1_{-}}+\ud{\hat{\La}^1_{-}})\p_1 (\p_t+\p_1)
 \langle \p_1\rangle^{\f12}\p_1^a f e^{q^-} \d x_1 \nonumber\\
&\quad-\int_{\BR} \langle w^-\rangle^{4\mu} (\p_t-\p_1)\langle \p_1\rangle^{\f12}\p_1^a f\cdot
  [\langle \p_1\rangle^{\f12}\p_1^a,\ud{\La^1_{+}}\ud{\La^1_{-}}
+\ud{\hat{\La}^1_{+}}\ud{\hat{\La}^1_{-}}] \p_1^2f e^{q^-} \d x_1 \nonumber\\
&\quad-\int_{\BR} \langle w^-\rangle^{4\mu} (\p_t-\p_1)\langle \p_1\rangle^{\f12}\p_1^a f\cdot
  [\langle \p_1\rangle^{\f12}\p_1^a,\ud{\La^1_{+}}+\ud{\hat{\La}^1_{+}}]
  \p_1(\p_t-\p_1) f e^{q^-} \d x_1 \nonumber\\
&\quad-\int_{\BR} \langle w^-\rangle^{4\mu} (\p_t-\p_1)\langle \p_1\rangle^{\f12}\p_1^a f\cdot
  [\langle \p_1\rangle^{\f12}\p_1^a,\ud{\La^1_{-}}+\ud{\hat{\La}^1_{-}}]
  \p_1(\p_t+\p_1) f e^{q^-} \d x_1 \nonumber\\
&\quad-\int_{\BR} \langle w^-\rangle^{4\mu} (\p_t-\p_1)\langle \p_1\rangle^{\f12}\p_1^a f\cdot
\langle \p_1\rangle^{\f12}\p_1^a\Big( N_f\cdot(\ud{\nabla p} +\ud{\nabla \hat{p} })\Big) e^{q^-} \d x_1 \nonumber\\\nonumber
&=A_1+A_2+A_3+A_4+A_5+A_6+A_7.
\end{align}
Here $A_1$, $A_2$, $A_3$ correspond to the terms containing the highest-order derivative, which may cause derivative loss.
$A_4$, $A_5$, $A_6$ correspond to the commutator terms which do not have derivative loss.
$A_7$ is the pressure term. In the sequel, we will divide the estimate into several steps.

\textbf{Estimate of the highest-order terms $A_1$, $A_2$, $A_3$}


\textbf{ Estimate of $A_1$}
\begin{align*}
A_1&=-\int_{\BR} \langle w^-\rangle^{4\mu} (\p_t-\p_1) \langle \p_1\rangle^{\f12}\p_1^a f\cdot
(\ud{\La^1_{+}} \ud{\La^1_{-}}
+\ud{\hat{\La}^1_{+}} \ud{\hat{\La}^1_{-}})  \langle \p_1\rangle^{\f12}\p_1^{a+2} f e^{q^-}\d x_1\nonumber\\
&= \int_{\BR} (\p_t-\p_1) \langle \p_1\rangle^{\f12}\p_1^af
\p_1\langle \p_1\rangle^{\f12}\p_1^a f\p_1  \Big(e^{q^-}\langle w^-\rangle^{4\mu}
(\ud{\La^1_{+}} \ud{\La^1_{-}}
+\ud{\hat{\La}^1_{+}} \ud{\hat{\La}^1_{-}})\Big) \d x_1 \\
&\quad -\f12 \int_{\BR} | \langle \p_1\rangle^{\f12}\p_1^{a+1} f |^2 \cdot
(\p_t-\p_1)\Big( e^{q^-}\langle w^-\rangle^{4\mu} (\ud{\La^1_{+}} \ud{\La^1_{-}}
+\ud{\hat{\La}^1_{+}} \ud{\hat{\La}^1_{-}}) \Big) \d x_1 \\
&\quad +\f12 \f{\d}{\dt}\int_{\BR}
| \langle \p_1\rangle^{\f12}\p_1^{a+1} f|^2 \cdot
e^{q^-}\langle w^-\rangle^{4\mu} (\ud{\La^1_{+}} \ud{\La^1_{-}}
+\ud{\hat{\La}^1_{+}} \ud{\hat{\La}^1_{-}})  \d x_1\\
&=A_{11}+A_{12}+A_{13}.
\end{align*}
$A_{11}$ has good control. By weighted Sobolev inequalities Lemma \ref{Sobo2} and weighted trace Lemma \ref{lemD2}, one has
\begin{align*}
A_{11}
&\lesssim \Big\| \f{\langle w^-\rangle^{2\mu}(\p_t-\p_1) \langle \p_1\rangle^{\f12}\p_1^af}{\langle w^+\rangle^{\mu}}\Big\|_{L^2(\bR)}
\|\p_1\langle \p_1\rangle^{\f12}\p_1^a f\|_{L^2(\bR)} \\
&\qquad \cdot \|\langle w^+\rangle^{-\mu} \p_1  \Big(e^{q^-}\langle w^-\rangle^{4\mu}
(\ud{\La^1_{+}} \ud{\La^1_{-}}
+\ud{\hat{\La}^1_{+}} \ud{\hat{\La}^1_{-}})\Big) \|_{L^\infty(\bR)}\\
&\lesssim E_s^{\f12}  G_s^{\f12} E_3^{\f12}  G_3^{\f12} (1+\| \p_1f\|_{L^\infty}).
\end{align*}
 $A_{12}$ and $A_{13}$ need further concerns.


\textbf{Estimate of $A_2$}

By weighted Sobolev inequalities Lemma \ref{Sobo2} and weighted trace Lemma \ref{lemD2}, one derives that
\begin{align*}
A_2=&-\int_{\BR} \langle w^-\rangle^{4\mu} (\p_t-\p_1)\langle \p_1\rangle^{\f12}\p_1^a f\cdot
  (\ud{\La^1_{+}}+\ud{\hat{\La}^1_{+}}) \p_1 (\p_t-\p_1)
 \langle \p_1\rangle^{\f12}\p_1^a f e^{q^-} \d x_1 \nonumber\\
=&\f12\int_{\BR} |(\p_t-\p_1)\langle \p_1\rangle^{\f12}\p_1^a f|^2\cdot \p_1 \big(
   \langle w^-\rangle^{4\mu}(\ud{\La^1_{+}}+\ud{\hat{\La}^1_{+}})  e^{q^-}\big) \d x_1 \nonumber\\
\lesssim & E_3^{\f12}G_s (1+\| \p_1f\|_{L^\infty}) .
\end{align*}

\textbf{Estimate of $A_3$}
\begin{align*}
A_3&=-\int_{\BR} \langle w^-\rangle^{4\mu} (\p_t-\p_1)\langle \p_1\rangle^{\f12}\p_1^a f\cdot
  (\ud{\La^1_{-}}+\ud{\hat{\La}^1_{-}})\cdot \p_1(\p_t+\p_1)
 \langle \p_1\rangle^{\f12}\p_1^a f e^{q^-} \d x_1 \nonumber\\
&=-\f{\d}{\dt}\int_{\BR} \langle w^-\rangle^{4\mu}  (\p_t-\p_1)\langle \p_1\rangle^{\f12}\p_1^a  f \big(\ud{\La_{-}^1}
+\ud{\hat{\La}_{-}^1}) \cdot \p_1  \langle \p_1 \rangle^{\f12}\p_1^a  f e^{q^-} \d x_1  \nonumber\\
&\quad+\int_{\BR} \langle w^-\rangle^{4\mu}  (\p_t-\p_1)\langle \p_1\rangle^{\f12}\p_1^a  f\cdot (\p_t+ \p_1)\big(\ud{\La_{-}^1}+\ud{\hat{\La}_{-}^1})
 \p_1  \langle \p_1\rangle^{\f12}\p_1^a  f e^{q^-} \d x_1  \nonumber\\
&\quad+\int_{\BR} \langle w^-\rangle^{4\mu} (\p_t-  \p_1)\langle \p_1\rangle^{\f12}\p_1^a  f \big(\ud{\La_{-}^1}
+\ud{\hat{\La}_{-}^1}) \cdot   \p_1  \langle \p_1\rangle^{\f12}\p_1^af(\p_t+\p_1) e^{q^-}\d x_1 \nonumber\\
&\quad+\int_{\BR} \langle w^-\rangle^{4\mu} (\p_t-  \p_1)(\p_t+\p_1)\langle \p_1\rangle^{\f12}\p_1^a  f \big(\ud{\La_{-}^1}
+\ud{\hat{\La}_{-}^1}) \cdot   \p_1  \langle \p_1\rangle^{\f12}\p_1^af e^{q^-} \d x_1 \\
&=A_{31}+A_{32}+A_{33}+A_{34}.
\end{align*}
Here $A_{31}$ will absorbed into the energy. 
For$A_{32}$,
by \eqref{B20}, there holds
\begin{align*}
	(\p_t +\p_1) \big( \ud{\La_{-}^1}+\ud{\hat{\La}_{-}^1} \big)
	&= -\ud{\La_{+}^1}\p_1\ud{\La_{-}^1}
	-\ud{\hat{\La}_{+}^1}\p_1\ud{\hat{\La}_{-}^1}
	-\ud{\p_1 p}-\ud{\p_1 \hat{p}}\,.
\end{align*}
Thus by weighted Sobolev inequalities Lemma \ref{Sobo2}, weighted trace Lemma \ref{lemD2} and Lemma \ref{lemPre}, one has
\begin{align*}
A_{32}= &
\int_{\BR} \langle w^-\rangle^{4\mu}  (\p_t-\p_1)\langle \p_1\rangle^{\f12}\p_1^a  f\cdot (\p_t+ \p_1)\big(\ud{\La_{-}^1}+\ud{\hat{\La}_{-}^1})
 \p_1  \langle \p_1\rangle^{\f12}\p_1^a  f e^{q^-} \d x_1  \nonumber\\
 \leq& \Big\| \f{\langle w^-\rangle^{2\mu}  (\p_t-\p_1)\langle \p_1\rangle^{\f12}\p_1^a  f}{ \langle w^+\rangle^{\mu} } \Big\|_{L^2}
 \big\| \langle w^-\rangle^{2\mu}\langle w^+\rangle^{\mu} (\p_t+ \p_1)\big(\ud{\La_{-}^1}+\ud{\hat{\La}_{-}^1})\big\|_{L^\infty}
 \|\p_1  \langle \p_1\rangle^{\f12}\p_1^a  f \|_{L^2}\\\nonumber
 \lesssim& E_s^{\f12}  G_s^{\f12} E_3^{\f12}  G_3^{\f12} (1+\| \p_1f\|_{L^\infty}) \,.
\end{align*}
For $A_{33}$, by weighted Sobolev inequalities Lemma \ref{Sobo2} and weighted trace Lemma \ref{lemD2}, $A_{33}$ can be controlled by
\begin{align*}
A_{33}\lesssim E_s^{\f12} G_s^{\f12} G_2^{\f12}.
\end{align*}
$A_{34}$ need further calculation.
Actually, by the equation of the free surface \eqref{E9}, plugging  $(\p_t^2-\p_1^2)\langle \p_1\rangle^{\f12}\p_1^a  f$
into $A_{34}$, we obtain
\begin{align*}
A_{34}=&\int_{\BR} \langle w^-\rangle^{4\mu} (\p_t^2-\p_1^2)\langle \p_1\rangle^{\f12}\p_1^a  f \big(\ud{\La_{-}^1}
+\ud{\hat{\La}_{-}^1}) \cdot   \p_1  \langle \p_1\rangle^{\f12}\p_1^af e^{q^-} \d x_1 \\
=&-\f12\int_{\BR} \langle w^-\rangle^{4\mu} \langle \p_1\rangle^{\f12}\p_1^a
\Big( \big(\ud{\La_{+}^1}+\ud{\hat{\La}_{+}^1} \big) \p_1 (\p_t-  \p_1) f  \Big)  \big(\ud{\La_{-}^1}
+\ud{\hat{\La}_{-}^1}) \cdot   \p_1  \langle \p_1\rangle^{\f12}\p_1^af e^{q^-} \d x_1\\
&-\f12\int_{\BR} \langle w^-\rangle^{4\mu} \langle \p_1\rangle^{\f12}\p_1^a
\Big( \big(\ud{\La_{-}^1}+\ud{\hat{\La}_{-}^1} \big) \p_1 (\p_t+  \p_1) f  \Big)  \big(\ud{\La_{-}^1}
+\ud{\hat{\La}_{-}^1}) \cdot   \p_1  \langle \p_1\rangle^{\f12}\p_1^af e^{q^-} \d x_1\\
&-\f12\int_{\BR} \langle w^-\rangle^{4\mu} \langle \p_1\rangle^{\f12}\p_1^a
\Big( \big(\ud{\La_{+}^1} \ud{\La_{-}^1} +\ud{\hat{\La}_{+}^1} \ud{\hat{\La}_{-}^1} \big)\p_1^2 f  \Big)  \big(\ud{\La_{-}^1}
+\ud{\hat{\La}_{-}^1}) \cdot   \p_1  \langle \p_1\rangle^{\f12}\p_1^af e^{q^-} \d x_1\\
&-\f12\int_{\BR} \langle w^-\rangle^{4\mu} \langle \p_1\rangle^{\f12}\p_1^a
\Big( N_f\cdot\ud{\nabla p} +N_f\cdot\ud{\nabla \hat{p}}  \Big)  \big(\ud{\La_{-}^1}
+\ud{\hat{\La}_{-}^1}) \cdot   \p_1  \langle \p_1\rangle^{\f12}\p_1^af e^{q^-} \d x_1\\
=&-\f12\int_{\BR} \langle w^-\rangle^{4\mu}
 \big(\ud{\La_{+}^1}+\ud{\hat{\La}_{+}^1} \big) \p_1 \langle \p_1\rangle^{\f12}\p_1^a (\p_t-  \p_1) f  \big(\ud{\La_{-}^1}
+\ud{\hat{\La}_{-}^1}) \cdot   \p_1  \langle \p_1\rangle^{\f12}\p_1^af e^{q^-} \d x_1\\
& -\f12\int_{\BR} \langle w^-\rangle^{4\mu}
 \big(\ud{\La_{-}^1}+\ud{\hat{\La}_{-}^1} \big) \p_1 \langle \p_1\rangle^{\f12}\p_1^a(\p_t+  \p_1) f    \big(\ud{\La_{-}^1}
+\ud{\hat{\La}_{-}^1}) \cdot   \p_1  \langle \p_1\rangle^{\f12}\p_1^af e^{q^-} \d x_1\\
& -\f12\int_{\BR} \langle w^-\rangle^{4\mu}
\big(\ud{\La_{+}^1} \ud{\La_{-}^1} +\ud{\hat{\La}_{+}^1} \ud{\hat{\La}_{-}^1} \big)\langle \p_1\rangle^{\f12}\p_1^a \p_1^2 f    \big(\ud{\La_{-}^1}
+\ud{\hat{\La}_{-}^1}) \cdot   \p_1  \langle \p_1\rangle^{\f12}\p_1^af e^{q^-} \d x_1\\
&-\f12\int_{\BR} \langle w^-\rangle^{4\mu}
 \big[\langle \p_1\rangle^{\f12}\p_1^a,\big(\ud{\La_{+}^1}+\ud{\hat{\La}_{+}^1} \big) \big] \p_1(\p_t-  \p_1) f  \cdot \big(\ud{\La_{-}^1}
+\ud{\hat{\La}_{-}^1}) \cdot   \p_1  \langle \p_1\rangle^{\f12}\p_1^af e^{q^-} \d x_1\\
& -\f12\int_{\BR} \langle w^-\rangle^{4\mu}
\big[\langle \p_1\rangle^{\f12}\p_1^a, \big(\ud{\La_{-}^1}+\ud{\hat{\La}_{-}^1} \big)\big]\p_1(\p_t+  \p_1) f  \cdot  \big(\ud{\La_{-}^1}
+\ud{\hat{\La}_{-}^1}) \cdot   \p_1  \langle \p_1\rangle^{\f12}\p_1^af e^{q^-} \d x_1\\
& -\f12\int_{\BR} \langle w^-\rangle^{4\mu}\big[\langle \p_1\rangle^{\f12}\p_1^a,
\big(\ud{\La_{+}^1} \ud{\La_{-}^1} +\ud{\hat{\La}_{+}^1} \ud{\hat{\La}_{-}^1} \big)\big] \p_1^2 f  \cdot  \big(\ud{\La_{-}^1}
+\ud{\hat{\La}_{-}^1}) \cdot   \p_1  \langle \p_1\rangle^{\f12}\p_1^af e^{q^-} \d x_1\\
&-\f12\int_{\BR} \langle w^-\rangle^{4\mu} \langle \p_1\rangle^{\f12}\p_1^a
\Big( N_f\cdot\ud{\nabla p} +N_f\cdot\ud{\nabla \hat{p}}  \Big)  \big(\ud{\La_{-}^1}
+\ud{\hat{\La}_{-}^1}) \cdot   \p_1  \langle \p_1\rangle^{\f12}\p_1^af e^{q^-} \d x_1\\
=&B_1+B_2+B_3+B_4+B_5+B_6+B_7.
\end{align*}
Here $B_1$, $B_2$, $B_3$ correspond to the terms containing the highest-order derivative, which may cause derivative loss. $B_4$, $B_5$, $B_6$ correspond to the commutator terms which do not have derivative loss.
$B_7$ is the pressure term.
In the sequel, we will estimate $B_1$-$B_7$.

We first calculate $B_1$, $B_2$, $B_3$.
Simple calculation yields
\begin{align*}
B_1=&-\f12\int_{\BR} \langle w^-\rangle^{4\mu}
 \big(\ud{\La_{+}^1}+\ud{\hat{\La}_{+}^1} \big)\big(\ud{\La_{-}^1}+\ud{\hat{\La}_{-}^1})
  \p_1 \langle \p_1\rangle^{\f12}\p_1^a (\p_t-  \p_1) f   \cdot   \p_1  \langle \p_1\rangle^{\f12}\p_1^af e^{q^-} \d x_1\\
=&-\f14\f{\d}{\dt}\int_{\BR} \langle w^-\rangle^{4\mu}
 \big(\ud{\La_{+}^1}+\ud{\hat{\La}_{+}^1} \big)\big(\ud{\La_{-}^1}+\ud{\hat{\La}_{-}^1})
  \big|\p_1 \langle \p_1\rangle^{\f12}\p_1^a  f\big|^2 e^{q^-} \d x_1\\
&  +\f14\int_{\BR} (\p_t-  \p_1)  \Big(\langle w^-\rangle^{4\mu}
 \big(\ud{\La_{+}^1}+\ud{\hat{\La}_{+}^1} \big)\big(\ud{\La_{-}^1}+\ud{\hat{\La}_{-}^1}) e^{q^-} \Big)
 \big| \p_1 \langle \p_1\rangle^{\f12}\p_1^a f  \big|^2  \d x_1\\
&=B_{11}+B_{12}.
\end{align*}
They need further concerns.

For $B_2$,
\begin{align*}
B_2&=-\f12\int_{\BR} \langle w^-\rangle^{4\mu}
 \big(\ud{\La_{-}^1}+\ud{\hat{\La}_{-}^1} \big)^2 \p_1 \langle \p_1\rangle^{\f12}\p_1^a(\p_t+  \p_1) f
 \cdot   \p_1  \langle \p_1\rangle^{\f12}\p_1^af e^{q^-} \d x_1\\
&=-\f14\f{\d}{\dt}\int_{\BR} \langle w^-\rangle^{4\mu}
 \big(\ud{\La_{-}^1}+\ud{\hat{\La}_{-}^1} \big)^2
  \big|\p_1 \langle \p_1\rangle^{\f12}\p_1^a  f\big|^2 e^{q^-} \d x_1\\
&\quad  +\f14\int_{\BR} (\p_t+  \p_1)  \Big(\langle w^-\rangle^{4\mu}
 \big(\ud{\La_{-}^1}+\ud{\hat{\La}_{-}^1} \big)^2 e^{q^-} \Big)
 \big| \p_1 \langle \p_1\rangle^{\f12}\p_1^a f  \big|^2  \d x_1\\
&=B_{21}+B_{22}.
\end{align*}
$B_{21}+B_{22}$ has good control. Actually, $B_{21}$ will be absorbed into the energy. For $B_{22}$,
we plug in the equation of $ (\p_t+  \p_1) \big(\ud{\La_{-}^1}+\ud{\hat{\La}_{-}^1} \big)$ by \eqref{B20}.
Then by weighted Sobolev inequalities Lemma \ref{Sobo2}, weighted trace Lemma \ref{lemD2} and Lemma \ref{lemPre}, we imply that
\begin{align*}
B_{22}=& \f14\int_{\BR} (\p_t+  \p_1)  \Big(\langle w^-\rangle^{4\mu}
 \big(\ud{\La_{-}^1}+\ud{\hat{\La}_{-}^1} \big)^2 e^{q^-} \Big)
 \big| \p_1 \langle \p_1\rangle^{\f12}\p_1^a f  \big|^2  \d x_1\\
=& -\f12\int_{\BR}   \f{\langle w^-\rangle^{4\mu}
 \big(\ud{\La_{-}^1}+\ud{\hat{\La}_{-}^1} \big)^2 e^{q^-} }{\langle w^+\rangle^{2\mu}}
 \big| \p_1 \langle \p_1\rangle^{\f12}\p_1^a f  \big|^2  \d x_1\\
 &+ \f12\int_{\BR}  \langle w^-\rangle^{4\mu} \big(\ud{\La_{-}^1}+\ud{\hat{\La}_{-}^1} \big)\cdot
 (\p_t+  \p_1)\big(\ud{\La_{-}^1}+\ud{\hat{\La}_{-}^1} \big)  e^{q^-}
 \big| \p_1 \langle \p_1\rangle^{\f12}\p_1^a f  \big|^2  \d x_1\\
 \lesssim& E_s  G_2 +E_s E_3^{\f12} G_3 .
\end{align*}
For $B_3$, similarly by Lemma \ref{Sobo2} and Lemma \ref{lemD2}, we deduce that
\begin{align*}
B_3=&-\f12\int_{\BR} \langle w^-\rangle^{4\mu}
\big(\ud{\La_{+}^1} \ud{\La_{-}^1} +\ud{\hat{\La}_{+}^1} \ud{\hat{\La}_{-}^1} \big)\big(\ud{\La_{-}^1}
+\ud{\hat{\La}_{-}^1})  \langle \p_1\rangle^{\f12}\p_1^a \p_1^2 f    \cdot   \p_1  \langle \p_1\rangle^{\f12}\p_1^af e^{q^-} \d x_1\\
 =&\f14\int_{\BR} \p_1  \Big(\langle w^-\rangle^{4\mu}
 \big(\ud{\La_{+}^1} \ud{\La_{-}^1} +\ud{\hat{\La}_{+}^1} \ud{\hat{\La}_{-}^1} \big)\big(\ud{\La_{-}^1}+\ud{\hat{\La}_{-}^1}) e^{q^-} \Big)
 \big| \p_1 \langle \p_1\rangle^{\f12}\p_1^a f  \big|^2  \d x_1\\
\lesssim& E_s E_3^{\f12} G_3 .
\end{align*}

\textbf{Estimate of the highest order nonlinear terms $A_{12},\,A_{13},\,B_{11},\,B_{12}$}

For the remaining highest order nonlinear terms $A_{12},\,A_{13},\,B_{11},\,B_{12}$,
we combine them and write
\begin{align}\label{rm}
&A_{13}+B_{11}+A_{12}+B_{12} \\\nonumber
&= \f14 \f{\d}{\dt}\int_{\BR}
|\p_1 \langle \p_1\rangle^{\f12}\p_1^a f|^2
e^{q^-}\cdot\langle w^-\rangle^{4\mu} (\ud{\La^1_{+}}-\ud{\hat{\La}^1_{+}}) (\ud{\La^1_{-}}-\ud{\hat{\La}^1_{-}}) \d x_1\\\nonumber
&\quad-\f14\int_{\BR} \big| \p_1 \langle \p_1\rangle^{\f12}\p_1^a f  \big|^2 e^{q^-}\cdot (\p_t-  \p_1)
\Big(\langle w^-\rangle^{4\mu} \big(\ud{\La_{+}^1}-\ud{\hat{\La}_{+}^1} \big)\big(\ud{\La_{-}^1}-\ud{\hat{\La}_{-}^1}\big) \Big)
  \d x_1\\\nonumber
&=C_1+C_2.
\end{align}
$C_1$ will be absorbed into the energy. For $C_2$, we write
\begin{align*}
C_2=&-\f{1}{4}\int_{\BR} \big| \p_1 \langle \p_1\rangle^{\f12}\p_1^a f  \big|^2 e^{q^-}\cdot (\p_t-  \p_1)
\Big(\langle w^-\rangle^{4\mu} \big(\ud{\La_{+}^1}-\ud{\hat{\La}_{+}^1} \big)\big(\ud{\La_{-}^1}-\ud{\hat{\La}_{-}^1}) \Big)
  \d x_1  \\
=&-\f{1}{4}\int_{\BR} \big|\p_1 \langle \p_1\rangle^{\f12}\p_1^a f  \big|^2 e^{q^-}\cdot
\Big( (\p_t-  \p_1) \langle w^-\rangle^{4\mu} \big(\ud{\La_{+}^1}-\ud{\hat{\La}_{+}^1} \big)\big(\ud{\La_{-}^1}-\ud{\hat{\La}_{-}^1}) \Big)
  \d x_1  \\
&-\f{1}{4}\int_{\BR} \big|\p_1 \langle \p_1\rangle^{\f12}\p_1^a f  \big|^2 e^{q^-}\cdot
\Big(\langle w^-\rangle^{4\mu} (\p_t-  \p_1) \big(\ud{\La_{+}^1}-\ud{\hat{\La}_{+}^1} \big)\big(\ud{\La_{-}^1}-\ud{\hat{\La}_{-}^1}) \Big)
  \d x_1  \\
&-\f{1}{4}\int_{\BR} \big|\p_1 \langle \p_1\rangle^{\f12}\p_1^a f  \big|^2 e^{q^-}\cdot
\Big(\langle w^-\rangle^{4\mu} \big(\ud{\La_{+}^1}-\ud{\hat{\La}_{+}^1} \big)(\p_t-  \p_1)\big(\ud{\La_{-}^1}-\ud{\hat{\La}_{-}^1}) \Big)
  \d x_1  \\
=&C_{21}+ C_{22}+ C_{23} \, .
\end{align*}
For $C_{21}$,  by weighted Sobolev inequalities Lemma \ref{Sobo2}, weighted trace Lemma \ref{lemD2} and Lemma \ref{lemPre},
we estimate
\begin{align*}
C_{21}=&-\f{1}{4}\int_{\BR} \big|\p_1 \langle \p_1\rangle^{\f12}\p_1^a f  \big|^2 e^{q^-}\cdot
\Big( (\p_t-  \p_1) \langle w^-\rangle^{4\mu} \big(\ud{\La_{+}^1}-\ud{\hat{\La}_{+}^1} \big)\big(\ud{\La_{-}^1}-\ud{\hat{\La}_{-}^1}) \Big)
  \d x_1  \\
\lesssim &  \|\p_1 \langle \p_1\rangle^{\f12}\p_1^a f \|^2_{L^2(\BR)}
\Big\| \f{\langle w^-\rangle^{5\mu-1} \big(\ud{\La_{-}^1}-\ud{\hat{\La}_{-}^1})}{\langle w^+\rangle^{\mu}} \Big\|_{L^\infty(\BR)}
\Big\|\f{  \langle w^+\rangle^{\mu} (\ud{\La_{+}^1}-\ud{\hat{\La}_{+}^1})}{\langle w^-\rangle^{\mu}}
 \Big\|_{L^\infty(\BR)}\\
\lesssim &   E_s  G_2 \,.
\end{align*}
For $C_{22}$, similarly we
plug in the equation of $ (\p_t-\p_1) \big(\ud{\La_{+}^1}-\ud{\hat{\La}_{+}^1} \big)$ by \eqref{B20},
then by weighted Sobolev inequalities Lemma \ref{Sobo2} and weighted trace Lemma \ref{lemD2} and Lemma \ref{lemPre}, we get
\begin{align*}
C_{22}=
&-\f{1}{4}\int_{\BR} \big|\p_1 \langle \p_1\rangle^{\f12}\p_1^a f  \big|^2 e^{q^-}\cdot
\Big(\langle w^-\rangle^{4\mu} (\p_t-  \p_1) \big(\ud{\La_{+}^1}-\ud{\hat{\La}_{+}^1} \big)\big(\ud{\La_{-}^1}-\ud{\hat{\La}_{-}^1}) \Big)
  \d x_1  \\
\lesssim & \| \p_1 \langle \p_1\rangle^{\f12}\p_1^a f \|^2_{L^2(\BR)}
\Big\| \f{\langle w^-\rangle^{2\mu } \big(\ud{\La_{-}^1}-\ud{\hat{\La}_{-}^1})}{\langle w^+\rangle^{\mu}} \Big\|_{L^\infty(\BR)}
\big\| \langle w^-\rangle^{2\mu}\langle w^+\rangle^{\mu } (\p_t-\p_1)\big(\ud{\La_{+}^1}-\ud{\hat{\La}_{+}^1}\big) \big\|_{L^\infty(\BR)}\\
\lesssim &   E_s E_3^{\f12} G_3   \,.
\end{align*}
For $C_{23}$, similarly by \eqref{B20}, we
plug in the equation of $ \p_t\big(\ud{\La_{-}^1}-\ud{\hat{\La}_{-}^1} \big)$,
then by Lemma \ref{Sobo2}, Lemma \ref{lemD2} and  Lemma \ref{lemPre},
we estimate
\begin{align*}
C_{23}=&-\f{1}{4}\int_{\BR} \big|\p_1 \langle \p_1\rangle^{\f12}\p_1^a f  \big|^2 e^{q^-}\cdot
\Big(\langle w^-\rangle^{4\mu} \big(\ud{\La_{+}^1}-\ud{\hat{\La}_{+}^1} \big)(\p_t-  \p_1)\big(\ud{\La_{-}^1}-\ud{\hat{\La}_{-}^1}) \Big)
  \d x_1  \\
\lesssim &  \| \p_1 \langle \p_1\rangle^{\f12}\p_1^a f \|^2_{L^2(\BR)}
\Big\| \f{\langle w^-\rangle^{5\mu} (\p_t-\p_1)\big(\ud{\La_{-}^1}-\ud{\hat{\La}_{-}^1})}{\langle w^+\rangle^{\mu}} \Big\|_{L^\infty(\BR)}
\Big\| \f{\langle w^+\rangle^{\mu} (\ud{\La_{+}^1}-\ud{\hat{\La}_{+}^1})}{\langle w^-\rangle^{\mu}}
\Big\|_{L^\infty(\BR)}\\
\lesssim &    E_s  G_s +E_s^{\f32} G_s  \,.
\end{align*}
\begin{remark}
 To take care of the above $C_1$ and $C_2$,
we need  estimate
\begin{align*}
\sum_{+,-}\sum_{|a|\leq 1}
\| \langle w^\pm\rangle^{5\mu} \big(|\p_1^a \ud{\La_{\pm}^1}|+|\p_1^a\ud{\hat{\La}_{\pm}^1}|\big) \|_{L^\infty}.
\end{align*}
Hence we introduce the lower order weighted energy with increased power of weights:
\begin{align*}
E_3^b (t)=
 \sum_{+,-}\sum_{|\alpha|\leq 3}
\big(\| \langle w^\pm \rangle^{5\mu}\nabla^\alpha \La_{\pm} \|^2_{L^2(\Om_f)}
+\| \langle w^\pm \rangle^{5\mu}\nabla^\alpha \hat{\La}_{\pm}\|^2_{L^2(\hat{\Om}_f)}\big).
\end{align*}
\end{remark}

\textbf{Estimate of the commutator terms}

In order to estimate the commutator terms $A_4$, $A_5$, $A_6$ and $B_4$, $B_5$, $B_6$, we need the weighted fractional commutator lemma.
In the sequel, we only present the estimate for $A_4$, $A_5$, $A_6$. The estimate of
$B_4$, $B_5$, $B_6$ is the same and the details are omitted.
\begin{align*}
&A_4+A_5+A_6\\
&=-\int_{\BR} \langle w^-\rangle^{4\mu} (\p_t-\p_1)\langle \p_1\rangle^{\f12}\p_1^a f\cdot
  [\langle \p_1\rangle^{\f12}\p_1^a,\ud{\La^1_{+}}\ud{\La^1_{-}}
+\ud{\hat{\La}^1_{+}}\ud{\hat{\La}^1_{-}}] \p_1^2f e^{q^-} \d x_1 \nonumber\\
&\quad-\int_{\BR} \langle w^-\rangle^{4\mu} (\p_t-\p_1)\langle \p_1\rangle^{\f12}\p_1^a f\cdot
  [\langle \p_1\rangle^{\f12}\p_1^a,\ud{\La^1_{+}}+\ud{\hat{\La}^1_{+}}]
  \p_1(\p_t-\p_1) f e^{q^-} \d x_1 \nonumber\\
&\quad-\int_{\BR} \langle w^-\rangle^{4\mu} (\p_t-\p_1)\langle \p_1\rangle^{\f12}\p_1^a f\cdot
  [\langle \p_1\rangle^{\f12}\p_1^a,\ud{\La^1_{-}}+\ud{\hat{\La}^1_{-}}]
  \p_1(\p_t+\p_1) f e^{q^-} \d x_1 \nonumber\\
&\leq \big\| \f{\langle w^-\rangle^{2\mu} (\p_t-\p_1)\langle \p_1\rangle^{\f12}\p_1^a f}{\langle w^+\rangle^{\mu}} \big\|_{L^2(\BR)}\cdot
\Big(
\big\| \langle w^-\rangle^{2\mu}\langle w^+\rangle^{\mu}
[\langle \p_1\rangle^{\f12}\p_1^a,\ud{\La^1_{+}}\ud{\La^1_{-}}
+\ud{\hat{\La}^1_{+}}\ud{\hat{\La}^1_{-}}] \p_1^2f\big\|_{L^2}\\
&\qquad+\big\| \langle w^-\rangle^{2\mu}\langle w^+\rangle^{\mu}
[\langle \p_1\rangle^{\f12}\p_1^a,\ud{\La^1_{+}}+\ud{\hat{\La}^1_{+}}]
  \p_1(\p_t-\p_1) f \big\|_{L^2}\\
&\qquad+\big\| \langle w^-\rangle^{2\mu}\langle w^+\rangle^{\mu}
[\langle \p_1\rangle^{\f12}\p_1^a,\ud{\La^1_{-}}+\ud{\hat{\La}^1_{-}}]
  \p_1(\p_t+\p_1) f \big\|_{L^2} \Big).
\end{align*}

\textbf{Estimate of
$\| \langle w^+\rangle^{\mu} \langle w^-\rangle^{2\mu} \big[\langle\p_1\rangle^{\f12}\p_1^a,\ud{\La_{+}^1 }+\ud{\hat{\La}_{+}^1 }\ \big] \p_1(\p_t-\p_1) f\|_{L^2(\BR)}$ and \\
$\| \langle w^+\rangle^{\mu} \langle w^-\rangle^{2\mu} \big[\langle\p_1\rangle^{\f12}\p_1^a,\ud{\La_{-}^1}+\ud{\hat{\La}_{-}^1 }\ \big] \p_1(\p_t+\p_1) f\|_{L^2(\BR)}$}

For these two terms, they have similar structure.
Hence we only present the estimate for the first term.
The second term can be estimated similarly and the details are omitted.

If $a=0$, by the weighted commutator estimate Lemma \ref{lemD8},
the weighted Sobolev inequalities Lemma \ref{Sobo2} and the weighted trace Lemma \ref{lemD3}, we infer that
\begin{align*}
&\| \langle w^+\rangle^{\mu} \langle w^-\rangle^{2\mu} \big[\langle\p_1\rangle^{\f12},\ud{\La_{+}^1 }+\ud{\hat{\La}_{+}^1 }\ \big] \p_1(\p_t-\p_1) f\|_{L^2(\BR)} \\
&\lesssim \big\| \langle w^+\rangle^{2\mu} \langle \p_1\rangle^{\f12}
\big( \ud{\La_{+}^1 }+ \ud{\hat{\La}_{+}^1} \big)
\big\|_{L^2(\BR)}
\Big\|\f{\langle w^-\rangle^{2\mu}}{\langle w^+\rangle^{\mu}}
 \p_1 (\p_t-\p_1) f \Big\|_{L^\infty(\BR)}  \\\nonumber
&\lesssim \big( \| \langle w^+\rangle^{2\mu}\p^{\leq 1}  \La_{+}^1\|_{L^2(\Om_f)}
 + \| \langle w^+\rangle^{2\mu} \p^{\leq 1}  \hat{\La}_{+}^1\|_{L^2(\hat{\Om}_f)} \big)  G_3^{\f12} \\\nonumber
&\lesssim E_1^{\f12} G_3^{\f12} .
\end{align*}
If $a=1$, by the weighted commutator estimate Lemma \ref{lemD10},
the weighted Sobolev inequalities Lemma \ref{Sobo2},
the weighted chain rules Lemma \ref{lemG3} and the weighted trace Lemma \ref{lemD3}, we deduce that
\begin{align*}
&\| \langle w^+\rangle^{\mu} \langle w^-\rangle^{2\mu} \big[\langle\p_1\rangle^{\f12}\p_1,\ud{\La_{+}^1 }+\ud{\hat{\La}_{+}^1 }\ \big] \p_1(\p_t-\p_1) f\|_{L^2(\BR)} \\
&\lesssim \big\| \langle w^+\rangle^{2\mu} \langle \p_1\rangle^{\f12}\p_1
\big( \ud{\La_{+}^1 }+ \ud{\hat{\La}_{+}^1} \big)
\big\|_{L^2(\BR)}
\Big\|\f{\langle w^-\rangle^{2\mu}}{\langle w^+\rangle^{\mu}}
 \p_1 (\p_t-\p_1) f \Big\|_{L^\infty(\BR)}  \\\nonumber
&\quad+ \big\| \langle w^+\rangle^{2\mu} \langle \p_1\rangle^{\f12}
\big( \ud{\La_{+}^1 }+ \ud{\hat{\La}_{+}^1} \big)
\big\|_{L^\infty(\BR)}
\Big\|\f{\langle w^-\rangle^{2\mu}}{\langle w^+\rangle^{\mu}}
 \p_1^2 (\p_t-\p_1) f \Big\|_{L^2(\BR)}  \\\nonumber
&\lesssim \big( \| \langle w^+\rangle^{2\mu}\p^{\leq 2}  \La_{+}^1\|_{L^2(\Om_f)}
 + \| \langle w^+\rangle^{2\mu} \p^{\leq 2}  \hat{\La}_{+}^1\|_{L^2(\hat{\Om}_f)} \big)  G_3^{\f12} \\\nonumber
&\lesssim E_2^{\f12} G_3^{\f12} .
\end{align*}
If $2\leq a\leq s-1$, by weighted commutator estimate Lemma \ref{lemD11}, weighted chain rule Lemma \ref{lemG3} and the weighted trace Lemma \ref{lemD3}, there holds
\begin{align*}
&\| \langle w^+\rangle^{\mu} \langle w^-\rangle^{2\mu} \big[\langle\p_1\rangle^{\f12}\p_1^a,\ud{\La_{+}^1 }+\ud{\hat{\La}_{+}^1 }\ \big] \p_1(\p_t-\p_1) f\|_{L^2(\BR)} \\
&\lesssim \big\| \langle w^+\rangle^{2\mu} \langle \p_1\rangle^{\f12}
 \p_1^{\leq a} \big( \ud{\La_{+}^1 }+ \ud{\hat{\La}_{+}^1} \big)
\big\|_{L^2(\BR)}
\Big\|\f{\langle w^-\rangle^{2\mu}}{\langle w^+\rangle^{\mu}} \langle \p_1\rangle^{\f12}
 \p_1^{\leq a}(\p_t-\p_1) f \Big\|_{L^2(\BR)}  \\\nonumber
&\lesssim \big( \| \langle w^+\rangle^{2\mu}\p^{\leq s}  \La_{+}^1\|_{L^2(\Om_f)}
 + \| \langle w^+\rangle^{2\mu} \p^{\leq s}  \hat{\La}_{+}^1\|_{L^2(\hat{\Om}_f)} \big)
 \mathcal{P}(1+\| f'\|_{H^{s-\f32}})
   G_s^{\f12} \\\nonumber
&\lesssim E_s^{\f12} G_s^{\f12} \mathcal{P}(1+\| f'\|_{H^{s-\f32}}).
\end{align*}

\textbf{Estimate of $\| \langle w^+\rangle^{\mu} \langle w^-\rangle^{2\mu} \big[\langle\p_1\rangle^{\f12}\p_1^a, \ud{\La_{-}^1} \ud{\La_{+}^1}\ \big] \p_1^2 f\|_{L^2(\BR)}$
and \\
$\| \langle w^+\rangle^{\mu} \langle w^-\rangle^{2\mu} \big[\langle\p_1\rangle^{\f12}\p_1^a, \ud{\hat{\La}_{-}^1} \ud{\hat{\La}_{+}^1}\ \big] \p_1^2 f\|_{L^2(\BR)}$}

The estimates of these two terms are the same. Hence we only present the estimate for
the first term.
Similar to the above estimate, by the weighted Sobolev inequalities Lemma \ref{Sobo2},
the weighted commutator estimate Lemma \ref{lemD8}, Lemma \ref{lemD10}, Lemma \ref{lemD11},
the weighted chain rules Lemma \ref{lemG3} and the weighted trace Lemma \ref{lemD3},
we imply
\begin{align*}
&\| \langle w^+\rangle^{\mu} \langle w^-\rangle^{2\mu} \big[\langle\p_1\rangle^{\f12}\p_1^a, \ud{\La_{-}^1} \ud{\La_{+}^1}\ \big] \p_1^2 f\|_{L^2(\BR)} \\
&\lesssim \big\| \langle w^+\rangle^{\mu} \langle w^-\rangle^{2\mu} \langle \p_1\rangle^{\f12}
 \p_1^{\leq s-1} \big( \ud{\La_{-}^1} \ud{\La_{+}^1} \big)
\big\|_{L^2(\BR)}
\|\langle \p_1\rangle^{\f12} \p_1^{\leq s} f \|_{L^2(\BR)}\\\nonumber
&\lesssim E_s G_s^{\f12} \mathcal{P}(1+\| f'\|_{H^{s-\f32}}).
\end{align*}

\textbf{Estimate of the pressure terms $A_7$ and $B_7$}

In order to estimate the pressure terms, we need to employ the inherent structure to
gain one order derivative. By Lemma \ref{lemPre} and Lemma \ref{lemPP0}, we finally derive that
\begin{align*}
A_7=&-\int_{\BR} \langle w^-\rangle^{4\mu} (\p_t-\p_1)\langle \p_1\rangle^{\f12}\p_1^a f\cdot
\langle \p_1\rangle^{\f12}\p_1^a\Big( N_f\cdot(\ud{\nabla p} +\ud{\nabla \hat{p} })\Big) e^{q^-} \d x_1 \\\nonumber
\lesssim
&\big\| \f{\langle w^-\rangle^{2\mu} (\p_t-\p_1)\langle \p_1\rangle^{\f12}\p_1^a f}{\langle w^+\rangle^{\mu}} \big\|_{L^2(\bR)}
\big\| \langle w^-\rangle^{2\mu} \langle w^+\rangle^{\mu} \langle \p_1\rangle^{\f12}\p_1^a\big( N_f\cdot(\ud{\nabla p} +\ud{\nabla \hat{p} })\big) \big\|_{L^2(\bR)}\\\nonumber
\lesssim
& E_s^{\f12} G_s \, .
\end{align*}

Combining all the estimates in this subsection, \eqref{E10} reduces to
\begin{align*}
& \f{\d}{\dt}
\int_{\BR}\big| \langle w^-\rangle^{2\mu}(\p_t-\p_1) \langle \p_1\rangle^{\f12}\p_1^a f
\big|^2 e^{q^-}\d x_1
+\int_{\BR}\f{\big| \langle w^-\rangle^{2\mu}(\p_t-\p_1) \langle \p_1\rangle^{\f12}\p_1^a f
\big|^2}{\langle w^+\rangle^{2\mu}} e^{q^-}\d x_1\\\nonumber
& +\f{\d}{\dt}\int_{\BR} \langle w^-\rangle^{4\mu}  (\p_t-\p_1)\langle \p_1\rangle^{\f12}\p_1^a  f \big(\ud{\La_{-}^1}
+\ud{\hat{\La}_{-}^1}) \cdot \p_1  \langle \p_1 \rangle^{s-\f12}  f e^{q^-} \d x_1
\qquad (\textrm{from}\,\,A_{31}) \\\nonumber
&+\f14\f{\d}{\dt}\int_{\BR} \langle w^-\rangle^{4\mu}
 \big(\ud{\La_{-}^1}+\ud{\hat{\La}_{-}^1} \big)^2
  \big|\p_1 \langle \p_1\rangle^{\f12}\p_1^a  f\big|^2 e^{q^-}\d x_1 \qquad (\textrm{from}\,\,B_{21})\\\nonumber
&- \f14 \f{\d}{\dt}\int_{\BR}
|\p_1 \langle \p_1\rangle^{\f12}\p_1^a f|^2 \cdot
e^{q^-}\langle w^-\rangle^{4\mu} (\ud{\La^1_{+}}-\ud{\hat{\La}^1_{+}}) (\ud{\La^1_{-}}-\ud{\hat{\La}^1_{-}}) \d x_1 \qquad (\textrm{from}\,\, A_{13}+B_{11}) \\\nonumber
&\lesssim (1+E_s^{\f12}+E_s )E_s^{\f12} G_s.
\end{align*}
Summing over all $|a|\leq s-1$ and then taking the integral over $[0,t]$,
 note that the same estimate also holds for the right Alfv\'en waves
 and $e^{q^\pm}\sim 1$, by using the a priori estimates that $E_s \ll 1$, we obtain
\begin{align*}
\sup_{0\leq \tau\leq t}E_{s+{\f12}}^f(\tau) + \int_0^t G_{s+{\f12}}^f(\tau)\d\tau
\lesssim E_{s+{\f12}}^f(0)+\int_0^t (1+E_s^{\f12}+E_s )E_s^{\f12} G_s (\tau)\d\tau.
\end{align*}
This yields \eqref{AA8}.

\section{Weighted lower order tangential energy estimate}\label{Et}

This section is devoted to the lower-order $(3-\textrm{order})$ tangential weighted energy estimate of Alfv\'en waves inside the domain
$\Om_f$ and $\hat{\Om}_f$.
The estimates are conducted by applying increased power of weights: $\langle w^\pm\rangle^{5\mu}$.
Here we recall the weight functions $ w^\pm=x_1\pm t$ where $\f12 < \mu\leq \f35$.
At the same time, we also need to use the ghost weight method.
Due to the non-vanishing boundary conditions, some of the estimates will reduce to the free boundary.
Especially, for the last line of \eqref{F70} involving the pressure term:
\begin{align*}
&-\int_{\Ga_f} e^{q^+}\langle w^+\rangle^{10\mu} \p_\tau^m \hat{p} n_f\cdot \p_\tau^m \hat{\Lambda}_{+}\d\sigma(x)
+\int_{\Ga_f} e^{q^+}\langle w^+\rangle^{10\mu} \p_\tau^m p n_f\cdot \p_\tau^m \La_{+}\d\sigma(x),
\end{align*}
where $m\leq 3$.
By weighted trace theorem, we need the information for
$ \| \langle w^\pm \rangle^{5\mu}\p_\tau^{\leq 3.5} \La_{\pm}\|^2_{L^2(\Om_f)}$ and
$\| \langle w^\pm \rangle^{5\mu}\p_\tau^{\leq 3.5} \hat{\La}_{\pm}\|^2_{L^2(\hat{\Om}_f)}
$. This is not true. To save one derivative on $\La_{\pm}$ and $\hat{\La}_{\pm}$,  we use the boundary condition and apply integration by parts to move one $\p_\tau$ away.
However, $\p_1^{a}f$ $(1\leq a\leq 4)$ are present. Fortunately, no weight is needed to apply on $\p_1^{a}f$. Thus its $L^2$ norm can be bounded by $E_4^f$. Note that for the weighted tangential derivative estimates, we always have this serious derivative loss problem. Hence
the weighted tangential estimates can not be conducted in the highest order derivatives.
\begin{remark}
In \cite{CMST},	Coulombel, Morando,  Secchi and Trebeschi  proved an {\it a priori estimate} without loss of derivatives for the nonlinear current-vortex sheet problem under a strong stability condition.
They introduced extra $\lambda^\pm$ in the tangential derivatives to cancellate the above highest order terms of the pressure on the free boundary. Their method does not apply in our case since they estimate the velocity field and the magnetic field separately, and we need to work on $\La_\pm$ variables.
\end{remark}


Recalling that the tangential derivative $\p_\tau$ is defined by \eqref{tangD-}:
\begin{equation*}
\begin{cases}
\p_\tau=\Psi\p_2+\p_1 \quad \textrm{in}\,\, \Om_f,\\
\p_\tau=\hat{\Psi}\p_2+\p_1 \quad \textrm{in}\,\, \hat{\Om}_f.
\end{cases}
\end{equation*}
where
\begin{equation*}
\begin{cases}
\Psi(t,x)= \zeta(x_2)\p_1f \quad \textrm{in}\,\, \Om_f,\\
\hat{\Psi}(t,x)= \zeta(x_2)\p_1f \quad \textrm{in}\,\, \hat{\Om}_f.
\end{cases}
\end{equation*}
Here $\zeta(x_2)\in C^\infty(\BR)$ and
\begin{equation*}
\zeta(x_2)=
\begin{cases}
1 , &\text{ if } |x_2|\leq 1-\f{c_0}{2} ,\\
0 , &\text{ if } |x_2|\geq 1-\f{c_0}{2},
\end{cases}
\end{equation*}
where  $c_0$ is the constant in Theorem \ref{thm}.


For function $g$ defined in $\Om_f$ or $\hat{\Om}_f$, recalling that $\ud{g}$ denotes
the trace of $g$ on $\Ga_f.$ There holds
\begin{align*}
&\p_1 \ud{g}
= \ud{\p_1 g}+\p_1f \ud{\p_2 g}
= \ud{\p_\tau g}.
\end{align*}

We define the lower order weighted tangential energy inside the bulk of the domain as follows:
\begin{align} \label{F01}
E_3^{b,\tau} (t)=
& \sum_{+,-}\sum_{0\leq m\leq 3} \big(
\| \langle w^\pm \rangle^{5\mu}\p_\tau^m \La_{\pm}(t,\cdot)\|^2_{L^2(\Om_f)}
+\| \langle w^\pm \rangle^{5\mu}\p_\tau^m \hat{\La}_{\pm}(t,\cdot)\|^2_{L^2(\hat{\Om}_f)}
\big).
\end{align}
Note that the weight is $\langle w^\pm \rangle^{5\mu}$. The index is different from the higher-order energy.
The corresponding weighted tangential ghost weight energy is defined as follows:
\begin{align}\label{F02}
G_3^{b,\tau} (t)
=&\sum_{+,-}\sum_{0\leq m\leq 3}\Big(
\big\|\f{ \langle w^\pm \rangle^{5\mu}\p_\tau^m  \La_{\pm}  }{ \langle w^\mp \rangle^{\mu} }(t,\cdot)\big\|^2_{L^2(\Om_f)}
+\big\|\f{ \langle w^\pm \rangle^{5\mu}\p_\tau^m  \hat{\La}_{\pm}  }{ \langle w^\mp \rangle^{\mu} }(t,\cdot)\big\|^2_{L^2(\hat{\Om}_f)}
\Big)\,.
\end{align}
Due to the similarity between the left Alfv\'en waves and the right Alfv\'en waves,
we only present the estimate for $\La_{+}$ and $\hat{\La}_+$.

We recall the evolution equations for $\La_+$ and $\hat{\La}_+$ from $\eqref{A18}$, $\eqref{A19}$
as follows
\begin{equation} \label{F2}
\begin{cases}
(\p_t +Z_{-}\cdot \nabla) \La_{+} + \nabla  p =0,\quad\textrm{in} \,\, \Om_f,\\
(\p_t +\hat{Z}_{-}\cdot \nabla) \hat{\La}_{+} + \nabla  \hat{p} =0,\quad\textrm{in} \,\, \hat{\Om}_f,\\
\nabla\cdot\La_{\pm}=\nabla\cdot\hat{\La}_{\pm}=0.
\end{cases}
\end{equation}
For $0\leq m\leq 3$, applying derivative $\p_\tau^m$ to \eqref{F2}, we organize that
\begin{equation} \label{F3}
\begin{cases}
(\p_t +Z_{-}\cdot\nabla) \p_\tau^m \La_{+} 
+[\p_\tau^m,\p_t +Z_{-}\cdot\nabla ] \La_{+} + \nabla \p_\tau^m p
 +[\p_\tau^m,\nabla] p=0, \,\textrm{in} \  \Om_f,
\\[-4mm]\\
(\p_t +\hat{Z}_{-}\cdot\nabla) \p_\tau^m \hat{\La}_{+}
+[\p_\tau^m,\p_t +\hat{Z}_{-}\cdot\nabla] \hat{\La}_{+} + \nabla \p_\tau^m \hat{p}
 +[\p_\tau^m,\nabla] \hat{p}=0,\, \textrm{in} \  \hat{\Om}_f.
\end{cases}
\end{equation}
where we have used the commutator notation
$$
[\nabla^\alpha,f\cdot\nabla] g
=\nabla^\alpha(f\cdot\nabla g)
-f\cdot\nabla \nabla^\alpha g.
$$
Multiplying $\eqref{F3}$ by $\langle w^+ \rangle^{5\mu} $,
we writes
\begin{equation} \label{F4}
\begin{cases}
(\p_t+Z_{-}\cdot\nabla )(\langle w^+ \rangle^{5\mu}\p_\tau^m \La_{+})
-\big(\La_{-}\cdot\nabla \langle w^+\rangle^{5\mu}\big)\p_\tau^m \La_{+}
\\[-4mm]\\
\quad+\langle w^+\rangle^{5\mu}
\big([\p_\tau^m,\p_t+Z_{-}\cdot\nabla] \La_{+} + \nabla \p_\tau^m p
+[\p_\tau^m, \nabla]  p \big) =0, \quad \textrm{in} \ \Om_f,
\\[-4mm]\\
(\p_t+\hat{Z}_{-}\cdot\nabla )(\langle w^+ \rangle^{5\mu}\p_\tau^m \hat{\La}_{+})
- \big(\hat{\La}_{-}\cdot\nabla\langle w^+\rangle^{5\mu}\big) \p_\tau^m \hat{\La}_{+}
\\[-4mm]\\
\quad+\langle w^+\rangle^{5\mu}\big([\p_\tau^m,\p_t+\hat{Z}_{-}\cdot\nabla] \hat{\La}_{+}
+ \nabla \p_\tau^m \hat{p} +[\p_\tau^m, \nabla]\hat{p} \big) =0,
\quad \textrm{in} \ \hat{\Om}_f.
\end{cases}
\end{equation}
Let $1/2<\mu \leq 3/5$, $q(\theta)=\int_0^\theta \langle\tau\rangle^{-2\mu} d\tau$,
thus $|q(\theta)|\lesssim 1$.
Denote $e^{q^+}=e^{q(w^-)}$ and $e^{q^-}=e^{q(-w^+)}$ for simplicity.
Due to the boundary condition, we derive that
\begin{align} \label{F5}
&\f12\f{\d}{\dt} \Big( \int_{\Om_f}
| \langle w^+ \rangle^{5\mu} \p_\tau^m \La_{+}|^2 e^{q^+} \dx
+\int_{\hat{\Om}_f}
| \langle w^+ \rangle^{5\mu} \p_\tau^m \hat{\La}_{+}|^2 e^{q^+} \dx \Big)\\\nonumber
&=\f12 \int_{\Om_f} (\p_t+Z_{-} \cdot\nabla)
\big(| \langle w^+\rangle^{5\mu} \p_\tau^m \La_{+}|^2 e^{q^+} \big)\dx \\\nonumber
&\quad +
\f12 \int_{\hat{\Om}_f} (\p_t+\hat{Z}_{-} \cdot\nabla)
\big(| \langle w^+\rangle^{5\mu} \p_\tau^m \hat{\La}_{+}|^2 e^{q^+} \big)\dx.
\end{align}
Plugging \eqref{F4} into \eqref{F5}, we deduce that
\begin{align}\label{F6}
&\f12\f{\d}{\dt} \Big( \int_{\Om_f}
| \langle w^+ \rangle^{5\mu} \p_\tau^m \La_{+}|^2 e^{q^+} \dx
+\int_{\hat{\Om}_f}
| \langle w^+ \rangle^{5\mu} \p_\tau^m \hat{\La}_{+}|^2 e^{q^+} \dx \Big)\\\nonumber
&-\f12\int_{\Om_f}
|\langle w^+ \rangle^{5\mu} \p_\tau^m \La_{+} |^2
(\p_t - \p_1)e^{q^+}  \dx
-\f12\int_{\hat{\Om}_f}
|\langle w^+ \rangle^{5\mu} \p_\tau^m \hat{\La}_{+} |^2
(\p_t - \p_1)e^{q^+}  \dx \\\nonumber
&=\f12\int_{\Om_f}
|\p_\tau^m \La_{+} |^2 \La_{-}\cdot\nabla(\langle w^+ \rangle^{10\mu} e^{q^+})  \dx
+\f12\int_{\hat{\Om}_f}
|\p_\tau^m \hat{\La}_{+} |^2 \hat{\La}_{-}\cdot\nabla(\langle w^+ \rangle^{10\mu} e^{q^+})  \dx \\\nonumber
&\quad-\int_{\Om_f}
\langle w^+ \rangle^{10\mu}\p_\tau^m \La_{+} e^{q^+} \cdot
 [\p_\tau^m,\p_t+Z_{-}\cdot\nabla] \La_{+}\dx \\\nonumber
&\quad-\int_{\hat{\Om}_f}
\langle w^+ \rangle^{10\mu}\p_\tau^m \hat{\La}_{+} e^{q^+} \cdot
 [\p_\tau^m,\p_t+\hat{Z}_{-}\cdot\nabla] \hat{\La}_{+}\dx \\\nonumber
&\quad-\int_{\Om_f}
\langle w^+ \rangle^{10\mu}\p_\tau^m \La_{+} e^{q^+} \cdot
 [\p_\tau^m,\nabla]  p\dx
 -\int_{\hat{\Om}_f}
\langle w^+ \rangle^{10\mu}\p_\tau^m \hat{\La}_{+} e^{q^+} \cdot
 [\p_\tau^m,\nabla] \hat{p}\dx \\\nonumber
&\quad-\int_{\Om_f}
\langle w^+ \rangle^{10\mu}\p_\tau^m \La_{+} e^{q^+} \cdot
 \nabla \p_\tau^m p\dx
 -\int_{\hat{\Om}_f}
\langle w^+ \rangle^{10\mu}\p_\tau^m \hat{\La}_{+} e^{q^+} \cdot
 \nabla \p_\tau^m \hat{p}\dx .
\end{align}
Now we estimate \eqref{F6}.

The second line of \eqref{F6} corresponds to the ghost weight energy of Alinhac and it is equal to
\begin{align*}
& \int_{\Om_f}
 \f{|\langle w^+ \rangle^{5\mu}\p_\tau^m \La_{+} |^2}{ \langle w^- \rangle^{2\mu} }
 e^{q^+}\dx
+\int_{\hat{\Om}_f} \f{|\langle w^+ \rangle^{5\mu}\p_\tau^m \hat{\La}_{+} |^2}{ \langle w^- \rangle^{2\mu} }
 e^{q^+}  \dx.
\end{align*}
For the third line of \eqref{F6}, by weighted Sobolev inequalities Lemma \ref{Sobo1}, we infer that
\begin{align*}
&\f12\big| \int_{\Om_f}
|\p_\tau^m \La_{+} |^2
\La_{-}\cdot\nabla (\langle w^+ \rangle^{10\mu} e^{q^+} ) \dx \big| \\\nonumber
&=\big| \int_{\Om_f}
|\p_\tau^m \La_{+} |^2
\La_{-}^1\cdot \Big(5\mu\langle w^+ \rangle^{10\mu-2}w^+ \p_1w^+
+\f12\f{\langle w^+ \rangle^{10\mu} }{\langle w^- \rangle^{2\mu}}\p_1 w^- \Big) e^{q^+}  \dx \big| \\\nonumber
&\lesssim E_3^{\f12} G_2^b.
\end{align*}
The second term in the third line of \eqref{F6} can be estimated similarly and has the same the bound.

\textbf{Estimate of the fourth and fifth line of \eqref{F6}}

The estimate of the fourth and the fifth line are the same. We only present the details for the fourth line of \eqref{F6}.
For this fourth line, it equals
\begin{align}\label{F10}
&-\int_{\Om_f}
\langle w^+ \rangle^{10\mu}\p_\tau^m \La_{+} e^{q^+} \cdot
 [\p_\tau^m,\La_{-}\cdot\nabla] \La_{+} \dx\\\nonumber
&-\int_{\Om_f}
\langle w^+ \rangle^{10\mu}\p_\tau^m \La_{+} e^{q^+} \cdot
 [\p_\tau^m,\p_t-\p_1] \La_{+} \dx .
\end{align}
For the first line of \eqref{F10},
by weighted Sobolev inequalities Lemma \ref{Sobo1} and tangential derivative estimate Lemma \ref{lemTang}, we defer that
\begin{align*}
&\big| \int_{\Om_f}
\langle w^+ \rangle^{10\mu}\p_\tau^m \La_{+} e^{q^+} \cdot
 [\p_\tau^m,\La_{-}\cdot\nabla] \La_{+} \dx \big| \nonumber\\
&=\big| \sum_{a+b=m,\, b\neq m}C_m^a\int_{\Om_f}
\langle w^+ \rangle^{10\mu}\p_\tau^m \La_{+} e^{q^+} \cdot
 (\p_\tau^a \La_{-}\cdot\p_\tau^b\nabla \La_{+}) \dx \big| \nonumber\\
&\lesssim
\Big\| \f{\langle w^+ \rangle^{5\mu}\p_\tau^m \La_{+}}{\langle w^-\rangle^{\mu}}\Big\|_{L^2(\Om_f)}
\Big(\|\langle w^- \rangle^{2\mu}\p_\tau^{\leq 1} \La_{-}\|_{L^\infty(\Om_f)}
\Big\| \f{\langle w^+ \rangle^{5\mu}\p_\tau^{\leq 2} \nabla  \La_{+}}{\langle w^- \rangle^{\mu}}\Big\|_{L^2(\Om_f)}
\nonumber\\
&\qquad+
\|\langle w^- \rangle^{2\mu}\p_\tau^{\leq 2} \La_{-}\|_{L^4(\Om_f)}
\Big\| \f{\langle w^+ \rangle^{5\mu} \p_\tau^{\leq 1} \nabla  \La_{+}}{\langle w^- \rangle^{\mu}}\Big\|_{L^4(\Om_f)} \nonumber\\
&\qquad+\|\langle w^- \rangle^{2\mu}\p_\tau^{\leq 3} \La_{-}\|_{L^2(\Om_f)}
\Big\| \f{\langle w^+ \rangle^{5\mu}  \nabla  \La_{+}}{\langle w^- \rangle^{\mu}}\Big\|_{L^\infty(\Om_f)}
\Big)\nonumber\\
&\lesssim E_3^{\f12} G_3^b \mathcal{P}(1+ \| \p_1f\|_{H^2(\bR)} ).
\end{align*}
%

\textbf{Estimate of the commutator of the tangential derivatives and $\p_t-\p_1$: the second line of \eqref{F10}.}
%
For the second line of \eqref{F10}, we write
\begin{align*}
&\big| \int_{\Om_f}
\langle w^+ \rangle^{10\mu}\p_\tau^m \La_{+} e^{q^+} \cdot
 [\p_\tau^m,\p_t-\p_1] \La_{+} \dx \big| \nonumber\\
&\lesssim \big\| \f{\langle w^+ \rangle^{5\mu}\p_\tau^m \La_{+}}{\langle w^- \rangle^{\mu}}\big\|_{L^2(\Om_f)}
\big\|  \langle w^+ \rangle^{5\mu}\langle w^- \rangle^{\mu}  [\p_\tau^m,\p_t-\p_1] \La_{+} \big\|_{L^2(\Om_f)}.
\end{align*}
To estimate this term, we need calculate the commutator of the tangential derivative
$\p_\tau$ and $\p_t-\p_1$.

Let us recall that the tangential derivative is defined as follows:
\begin{align*}
\p_\tau=\Psi\p_2+\p_1 ,
\end{align*}
Thus we calculate
\begin{align*}
[\p_\tau,\p_t-\p_1] \La_{+}
&=-(\p_t-\p_1) \Psi\cdot \p_2 \La_{+} ,\\
[\p_\tau^2,\p_t-\p_1] \La_{+}
&=\p_\tau [\p_\tau,\p_t-\p_1] \La_{+}+[\p_\tau,\p_t-\p_1] \p_\tau\La_{+}  \\
&=- (\p_t-\p_1) \Psi\cdot
\big( 2\p_1\p_2 \La_{+} +2\Psi\p_2^2\La_{+}+\p_2\Psi\p_2\La_{+}  \big)
-( \Psi\p_2+\p_1)(\p_t-\p_1)\Psi \p_2\La_{+},\\
[\p_\tau^3,\p_t-\p_1] \La_{+}&=
(\p_1 + \Psi\p_2)(\p_1^2 +\p_1\Psi\p_2+2\Psi\p_{12}+\Psi\p_2\Psi\p_2
+\Psi^2\p_2^2\big)(\p_t-\p_1) \La_{+} \\
&\quad- (\p_t-\p_1)(\p_1 + \Psi\p_2)(\p_1^2 +\p_1\Psi\p_2+2\Psi\p_{12}+\Psi\p_2\Psi\p_2
+\Psi^2\p_2^2\big) \La_{+}.
\end{align*}
Thus
\begin{align*}
|[\p_\tau,\p_t-\p_1] \La_{+}|
& \leq |(\p_t-\p_1) \Psi\p\La_{+}|,\\
|[\p_\tau^2,\p_t-\p_1] \La_{+}|
&\lesssim
(1+|\Psi|)
\Big(|(\p_t-\p_1)\Psi|( |\p^2 \La_{+}|+ |\p\Psi \p\La_{+}| )
+ |(\p_t-\p_1)\p\Psi \p \La_{+}|  \Big),\\
\big| [\p_\tau^3,\p_t-\p_1] \La_{+} \big|
&\lesssim \big( 1+|\Psi| \big)^2 \Big( |(\p_t-\p_1)\p^2\Psi \p\La_{+}|+
|(\p_t-\p_1)\p\Psi| (|\p^2\La_{+}|+|\p\La_{+}\p\Psi|) \\
&\quad+|(\p_t-\p_1)\Psi| (|\p^3\La_{+}|+|\p^2\La_{+}\p\Psi|
+|\p\La_{+}\p^2\Psi|)
\Big)  \,.
\end{align*}
Consequently, for $m=1$, by weighted Sobolev inequalities Lemma \ref{Sobo1}, Lemma \ref{Sobo2}
and Lemma \ref{lemPsi-s},
for $\f 12<\mu<\f 35$, we obtain
\begin{align*}
&\big\|  \langle w^+ \rangle^{5\mu}\langle w^- \rangle^{\mu}
 [\p_\tau,\p_t-\p_1] \La_{+} \big\|_{L^2(\Om_f)}\\
&\lesssim \big\| \f{\langle w^+ \rangle^{5\mu}  \p_2\La_{+}}{\langle w^- \rangle^{\mu}}\big\|_{L^4(\Om_f)}
\big\|   \langle w^- \rangle^{2\mu} (\p_t-\p_1) \Psi \big\|_{L^4(\Om_f)}
\leq  (G_2^b E_3)^{\f12} \,.
\end{align*}
For $m=2$, similarly by Lemma \ref{Sobo1}, Lemma \ref{Sobo2} and Lemma \ref{lemPsi-s},
we derive that
\begin{align*}
&\big\|  \langle w^+ \rangle^{5\mu}\langle w^- \rangle^{\mu}
 [\p_\tau^2,\p_t-\p_1] \La_{+} \big\|_{L^2(\Om_f)}\\
&\lesssim
\big\| \langle w^+ \rangle^{5\mu}\langle w^- \rangle^{\mu}
\big( |(\p_t-\p_1)\p\Psi \p\La_{+}|+
|(\p_t-\p_1)\Psi| (|\p^2\La_{+}|+|\p\La_{+}\p\Psi|)\big)\big\|_{L^2(\Om_f)} \\
&\quad\cdot \big( 1+\big\|\Psi \big\|_{L^\infty(\Om_f)}  \big)\\
&\lesssim \big\| \f{\langle w^+ \rangle^{5\mu}  \p^{\leq 3}\La_{+}}{\langle w^- \rangle^{\mu}}\big\|_{L^2(\Om_f)}
 \big\|   \langle w^- \rangle^{2\mu} (\p_t-\p_1)\p^{\leq 1} \Psi\|_{L^2(\Om_f)}
(1+\| \p^{\leq 2} \Psi\|_{L^2(\Om_f)} )^2\\
&\lesssim (E_3 G_3^b)^{\f12} (1+\| \p_1f\|_{H^2(\bR)} )^2 \,.
\end{align*}
For $m=3$, by Lemma \ref{Sobo1}, Lemma \ref{Sobo2} and Lemma \ref{lemPsi-s}, we have
\begin{align*}
&\big\|  \langle w^+ \rangle^{5\mu}\langle w^- \rangle^{\mu}
 [\p_\tau^3,\p_t-\p_1] \La_{+} \big\|_{L^2(\Om_f)}\\
&\lesssim
\big\| \langle w^+ \rangle^{5\mu}\langle w^- \rangle^{\mu}
\big( |(\p_t-\p_1)\p^2\Psi \p\La_{+}|+
|(\p_t-\p_1)\p\Psi| (|\p^2\La_{+}|+|\p\La_{+}\p\Psi|) \\
&\quad+|(\p_t-\p_1)\Psi| (|\p^3\La_{+}|+|\p^2\La_{+}\p\Psi|
+|\p\La_{+}\p^2\Psi|
\big)\big\|_{L^2(\Om_f)}
\cdot \big( 1+\big\|\Psi \big\|_{L^\infty(\Om_f)}  \big)^2\\
&\lesssim
\big\| \f{\langle w^+ \rangle^{5\mu} \p\La_{+} }{\langle w^- \rangle^{\mu}}\big\|_{L^\infty(\Om_f)}
\big\|   \langle w^- \rangle^{2\mu} (\p_t-\p_1) \p^2 \Psi \big\|_{L^2 (\Om_f)}\\
&\quad+\big\| \f{\langle w^+ \rangle^{5\mu} (|\p^{\leq 2}\La_{+}|
+|\p \La_{+}\p\Psi| )}{\langle w^- \rangle^{\mu}}\big\|_{L^4(\Om_f)}
\big\|   \langle w^- \rangle^{2\mu} (\p_t-\p_1) \p \Psi \big\|_{L^4 (\Om_f)}\\
&\quad+\big\| \f{\langle w^+ \rangle^{5\mu}  (|\p^{\leq 3}\La_{+}|+|\p^{\leq 2}\La_{+}\p\Psi|+|\p\La_{+}\p^2\Psi|)}{\langle w^- \rangle^{\mu}}\big\|_{L^2(\Om_f)}
\big\|   \langle w^- \rangle^{2\mu} (\p_t-\p_1) \Psi \big\|_{L^\infty(\Om_f)}\\
&\lesssim \sum_{|\alpha|\leq 3}\big\| \f{\langle w^+ \rangle^{5\mu}  \p^\al\La_{+}}{\langle w^- \rangle^{\mu}}\big\|_{L^2(\Om_f)}
\sum_{0\leq a\leq 2}\big\|   \langle w^- \rangle^{2\mu} (\p_t-\p_1)\p_1^{1+a} f\|_{L^2}
\big(1+\| \p_1f\|_{H^2} \big)^3 \\
&\leq E_4^{\f12} G_3^{\f12} \big(1+\| \p_1f\|_{H^2} \big)^3  \,.
\end{align*}

\textbf{Estimate of the commutator term involving tangential derivatives: the sixth line of \eqref{F6}}
\begin{align*}
&\quad-\int_{\Om_f}
\langle w^+ \rangle^{10\mu}\p_\tau^m \La_{+} e^{q^+} \cdot
 [\p_\tau^m,\nabla]  p\dx
 -\int_{\hat{\Om}_f}
\langle w^+ \rangle^{10\mu}\p_\tau^m \hat{\La}_{+} e^{q^+} \cdot
 [\p_\tau^m,\nabla] \hat{p}\dx .
\end{align*}
Let us first calculate the commutators in $\Om_f$:
\begin{align*}
[\nabla,\p_\tau]p
&=[\nabla,\Psi\p_2 ]p  =\nabla \Psi \p_2p ,\\
[\nabla,\p_\tau^2]p
&=[\nabla,\p_\tau]\p_\tau p+\p_\tau[\nabla,\p_\tau] p
=\nabla \Psi \p_2\p_\tau p
+(\Psi\p_2+\p_1)(\nabla\Psi \p_2p ),\\
[\nabla,\p_\tau^3]p
&=[\nabla,\p_\tau^2]\p_\tau p+\p_\tau^2[\nabla,\p_\tau] p \\
&=\nabla \Psi \p_2\p_\tau^2 p
+  \Psi \nabla\p_2\Psi \p_2\p_\tau p+\nabla\p_1\Psi \p_2\p_\tau p
+  \Psi \nabla\Psi \p_2^2\p_\tau p+\nabla\Psi \p_2\p_1\p_\tau p\\
&\quad+\p_\tau^2\big(\nabla\Psi \p_2p\big).
\end{align*}
Thus
\begin{align*}
|[\nabla,\p_\tau]p|&\leq | \nabla\Psi \p_2p| ,\\
|[\nabla,\p_\tau^2]p|
&\lesssim (1+|\Psi|)\Big( | \nabla^2\Psi \nabla p|+ |\nabla\Psi \nabla^2 p|+| \nabla\Psi|^2| \nabla p|\Big),\\
|[\nabla,\p_\tau^3]p|
&\lesssim(1+|\Psi|)^2\Big( |\nabla^3 p \nabla\Psi|+|\nabla^2 p|( |\nabla^2\Psi|+|\nabla\Psi|^2) \\
&\qquad\qquad +|\nabla p|(|\nabla^3\Psi|+|\nabla^2\Psi \nabla\Psi|+|\nabla\Psi|^3) \Big)\, .
\end{align*}
Consequently, by Lemma \ref{lemTang} and  Lemma \ref{lemPre}, there holds
\begin{align*}
&\big| \int_{\Om_f}
\langle w^+ \rangle^{10\mu}\p_\tau^m \La_{+} e^{q^+} \cdot
 [\p_\tau^m,\nabla]  p\dx \big| \\
&\lesssim  \big\| \f{\langle w^+ \rangle^{5\mu}\p_\tau^m \La_{+}}{\langle w^- \rangle^{\mu}} \big\|_{L^2(\Om_f)}
\big\|  \langle w^+ \rangle^{5\mu}\langle w^- \rangle^{\mu}
[\p_\tau^m,\nabla]  p \big\|_{L^2(\Om_f)}\\
&\quad +\big\| \f{\langle w^+ \rangle^{5\mu}\p_\tau^m \hat{\La}_{+}}{\langle w^- \rangle^{\mu}} \big\|_{L^2(\Om_f)}
\big\|  \langle w^+ \rangle^{5\mu}\langle w^- \rangle^{\mu}
[\p_\tau^m,\nabla]  \hat{p} \big\|_{L^2(\hat{\Om}_f)}\\
&\lesssim (G_3^b)^{\f12} \sum_{1\leq a\leq m}
\big( \big\|  \langle w^+ \rangle^{5\mu}\langle w^- \rangle^{\mu} \nabla^a p \big\|_{L^2(\Om_f)}
+\big\|  \langle w^+ \rangle^{5\mu}\langle w^- \rangle^{\mu} \nabla^a \hat{p} \big\|_{L^2(\hat{\Om}_f)}
\big)\\
&\quad \cdot \mathcal{P}( \| \p_1f \|_{H^3}+1)\\
&\lesssim G_3^b E_3^\f12 \cdot \mathcal{P}( \| \p_1f \|_{H^3}+1) \,.
\end{align*}
On the other hand, by Lemma \ref{lemTang} and  Lemma \ref{lemPre}, there similarly holds
\begin{align*}
\big|\int_{\hat{\Om}_f}
\langle w^+ \rangle^{10\mu}\p_\tau^m \hat{\La}_{+} e^{q^+} \cdot
 [\p_\tau^m,\nabla] \hat{p}\dx \big|
\lesssim G_3^b E_3^\f12 \cdot \mathcal{P}( \| \p_1f \|_{H^3}+1) \,.
\end{align*}

\textbf{Estimate of the pressure terms: the seventh line of \eqref{F6}}

By integration by parts, one obtains
\begin{align}\label{F70}
&\int_{\hat{\Om}_f} e^{q^+}\langle w^+\rangle^{10\mu} \nabla \p_\tau^m \hat{p}\cdot \p_\tau^m \hat{\Lambda}_{+} \dx
+\int_{\Om_f} e^{q^+}\langle w^+\rangle^{10\mu} \nabla \p_\tau^m p\cdot \p_\tau^m \Lambda_{+} \dx\\\nonumber
&=-\int_{\hat{\Om}_f}  e^{q^+}\langle w^+\rangle^{10\mu} \p_\tau^m \hat{p}\cdot \div \p_\tau^m \hat{\Lambda}_{+} \dx
-\int_{\Om_f}    e^{q^+}\langle w^+\rangle^{10\mu} \p_\tau^m p\cdot \div \p_\tau^m \Lambda_{+} \dx\\\nonumber
&\quad-\int_{\hat{\Om}_f} \nabla( e^{q^+}\langle w^+\rangle^{10\mu}) \p_\tau^m \hat{p}\cdot   \p_\tau^m \hat{\Lambda}_{+} \dx
  -\int_{\Om_f} \nabla( e^{q^+}\langle w^+\rangle^{10\mu}) \p_\tau^m p\cdot   \p_\tau^m \Lambda_{+} \dx\\\nonumber
&\quad +\int_{\hat{\Ga}} e^{q^+}\langle w^+\rangle^{10\mu} \p_\tau^m \hat{p}\, n \cdot \p_\tau^m \hat{\Lambda}_{+}\dx_1
-\int_{\Ga} e^{q^+}\langle w^+\rangle^{10\mu} \p_\tau^m p\, n \cdot \p_\tau^m \Lambda_{+}\dx_1\\\nonumber
&\quad -\int_{\Ga_f} e^{q^+}\langle w^+\rangle^{10\mu} \p_\tau^m \hat{p} n_f\cdot \p_\tau^m \hat{\Lambda}_{+}\d\sigma(x)
+\int_{\Ga_f} e^{q^+}\langle w^+\rangle^{10\mu} \p_\tau^m p n_f\cdot \p_\tau^m \La_{+}\d\sigma(x).
\end{align}

\textbf{Estimate of the second line of \eqref{F70}.} We need to calculate the commutator $[\div, \p_\tau^k] \La_{+}$.
We conduct the calculation in $\Om_f$ since the estimate in $\hat{\Om}_f$ is the same.
\begin{align*}
[\div, \p_\tau ] \La_{+}&= \div  \p_\tau   \La_{+}=\nabla \Psi\cdot \p_2\La_{+}, \\
[\div, \p_\tau^2 ] \La_{+}&= [\div, \p_\tau ]\p_\tau \La_{+}+\p_\tau [\div, \p_\tau ] \La_{+}
=\nabla \Psi\cdot \p_2\p_\tau\La_{+} +\p_\tau \big( \nabla \Psi\cdot \p_2\La_{+} \big),\\
[\div, \p_\tau^3 ] \La_{+}&=[\div, \p_\tau^2 ]\p_\tau \La_{+}
+\p_\tau^2 [\div, \p_\tau ] \La_{+}\\
&= \nabla\Psi \cdot\p_2 \p_\tau^2 \La_{+}
+\p_\tau \big( \nabla\Psi \p_2 \p_\tau\La_{+} \big)
 +\p_\tau^2\big(  \nabla\Psi\cdot \p_2 \La_{+}\big).
\end{align*}
Thus
\begin{align*}
|[\div, \p_\tau ] \La_{+}|&\leq  |\p \Psi \p\La_{+}|,\\
\big| [\div, \p_\tau^2 ] \La_{+}\big| &\lesssim
(1+|\Psi|) \Big( | \p^2 \La_{+}\p\Psi| +|\p \La_{+}| (|\p^2\Psi|+ |\p\Psi|^2) \Big) ,\\
\big| [\div, \p_\tau^3 ] \La_{+}\big|&\lesssim
(1+|\Psi|)^2\Big( |\p\Psi \p^3 \La_{+}| +
 \big(|\p^2\Psi|+|\p\Psi|^2 \big)|\p^2 \La_{+}| \\
&\qquad+ (|\p^3\Psi|+|\p^2\Psi\p\Psi|+|\p \Psi|^3 )|\p \La_{+}| \Big) \, .
\end{align*}
Thus by  Lemma \ref{Sobo1}, Lemma \ref{lemPsi-s} and Lemma \ref{lemPre}, one infers that
\begin{align*}
&\big| \int_{\Om_f}    e^{q^+}\langle w^+\rangle^{10\mu} \p_\tau^m p\cdot \div \p_\tau^m \Lambda_{+} \dx\big|\\
&=\big| \int_{\Om_f}    e^{q^+}\langle w^+\rangle^{10\mu} \p_\tau^m p\cdot [\div, \p_\tau^m] \Lambda_{+} \dx\big| \\
&\lesssim \big\|  \langle w^+\rangle^{5\mu} \langle w^-\rangle^{\mu}\p_\tau^m p \big\|_{L^2(\hat{\Om}_f)} \cdot
\Big\| \f{\langle w^+\rangle^{5\mu}  [\div, \p_\tau^m] \La_{+}}{\langle w^-\rangle^{\mu}} \Big\|_{L^2(\Om_f)} \\\nonumber
&\lesssim G_3^b E_3^\f12 \cdot \mathcal{P}( \| \p_1f \|_{H^3}+1) \,.
\end{align*}
Similarly, there holds
\begin{align*}
\big| \int_{\hat{\Om}_f}  e^{q^+}\langle w^+\rangle^{10\mu} \p_\tau^m \hat{p}\cdot \div \p_\tau^m \hat{\Lambda}_{+} \dx\big|
\lesssim G_3^b E_3^\f12 \cdot \mathcal{P}( \| \p_1f \|_{H^3}+1) \,.
\end{align*}

\textbf{Estimate of the third and the fourth line of \eqref{F70}.}
For the third line, by Lemma \ref{lemTang} and  Lemma \ref{lemPre}, we estimate
\begin{align*}
&\big| \int_{\hat{\Om}_f} \nabla( e^{q^+}\langle w^+\rangle^{10\mu}) \p_\tau^m \hat{p}\cdot   \p_\tau^m \hat{\Lambda}_{+} \dx
  +\int_{\Om_f} \nabla( e^{q^+}\langle w^+\rangle^{10\mu}) \p_\tau^m p\cdot   \p_\tau^m \Lambda_{+} \dx\big| \\\nonumber
&\lesssim  \big\|  \langle w^+\rangle^{5\mu} \langle w^-\rangle^{\mu} \p_\tau^m \hat{p} \|_{L^2 (\hat{\Om}_f)}
\Big\| \f{\langle w^+\rangle^{5\mu} \p_\tau^m \hat{\Lambda}_{+}}{\langle w^-\rangle^{\mu}} \Big\|_{L^2(\hat{\Om}_f)} \\\nonumber
&\quad+  \big\|  \langle w^+\rangle^{5\mu} \langle w^-\rangle^{\mu} \p_\tau^m p \|_{L^2(\Om_f)}
\Big\| \f{\langle w^+\rangle^{5\mu} \p_\tau^m \La_{+}}{\langle w^-\rangle^{\mu}} \Big\|_{L^2(\Om_f)} \\\nonumber
&\lesssim G_3^b E_3^{\f12} \,.
\end{align*}
The forth line of \eqref{F70} is zero due to the vanishing of $e_2 \cdot \hat{\Lambda}_{\pm}$
 and $e_2 \cdot\Lambda_{\pm}$ on $\hat{\Ga}$ and on $\Ga$ respectively.

\textbf{Estimate the last line of \eqref{F70}.}
 Note that on $\Ga_f$, there holds $\ud{\p_\tau^m \hat{p}}=\p_1^m\ud{\hat{p}}=\p_1^m\ud{p}=\ud{\p_\tau^m p}$. Thus the last line equals
\begin{align*}
&-\int_{\Ga_f} e^{q^+}\langle w^+\rangle^{10\mu} \p_\tau^m \hat{p} n_f\cdot \p_\tau^m \hat{\Lambda}_{+}\d\sigma(x)
+\int_{\Ga_f} e^{q^+}\langle w^+\rangle^{10\mu} \p_\tau^m p n_f\cdot \p_\tau^m \La_{+}\d\sigma(x)\\
&=-\int_{\BR} e^{q^+}\langle w^+\rangle^{10\mu} \p_1^m \ud{\hat{p}} N_f \cdot
\big(\ud{\p_\tau^m \hat{\Lambda}_{+}}-\ud{\p_\tau^m \La_{+}}\big) \d x_1 .
\end{align*}
Here, we need reduce one derivative on
$\ud{\p_\tau^m (\hat{\Lambda}_{+}-\La_{+})}$ by
using the boundary condition on $\Ga_f$.
Recalling $\eqref{BB5}_1$ that
\begin{align*}
&(\p_t\pm\p_1)f=N_f\cdot \ud{\La_{\pm}}=N_f \cdot \ud{\hat{\La}_{\pm}}.
\end{align*}
Consequently,
\begin{align*}
&N_f \cdot
\big(\ud{\p_\tau^m \hat{\Lambda}_{+}}-\ud{\p_\tau^m \La_{+}}\big)
=N_f \cdot
\p_1^m \big(\ud{ \hat{\Lambda}_{+}}- \ud{\La_{+}}\big)\\
&=\p_1^m \Big( N_f \cdot
 \big(\ud{ \hat{\Lambda}_{+}}- \ud{\La_{+}}\big) \Big)
 -\sum_{0<a\leq m}C_m^a \p_1^a N_f \cdot \p_1^{m-a}
 \big(\ud{ \hat{\Lambda}_{+}}- \ud{\La_{+}}\big)\\
&=\sum_{0<a\leq m}C_m^a \p_1^{a+1} f \cdot \big(\ud{ \p_\tau^{m-a} \hat{\La}_{+}^1}- \ud{\p_\tau^{m-a} \La_{+}^1}\big).
\end{align*}
Note that 
the term $\p_1^{a+1} f$ contains higher order derivative
when $a=m=3$.

For $0\leq m \leq 2$, by weighted Sobolev inequalities Lemma \ref{Sobo2}, weighted trace Lemma \ref{lemD2} and  Lemma \ref{lemPre}, one deduces
\begin{align*}
&\big| \int_{\BR} e^{q^+}\langle w^+\rangle^{10\mu} \p_1^m \ud{\hat{p}} N_f \cdot
\big(\ud{\p_\tau^m \hat{\Lambda}_{+}}-\ud{\p_\tau^m \La_{+}}\big) \d x_1 \big| \\
&=\big| \int_{\BR} e^{q^+}\langle w^+\rangle^{10\mu}  \p_1^m \ud{\hat{p}} \sum_{0<a\leq m}C_m^a\p_1^{a+1} f \cdot \p_1^{m-a}
 \big(\ud{ \hat{\La}_{+}^1}- \ud{\La_{+}^1}\big) \d x_1\big| \\
&\lesssim \big\| \langle w^+\rangle^{5\mu}\langle w^-\rangle^{\mu} \p_1^m \ud{\hat{p}} \big\|_{L^2(\BR)}\sum_{0<a\leq m}
\Big\| \f{\langle w^+\rangle^{5\mu}\p_1^{a+1} f \cdot \p_1^{m-a}
 \big(\ud{ \hat{\La}_{+}^1}- \ud{\La_{+}^1}\big) }{\langle w^-\rangle^{\mu} } \Big\|_{L^2(\BR)}\\
 &\lesssim G_3^b E_3 .
\end{align*}
When $m=3$, we need move half derivative on the pressure away.
Denote
\begin{align*}
\Upsilon=\Big(e^{q^+}\langle w^+\rangle^{10\mu}
\sum_{0<a\leq m}C_m^a \p_1^{a+1} f \cdot
 \big(\ud{ \p_\tau^{m-a}\hat{\La}_{+}^1}- \ud{\p_\tau^{m-a}\La_{+}^1}\big) \Big).
\end{align*}
Then we write
\begin{align*}
& \big| \int_{\BR} e^{q^+}\langle w^+\rangle^{10\mu} \p_1^m \ud{\hat{p}}
 \sum_{0<a\leq m}C_m^a\p_1^{a+1} f \cdot \p_1^{m-a}
 \big(\ud{ \hat{\La}_{+}^1}- \ud{\La_{+}^1}\big) \d x_1\big| \\
&=\big| \int_{\BR}   \langle w^+\rangle^{5\mu}
 \langle w^-\rangle^{\mu}\langle \p_1\rangle^{-\f12}\p_1^m \ud{\hat{p}}\\
&\qquad\cdot
 \langle w^+\rangle^{-5\mu}    \langle w^-\rangle^{-\mu}
\langle \p_1\rangle^{\f12} \Big(e^{q^+}\langle w^+\rangle^{10\mu}
\sum_{0<a\leq m}C_m^a \p_1^{a+1} f \cdot \p_1^{m-a}
 \big(\ud{ \hat{\La}_{+}^1}- \ud{\La_{+}^1}\big) \Big)\d x_1\big| \\
&\leq \big\|  \langle w^+\rangle^{5\mu}\langle w^-\rangle^{\mu} \langle \p_1\rangle^{-\f12}\p_1^m \ud{\hat{p}} \big\|_{L^2(\BR)}
\| \langle w^+\rangle^{-5\mu}    \langle w^-\rangle^{-\mu}
\langle \p_1\rangle^{\f12} \Upsilon\|_{L^2(\BR)}.
\end{align*}
By Lemma \ref{lemW}, weighted chain rule Lemma \ref{lemG3}, weighted trace Lemma \ref{lemD3} and  Lemma \ref{lemPre},
 we deduce that
\begin{align*}
&\big\|  \langle w^+\rangle^{5\mu}\langle w^-\rangle^{\mu} \langle \p_1\rangle^{-\f12}\p_1^m \ud{\hat{p}} \big\|_{L^2(\BR)}
\lesssim \big\|  \langle w^+\rangle^{5\mu}\langle w^-\rangle^{\mu} \langle \p_1\rangle^{\f12}\p_1^{m-1} \ud{\hat{p}} \big\|_{L^2(\BR)}\\
&\quad\lesssim \| \langle w^+\rangle^{5\mu}\langle w^-\rangle^{\mu} \nabla^{\leq m} \hat{p}\|_{L^2(\hat{\Om}_f)}\lesssim (E_3 G_3^b)^{\f12} .
\end{align*}
For the other term, by weighted Sobolev inequalities Lemma \ref{Sobo1}, Lemma \ref{Sobo2} and weighted trace Lemma \ref{lemD3}, we estimate
\begin{align*}
&\| \langle w^+\rangle^{-5\mu}    \langle w^-\rangle^{-\mu}
\langle \p_1\rangle^{\f12} \Upsilon\|_{L^2(\BR)} \\
&\lesssim \sum_{0<a\leq m}\|    \langle w^+\rangle^{5\mu}    \langle w^-\rangle^{-\mu}
\p_1^{a+1} f \cdot \p_\tau^{m-a}\La_{+}^1 \|_{H^1(\Om_f)}\\
&\quad+ \sum_{0<a\leq m}\|    \langle w^+\rangle^{5\mu}    \langle w^-\rangle^{-\mu}
\p_1^{a+1} f \cdot \p_\tau^{m-a}\hat{\La}_{+}^1 \|_{H^1(\hat{\Om}_f)}\\
&\lesssim (E_4 G_3^b)^{\f12}  .
\end{align*}

Combining all the above estimates, \eqref{F6} reduces to
\begin{align*}
&\f12\f{\d}{\dt} \Big( \int_{\Om_f}
| \langle w^+ \rangle^{5\mu} \p_\tau^m \La_{+}|^2 e^{q^+} \dx
+\int_{\hat{\Om}_f}
| \langle w^+ \rangle^{5\mu} \p_\tau^m \hat{\La}_{+}|^2 e^{q^+} \dx \Big)\\\nonumber
&+\int_{\Om_f}
 \f{|\langle w^+ \rangle^{5\mu}\p_\tau^m \La_{+} |^2}{ \langle w^- \rangle^{2\mu} }
 e^{q^+}\dx
+\int_{\hat{\Om}_f} \f{|\langle w^+ \rangle^{5\mu}\p_\tau^m \hat{\La}_{+} |^2}{ \langle w^- \rangle^{2\mu} }
 e^{q^+}  \dx
\\\nonumber
&\lesssim E_4^{\f12}G_3^b \mathcal{P}(1+E_4^{\f12})\,.
\end{align*}
Taking the sum for $0\leq m\leq 3$, then taking the integral in time over $[0,t]$, note similar estimates also hold for $\La_-$ and $\hat{\La}_-$,
we obtain
\begin{align*}
&\sup_{0\leq \bar{\tau}\leq t} E_3^{b,\tau}(\bar{\tau}) +\int_0^t G_3^{b,\tau} (\bar{\tau}) \d \bar{\tau}
\leq C E_3^{b,\tau}(0) +C\int_0^t E_4^{\f 12}(\bar{\tau}) G_3^b(\bar{\tau})\d \bar{\tau}.
\end{align*}
This yields \eqref{AA9}.

\section{Weighted energy estimate of the vorticity and current}\label{Ev}


%
%
In this section, we derive the weighted energy estimate for the vorticity and current:
\begin{align*}
\nabla\times \La_{\pm},\quad \nabla\times \hat{\La}_{\pm}\,.
\end{align*}
Note that we are in two dimensional dimension. For two dimensional vector value function $F$, the curl operator on $F$ denotes
$\nabla\times F=(-\p_2,\p_1) \cdot F$. In addition, for function $g$, there holds
$\nabla g \times F=(-\p_2,\p_1) g\cdot F$.
Thus $\nabla\times (gF)=g \nabla\times F + \nabla g \times F=g (-\p_2,\p_1) \cdot F+(-\p_2,\p_1) g\cdot F$.

By taking curl of \eqref{BB3} and \eqref{BB4}, we derive the following system of equations for
 $\nabla\times \La_{\pm},\,\nabla\times \hat{\La}_{\pm}$:
\begin{equation}\label{F111}
\begin{cases}
\p_t  \nabla\times \La_{+}+Z_{-} \cdot \nabla \nabla\times \La_{+}= \p_j \La_{+} \times \nabla \La_{-}^j, \\
\p_t  \nabla\times \La_{-}+Z_{+} \cdot \nabla \nabla\times \La_{-}= \p_j \La_{-} \times \nabla \La_{+}^j,
\end{cases}
\quad\text{in}\,\, \Om_f
\end{equation}
and
\begin{equation}\label{F112}
\begin{cases}
\p_t  \nabla\times \hat{\La}_{+}+\hat{Z}_{-} \cdot \nabla \nabla\times \hat{\La}_{+}= \p_j \hat{\La}_{+} \times \nabla \hat{\La}_{-}^j, \\
\p_t  \nabla\times \hat{\La}_{-}+\hat{Z}_{+} \cdot \nabla \nabla\times \hat{\La}_{-}= \p_j \hat{\La}_{-} \times \nabla \hat{\La}_{+}^j,
\end{cases}
\quad\text{in}\,\, \hat{\Om}_f.
\end{equation}

We define the weighted energy inside the bulk of the domain for the vorticity and current as follows:
\begin{align}\label{F113}
&E_3^{b,\omega}(t)=\sum_{+,-}\sum_{|\alpha|\leq 2} \Big( \int_{\Om_f}
| \langle w^\pm \rangle^{5\mu} \p^\alpha \nabla\times \La_{\pm}|^2 \dx
+\int_{\hat{\Om}_f}
| \langle w^\pm \rangle^{5\mu} \p^\alpha \nabla\times \hat{\La}_{\pm}|^2 \dx \Big).
\end{align}
The corresponding weighted ghost energy estimate is defined by
\begin{align}\label{F114}
&G_3^{b,\omega}(t)=\sum_{+,-} \sum_{|\alpha|\leq 2} \Big( \int_{\Om_f}
 \f{|\langle w^\pm \rangle^{5\mu}\p^\alpha \nabla\times \La_{\pm} |^2}{ \langle w^\mp \rangle^{2\mu} } \dx
+\int_{\hat{\Om}_f} \f{|\langle w^\pm \rangle^{5\mu}\p^\alpha \nabla\times \hat{\La}_{\pm} |^2}{ \langle w^\mp \rangle^{2\mu} }
 \dx\Big) \,.
\end{align}
In the sequel, we deal with the weighted energy estimate for right Alfv\'en waves.
The lower order energy for the left Alfv\'en waves can be estimated similarly.

The estimate is similar to the weighted tangential derivative estimate.
For $|\alpha|\leq 2$, by employing \eqref{F111} and \eqref{F112}, we have
\begin{align}\label{F115}
&\f12\f{\d}{\dt} \Big( \int_{\Om_f}
| \langle w^+ \rangle^{5\mu} \p^\alpha \nabla\times \La_{+}|^2 e^{q^+} \dx
+\int_{\hat{\Om}_f}
| \langle w^+ \rangle^{5\mu} \p^\alpha \nabla\times \hat{\La}_{+}|^2 e^{q^+} \dx \Big)\\\nonumber
&+ \int_{\Om_f}
 \f{|\langle w^+ \rangle^{5\mu}\p^\alpha \nabla\times \La_{+}|^2}{ \langle w^- \rangle^{2\mu} }
 e^{q^+}\dx
+\int_{\hat{\Om}_f} \f{|\langle w^+ \rangle^{5\mu}\p^\alpha \nabla\times \hat{\La}_{+}|^2}{ \langle w^- \rangle^{2\mu} }
 e^{q^+}  \dx \\\nonumber
&=\f12\int_{\Om_f}
|\p^\alpha \nabla\times \La_{+}|^2 \La_{-}\cdot\nabla(\langle w^+ \rangle^{10\mu} e^{q(\sigma^{+})})  \dx
+\f12\int_{\hat{\Om}_f}
|\p^\alpha \nabla\times \hat{\La}_{+}|^2 \hat{\La}_{-}\cdot\nabla(\langle w^+ \rangle^{10\mu} e^{q(\sigma^{+})})  \dx \\\nonumber
&\quad-\int_{\Om_f}
\langle w^+ \rangle^{10\mu}\p^\alpha \nabla\times \La_{+}e^{q^+} \cdot
 [\p^\alpha,\La_{-}\cdot\nabla] \nabla\times \La_{+}\dx \\\nonumber
&\quad-\int_{\hat{\Om}_f}
\langle w^+ \rangle^{10\mu}\p^\alpha \nabla\times \hat{\La}_{+}e^{q^+} \cdot
 [\p^\alpha,\hat{\La}_{-}\cdot\nabla] \nabla\times \hat{\La}_{+} \dx \\\nonumber
&\quad-\int_{\Om_f}
\langle w^+ \rangle^{10\mu}\p^\alpha \nabla\times \La_{+}e^{q^+} \cdot
 \p^\alpha \big( \p_j \La_{+} \times \nabla \La_{-}^j \big) \dx \\\nonumber
&\quad -\int_{\hat{\Om}_f}
\langle w^+ \rangle^{10\mu}\p^\alpha \nabla\times \hat{\La}_{+}e^{q^+} \cdot
 \p^\alpha \big( \p_j \hat{\La}_{+} \times \nabla \hat{\La}_{-}^j \big) \dx .
\end{align}
For the first term on first line on the right hand side of \eqref{F115}, by weighted Sobolev inequalities Lemma \ref{Sobo1}, we obtain
\begin{align}\label{F116}
&\f12\big|\int_{\Om_f}
|\p^\alpha \nabla\times \La_{+}|^2
\La_{-}\cdot\nabla (\langle w^+ \rangle^{10\mu} e^{q^+} ) \dx \big| \\\nonumber
&=\big| \int_{\Om_f}
|\p^\alpha \nabla\times \La_{+}|^2
\La_{-}^1\cdot \Big(5\mu\langle w^+ \rangle^{10\mu-2}w^+ \p_1w^+
+\f12\f{\langle w^+ \rangle^{10\mu} }{\langle w^- \rangle^{2\mu}}\p_1 w^- \Big) e^{q^+}  \dx \big| \\\nonumber
&\lesssim E_2^{\f12} G_3^b.
\end{align}
Next, for the  second line on the right hand side of \eqref{F115}, similarly, one deduces that
\begin{align}\label{F117}
&\big| \int_{\Om_f}
\langle w^+ \rangle^{10\mu}\p^\alpha \nabla\times \La_{+}e^{q^+} \cdot
 [\p^\alpha,\La_{-}\cdot\nabla] \nabla\times \La_{+} \dx \big|\\
&=\big| \sum_{\beta+\gamma=\alpha,\, \beta\neq 0} C_\alpha^\beta \int_{\Omega_f}
\langle w^+\rangle^{10\mu}\p^\alpha \nabla\times \La_{+}e^{q^+} \cdot
 (\p^\beta \La_{-}\cdot\nabla \p^\gamma\nabla\times \La_{+}) \dx \big| \nonumber\\
&=\big| \sum_{\beta+\gamma=\alpha,\, \beta\neq 0 } C_\alpha^\beta \int_{\Omega_f}
\f{\langle w^+\rangle^{5\mu}\p^\alpha \nabla\times \La_{+}}{\langle w^-\rangle^{\mu}} e^{q^+} \cdot
 \Big(\langle w^-\rangle^{2\mu} \p^\beta \La_{-} \cdot
 \f{\langle w^+\rangle^{5\mu} \nabla \p^\gamma \nabla\times \La_{+}}{\langle w^-\rangle^{\mu} } \Big) \dx\big| \nonumber\\ \nonumber
&\lesssim E_3^{\f12} G_3^b.
\end{align}
For the forth line on the right hand side of \eqref{F115}, we estimate
\begin{align}\label{F118}
&\big| \int_{\Om_f}
\langle w^+ \rangle^{10\mu}\p^\alpha \nabla\times \La_{+}e^{q^+} \cdot
 \p^\alpha \big( \p_j \La_{+} \times \nabla \La_{-}^j \big) \dx \big|\\
&=\big|\sum_{\beta+\gamma=\alpha } C_\alpha^\beta \int_{\Omega_f}
\langle w^+\rangle^{10\mu}\p^\alpha \nabla\times \La_{+}e^{q^+} \cdot
 \big(\p_j\p^\beta \La_{+} \times \nabla \p^\gamma \La_{-}^j\big) \dx \big| \nonumber\\
&=\big| \sum_{\beta+\gamma=\alpha } C_\alpha^\beta \int_{\Omega_f}
\f{\langle w^+\rangle^{5\mu}\p^\alpha \nabla\times \La_{+}}{\langle w^-\rangle^{\mu}} e^{q^+} \cdot
 \Big(\f{\langle w^+\rangle^{5\mu} \p_j\p^\beta \La_{+}}{\langle w^-\rangle^{\mu} } \times
 \langle w^-\rangle^{2\mu}\nabla \p^\gamma \La_{-}^j\Big) \dx\big| \nonumber\\ \nonumber
&\lesssim E_3^{\f12} G_3^b .
\end{align}
For the second term on the first line on the right hand side of \eqref{F115},
 the third line on the right hand side of \eqref{F115}, the fifth line on the right hand side of \eqref{F115}, they can be estimated similiar to \eqref{F116}, \eqref{F117}, \eqref{F118}
and can be bounded by $E_s^{\f12} G_2^b$. Note the estimate of \eqref{F115} is for all $|\alpha|\leq 1$. Hence we imply
\begin{align*}
&\sum_{|\alpha|\leq 2}\f12\f{\d}{\dt} \Big( \int_{\Om_f}
| \langle w^+ \rangle^{5\mu} \p^\alpha \nabla\times \La_{+}|^2 e^{q^+} \dx
+\int_{\hat{\Om}_f}
| \langle w^+ \rangle^{5\mu} \p^\alpha \nabla\times \hat{\La}_{+}|^2 e^{q^+} \dx \Big)\\\nonumber
&+ \sum_{|\alpha|\leq 2} \int_{\Om_f}
 \f{|\langle w^+ \rangle^{5\mu}\p^\alpha \nabla\times \La_{+}|^2}{ \langle w^- \rangle^{2\mu} }
 e^{q^+}\dx
+\int_{\hat{\Om}_f} \f{|\langle w^+ \rangle^{5\mu}\p^\alpha \nabla\times \hat{\La}_{+}|^2}{ \langle w^- \rangle^{2\mu} }
 e^{q^+}  \dx \\\nonumber
&\lesssim  E_3^{\f12} G_3^b.
\end{align*}
Taking the integral in time over $[0,t]$, note similar estimates also hold for $\nabla\times \La_{-}$
and $\nabla\times \hat{\La}_{-}$,
we obtain
\begin{align}\label{F119}
&\sup_{0\leq \bar{\tau}\leq t} E_3^{b,\omega}(\bar{\tau}) +\int_0^t G_3^{b,\omega} (\bar{\tau}) \d \bar{\tau}
\leq C E_3^{b,\omega}(0) +C\int_0^t E_3^{\f 12}(\bar{\tau}) G_3^b(\bar{\tau})\d \bar{\tau}.
\end{align}
This yields \eqref{AA10}.

For $s\geq 4$, we define the weighted energy inside the bulk of the domain for the vorticity and current as follows:
\begin{align}\label{F121}
&E_s^{b,\omega}(t)=\sum_{+,-}\sum_{|\alpha|\leq s-1} \Big( \int_{\Om_f}
| \langle w^\pm \rangle^{2\mu} \p^\alpha \nabla\times \La_{\pm}|^2\dx
+\int_{\hat{\Om}_f}
| \langle w^\pm \rangle^{2\mu} \p^\alpha \nabla\times \hat{\La}_{\pm}|^2\dx \Big).
\end{align}
The corresponding weighted ghost energy estimate is defined by
\begin{align}\label{F122}
&G_s^{b,\omega}(t)=\sum_{+,-} \sum_{|\alpha|\leq s-1} \Big( \int_{\Om_f}
 \f{|\langle w^\pm \rangle^{2\mu}\p^\alpha \nabla\times \La_{\pm}|^2}{ \langle w^\mp \rangle^{2\mu} }\dx
+\int_{\hat{\Om}_f} \f{|\langle w^\pm \rangle^{2\mu}\p^\alpha \nabla\times \hat{\La}_{\pm}|^2}{ \langle w^\mp \rangle^{2\mu}}\dx\Big) \,.
\end{align}
Similar to the lower order weighted estimate, we can obtain the higher order energy estimate for the vorticity and the current:
\begin{align*}
&\sup_{0\leq \bar{\tau}\leq t} E_s^{b,\omega}(\bar{\tau}) +\int_0^t G_s^{b,\omega} (\bar{\tau}) \d \bar{\tau}
\leq C E_s^{b,\omega}(0) +C\int_0^t E_s^{\f 12}(\bar{\tau}) G_s^b(\bar{\tau})\d \bar{\tau}.
\end{align*}
 Since the proof is the same to \eqref{F119}, the details are omitted. This yields \eqref{AA11}.

\section{Local solutions for the 2D ideal
incompressible MHD current-vortex sheet}
In this last section, we show that the local solutions of current-vortex sheet free boundary problem
for the two dimensional ideal incompressible magnetohydrodynamics under strong magnetic field.
The  local solution  to  the problem  will be obtained by the method of successive approximations.
Precisely, we show that a sequence of solutions
will finally converge to the solutions of the current-vortex sheet free boundary problem.
We remind that the notions in this section is independent of the notations in the previous argument.

Let $s\geq 4$ be an integer.
For $f$:\,$[0,T)\times \bR \rightarrow \bR$,
$\La_\pm,\omega_\pm$:\,$[0,T)\times \Om_f \rightarrow \bR$,
$\hat{\La}_\pm,\hat{\omega}_\pm$:\,$[0,T)\times \hat{\Om}_f \rightarrow \bR$,
we denote  the energy functional
\begin{align*} 
E_{s+\f12}^f(\p_tf,f)&=\| \p_t f(t,\cdot)\|_{H^{s-\f12}(\bR)}^2+\| \p_1 f(t,\cdot)\|_{H^{s-\f12}(\bR)}^2,\\
E_{s}^b(\La_\pm,\hat{\La}_\pm)&=\sum_{+,-}\big(
\| \La_\pm(t,\cdot )\|_{H^s(\Om_f)}^2
+\| \hat{\La}_\pm(t,\cdot )\|_{H^s(\hat{\Om}_f)}^2\big),\\
E_{s}^w(\omega_\pm,\hat{\omega}_\pm)&=\sum_{+,-}\big(
\| \omega_\pm(t,\cdot )\|_{H^{s-1}(\Om_f)}^2
+\| \hat{\omega}_\pm(t,\cdot )\|_{H^{s-1}(\hat{\Om}_f)}^2\big),\\
E_s(\La_\pm,\hat{\La}_\pm,f)&=E_{s}^b(\La_\pm,\hat{\La}_\pm)+E_{s+\f12}^f(\p_tf,f)+\| f(t,\cdot) \|^2_{L^\infty}.
\end{align*}
We remind that the energy notations are simpler compared with the definition given in Section 2.4
since no weight functions are applied.

\textbf{Linearized system of the free surface and the voricity and the current}

For the free surface \eqref{BB9}, we introduce the following  approximate solutions
\begin{align} \label{Li-BB9}
&(\p_t^2-\p_1^2) f^{(n+1)}
+ \f12({\ud{\La_+^1}}^{(n)} +{\ud{\La_-^1}}^{(n)}
+{\ud{\hat{\La}_+^1}}^{(n)} +{\ud{\hat{\La}_-^1}}^{(n)})
\p_t\p_1 f^{(n+1)} \\\nonumber
&+\f12\big({\ud{\La_-^1}}^{(n)}  - {\ud{\La_+^1}}^{(n)}
+ {\ud{\hat{\La}_-^1}}^{(n)}  - {\ud{\hat{\La}_+^1}}^{(n)}
+ {\ud{\La_-^1}}^{(n)}  {\ud{\La_+^1}}^{(n)}
+ {\ud{\hat{\La}_-^1}}^{(n)}  {\ud{\hat{\La}_+^1}}^{(n)} \big)\p_1^2 f^{(n+1)} \\\nonumber
&=-\f12\big( N_{f^{(n)}}\cdot\ud{\nabla p^{(n)}}+ N_{f^{(n)}}\cdot\ud{\nabla \hat{p}^{(n)}} \big),
\end{align}
where the pressure is defined by
\begin{equation}\label{li-B22}
\begin{cases}
\Delta p^{(n)} =-\nabla \cdot (\La_{-}^{(n)} \cdot \nabla \La_{+}^{(n)}) \quad \textrm{in}\,\, \Om_{f^{(n)}},\\
\Delta \hat{p}^{(n)} =-\nabla \cdot (\hat{\La}_{-}^{(n)} \cdot \nabla \hat{\La}_{+}^{(n)})\quad \textrm{in}\,\, \hat{\Om}_{f^{(n)}},\\
p^{(n)} =\hat{p}^{(n)}\quad \textrm{on} \,\, \Ga_{f^{(n)}},\\
\p_{N_{f^{(n)}}}p^{(n)}-\p_{N_{f^{(n)}}}\hat{p}^{(n)}
= -\big({\ud{\La_{+}^1}}^{(n)} {\ud{\La_{-}^1}}^{(n)}
- {\ud{\hat{\La}_{+}^1}}^{(n)} {\ud{\hat{\La}_{-}^1}}^{(n)} \big) \p_1^2 f^{(n)}\\
\quad-\big({\ud{\La_{+}^1}}^{(n)}-{\ud{\hat{\La}_{+}^1}}^{(n)} \big) \p_1 (\p_t-  \p_1) f^{(n)}
  -\big({\ud{\La_{-}^1}}^{(n)}-{\ud{\hat{\La}_{-}^1}}^{(n)}\big) \p_1 (\p_t+  \p_1) f^{(n)}
\quad\text{on}\,\,\Ga_{f^{(n)}} ,\\
\p_2 p^{(n)}=0 \,\, \textrm{on} \,\,  \Ga,\quad \p_2 \hat{p}^{(n)}=0 \,\, \textrm{on} \,\,  \hat{\Ga}.
\end{cases}
\end{equation}

For the vorticity system \eqref{F111}-\eqref{F112},
we introduce the following  approximate solutions:
\begin{equation}\label{li-F111}
\begin{cases}
\p_t \omega_{+}^{(n+1)} +(\La_{-}^{(n)}-e_1) \cdot \nabla \omega_{+}^{(n+1)}
= \p_j \La_{+}^{(n)} \times \nabla {\La_{-}^j}^{(n)}, \\
\p_t \omega_{-}^{(n+1)} +(\La_{+}^{(n)}+e_1) \cdot \nabla \omega_{-}^{(n+1)}
= \p_j \La_{-}^{(n)} \times \nabla {\La_{+}^j}^{(n)},
\end{cases}
\quad\text{in}\,\, \Om_{f^{(n)}}
\end{equation}
and
\begin{equation}\label{li-F112}
\begin{cases}
\p_t \hat{\omega}_+^{(n+1)} +(\hat{\La}_-^{(n)}-e_1) \cdot \nabla \hat{\omega}_+^{(n+1)}
 = \p_j \hat{\La}_+^{(n)} \times \nabla
 {\hat{\La}_-}^{j\,\,(n)}
 , \\
\p_t \hat{\omega}_-^{(n+1)} +(\hat{\La}_+^{(n)}+e_1) \cdot \nabla\hat{\omega}_-^{(n+1)}
= \p_j \hat{\La}_-^{(n)} \times \nabla
{\hat{\La}_+}^{j\,\,(n)} ,
\end{cases}
\quad\text{in}\,\, \hat{\Om}_{f^{(n)}}.
\end{equation}

Let $s\geq 4$ be an integer, $A>0$, $0<\delta<\f12 $ be given constants. Assume that
\begin{equation}\label{R-10}
\begin{cases}
E_s (\La_\pm^{(n)},\hat{\La}_\pm^{(n)}, f^{(n)})\leq C(A,\delta)A,\\
\| f^{(n)} \|_{L^\infty}\leq 1-\delta,
\end{cases}
\end{equation}
where $C(A,\delta)$ is a constant depending on $A$ and $\delta$.
Similar to the energy estimate of the free surface without weights in Section \ref{Ef}, we can derive the energy estimate for \eqref{Li-BB9}.
\begin{lem}\label{Lem-fL}
Given the initial data  $(\p_t  f(0)=v^0, f(0)= f^0)$ with $(v^0,  f^0)\in H^{s-\f12}\times H^{s+\f12}(\bR)$,
there exists $T>0$ depending on $A$ and $\delta$, and there exits a unique solution $(f^{(n+1)},\p_tf^{(n+1)})\in C\big([0,T];H^{s+\f12}\times H^{s-\f12}(\bR)\big)$ to the system \eqref{Li-BB9} such that
\begin{align*}
&\sup_{t\in[0,T]}E_{s+\f12}^f(\p_tf^{(n+1)},f^{(n+1)})
 \leq \| v^0\|_{H^{s-\f12}}^2+\| \p_1f^0\|_{H^{s-\f12}}^2\\
&\quad + C(A,\delta)\Big(\int_0^T
E_{s+\f12}^f(\p_tf^{(n+1)},f^{(n+1)}) E_s^{\f12} (\La_\pm^{(n)},\hat{\La}_\pm^{(n)},f^{(n)})\\
&\qquad+\big( E_{s+\f12}^f(\p_tf^{(n+1)},f^{(n+1)}) \big)^{\f12}
\|\big( \p_{N_{f^{(n)}}}p^{(n)}+\p_{N_{f^{(n)}}}\hat{p}^{(n)} \big)(\tau)\|_{H^{s-\f12}(\bR)}\d\tau\Big).
 \end{align*}
Also, we have
\begin{align*}
&\|f^{(n+1)}(t)-f^{(n+1)}(0)\|_{L^\infty}\le \|f^{(n+1)}(t)-f^{(n+1)}(0)\|_{H^1}
\le  t\sup_{0\leq \tau\leq t}\|\p_\tau f^{(n+1)}(\tau)\|_{H^{s-\f12}} .
\end{align*}
\end{lem}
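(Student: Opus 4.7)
\medskip

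\noindent\textbf{Proof plan for Lemma \ref{Lem-fL}.} The plan is to view \eqref{Li-BB9} as a linear, strictly hyperbolic second-order scalar equation for $f^{(n+1)}$ with coefficients frozen in terms of the $n$-th iterate, and then to run an energy method modeled line by line on Section \ref{Ef} but stripped of the ghost weight $e^{q^\pm}$ and of the powers $\langle w^\pm\rangle^{2\mu}$. Writing \eqref{Li-BB9} schematically as
\begin{align*}
\partial_t^2 f^{(n+1)} + b^{(n)}(t,x_1)\,\partial_t\partial_1 f^{(n+1)} - c^{(n)}(t,x_1)\,\partial_1^2 f^{(n+1)} = F^{(n)}(t,x_1),
\end{align*}
where
\begin{align*}
c^{(n)} = 1-\tfrac12\bigl({\ud{\La_-^1}}^{(n)}-{\ud{\La_+^1}}^{(n)}+{\ud{\hat\La_-^1}}^{(n)}-{\ud{\hat\La_+^1}}^{(n)}+{\ud{\La_-^1}}^{(n)}{\ud{\La_+^1}}^{(n)}+{\ud{\hat\La_-^1}}^{(n)}{\ud{\hat\La_+^1}}^{(n)}\bigr),
\end{align*}
the bound $E_s^b(\La_\pm^{(n)},\hat\La_\pm^{(n)})\le C(A,\delta)A$ from \eqref{R-10} (after choosing $A$ small depending on $\delta$, or simply invoking the trace bound of the $\underline{\La}^1$'s in $L^\infty$ through Lemma \ref{lemD2}) makes $c^{(n)}$ bounded above and below by positive constants and $b^{(n)}$ uniformly small, so the principal part is strictly hyperbolic with characteristics that are smooth perturbations of the straight lines $x_1\pm t = \text{const}$.

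With this, the construction of $f^{(n+1)}\in C([0,T];H^{s+\frac12})$ with $\partial_t f^{(n+1)}\in C([0,T];H^{s-\frac12})$ follows from standard linear hyperbolic theory: I would regularize the coefficients and the forcing by a mollifier $\rho_\varepsilon$, solve the regularized ODE/PDE via Picard iteration (or the Galerkin scheme in a Fourier basis, since $x_1\in\mathbb R$), obtain a uniform $H^{s+\frac12}\times H^{s-\frac12}$ energy bound, and pass to the limit $\varepsilon\to 0$; uniqueness follows from the same energy estimate applied to the difference of two solutions. The lifespan $T$ is controlled from below in terms of $C(A,\delta)$ and the $H^{s-\frac12}$-norm of the initial data.

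The core work is the energy estimate. I would apply $\langle\partial_1\rangle^{\frac12}\partial_1^a$ for $0\le a\le s-1$ to the equation and pair the result in $L^2(\mathbb R)$ with $(\partial_t\pm\partial_1)\langle\partial_1\rangle^{\frac12}\partial_1^a f^{(n+1)}$, exactly as in the decomposition $A_1+\cdots+A_7$ of \eqref{E10}. Without the weights, every commutator term is handled by the classical Kato--Ponce inequality and the classical trace theorem Lemma \ref{lemD1}; the null-structure manipulation that was crucial for global control is \emph{not} needed here, because we only need a local-in-time bound and the coefficients of $\partial_1^2$, $\partial_t\partial_1$ are multiplied by factors $\underline\Lambda^{(n)}$ that already contribute a power of $E_s^{1/2}(\La^{(n)},\hat\La^{(n)},f^{(n)})$. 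Absorbing the principal symbol into the energy $E_{s+\frac12}^f$ (which requires the strict hyperbolicity argued above), the differential inequality
\begin{align*}
\tfrac{d}{dt}E_{s+\frac12}^f(\p_tf^{(n+1)},f^{(n+1)}) \le C(A,\delta)\Bigl(E_{s+\frac12}^f\,E_s^{\frac12}(\La^{(n)},\hat\La^{(n)},f^{(n)}) + (E_{s+\frac12}^f)^{\frac12}\,\|F^{(n)}\|_{H^{s-\frac12}}\Bigr)
\end{align*}
follows, and integration from $0$ to $T$ yields the stated bound with $\|F^{(n)}\|_{H^{s-\frac12}}\lesssim \|N_{f^{(n)}}\cdot\underline{\nabla p^{(n)}}+N_{f^{(n)}}\cdot\underline{\nabla\hat p^{(n)}}\|_{H^{s-\frac12}}$. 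The amplitude estimate is then immediate from the 1D Sobolev embedding $H^1(\mathbb R)\hookrightarrow L^\infty(\mathbb R)$ together with the fundamental theorem of calculus:
\begin{align*}
\|f^{(n+1)}(t)-f^{(n+1)}(0)\|_{L^\infty} \le \Bigl\|\int_0^t\partial_\tau f^{(n+1)}(\tau)\,d\tau\Bigr\|_{H^1} \le t\sup_{0\le\tau\le t}\|\partial_\tau f^{(n+1)}(\tau)\|_{H^{s-\frac12}}.
\end{align*}

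The main obstacle will be the forcing term $N_{f^{(n)}}\cdot\underline{\nabla p^{(n)}}+N_{f^{(n)}}\cdot\underline{\nabla\hat p^{(n)}}$: naively the pressure enjoys only $H^{s}$ control inside the bulk (Lemma \ref{lemPre} in its unweighted analogue), and the trace of $\nabla p^{(n)}$ therefore sits a priori in $H^{s-\frac32}$, which is a half derivative short of what appears on the right-hand side of the stated inequality. This is precisely the same half-derivative deficit resolved in Section 5 via the harmonic/Poisson splitting $p=p^1+p^2$ and the Dirichlet--Neumann operator bounds of Lemma \ref{lemPP2}; transplanting that argument without weights gives $\|\underline{N_{f^{(n)}}\cdot\nabla p^{(n)}}+\underline{N_{f^{(n)}}\cdot\nabla\hat p^{(n)}}\|_{H^{s-\frac12}}\lesssim C(A,\delta)(E_s^b(\La^{(n)},\hat\La^{(n)}))^{\frac12}$, which closes the estimate. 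No other step presents a genuine difficulty beyond bookkeeping already carried out in Sections \ref{Ef} and 5.
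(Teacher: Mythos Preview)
Your proposal is correct and takes the same approach the paper indicates: the paper does not give a detailed proof of this lemma but simply writes ``Similar to the energy estimate of the free surface without weights in Section~\ref{Ef}, we can derive the energy estimate for \eqref{Li-BB9},'' and your plan of running the $A_1$--$A_7$ scheme of \eqref{E10} stripped of the weights $\langle w^\pm\rangle^{2\mu}$ and $e^{q^\pm}$, together with a standard regularization/Galerkin existence argument for the linear strictly hyperbolic problem, is exactly that. One small remark: your final paragraph about the half-derivative deficit for the pressure forcing is really the content of Lemma~\ref{lem-PL}, not of Lemma~\ref{Lem-fL} itself, since in the statement you are proving the norm $\|\p_{N_{f^{(n)}}}p^{(n)}+\p_{N_{f^{(n)}}}\hat p^{(n)}\|_{H^{s-\frac12}}$ sits unestimated on the right-hand side and no Section~5 machinery is needed here.
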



Similar to the energy estimate of voricity and current estimate in Section \ref{Ev},
we can derive the energy estimate for \eqref{li-F111} and \eqref{li-F112}.
\begin{lem}\label{Lem-vL}
Given the initial data $\omega_\pm(0)=\omega^0_\pm\in H^{s-1}(\Om_{f_0}),\, \hat{\omega}_\pm(0)=\hat{\omega}^0_\pm\in H^{s-1}(\hat{\Om}_{f_0})$,
 there exists $T>0$ depending on $A$ and $\delta$, and there
  exist  unique solution $\omega_\pm^{(n+1)}\in C\big([0,T];H^{s-1}(\Om_{f^{(n)}})\big)$ to the system \eqref{li-F111}
and solution $\hat{\omega}_\pm^{(n+1)}\in C\big([0,T];H^{s-1}(\hat{\Om}_{f^{(n)}})\big)$ to the system \eqref{li-F112}
such that
 \begin{align*}
\sup_{t\in[0,T]}&  E_{s}^w(\omega_\pm^{(n+1)},\hat{\omega}_\pm^{(n+1)})\leq E_{s}^w(\omega_\pm^0,\hat{\omega}_\pm^0) \\
 &\qquad  +C(A,\delta)\Big(\int_0^T
E_{s}^w(\omega_\pm^{(n+1)},\hat{\omega}_\pm^{(n+1)}) E_s^{\f12} (\La_\pm^{(n)},\hat{\La}_\pm^{(n)},f^{(n)}) \\
&\qquad +\big(E_{s}^w(\omega_\pm^{(n+1)},\hat{\omega}_\pm^{(n+1)})\big)^{\f12} E_s  (\La_\pm^{(n)},\hat{\La}_\pm^{(n)},f^{(n)})
\d\tau.
 \end{align*}
\end{lem}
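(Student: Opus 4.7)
The plan is to treat \eqref{li-F111}--\eqref{li-F112} as linear inhomogeneous transport equations on time-dependent domains, obtain well-posedness by reducing to a fixed reference strip, and then derive the stated energy inequality by differentiating $s-1$ times, integrating by parts, and applying commutator and product estimates.

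First I would establish existence and uniqueness. Given iterates with $\La_\pm^{(n)},\hat\La_\pm^{(n)}\in C([0,T];H^s)$ and $\p_t f^{(n)},\p_1 f^{(n)}\in C([0,T];H^{s-\f12})$, the transport field $\La_-^{(n)}-e_1$ is Lipschitz in $x$ uniformly in $t$; at the fixed boundary $\Gamma$ its normal component vanishes since $\La_-^{(n)}\cdot e_2=0$ there, and at $\Gamma_{f^{(n)}}$ the identity $(\La_-^{(n)}-e_1)\cdot N_{f^{(n)}}=\p_t f^{(n)}$ (inherited from the construction of the scheme, as $(\Lambda_-\cdot N_f=\p_tf-\p_1f)$ and $e_1\cdot N_f=-\p_1f$) says precisely that characteristics stay inside $\Omega_{f^{(n)}}$. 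Pulling back by a flattening diffeomorphism $\Omega_0:=\mathbb{R}\times(-1,1)\to\Omega_{f^{(n)}}$ reduces \eqref{li-F111} to a linear transport equation on a fixed domain, for which Picard iteration in $C([0,T];H^{s-1})$ gives a unique classical solution $\omega_+^{(n+1)}$; the same argument handles $\omega_-^{(n+1)}$ and $\hat\omega_\pm^{(n+1)}$.

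For the energy bound, fix a multi-index $\alpha$ with $|\alpha|\le s-1$, apply $\p^\alpha$ to $\eqref{li-F111}_1$, and pair the result with $\p^\alpha\omega_+^{(n+1)}$ in $L^2(\Omega_{f^{(n)}})$. Combining the Reynolds transport formula with integration by parts yields
\begin{align*}
\tfrac12\tfrac{d}{dt}\|\p^\alpha\omega_+^{(n+1)}\|_{L^2(\Omega_{f^{(n)}})}^2 = B_\alpha + D_\alpha + R_\alpha,
\end{align*}
where $B_\alpha=\tfrac12\int_{\mathbb{R}}\bigl(\p_t f^{(n)}-(\La_-^{(n)}-e_1)\cdot N_{f^{(n)}}\bigr)\ud{|\p^\alpha\omega_+^{(n+1)}|^2}\,dx_1$ vanishes by the kinematic identity on $\Gamma_{f^{(n)}}$; $D_\alpha=\tfrac12\int_{\Omega_{f^{(n)}}}(\div\La_-^{(n)})|\p^\alpha\omega_+^{(n+1)}|^2\,dx=0$ since the iterates are divergence-free; and $R_\alpha$ is the $L^2$ inner product of $\p^\alpha\omega_+^{(n+1)}$ with the commutator $[\p^\alpha,(\La_-^{(n)}-e_1)\cdot\nabla]\omega_+^{(n+1)}$ and with the source $\p^\alpha(\p_j\La_+^{(n)}\times\nabla{\La_-^j}^{(n)})$. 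Standard Kato--Ponce commutator estimates together with the Sobolev embedding $H^{s-2}\hookrightarrow L^\infty$ (valid for $s\ge 4$) yield $\|[\p^\alpha,(\La_-^{(n)}-e_1)\cdot\nabla]\omega_+^{(n+1)}\|_{L^2}\lesssim E_s^{1/2}(\La_\pm^{(n)},\hat\La_\pm^{(n)},f^{(n)})\,(E_s^w)^{1/2}$, while Moser-type product estimates give $\|\p^\alpha(\p_j\La_+^{(n)}\times\nabla{\La_-^j}^{(n)})\|_{L^2}\lesssim E_s(\La_\pm^{(n)},\hat\La_\pm^{(n)},f^{(n)})$; constants depend on the geometry of $\Omega_{f^{(n)}}$ only through $\|f^{(n)}\|_{L^\infty}\le 1-\delta$ and $\|\p_1 f^{(n)}\|_{H^{s-1/2}}$, both controlled by \eqref{R-10} and absorbed into $C(A,\delta)$. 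Summing over $|\alpha|\le s-1$ and over the four unknowns $\omega_\pm^{(n+1)},\hat\omega_\pm^{(n+1)}$ (using the analogous identity $(\La_+^{(n)}+e_1)\cdot N_{f^{(n)}}=\p_t f^{(n)}$ for $\omega_-^{(n+1)}$ and its hatted counterparts) and integrating in time produces the stated inequality.

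The main obstacle is the vanishing of $B_\alpha$, which closes the estimate only because each transport velocity is tangent to the space-time boundary of its domain. This tangency is exactly the kinematic boundary condition $\La_\pm^{(n)}\cdot N_{f^{(n)}}=(\p_t\pm\p_1)f^{(n)}$, which is not imposed by \eqref{Li-BB9} or \eqref{li-F111}--\eqref{li-F112} on their own. One must therefore arrange the successive approximation so that this identity, together with $\div\La_\pm^{(n)}=0$, is propagated between iterations; in the standard construction this is done by recovering $\La_\pm^{(n+1)}$ and $\hat\La_\pm^{(n+1)}$ from $\omega_\pm^{(n+1)},\hat\omega_\pm^{(n+1)}$ and $f^{(n+1)}$ via the div--curl decomposition with normal trace prescribed by $(\p_t\pm\p_1)f^{(n+1)}$. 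With this bookkeeping in place the lemma reduces to the energy argument outlined above.
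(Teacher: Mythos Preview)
Your argument is correct and matches the paper's approach: the paper itself offers no detailed proof of this lemma, merely saying ``Similar to the energy estimate of vorticity and current estimate in Section~\ref{Ev}'', which is precisely the transport--commutator computation you carry out (without the weight factors, since they are not needed for the local theory). Your observation about the boundary term $B_\alpha$ vanishing via the kinematic identity $(\La_\mp^{(n)}\mp e_1)\cdot N_{f^{(n)}}=\p_t f^{(n)}$ is exactly how the paper handles the analogous step in \eqref{F5}, and your concern in the last paragraph is resolved by the paper's iteration design: $\La_\pm^{(n)}$ is recovered from $(\omega_\pm^{(n)},f^{(n)})$ through the div--curl system \eqref{DC3}, where the normal trace $\La_{\pm}^{(n)}\cdot N_{f^{(n)}}=(\p_t\pm\p_1)f^{(n)}$ and $\div\La_\pm^{(n)}=0$ are imposed directly.
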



For the pressure $ p^{(n)},\,\hat{p}^{(n)}$ of the system \eqref{li-B22},
similar to the estimate of pressure in Lemma \ref{lemPP3},
we would obtain the following estimate.
\begin{lem}\label{lem-PL}
There exist solution $p^{(n)}\in H^{s}(\Om_{f^{(n)}})$ and $\hat{p}^{(n)}\in H^{s}(\hat{\Om}_{f^{(n)}})$ to the system \eqref{li-F112}
such that
\begin{align*} 
\| \p_{N_{f^{(n)}}}p^{(n)}+\p_{N_{f^{(n)}}}\hat{p}^{(n)} \|_{H^{s-\f12}(\bR)}
&\leq C(A,\delta) E_s (\La_\pm^{(n)},\hat{\La}_-^{(n)}, f^{(n)}).
\end{align*}
\end{lem}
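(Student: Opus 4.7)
The plan is to mirror the strategy developed in Section~5 for Lemma~\ref{lemPP0}, but stripped of weights and adapted to the linearized system \eqref{li-B22}. I would begin by showing existence of $p^{(n)}$ and $\hat{p}^{(n)}$ in $H^s$ via the classical decomposition $p^{(n)}=p^{1,(n)}+p^{2,(n)}$ and $\hat{p}^{(n)}=\hat{p}^{1,(n)}+\hat{p}^{2,(n)}$ (cf.\ \eqref{U4}-\eqref{U3}), where $p^{1,(n)}, \hat{p}^{1,(n)}$ are the two-phase harmonic extensions sharing a common trace on $\Gamma_{f^{(n)}}$ satisfying the transmission (jump) Neumann condition, and $p^{2,(n)}, \hat{p}^{2,(n)}$ solve Poisson problems in $\Omega_{f^{(n)}}$, $\hat{\Omega}_{f^{(n)}}$ with zero Dirichlet data on $\Gamma_{f^{(n)}}$ and zero Neumann data on $\Gamma, \hat{\Gamma}$. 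The Dirichlet trace $\ud{p^{(n)}}$ is determined (up to a constant fixed by decay at infinity) from the normal jump boundary datum in $\eqref{li-B22}_4$ via the inverse of the two-sided Dirichlet--Neumann operator, whose mapping properties are recalled in the appendix.

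Next, I would carry out the estimate in two pieces. For the Poisson part, since both boundary conditions vanish, standard elliptic regularity (applied after flattening the interface by a diffeomorphism of regularity $H^{s+\frac12}$ controlled by $\|f^{(n)}\|_{H^{s+\frac12}}$) yields an $H^{s+1}$ bound
\[
\|p^{2,(n)}\|_{H^{s+1}(\Omega_{f^{(n)}})}+\|\hat{p}^{2,(n)}\|_{H^{s+1}(\hat{\Omega}_{f^{(n)}})} \leq C(A,\delta)\,E_s(\Lambda_\pm^{(n)},\hat{\Lambda}_\pm^{(n)},f^{(n)}),
\]
so by the trace theorem $\p_{N_{f^{(n)}}}p^{2,(n)}+\p_{N_{f^{(n)}}}\hat{p}^{2,(n)}$ is bounded in $H^{s-\frac12}(\mathbb{R})$ with the required estimate. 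For the harmonic part, by the analogue of Lemma~\ref{lemPP2} (whose proof is recalled in the appendix from mapping properties of the Dirichlet--Neumann operator), one has
\[
\|\p_{N_{f^{(n)}}}p^{1,(n)}+\p_{N_{f^{(n)}}}\hat{p}^{1,(n)}\|_{H^{s-\frac12}(\mathbb{R})} \lesssim \|\ud{p^{(n)}}\|_{H^{s-\frac12}(\mathbb{R})},
\]
and in turn $\|\ud{p^{(n)}}\|_{H^{s-\frac12}}$ is controlled via the same two-sided DN operator inverse applied to the jump data on $\Gamma_{f^{(n)}}$, which itself is bounded by $E_s$ thanks to the trace theorem applied to $\ud{\Lambda_\pm^1}^{(n)}, \ud{\hat{\Lambda}_\pm^1}^{(n)}$ together with the regularity of $\p_t f^{(n)}, \p_1 f^{(n)}$ in $H^{s-\frac12}(\mathbb{R})$.

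The key point on which all the estimates hinge, and the main technical obstacle, is that the boundary datum $\p_{N_{f^{(n)}}}p^{(n)}-\p_{N_{f^{(n)}}}\hat{p}^{(n)}$ on $\Gamma_{f^{(n)}}$ contains the term $\p_1^2 f^{(n)}$ and $\p_1(\p_t\pm\p_1)f^{(n)}$, both of which sit at regularity $H^{s-\frac32}$ only. A naive application of elliptic regularity to each one-phase problem would therefore lose half a derivative and fail to produce the claimed $H^{s-\frac12}$ bound on the \emph{sum} $\p_{N_{f^{(n)}}}p^{(n)}+\p_{N_{f^{(n)}}}\hat{p}^{(n)}$. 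The gain is recovered exactly because it is the \emph{sum} of the normal derivatives (not the individual ones) that one is estimating: in the harmonic decomposition, this sum can be rewritten via the two-sided Dirichlet--Neumann operator acting on $\ud{p^{(n)}}$, which by \eqref{appd-9} is smoothing of the right order, so one derivative is recovered and the $H^{s-\frac12}$ bound closes.

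Finally, I would combine the two pieces using the triangle inequality and absorb the small commutator terms coming from the change of variables that flattens $\Gamma_{f^{(n)}}$ (these are controlled using $\|\p_1 f^{(n)}\|_{H^{s-\frac12}}\le \delta$ from \eqref{R-10}), which yields the stated estimate with a constant $C(A,\delta)$ depending only on $A$ and $\delta$. Uniqueness of $p^{(n)}, \hat{p}^{(n)}$ in the decay class follows from the pressure identity \eqref{Pre-Ide} (applied to the linearized system) together with standard elliptic uniqueness in each subdomain.
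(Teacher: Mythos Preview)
Your proposal is correct and follows essentially the same approach as the paper, which simply states that the lemma follows ``similar to the estimate of pressure in Lemma~\ref{lemPP3}'' without giving further details; you have correctly unpacked this into the harmonic/Poisson decomposition of Section~5 and identified the key mechanism (the smoothing property \eqref{appd-9} of the two-sided Dirichlet--Neumann operator on the \emph{sum} of normal traces) that recovers the half-derivative. One minor slip: in the local-existence section the paper does not assume $\|\p_1 f^{(n)}\|_{H^{s-\frac12}}\le\delta$ is small; rather \eqref{R-10} gives only the uniform bound $E_s\le C(A,\delta)A$ with $A<\tfrac14$, so the commutator terms from flattening are controlled by $A$ (not absorbed by smallness), and all constants are collected into $C(A,\delta)$.
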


~\\~
\textbf{The initial data}

Assume that
\begin{align*}
f^0\in \dot{H}^1(\BR)\cap \dot{H}^{s+\f12}(\BR) \cap L^\infty(\BR),\quad
v^0\in H^{s-\f12}(\BR) ,\quad
\La_{\pm}^0 \in H^{s}(\Omega_{f_0}),\, \hat{\La}_{\pm}^0 \in H^{s}(\hat{\Om}_{f_0}).
\end{align*}
Furthermore, assume that there exists a small constant that $c_0>0$ such that
\begin{align}\nonumber
\| \p_1 f^0\|_{H^{s-\f12}(\bR)}^2
+\| v^0\|_{H^{s-\f12}(\bR)}^2+
\| \La_\pm^0\|_{H^s(\Om_{f_0})}^2
+\| \hat{\La}_\pm^0\|_{H^s(\hat{\Om}_{f^0})}^2\leq c_0
\end{align}
and $$\| f^0\|_{L^\infty} \leq 1-2\delta$$
for $0<\delta<\f12$.

The iteration step need to be constructed in a fixed domain.
We choose $f_*=f_0$ and take $\Omega_*=\Omega_{f_0},\, \hat{\Om}_*=\hat{\Om}_{f_0}$ as the reference region.
The initial data $(f^{(n)}(0),\p_tf^{(n)}(0),\omega_{\pm,*}^{(n)}(0),\hat{\omega}_{\pm,*}^{(n)}(0)$ for the equivalent system is defined as follows
\begin{align*}
&f^{(n)}(0)=f^0,\quad \p_tf^{(n)}(0)=v^0=u^0_\pm(x_1,f_0(x_1))\cdot(-\p_1f_0,1),\\
&\omega_{\pm,*}^{(n)}(0)=\curl\La_0^\pm,\quad \hat{\omega}_{\pm,*}^{(n)}(0)=\curl\hat{\La}_0^\pm.
\end{align*}
In addition,
\begin{align*}
\|\p_1 f^0\|^2_{H^{s-\f12}}+\|v^0\|^2_{H^{s-\f12}}+\sum_{+,-}\|\omega_{\pm,*}^{(n)}(0)\|^2_{H^{s-1}(\Om_*)}
+\sum_{+,-} \|\hat{\omega}_{\pm,*}^{(n)}(0)\|^2_{H^{s-1}(\hat{\Om}_*)}
 \leq \f12 A.
\end{align*}

\textbf{Recover of the velocity and magnetic field in the bulk region from the voticity and the current}
Recall that
\begin{align*}
\Om_f=\big\{ x \in \Om| x_2 < f (t,x_1)\big\}, \quad \hat{\Om}_f=\big\{ x \in \Om| x_2> f (t,x_1)\big\}.
\end{align*}
To consider the solution in a fixed domain, we introduce the
harmonic coordinate map $\Phi_f :\Omega_* \to\Omega_f$ and $\hat{\Phi}_f:\hat{\Om}_*\to\hat{\Om}_f$.

Let  $\Gamma_*$ be a fixed graph given by
\begin{align*}
\Gamma_*=\{(y_1,y_2):y_2=f_*(y_1)\}.
\end{align*}
The reference domains $\Om_*$ and $\hat{\Om}_*$ are given by
\begin{align*}
\Om_*=\{ y \in \bR\times [-1,1]| y_2 \leq f_*(y_1)\},
\quad \hat{\Om}_*=\{ y \in \bR\times [-1,1]| y_2 \geq f_*(y_1)\}.
\end{align*}
Then $\Phi_f$ is defined by the following equation.
\begin{equation*}
\begin{cases}
\Delta_y \Phi_f=0,\qquad \text{for } y\in \Omega_*,\\
\Phi_f(y_1,f_*(y_1))=(y_1,f(y_1)), \quad y_1\in\bR,\\
\Phi_f(y_1,-1)=(y',-1), \quad y_1\in\bR.
\end{cases}
\end{equation*}
$\hat{\Phi}_f$ can be defined similarly.
$\Phi_f$ is a bijection depending on $\|f_*\|_{W^{1,\infty}}$.
For the properties of the harmonic coordinate map, we refer to Lemma 2.1 in \cite{SWZ18}.

\textbf{Define the iteration process}
Note the domains of $\La_\pm^{(n)}$ ($\hat{\La}_\pm^{(n)}$) are $\Om_{f^{(n)}}$ ($\hat{\Om}_{f^{(n)}}$) which are different regions, we can not estimate their difference directly.
Thus, we introduce for $n\in\bN$,
$$
\La_{\pm,*}^{(n)}=\La_\pm^{(n)}\circ\Phi_{f^{(n)}}^{\pm},\quad
\hat{\La}_{\pm,*}^{(n)}=\hat{\La}_\pm^{(n)}\circ\hat{\Phi}_{f^{(n)}}^{\pm}.
$$
Note that the domains of $\omega_{\pm}^{(n+1)}, \hat{\omega}_{\pm}^{(n+1)}$ are
$\Om_{f^{(n)}}$, $\hat{\Om}_{f^{(n)}}$.
Thus we define
$$
\omega_{\pm,*}^{(n+1)}=\omega_\pm^{(n+1)}\circ\Phi_{f^{(n)}}^{\pm},\quad
\hat{\omega}_{\pm,*}^{(n+1)}=\hat{\omega}_\pm^{(n+1)}\circ\hat{\Phi}_{f^{(n)}}^{\pm},
$$
for $n\in\bN$.
By the properties of the harmonic coordinate map, there holds
\begin{align*}
&\| \La_{\pm,*}^{(n)} \|_{H^s(\Om_*)}\leq C(A,\delta) \| \La_{\pm}^{(n)} \|_{H^s(\Om_{f^{(n)}})},
\quad \| \hat{\La}_{\pm,*}^{(n)} \|_{H^s(\hat{\Om}_*)}\leq C(A,\delta) \| \hat{\La}_{\pm}^{(n)} \|_{H^s(\hat{\Om}_{f^{(n)}})},\\
&\| \omega_{\pm,*}^{(n+1)} \|_{H^s(\Om_*)}\leq C(A,\delta) \| \omega_{\pm}^{(n+1)} \|_{H^s(\Om_{f^{(n)}})},
\quad \| \hat{\omega}_{\pm,*}^{(n+1)} \|_{H^s(\hat{\Om}_*)}\leq C(A,\delta) \| \hat{\omega}_{\pm}^{(n+1)} \|_{H^s(\hat{\Om}_{f^{(n)}})}.
\end{align*}

Once we have obtained $(f^{(n+1)},\omega_{\pm}^{(n+1)}, \hat{\omega}_{\pm}^{(n+1)})$ from the linearized system, we need solve
$(\La_{\pm}^{(n+1)},\hat{\La}_{\pm}^{(n+1)})$ from the div-curl system
\begin{equation}\label{DC1}
\begin{cases}
\curl \La^\pm=\omega^\pm,\quad\div \La^\pm =0\quad  \text{in}\quad\Om_f,\\
\La^\pm\cdot N_f=(\p_t\pm\p_1)f\quad\text{on}\quad\Gamma_{f}, \\
\La^\pm\cdot e_2 = 0\quad\text{on}\quad\Gamma \,,
\end{cases}
\end{equation}
and
\begin{equation}\label{DC2}
\begin{cases}
\curl \hat{\La}^\pm=\hat{\omega}^\pm,\quad\div \La^\pm =0\quad  \text{in}\quad\hat{\Om}_f,\\
\hat{\La}^\pm\cdot N_f=(\p_t\pm\p_1)f\quad\text{on}\quad\Gamma_{f}, \\
\hat{\La}^\pm\cdot e_2 = 0\quad\text{on}\quad\hat{\Ga} .
\end{cases}
\end{equation}
In order to solve $(\La_{\pm}^{(n+1)},\hat{\La}_{\pm}^{(n+1)})$ in $(\Om_{f^{(n+1)}},\hat{\Om}_{f^{(n+1)}})$,
by the harmonic coordinate map, it suffices
to solve $(\La_{\pm,*}^{(n+1)},\hat{\La}_{\pm,*}^{(n+1)})$ from $\omega_{\pm,*}^{(n+1)}, \omega_{\pm,*}^{(n+1)}$.
To keep the div-curl structure of  \eqref{DC1} and \eqref{DC2} under $\Om_f$, $\hat{\Om}_f$,
the corresponding div-curl system in $\Om_*$, $\hat{\Om}_*$ need to be changed.

For a vector field $v_*^{(n)}$ ($\hat{v}_*^{(n)}$) defined in $\Om_*$ ($\Om_*^{(n)}$), we define
\begin{align*}
\curl_n v_*^{(n)}=\big(\curl (v_*^{(n)}\circ(\Phi_{f^{(n)}})^{-1})\big) \circ \Phi_{f^{(n)}},\quad
\div_n  v_*^{(n)}=\big(\div(v_*^{(n)}  \circ(\Phi_{f^{(n)}})^{-1})\big) \circ \Phi_{f^{(n)}},
\end{align*}
for $n\in\bN$. Then we find  $\La_{\pm,*}^{(n)}$ from the following equations:
\begin{equation}\label{DC3}
\begin{cases}
\curl_n \La_{\pm,*}^{(n)}=\omega_{\pm,*}^{(n)}  \quad\text{in}\quad\Om_*,\\
\div_n \La_{\pm,*}^{(n)}=0\quad\text{in}\quad\Om_*,\\
\La_{\pm,*}^{(n)}\cdot N_{f^{(n)}}=(\p_t\pm \p_1) f^{(n)} \quad\text{on}\quad\Ga_{*},\\
\La_{\pm,*}^{(n)}\cdot e_2 = 0, \quad\text{on}\quad\Ga .
\end{cases}
\end{equation}
Similar definition applies to $\hat{\La}_{\pm,*}^{(n)}$.
By solving the div-curl system \eqref{DC3}, there holds
\begin{align}\label{PD-10}
&\|\La_{\pm,*}^{(n)}\|_{H^{s}(\Om_*^\pm)}
\le C(A,\delta)\big(\| \omega_{\pm,*}^{(n)}\|_{H^{s-1}(\Omega_*^\pm)}
+\|(\p_t\pm\p_1)f^{(n)}\|_{H^{s-\f12}} \big).
\end{align}
Similar estimate also holds for $\hat{\La}_{\pm,*}^{(n)}$:
\begin{align}\label{PD-11}
\|\hat{\La}_{\pm,*}^{(n)}\|_{H^{s}(\Om_*^\pm)}
\le C(A,\delta)\big(\| \hat{\omega}_{\pm,*}^{(n)}\|_{H^{s-1}(\Omega_*^\pm)}
+\|(\p_t\pm\p_1)f^{(n)}\|_{H^{s-\f12}} \big).
\end{align}

\textbf{Define the iteration map}

The iteration sequence is defined as follows
\begin{equation*}
 \big(f^{(n)},\omega_{\pm,*}^{(n)}, \hat{\omega}_{\pm,*}^{(n)}\big).
\end{equation*}
Now we show the uniform bound of $\omega_\pm^{(n)},\hat{\omega}_\pm^{(n)}, f^{(n)}$ for all $n\geq 0$
for small time interval.
\begin{lem}\label{Lem-Abound}
Let $A<\f14, 0<\delta<\f12$.
Assume that
\begin{align*}
&\|\p_1 f^0\|^2_{H^{s-\f12}}+\|v^0\|^2_{H^{s-\f12}}+\sum_{+,-}\|\omega_{\pm,*}^{(n)}(0)\|^2_{H^{s-1}(\Om_*)}
+\sum_{+,-} \|\hat{\omega}_{\pm,*}^{(n)}(0)\|^2_{H^{s-1}(\hat{\Om}_*)}
 \leq \f12 A,\\
&\quad \| f^0\|_{L^\infty} \leq 1-2\delta.
\end{align*}
There exits $T_*>0$ depending on $A$ and $\delta$  such that
\begin{align*}
&\sup_{t\in[0,T_*]} E_{s}^w(\omega_\pm^{(n)},\hat{\omega}_\pm^{(n)})
+\sup_{t\in[0,T_*]}E_{s+\f12}^f(\p_tf^{(n)},f^{(n)}) \leq A, \\
&\qquad \sup_{t\in[0,T_*]}\| f^{(n)}(t) \|_{L^\infty}\leq 1-\delta.
\end{align*}
\end{lem}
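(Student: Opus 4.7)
The proof will proceed by induction on $n$. For the base case, one extends the initial data to the iteration start (e.g., taking $f^{(0)} \equiv f^0$, $\omega_{\pm,*}^{(0)} \equiv \omega_{\pm,*}^0$, etc.), which satisfies all the required bounds trivially thanks to the assumption $\tfrac12 A$ on the initial data and $\|f^0\|_{L^\infty}\le 1-2\delta$. Assume inductively that the bounds hold for all iterates up to index $n$, in particular $E_s(\Lambda_\pm^{(n)},\hat{\Lambda}_\pm^{(n)},f^{(n)})\le C(A,\delta)A$ and $\|f^{(n)}(t)\|_{L^\infty}\le 1-\delta$ on $[0,T_*]$. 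The recovery estimates \eqref{PD-10}--\eqref{PD-11} first turn the control on $\omega_{\pm,*}^{(n)}$, $\hat{\omega}_{\pm,*}^{(n)}$ together with the trace $(\partial_t\pm\partial_1)f^{(n)}$ into an $H^s$-bound for $\Lambda_{\pm,*}^{(n)}$ and $\hat{\Lambda}_{\pm,*}^{(n)}$; the harmonic-coordinate change then yields the same bound, up to a constant $C(A,\delta)$, for $\Lambda_\pm^{(n)}$ and $\hat{\Lambda}_\pm^{(n)}$ on $\Omega_{f^{(n)}}, \hat{\Omega}_{f^{(n)}}$.

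With these bounds in hand, I will next invoke Lemma \ref{lem-PL} to estimate the forcing term appearing in the wave equation for $f^{(n+1)}$:
\begin{equation*}
\|\partial_{N_{f^{(n)}}}p^{(n)}+\partial_{N_{f^{(n)}}}\hat p^{(n)}\|_{H^{s-\frac12}(\mathbb R)}\le C(A,\delta)\,E_s(\Lambda_\pm^{(n)},\hat\Lambda_\pm^{(n)},f^{(n)}).
\end{equation*}
Plugging this into Lemma \ref{Lem-fL} and Lemma \ref{Lem-vL}, and using the inductive bound to absorb $E_s^{1/2}(\Lambda_\pm^{(n)},\hat\Lambda_\pm^{(n)},f^{(n)})$ into $C(A,\delta)A^{1/2}$, gives the differential inequalities
\begin{align*}
\sup_{[0,T_*]}E_{s+\frac12}^f(\partial_t f^{(n+1)},f^{(n+1)})&\le \tfrac12 A+C(A,\delta)\,T_*\,(A+A^{1/2}),\\
\sup_{[0,T_*]}E_s^w(\omega_\pm^{(n+1)},\hat\omega_\pm^{(n+1)})&\le \tfrac12 A+C(A,\delta)\,T_*\,(A+A^{3/2}),
\end{align*}
where I have used Gronwall in the form that arises from absorbing $\int_0^t E_{s+\frac12}^f(\tau)\,\mathrm d\tau$ and $\int_0^t E_s^w(\tau)\,\mathrm d\tau$ by a factor $e^{C(A,\delta)T_*}\le 2$ after further shrinking $T_*$. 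Choosing $T_*=T_*(A,\delta)>0$ small enough so that $C(A,\delta)T_*(A^{1/2}+A^{3/2})\le \tfrac12 A$ closes the two energy bounds at level $A$ for the $(n{+}1)$-th iterate.

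Finally, for the $L^\infty$ bound on the amplitude, the estimate at the end of Lemma \ref{Lem-fL} gives
\begin{equation*}
\|f^{(n+1)}(t)\|_{L^\infty}\le \|f^0\|_{L^\infty}+t\sup_{0\le\tau\le t}\|\partial_\tau f^{(n+1)}(\tau)\|_{H^{s-\frac12}}\le (1-2\delta)+T_*\,A^{1/2},
\end{equation*}
and a further (possibly smaller) choice of $T_*$ with $T_*A^{1/2}\le \delta$ yields $\|f^{(n+1)}\|_{L^\infty}\le 1-\delta$ on $[0,T_*]$. Since the same $T_*$ works at every step of the induction, the lemma follows. The main obstacle is to verify that the constants $C(A,\delta)$ appearing through Lemmas \ref{Lem-fL}, \ref{Lem-vL}, \ref{lem-PL} and the div-curl estimates \eqref{PD-10}--\eqref{PD-11} truly depend only on $A$ and $\delta$ and not on $n$; this is guaranteed by the inductive control on $E_s(\Lambda_\pm^{(n)},\hat\Lambda_\pm^{(n)},f^{(n)})$ and on $\|f^{(n)}\|_{L^\infty}$ (which in turn controls the harmonic coordinate map $\Phi_{f^{(n)}}$ and the geometry of $\Omega_{f^{(n)}}$), and on the separation condition $\|f^{(n)}\|_{L^\infty}\le 1-\delta$ preventing the free interface from reaching the fixed walls $\Gamma, \hat\Gamma$.
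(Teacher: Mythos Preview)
Your proposal is correct and follows essentially the same approach as the paper: induction on $n$, recovery of $\Lambda_\pm^{(n)},\hat\Lambda_\pm^{(n)}$ via the div--curl estimates \eqref{PD-10}--\eqref{PD-11}, insertion of Lemma~\ref{lem-PL} into Lemmas~\ref{Lem-fL} and~\ref{Lem-vL}, a Gronwall-type bound, and finally the $L^\infty$ control on $f^{(n+1)}$ by choosing $T_*\le \delta A^{-1/2}$. The paper compresses the Gronwall step into a single inequality $(\text{initial data}+C_1T_*)e^{C_1T_*}$ and gives explicit thresholds $T_*\le\min\{\tfrac14 C_1^{-1},\,C_1^{-1}\ln\tfrac43,\,\delta A^{-1/2}\}$, but this is only a cosmetic difference from your presentation.
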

\begin{remark}
The condition $0<A<\f14$ and $\|f^0\|_{L^\infty}<1-\delta$ are to guarantee the hyperbolicity condition and
 that the free surface will not touch the above the bottom fixed boundary.
The main purpose in this paper is the global solution for small initial data.
Hence the initial condition $0<A<\f14$ and $\|f^0\|_{L^\infty}<1-\delta$ are enough for our purpose.
\end{remark}

\begin{proof}
Assume that for some $T_*$ depending on $A$, $\delta$, there holds for $k\leq n$:
 \begin{align*} 
&\sup_{t\in[0,T_*]} E_{s}^w(\omega_\pm^{(k)},\hat{\omega}_\pm^{(k)})
+\sup_{t\in[0,T_*]}E_{s+\f12}^f(\p_tf^{(k)},f^{(k)}) \leq A, \\
&\qquad \sup_{t\in[0,T_*]}\| f^{(k)}(t) \|_{L^\infty}\leq 1-\delta.
\end{align*}
Then by using \eqref{PD-10} and \eqref{PD-11}, the bound \eqref{R-10} holds.
By using Lemma \ref{Lem-fL}, \ref{Lem-vL}, \ref{lem-PL}, we would derive that
\begin{align*}
&\sup_{0\leq t\leq T_*}  \Big( E_{s}^w(\omega_\pm^{(n+1)},\hat{\omega}_\pm^{(n+1)})
+ E_{s+\f12}^f(\p_tf^{(n+1)},f^{(n+1)}) \Big) \\
&\leq (\| v^0\|_{H^{s-\f12}}^2+\| \p_1f^0\|_{H^{s-\f12}}^2
+\|\omega_\pm^0\|^2_{H^{s-1}(\Om_{f^0})}+\|\hat{\omega}_\pm^0\|^2_{H^{s-1}(\hat{\Om}_{f^0})}
+C_1(A,\delta) T_*) e^{C_1(A,\delta) T_*},
\end{align*}
where $C_1(A,\delta)$ is a positive number depending on $A$ and $\delta$.
By choosing positive $T_*\leq 
\min \{\f14C_1^{-1}, \ln\f43 C_1^{-1}A^{-1}\}$. Then there holds
\begin{align*}
&\sup_{t\in[0,T_*]} E_{s}^w(\omega_\pm^{(n+1)},\hat{\omega}_\pm^{(n+1)})
+\sup_{t\in[0,T_*]}E_{s+\f12}^f(\p_tf^{(n+1)},f^{(n+1)}) \leq A.
\end{align*}

On the other hand, for $0\leq t\leq T_*$,
\begin{align*}
&\|f^{(n+1)}(t)-f^{(n+1)}(0)\|_{L^\infty}\le \|f^{(n+1)}(t)-f^{(n+1)}(0)\|_{H^1} \\
&\le  t\sup_{0\leq \tau\leq t}\|\p_\tau f^{(n+1)}(\tau)\|_{H^{s-\f12}} \leq T_* A^{\f12} .
\end{align*}
Set
$$\|f^{(n+1)}(0)\|_{L^\infty}\leq 1-2\delta. $$
By choosing $T_*=\min\{ \delta A^{-\f12}, \f14C_1^{-1}, \ln\f43 C_1^{-1}A^{-1}\}>0$, then
\begin{align*}
&\|f^{(n+1)}(t) \|_{L^\infty}
\leq \|f^{(n+1)}(0) \|_{L^\infty}+T_* A^{\f12} \leq 1-\delta.
\end{align*}
The lemma is proved.
\end{proof}

\textbf{Contraction of the iteration map}

In the sequel, we will show that the sequence
\begin{equation*}
 \big(f^{(n)},\omega_{\pm,*}^{(n)}, \hat{\omega}_{\pm,*}^{(n)}\big)
\end{equation*}
 is a Cauchy sequence in some Sobolev spaces.
\begin{lem}\label{Lem-contr}
Let $s'=s-1$. There exists $T_*'>0$ depending on $s, A, \delta$ such that
\begin{align*}
E^{(n+1)}\triangleq
&\sup_{0\leq t\leq T_*'}
\Big(\sum_{+,-} E_{s'}^w(\omega_\pm^{(n+1)}-\omega_\pm^{(n)},\hat{\omega}_\pm^{(n+1)}-\hat{\omega}_\pm^{(n)}) \\
&\quad + E_{s'+\f12}^f(\p_tf^{(n+1)}-\p_tf^{(n)},f^{(n+1)}-f^{(n)})
+\| f^{(n+1)}-f^{(n)} \|^2_{L^\infty(\bR)} \Big) \\
\leq& \f12\sup_{0\leq t\leq T_*'}  \Big(\sum_{+,-} E_{s'}^w(\omega_\pm^{(n)}-\omega_\pm^{(n-1)},\hat{\omega}_\pm^{(n)}-\hat{\omega}_\pm^{(n-1)}) \\
&\qquad+ E_{s'+\f12}^f(\p_tf^{(n)}-\p_tf^{(n-1)},f^{(n)}-f^{(n-1)})+\| f^{(n)}-f^{(n-1)} \|^2_{L^\infty(\bR)} \Big)=E^{(n)}.
\end{align*}
\end{lem}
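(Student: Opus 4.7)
\textbf{Proof proposal for Lemma \ref{Lem-contr}.}

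The plan is to form the difference equations at levels $(n{+}1)$ and $(n)$, estimate them in the lower regularity $s' = s-1$ using exactly the same machinery (free-surface wave estimate, vorticity transport estimate, pressure elliptic estimate, and div-curl recovery) that produced Lemmas \ref{Lem-fL}, \ref{Lem-vL}, \ref{lem-PL}, and then absorb the factor $1/2$ by choosing $T_*'$ small. The uniform-in-$n$ bounds from Lemma \ref{Lem-Abound} (at the higher level $s$) will be used to estimate the coefficients that arise in the difference equations.

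First I would set $\delta f^{(n+1)} = f^{(n+1)} - f^{(n)}$ and subtract two copies of \eqref{Li-BB9} to get
\[
(\p_t^2 - \p_1^2)\delta f^{(n+1)} + \tfrac12 \mathcal{A}^{(n)} \p_t\p_1 \delta f^{(n+1)} + \tfrac12 \mathcal{B}^{(n)} \p_1^2 \delta f^{(n+1)} = R^{(n)},
\]
where $\mathcal{A}^{(n)}, \mathcal{B}^{(n)}$ are the coefficients at level $n$ (uniformly bounded in $H^{s-\tfrac12}$ by Lemma \ref{Lem-Abound}), and the right-hand side $R^{(n)}$ is the sum of (i) $(\mathcal{A}^{(n)}-\mathcal{A}^{(n-1)})\p_t\p_1 f^{(n)}$-type terms, (ii) the analogous $\mathcal{B}$-differences, and (iii) the pressure-normal-trace difference $N_{f^{(n)}}\!\cdot\!\ud{\nabla p^{(n)}} - N_{f^{(n-1)}}\!\cdot\!\ud{\nabla p^{(n-1)}}$ (plus the same for $\hat p$). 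Each factor appearing in the coefficient differences is a trace of $\La_{\pm,*}^{(n)}-\La_{\pm,*}^{(n-1)}$, $\hat\La_{\pm,*}^{(n)}-\hat\La_{\pm,*}^{(n-1)}$, or $\p_1 f^{(n)}-\p_1 f^{(n-1)}$, and is multiplied by a quantity that sits at one extra level of regularity (and is bounded uniformly by $A$). Running the energy argument of Lemma \ref{Lem-fL} at regularity $s'+\tfrac12$ then gives
\[
E_{s'+\tfrac12}^f(\p_t\delta f^{(n+1)},\delta f^{(n+1)})(t) \;\le\; C(A,\delta)\!\int_0^t \!\!E^{(n)}(\tau)\,\d\tau + C(A,\delta)\!\int_0^t \!\!E^{(n+1)}(\tau)\,\d\tau.
\]

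Next I would do the analogous subtraction in the vorticity transport equations \eqref{li-F111}-\eqref{li-F112}, after pulling everything back to the reference domain $\Om_*$ via the harmonic maps $\Phi_{f^{(n)}}, \Phi_{f^{(n-1)}}$. The difference $\delta\omega^{(n+1)}_{\pm,*}$ satisfies a transport equation driven by level-$n$ coefficients, with source term containing $\delta\La^{(n)}_{\pm,*}$, $\delta\hat\La^{(n)}_{\pm,*}$, and the map difference $\Phi_{f^{(n)}}-\Phi_{f^{(n-1)}}$ (controlled by $\|\delta f^{(n)}\|_{H^{s'+\tfrac12}}$ via standard harmonic-extension estimates). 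A standard $H^{s-1}$ energy estimate for transport equations, together with Lemma \ref{Lem-Abound}, yields the same bound as above for $E_{s'}^w(\delta\omega^{(n+1)}_\pm, \delta\hat\omega^{(n+1)}_\pm)$. For the pressure normal-trace difference I would use the elliptic system \eqref{li-B22} at two consecutive levels, subtract, and apply the $s'$-version of Lemma \ref{lem-PL}: the resulting bound is $C(A,\delta)\,[E^{(n)}(t)]^{1/2}$. To close the loop I also need $\delta\La^{(n)}_{\pm,*}$, $\delta\hat\La^{(n)}_{\pm,*}$ bounded by $[E^{(n)}]^{1/2}$; this comes from subtracting two copies of the div-curl system \eqref{DC3}, noting that both the curl source $\delta\omega^{(n)}_{\pm,*}$ and the Neumann data $(\p_t\pm\p_1)\delta f^{(n)}$ are already part of $E^{(n)}$, while the commutator $\mathrm{curl}_n - \mathrm{curl}_{n-1}$ is of lower order in the map difference.

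Combining the three estimates gives $E^{(n+1)}(t) \le C(A,\delta) \int_0^t E^{(n+1)}(\tau)\,\d\tau + C(A,\delta)\int_0^t E^{(n)}(\tau)\,\d\tau$, with an additive zero initial contribution because $\delta f^{(n+1)}(0)=\delta \p_t f^{(n+1)}(0)=0$ and $\delta\omega^{(n+1)}_{\pm,*}(0)=0$. Gronwall's inequality and choosing $T_*' \le \min\{T_*,\, (2C(A,\delta))^{-1}\log 2\}$ (say) then gives $\sup_{[0,T_*']} E^{(n+1)} \le \tfrac12 \sup_{[0,T_*']} E^{(n)}$, as required. The $L^\infty$ bound on $\delta f^{(n+1)}$ follows as in Lemma \ref{Lem-fL} from the $H^{s'-\tfrac12}$ bound on $\p_t \delta f^{(n+1)}$ upon further shrinking $T_*'$.

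The main obstacle is the pressure difference estimate: the two pressures live on two different domains $\Om_{f^{(n)}}, \Om_{f^{(n-1)}}$ whose boundaries differ, so in order to take the difference one must again pull back both elliptic problems to $\Om_*$ and carefully track how the coefficients $\nabla\Phi_{f^{(n)}} - \nabla\Phi_{f^{(n-1)}}$, the transformed boundary normals, and the Dirichlet-Neumann operators at the two levels differ. Working at the lower regularity $s' = s-1$ (with the higher norm $s$ bounded uniformly in $n$) is exactly what makes this commutator analysis affordable, and is the reason the statement is at $s' = s-1$ rather than $s$.
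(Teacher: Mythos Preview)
Your proposal is correct and follows essentially the same approach as the paper: form the difference equations for the free surface and the vorticity, control $\La_{\pm,*}^{(n)}-\La_{\pm,*}^{(n-1)}$ via the differenced div-curl system \eqref{DC3} (with $\Phi_{f^{(n)}}-\Phi_{f^{(n-1)}}$ controlled by the elliptic estimate), run the energy estimates of Lemmas \ref{Lem-fL}--\ref{lem-PL} at the lower regularity $s'=s-1$, and close by choosing $T_*'$ small. The paper's proof is in fact more terse than yours---it writes the difference equation \eqref{Li-BB9-D} explicitly, states the div-curl difference estimate, and then simply asserts the bounds $C(A,\delta)T_*'\bigl(E^{(n+1)}+E^{(n)}\bigr)$ for the free-surface and vorticity pieces ``with slight modifications'' of the earlier lemmas, absorbing the $E^{(n+1)}$ term by taking $T_*'=\min\{1/(3C),\,T_*\}$; your use of Gronwall in place of this direct absorption is an equivalent way to finish.
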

\begin{proof}
Let us first show that
\begin{align*} 
\|\La_{\pm,*}^{(n)}-\La_{\pm,*}^{(n-1)}\|^2_{H^{s-1}(\Om_*)}
+\|\hat{\La}_{\pm,*}^{(n)}-\hat{\La}_{\pm,*}^{(n-1)}\|^2_{H^{s-1}(\Om_*)}\le CE^{(n)}.
\end{align*}
By the div-curl system \eqref{DC3}, we have
\begin{equation}\nonumber
\begin{cases}
\curl_{n}(\La_{\pm,*}^{(n)}-\La_{\pm,*}^{(n-1)})
=(\omega_{\pm,*}^{(n)}-\omega_{\pm,*}^{(n-1)})-(\curl_n-\curl_{n-1})\La_{\pm,*}^{(n-1)}\quad\text{in}\quad \Om_*,\\
 \div_n (\La_{\pm,*}^{(n)}-\La_{\pm,*}^{(n-1)})
=(\div_{n-1}-\div_{n})\La_{\pm,*}^{(n-1)} \quad\text{in}\quad\Om_*,\\
(\La_{\pm,*}^{(n)}-\La_{\pm,*}^{(n-1)}) \cdot N_{f^{(n)}}
=(\p_t\pm \p_1) (f^{(n)}-f^{(n-1)})+\La_{\pm,*}^{(n-1)}\cdot(N_{f^{(n-1)}}-N_{f^{(n)}})\quad\text{on}\quad\Ga_{*}, \\
(\La_{\pm,*}^{(n)}-\La_{\pm,*}^{(n-1)}) \cdot e_2=0\quad\text{on}\quad\Ga.
\end{cases}.
\end{equation}
By the elliptic estimate, we have
\begin{align*}
&\|\Phi_{f^{(n)}}-\Phi_{f^{(n-1)}}\|^2_{H^{s-1}(\Om_*^\pm)}
+\|\hat{\Phi}_{f^{(n)}}-\hat{\Phi}_{f^{(n-1)}}\|^2_{H^{s-1}(\hat{\Om}_*^\pm)} \\
&\le C(A,\delta)\big( \|\p_1f^{(n)}-\p_1f^{(n-1)}\|^2_{H^{s-\f32}}+\|f^{(n)}-f^{(n-1)}\|^2_{L^\infty}\big)
\le CE^{(n)}.
\end{align*}
On the other hand, there holds
\begin{align*}
\|(\curl_{n}-\curl_{n-1})\La_{\pm,*}^{(n-1)}\|^2_{H^{s-2}(\Omega_*^\pm)}
\le C\|\Phi_{f^{(n)}}-\Phi_{f^{(n-1)}}\|^2_{H^{s-1}(\Om_*^\pm)}
\le CE^{(n)}.
\end{align*}
Similarly, we have
\begin{align*}
&\|\div_{n}(\La_{\pm,*}^{(n)}-\La_{\pm,*}^{(n-1)})\|^2_{H^{s-2}(\Omega_*^\pm)}\le CE^{(n)},\\
&\|\ud{\La_{\pm,*}^{(n-1)}}\cdot(N_{f^{(n)}}-N_{f^{(n-1)}})\|^2_{H^{s-\f32}(\bR)}\le CE^{(n)}.
\end{align*}

Then we infer from the elliptic estimate that
\begin{align*} 
&\|\La_{\pm,*}^{(n)}-\La_{\pm,*}^{(n-1)}\|^2_{H^{s-1}(\Om_*^\pm)} \\
&\le C\left(\| \omega_{\pm,*}^{(n)}-\omega_{\pm,*}^{(n-1)}\|^2_{H^{s-2}(\Omega_*^\pm)}
+\|(\p_t\pm\p_1)(f^{(n)}-f^{(n-1)})\|^2_{H^{s-\f32}}+E^{(n)}\right)\le CE^{(n)}.
\end{align*}
Similarly, we have
\begin{align*}
\|\hat{\La}_{\pm,*}^{(n)}-\hat{\La}_{\pm,*}^{(n-1)}\|^2_{H^{s-1}(\hat{\Om}_*^\pm)}\le& CE^{(n)}.
\end{align*}

It follows from \eqref{Li-BB9} that
\begin{align} \label{Li-BB9-D}
&(\p_t^2-\p_1^2) (f^{(n+1)}-f^{(n)})
+ \f12({\ud{\La_+^1}}^{(n)} +{\ud{\La_-^1}}^{(n)}
+{\ud{\hat{\La}_+^1}}^{(n)} +{\ud{\hat{\La}_-^1}}^{(n)})
\p_t\p_1 (f^{(n+1)}-f^{(n)}) \\\nonumber
&+ \f12({\ud{\La_+^1}}^{(n)} +{\ud{\La_-^1}}^{(n)}
+{\ud{\hat{\La}_+^1}}^{(n)} +{\ud{\hat{\La}_-^1}}^{(n)}
-{\ud{\La_+^1}}^{(n-1)}-{\ud{\La_-^1}}^{(n-1)}
-{\ud{\hat{\La}_+^1}}^{(n-1)}-{\ud{\hat{\La}_-^1}}^{(n-1)})
\p_t\p_1 f^{(n)} \\\nonumber
&+\f12\big({\ud{\La_-^1}}^{(n)}  - {\ud{\La_+^1}}^{(n)}
+ {\ud{\hat{\La}_-^1}}^{(n)}  - {\ud{\hat{\La}_+^1}}^{(n)}
 \big)\p_1^2 (f^{(n+1)}-f^{(n)})  \\\nonumber
&+\f12\big({\ud{\La_-^1}}^{(n)}  - {\ud{\La_+^1}}^{(n)}
+ {\ud{\hat{\La}_-^1}}^{(n)}  - {\ud{\hat{\La}_+^1}}^{(n)}
-{\ud{\La_-^1}}^{(n-1)}+{\ud{\La_+^1}}^{(n-1)}
-{\ud{\hat{\La}_-^1}}^{(n-1)}+{\ud{\hat{\La}_+^1}}^{(n-1)}
 \big)\p_1^2 f^{(n)}\\\nonumber
&+\f12\big( {\ud{\La_-^1}}^{(n)}  {\ud{\La_+^1}}^{(n)}
+ {\ud{\hat{\La}_-^1}}^{(n)}  {\ud{\hat{\La}_+^1}}^{(n)} \big)\p_1^2 (f^{(n+1)}-f^{(n)})  \\\nonumber
&+\f12\big( {\ud{\La_-^1}}^{(n)}  {\ud{\La_+^1}}^{(n)}
+ {\ud{\hat{\La}_-^1}}^{(n)}  {\ud{\hat{\La}_+^1}}^{(n)}
-{\ud{\La_-^1}}^{(n-1)}  {\ud{\La_+^1}}^{(n-1)}
-{\ud{\hat{\La}_-^1}}^{(n-1)} {\ud{\hat{\La}_+^1}}^{(n-1)}  \big)\p_1^2 f^{(n)} \\\nonumber
&=-\f12\big( N_{f^{(n)}}\cdot\ud{\nabla p^{(n)}}+ N_{f^{(n)}}\cdot\ud{\nabla \hat{p}^{(n)}}
-N_{f^{(n-1)}}\cdot\ud{\nabla p^{(n-1)}}+ N_{f^{(n-1)}}\cdot\ud{\nabla \hat{p}^{(n-1)}} \big).
\end{align}
Then similar to the estimate for \eqref{Li-BB9}, for $s'=s-1$,
we treat the
$H^{s'+\f12}(\BR)$ energy estimate and $L^\infty(\bR)$ estimate
for \eqref{Li-BB9-D} with slight modifications. By using Lemma \ref{Lem-Abound}, for $0<T'_*\leq T_*$,
 we conclude  that
\begin{align}\label{R-1}
&\sup_{0\leq t\leq T'_*}
\Big( E_{s'+\f12}^f(\p_tf^{(n+1)}-\p_tf^{(n)},f^{(n+1)}-f^{(n)})
+\| f^{(n+1)}-f^{(n)} \|^2_{L^\infty(\bR)} \Big) \\\nonumber
&\leq C(A,\delta)T'_* \big(E^{(n+1)}+E^{(n)}\big).
\end{align}
Similarly, we can show that
\begin{align}\label{R-2}
&\sup_{0\leq t\leq T'_*}
\sum_{+,-} E_{s'}^w(\omega_{\pm}^{(n+1)}-\omega_{\pm}^{(n)},\hat{\omega}_{\pm}^{(n+1)}-\hat{\omega}_{\pm}^{(n)})  \\\nonumber
&\leq C(A,\delta)T'_* \big(E^{(n+1)}+E^{(n)}\big).
\end{align}
Combining the above \eqref{R-1} and \eqref{R-2}, by taking $T'_*=\min\{\f{1}{3C(A,\delta)},T_*\}$, then the conclusion follows.
\end{proof}
Lemma \ref{Lem-Abound} and Lemma \ref{Lem-contr} ensure that the map $\mathcal{F}$ has a unique fixed point
$(f,\omega^\pm, \hat{\omega}^\pm)$. From the construction of $\mathcal{F}$, we know that $(f,\omega^\pm, \hat{\omega}^\pm)$ satisfies
the equation of the free surface  \eqref{BB9}, the vorticity equation \eqref{F111}-\eqref{F112} and the div-curl system \eqref{DC1}-\eqref{DC2}. And the pressure satisfies \eqref{B22}.
The limit system is equivalent to the current-vortex sheet system \eqref{A18}-\eqref{A20}.
For its proof, we refer to section 9 in \cite{SWZ18}.

\section{Appendix}
In this appendix, we summarize some estimates of the Dirichlet-Neumann opertor.

\subsection{The elliptic equation in $\Om_f$ and $\hat{\Om}_f$}
We first recall some classical results for the elliptic boundary value problem. The reader may refer to, for instance, \cite{ABZ}.
\begin{equation}\label{EE1}
\begin{cases}
\Delta \Psi =0 \quad \textrm{in}\,\, \Om_f,\\
\Psi =\phi \quad \textrm{on} \,\, \Gamma_f,\\
\p_2 \Psi=0\,\,   \textrm{on} \,\,  \Ga .
\end{cases}
\end{equation}
Let $\phi \in H^{\sigma+\f12}( {\bR})$, $\sigma\in\bN$. We define an~$H^{\sigma+1}$ lifting of~$\phi$ in~$\Omega_f.$
To do so, let
\begin{align*}
\psi(x)&=e^{-x_2|\p_1|} \phi(x_1),\quad x_2\in\BR^+,\,x_1\in\BR.
\end{align*}
Now we introduce
\begin{align*}
\Phi(x)&= \zeta(x_2) \psi(x_1, f-x_2) ,\quad -1\leq x_2\leq f(t,x_1),\,x_1\in\BR\,.
\end{align*}
where $\zeta(x_2)\in C^\infty(\BR)$ and
\begin{equation*}
\zeta(x_2)=
\begin{cases}
1 , &\text{ if } |x_2|\leq 1-\f{c_0}{2} ,\\
0 , &\text{ if } |x_2|\geq 1-\f{c_0}{2},
\end{cases}
\end{equation*}
By the property of the Poisson kernel, we have
\begin{align*}
\| \Phi\|_{L^2(\Om_f)}
&\lesssim \| \langle \p_1\rangle^{-\f12} \phi \|_{L^2(\BR)},\\
\| \nabla \Phi\|_{L^2(\Om_f)}
&\lesssim \|  \phi \|_{H^{\f12}(\BR)}
 \big(1+\| \phi \|_{H^1(\BR)}\big) \, .
\end{align*}
For $2\leq k\leq \sigma+1$, there hold
\begin{align*}
\| \nabla^k \Phi\|_{L^2(\Om_f)}
&\lesssim  \|  \phi \|_{H^{k-\f12}(\BR)}
\mathcal{P} \big(1+\|  \phi \|_{H^{k-\f12}(\BR)} \big).
\end{align*}
where $\mathcal{P}(\cdot)$ is a polynomial.

Then for the problem
\begin{equation*}
\begin{cases}
-\Delta \Phi_1  =\Delta \Phi \quad \textrm{in}\,\, \Om_f,\\
\Phi_1 =0 \quad \textrm{on} \,\, \Gamma_f,\\
\p_2 \Psi_1=0\,\,   \textrm{on} \,\,  \Ga .
\end{cases}
\end{equation*}
It is easy to construct a variational solution $\Phi_1$ satisfying the boundary conditions and
$$ \| \nabla \Phi_1 \|_{L^2(\Om_f)}\lesssim \| \nabla \Phi  \|_{L^2(\Om_f)} \lesssim \| \phi \|_{H^{\f12}(\bR)}.$$
By the regularity for elliptic equations, there holds
\begin{align*}
\| \nabla \Phi_1\|_{H^{\sigma}(\Om_f)}
& \lesssim \| \nabla \Phi  \|_{H^{\sigma}(\Om_f)}\lesssim \| \phi \|_{H^{\sigma+\f12}(\bR)}.
\end{align*}
Due to the vanishing condition of $\Phi_1$ on $\Ga_f$, by using Poincar\'e inequality, there holds
$$ \| \Phi_1 \|_{L^2(\Om_f)} \lesssim \| \nabla \Phi_1 \|_{L^2(\Om_f)}  \lesssim \| \phi \|_{H^{\f12}(\bR)}.$$

Thus $\Psi=\Phi+\Phi_1$ is the solution to \eqref{EE1} satisfying
\begin{align*}
\| \Phi \|_{H^{\sigma+1}(\Om_f)} \lesssim \| \phi \|_{H^{\sigma+\f12}(\bR)}.
\end{align*}
On the other hand, $\phi$ is the restriction of $\Phi$ on $\Ga_f$ in the trace sense.
Hence
\begin{align*}
 \| \phi \|_{H^{\sigma+\f12}(\bR)}\lesssim \| \Phi \|_{H^{\sigma+1}(\Om_f)}.
\end{align*}

Similarly, for the elliptic boundary value problem
\begin{equation*}
\begin{cases}
\Delta \hat{\Psi} =0 \quad \textrm{in}\,\, \hat{\Om}_f,\\
\hat{\Psi} =\phi \quad \textrm{on} \,\, \Gamma_f,\\
\p_2\hat{\Psi}=0\,\,   \textrm{on} \,\,  \hat{\Ga} .
\end{cases}
\end{equation*}
We can construct the solution such that
\begin{align*}
\| \hat{\Phi} \|_{H^{\sigma+1}(\hat{\Om}_f)}\lesssim \| \phi \|_{H^{\sigma+\f12}(\bR)},\quad
 \| \phi \|_{H^{\sigma+\f12}(\bR)}\lesssim \| \hat{\Phi} \|_{H^{\sigma+1}(\hat{\Om}_f)}.
\end{align*}

\subsection{Dirichlet-Neumann operator}
In this subsection, we recall some estimates for the Dirichlet-Neumann operator.
For the details of the proof, we refer to section 3 in \cite{Lannes}, section 3.2 in \cite{ABZ}, section 3 and Appendix in \cite{SWZ18}.
The idea is straightening the free boundary and then
 transform the elliptic estimate into
the parabolic estimate by decoupling the elliptic equation into a forward and a
backward parabolic evolution equation.
Denote $K_{s+\f12,f}$ a constant depending on $\| \p_1f\|_{H^{s+\f12}}+\|f\|_{L^\infty}$
if $\| \p_1f\|_{H^{s+\f12}}+\|f\|_{L^\infty}<\infty $ for $s>\f52$.

The Dirichlet-Neumann operator is defined as follows:
\begin{align*}
\mathcal{N}_f \phi = N_f\cdot\ud{\nabla \Psi},\,\,
\mathcal{\hat{N}}_f \phi = - N_f\cdot\ud{\nabla \hat{\Psi}}\,.
\end{align*}
By the paradifferential operator and the well-known Bony's decomposition, the Dirichlet-Neumann operator can decomposed as follows:
(see for instance \cite{Lannes, ABZ, SWZ18})
\begin{align*}
\mathcal{N}_f \phi=\mathcal{T}\phi+ \mathcal{R}_1\phi,
 \quad \mathcal{\hat{N}}_f \phi=\mathcal{T}\phi+ \mathcal{R}_2\phi\,.
\end{align*}
Here $\mathcal{T}\phi$ is the leading term where the symbol $\lambda(x,\xi)$ is given by
\begin{align*}
\lambda(x,\xi)=
\sqrt{(1+|\nabla f|^2)|\xi|^2-(\nabla f\cdot\xi)^2}.
\end{align*}
$\mathcal{R}_1\phi$ and $\mathcal{R}_2\phi$ are the remainder terms.
For any $\sigma\in \big[\f12,s-\f12\big]$, there holds (see for instance \cite{Lannes, ABZ, SWZ18})
\begin{align*}
\|\mathcal{T}\phi\|_{H^{\sigma}(\bR)}\leq  K_{s+\f12,f}\|\phi\|_{H^{\sigma+1}(\bR)},\\
\|\mathcal{R}_1\phi\|_{H^{\sigma}(\bR)}+\|\mathcal{R}_2\phi\|_{H^{\sigma}(\bR)}\leq  K_{s+\f12,f}\|\phi\|_{H^{\sigma+1}(\bR)}.
\end{align*}
Thus for any $\sigma\in \big[\f12,s-\f12\big]$,
\begin{align}
&\| \mathcal{N}_f \phi\|_{H^{\sigma}(\bR)}+\| \mathcal{\hat{N}}_f \phi\|_{H^{\sigma}(\bR)}\leq  K_{s+\f12,f}\|\phi\|_{H^{\sigma+1}(\bR)},\nonumber\\
\label{appd-9}
&\| \mathcal{N}_f \phi-\mathcal{\hat{N}}_f \phi\|_{H^{\sigma}(\bR)}\leq  K_{s+\f12,f}\|\phi\|_{H^{\sigma}(\bR)}.
\end{align}

\section*{Acknowledgement.}

Cai is supported by NSFC grants (No. 12201122).
 Lei is in part supported by NSFC (No. 12431007), Sino-German Center (No. M-0548)
  and New Cornerstone Science Foundation through the XPLORER PRIZE.

\end{document}